\newtheorem{theorem}{Theorem}[section]
\newtheorem*{theoremnn}{Theorem}
\newtheorem{problem}[theorem]{Problem}
\newtheorem{lemma}[theorem]{Lemma}
\newtheorem{proposition}[theorem]{Proposition}
\newtheorem{fact}[theorem]{Fact}
\newtheorem{corollary}[theorem]{Corollary}
\theoremstyle{definition}
\newtheorem{definition}[theorem]{Definition}
\newtheorem{notation}[theorem]{Notation}
\newtheorem{remark}[theorem]{Remark}
\newtheorem{example}[theorem]{Example}
\newtheorem{fundexample}[theorem]{Fundamental example}
\newtheorem{convention}[theorem]{Convention}
\newtheorem{openpbs}[theorem]{Problems}
\theoremstyle{remark}
\newcommand{\Si}{\mathfrak{S}}
\newcommand{\Ext}{{\mathrm{Ext}}}
\newcommand{\Tor}{{\mathrm{Tor}}}
\newcommand{\End}{{\mathrm{End}}}
\newcommand{\Id}{{\mathrm{Id}}}
\newcommand{\FF}{\mathcal{F}}
\newcommand{\PP}{\mathcal{P}}
\newcommand{\Cr}{\mathrm{cr}}
\newcommand{\V}{\mathcal{V}}
\newcommand{\A}{\mathcal{A}}
\newcommand{\AC}{\mathcal{AC}}
\newcommand{\Z}{\mathbb{Z}}
\newcommand{\D}{\mathcal{D}}
\newcommand{\EML}{\mathrm{EML}}
\newcommand{\cw}{\mathrm{cw}}
\newcommand{\w}{\mathrm{w}}
\newcommand{\even}{\mathrm{ev}}
\DeclareMathOperator*{\colim}{colim}
\newcommand{\Fp}{{\mathbb{F}_p}}
\newcommand{\Fq}{{\mathbb{F}_q}}
\renewcommand{\V}{\mathcal{V}}
\newcommand{\HH}{\mathcal{H}}
\newcommand{\OO}{\mathcal{O}}
\newcommand{\C}{\mathcal{C}}
\newcommand{\U}{\mathcal{U}}
\newcommand{\strict}{\mathrm{strict}}
\newcommand{\ord}{\mathrm{ord}}
\newcommand{\kk}{\Bbbk}
\newcommand{\Exp}{\mathrm{Exp}}
\newcommand{\Alg}{\mathrm{Alg}}
\newcommand{\Coalg}{\mathrm{Coalg}}
\renewcommand{\H}{\mathrm{Hopf}}
\newcommand{\Proj}{\mathrm{P}}
\newcommand{\add}{\mathrm{add}}
\newcommand{\conn}{\mathrm{conn}}
\newcommand{\Fct}{\mathrm{Fct}}
\newcommand{\Mod}{{\mathrm{Mod}_\kk}}
\newcommand{\Hom}{\mathrm{Hom}}
\newcommand{\Modgr}{{\mathrm{Mod}^*_\kk}}
\title{On the structure of graded commutative exponential funtors}
\author[A. Touz\'e]{Antoine Touz\'e} 
\address{Universit\'e Lille\\
Laboratoire Painlev\'e\\
Cit\'e Scientifique - B\^atiment M2\\
F-59655 Villeneuve d'Ascq Cedex, France}
\email{antoine.touze@univ-lille.fr}
\date{\today}
\begin{document}

\begin{abstract}
We investigate the structure of graded commutative exponential functors. We give applications of these structure results, including computations of the homology of the symmetric groups and of extensions in the category of strict polynomial functors.
\end{abstract}

\maketitle

 \setcounter{tocdepth}{1}

\section{Introduction}

Let $\V$ be an additive category, and let $\Modgr$ be the category of graded modules over a commutative ring $\kk$. An exponential functor is a strong monoidal functor $E:(\V,\oplus, 0)\to (\Modgr,\otimes,\kk)$. The name `exponential functor' comes from the analogy between the fundamental property of exponential functions and the so-called `exponential isomorphism'
$$E(V)\otimes E(W)\simeq E(V\oplus W)\;.$$

Exponential functors appear in many places in algebra and topology. For example, classical universal constructions such as the free graded commutative algebra on a graded vector space $V$, or the free divided power algebra on $V$, yield examples of exponential functors with source $\V=\Modgr$ (see section \ref{subsec-ex}). As these first examples suggest, any exponential functor has a canonical algebra structure. To be more explicit, the product on $E(V)$ is the composite map (where $\sigma_V:V^{\oplus 2}\to V$ is the sum map, i.e. $\sigma_V(v,w)=v+w$):
$$E(V)\otimes E(V)\simeq E(V\oplus V)\xrightarrow[]{E(\sigma_V)} E(V)\;.$$
There is also a canonical coalgebra structure on $E(V)$. Thus exponential functors are tightly connected to Hopf algebras. In fact, as we explain it in section \ref{sec-ordvsHopf}, bicommutative Hopf algebras and commutative affine group schemes naturally yield examples of exponential functors. 
Other kinds of examples of exponential functors come from some usual constructions in homological algebra and topology. As a typical example, the Hochschild Homology of an exponential functor is again an exponential functor, see section \ref{subsec-exHT}. Another typical example is provided by applying a generalized homology theory with K\"unneth isomorphisms to Eilenberg-Mac Lane spaces, see example \ref{ex-dualMorava}. Sometimes, the exponential structure is slightly less obvious, as in the case of the homology of symmetric groups explained in section \ref{sec-sym}.
Finally, 
let us mention that the name `exponential functor' comes from functor homology. To the author's knowledge, the name first appeared in \cite{Franjou}. Hence, the notion of exponential functor plays a role in homological algebra computations related to unstable modules over the Steenrod algebra, to the cohomology of finite groups $GL_n(\Fq)$, and to the cohomology of algebraic groups schemes $GL_n$, see e.g. \cite{FFSS,Chalupnik2,TouzeBar,Drup}.

Although they appear in many places, and although they play an important role in some fundamental computations, exponential functors do not seem to have been studied on their own, and their use seems to have essentially been limited to a few computational tricks. The purpose of this paper is to write down the basic theory of exponential functors, in order to place these tricks into a wider perspective, and to allow for further and more powerful applications. 
As already explained, an exponential functor $E$ yields a family of graded algebras and also of coalgebras $E(V)$, depending functorially on the variable $V$. In general, the whole structure is overdetermined. The main problem that we investigate in this article is the following one.
\begin{problem}\label{pb-main}
Which part of the structure (multiplication, comultiplication, functoriality with respect to $V$\dots) on the graded vector spaces $E(V)$ is sufficient to completely determine an exponential functor $E$?
\end{problem}
We are interested in problem \ref{pb-main} because it has practical implications. 
It is often the case in practice that only a part of the whole structure can be easily described or computed, while the rest of the structure is interesting as well - sometimes it is even the most interesting part. An archetypal example is given by the article \cite{CHN}, in which Cohen, Hemmer and Nakano need to determine the homology of symmetric groups $H_i(\Si_d,V^{\otimes d})$ as representations of $GL(V)$. As we explain it in section \ref{sec-sym}, these cohomology groups are part of an exponential  functor $E$, whose algebra structure is well-known. Elementary results on exponential functors then allows an effortless  reconstruction of the sought-after representations of $GL(V)$.

\bigskip
 
Let us describe in more details the content of the article. 
As mentioned in the title, we essentially restrict ourselves to \emph{graded commutative} exponential functors, i.e. the exponential functors whose product is graded commutative\footnote{As specified in section \ref{subsec-conventions}, from section \ref{sec-elementary} until the end of the article, all exponential functors are implicitly graded commutative and we drop the words `graded commutative' for the sake of concision.}. In view of applications, we also consider the variant of \emph{strict exponential functors}. They are algebro-geometric analogues of exponential functors. In order to define them, one simply replaces ordinary functors $E:\V\to \Modgr$ in the definition by some highly structured functors $E:\Proj_\kk\to \Modgr$, namely the strict polynomial functors of \cite{FS} (here $\Proj_\kk$ is the category of finitely generated projective $\kk$-modules, see the standard notations given at the end of the introduction), or rather the immediate generalization of strict analytic functors which are simply infinite direct sums of strict polynomial functors. All the relevant definitions and basic examples are given in section \ref{sec-def}.

Problem \ref{pb-main} has two main directions.  
Given an exponential functor $E$, one can forget functoriality but still remember some other structures, such as the algebra or coalgebra structures on $E(V)$, for some $V$ in the domain category $\V$.  On the other extreme, one may forget everything but functoriality. This is depicted by the following diagram:
$$\left\{
\text{
\begin{tabular}{c}
$\kk$-modules $+$ \\
extra structure\\
($\mu$, $\Delta$\dots)
\end{tabular}}
\right\}
\xleftarrow{\;(I)\;}
\left\{
\text{
\begin{tabular}{c}
exponential\\ functors 
\end{tabular}
}
\right\}
\xrightarrow[]{\;(II)\;}
\left\{
\text{
\begin{tabular}{c}
functors\\
$\V\to\Modgr$
\end{tabular}}
\right\}.
$$

Knowing how much information is preserved (or lost) by the forgetful functor $(I)$ is the main concern of sections \ref{sec-elementary}, \ref{sec-ordvsHopf} and \ref{sec-strictvsHopf}. In these sections we prove some reconstruction theorems, which allow in certain situations to reconstruct an exponential functor from the description of some algebra, coalgebra or Hopf algebra structures. As an elementary example, we quote theorem \ref{thm-classif-ord}. 
\begin{theoremnn}
Let $\Exp_c$ denote the category of graded commutative exponential functors with domain $\V=\Proj_R$ and codomain graded $\kk$-modules, where $R$ a ring and $\kk$ a is commutative ring. Let $\HH$ be the category of graded bicommutative Hopf $\kk$-algebras, and let $_R\HH$ denote the category of $R$-modules in $\HH$. Evaluation on $R$ yields an equivalence of categories:
$$\begin{array}[t]{ccc}\Exp_c & \xrightarrow[]{\simeq} & {_R}\HH\\
 E & \mapsto & E(R)
\end{array}
\;.$$
\end{theoremnn}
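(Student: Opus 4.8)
The plan is to factor the evaluation functor $\mathrm{ev}_R$ through two successive identifications, reducing the statement to a Morita-type recognition of additive functors out of $\Proj_R$.

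\emph{Step 1: $\mathrm{ev}_R$ is well defined.} As recalled above, every graded commutative exponential functor $E$ equips $E(R)$ with a product $\mu=E(\sigma_R)$ and a coproduct $\Delta=E(\delta_R)$ (together with unit, counit and antipode induced by $0\to R\to 0$ and $-\Id_R$), making it a graded bicommutative Hopf algebra: commutativity is the assumed graded commutativity, and cocommutativity is automatic since $\delta_R$ is symmetric. Recall from section \ref{sec-ordvsHopf} that $\HH$ is an additive category in which the abelian group structure on hom-sets is the convolution $f*g=\mu'\circ(f\otimes g)\circ\Delta$ and in which the tensor product $\otimes$ is the biproduct. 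Functoriality of $E$ gives a ring homomorphism $\End_{\Proj_R}(R)=R\to\End(E(R))$; it is additive for convolution because the identity $f+g=\sigma_R\circ(f\oplus g)\circ\delta_R$ in $\End_{\Proj_R}(R)$ is carried by the strong monoidal $E$ to $E(f+g)=E(\sigma_R)\circ E(f\oplus g)\circ E(\delta_R)=\mu\circ(E(f)\otimes E(g))\circ\Delta=E(f)*E(g)$. Hence $E(R)\in{_R}\HH$, and a morphism of exponential functors restricts to a morphism of $R$-module Hopf algebras, so $\mathrm{ev}_R$ is a well-defined functor.

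\emph{Step 2: exponential functors into $\HH$ are additive functors.} The key observation is that, since $\otimes$ is the biproduct of the additive category $\HH$ and $\oplus$ is the biproduct of $\Proj_R$, a strong symmetric monoidal functor $(\Proj_R,\oplus,0)\to(\HH,\otimes,\kk)$ is the same thing as an additive (biproduct-preserving) functor $\Proj_R\to\HH$. Indeed, strong monoidality forces $E$ to carry the biproduct structure maps (the inclusions $\Id\oplus 0$ and the projections) to the corresponding biproduct structure maps of $\otimes$, hence to preserve the diagonal and codiagonal; since the sum of morphisms in any additive category is recovered as $f+g=\nabla\circ(f\oplus g)\circ\Delta$, such a functor is automatically additive, and conversely every additive functor is strong symmetric monoidal for these biproduct monoidal structures. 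Thus $\Exp_c\simeq\Fct_{\add}(\Proj_R,\HH)$, the graded commutativity matching the use of the graded symmetry of $\otimes$.

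\emph{Step 3: Morita recognition.} Now $\Proj_R$ is the idempotent completion of the additive envelope $\add(\underline R)$ of the one-object $R$-linear category $\underline R$ with $\End(R)=R$, and $\HH$ is additive and idempotent complete (it is in fact abelian). Therefore restriction along $\underline R\hookrightarrow\Proj_R$, i.e. evaluation at $R$, is an equivalence between $\Fct_{\add}(\Proj_R,\HH)$ and the category of additive functors $\underline R\to\HH$, which is exactly $_R\HH$. A quasi-inverse is given by additive left Kan extension: one sets $E_H(R^n)=H^{\otimes n}$, lets a matrix act through the Kan extension formula, and defines $E_H$ on a projective $V=(R^n,e)$ as the image in $\HH$ of the idempotent $E_H(e)$, which exists because idempotents split. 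Composing the equivalences of Steps 2 and 3 identifies $\mathrm{ev}_R$ with this equivalence, proving the theorem.

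\emph{Main obstacle.} The two facts on which everything rests are, first, that $\otimes$ is the biproduct of $\HH$ (so that strong monoidality and additivity coincide), which relies crucially on bicommutativity to make convolution an abelian group law on Hopf morphisms; and, second, the functoriality of the reconstructed $E_H$ on an arbitrary matrix $A=(a_{ij})\in\Hom_{\Proj_R}(R^n,R^m)$. The latter is the heart of the matter: the product $(AB)_{ik}=\sum_j a_{ij}b_{jk}$ mixes the addition of $R$ (which $E_H$ must send to convolution $\mu\circ(-\otimes-)\circ\Delta$) with the multiplication of $R$ (which it must send to composition of the $R$-action), and checking $E_H(AB)=E_H(A)\circ E_H(B)$ amounts to the bialgebra compatibility between $\mu$ and $\Delta$ together with (co)associativity and (co)commutativity. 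I would isolate this as a lemma and verify it either by a string-diagram computation or by decomposing every $A$ into elementary matrices (scalars, the diagonal $\delta_R$, the sum $\sigma_R$, permutations, and the biproduct inclusions and projections) and checking the relations among them.
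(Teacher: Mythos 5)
Your proposal is correct and follows essentially the same route as the paper: the paper's Lemma \ref{lm-intelligent} is exactly your Step 2 (exponential functors are additive functors $\Proj_R\to\HH$ because the tensor product is the biproduct of the additive category $\HH$), and the paper then concludes by citing the Eilenberg--Watts recognition (Fact \ref{fact-EW}), which is your Step 3, with the same quasi-inverse via the additive hull and idempotent completion of the one-object category $\underline{R}$. The functoriality-on-matrices check you flag as the main obstacle is precisely what the cited Eilenberg--Watts statement packages away, and your proposed verification of it is sound.
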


In theorem \ref{thm-classif-strict}, we give an analogue of theorem \ref{thm-classif-ord} for strict exponential functors. However, the proof is technically more involved in the case of strict exponential functors, and we need to assume that $\kk$ is a perfect field (although we suspect that theorem \ref{thm-classif-strict} is at least valid over an arbitrary field). Other reconstruction results which are useful in applications are given in theorems \ref{thm-alg} and \ref{thm-coalg}. The latter only consider algebra or coalgebra structures.

Theorems \ref{thm-classif-ord} and \ref{thm-classif-strict} bridge the gap between graded commutative exponential functors and graded bicommutative Hopf algebras. In section \ref{sec-indecomp}, we exploit this link, together with the classical theory of Dieudonn\'e modules \cite{Schoeller} and the representation theory of string algebras \cite{CB} to obtain a clear portrayal of connected exponential functors, under some reasonable hypotheses. 
The next theorem summarizes the results obtained (see proposition \ref{prop-defiEAW}, theorem \ref{thm-classif-indecomp} and corollary \ref{cor-KRS} for more precise statements). In this theorem, the reflexivity hypothesis is a mild finiteness condition on exponential functors. For example, if $E$ is a connected strict exponential functor and if the algebra $E(\kk)$ is finitely generated, then $E(\kk)$ is reflexive. The hypothesis on homological dimension is strong, but it is satisfied in many cases of interest. For example if $\V=\Proj_R$, it is equivalent to the fact that $R\otimes_\Z\kk$ has homological dimension zero. (This is satisfied when $\kk=\Z$, $\Z/n\Z$ or a finite field, which is sufficient for most of the applications described in this article).

\begin{theoremnn}
Let $\kk$ be a perfect field of positive characteristic, and let $E$ be either a strict exponential functor, or an exponential functor with source $\V$ and target $\Modgr$. In the latter case, assume in addition that the category $\V$ is such that the category of additive functors $\V\to\Mod$ has homological dimension zero.

If $E$ is graded commutative, connected (in the sense that for all $V$, the graded algebra $E(V)$ equals $\kk$ in degree zero) and reflexive, then it can be written in a unique way (up to permutation of the indices) as a countable tensor product:
$$E\simeq \bigotimes_{i\in I}E_i$$
where the exponential functors $E_i$ are indecomposable with respect to the tensor product. 
Moreover, such indecomposable exponential functors are classified. They are exterior algebras $\Lambda(A[d])$ on a simple additive functor $A$ placed in odd degree $d$, or they are certain exponential functors $\Sigma(A[d],w)$ where $A$ is a simple additive functor, $d$ is a positive even integer, and $w$ is a word (possibly infinite) on the alphabet $\{F,V\}$. 
\end{theoremnn}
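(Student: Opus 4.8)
The plan is to use the reconstruction theorems to replace exponential functors by Hopf-algebraic data, to linearize that data by Dieudonn\'e theory, and then to reduce the classification to the representation theory of string algebras.

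First I would invoke Theorem \ref{thm-classif-ord} in the ordinary case, and its strict counterpart Theorem \ref{thm-classif-strict} in the strict case, to transport the whole problem to the category of (modules in) graded bicommutative Hopf algebras. Since these equivalences are symmetric monoidal, a tensor decomposition $E\simeq\bigotimes_{i\in I}E_i$ of exponential functors corresponds exactly to a tensor decomposition of the associated Hopf algebra, connectedness is preserved, and tensor-indecomposability is translated faithfully. Thus it suffices to prove unique tensor factorization and to classify the tensor-indecomposables on the Hopf side, where the extra structure (the $R$-module structure in the ordinary case, resp.\ the simple additive / Frobenius-twist structure coming from strict functoriality) is now carried by the Hopf algebra.

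Next I would apply Dieudonn\'e theory. Over a perfect field $\kk$ of characteristic $p>0$, Schoeller's theorem \cite{Schoeller} gives an equivalence between connected graded bicommutative Hopf algebras and graded modules over the Dieudonn\'e ring, generated over $\kk$ by the Frobenius $F$ and the Verschiebung $V$. Crucially this equivalence sends the tensor product of Hopf algebras to the direct sum of Dieudonn\'e modules, so tensor-indecomposable Hopf algebras correspond to indecomposable Dieudonn\'e modules, and the unique tensor factorization becomes the Krull--Remak--Schmidt statement of corollary \ref{cor-KRS}. The grading then splits off the odd-degree part: graded commutativity forces odd generators to behave as an exterior algebra, producing the indecomposables $\Lambda(A[d])$ with $d$ odd, while the remaining even-degree part is an honestly commutative Hopf algebra to which the full $(F,V)$-machinery applies. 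The additive-functor hypothesis, together with the assumption that the additive functors have homological dimension zero (resp.\ the perfectness of $\kk$ in the strict case), forces each indecomposable to be supported on a single simple additive functor $A$ and makes the combined algebra of $F$, $V$ and $\End(A)$ into a string algebra in the sense of \cite{CB}. I would then read off the classification from Crawley--Boevey's description: the indecomposable string modules are indexed precisely by words (finite or infinite) on the alphabet $\{F,V\}$, which I would match with the explicit functors $\Sigma(A[d],w)$ of proposition \ref{prop-defiEAW}; the reflexivity and homological-dimension hypotheses are exactly what guarantee that no band modules arise, so that theorem \ref{thm-classif-indecomp} follows, and combining it with corollary \ref{cor-KRS} yields the theorem.

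The hardest step is the passage through Dieudonn\'e theory in the graded, structured setting. One must check that Schoeller's correspondence is compatible with the grading and with the extra module/additive structure, that the tensor-to-direct-sum compatibility holds on the nose, and above all that the resulting $(F,V)$-algebra---twisted by the $\kk$-semilinearity inherent in a perfect field and decorated by $\End(A)$---genuinely satisfies the special biserial axioms required to invoke \cite{CB} and produces no band modules. Verifying these compatibilities, and identifying the combinatorial data of the strings with the words $w$ parametrizing $\Sigma(A[d],w)$, is where the real work lies, and it is also the point at which the homological-dimension-zero hypothesis is indispensable.
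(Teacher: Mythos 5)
Your proposal follows essentially the same route as the paper: reduction to the even-degree connected case via the splitting principle and the categorical decompositions, transport through the reconstruction theorems to Hopf-algebraic data, passage to graded Dieudonn\'e modules over $\FF_\add$ (lemma \ref{lm-dieu-tout}), and classification of the indecomposables as string modules via Crawley--Boevey (theorem \ref{thm-classif-combin}), with unique factorization handled as in corollary \ref{cor-KRS}. One small correction of emphasis: the absence of band modules is automatic from the shape of the quiver (the linear quiver with relations $F_iV_i=0=V_iF_i$ has no periodic words), whereas reflexivity supplies the pointwise-artinian hypothesis needed to invoke \cite{CB}, and the homological-dimension-zero assumption on $\FF_\add$ is what makes each degreewise piece of the Dieudonn\'e module a finite direct sum of simples.
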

  
This theorem shows that the situation in positive characteristic is more complicated than when $\kk$ is a field of characteristic zero (where any connected exponential functor is a free graded commutative algebra over a graded additive functor,  by classical results of Milnor and Moore \cite{MilnorMoore}, cf. lemma \ref{lm-classif-carzero}). The situation is not too complicated however, and this theorem is a great help to study the forgetful functor $(II)$. Building on the results of section \ref{sec-indecomp}, we obtain the following strong rigidity result for strict exponential functors in theorem \ref{thm-uniqueness-strict}. The hypothesis of virtual reflexivivity in the statement is a mild finiteness condition on strict exponential functors, slightly weaker than reflexivity. (Example \ref{ex-infinity} shows that the statement of the theorem is not true if we remove this finiteness condition).

\begin{theoremnn}
Let $E$ and $E'$ be graded commutative strict exponential functors over a perfect field $\kk$. Assume that $E$ is virtually reflexive. If the underlying strict analytic functors of $E,E':\Proj_\kk\to \Modgr$ are isomorphic then $E$ and $E'$ are isomorphic as strict exponential functors.
\end{theoremnn}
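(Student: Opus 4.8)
The plan is to reduce the rigidity statement to the uniqueness part of the classification of indecomposables (theorem~\ref{thm-classif-indecomp}), and then to run a Krull--Remak--Schmidt argument entirely on the underlying strict analytic functors.

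First I would set up the reductions. Writing $\overline{E}$ for the underlying strict analytic functor of $E$, I would check that the hypothesis of virtual reflexivity is in fact a property of $\overline{E}$ alone; since $\overline{E}\simeq\overline{E'}$ by assumption, this shows that $E'$ is virtually reflexive as well, restoring the symmetry between $E$ and $E'$ that the statement superficially breaks. Next I would reduce to the connected case. The internal-degree-zero component $E(-)_0$ is itself an exponential functor (the degree-zero part of the exponential isomorphism is again an exponential isomorphism), concentrated in degree zero and hence valued in ungraded Hopf algebras, so its structure is pinned down by theorem~\ref{thm-classif-strict}; the complementary connected factor is where all the work lies. Applying theorem~\ref{thm-classif-indecomp} (together with proposition~\ref{prop-defiEAW} and corollary~\ref{cor-KRS}) I then write
$$E\simeq\bigotimes_{i\in I}E_i,\qquad E'\simeq\bigotimes_{j\in J}E'_j,$$
where each indecomposable factor is either an exterior algebra $\Lambda(A[d])$ on a simple additive functor $A$ in odd degree $d$, or one of the functors $\Sigma(A[d],w)$ with $A$ simple additive, $d$ a positive even integer, and $w$ a word on $\{F,V\}$.

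It then suffices to prove that the multisets $\{E_i\}_{i\in I}$ and $\{E'_j\}_{j\in J}$ coincide. Taking underlying functors turns these tensor products of exponential functors into tensor products of strict analytic functors, $\overline{E}\simeq\bigotimes_i\overline{E_i}$ and $\overline{E'}\simeq\bigotimes_j\overline{E'_j}$, and by hypothesis these two functors are isomorphic. Thus the whole theorem comes down to the claim that the assignment sending a multiset of indecomposable exponential functors to the isomorphism class of the underlying functor of their tensor product is injective. I would establish this by analyzing the bigrading by (internal degree, homogeneous polynomial degree): over a perfect field each bihomogeneous component of the functors involved is a direct sum of Frobenius twists of the identity, and the explicit descriptions of $\Lambda(A[d])$ and $\Sigma(A[d],w)$ prescribe exactly which twists occur in which bidegrees. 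From this pattern one reads off the simple functor $A$ (the lowest twist appearing), the generating degree $d$, and---for the $\Sigma$ factors---the word $w$ letter by letter, from the successive Frobenius/Verschiebung shifts visible across the bigraded components.

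The main obstacle is the disentangling of the tensor product. Krull--Remak--Schmidt for strict polynomial functors over a field decomposes $\overline{E}$ uniquely into indecomposable \emph{functors}, but this direct-sum decomposition mixes the contributions of all the tensor factors $\overline{E_i}$, and there is a genuine danger that the twist patterns coming from distinct factors, or from different letters of the words, collide so that a different multiset of factors reproduces the same underlying functor. The heart of the proof is therefore a careful inductive peeling: using the finiteness supplied by virtual reflexivity, I would isolate the factors contributing to the lowest positive internal degrees (these behave like a space of primitive generators and must originate from pairwise distinct indecomposable factors), match them across $E$ and $E'$ by the injectivity just discussed, cancel them by uniqueness of the tensor factorization, and induct. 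Example~\ref{ex-infinity} shows that without a finiteness hypothesis this peeling fails, so the delicate point is precisely to control the possibly infinite tensor products and to verify that no accidental coincidence of twist patterns can occur.
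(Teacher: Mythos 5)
Your overall strategy --- reduce to the connected case, decompose into indecomposables via theorem \ref{thm-classif-indecomp} and corollary \ref{cor-KRS}, and recover the multiset of tensor factors from the underlying functor --- is the paper's strategy, but the two steps you leave as plans are exactly where the difficulty sits, and one of your reductions does not go through as stated. Concerning the reduction: you dispose of the internal-degree-zero factor $E^0$ by invoking theorem \ref{thm-classif-strict}, but that theorem classifies strict exponential functors by the strict Hopf algebra $E(\kk)$; it says nothing about whether the underlying strict analytic functor of $E^0$ determines its exponential structure, which is precisely the question being asked. The paper instead regrades the whole functor at once (placing $w_kE^i$ in degree $i+2k$, lemma \ref{lm-reduction-reflexive}), which makes $E$ connected and reflexive in one stroke and never has to treat $E^0$ separately; your degree-zero factor could be salvaged by the same trick, but as written this is a gap.

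The more serious gap is that your ``inductive peeling'' is asserted rather than performed, and the key lemma that makes any such matching possible is absent. Note first that a false premise creeps in: the bihomogeneous components of $\Lambda(A[d])$ and $\Sigma(A[d],w)$ are \emph{not} direct sums of Frobenius twists (e.g.\ $S^2(A)$ is not additive); only the primitives and indecomposables are, so one cannot read off $A$, $d$ and $w$ ``twist by twist'' from the bigraded pieces. What the paper proves instead is proposition \ref{prop-indecomp-summand}: for an indecomposable (special) factor with $PQE$ in degree $d$, one has $\End_{\PP_{\omega,\kk}}(E^{kd})=\kk$, so each homogeneous component of each tensor factor is an indecomposable functor with local endomorphism ring. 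This rests on the injectivity of iterated reduced diagonals (lemma \ref{lm-sec-coalg}), the $\Ext$-computation of \cite{FFSS}, and the weight rigidity of proposition \ref{prop-categ-decomp}. Only with this can one apply Krull--Remak--Schmidt to $\OO E^k$, discard the cross-terms (which have no additive subfunctors or quotients by Pirashvili vanishing) and match the surviving indecomposable summands with homogeneous pieces of the tensor factors via corollary \ref{cor-iso-fct}. Finally, even after matching, one cannot literally ``cancel and induct'': an isomorphism $\OO E\simeq\OO E'$ together with a common tensor factor does not yield an isomorphism of the complementary factors. The paper circumvents this by recording, for each $k$, the multiset of truncated signatures $\phi_k(\sigma(E))$ extracted from $\OO E^k$ alone, and proving separately (proposition \ref{prop-inj}) that an almost finite multiset is determined by its fake truncations --- this limit-of-multiplicities argument is where reflexivity is used and is the precise mechanism your peeling would need.
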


Theorem \ref{thm-uniqueness-strict} shows that there is at most one way of endowing a given graded strict analytic functor (satisfying mild finiteness hypotheses) with the structure of an exponential functor. 
We have not been able to prove the analogous result for ordinary (i.e. non strict) exponential functors. However, in section \ref{sec-filtr} we investigate filtrations of exponential functors, and we show that much of the structure of an exponential functor $E$ can be determined from the structure of the underlying functor $E:\V\to \Modgr$. For example, we prove in theorem \ref{thm-aug} that the augmentation filtration of an exponential functor coincides with the copolynomial filtration of the underlying functor $E:\V\to \Modgr$. Building on these results, we obtain the following weak analogue of theorem \ref{thm-uniqueness-strict} in corollary \ref{cor-weak}. 
\begin{theoremnn}
Assume that $\kk$ is a perfect field, and that the category of additive functors $\V\to\Mod$ has homological dimension zero. Let $E$ and $E'$ be two graded commutative exponential functors from $\V$ to $\Modgr$. Assume that $E$ is absolutely reflexive. If the underlying functors $E,E':\V\to \Modgr$ are isomorphic, then $\mathrm{gr}\, E$ and $\mathrm{gr}\, E'$ are isomorphic as exponential functors, where `$\mathrm{gr}$' refers to the graded object associated to the augmentation filtration, or to the coradical filtration.
\end{theoremnn}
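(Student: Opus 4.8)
My plan rests on theorem \ref{thm-aug}, which identifies the augmentation filtration of an exponential functor with the copolynomial filtration of its underlying functor. Since the copolynomial filtration is defined purely in terms of the underlying functor $\V\to\Modgr$, any isomorphism $\phi\colon E\xrightarrow{\simeq}E'$ of underlying functors automatically carries the augmentation filtration of $E$ to that of $E'$. Passing to associated gradeds, $\phi$ therefore induces an isomorphism $\mathrm{gr}\,\phi\colon \mathrm{gr}\,E\xrightarrow{\simeq}\mathrm{gr}\,E'$ of the underlying \emph{graded} functors. The corollary thus reduces to a rigidity statement: the exponential structure on the associated graded is determined by its underlying graded functor. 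The coradical case is handled identically, using the dual of theorem \ref{thm-aug} which identifies the coradical filtration with the polynomial filtration of the underlying functor.

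To prove this rigidity I would transport the argument of theorem \ref{thm-uniqueness-strict} to the associated-graded setting. The decisive observation is that forming the associated graded of the augmentation filtration equips $\mathrm{gr}\,E$ with an additional grading by filtration degree. This grading plays exactly the role that the weight grading plays for strict polynomial functors, and its absence is precisely what prevents theorem \ref{thm-uniqueness-strict} from holding verbatim for ordinary exponential functors. After reducing to the connected case (the hypotheses on $\kk$ and on the homological dimension of additive functors ensuring that theorem \ref{thm-classif-indecomp} applies), I would decompose $\mathrm{gr}\,E$ and $\mathrm{gr}\,E'$ into indecomposable factors $\Lambda(A[d])$ and $\Sigma(A[d],w)$ via theorem \ref{thm-classif-indecomp}, the absolute reflexivity of $E$ supplying the finiteness needed for the classification. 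It then remains to show that the data $(A,d)$ and the word $w$ attached to each factor can be read off from the underlying graded functor endowed with its filtration-degree grading; granting this, $\mathrm{gr}\,\phi$ matches the factors of $\mathrm{gr}\,E$ with those of $\mathrm{gr}\,E'$, and the uniqueness of the tensor decomposition assembles these matchings into the desired isomorphism of exponential functors.

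The main obstacle is this detection of the word $w$. On the underlying \emph{ordinary} functor the Frobenius twist is invisible, which is exactly why $F$ and $V$ cannot be separated for $E$ itself; the crux is to verify that the filtration-degree grading on the associated graded reinstates this information, so that the Frobenius and Verschiebung operations strictly shift this grading and $w$ becomes a bona fide invariant of the underlying graded functor. Once this is established, the remainder is a routine transcription of the proof of theorem \ref{thm-uniqueness-strict}, with the augmentation-weight in place of the strict weight.
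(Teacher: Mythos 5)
Your opening reduction is exactly the paper's: theorems \ref{thm-corad} and \ref{thm-aug} identify the two filtrations with the (co)polynomial filtrations of the underlying functor, so the given isomorphism is automatically filtered and induces an isomorphism $\OO(\mathrm{gr}\,E)\simeq\OO(\mathrm{gr}\,E')$, reducing everything to a rigidity statement for the associated graded. But your execution of that rigidity statement has a genuine gap, and you have also misidentified where the difficulty lies. First, you overlook the pivotal structural fact that $\mathrm{gr}\,E$ is \emph{primitively generated} (augmentation filtration), resp.\ \emph{cogenerated by its indecomposables} (coradical filtration). By theorem \ref{thm-classif-indecomp} this forces every indecomposable tensor factor to be of the form $\Lambda(A[d])$, $S(A[d])$ or $S_n(A[d])$ (resp.\ $\Gamma$, $\Gamma_n$, $\Lambda$): the $FV$-words contain no $V$ (resp.\ no $F$). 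So the ``main obstacle'' you single out --- separating $F$ from $V$ in the word $w$ --- is vacuous here; the word is determined by its length, and the paper's theorem \ref{thm-uniqueness-weak} is stated precisely for this restricted class.

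The real difficulty is the one you dismiss as ``routine transcription'': the analogue of proposition \ref{prop-indecomp-summand}, i.e.\ that the homogeneous components $X^{dk}$ of the indecomposable factors have endomorphism ring $\kk$. This is what makes the components indecomposable with local endomorphism rings and lets a Krull--Remak--Schmidt argument match the factors of $\mathrm{gr}\,E$ with those of $\mathrm{gr}\,E'$. Your substitute for the weight --- the filtration-degree grading --- does not do the job the weight does in the strict proof. There, the cross terms $\Hom(E^{\beta_i},E^{\alpha_i})$ with $\beta_i\ne\alpha_i$ in the decomposition coming from \cite[Thm 1.7]{FFSS} vanish because morphisms of strict polynomial functors preserve weights. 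Morphisms of ordinary functors preserve the copolynomial \emph{filtration} but not its splitting, and such cross terms are genuinely nonzero: e.g.\ $\Hom_{\FF}(A,S^p(A))\ne 0$ over $\Fp$, since $I^{(1)}\hookrightarrow S^p$ and $I^{(1)}\simeq I$ as ordinary functors. This is exactly the obstruction the paper records at the end of the introduction to section \ref{sec-unique-strict} (``we don't know how to prove the indecomposability property of proposition \ref{prop-indecomp-summand} in the ordinary case''). The paper's proof circumvents it by exploiting primitive generation a second time: each $X^{dk}$ is a \emph{quotient of} $A^{\otimes k}$, so $\End_\FF(X^{dk})$ embeds in $\Hom_\FF(A^{\otimes k},X^{dk})=\bigoplus_\beta\bigotimes_i\Hom_\FF(A,X^{\beta_i})$, and now the collapse to $\beta=(d,\dots,d)$ holds because each tensor factor of the source is additive and concentrated in a single degree (proposition \ref{prop-substitute}), with absolute simplicity of $A$ giving $\dim_\kk=1$. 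Without this (or some replacement for it), your matching of indecomposable factors does not go through.
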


In order to illustrate the interest of problem \ref{pb-main} and of our results, we have included various examples and applications all along the article. In particular,
\begin{itemize}
\item in section \ref{sec-sym} we give an approach to the computations of \cite{CHN} which does not use Dyer-Lashof operations. 
\item In section \ref{sec-bar} we generalize the functor homology computations of \cite{TouzeBar}, thereby computing many extension groups in the category of rational representations of $GL_n$.
\item In section \ref{sec-strict-vs-ord} we investigate the exponential variant of the problem\footnote{This problem is closely related to the problem of understanding the forgetful functor from polynomial representations of the group schemes $GL_{n,\Fq}$ to the representations of the finite groups $GL_n(\Fq)$. Indeed $\PP_{d,\kk}$ is a model for polynomial representations of the group scheme $GL_{n,\Fq}$ by \cite{FS}, while the target category is closely related \cite{Kuhn2} to the category of modular representation of $GL_n(\Fq)$.} of understanding the forgetful functor $\PP_{d,\kk}\to \Fct(\Proj_\kk,\Mod)$ where $\kk$ is a finite field and $\PP_{d,\kk}$ is the category homogeneous strict polynomial functors of weight $d$. We obtain surprisingly simple and exhaustive results for the exponential variant of this problem.
\item In examples \ref{ex-dualMorava} and \ref{ex-Morava} we obtain in a simple way some basic properties of the functor  $V\mapsto \overline{K}(n)_{\overline{*}} BV$ (the cyclically graded $n$-th Morava $K$-theory of a classifying space $BV$, viewed as an endofunctor of finite $\Fp$-vector spaces). These results are analogues for $p$ odd of some results of \cite{theseQuyet}.
\end{itemize}
These are only a sample of applications of the idea of an exponential functor, but there are others. For example, a variant of theorem \ref{thm-classif-ord} appears in \cite{PV} where the idea of exponential functor plays a key role to analyze Higher Hochschild Homology.

The article ends with the short section \ref{sec-counter}, which describes a family of exponential functors exhibiting some pathologies over imperfect fields (in particular this shows that the hypothesis that $\kk$ is perfect cannot be dropped in the theorems above), as well as three appendices which recall the basics of three theories which are heavily used in the article: Frobenius twists, Dieudonn\'e modules (in the graded setting, as in \cite{Schoeller}), and polynomial functors (as defined by Eilenberg and Mac Lane \cite{EML2}).

\subsection*{Some notations used in the article}

\begin{description}
\item[$\kk$] 
is a commutative ring (often a field, sometimes a perfect field). Unadorned tensor products are taken over $\kk$. If it is a prime, the characteristic of $\kk$ is denoted by $p$.
\item[$\V$] is a small additive category (often $\V$ is $\Proj_R$, the category of projective finitely generated right modules over a ring $R$). An object of $\V$ is typically denoted by $V$ or $W$.
\item[$\mathbb{N}$] is the set of nonnegative integers, $\mathbb{Q}$ is the set of rational numbers.
\item[$\Alg$] is the category of graded $\kk$-algebras. 
Similarly $\Coalg$ is the category of graded $\kk$-coalgebras, $\H$ is the category of graded Hopf $\kk$-algebras, and $\HH$ is the category of bicommutative graded Hopf algebras. In all the article, graded means `nonnegatively graded', see section \ref{subsec-Nota-graded}.
\end{description}

\subsection*{Funding}

This work was supported by the Agence Nationale de la Recherche, via the Projet ANR ChroK  [ANR-16-CE40-0003] and the labex CEMPI [ANR-11-LABX-0007-01].

\tableofcontents

\section{Definitions}\label{sec-def}
\subsection{The functor category $\mathcal{F}$}\label{subsec-setting}
All the constructions of this article involve a certain functor category $\mathcal{F}$. We consider two possibilities for $\mathcal{F}$.
\begin{enumerate}
\item\label{it-set-1} Either $\V$ is a small additive category, $\kk$ is a commutative ring and $\mathcal{F}=\Fct(\V,\Mod)$ is the category of functors with domain $\V$ and codomain $\Mod$, and natural transformations.
\item\label{it-set-2}  Or $\kk$ is a commutative ring and $\mathcal{F}=\PP_{\omega,\kk}$ is the category of strict analytic functors over $\kk$. That is,
$$\PP_{\omega,\kk}=\prod_{d\ge 0}\PP_{d,\kk}\,,$$
where $\PP_{d,\kk}$ is the category of homogeneous strict polynomial functors of weight\footnote{Here we follow the terminology adopted in \cite{TouzeBar,TouzeFund} and we use the term `weight' instead of the more commonly used term `degree' for the integer $d$. We do this because we already use the term degree for two other notions, namely for homological gradings, as well as for Eilenberg-Mac Lane degree of functors.} $d$. If $\Proj_\kk$ denotes the category of finitely generated projective $\kk$-modules, there is a (faithful) forgetful functor:
$$\U_d\;:\;\PP_{d,\kk}\to \Fct(\Proj_\kk,\Mod)\;,$$
hence strict polynomial functors can be thought of as functors with domain $\Proj_\kk$ and codomain $\Mod$, equipped with some additional structure. Strict polynomial functors were initially defined by Friedlander and Suslin \cite{FS,SFB}, here we make a slight change by allowing functors with arbitrary $\kk$-modules as codomain. Thus $\PP_{d,\kk}$ is equivalent to the category of \emph{all} modules over the Schur algebra $S(n,d)$ if $n\ge d$. Our category $\PP_{d,\kk}$ is denoted by $\mathrm{Rep}\,\Gamma^d_\kk$ in \cite{Krause}.
\end{enumerate}
An object of  $\mathcal{F}$ is called an `ordinary functor' in the first case, a `strict analytic functor' in the second case, or simply a `functor' if we don't need to distinguish between the two settings. A functor is typically denoted by $F$ or by $F(V)$ if we want to make the variable explicit. Similarly, a morphism of functors is be denoted by $f:F\to G$ or $f_V:F(V)\to G(V)$.
 
By definition, a strict analytic functor $F$ decomposes as a (possibly infinite) direct sum of homogeneous strict polynomial functors of weight $d$, for $d\ge 0$. These are the homogeneous summands of weight $d$ of $F$, and we denote them by $w_dF$:
$$F=\bigoplus_{d\ge 0} w_dF\;.$$
Note that by definition of $\PP_{\omega,\kk}$, morphisms of strict analytic functors preserve the weight decomposition. Apart from this weight decomposition (which is specific to strict analytic functors) we will mainly use the following basic constructions and properties which hold for the two possible choices for $\mathcal{F}$.

\subsubsection{Abelian and symmetric monoidal category}
The category $\mathcal{F}$ is abelian, complete and cocomplete (limits and colimits are defined objectwise, e.g. $(\ker f)(V)=\ker f_V$ for a morphism of functors $f:F\to G$), with exact filtrant colimits. 
There is a symmetric monoidal category structure $(\mathcal{F},\otimes,\kk,\tau)$, whose tensor product is defined objectwise:
$$(F\otimes G)(V)=F(V)\otimes G(V),$$
whose unit is the constant functor $\kk$, and whose symmetry operator $\tau$ is:
$$\begin{array}[t]{cccc}
\tau :&  F(V)\otimes G(V) & \to &G(V)\otimes F(V)\\
&v\otimes w & \mapsto & w\otimes v
\end{array}.
$$

\subsubsection{Additive functors}\label{subsubsec-setting-add}
A functor $F$ is additive if the canonical morphisms $F(V)\oplus F(W)\to F(V\oplus W)$ are isomorphisms for all $V$, $W$. We let
$\mathcal{F}_{\add}$ be the full subcategory of $\mathcal{F}$ supported by the additive functors. This is a localizing subcategory of $\mathcal{F}$ (stable by subobjects, quotients, extensions and colimits). In many cases of interest, the objects of $\mathcal{F}_{\add}$ are classified and under control. In good cases, $\mathcal{F}_{\add}$ has homological dimension zero, which means that all epimorphisms are split, or equivalently that all monomorphisms are split.  

To be more specific, in the case of ordinary functors with source the category $\V=\Proj_R$ of projective finitely generated modules over a ring $R$, we can control the additive functors via the Eilenberg-Watts theorem (see fact \ref{fact-EW}), which takes the following form.
\begin{lemma}\label{lm-control-ordinary}
Let $R$ be a ring and let $\FF=\Fct(\Proj_R,\Mod)$. Evaluation on $R$ yields an equivalence of categories:
$$\begin{array}[t]{ccc}
\FF_\add & \xrightarrow[]{\simeq} & {_R}\Mod\\
F & \mapsto & F(R) 
\end{array}.$$
(Its inverse sends an $(R,\kk)$-bimodule $M$ to the functor $-\otimes_R M$.) Thus, $\mathcal{F}_{\add}$ has homological dimension zero if and only if the ring $R\otimes_\mathbb{Z}\kk$ has homological dimension zero.
\end{lemma}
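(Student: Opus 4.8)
The plan is to write down the evaluation functor $\Phi\colon \FF_\add\to {_R}\Mod$, $F\mapsto F(R)$, together with its proposed inverse $\Psi\colon {_R}\Mod\to\FF_\add$, $M\mapsto(-\otimes_R M)$, and to check directly that they are mutually quasi-inverse, reducing everything to the fact that every object of $\Proj_R$ is a retract of some $R^n$. First I would make $\Phi$ precise: since $\End_R(R)=R$ acts on the right $R$-module $R$ by left multiplication, functoriality endows $F(R)$ with a left $R$-action; because a functor between additive categories preserves finite biproducts iff it is additive on hom-sets, this action is additive in $r$ and commutes with the ambient $\kk$-action, so $F(R)$ is naturally an $(R,\kk)$-bimodule, i.e.\ an object of ${_R}\Mod$ (recall $\Mod=\Mod_\kk$). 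Conversely $-\otimes_R M$ is additive with values in $\Mod_\kk$, hence lies in $\FF_\add$, and the composite $\Phi\Psi$ is the identity up to the canonical isomorphism $R\otimes_R M\cong M$, which one checks respects both actions.

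The heart of the matter is the natural isomorphism $\Psi\Phi\cong\Id$, i.e.\ $F\cong(-\otimes_R F(R))$. For $v\in V$ write $v_*\colon R\to V$, $r\mapsto vr$, for the associated morphism of right $R$-modules, and set
$$\eta_V\colon V\otimes_R F(R)\to F(V),\qquad v\otimes x\mapsto F(v_*)(x).$$
I would verify that $\eta_V$ is $R$-balanced --- this is exactly the compatibility of the left action on $F(R)$ with the tensor relation, using $(vr)_*=v_*\circ\lambda_r$ (with $\lambda_r\colon s\mapsto rs$) and $F(\lambda_r)(x)=rx$ --- and natural in $V$, using $(f(v))_*=f\circ v_*$ for right $R$-linear $f$.

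It then remains to see $\eta$ is an isomorphism, and here the restriction to projectives enters. For $V=R$ the map $\eta_R$ is literally the canonical isomorphism $R\otimes_R F(R)\cong F(R)$. As both $V\mapsto V\otimes_R F(R)$ and $V\mapsto F(V)$ are additive and $\eta$ is natural, one has $\eta_{V\oplus W}\cong\eta_V\oplus\eta_W$ for every biproduct; in particular $\eta_{R^n}$ is an isomorphism. Finally any $V\in\Proj_R$ is a retract of some $R^n$, say $V\oplus W\cong R^n$, so $\eta_V\oplus\eta_W$ is conjugate to $\eta_{R^n}$ and hence an isomorphism; since a block-diagonal isomorphism has block-diagonal inverse, each $\eta_V$ is an isomorphism. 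This yields the equivalence $\FF_\add\simeq{_R}\Mod$.

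For the last assertion I would identify ${_R}\Mod$, i.e.\ $(R,\kk)$-bimodules, with left modules over $R\otimes_\Z\kk$, merging the two commuting actions (here $\kk$ commutative). An equivalence of abelian categories preserves exactness and hence homological dimension, so $\FF_\add$ has homological dimension zero iff $(R\otimes_\Z\kk)$-$\mathrm{Mod}$ does, i.e.\ iff $R\otimes_\Z\kk$ is semisimple --- precisely the assertion that $R\otimes_\Z\kk$ has homological dimension zero. The only real obstacle is bookkeeping: keeping the left/right conventions consistent so that the left action on $F(R)$ from functoriality matches the action used in $V\otimes_R F(R)$; once $\eta$ is correctly defined the reduction from $\Proj_R$ to $R$ is formal. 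Alternatively one could invoke the Eilenberg--Watts theorem (fact \ref{fact-EW}) directly, noting that additive functors on $\Proj_R$ extend uniquely to right-exact, coproduct-preserving functors on $\Mod_R$.
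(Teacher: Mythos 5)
Your proof is correct. Note, though, that the paper does not actually prove this lemma in situ: it simply invokes the Eilenberg--Watts theorem, recorded as fact \ref{fact-EW} in appendix \ref{app-Fct}, whose justification there is the abstract one --- an object of ${_R}\Mod$ is a functor from the one-object category $*_R$ to $\Mod$, and such a functor extends uniquely (up to isomorphism) to an additive functor on the additive hull and idempotent completion $(*_R)_\oplus^\natural\simeq\Proj_R$. Your argument unwinds that universal-property statement into an explicit unit/counit check: you construct the natural transformation $\eta_V\colon V\otimes_R F(R)\to F(V)$ by hand, verify it is balanced and natural, and prove it is an isomorphism by the standard reduction ($\eta_R$ is the canonical isomorphism, additivity handles $R^n$, and passage to retracts handles arbitrary $V\in\Proj_R$, using that a block-diagonal isomorphism has block-diagonal inverse). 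This buys concreteness and self-containedness --- in particular it makes visible exactly where projectivity and additivity are used --- at the cost of some bookkeeping that the universal-property formulation hides. The identification of $(R,\kk)$-bimodules with left $R\otimes_\Z\kk$-modules and the transport of the splitting property along the equivalence in your last paragraph match what the paper intends for the final assertion. No gaps.
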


One of the most elementary case where $R\otimes_\mathbb{Z}\kk$ has homological dimension zero, which is also one of the most important for our applications (especially section \ref{sec-strict-vs-ord}), is detailed in the following example.
\begin{example}\label{ex-Fadd-Fq}
If $R=\kk=\Fq$ is a field with $q=p^\ell$ elements ($p$ is a prime), then $\mathcal{F}_{\add}$ has homological dimension zero. Since $\Fq\otimes_\Z\Fq$ has finite dimension and is absolutely semisimple, one can even say more. Any object of $\mathcal{F}_{\add}$ decomposes as a direct sum of Frobenius twists functors $I^{(i)}$, $0\le i<\ell$ defined in appendix \ref{subsec-app-VF-norm} (note that $I^{(i)}\simeq I^{(j)}$ if and only if $j=i\mod \ell$ because $x^q=x$ in $\Fq$). Moreover, the Frobenius twist functors have endomorphism rings isomorphic to $\Fq$.
\end{example}

If $\FF=\PP_{\omega,\kk}$, the additive functors are classified in \cite[Section 3.1]{TouzeFund}. We recall the case where $\kk$ is a field, which is the most important for the purposes of this article.
\begin{lemma}\label{lm-control-strict}  
Assume that $\kk$ is a field, and let $\FF=\PP_{\omega,\kk}$. Then $\FF_\add$ has homological dimension zero. 

If $\kk$ has characteristic zero, then any additive strict analytic functor is isomorphic to a direct sum of copies of the functor $I=S^1=\Lambda^1=\Gamma^1$. 

If $\kk$ has positive characteristic $p$, let $I^{(r)}$ be the $r$-th Frobenius twist functor for $r\ge 0$. It is the unique additive functor of weight $p^r$ such that $I^{(r)}(\kk)$ has dimension $1$. In particular, the Frobenius twists are simple and have endomorphism rings isomorphic to $\kk$ (see appendix \ref{subsec-app-VF-strict} for a more explicit definition and further details). Then any additive strict analytic functor decomposes as a direct sum of copies of Frobenius twists. 
\end{lemma}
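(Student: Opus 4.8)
The plan is to reduce the statement to the classification of additive functors of a single fixed weight, and then to control those through their weights for the diagonal torus.

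\emph{Reduction to the homogeneous case.} Since $\PP_{\omega,\kk}=\prod_{d\ge 0}\PP_{d,\kk}$, every morphism preserves weights; in particular the canonical comparison morphism $F(V)\oplus F(W)\to F(V\oplus W)$, being the copairing of the images under $F$ of the inclusions of $V$ and of $W$ into $V\oplus W$, is weight-homogeneous, and its weight-$d$ component is exactly the comparison morphism for $w_dF$. Hence $F$ is additive if and only if each homogeneous summand $w_dF$ is additive, and it suffices to classify the additive functors lying in each $\PP_{d,\kk}$. Because $\Ext$-groups in a product category decompose as products with no cross-terms between distinct weights, and because $\FF_\add$ is closed under extensions in $\FF$ (section \ref{subsubsec-setting-add}), the global assertions — the direct-sum decomposition and homological dimension zero — will follow by assembling the weightwise statements.

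\emph{Weights of an additive functor.} Fix $F$ additive and homogeneous of weight $d$, and evaluate on $\kk^n$ written as the sum of its $n$ coordinate lines. Additivity yields $F(\kk^n)\cong\bigoplus_{i=1}^n F(\kk\,e_i)$, the $i$-th summand being the image of $F(\kk)$ under the $i$-th coordinate inclusion. As $F(\kk)$ is homogeneous of weight $d$ it carries the $GL_1$-weight $d$, so every weight of $F(\kk^n)$ for the diagonal torus $T\subset GL_n$ is one of the extreme weights $d\cdot e_i$. Since the weight-space functor is exact, the weights of $F$ are exactly the weights of its composition factors; as the highest weight of any factor occurs in $F$ and is dominant, the only simple functor that can occur is $L(d)$. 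A standard weight computation — directly for $S^d$ in characteristic zero, via the Steinberg tensor product theorem in characteristic $p$, or from the classification of \cite[Section 3.1]{TouzeFund} — shows that $L(d)$ has only the extreme weights $d\cdot e_i$ precisely when $d=1$ in characteristic zero and precisely when $d=p^r$ in characteristic $p$, where $L(p^r)=I^{(r)}$ is the Frobenius twist of appendix \ref{subsec-app-VF-strict}. For any other $d$, the functor $L(d)$ has a non-extreme weight, which would then be a weight of $F$; this contradiction forces $F=0$. This identifies $I^{(r)}$ (resp. $I=S^1=\Lambda^1=\Gamma^1$) as the unique simple additive functor of weight $p^r$ (resp. of weight $1$), one-dimensional on $\kk$ and with endomorphism ring $\kk$.

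\emph{Semisimplicity.} It remains to show that an arbitrary additive functor of weight $p^r$ is a direct sum of copies of $I^{(r)}$, equivalently that $\Ext^1_{\PP_{p^r,\kk}}(I^{(r)},I^{(r)})=0$. This vanishing is where the genuine input lies, since the ambient categories $\PP_{d,\kk}$ are far from semisimple in positive characteristic: the weight analysis pins down the composition factors for free, but ruling out nonsplit additive self-extensions requires real content. It follows from the computation of $\Ext^*_{\PP}(I^{(r)},I^{(r)})$ recalled in appendix \ref{subsec-app-VF-strict} (originally \cite{FS}), whose odd part, and in particular its degree-one part, vanishes. Given this, every weight-$p^r$ additive functor, having all composition factors equal to $I^{(r)}$ and no nonsplit self-extensions, is semisimple, so that evaluation at $\kk$ identifies the category of weight-$p^r$ additive functors with $\kk$-vector spaces, sending $I^{(r)}$ to $\kk$. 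Assembling over all weights and using the product structure of $\PP_{\omega,\kk}$ yields both the decomposition into Frobenius twists and the vanishing of all $\Ext^1$ between additive functors, i.e. homological dimension zero. In characteristic zero the conclusion is immediate, each $\PP_{d,\kk}$ being already semisimple, so every weight-$1$ functor is a sum of copies of $I$ and no other weight contributes.
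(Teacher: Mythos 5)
Your argument is mathematically sound, but note that the paper itself does not prove this lemma: it delegates entirely to \cite[Section 3.1]{TouzeFund} and merely restates the conclusion over a field. So you are supplying an actual proof where the paper supplies a citation. Your route --- reduce to a single weight $d$, use additivity to show that the diagonal torus acts on $F(\kk^n)\simeq\bigoplus F(\kk e_i)$ only through the extreme weights $d\cdot e_i$, deduce via highest-weight theory and Steinberg that the only possible composition factor is $L(d)$ and that $L(d)$ is extreme-weighted only for $d=1$ (char.\ $0$) or $d=p^r$ (char.\ $p$), and then kill self-extensions --- is a perfectly standard and correct way to obtain the classification, and it has the virtue of isolating exactly where the nontrivial input sits: the vanishing of $\Ext^1_{\PP_{p^r,\kk}}(I^{(r)},I^{(r)})$. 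That vanishing is genuinely necessary (since $\FF_\add$ is closed under extensions in $\FF$, a nonsplit additive self-extension of $I^{(r)}$ would defeat both conclusions), and it is correctly attributed in substance to Friedlander--Suslin, whose computation gives $\Ext^*_{\PP}(I^{(r)},I^{(r)})$ concentrated in even degrees.

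Two small repairs. First, your pointer is off: appendix \ref{subsec-app-VF-strict} recalls only the definition of $I^{(r)}$ as $\mathrm{Im}(\Gamma^{p^r}\to S^{p^r})$ and fact \ref{fact-thickF}; it does not recall the $\Ext$-computation, so you must cite \cite{FS} (or \cite{FFSS}) directly rather than the appendix. Second, the step from ``all composition factors are $I^{(r)}$ and $\Ext^1(I^{(r)},I^{(r)})=0$'' to semisimplicity deserves a word for objects of infinite length: since $\PP_{p^r,\kk}$ is equivalent to modules over a finite-dimensional Schur algebra, $I^{(r)}$ is compact, so $\Ext^1(I^{(r)},-)$ commutes with arbitrary direct sums and the usual socle argument applies. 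With those two sentences added, the proof is complete; likewise, once every object of $\FF_\add$ is semisimple, every subobject is a direct summand and the homological-dimension-zero statement follows as you say.
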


\begin{remark}
If $\kk$ is finite field, there is a strong similarity between the categories of additive functors of $\Fct(\PP_\kk,\Mod)$ and $\PP_{\omega,\kk}$. This similarity will be exploited in section \ref{sec-strict-vs-ord}.
\end{remark} 
 
\subsection{Notations for graded objects, Hopf monoids, etc.}\label{subsec-Nota-graded}
Let $(\C,\otimes,\kk,\tau)$ be a symmetric monoidal category, such that $\C$ is additive with all colimits and $\otimes$ commute with colimits in each argument. 
Our two main examples are $(\Mod,\otimes,\kk,\tau)$ and the functor category $(\FF,\otimes,\kk,\tau)$.

A graded object in $\C$ is an object of $\C$ equipped with a decomposition $X\simeq\bigoplus_{i\in\mathbb{N}}X^i$, and a morphism of graded objects $f:X\to Y$ is a morphism of $\C$ preserving the decompositions. We let $\C^*$ be the category of graded objects of $\C$ and their morphisms. The monoidal product $\otimes$ induces a symmetric monoidal product on $\A^*$ as usual: a summand $X^i\otimes Y^j$ of $X\otimes Y$ is placed in degree $i+j$, and the graded symmetry morphism:
$$\tau^*: X^i\otimes Y^j\to Y^j\otimes X^i$$
is defined by the Koszul rule, i.e. it equals the symmetry morphism of $\A$ up to a $(-1)^{ij}$ sign. We can define monoids, comonoids, bimonoids and Hopf monoids in $\A^*$ as usual \cite[Chap. 1]{AM}. We use the following notations:
\begin{itemize}
\item $\C-\Alg$ is the category of monoids in $\C^*$,
\item $\C-\Coalg$ is the category of comonoids in $\C^*$,  
\item $\C-\H$ is the category of Hopf monoids in $\C^*$,
\item $\C-\HH$ is the category of bicommutative Hopf monoids in $\C^*$.
\end{itemize}
The notations are suggested by the case $\C=\Mod$, where these categories are the categories of graded $\kk$-algebras, graded $\kk$-coalgebras, graded Hopf $\kk$-algebras, and graded bicommutative Hopf $\kk$-algebras. In order to avoid cumbersome notations, we will simply denote these categories by $\Alg$, $\Coalg$, $\H$ and $\HH$ when $\C=\Mod$.

If $\C=\FF$, we often refer to these categories as the categories of \emph{functorial} graded $\kk$-algebras, \emph{functorial} graded $\kk$-coalgebras and so on. This terminology is inspired by the case $\FF=\Fct(\V,\Mod)$, where (co)monoids in $\FF$ can be equivalently described as functors from $\V$ to graded $\kk$-(co)algebras.  For example, the category $\FF-\Alg$ identifies with the category $\Fct(\V,\Alg)$.

We usually don't indicate the grading in the notation, because unless explicitly specified, we consider graded objects. Instead, if we need to consider ungraded objects, we use a `$^0$' superscript (ungraded objects are identified with objects concentrated in degree zero). For example, the category of ungraded Hopf monoids in $\C$ is denoted by $\C-\H^0$. More generally, we use a variety of superscripts to indicate conditions on degrees: 
\begin{itemize}
\item `$^0$' is the full subcategory of objects concentrated in degree zero,
\item `$^\even$' is the full subcategory of objects concentrated in even degrees,
\item `$^\conn$' is the full subcategory of connected objects, i.e. which are equal to $\kk$ in degree zero,
\item `$^+$' is the full subcategory of objects which are connected and concentrated in even degrees.
\end{itemize}

\subsection{Exponential functors and functorial (co)algebras}
Let $\FF$ be either $\Fct(\V,\Mod)$ or $\PP_{\omega,\kk}$, as specified in section \ref{subsec-setting}.

\begin{definition}\label{def-exp}
An exponential functor is a graded functor $E$ (i.e. an object $E$ of $\FF^*$), equipped with an isomorphism $\phi$ of graded bifunctors and  an isomorphism $u$ of graded $\kk$-modules ($\kk$ is placed in degree zero):
$$\phi_{V,W}: E(V)\otimes E(W)\xrightarrow[]{\simeq} E(V\oplus W)\;,\qquad u:\kk\xrightarrow[]{\simeq} E(0)\;,$$
such that $\phi$ and $\eta$ are associative and unital in the obvious sense. A morphism of exponential functors $(E,\phi,u)\to (E',\phi,u)$ is a morphism of graded functors $f:E\to E'$ commuting with  $\phi$ and $u$. We denote by $\mathcal{F}-\Exp$ the category of exponential functors.
\end{definition}

\begin{remark}
An ordinary exponential functor is nothing but a strong monoidal functor from $(\V,\oplus,0)$ to $(\mathrm{Mod}_\kk^*,\otimes,\kk)$. All the results of section \ref{sec-def} except lemmas \ref{lm-precision-defbis} and \ref{lm-stabilite} remain valid if the target category is replaced by an arbitrary symmetric monoidal category. However we have chosen to stick to $(\mathrm{Mod}_\kk^*,\otimes,\kk)$ in order to treat at the same time the case of strict exponential functors. Indeed, the latter are some kind of enriched strong monoidal functors, and a uniform treatment of the two cases with a general target category would require to introduce a heavier categorical framework, unnecessary for the applications we have in mind in this article.
\end{remark}

Given an exponential functor $(E,\phi,\eta)$, we construct morphisms of graded functors $\mu$, $\eta$, $\Delta$, $\epsilon$, $\chi$ as follows. If $\delta_V:V\to V\oplus V$ is the diagonal map and $\sigma_V:V\oplus V\to V$ is the fold map, then we let
\begin{align*}
&\mu_V:=E(V)\otimes E(V)\xrightarrow[]{\phi_{V,V}}E(V\oplus V)\xrightarrow[]{E(\sigma_V)}E(V)\;,&&&&\\&\eta_V:=\kk\xrightarrow[]{u}E(0)\xrightarrow[]{E(0)} E(V)\;,\\
&\Delta_V:=E(V)\xrightarrow[]{E(\delta_V)} E(V\oplus V)\xrightarrow[]{\phi_{V,V}^{-1}}E(V)\otimes E(V)\;,&&&&\\&\epsilon_V:=E(V)\xrightarrow[]{E(0)} E(0)\xrightarrow[]{u^{-1}}\kk\;,\\
&\chi_V:=E(V)\xrightarrow[]{E(-\Id_V)} E(V)\;.
\end{align*}
It is immediate to check that $(E,\mu,\eta)$ is a functorial graded algebra and that sending an exponential functor $(E,\phi,u)$ to $(E,\mu,\eta)$ yields a functor
$$\Theta_{\Alg}:\, \mathcal{F}-\Exp\to \mathcal{F}-\Alg\;.$$
\begin{lemma}\label{lm-defbis}
The functor $\Theta_{\Alg}$ is fully faithful. 
Moreover $(A,\mu,\eta)$ lies in the image of $\Theta_{\Alg}$ if and only if the following conditions are satisfied. 
\begin{enumerate}
\item[(A1)] The $\kk$-linear map $\eta_0:\kk\to A(0)$ is an isomorphism.
\item[(A2)] For all $V$, $W$ the following composite is an isomorphism (where $\iota_V$ and $\iota_W$ are the canonical inclusions of $V$ and $W$ into $V\oplus W$) 
$$A(V)\otimes A(W)\xrightarrow[]{A(\iota_V)\otimes A(\iota_W)} A(V\oplus W)^{\otimes 2}\xrightarrow[]{\mu_{V\oplus W}} A(V\oplus W)\;.$$
\end{enumerate}

\end{lemma}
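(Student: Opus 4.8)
The plan is to read conditions (A1) and (A2) as an explicit recipe for reconstructing the coherence data $(\phi,u)$, and then to show that this data is forced by the algebra structure, so that nothing is lost by $\Theta_{\Alg}$. The one computation that drives everything is the observation that the composite appearing in (A2) is exactly $\phi_{V,W}$. Indeed, for an exponential functor, naturality of $\phi$ (as a morphism of bifunctors) rewrites $\phi_{V\oplus W,V\oplus W}\circ(E(\iota_V)\otimes E(\iota_W))$ as $E(\iota_V\oplus\iota_W)\circ\phi_{V,W}$, and then the biproduct identity $\sigma_{V\oplus W}\circ(\iota_V\oplus\iota_W)=\mathrm{Id}_{V\oplus W}$ gives
$$\mu_{V\oplus W}\circ(E(\iota_V)\otimes E(\iota_W))=E\big(\sigma_{V\oplus W}\circ(\iota_V\oplus\iota_W)\big)\circ\phi_{V,W}=\phi_{V,W}.$$
In particular $\phi$ is determined by $\mu$ and the functoriality of $E$. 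This already settles the necessity of the two conditions: $\eta_0=u$ is an isomorphism, and the (A2)-composite equals the isomorphism $\phi_{V,W}$.

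For full faithfulness, faithfulness is automatic because $\Theta_{\Alg}$ leaves the underlying natural transformation unchanged. For fullness I would take a morphism $f$ of functorial algebras and verify it commutes with $\phi$ and $u$. Compatibility with $u=\eta_0$ is just the $V=0$ instance of compatibility with $\eta$. Compatibility with $\phi$ follows by substituting the (A2)-formula for $\phi$ and chasing: since $f$ commutes with $\mu$ and is natural, so that $f_{V\oplus W}\circ E(\iota_V)=E'(\iota_V)\circ f_V$ and likewise for $W$, one obtains $f_{V\oplus W}\circ\phi_{V,W}=\phi'_{V,W}\circ(f_V\otimes f_W)$.

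For sufficiency, given $(A,\mu,\eta)$ satisfying (A1) and (A2), I would define $u:=\eta_0$ and $\phi_{V,W}:=\mu_{V\oplus W}\circ(A(\iota_V)\otimes A(\iota_W))$, which are isomorphisms by the two conditions. Naturality of $\phi$ in each variable follows from naturality of $\mu$ together with the identities $(f\oplus g)\circ\iota_V=\iota_{V'}\circ f$. It then remains to check that $\Theta_{\Alg}(A,\phi,u)=(A,\mu,\eta)$, which is a short computation: $A(\sigma_V)\circ\phi_{V,V}=\mu_V\circ(A(\sigma_V\iota_1)\otimes A(\sigma_V\iota_2))=\mu_V$ using $\sigma_V\iota_1=\sigma_V\iota_2=\mathrm{Id}_V$ and naturality of $\mu$, while $A(0\to V)\circ\eta_0=\eta_V$ by naturality of $\eta$.

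The main obstacle is verifying that $(\phi,u)$ is associative and unital, i.e. translating the monoid axioms for $\mu$ into the coherence axioms of a strong monoidal functor on $(\V,\oplus,0)$. I would prove associativity by expanding both $\phi_{V\oplus W,U}\circ(\phi_{V,W}\otimes\mathrm{Id})$ and $\phi_{V,W\oplus U}\circ(\mathrm{Id}\otimes\phi_{W,U})$ through the defining formula, pushing all inclusion maps to the outside via naturality of $\mu$, and reducing to the associativity of $\mu$ on $A(V\oplus W\oplus U)$ together with the evident relations among the canonical inclusions; unitality is the same kind of reduction, now using (A1) and the unit axiom for $\mu$. These are routine but somewhat lengthy diagram chases in the biproduct category $\V$, and they are the only genuinely technical part of the argument.
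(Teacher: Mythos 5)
Your proposal is correct and follows essentially the same route as the paper: the central observation in both is that the (A2)-composite equals $\phi_{V,W}$ (via naturality of $\phi$ and the identity $\sigma_{V\oplus W}\circ(\iota_V\oplus\iota_W)=\mathrm{Id}$), which yields an explicit inverse construction $(A,\mu,\eta)\mapsto(A,\phi,\eta_0)$ and the two identities $\Phi\circ\Theta_{\Alg}=\Id$ and $\Theta_{\Alg}\circ\Phi=\Id$. If anything you are more careful than the paper, which asserts without comment that $(A,\phi,\eta_0)$ is an exponential functor, whereas you flag the associativity and unitality of $\phi$ as the remaining (routine) verification.
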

\begin{proof}  
Let $\C$ be the full subcategory of $\mathcal{F}-\Alg$ supported by the functorial algebras satisfying (A1) and (A2). If $(A,\mu,\eta)$ is an object of $\C$, let $\phi$ be the composite map appearing in condition (A2). Then $(A,\phi,\eta_0)$ is an exponential functor, and we have a functor:
$$\begin{array}[t]{cccc}
\Phi: &\C&\to &\mathcal{F}-\Exp\\
& (A,\mu,\eta)&\mapsto & (A,\phi,\eta_0)
\end{array}
\;.$$
If $(E,\phi,u)$ is an exponential functor, we have a commutative diagram:
$$\xymatrix{ 
E(V)\otimes E(W)\ar[d]^-{\phi_{V,W}}\ar[rr]^-{E(\iota_V)\otimes E(\iota_{W})}&& E(V\oplus W)^{\otimes 2}\ar[d]^-{\phi_{V\oplus W,V\oplus W}}\ar[rrd]^-{\mu_{V\oplus W}} &\\
E(V\oplus W)\ar[rr]^-{E(\iota_V\oplus\iota_{W})}\ar@/_1pc/[rrrr]_-{\Id} && E(V\oplus W\oplus V\oplus W)\ar[rr]^-{E(\sigma_{V\oplus W})}&& E(V\oplus W)
}.$$
Let $(E,\mu,\eta)=\Theta_{\Alg}(E,\phi,u)$. Then the commutativity of the outer diagram means that for $(E,\mu,\eta)$ the composition of condition (A2) equals $\phi_{V,W}$, hence it is an isomorphism. Thus the image of $\Theta_{\Alg}$ is contained in $\C$ and $\Phi\circ \Theta_{\Alg}=\Id$. 
Conversely, given $(E,\mu,\eta)$ in $\C$, the diagram
$$\xymatrix{ 
&& E(V\oplus V)\ar[rr]^-{E(\sigma_V)}&& E(V) &\\
E(V)^{\otimes 2}\ar[rru]^-{\phi_{V,V}}\ar[rr]^-{E(\iota_V)^{\otimes 2}}\ar@/_1pc/[rrrr]_-{\Id} && E(V\oplus V)^{\otimes 2}\ar[u]^-{\mu_{V\oplus W}}\ar[rr]^-{E(\sigma_{V})^{\otimes 2}}&& E(V)^{\otimes 2}\ar[u]^-{\mu_V}
}$$
commutes. The commutativity of the outer diagram means that the composite $\Theta_{\Alg}\circ\Phi$ is the identity. 
\end{proof}

Similarly $(E,\Delta,\epsilon)$ is a functorial graded coalgebra and sending an exponential functor $(E,\phi,u)$ to $(E,\Delta,\epsilon)$ yields a functor:
$$\Theta_{\Coalg}:\, \mathcal{F}-\Exp\to \mathcal{F}-\Coalg\;.$$ 
The proof of the next lemma is similar to the proof of lemma \ref{lm-defbis} and is omitted.
\begin{lemma}\label{lm-defbisbis}
The functor $\Theta_{\Coalg}$ is fully faithful. 
Moreover $(C,\Delta,\epsilon)$ lies in the image of $\Theta_{\Coalg}$ if and only if the following conditions are satisfied. 
\begin{enumerate}
\item[(C1)] The $\kk$-linear map $\epsilon_0:C(0)\to \kk$ is an isomorphism.
\item[(C2)] For all $V$, $W$ the following composite is an isomorphism (where $p_V$ and $p_W$ are the canonical projections of $V\oplus W$ onto $V$ and $W$) 
$$C(V\oplus W)\xrightarrow[]{\Delta_{V\oplus W}} C(V\oplus W)^{\otimes 2}\xrightarrow[]{C(p_V)\otimes C(p_W)}C(V)\otimes C(W)\;.$$
\end{enumerate}
\end{lemma}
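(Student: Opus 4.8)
The statement is the precise dual of Lemma \ref{lm-defbis}, and the plan is to run exactly the same argument with all arrows reversed, replacing the multiplication $\mu$, unit $\eta$, inclusions $\iota_V,\iota_W$ and fold maps $\sigma_V$ by the comultiplication $\Delta$, counit $\epsilon$, projections $p_V,p_W$ and diagonals $\delta_V$ respectively.

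First I would let $\C$ be the full subcategory of $\FF-\Coalg$ supported by the functorial coalgebras satisfying (C1) and (C2). Given $(C,\Delta,\epsilon)$ in $\C$, the composite in (C2) is by hypothesis an isomorphism, so I define $\phi_{V,W}$ to be its inverse and $u:=\epsilon_0^{-1}$ (an isomorphism by (C1)). A routine verification, dual to the one tacitly used in Lemma \ref{lm-defbis} and translating the coassociativity and counit axioms of $(C,\Delta,\epsilon)$ into the associativity and unitality of $\phi$, shows that $(C,\phi,u)$ is an exponential functor. This produces a functor $\Psi:\C\to\FF-\Exp$.

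Next I would check that the image of $\Theta_{\Coalg}$ is contained in $\C$ and that $\Psi\circ\Theta_{\Coalg}=\Id$. For an exponential functor $(E,\phi,u)$ with $(E,\Delta,\epsilon)=\Theta_{\Coalg}(E,\phi,u)$, condition (C1) is immediate since $\epsilon_0=u^{-1}$. For (C2) I would dualize the first commutative diagram in the proof of Lemma \ref{lm-defbis}: since $\Delta_{V\oplus W}=\phi_{V\oplus W,V\oplus W}^{-1}\circ E(\delta_{V\oplus W})$, naturality of $\phi$ gives $(E(p_V)\otimes E(p_W))\circ\phi_{V\oplus W,V\oplus W}^{-1}=\phi_{V,W}^{-1}\circ E(p_V\oplus p_W)$, and the identity $(p_V\oplus p_W)\circ\delta_{V\oplus W}=\Id_{V\oplus W}$ then collapses the (C2) composite to $\phi_{V,W}^{-1}$, which is an isomorphism. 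This simultaneously shows that $\Psi$ recovers the original $\phi$ and $u$, i.e. $\Psi\circ\Theta_{\Coalg}=\Id$.

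Finally I would verify $\Theta_{\Coalg}\circ\Psi=\Id$ by dualizing the second commutative diagram. Starting from $(C,\Delta,\epsilon)$ in $\C$, the reconstructed comultiplication is $\Delta'_V=(C(p_V)\otimes C(p_V))\circ\Delta_{V\oplus V}\circ C(\delta_V)$, and naturality of $\Delta$ as a morphism of functors together with $p_V\circ\delta_V=\Id_V$ yields $\Delta'_V=\Delta_V$; likewise the reconstructed counit $\epsilon'_V=\epsilon_0\circ C(0_V)$ equals $\epsilon_V$ by naturality of $\epsilon$. Hence $\Psi$ and $\Theta_{\Coalg}$ are mutually inverse equivalences between $\FF-\Exp$ and $\C$, so $\Theta_{\Coalg}$ is fully faithful with image $\C$. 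I expect no genuine obstacle: the whole argument is a formal dualization, and the only point requiring slight care is keeping the directions of the naturality squares for $\phi$, $\Delta$ and $\epsilon$ consistent and confirming that the coalgebra axioms deliver the associativity and unitality of $\phi$.
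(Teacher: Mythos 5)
Your proposal is correct and is exactly the argument the paper intends: the paper omits this proof with the remark that it is "similar to the proof of lemma \ref{lm-defbis}", and your formal dualization (defining $\phi_{V,W}$ as the inverse of the (C2) composite, then checking the two compositions are identities via the dualized commutative diagrams) is that same argument carried out in detail. The key simplifications you use — $(p_V\oplus p_W)\circ\delta_{V\oplus W}=\Id$ and $\epsilon_0=u^{-1}$ — are both correct.
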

We can actually weaken the conditions appearing in lemmas \ref{lm-defbis} and \ref{lm-defbisbis}.

\begin{lemma}\label{lm-precision-defbis}
Condition (A1) in lemma \ref{lm-defbis} can be replaced by the weaker condition (A1'): the $\kk$-linear map $\eta_0:\kk\to A(0)$ has a retract. Similarly, condition (C1) in lemma \ref{lm-defbisbis} can be replaced by the weaker condition (C1'): the $\kk$-linear map $\epsilon_0:C(0)\to \kk$ has a section.
In particular, if $\kk$ is a field and $A(0)\ne 0$, conditions (A1) and (C1) are superfluous.
\end{lemma}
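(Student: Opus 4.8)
The plan is to reduce the statement to a question about the single graded algebra $A(0)$ (resp.\ coalgebra $C(0)$) by specialising the conditions of Lemmas~\ref{lm-defbis} and~\ref{lm-defbisbis} to $V=W=0$, and then to prove the purely algebraic fact that an algebra with invertible multiplication and split unit already has an invertible unit.

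First I would observe that taking $V=W=0$ in condition~(A2) trivialises the functoriality: since the inclusion $\iota_0\colon 0\to 0\oplus 0=0$ is the identity, the map $A(\iota_0)$ is the identity and the composite of~(A2) reduces to the multiplication $\mu_0\colon A(0)\otimes A(0)\to A(0)$. Hence~(A2) already forces $\mu_0$ to be an isomorphism. The whole statement for algebras therefore follows from the following elementary claim: if $C$ is an associative unital graded $\kk$-algebra whose multiplication $m\colon C\otimes C\to C$ is an isomorphism and whose unit $u\colon\kk\to C$ admits a $\kk$-linear retract, then $u$ is an isomorphism (applied to $C=A(0)$, $m=\mu_0$, $u=\eta_0$); for then~(A1$'$) together with~(A2) implies~(A1), while the converse implication~(A1)$\Rightarrow$(A1$'$) is trivial.

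To prove the claim, I would exploit the unit axioms. Up to the unit isomorphisms they read $m\circ(u\otimes\mathrm{id}_C)=\mathrm{id}_C$ and $m\circ(\mathrm{id}_C\otimes u)=\mathrm{id}_C$, so, $m$ being invertible, both $u\otimes\mathrm{id}_C$ and $\mathrm{id}_C\otimes u$ are isomorphisms. Let $r\colon C\to\kk$ be a retract, i.e.\ $r\circ u=\mathrm{id}_\kk$. Then $(\mathrm{id}_C\otimes r)\circ(\mathrm{id}_C\otimes u)=\mathrm{id}_C\otimes(r\circ u)=\mathrm{id}_C$, so $\mathrm{id}_C\otimes r$ is the inverse of the isomorphism $\mathrm{id}_C\otimes u$ and is itself an isomorphism. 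Consequently the composite
$$C\xrightarrow{\,u\otimes\mathrm{id}_C\,}C\otimes C\xrightarrow{\,\mathrm{id}_C\otimes r\,}C$$
is an isomorphism. By the interchange law this composite equals $u\otimes r$, which under the unitor identifications is exactly $u\circ r$. But $u\circ r$ is idempotent, since $(u\circ r)\circ(u\circ r)=u\circ(r\circ u)\circ r=u\circ r$, and an idempotent endomorphism that is invertible equals the identity. Hence $u\circ r=\mathrm{id}_C$, which together with $r\circ u=\mathrm{id}_\kk$ shows that $u$ and $r$ are mutually inverse, so $u=\eta_0$ is an isomorphism.

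The coalgebra case is strictly dual: taking $V=W=0$ in~(C2) and using that $p_0\colon 0\oplus 0\to 0$ is the identity shows that $\Delta_0\colon C(0)\to C(0)\otimes C(0)$ is an isomorphism, and a section $s$ of $\epsilon_0$ plays the role of $r$; the counit axioms make $\epsilon_0\otimes\mathrm{id}_C$ and $\mathrm{id}_C\otimes\epsilon_0$ isomorphisms, and the invertible idempotent $s\circ\epsilon_0$ must be the identity, so $\epsilon_0$ is an isomorphism. Finally, when $\kk$ is a field and $A(0)\ne 0$ the unit $\eta_0$ is nonzero (otherwise $1_{A(0)}=0$ and $A(0)=0$), hence injective with one-dimensional source, hence split as a $\kk$-linear map; thus~(A1$'$) holds automatically and~(A1) becomes superfluous, and dually for~(C1$'$) and~(C1). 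The only real point—the main obstacle—is spotting that the functorial hypothesis collapses at $V=W=0$ to invertibility of $\mu_0$ (resp.\ $\Delta_0$), after which the invertible-idempotent trick does all the work.
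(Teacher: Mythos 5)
Your proof is correct, and it shares with the paper the one essential observation: specializing condition (A2) to $V=W=0$ forces $\mu_0$ to be an isomorphism, after which everything is a statement about the single algebra $A(0)$. Where you diverge is in the linear algebra that follows. The paper uses the retract to split $A(0)\simeq \kk\oplus S$, notes that the unit axiom makes $\mu_0$ restricted to the summand $\kk\otimes A(0)$ already surjective onto $A(0)$, and deduces from the injectivity of $\mu_0$ that $S\otimes A(0)=0$, hence $S=0$ because $\kk$ is a direct summand of $A(0)$. You instead never decompose $A(0)$: you observe that the unit axioms together with the invertibility of $\mu_0$ make $\eta_0\otimes\Id$ and $\Id\otimes\eta_0$ isomorphisms, that the retract $r$ is then forced to be the inverse of $\eta_0$ on the second factor, and that $\eta_0\circ r$ is an invertible idempotent, hence the identity. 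Both arguments are elementary and of comparable length; yours has the mild aesthetic advantage of being visibly self-dual (the coalgebra case is obtained by reversing arrows verbatim), whereas the paper's direct-summand argument reads slightly differently in the two cases. Your handling of the final assertion over a field and your dualization to (C1$'$) are also correct.
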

\begin{proof}
We prove that (A1') implies (A1), the proof that (C1') implies (C1) is similar.
The map $\eta_0$ and its retract induce an isomorphism $\kk\oplus S\simeq A(0)$. Since $\eta_0$ is a unit for $A(0)$ the composite map
$$\kk\otimes A(0)\hookrightarrow (\kk\oplus S)\otimes A(0)\simeq A(0)\otimes A(0)\xrightarrow[]{\mu_0} A(0)$$
is an isomorphism. But condition (A2) implies that $\mu_0$ is an isomorphism, so we must have $S\otimes A(0)=0$. As $\kk$ is a direct summand of $A(0)$, this implies that $S=0$, hence that $\eta_0$ is an isomorphism. 
\end{proof}

We define a tensor product on $\mathcal{F}-\Alg$ and $\mathcal{F}-\Coalg$ by the usual formulas \cite[1.2.2]{AM}, \cite[VI.4]{ML}. Thus, the tensor product of two functorial graded algebras $(A_1,\mu_1,\eta_1)$ and $(A_2,\mu_2,\eta_2)$ is the functorial graded algebra 
$$(A_1\otimes A_2, (\mu_1\otimes\mu_2)\circ(\Id\otimes \tau^*\otimes\Id), \eta_1\otimes\eta_2)\;.$$
Similarly, we define the tensor product of two exponential functors $(E_1,\phi_1,u_1)$ and $(E_2,\phi_2,u_2)$ as the triple: 
$$(E_1\otimes E_2, (\phi_1\otimes\phi_2)\circ(\Id\otimes \tau^*\otimes\Id), u_1\otimes u_2)\;.$$
The next lemma is an immediate check.

\begin{lemma}\label{lm-theta-tens}
The functors $\Theta_{\Alg}$ and $\Theta_{\Coalg}$ preserve tensor products.
\end{lemma}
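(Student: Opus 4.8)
The plan is to show that the tensor product defined on $\mathcal{F}-\Exp$ is compatible, under $\Theta_{\Alg}$ and $\Theta_{\Coalg}$, with the standard tensor product of functorial (co)algebras. Since both claims are symmetric, I would treat $\Theta_{\Alg}$ in detail and note that $\Theta_{\Coalg}$ follows by the dual argument (replacing $\mu,\eta,\phi,u$ with $\Delta,\epsilon,\phi^{-1},u^{-1}$ and inclusions with projections). The whole statement is asserted to be ``an immediate check'', so the proof is really a matter of unwinding the definitions and verifying that two composites agree; the task is to organize this transparently rather than to find a clever idea.

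First I would recall the two relevant constructions. Given exponential functors $(E_1,\phi_1,u_1)$ and $(E_2,\phi_2,u_2)$, their tensor product is $(E_1\otimes E_2,\,\phi,\,u_1\otimes u_2)$ with $\phi=(\phi_1\otimes\phi_2)\circ(\Id\otimes\tau^*\otimes\Id)$. Applying $\Theta_{\Alg}$ to this exponential functor produces, by the formula in the definition of $\mu$, the multiplication $\mu_V=(E_1\otimes E_2)(\sigma_V)\circ\phi_{V,V}$ and unit $\eta_V=(E_1\otimes E_2)(0)\circ(u_1\otimes u_2)$ on $E_1\otimes E_2$. On the other side, I would write down $\Theta_{\Alg}(E_1,\phi_1,u_1)=(E_1,\mu_1,\eta_1)$ and $\Theta_{\Alg}(E_2,\phi_2,u_2)=(E_2,\mu_2,\eta_2)$, and then form their tensor product as functorial algebras, which by the formula preceding the lemma is $(E_1\otimes E_2,\,(\mu_1\otimes\mu_2)\circ(\Id\otimes\tau^*\otimes\Id),\,\eta_1\otimes\eta_2)$. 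The goal is to check that these two functorial-algebra structures on $E_1\otimes E_2$ coincide.

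The core computation is to expand $\mu_V=(E_1\otimes E_2)(\sigma_V)\circ(\phi_1\otimes\phi_2)_{V,V}\circ(\Id\otimes\tau^*\otimes\Id)$ and recognize it as $(\mu_1\otimes\mu_2)\circ(\Id\otimes\tau^*\otimes\Id)$. The key points are that the tensor product of functors is computed objectwise, so $(E_1\otimes E_2)(\sigma_V)=E_1(\sigma_V)\otimes E_2(\sigma_V)$, and that the bifunctor isomorphism for the tensor product factors as $\phi=\phi_1\otimes\phi_2$ after conjugation by the middle-swap $\Id\otimes\tau^*\otimes\Id$. Substituting these, the two outer $\tau^*$-swaps match up and the factors regroup so that $E_i(\sigma_V)\circ(\phi_i)_{V,V}=\mu_{i,V}$ appears on each tensor factor; this is exactly $(\mu_1\otimes\mu_2)\circ(\Id\otimes\tau^*\otimes\Id)$. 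For the units, $(E_1\otimes E_2)(0)\circ(u_1\otimes u_2)=(E_1(0)\circ u_1)\otimes(E_2(0)\circ u_2)=\eta_{1}\otimes\eta_{2}$ again by objectwise computation, up to the canonical identification $\kk\otimes\kk\simeq\kk$.

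The only point requiring genuine care — and hence the main obstacle, such as it is — is bookkeeping the symmetry morphisms: one must confirm that the single middle-swap $\Id\otimes\tau^*\otimes\Id$ occurring in the definition of $\phi$ is precisely the one needed to turn $\mu_1\otimes\mu_2$ (which multiplies the two $E_1$-factors together and the two $E_2$-factors together) into the multiplication induced by $\phi$. A clean way to make this unambiguous is to invoke coherence (Mac Lane's theorem) in the symmetric monoidal category $(\mathcal{F},\otimes,\kk,\tau)$, which guarantees that any two composites of associativity and symmetry isomorphisms with the same source, target, and underlying permutation agree; this reduces the verification to checking that the two sides realize the same permutation of tensor factors, which they manifestly do. The dual statement for $\Theta_{\Coalg}$ is obtained by reversing all arrows, and the verification is formally identical.
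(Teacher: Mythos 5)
Your proof is correct and matches the paper, which offers no written argument at all ("The next lemma is an immediate check"); your unwinding of the definitions — identifying $(E_1\otimes E_2)(\sigma_V)=E_1(\sigma_V)\otimes E_2(\sigma_V)$, regrouping around the middle swap $\Id\otimes\tau^*\otimes\Id$, and dualizing for $\Theta_{\Coalg}$ — is precisely the check being alluded to.
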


\subsection{Commutativity and functorial Hopf algebras}

\begin{definition}
An exponential functor $(E,\phi,u)$ is graded commutative if for all $V$, $W$ the following diagram of graded $\kk$-modules commutes (here $\tau$ is the symmetry operator in $\V$ obtained from the universal property of the direct sum). 
$$
\xymatrix{
E(V)\otimes E(W)\ar[d]^-{\tau^*}\ar[r]^-{\phi_{V,W}}& E(V\oplus W)\ar[d]^-{E(\tau)}\\
E(W)\otimes E(V)\ar[r]^-{\phi_{W,V}}&E(W\oplus V) 
}.
$$
We denote by $\mathcal{F}-\Exp_c$ the full subcategory of $\mathcal{F}-\Exp$ supported by the graded commutative exponential functors.
\end{definition}

\begin{lemma}\label{lm-commut}
Given an exponential functor $(E,\phi,u)$, the following conditions are equivalent:
\begin{enumerate}
\item[(1)] $(E,\phi,u)$ is a graded commutative exponential functor,
\item[(2)] $(E,\mu,\eta)$ is a functorial graded commutative algebra,
\item[(3)] $(E,\Delta,\eta)$ is a functorial graded cocommutative coalgebra,
\item[(4)] $(E,\mu,\eta,\Delta,\epsilon,\chi)$ is a functorial graded bicommutative Hopf algebra,
\item[(5)] $(E,\mu,\eta,\Delta,\epsilon,\chi)$ is a functorial graded Hopf algebra,
\item[(6)] $(E,\mu,\eta,\Delta,\epsilon)$ is a functorial graded bialgebra.
\end{enumerate}
\end{lemma}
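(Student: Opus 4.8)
The plan is to prove $(1)\Leftrightarrow(2)$, $(1)\Leftrightarrow(3)$, $(1)\Rightarrow(4)$, the forgetful implications $(4)\Rightarrow(5)\Rightarrow(6)$, and finally the converse $(6)\Rightarrow(1)$; together with the trivial fact that a bicommutative Hopf algebra $(4)$ has commutative product and cocommutative coproduct, this closes the cycle. I use throughout that $(E,\mu,\eta)$ is always a functorial graded algebra and $(E,\Delta,\epsilon)$ always a functorial graded coalgebra. For $(1)\Rightarrow(2)$ I compute $\mu_V\circ\tau^*=E(\sigma_V)\circ\phi_{V,V}\circ\tau^*=E(\sigma_V)\circ E(\tau)\circ\phi_{V,V}=E(\sigma_V\circ\tau)\circ\phi_{V,V}=\mu_V$, using $(1)$ and $\sigma_V\circ\tau=\sigma_V$. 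For $(2)\Rightarrow(1)$ I rewrite $\phi$ in terms of $\mu$ via condition (A2) of Lemma \ref{lm-defbis}, i.e.\ $\phi_{V,W}=\mu_{V\oplus W}\circ(E(\iota_V)\otimes E(\iota_W))$, and then combine the naturality of $\mu\colon E\otimes E\to E$ along $\tau\colon V\oplus W\to W\oplus V$ (which carries $\iota_V,\iota_W$ to the inclusions $\iota'_V,\iota'_W$ of $V,W$ into $W\oplus V$), the naturality of the Koszul symmetry $\tau^*$, and the commutativity $\mu_{W\oplus V}\circ\tau^*=\mu_{W\oplus V}$ from $(2)$; both $E(\tau)\circ\phi_{V,W}$ and $\phi_{W,V}\circ\tau^*$ then reduce to $\mu_{W\oplus V}\circ(E(\iota'_V)\otimes E(\iota'_W))$, which is exactly the diagram of $(1)$. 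The equivalence $(1)\Leftrightarrow(3)$ is strictly dual, using condition (C2) of Lemma \ref{lm-defbisbis} and the naturality of $\Delta$.

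The conceptual backbone is that in $(\V,\oplus,0)$ every object $V$ is an abelian group object, i.e.\ a bicommutative Hopf monoid with product $\sigma_V$, unit $0\to V$, coproduct $\delta_V$, counit $V\to 0$ and antipode $-\Id_V$: all the Hopf-monoid axioms are elementary identities in $\V$, for instance the bimonoid compatibility $\delta_V\circ\sigma_V=(\sigma_V\oplus\sigma_V)\circ(\Id\oplus\tau\oplus\Id)\circ(\delta_V\oplus\delta_V)$ and the antipode identity $\sigma_V\circ((-\Id_V)\oplus\Id_V)\circ\delta_V=0$. Transporting these identities through the strong monoidal $E$ (using the naturality and associativity of $\phi$ to convert $\oplus$ into $\otimes$) shows that $(E,\mu,\eta,\Delta,\epsilon,\chi)$ satisfies every axiom of a bicommutative graded Hopf algebra, with the single caveat that commutativity, cocommutativity and the bimonoid compatibility come out with the transported symmetry $\widetilde\tau:=\phi_{V,V}^{-1}\circ E(\tau)\circ\phi_{V,V}$ in place of the Koszul symmetry $\tau^*$ (the remaining axioms do not involve any symmetry and hold verbatim). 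Condition $(1)$ asserts exactly that $\widetilde\tau=\tau^*$, so under $(1)$ these three axioms become genuine and $(4)$ follows at once; the implications $(4)\Rightarrow(5)\Rightarrow(6)$ are the evident forgetful functors.

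The one substantial step is $(6)\Rightarrow(1)$. Assuming $(6)$, the always-valid antipode identity promotes $E$ to a genuine functorial graded Hopf algebra. The bimonoid compatibility of $(6)$ reads $\Delta\circ\mu=(\mu\otimes\mu)\circ(\Id\otimes\tau^*\otimes\Id)\circ(\Delta\otimes\Delta)$, whereas the transport above gives the same identity with $\widetilde\tau$ in place of $\tau^*$; hence $\tau^*$ and $\widetilde\tau$ have the same image under the operator $Y\mapsto T_Y:=(\mu\otimes\mu)\circ(\Id\otimes Y\otimes\Id)\circ(\Delta\otimes\Delta)$. I conclude $\tau^*=\widetilde\tau$ by producing a left inverse of $Y\mapsto T_Y$. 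Writing $S=\chi$, the assignment $T\mapsto\widehat T$ with $\widehat T(a\otimes b)=\sum S(a_{(1)})\,T(a_{(2)}\otimes b_{(1)})\,S(b_{(2)})$ (the output of $T$ multiplied on the left of its first factor by $S(a_{(1)})$ and on the right of its second factor by $S(b_{(2)})$) is such a left inverse: expanding $T_Y$, applying coassociativity, and collapsing via $\sum S(a_{(1)})a_{(2)}\otimes a_{(3)}=1\otimes a$ and $\sum b_{(1)}\otimes b_{(2)}S(b_{(3)})=b\otimes 1$ gives $\widehat{T_Y}=Y$ for every $Y$. Applying this to the equal morphisms $T_{\tau^*}=T_{\widetilde\tau}=\Delta\circ\mu$ yields $\tau^*=\widetilde\tau$, i.e.\ $(1)$. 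I expect this antipode-conjugation to be the main obstacle: it must be carried out as an identity of morphisms in the Koszul-symmetric category $(\Modgr,\otimes,\tau^*)$, so the placement of Koszul signs in $\widehat T$ and in the collapsing identities has to be tracked with care, whereas every other step is either a formal transport of identities along $E$ or the elementary computation of the first paragraph.
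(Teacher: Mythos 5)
Your proof is correct, and its skeleton coincides with the paper's: the equivalence $(1)\Leftrightarrow(2)\Leftrightarrow(3)$ via reconstructing $\phi$ from $\mu$ (resp.\ $\phi^{-1}$ from $\Delta$) and naturality, the implication $(1)\Rightarrow(4)$ by transporting the bicommutative Hopf-monoid structure $(\sigma_V,\delta_V,-\Id_V)$ of $V$ in $(\V,\oplus,0)$ through the strong monoidal $E$, and the forgetful implications $(4)\Rightarrow(5)\Rightarrow(6)$. The genuine difference is the closing step. The paper proves $(6)\Rightarrow(2)$ by an Eckmann--Hilton-style unit insertion: since $\mu$ is a morphism of functorial coalgebras, hence of exponential functors, one compares $\mu_{V\oplus W}$ with $\mu_V\otimes\mu_W$ and precomposes with $\eta_V\otimes\Id\otimes\Id$ to extract $\mu_V=\mu_V\circ\tau^*$; no antipode is needed. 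You instead prove $(6)\Rightarrow(1)$ by first noting that $\chi$ is automatically an antipode (correct: $\mu\circ(\chi\otimes\Id)\circ\Delta=E(\sigma_V\circ(-\Id_V,\Id_V)\circ\delta_V)=E(0)=\eta\epsilon$ uses no commutativity) and then inverting the sandwich operator $Y\mapsto T_Y$ by antipode conjugation. Your left inverse $\widehat T$ does work: every identity it uses (associativity, coassociativity, counit, both antipode identities) is symmetry-free, and the tensor factors are never permuted in the collapse, so no Koszul signs intervene; the cost is that the argument is noticeably heavier than the paper's two-line unit insertion. One small imprecision to fix: $\widetilde\tau=\tau^*$ is only the diagonal instance $V=W$ of condition $(1)$; to obtain $(1)$ for an arbitrary pair $(V,W)$ you should observe that the diagonal case already yields $(2)$ (by your opening computation $\mu_V\circ\tau^*=E(\sigma_V)\circ E(\tau)\circ\phi_{V,V}=\mu_V$) and then invoke your own $(2)\Rightarrow(1)$ — with that remark the cycle closes cleanly.
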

\begin{proof} Condition (1), resp. (2), resp. (3) is equivalent to the morphism of functors $\tau:E\otimes E\to E\otimes E$ being a morphism of exponential functors, resp. functorial algebras, resp. functorial coalgebras. So the first three conditions are equivalent by lemmas \ref{lm-defbis}, \ref{lm-defbisbis} and \ref{lm-theta-tens}. 

Let us prove (1)$\Rightarrow$(4). Since the first three conditions are equivalent, we need to prove (i) $\chi: E\to E$ is an antipode
(ii) $\Delta:E\to E\otimes E$ is a morphism of algebras. Property (i) follows from the fact that for all $V$, $(\mu\circ (\chi\otimes \Id)\circ \Delta)_V$ equals $E(f_V)$ where $f_V=\sigma_V\circ (\Id_V,-\Id_V)\circ \delta_V=0$. Property (ii) follows from the fact that for all $V$, $((\mu\otimes\mu)\circ (\Id\otimes\tau\otimes\Id)\circ (\Delta\otimes\Delta))_V$ equals $E(g_V)$ where $g_V= (\sigma_V,\sigma_V)\circ (\Id_V,\tau,\Id_V)\circ (\delta_V,\delta_V)$, that $(\Delta\circ \mu)_V$ equals $E(h_V)$ where $h_V=\delta_V\circ\sigma_V$ and that $h_V=g_V$.
 
The implications (4)$\Rightarrow$(5)$\Rightarrow$(6) are obvious. It remains to prove (6)$\Rightarrow$(2). Since $\mu:E\otimes E\to E$ is a morphism of functorial graded coalgebras, and since $\phi^{-1}$ can be reconstructed from $\Delta$ as the composite map appearing in condition (C2) of lemma \ref{lm-defbis}, we have a commutative diagram
$$\xymatrix{
E(V\oplus W)^{\otimes 2}\ar[rr]^-{\mu_{V\oplus W}}&&E(V\oplus W)\\
(E(V)\otimes E(W))^{\otimes 2}\ar[u]^-{(\phi_{V, W})^{\otimes 2}}\ar[rr]^-{\Id\otimes\tau^*\otimes\Id}&&
E(V)^{\otimes 2}\otimes E(W)^{\otimes 2}\ar[rr]^-{\mu_{V}\otimes \mu_W}&& E(V)\otimes E(W)\ar[ull]_-{\phi_{V,W}}
}.$$
Assume that $V=W$, and let $t$, resp. $d$ be the composite map provided by the top left corner of the diagram, resp. bottom right corner of the diagram. Then $E(\sigma_V)\circ t\circ(\eta_V\otimes\Id\otimes\Id)=\mu_V$ while $E(\sigma_V)\circ d\circ(\eta_V\otimes\Id\otimes\Id)$ equals $\mu_V\circ \tau^*$, thus these two morphisms are equal, which proves (2).
\end{proof}
 
One obvious way to construct exponential functors which are not graded commutative is to take a graded commutative exponential functor which is not concentrated in even degrees, and to forget the gradings. We are grateful to Aur\'elien Djament for communicating to us the following less obvious example.
\begin{example}
Assume that $\kk$ is a field.
Let $E$ be a connected exponential functor, and let $M$ be a vector space of dimension greater or equal to $2$. We define a functorial graded algebra $A$ by letting $A^n(V)= E^n(V)\otimes M^{\otimes n}$
with product given by 
$$(e\otimes m_1\otimes\dots\otimes m_k)\cdot (e'\otimes m_1'\otimes\dots\otimes m_\ell')= (ee')\otimes m_1\otimes\dots\otimes m_k\otimes m_1'\otimes\dots\otimes m_\ell'\;.$$
Then $A$ is an exponential functor, which is not graded commutative.
\end{example}

In the remainder of the paper, we shall consider only graded commutative exponential functors (and from section \ref{sec-elementary}, all exponential functors are graded commutative without further notice, see the conventions of section \ref{subsec-conventions}).
By lemma \ref{lm-commut}, sending a graded commutative exponential functor $(E,\phi,u)$ to the tuple $(E,\mu,\eta,\Delta,\epsilon,\chi)$ yields a functor:
$$\Theta_{\H}: \mathcal{F}-\Exp_c\to \mathcal{F}-\H\;.$$
If we define a tensor product of functorial Hopf algebras by the usual formulas \cite[VI.4]{ML} or \cite[1.2.7]{AM}, then $\Theta_{\H}$ commutes with tensor products.

\begin{lemma}\label{lm-defter}
The functor $\Theta_{\H}$ is fully faithful, and its image is contained in $\mathcal{F}-\HH$. A (a priori not necessarily graded commutative or cocommutative) functorial graded Hopf algebra $H$ lies in the image of  $\Theta_{\H}$ if and only if it satisfies one of the following equivalent conditions.
\begin{enumerate}
\item[(a)] For all $V$, $W$ the following composition yields an isomorphism
$$H(V)\otimes H(W)\xrightarrow[]{H(\iota_V)\otimes H(\iota_W)} H(V\oplus W)^{\otimes 2}\xrightarrow[]{\mu_{V\oplus W}} H(V\oplus W)\;, $$
\item[(b)] For all $V$, $W$ the following composition yields an isomorphism
$$H(V\oplus W)\xrightarrow[]{\Delta_{V\oplus W}} H(V\oplus W)^{\otimes 2}\xrightarrow[]{H(p_V)\otimes H(p_W)}H(V)\otimes H(W)\;.$$
\end{enumerate}
\end{lemma}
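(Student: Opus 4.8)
The plan is to bootstrap everything off the already-proven full faithfulness of $\Theta_{\Alg}$ (lemma \ref{lm-defbis}) and off lemma \ref{lm-commut}, so that the only genuinely new content is a single Hopf-algebraic identity. For full faithfulness of $\Theta_{\H}$, I would note that composing $\Theta_{\H}$ with the forgetful functor $\mathcal{F}-\H\to\mathcal{F}-\Alg$ gives the restriction of $\Theta_{\Alg}$ to the full subcategory $\mathcal{F}-\Exp_c\subset\mathcal{F}-\Exp$. Since the forgetful functor is injective on morphisms and $\Theta_{\Alg}$ is fully faithful, a morphism of the underlying graded functors between two objects of the image is a morphism of exponential functors iff it is a morphism of functorial algebras iff it is a morphism of functorial Hopf algebras (a morphism of exponential functors commutes with $\phi$, $u$, hence with the derived $\mu,\eta,\Delta,\epsilon,\chi$; conversely a Hopf morphism is in particular an algebra morphism). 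Full faithfulness of $\Theta_{\H}$ then follows formally, and the fact that the image lands in $\mathcal{F}-\HH$ is exactly the implication (1)$\Rightarrow$(4) of lemma \ref{lm-commut}.

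For the image characterization, I would attach to any functorial Hopf algebra $H$ the two natural maps
$$\phi_{V,W}=\mu_{V\oplus W}\circ(H(\iota_V)\otimes H(\iota_W)),\qquad \psi_{V,W}=(H(p_V)\otimes H(p_W))\circ\Delta_{V\oplus W},$$
so that condition (a) asserts that every $\phi_{V,W}$ is an isomorphism and (b) that every $\psi_{V,W}$ is. The technical heart is to verify the retraction identity $\psi_{V,W}\circ\phi_{V,W}=\Id$ whenever $\eta_0$ is invertible. This is a Sweedler-type computation using only that $\Delta$ is an algebra map, that each $H(\iota_V)$ and $H(p_V)$ is a Hopf-algebra morphism, that $H$ sends a zero morphism $W\to V$ to $\eta_V\circ\epsilon_W$, and the counit axioms; the Koszul signs vanish because $\epsilon$ is nonzero only in degree $0$. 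Now lemma \ref{lm-precision-defbis} shows that each of (a) and (b) already forces $\eta_0$ (equivalently $\epsilon_0$) to be invertible, so under either hypothesis $\psi\phi=\Id$ holds. Hence $\phi_{V,W}$ is invertible iff $\psi_{V,W}$ is, and when they are we get $\psi_{V,W}=\phi_{V,W}^{-1}$; this yields (a)$\Leftrightarrow$(b). The implication ``image $\Rightarrow$ (a) and (b)'' is immediate: the underlying algebra is $\Theta_{\Alg}(E)$ and the underlying coalgebra is $\Theta_{\Coalg}(E)$, which satisfy (A2)$=$(a) and (C2)$=$(b).

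It then remains to prove (a)$\Rightarrow$image. Assuming (a), the underlying functorial algebra satisfies (A2) and (A1) (again by lemma \ref{lm-precision-defbis}), so lemma \ref{lm-defbis} produces an exponential functor $E$ with $\Theta_{\Alg}(E)=(H,\mu,\eta)$, whose structure maps satisfy $\phi^E_{V,W}=\phi_{V,W}$ and $u=\eta_0$. I would then identify the derived coalgebra of $E$ with that of $H$. For the counit this is immediate from naturality of $\epsilon$ applied to $V\to 0$. For the comultiplication, writing $p_1,p_2:V\oplus V\to V$ for the projections and using $\phi_{V,V}^{-1}=\psi_{V,V}$ one computes
$$\Delta^E_V=\psi_{V,V}\circ H(\delta_V)=(H(p_1)\otimes H(p_2))\circ\Delta_{V\oplus V}\circ H(\delta_V)=(H(p_1\delta_V)\otimes H(p_2\delta_V))\circ\Delta_V=\Delta_V,$$
where the third equality is naturality of $\Delta$ and the last uses $p_i\circ\delta_V=\Id_V$. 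Thus $(E,\mu,\eta,\Delta,\epsilon)$ is exactly the bialgebra underlying $H$, so the implication (6)$\Rightarrow$(1) of lemma \ref{lm-commut} shows $E$ is graded commutative; since the antipode of a bialgebra is unique, the derived antipode of $E$ agrees with that of $H$, whence $\Theta_{\H}(E)=H$. Combined with (a)$\Leftrightarrow$(b) this closes the argument.

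The main obstacle I anticipate is the verification that $\psi\circ\phi=\Id$: this is where all the bialgebra axioms are genuinely used, and one must be careful that the zero-morphism identity $H(0_{W\to V})=\eta_V\epsilon_W$ really requires $H(0)\cong\kk$ — which is precisely why lemma \ref{lm-precision-defbis}, rather than the bare bialgebra relation $\epsilon_0\eta_0=\Id$, must be invoked to guarantee invertibility of $\eta_0$ before the computation can be run.
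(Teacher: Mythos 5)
Your proof is correct, and while the overall architecture (reduce full faithfulness to that of $\Theta_{\Alg}$, produce the exponential structure from condition (a) via lemma \ref{lm-defbis}, then identify the derived coalgebra structure with the given one) matches the paper's, the key identification step is done by a genuinely different argument. The paper shows $\Delta=\Delta'$ by observing that $\Delta$ is an algebra morphism, hence a morphism of exponential functors, hence a coalgebra morphism for $\Delta'$, and then invoking the Eckmann--Hilton argument; it treats (a) and (b) separately, leaving (b) to the reader. You instead prove the explicit retraction identity $\psi_{V,W}\circ\phi_{V,W}=\Id$ by a Sweedler-type computation with the bialgebra axiom $\Delta\mu=(\mu\otimes\mu)(\Id\otimes\tau\otimes\Id)(\Delta\otimes\Delta)$, which buys you two things at once: the equivalence (a)$\Leftrightarrow$(b) directly (a split mono with a given retraction is invertible iff the retraction is), and the identity $\Delta^E_V=\psi_{V,V}\circ H(\delta_V)=\Delta_V$ by bare naturality of $\Delta$, with no Eckmann--Hilton needed. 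Your approach is more computational but more self-contained; the paper's is shorter at the cost of a slicker categorical trick and of not exhibiting $\phi^{-1}$ explicitly. You also correctly isolate the one delicate point, namely that the identity $H(0_{W\to V})=\eta_V\epsilon_W$ needs $\eta_0\epsilon_0=\Id_{H(0)}$ and not merely $\epsilon_0\eta_0=\Id_\kk$, so that lemma \ref{lm-precision-defbis} must be invoked before the computation; the paper uses the same lemma for the same purpose. Both arguments conclude identically via lemma \ref{lm-commut} for graded commutativity and uniqueness of the antipode.
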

\begin{proof}
That the image of $\Theta_{\H}$ is contained in $\mathcal{F}-\HH$ follows from lemma \ref{lm-commut}. To prove that $\Theta_{\H}$ is fully faithful, we observe that the composition:
$$ \mathcal{F}-\Exp_c\xrightarrow[]{\Theta_{\H}} \mathcal{F}-\H\xrightarrow[]{\mathrm{Forget}}\mathcal{F}-\Alg\qquad(*)$$
equals $\Theta_{\Alg}$, hence it is fully faithful by lemma \ref{lm-defbis}. Since the forgetful functor $\mathcal{F}-\H\to \mathcal{F}-\Alg$ is faithful, $\Theta_{\H}$ must be fully faithful.

Next we show that an object of $\mathcal{F}-\H$ is in the image of $\Theta_\H$ if and only if  condition (a) holds (we leave the analogous proof for (b) to the reader). 
Since the composition $(*)$ equals $\Theta_{\Alg}$, condition (a) is necessary by lemma \ref{lm-defbis}.
Conversely, let $(H,\mu,\eta,\Delta,\epsilon,\chi)$ be an object of $\mathcal{F}-\H$ satisfying condition (a). Then $(H,\mu,\eta)$ is an object of $\mathcal{F}-\Alg$ satisfying condition (A1') and (A2) of lemma \ref{lm-precision-defbis} (the retract of $\eta_0$ is $\epsilon_0$), hence there is an exponential functor $(H,\phi',u')$ such that $\Theta_\Alg(H,\phi',u')=(H,\mu,\eta)$. Let $\Delta',\epsilon',\chi'$ be the structural morphisms built from $\phi'$ and $u'$. To prove that  $(H,\mu,\eta,\Delta,\epsilon,\chi)$ is in the image of $\Theta_\H$, it remains to prove that $\Delta=\Delta'$, $\epsilon=\epsilon'$ and $\chi=\chi'$. First, $\epsilon_V,\epsilon'_V:H(V)\to \kk$ factor as $H(0)\circ \epsilon_0$ and $H(0)\circ \epsilon'_0$ and the axioms of the Hopf algebra $H(0)=\kk$ imply that $\epsilon_0=\eta_0^{-1}=\epsilon'_0$, hence $\epsilon=\epsilon'$.
Second, $\Delta:H\to H\otimes H$ is a morphism of algebras, hence a morphism of exponential functors (by lemma \ref{lm-defbis}), hence a morphism of coalgebras between $(H,\Delta',\epsilon')$ and $(H,\Delta',\epsilon')^{\otimes 2}$. Thus $\Delta=\Delta'$ by the Eckmann-Hilton argument. Third, the antipode of a Hopf algebra is uniquely determined, so $\chi=\chi'$.
Thus, satisfying condition (a) implies being in the image of $\Theta_\H$. 
\end{proof}
\begin{remark}\label{rk-uniqueDelta}
Lemma \ref{lm-defter} (or its proof) shows a uniqueness statement. Namely, if $A$ is a graded commutative functorial algebra which is exponential, then the comultiplication $\Delta$ produced from the exponential structure is the unique one such that $A$ is a functorial Hopf algebra. 
\end{remark}

The next lemma complements lemma \ref{lm-defter} by giving an elementary stability property of the image of $\Theta_\H$. 
\begin{lemma}\label{lm-stabilite}
Let $f:H\to H'$ be a morphism of functorial graded Hopf algebras. The following holds.
\begin{enumerate}
\item[(i)] If $H$ lies in the image of $\Theta_\H$ and for all $V$, $f_V:H(V)\to H'(V)$ is an epimorphism of $\kk$-modules, then $H'$ lies in the image of $\Theta_\H$.
\item[(ii)] If $H'$ lies in the image of $\Theta_\H$ and for all $V$,  $f_V:H(V)\to H'(V)$ is a monomorphism of $\kk$-modules and $H(V)$ and $H'(V)$ are $\kk$-flat, then $H$ lies in the image of $\Theta_\H$.
\end{enumerate}
\end{lemma}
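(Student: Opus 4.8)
The plan is to characterise membership in the image of $\Theta_{\H}$ through the two mutually inverse maps already implicit in lemmas \ref{lm-defbis}, \ref{lm-defbisbis} and \ref{lm-defter}. For a functorial graded Hopf algebra $H$ and objects $V,W$, write
$$\Phi^H_{V,W}=\mu_{V\oplus W}\circ(H(\iota_V)\otimes H(\iota_W)):H(V)\otimes H(W)\to H(V\oplus W),$$
$$\Psi^H_{V,W}=(H(p_V)\otimes H(p_W))\circ\Delta_{V\oplus W}:H(V\oplus W)\to H(V)\otimes H(W).$$
By lemma \ref{lm-defter}, condition (a) says that every $\Phi^H_{V,W}$ is an isomorphism and condition (b) that every $\Psi^H_{V,W}$ is one; either characterises the image of $\Theta_{\H}$. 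Since $f$ is a morphism of functorial algebras (natural and commuting with the multiplications), one gets a commutative square which reads
$$\Phi^{H'}_{V,W}\circ(f_V\otimes f_W)=f_{V\oplus W}\circ\Phi^H_{V,W};$$
dually, since $f$ is a morphism of functorial coalgebras,
$$(f_V\otimes f_W)\circ\Psi^H_{V,W}=\Psi^{H'}_{V,W}\circ f_{V\oplus W}.$$

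The key observation, and the step I expect to be the technical heart, is that for every functorial graded bialgebra one has
$$\Psi^H_{V,W}\circ\Phi^H_{V,W}=\Id_{H(V)\otimes H(W)}.$$
This is a formal consequence of the bialgebra axioms: expanding $\Delta_{V\oplus W}\circ\mu_{V\oplus W}$ by compatibility of product and coproduct, using naturality of $\Delta$ along $\iota_V,\iota_W$ to push $H(p_V)$ and $H(p_W)$ inside, and then the relations $p_V\iota_V=\Id_V$, $p_W\iota_W=\Id_W$, $p_V\iota_W=0$, $p_W\iota_V=0$ together with $H(0)=\eta\epsilon$ and the counit axioms, collapses the composite to the identity (the Koszul signs disappear because the surviving terms carry counit-annihilated factors of degree zero). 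Consequently $\Phi^H_{V,W}$ is always a split monomorphism and $\Psi^H_{V,W}$ a split epimorphism; in particular $\Phi^H_{V,W}$ is an isomorphism if and only if it is an epimorphism, and $\Psi^H_{V,W}$ is an isomorphism if and only if it is a monomorphism. This is exactly the dictionary that makes (a) and (b) interchangeable, and it reduces each part of the lemma to a one-sided statement.

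For (i), $H$ lies in the image, so every $\Phi^H_{V,W}$ is an isomorphism; and each $f_V$ is an epimorphism, hence so are $f_{V\oplus W}$ and $f_V\otimes f_W$ (a tensor product of surjections is surjective). The first square then shows that $\Phi^{H'}_{V,W}\circ(f_V\otimes f_W)=f_{V\oplus W}\circ\Phi^H_{V,W}$ is an epimorphism, whence $\Phi^{H'}_{V,W}$ is an epimorphism, hence an isomorphism by the previous paragraph. Thus $H'$ satisfies condition (a) and lies in the image of $\Theta_{\H}$.

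For (ii), $H'$ lies in the image, so every $\Psi^{H'}_{V,W}$ is an isomorphism; and each $f_V$, hence $f_{V\oplus W}$, is a monomorphism. The second square shows that $(f_V\otimes f_W)\circ\Psi^H_{V,W}=\Psi^{H'}_{V,W}\circ f_{V\oplus W}$ is a monomorphism, whence $\Psi^H_{V,W}$ is a monomorphism, hence an isomorphism. Thus $H$ satisfies condition (b) and lies in the image of $\Theta_{\H}$. Here the flatness hypothesis is what guarantees that the tensor products $f_V\otimes f_W$ remain monomorphisms (tensoring the monomorphism $f_V$ with the flat module $H(W)$, then the flat module $H'(V)$ with $f_W$); this is the natural input if one prefers to verify the multiplicative condition (a) for $H$ directly, whereas running the argument through the comultiplicative condition (b) as above only uses that $f_{V\oplus W}$ is a monomorphism.
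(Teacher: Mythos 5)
Your proof is correct, but it is organised differently from the paper's. The paper's proof of (i) writes down the four-column commutative ladder built from $a_H=\Phi^H$ and $b_H=\Psi^H$, notes that for $H$ in the image one has $a_Hb_H=\Id$ \emph{and} $b_Ha_H=\Id$ (they are $\phi$ and $\phi^{-1}$), and transfers both identities to $H'$ by cancelling the surjective vertical maps; part (ii) is declared ``similar'', which forces all vertical maps — including $f_V\otimes f_W$ — to be monomorphisms, and that is exactly where the flatness hypothesis is consumed. You instead isolate the identity $\Psi^H_{V,W}\circ\Phi^H_{V,W}=\Id$ as a \emph{universal} fact about arbitrary functorial graded bialgebras (your verification sketch — bialgebra compatibility of $\Delta$ and $\mu$, naturality, the relations $p_V\iota_V=\Id$, $p_V\iota_W=0$, the (co)unit axioms, and the observation that the Koszul signs vanish on the surviving terms — is the right computation and does go through). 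This makes $\Phi$ always a split monomorphism and $\Psi$ always a split epimorphism, so each half of the lemma reduces to a one-sided transfer across a single commutative square, using the equivalence of conditions (a) and (b) in lemma \ref{lm-defter}. Two things are gained: the argument is conceptually tighter, and, as you point out, your proof of (ii) runs entirely through condition (b) and only uses that $f_{V\oplus W}$ is a monomorphism — the flatness hypothesis on $H(V)$ and $H'(V)$ is never invoked, so you have in fact proved a slightly stronger statement than the one in the paper. The only cost is that the universal identity $\Psi\Phi=\Id$, which the paper never needs in this generality (it only uses it for exponential $H$, where it is trivially $\phi^{-1}\phi=\Id$), must be checked by hand; if you were writing this up you should display that computation rather than leave it as a prose sketch.
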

\begin{proof}
We only prove (i), the proof of (ii) being similar. For all $H$ we let $a_H$ be the composite morphism of lemma \ref{lm-defter}(a), and we let $b_H$ be the composite morphism of lemma \ref{lm-defter}(b). Since $f$ is a morphism of functorial Hopf algebras, for all $V$ and $W$ we have a commutative diagram of $\kk$-modules:
$$
\xymatrix{
H(V)\otimes H(W)\ar[r]^-{a_H}\ar[d]^-{f_V\otimes f_W} & H(V\oplus W)\ar[r]^-{b_H}\ar[d]^-{f_{V\oplus W}} & H(V)\otimes H(W)\ar[r]^-{a_H}\ar[d]^-{f_V\otimes f_W} & H(V\oplus W)\ar[d]^-{f_{V\oplus W}}\\
H'(V)\otimes H'(W)\ar[r]^-{a_{H'}} & H'(V\oplus W)\ar[r]^-{b_{H'}} & H'(V)\otimes H'(W)\ar[r]^-{a_{H'}} & H'(V\oplus W)\\
}$$
If $H=\Theta_\H(E,\phi,u)$, then $a_{H}=\phi$ and $b_{H}=\phi^{-1}$, so $a_{H}\circ b_{H}$ and $b_{H}\circ a_{H}$ are identity maps. If moreover $f$ is objectwise surjective then all the vertical maps of the diagram are surjective, so that $a_{H'}\circ b_{H'}$ and $b_{H'}\circ a_{H'}$ must be identity maps too. Hence $H'$ is in the image of $\Theta_\H$ by lemma \ref{lm-defter}.
\end{proof}

\subsection{Conventions and notations for exponential functors}\label{subsec-conventions}
In the rest of the article, we only consider \emph{graded commutative} exponential functors. We therefore systematically drop the terms `graded commutative' and simply write  `exponential functor'. There are four equivalent definitions of an exponential functor, namely
\begin{itemize}
\item a triple $(E,\phi,u)$ as in definition \ref{def-exp},
\item a functorial graded algebra $(E,\mu,\eta)$ satisfying the two conditions of lemma \ref{lm-defbis},
\item a functorial graded coalgebra $(E,\Delta,\epsilon)$ satisfying the two conditions of lemma \ref{lm-defbisbis},
\item a functorial graded Hopf algebra $(E,\mu,\eta,\Delta,\epsilon,\chi)$ satisfying an additional condition as in lemma \ref{lm-defter}.
\end{itemize}
Thus an exponential functor will be denoted by $(E,\phi,u)$, $(E,\mu,\eta)$, $(E,\Delta,\epsilon)$ or $(E,\mu,\eta,\Delta,\epsilon,\chi)$ according to which point of view is more convenient. If no confusion is possible, it will simply be denoted by $E$. The morphisms $\phi,u,\mu,\eta,\Delta,\epsilon,\chi$ attached to an exponential functor $E$ will often be loosely called `the exponential structure' of $E$, or `the structure morphisms' of $E$. 

Although we try to treat in parallel the setting of strict analytic functors (i.e. when $\FF=\PP_{\omega,\kk}$) and the setting of ordinary functors (i.e. when $\FF=\Fct(\V,\Mod)$), we will sometimes have to distinguish between the two settings. In such a situation, we will  use the terms `strict exponential functors (over $\kk$)' or `ordinary exponential functors (with domain $\V$ and codomain $\kk$-modules)' to indicate in which setting the proof or the statement is valid. 

\subsection{Basic examples of exponential functors}\label{subsec-ex}

Let $M$ be a graded $\kk$-module over a commutative ring $\kk$. We let $S_{\pm}(M)$ be the free graded commutative algebra on $M$ \cite[3(c), ex 5 and 6]{FHT}. Let $S(M)$ and $\Lambda(M)$ denote the symmetric and the exterior algebras on $M$ \cite[A.2.3]{Eisenbud}. Then $S_{\pm}(M)=S(M)$ if $2=0$ in $\kk$, and $S_{\pm}(M)=S(M^{\mathrm{even}})\otimes \Lambda(M^\mathrm{odd})$ if $2$ is invertible in $\kk$.

Dually, if $\kk$ is a field and $V$ is a graded vector space, we let
$\Gamma_{\pm}(V)$ be the cofree graded cocommutative connected
coalgebra on $V$. Thus, $\Gamma_{\pm}(V)$ is the subcoalgebra of the tensor coalgebra $T^c(V)$, supported by the vector space $\bigoplus_{d\ge 0}\Gamma^{d}_{\pm}(V)$ where $\Gamma^d_{\pm}(V)=(V^{\otimes d})^{\Si_d}$ is the space of invariants under the canonical action of the symmetric group $\Si_d$ (with Koszul signs coming from the signs of the graded symmetry operator $\tau^*$). We let $\Gamma(V)$ and $\Lambda(V)$ be the cofree cocommutative connected coalgebra on $V$ and the exterior coalgebra on $V$. In characteristic $2$, one has $\Gamma_{\pm}(V)=\Gamma(V)$ while $\Gamma_{\pm}(V)=\Gamma(V^{\mathrm{even}})\otimes \Lambda(V^{\mathrm{odd}})$ in characteristic different from $2$. If $V$ has finite dimension, the graded coalgebra $\Gamma_{\pm}(V)$ is the restricted dual of the graded algebra $S_{\pm}(\Hom_\kk(V,\kk))$.

Finally we denote by $\kk G$ the $\kk$-algebra of a group $G$.

Let $A:\V\to \mathrm{Mod}_\kk^*$, and $G:\V\to\mathrm{Ab}$ be additive functors, it is easy to prove from their universal properties that the following functorial algebras or coalgebras are ordinary exponential functors:
$$S_{\pm}(A):=S_{\pm}\circ A\,,\quad  \Gamma_{\pm}(A):=\Gamma_{\pm}\circ A\,,\quad \kk G\,.$$
\begin{remark}
The letter `$\Gamma$' is supposed to remind the reader that if $V$ is a graded vector space, then $\Gamma_{\pm}(V)$ equipped with its canonical algebra structure (coming from the exponential structure) coincides with the universal divided power algebra on $V$, see \cite[Chap III]{Roby} or \cite[Exp. 7]{Cartan}.
\end{remark}

If $A$ is a graded additive strict analytic functor, then $S_{\pm}(A)$ and $\Gamma_{\pm}(A)$ have a canonical structure of a strict exponential functor. If $A$ is homogeneous of weight $p^r$, then each summand $S^i(A^{\mathrm{even}})\otimes\Lambda^j(A^{\mathrm{odd}})$ or $\Gamma^i(A^{\mathrm{even}})\otimes\Lambda^j(A^{\mathrm{odd}})$ is homogeneous of weight $p^r(i+j)$.

\section{Elementary structure over a field}\label{sec-elementary}

In this section, $\kk$ is a field and $\FF=\PP_{\omega,\kk}$ or $\Fct(\V,\Mod)$. Following the conventions of section \ref{subsec-conventions}, from this section until the end of the paper, all exponential functors are graded commutative without further notice. We give basic structure results for exponential functors. The main results are theorems \ref{thm-twist}, \ref{thm-alg} and \ref{thm-coalg}. An application of these results is given in section \ref{sec-sym}. 

\subsection{The abelian category $\mathcal{F}-\Exp_c$}
It is well-known \cite{DG,Newman,Takeuchi} that graded bicommutative Hopf algebras over a field $\kk$ yield a cocomplete abelian category $\HH$. To be more specific, the coproduct is the tensor product. The sum of two morphisms $f,g:H\to H'$ is the convolution product:
$$f\star g\;:=\; H\xrightarrow[]{\Delta}H\otimes H\xrightarrow[]{f\otimes g}H'\otimes H'\xrightarrow[]{\mu'}H'\;. $$
The zero morphism $H\to H'$ is the composition $\eta'\epsilon$, while the inverse of $f$ is the composition $f\chi$. 
The cokernel of a morphism $f:H\to H'$ is the canonical map $\pi: H'\to H'\otimes_H\kk$. The kernel of $f:H\to H'$ is supported by the kernel in the category of graded $\kk$-vector spaces of the morphism 
$$(f\otimes \Id_H)\Delta - \eta'\otimes \Id_H \;: \;H\to H'\otimes H\;.$$
For cocompleteness, one needs in addition arbitrary coproducts $\bigotimes_{\alpha\in A} H_\alpha$. Such coproducts are supported by the colimit (union) in the category of graded $\kk$-vector spaces of the finite tensor products $\bigotimes_{\alpha\in B} E_\alpha$ indexed by the finite subsets $B\subset A$, and the various inclusions of subsets $B\subset B'$.

All these constructions can be performed in the category $\mathcal{F}-\HH$ of functorial graded bicommutative Hopf algebras, which is abelian and cocomplete as well. 
Now by lemma \ref{lm-defter}, the category $\mathcal{F}-\Exp_c$ of exponential functors identifies with a full subcategory of $\FF-\HH$. 
\begin{lemma}\label{lm-abelian}
The category $\mathcal{F}-\Exp_c$ is stable under colimits and finite limits calculated in $\FF-\HH$. Thus, it is abelian and cocomplete. Moreover, the tensor product induces an equivalence of categories:
$$\begin{array}[t]{ccc}
\mathcal{F}-\Exp^0_c\;\times\; \mathcal{F}-\Exp^{\conn}_c & \xrightarrow[]{\simeq}&
\mathcal{F}-\Exp_c\\
(E, E')&\mapsto & E\otimes E'
\end{array}.$$
\end{lemma}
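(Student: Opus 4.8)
The plan is to exploit lemmas \ref{lm-defter} and \ref{lm-stabilite}, which realize $\FF-\Exp_c$ as a full subcategory of the abelian cocomplete category $\FF-\HH$ and control exactly when an object of $\FF-\HH$ lies in this subcategory. Since $\kk$ is a field, every functor is objectwise $\kk$-flat, so the flatness hypothesis in lemma \ref{lm-stabilite}(ii) is automatic and both parts of that lemma apply freely. First I would check stability under finite limits and under all colimits computed in $\FF-\HH$. For a morphism $f\colon H\to H'$ of exponential functors, the kernel is supported by an objectwise subobject of $H$, so the structural map $\ker f\hookrightarrow H$ is objectwise injective and lemma \ref{lm-stabilite}(ii) shows $\ker f$ is exponential; dually, the cokernel $H'\to H'\otimes_H\kk$ is objectwise surjective, so lemma \ref{lm-stabilite}(i) shows it is exponential. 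Finite biproducts coincide with finite tensor products in $\FF-\HH$, and these are exponential because $\Theta_\H$ preserves tensor products (lemma \ref{lm-theta-tens}). Finally, an arbitrary coproduct is, by the description recalled just above, a filtered colimit of finite tensor products; since filtered colimits are exact and commute with the finite tensor products and the evaluations occurring in condition (a) of lemma \ref{lm-defter}, that condition passes to the colimit, so $\FF-\Exp_c$ is stable under arbitrary coproducts. A full subcategory of an abelian category closed under kernels, cokernels and finite biproducts is abelian with exact inclusion, and closure under cokernels together with arbitrary coproducts yields cocompleteness.

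For the tensor decomposition I would single out the degree-zero part. Writing $E=\bigoplus_i E^i$, the summand $E^0$ is a functorial sub-Hopf-algebra: the grading is nonnegative, so $E^0\cdot E^0\subseteq E^0$, one has $\Delta(E^0)\subseteq (E\otimes E)^0=E^0\otimes E^0$, and the counit and antipode preserve the degree. As the inclusion $E^0\hookrightarrow E$ is objectwise injective, lemma \ref{lm-stabilite}(ii) shows $E^0$ is exponential (and concentrated in degree $0$). The key observation is that the projection $\pi_0\colon E\to E^0$ onto the degree-zero part is again a morphism of functorial Hopf algebras: for the product $\pi_0(xy)=\pi_0(x)\pi_0(y)$ since a product of elements of positive degree again has positive degree, and dually $(\pi_0\otimes\pi_0)\Delta=\Delta\,\pi_0$, while compatibility with $\eta,\epsilon,\chi$ is immediate. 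Thus $\pi_0$ is a morphism of exponential functors (by fullness in lemma \ref{lm-defter}) retracting the inclusion. Setting $E^{\conn}:=\mathrm{coker}(E^0\hookrightarrow E)$, its degree-zero part is $\kk$, so $E^{\conn}$ is connected and exponential, and the short exact sequence $0\to E^0\to E\to E^{\conn}\to 0$ splits. Since the biproduct of $\FF-\Exp_c$ is the tensor product, this produces an isomorphism $E\simeq E^0\otimes E^{\conn}$.

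It then remains to upgrade this decomposition to an equivalence. The tensor product functor is well defined and essentially surjective by the previous paragraph. For full faithfulness I would argue that the two subcategories are orthogonal, i.e.\ every morphism from an exponential functor concentrated in degree $0$ to a connected one, and every morphism in the reverse direction, is zero. Indeed, a morphism of functorial Hopf algebras preserves the grading: a degree-zero source maps into the degree-zero part $\kk$ of a connected target and hence factors as $\eta\epsilon$ through the zero object $\kk$ of $\FF-\HH$; symmetrically, a connected source sends its positive part to the zero positive part of a degree-zero target and its unit to the unit, so it too equals $\eta\epsilon$. Consequently, viewing the tensor products as biproducts, any morphism $h\colon E_1\otimes E_1'\to E_2\otimes E_2'$ is diagonal, with components $f=(\Id\otimes\epsilon)\,h\,(\Id\otimes\eta)$ and $g=(\epsilon\otimes\Id)\,h\,(\eta\otimes\Id)$. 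A standard convolution computation identifies the diagonal biproduct morphism with the tensor product $f\otimes g$, yielding fullness, while the same two formulas recover $f$ and $g$ from $f\otimes g$, yielding faithfulness.

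The step I expect to be the main obstacle is the systematic identification of the abelian-categorical biproduct with the monoidal tensor product, both at the level of objects (so that the split retraction by $\pi_0$ really yields a tensor factorization $E\simeq E^0\otimes E^{\conn}$) and at the level of morphisms (the equality of the diagonal biproduct map with $f\otimes g$). Once this identification and the orthogonality of the degree-zero and connected parts are in place, together with the closure properties from the first paragraph, the remaining verifications are formal.
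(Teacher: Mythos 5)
Your proof is correct and follows essentially the same route as the paper: stability under (co)kernels via lemma \ref{lm-stabilite}, tensor products handled directly, and the decomposition obtained by splitting off the degree-zero part (the paper phrases this via the idempotent $e=\iota\circ\pi_0$ and takes $E'=\ker e$, which agrees with your cokernel $E^{\conn}$ once the sequence splits). You spell out in more detail the identification of the biproduct with the tensor product and the orthogonality argument for full faithfulness, which the paper leaves implicit, but the underlying argument is the same.
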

\begin{proof}
Any (infinite) tensor product of exponential functors is clearly an exponential functor. Stability of exponential functors by (co)kernels follows from lemma \ref{lm-stabilite}. Moreover, for all exponential functors $E$, the composition $e:E\to E^0\to E$ of the projection onto the degree zero component of $E$ and the inclusion of this degree zero component in $E$ is an idempotent of $E$, hence there is a natural isomorphism $E\simeq E^0\otimes E'$ where $E'=\ker e$ is connected, which provides the asserted categorical decomposition.
\end{proof}

An exponential functor $E$ can be seen as a functorial graded Hopf algebra, thus standard constructions of graded Hopf algebras make sense for $E$. In particular, $E$ has an augmentation ideal $\overline{E}=\ker \epsilon$, a graded subfunctor of primitives $PE$, (defined as the kernel of the map $\overline{E}\to \overline{E}^{\otimes 2}$ induced by $\Delta$), and a graded quotient functor of indecomposables $QE$ (defined as the cokernel of the map $\overline{E}^{\otimes 2}\to \overline{E}$ induced by $\mu$).
The next lemma is a direct consequence of \cite[Prop 7.21]{MilnorMoore}.

\begin{lemma}[The splitting principle]\label{lm-splitting-principle}
If $\kk$ is a field of characteristic different from $2$ let $\mathcal{F}-\Exp_c^+$, resp. $\mathcal{F}-\Exp_c^-$, be the full subcategory of $\mathcal{F}-\Exp_c^{\mathrm{conn}}$ supported by the exponential functors which are concentrated in even degrees, resp. whose primitives are concentrated in odd degrees. The tensor product induces an equivalence of categories:
$$\begin{array}[t]{ccc}
\mathcal{F}-\Exp^+_c\;\times\; \mathcal{F}-\Exp^{-}_c & \xrightarrow[]{\simeq}&
\mathcal{F}-\Exp^{\conn}_c\\
(E, E')&\mapsto & E\otimes E'
\end{array}.$$
Moreover, 
for all objects $E$ and $E'$ of $\mathcal{F}-\Exp^{-}_c$ there are isomorphisms 
$$E\simeq \Lambda(PE)\simeq \Lambda(QE)\;,\qquad 
\Hom_{\FF-\Exp^{-}_c}(E,E')\simeq \Hom_{\FF^*}(PE,PE')\;.$$
\end{lemma}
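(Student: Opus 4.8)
The plan is to reduce everything to the classical structure theory of connected graded bicommutative Hopf algebras over $\kk$, applied \emph{objectwise}, and then to promote the objectwise statements to statements about exponential functors by exploiting naturality. The key point is that, by Lemma~\ref{lm-defter}, $\mathcal{F}-\Exp_c$ is a full subcategory of $\FF-\HH$, and by Lemma~\ref{lm-abelian} every relevant construction --- the augmentation ideal $\overline{E}$, the primitives $PE$, the indecomposables $QE$, (co)kernels, and tensor products --- is computed as in $\FF-\HH$, hence objectwise in $V$. Consequently any construction on graded bicommutative Hopf algebras that is natural in the Hopf algebra commutes automatically with the structure maps $E(f)\colon E(V)\to E(W)$, and therefore upgrades to a morphism of exponential functors. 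This is exactly what converts an objectwise Milnor--Moore decomposition into a functorial one.

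For the first assertion, I would construct the inverse to the tensor product functor directly. Given $E$ in $\mathcal{F}-\Exp_c^{\conn}$, observe first that $PE$ is additive: the exponential isomorphism identifies $PE(V\oplus W)$ with $P\bigl(E(V)\otimes E(W)\bigr)\cong PE(V)\oplus PE(W)$. Splitting $PE=(PE)^{\even}\oplus(PE)^{\mathrm{odd}}$ by parity, set $E^-:=\Lambda\bigl((PE)^{\mathrm{odd}}\bigr)$, which is an exponential functor in $\mathcal{F}-\Exp_c^-$ by the examples of Section~\ref{subsec-ex}. Since $\mathrm{char}\,\kk\neq 2$, odd primitives square to zero, so the inclusion of primitives extends uniquely to an algebra map $E^-\to E$; by uniqueness of this extension and naturality of $PE$ it is a morphism of exponential functors (Lemma~\ref{lm-defter}), objectwise injective. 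Define $E^+:=\mathrm{coker}(E^-\to E)$ in the abelian category $\mathcal{F}-\Exp_c$ (Lemma~\ref{lm-abelian}). Then \cite[Prop.~7.21]{MilnorMoore} shows objectwise that $E^+(V)$ is concentrated in even degrees and that the extension $\kk\to E^-\to E\to E^+\to\kk$ splits; assembling these splittings --- concretely, via the natural Hopf retraction onto $E^-$ furnished by Milnor--Moore, composed with $\Delta$ and the projection onto $E^+$ --- yields a natural isomorphism $E\simeq E^+\otimes E^-$. This provides the inverse functor, the parity conditions placing the two factors in $\mathcal{F}-\Exp_c^+$ and $\mathcal{F}-\Exp_c^-$; verifying that the two composites are naturally the identity is then routine.

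For the second assertion, specialize to $E\in\mathcal{F}-\Exp_c^-$, where $PE$ is concentrated in odd degrees: the construction above then gives $E=E^-=\Lambda(PE)$, the first isomorphism. For an exterior algebra on an odd-degree functor the canonical map $PE\to QE$ is an isomorphism (again using $\mathrm{char}\,\kk\neq 2$), whence $E\simeq\Lambda(QE)$. For the $\Hom$ statement, restriction to primitives defines a natural map $\Hom_{\FF-\Exp^-_c}(E,E')\to\Hom_{\FF^*}(PE,PE')$; it is injective because $E=\Lambda(PE)$ is generated by $PE$, and surjective because any morphism of graded functors $PE\to PE'$ takes values in odd degrees, where squares vanish, hence extends to an algebra map $\Lambda(PE)\to\Lambda(PE')=E'$ which automatically respects $\Delta$ (being prescribed on primitives) and is thus a morphism of exponential functors by Lemma~\ref{lm-defter}.

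The main obstacle is not the underlying algebra but the \emph{naturality} bookkeeping: one must guarantee that the objectwise Milnor--Moore data assemble into genuine morphisms of $\FF$ rather than an unrelated family of Hopf-algebra maps. This is ensured by the canonicity of each ingredient --- $P(-)$, the extension off the primitives, the cokernel, and the splitting --- together with the full faithfulness of $\Theta_\H$ and the objectwise computation of (co)limits from Lemma~\ref{lm-abelian}. The one genuinely nontrivial input, namely the concentration of $E^+$ in even degrees and the natural splitting of the even/odd extension, is precisely what is imported from \cite[Prop.~7.21]{MilnorMoore}.
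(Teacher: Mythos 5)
Your proposal is correct and takes essentially the same route as the paper: the paper's entire proof is the single sentence that the lemma is a direct consequence of \cite[Prop 7.21]{MilnorMoore}, applied objectwise and promoted to exponential functors exactly as you do, via the additivity of $PE$, the full faithfulness of $\Theta_{\Alg}$ and $\Theta_\H$, and the fact that (co)kernels and tensor products in $\mathcal{F}-\HH$ are computed objectwise. Your write-up merely makes explicit the naturality bookkeeping (the canonical inclusion $\Lambda((PE)^{\mathrm{odd}})\hookrightarrow E$, its cokernel, and the natural Milnor--Moore retraction) that the paper leaves implicit.
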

As a direct consequence of \cite[Cor 4.18 and Thm 5.18]{MilnorMoore}, one also has that connected exponential functors are quite boring in characteristic zero:
\begin{lemma}\label{lm-classif-carzero}
If $\kk$ is a field of characteristic zero, for all connected exponential functors $E$ there are isomorphisms $E\simeq S_{\pm}(PE)\simeq S_{\pm}(QE)$.
\end{lemma}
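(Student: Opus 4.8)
The plan is to derive Lemma~\ref{lm-classif-carzero} directly from the Milnor--Moore structure theory for graded connected (co)commutative Hopf algebras over a field of characteristic zero, treating $E$ as the functorial graded bicommutative Hopf algebra $\Theta_\H(E)$ supplied by Lemma~\ref{lm-defter}. Over such a field the cited results \cite[Cor 4.18 and Thm 5.18]{MilnorMoore} say that a connected graded cocommutative Hopf algebra is primitively generated and that, moreover, it is the free graded commutative algebra on its primitives; dually, a connected graded commutative Hopf algebra is cofree, i.e.\ isomorphic to $S_\pm$ of its module of indecomposables. First I would record that for each object $V$ of $\V$ (or, in the strict setting, for each $\kk$-point), $E(V)$ is a connected bicommutative graded Hopf algebra over $\kk$, so these two theorems apply pointwise and yield isomorphisms $E(V)\simeq S_\pm(PE(V))$ and $E(V)\simeq S_\pm(QE(V))$.

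The real content is to upgrade these pointwise isomorphisms to isomorphisms of \emph{exponential functors}, i.e.\ to make the construction natural in $V$ and compatible with the exponential structure. Here I would exploit that $PE$ and $QE$ are themselves graded subfunctor, resp. quotient functor, of $E$ (defined just before the statement via the functorial maps $\overline{E}\to\overline{E}^{\otimes2}$ and $\overline{E}^{\otimes2}\to\overline{E}$), and that the free graded commutative algebra construction $S_\pm$ is itself functorial. The natural thing is to produce a morphism of exponential functors $S_\pm(PE)\to E$, built from the inclusion $PE\hookrightarrow\overline{E}\hookrightarrow E$ together with the universal property of $S_\pm$ as the free graded commutative algebra (using that $E$ is a functorial graded commutative algebra); dually a morphism $E\to S_\pm(QE)$ built from $E\twoheadrightarrow QE$ and the cofreeness of $S_\pm(QE)$ as a graded cocommutative coalgebra. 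Both maps are morphisms in $\FF-\HH$, hence by Lemma~\ref{lm-defter} morphisms of exponential functors, and by Lemma~\ref{lm-abelian} the category $\FF-\Exp_c$ is abelian with (co)kernels computed objectwise in $\FF-\HH$. Since the underlying maps $S_\pm(PE(V))\to E(V)$ and $E(V)\to S_\pm(QE(V))$ are the Milnor--Moore isomorphisms at each $V$, these functorial morphisms are objectwise isomorphisms, hence isomorphisms of exponential functors.

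The main obstacle I anticipate is verifying that the universal maps I invoke really are morphisms of exponential functors, and not merely of the underlying algebras or coalgebras. Concretely: the map $S_\pm(PE)\to E$ is manifestly a map of functorial graded commutative algebras by construction, and by Remark~\ref{rk-uniqueDelta} (uniqueness of the comultiplication making a given exponential algebra a Hopf algebra) any algebra map between objects in the image of $\Theta_\H$ that respects the exponential structure automatically respects the coalgebra structure; so the bulk of the work is to check condition (A2) (equivalently, condition (a) of Lemma~\ref{lm-defter}) survives, which follows because $S_\pm$ of an additive-in-the-appropriate-sense functor is exponential (cf.\ section~\ref{subsec-ex}). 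I would phrase it so that the exponential structure is read off from the algebra (resp. coalgebra) side via Lemma~\ref{lm-defbis} (resp. Lemma~\ref{lm-defbisbis}), which sidesteps any delicate compatibility check with $\phi$ directly. A secondary point to handle carefully, in the strict-analytic setting $\FF=\PP_{\omega,\kk}$, is that characteristic zero is forced (the Frobenius twists only arise in positive characteristic, cf.\ Lemma~\ref{lm-control-strict}), so $S_\pm(A)=S_\pm\circ A$ behaves exactly as in the ordinary case and the same argument goes through verbatim.
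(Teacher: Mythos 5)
Your proof is correct and follows the same route as the paper, which states the lemma as a direct consequence of the very results of Milnor and Moore you cite and leaves the upgrade from pointwise Hopf-algebra isomorphisms to isomorphisms of exponential functors implicit. Your extra care in checking that the universal maps $S_\pm(PE)\to E$ and $E\to S_\pm(QE)$ are morphisms of exponential functors (via the full faithfulness of $\Theta_{\Alg}$, resp. $\Theta_{\Coalg}$, and the additivity of $PE$ and $QE$) is exactly the routine verification the paper suppresses.
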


\begin{remark}\label{Rk-classif-carzero}
For strict exponential functors, lemma \ref{lm-classif-carzero} is true without any connectedness hypothesis. Indeed, define a new grading on $E$ by placing each summand $w_kE^i$ in degree $i+2k$. Let $\mathbf{E}$ be the resulting strict exponential functor. Since $\mathbf{E}$ is connected, $\mathbf{E}\simeq S_{\pm}(P\mathbf{E})\simeq S_{\pm}(Q\mathbf{E})$ by lemma \ref{lm-classif-carzero}. Coming back to the original grading (by placing each summand $w_k\mathbf{E}^i$ in degree $i-2k$) we get isomorphisms $E\simeq S_{\pm}(PE)\simeq S_{\pm}(QE)$. 
\end{remark}

We say that an additive category $\C$ has characteristic $n\in \mathbb{N}\setminus\{0\}$, if for all morphisms $f$ in $\C$ one has 
$$n\cdot f:= \underbrace{f+\dots+f}_{n\;times} =0\;,$$
and if furthermore for all $k$ such that $0<k<n$ there is a morphism $f_k\in \C$ such that $k\cdot f\ne 0$. Equivalently, $\C$ has characteristic $n$ if the characteristic of the endomorphism rings of all its objects have positive characteristic dividing $n$, and if there is an object of $\C$ whose endomorphism ring is exactly of characteristic $n$. 
For example, the category $\Proj_R$ of finitely generated projective right $R$-modules has the same characteristic as the ring $R$. Also, $\C$ has prime characteristic $p$ if and only if its $\Hom$ groups are $\Fp$-vector spaces.

\begin{lemma}\label{lm-charact}
If $\FF=\Fct(\V,\Mod)$ with $\V$ of positive characteristic $n$, or if $\FF=\PP_{\omega,\kk}$ with $\kk$ of positive (prime) characteristic $n$, then $\mathcal{F}-\Exp_c$ also has characteristic $n$. 
\end{lemma}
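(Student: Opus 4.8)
The plan is to exploit the description, recalled just before Lemma~\ref{lm-abelian}, of the additive structure of $\FF-\HH$: the sum of two morphisms is the convolution product, and the full subcategory $\FF-\Exp_c$ inherits this structure. Hence for a morphism $f:E\to E'$ of exponential functors the $k$-fold sum $k\cdot f$ is the convolution power $f^{\star k}=(\mu')^{(k)}\circ f^{\otimes k}\circ\Delta^{(k)}$, where $\Delta^{(k)}$ and $(\mu')^{(k)}$ are the (unambiguously defined, by coassociativity and associativity) iterated comultiplication and multiplication. Since $f$ is in particular a morphism of functorial coalgebras, an immediate induction gives $f^{\otimes k}\circ\Delta^{(k)}=(\Delta')^{(k)}\circ f$, so that $f^{\star k}=(\Id_{E'})^{\star k}\circ f$. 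Everything thus reduces to computing the convolution powers of the identity, and I claim that
$$(\Id_E)^{\star k}_V=E(k\,\Id_V)\qquad\text{for all }V,$$
from which $(k\cdot f)_V=E'(k\,\Id_V)\circ f_V$ follows.

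To prove this key identity I would work directly from the exponential structure. Write $\delta_V^{(k)}:V\to V^{\oplus k}$ and $\sigma_V^{(k)}:V^{\oplus k}\to V$ for the iterated diagonal and fold, and $\phi_V^{(k)}:E(V)^{\otimes k}\xrightarrow{\simeq}E(V^{\oplus k})$ for the iterated exponential isomorphism. Unwinding the definitions of $\Delta$ and $\mu$ gives $\Delta^{(k)}_V=(\phi_V^{(k)})^{-1}\circ E(\delta_V^{(k)})$ and $\mu^{(k)}_V=E(\sigma_V^{(k)})\circ\phi_V^{(k)}$; composing these, the isomorphisms $\phi_V^{(k)}$ cancel and one is left with $E(\sigma_V^{(k)}\circ\delta_V^{(k)})$. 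As $\sigma_V^{(k)}\circ\delta_V^{(k)}=k\,\Id_V$ in the additive category $\V$ (respectively in $\Proj_\kk$), this equals $E(k\,\Id_V)$. I expect this step to be the main obstacle: one has to feed $\Delta^{(k)}$ and $\mu^{(k)}$ through the inverse of the same iterated isomorphism $\phi_V^{(k)}$ so that they cancel, which requires keeping careful track of the associativity coherence implicit in writing unambiguous iterated maps.

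Next, to see that the characteristic divides $n$, observe that $n\,\Id_V=0$ for every $V$: this holds because $\V$ has characteristic $n$ in the ordinary setting, and because $n=0$ in $\kk$ makes $n\,\Id_V$ the zero map of $\kk$-modules in the strict setting. Consequently $E'(n\,\Id_V)=E'(0_V)=\eta'_V\epsilon'_V$, where $0_V$ is the zero endomorphism of $V$. Since $\epsilon'f=\epsilon$, this yields $(n\cdot f)_V=\eta'_V\epsilon_V$, which is exactly the zero morphism $E\to E'$. Hence $n\cdot f=0$ for every morphism $f$, so all endomorphism rings of $\FF-\Exp_c$ have characteristic dividing $n$.

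Finally, for the reverse direction I would exhibit in each setting an exponential functor $E$ with $k\cdot\Id_E\ne 0$ for $0<k<n$; by the formula above this amounts to finding $V$ with $E(k\,\Id_V)\ne\eta_V\epsilon_V$. In the ordinary setting, fix $V_0$ whose endomorphism ring has characteristic exactly $n$ and take $E=\kk G$ with $G=\Hom_\V(V_0,-):\V\to\Ab$ the additive representable functor. Then $E(k\,\Id_{V_0})$ is the algebra endomorphism of $\kk[\End_\V(V_0)]$ induced by multiplication by $k$ on the group $\End_\V(V_0)$, sending the basis vector $[\Id_{V_0}]$ to $[k\,\Id_{V_0}]$, which differs from the unit $[0]=\eta_{V_0}\epsilon_{V_0}([\Id_{V_0}])$ precisely because $k\,\Id_{V_0}\ne 0$ for $0<k<n$. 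In the strict setting, where $\kk$ has prime characteristic $n=p$, take $E=S_{\pm}(I)$ (or $\Gamma_{\pm}(I)$) for the weight-one additive functor $I$: on the weight-one summand $I(\kk)=\kk$ the endomorphism $k\cdot\Id_E$ acts by multiplication by $k$, which is nonzero for $0<k<p$. In both cases $\End_{\FF-\Exp_c}(E)$ has characteristic exactly $n$, which with the previous paragraph shows $\FF-\Exp_c$ has characteristic $n$. A secondary subtlety worth flagging is that a naive test functor such as $S_{\pm}(A)$ can fail to detect $k\,\Id_{V_0}$ when the arithmetic of $\kk$ and $n$ are incompatible, which is why the group-algebra functor $\kk\,\Hom_\V(V_0,-)$ is the natural faithful choice in the ordinary case.
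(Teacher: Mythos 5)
Your proof is correct and follows the same route as the paper's: the key identity $(\Id_E)^{\star k}=E(k\cdot)$, the observation that $E(0_V)=\eta_V\epsilon_V$ forces the characteristic to divide $n$, and the test objects $S$ (strict case) and an exponential functor attached to an object $V_0$ whose endomorphism ring has characteristic exactly $n$ (ordinary case). You merely spell out details the paper leaves implicit — the reduction of $k\cdot f$ to $(\Id_{E'})^{\star k}\circ f$ via the coalgebra-morphism property, and the explicit choice of the group algebra $\kk\,\Hom_\V(V_0,-)$ where the paper just writes $E_V$ — all of which is sound.
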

\begin{proof}
Let $k\cdot$ be the multiplication by $k$ in the category $\V$. By definition of the structure morphisms of $E$ we have for all integers $k$:
$$(\Id_E)^{\star k}=E(k\cdot)\qquad (*)$$ 
Thus $(\Id_E)^{\star n}=E(n\cdot)=E(0\cdot)=\eta\epsilon$ (the zero morphism of $\mathcal{F}-\Exp_c$). Thus, it remains to find an object of $\mathcal{F}-\Exp_c$ whose endomorphism ring has characteristic $n$. For strict exponential functors, such an object is provided by $S$ (the symmetric algebra of a $\kk$-vector space). 
For ordinary exponential functors, let $V$ be an object of $\V$ whose endomorphism ring has characteristic $n$. If $0<k<n$ then $k\cdot:V\to V$ is nonzero, hence the induced morphism $E_V\to E_V$ is not trivial, so that the endomorphism ring of $E_V$ has characteristic $n$. 
\end{proof}

\subsection{Some connections with additive functors}

The structure of exponential functors is tightly connected with additive functors. The next lemma is a first explanation of this fact.

\begin{lemma}\label{lm-additivite} Let $E$ be an exponential functor. The functor of primitives $PE$ is the largest additive subfunctor of $E$, and the functor of indecomposables $QE$ is the largest additive quotient of $E$.
\end{lemma}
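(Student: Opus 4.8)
The plan is to prove the two dual assertions in parallel, carrying out the argument for $PE$ in detail and indicating the modifications for $QE$. Throughout I use that the exponential isomorphism $\phi_{V,W}\colon E(V)\otimes E(W)\to E(V\oplus W)$ is an isomorphism of functorial graded Hopf algebras: that it is an algebra (resp.\ coalgebra) isomorphism is exactly the content of the commutative diagrams appearing in the proof of Lemma \ref{lm-commut}, and by naturality of $\phi$ together with the unit axiom it identifies $E(\iota_V)$ with $x\mapsto\phi_{V,W}(x\otimes 1)$ and $E(p_V)$ with the corresponding projection.

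First I would check that $PE$ and $QE$ are additive. Since $\kk$ is a field, the primitives of a tensor product of Hopf algebras split as $P(H\otimes H')\cong PH\oplus PH'$, and dually $Q(H\otimes H')\cong QH\oplus QH'$. Applying $P$ and $Q$ to the Hopf algebra isomorphism $\phi_{V,W}$ therefore yields $PE(V\oplus W)\cong PE(V)\oplus PE(W)$ and $QE(V\oplus W)\cong QE(V)\oplus QE(W)$; using the identification of $E(\iota_V)$ and $E(p_V)$ above, one checks that these isomorphisms are induced by the inclusions $\iota_V,\iota_W$ (resp.\ the projections $p_V,p_W$), so that $PE$ and $QE$ are additive. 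Note also that $PE(0)=QE(0)=0$ because $E(0)=\kk$.

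Then comes the maximality, which is the heart of the lemma and rests on a single observation: an additive functor sends a categorical sum of parallel morphisms to the sum of the images. Let $A\subseteq E$ be an additive subfunctor and $x\in A(V)$. Since $A(0)=0$, naturality of the inclusion forces $\epsilon(x)=0$, so $x\in\overline E$. Writing the diagonal as $\delta_V=\iota_1+\iota_2$ in $\V$ and using additivity of $A$ together with the fact that $A\hookrightarrow E$ is a natural $\kk$-linear inclusion, I get
$$E(\delta_V)(x)=E(\iota_1)(x)+E(\iota_2)(x)=\phi_{V,V}(x\otimes 1)+\phi_{V,V}(1\otimes x),$$
whence $\Delta_V(x)=\phi_{V,V}^{-1}E(\delta_V)(x)=x\otimes 1+1\otimes x$; thus $x$ is primitive and $A\subseteq PE$. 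Dually, for a surjection $q\colon E\to B$ onto an additive functor $B$, naturality applied to the zero endomorphism of $V$ gives $q\circ\eta=0$, so $q$ kills the unit and hence factors through $\overline E$ via the splitting $E=\kk\oplus\overline E$. For $x,y\in\overline E(V)$ one writes $\sigma_V=p_1+p_2$ and computes, using additivity of $B$, naturality of $q$, and the fact that each $E(p_i)$ is a Hopf algebra map sending $\phi_{V,V}(x\otimes y)$ to a product with a factor $\eta\epsilon(x)=0$ or $\eta\epsilon(y)=0$, that $q\circ\mu$ vanishes on $\overline E\otimes\overline E$; hence $q$ factors through the canonical quotient $E\to QE$, showing $QE$ is the largest additive quotient.

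The computations are routine once the right identifications are in place; the only genuine obstacle is to ensure that the morphism-additivity property $A(f+g)=A(f)+A(g)$ (equivalently, that an additive subfunctor or quotient is additive on hom-sets, not merely on objects) is available and is applied to the correct decompositions $\delta_V=\iota_1+\iota_2$ and $\sigma_V=p_1+p_2$. Everything else — the splitting of primitives and indecomposables of a tensor product, and the identification of $E(\iota_V)$ and $E(p_V)$ through $\phi$ — is standard Hopf-algebra bookkeeping valid over a field.
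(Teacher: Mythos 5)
Your proof is correct. The first half (additivity of $PE$ and $QE$, obtained by applying the splittings $P(H\otimes H')\simeq PH\oplus PH'$ and $Q(H\otimes H')\simeq QH\oplus QH'$ to the exponential isomorphism) is exactly the paper's argument. For maximality, however, you take a genuinely different and more elementary route: the paper disposes of it in one line by citing Pirashvili's vanishing lemma (fact \ref{lm-cancel}), which gives $\Hom(A,\overline{E}^{\otimes 2})=0$ for $A$ additive, so that every morphism $A\to E$ factors through $PE=\ker(\overline{E}\to\overline{E}^{\otimes 2})$, and dually for $QE$. You instead use the classical Eilenberg--Mac Lane fact that a reduced biproduct-preserving functor is additive on hom-sets, applied to the decompositions $\delta_V=\iota_1+\iota_2$ and $\sigma_V=p_1+p_2$, to compute directly that elements of an additive subfunctor are primitive and that an additive quotient kills $\overline{E}^2$; your calculation of $E(p_i)\circ\phi_{V,V}$ via naturality and the counit is the right way to see that the products degenerate. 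In effect you give a self-contained proof of the degree-zero ($\Hom$-level) case of Pirashvili's lemma in the situation at hand, which is arguably more illuminating and avoids any appeal to the appendix; what the paper's formulation buys in exchange is the slightly stronger (though not needed here) statement that \emph{every} morphism from, resp.\ to, an additive functor factors through $PE$, resp.\ $QE$, not only subfunctor inclusions and quotient maps. The one hinge of your argument that deserves to be stated explicitly in a final write-up is the equivalence between object-level additivity ($A(V)\oplus A(W)\xrightarrow{\simeq}A(V\oplus W)$ for reduced $A$) and additivity on hom-sets, which is classical and remains valid for the underlying functors of strict analytic functors, so both settings of the lemma are covered.
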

\begin{proof}
Given two augmented coalgebras $C_1$ and $C_2$ we have $P(C_1\otimes C_2)= PC_1\oplus PC_2$. By taking $C_1=E(V)$ and $C_2=E(W)$, one obtains that $PE$ is additive. Moreover, if $A$ is a graded additive functor there is no nonzero morphism $A\to \overline{E}^{\otimes 2}$ by Pirashvili's vanishing lemma \ref{lm-cancel}. Thus all morphisms $A\to E$ factor through $PE$. The proof for $QE$ is similar.
\end{proof}

Let $E$ be a strict exponential functor over a field $\kk$ of positive characteristic $p$. Since Frobenius twist functors are additive, the composition $E^{(r)}:=E\circ I^{(r)}$ (with all structure morphisms precomposed by $I^{(r)}$) is again an exponential functor. 
Thus, for all nonnegative integers $r$ we have a functor
$$\begin{array}[t]{cccc}
-\circ I^{(r)}: & \PP_{\omega,\kk}-\Exp_c &\to &\PP_{\omega,\kk}-\Exp_c\\
& E & \mapsto & E^{(r)}
\end{array}.
$$
The degrees and weights of $E^{(r)}$ are related to the ones of $E$ by the equation: 
$$w_k(E^{(r)})^i=\begin{cases}0 & \text{if $k$ is not divisible by $p^r$,}\\
(w_\ell E^i)^{(r)} & \text{if $k=p^r\ell$.}
\end{cases}$$
In particular, $w_k(E^{(r)})=0$ for $0<k<p^r$.
\begin{theorem}\label{thm-twist}
The functor $-\circ I^{(r)}$ is fully faithful. A strict exponential functor $E$ lies in the image of $-\circ I^{(r)}$ if and only if $w_kE=0$ for $0<k<p^r$. 
\end{theorem}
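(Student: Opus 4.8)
The plan is to establish the two assertions separately, reducing both to the two classical structural properties of the Frobenius twist on strict polynomial functors (full faithfulness, and thickness of the essential image) recalled in appendix \ref{subsec-app-VF-strict}, and feeding the exponential structure in through lemmas \ref{lm-additivite} and \ref{lm-control-strict}. For full faithfulness I would start from the classical fact that the underlying precomposition functor $-\circ I^{(r)}$ on strict analytic functors is fully faithful. Faithfulness on exponential functors is then immediate, and for fullness I would take a morphism $h:E^{(r)}\to E'^{(r)}$ of exponential functors, produce the unique underlying $f:E\to E'$ with $f^{(r)}=h$, and observe that, since $\otimes$ is computed objectwise and hence commutes with $-\circ I^{(r)}$, the identities expressing that $h$ respects $\mu,\eta,\Delta,\epsilon$ are obtained by applying $-\circ I^{(r)}$ to the corresponding identities for $f$; these therefore hold for $f$ by faithfulness, so $f$ is a morphism of exponential functors.

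The necessity of the weight condition is immediate from the weight formula displayed just before the statement. For sufficiency, assume $w_kE=0$ for $0<k<p^r$. First I would analyze the indecomposables: by lemma \ref{lm-additivite} the functor $QE$ is additive, hence by lemma \ref{lm-control-strict} a direct sum of Frobenius twists (in various homological degrees); the weight hypothesis, inherited by the quotient $QE$, forces every summand to be some $I^{(s)}$ with $s\ge r$, so using $I^{(s)}=(I^{(s-r)})^{(r)}$ one gets $QE\cong A^{(r)}$ for an additive functor $A$. Next I would use that the weight defines a grading of the functorial Hopf algebra $E$ (the morphisms $\phi$ and $\mu$ preserve weight) for which $E$ is connected, since $w_0E=E(0)=\kk$, and whose algebra indecomposables are exactly $QE$. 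A connected graded algebra is generated by any lift of its indecomposables; as $QE$ is concentrated in weights divisible by $p^r$, this shows that $E$ is concentrated in weights divisible by $p^r$.

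Then I would prove by induction on $d$ that each $w_{p^rd}E$ lies in the image of $-\circ I^{(r)}$. The cases $d=0,1$ are clear, since $w_0E=\kk$ and $w_{p^r}E=w_{p^r}QE$ (there are no decomposables in weight $p^r$). For $d\ge 2$, I would consider the short exact sequence of strict polynomial functors $0\to D\to w_{p^rd}\overline{E}\to w_{p^rd}QE\to 0$, where $D$ is the image of $\mu$ in weight $p^rd$. The source of $\mu$ in weight $p^rd$ is a sum of tensor products $w_{p^ri}E\otimes w_{p^rj}E$ with $i+j=d$ and $i,j\ge 1$, which are twists by induction; since twists are closed under tensor products and the essential image of $-\circ I^{(r)}$ is closed under quotients, $D$ is a twist, and since $w_{p^rd}QE$ is a twist and the image is closed under extensions, $w_{p^rd}E$ is a twist. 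Writing $w_{p^rd}E=G_d^{(r)}$ and $F=\bigoplus_d G_d$ gives $E\cong F^{(r)}$ as underlying functors; finally, by the full faithfulness of $-\circ I^{(r)}$ on underlying functors each structure morphism of $E$ is the twist of a unique morphism of $F$, and the Hopf-algebra axioms descend by faithfulness, so $F$ is an exponential functor with $F^{(r)}\cong E$ in $\PP_{\omega,\kk}-\Exp_c$.

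The step carrying the real weight is the appeal to the two classical properties of the Frobenius twist on strict polynomial functors: its full faithfulness, and the thickness of its essential image (closure under subobjects, quotients and extensions, reflecting that this image is a union of blocks of $\PP_{p^rd,\kk}$); everything else is a formal consequence of the exponential structure. I would emphasize that this structure is genuinely indispensable: a single homogeneous functor such as $S^2$ satisfies the weight condition yet is not a Frobenius twist, the point being that only for an exponential functor is the weight-graded Hopf algebra generated in, and hence concentrated in, weights divisible by $p^r$.
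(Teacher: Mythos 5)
Your proof is correct and takes essentially the same route as the paper: both arguments reduce everything to the full faithfulness and thickness of the essential image of $-\circ I^{(r)}$ on strict analytic functors (fact \ref{fact-thickF}), combined with the identification $QE\simeq A^{(r)}$ obtained from lemmas \ref{lm-additivite} and \ref{lm-control-strict}, and both descend the structure morphisms at the end by full faithfulness. The only difference is organizational: where the paper runs the generation-by-indecomposables argument through the augmentation filtration and the epimorphism $S_{\pm}(QE)\to\mathrm{gr}\,E$, you run a direct induction on the weight using the exact sequence $0\to \mathrm{im}(\mu)\to\overline{E}\to QE\to 0$ in each weight, which is an equivalent and, if anything, slightly more self-contained bookkeeping of the same idea.
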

\begin{proof}
That $-\circ I^{(r)}$ is fully faithful is a formal consequence of fact \ref{fact-thickF}. We have already seen that if $E$ is in the image of $-\circ I^{(r)}$, then $w_kE=0$ for $0<k<p^r$. Conversely, let $E$ be a strict exponential functor such that $w_kE=0$ for $0<k<p^r$.
The powers of $\overline{E}$ yield the augmentation filtration of $E$, with associated object: 
$$\mathrm{gr}E=\bigoplus_{n\ge 0}\overline{E}^n/\overline{E}^{n+1}\;.$$
Since $E$ is graded, each $\overline{E}^n/\overline{E}^{n+1}$ is a graded strict analytic functor, so $\mathrm{gr}E$ is a graded strict analytic functor (we don't consider the filtration grading on $\mathrm{gr}E$). Then $\mathrm{gr}E$ inherits an algebra structure from $E$, and the canonical morphism yields an epimorphism of functorial graded algebras
$$S_{\pm}(QE)=S_{\pm}(\overline{E}/\overline{E}^{2})\to\mathrm{gr}E\;.\quad(*)$$
By lemma \ref{lm-additivite} and the classification of additive strict analytic functors of lemma \ref{lm-control-strict}, $QE$ is a direct sum of Frobenius twist functors, and the hypothesis that $w_kE=0$ for $0<k<p^r$ implies that only Frobenius twist $I^{(k)}$ with $k\ge r$ appear as summands of $QE$. Since $I^{(i)}\circ I^{(r)}=I^{(r+i)}$, we have $QE=A^{(r)}$ for some graded additive functor $A$. Thus $(*)$ reinterprets as an epimorphism of graded strict analytic functors:
$$S_{\pm}(A^{(r)})=S_{\pm}(A)^{(r)}\to \mathrm{gr} E\;.\qquad(**)$$
By lemma \ref{fact-thickF}, the epimorphism $(**)$ implies that each graded strict analytic functor $\overline{E}^n/\overline{E}^{n+1}$ is of the form ${G^n}^{(r)}$ for some graded strict analytic functor $F^n$. By lemma \ref{fact-thickF} again, this implies in turn that each quotient $E/\overline{E}^n$ is of the form ${G^n}^{(r)}$ for some graded strict analytic functor $G^n$. Now $w_0\overline{E}=0$, and the multiplication preserves weights, hence $w_k\overline{E}^{n}=0$ for $k<n$. Thus $w_kE=w_k(E/\overline{E}^k)$. Hence the graded strict analytic functor $E$ equals the precomposition by $I^{(r)}$ of the graded strict analytic functor $E'=\bigoplus_{i\ge 0} w_iG^i$. To finish the proof, we observe that precomposition by the Frobenius twist functor $I^{(r)}$ being fully faithful by fact \ref{fact-thickF}, the structure morphisms of the exponential functor $E$ are obtained by precomposition by $I^{(r)}$ from uniquely determined structure morphisms for $E'$.
\end{proof}

We have seen in lemma \ref{lm-classif-carzero} that when $\kk$ is a field of characteristic zero all connected exponential functors have the form $E\simeq S_{\pm}(QE)$. The next theorem gives a characterization of exponential functors of this form in positive characteristic. This characterization is useful in practice because it is stated in terms of graded $\kk$-algebras, i.e. it does not require any information regarding comultiplication or functoriality.

\begin{theorem}\label{thm-alg} Let $\kk$ be a field of positive characteristic $p$ and let $\mathcal{F}$ be either $\mathcal{P}_{\omega,\kk}$ or $\Fct(\Proj_R,\Mod)$ for some ring $R$ of characteristic $p$. Assume that $\mathcal{F}_{\mathrm{add}}$ has homological dimension zero. 
If $E$ is a connected exponential functor and if there is a $V$ such that $E(V)$ is a free graded commutative $\kk$-algebra, then there is an isomorphism of exponential functors $E\simeq S_{\pm}(QE)$.
\end{theorem}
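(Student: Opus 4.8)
The plan is to build a comparison morphism and to reduce to the primitively generated case by peeling off Frobenius twists via Theorem \ref{thm-twist}. Throughout, recall that $QE$ is additive (Lemma \ref{lm-additivite}), that there is a canonical epimorphism of functorial graded algebras $S_{\pm}(QE)=S_{\pm}(\overline{E}/\overline{E}^2)\twoheadrightarrow \mathrm{gr}\,E$ (as in the proof of Theorem \ref{thm-twist}), and that by Lemma \ref{lm-defbis} any morphism of the underlying functorial algebras between two exponential functors is automatically a morphism of exponential functors. So it will be enough to produce an algebra isomorphism $S_{\pm}(QE)\to E$.

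First I would globalize the freeness hypothesis. Since $E(V^{\oplus n})\simeq E(V)^{\otimes n}$ is again free, and for a summand $W$ of $V^{\oplus n}$ functoriality on the corresponding idempotent realizes $E(W)$ as a tensor factor of $E(V^{\oplus n})$ inside the abelian category $\mathcal{F}-\Exp_c$ (Lemma \ref{lm-abelian}), and since a tensor factor of a free graded commutative algebra is free, I get that $E(W)$ is free for every $W\in\add(V)$. Comparing Poincaré series then shows that the canonical surjection $S_{\pm}(QE)\to\mathrm{gr}\,E$ is an isomorphism on $\add(V)$ (both sides are the free graded commutative algebra on $QE(W)=Q(E(W))$), and since its kernel is an exponential functor whose indecomposables are additive and vanish on $\add(V)$, additive detection (evaluation at the simple summand $\kk$ of $V$ in the strict setting via Lemma \ref{lm-control-strict}, respectively at a generator in the ordinary setting via Lemma \ref{lm-control-ordinary}) upgrades this to a global isomorphism $S_{\pm}(QE)\xrightarrow{\sim}\mathrm{gr}\,E$. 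Thus $E$ and $S_{\pm}(QE)$ already have the same underlying functor; what remains is to match the exponential structure, i.e. to prove that $E$ is primitively generated.

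For the latter I would argue by induction on weights. The indecomposables of minimal positive weight are automatically primitive (there is no room for a nonzero reduced coproduct), so they define an additive subfunctor $A\subseteq PE$; more generally the part of $QE$ of weight $<p$ is primitive. The heart of the proof is to split this off as an exponential tensor factor, $E\simeq S_{\pm}(A)\otimes E'$, with $E'$ satisfying $w_kE'=0$ for $0<k<p$. Granting this, Theorem \ref{thm-twist} gives $E'\simeq E''\circ I^{(1)}$ for a unique exponential functor $E''$; since $E'(V)=E''(V^{(1)})$ is free and $V^{(1)}\neq 0$, the induction hypothesis yields $E''\simeq S_{\pm}(QE'')$, whence $E'\simeq S_{\pm}(QE'')\circ I^{(1)}=S_{\pm}(QE')$ and finally $E\simeq S_{\pm}(A\oplus QE')=S_{\pm}(QE)$. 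The splittings of additive functors needed along the way (of $A$ off $PE$, and of the various short exact sequences of additive functors) are available because $\mathcal{F}_{\mathrm{add}}$ has homological dimension zero.

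The main obstacle is exactly this splitting of the low-weight primitives as a tensor factor. It is here, and only here, that algebra-freeness must be combined with functoriality: for an abstract connected bicommutative Hopf algebra, being free as an algebra does \emph{not} force primitive generation — one can equip a free graded commutative algebra with an exotic, non-split coproduct having nonzero Verschiebung — so the isomorphism $S_{\pm}(QE)\simeq\mathrm{gr}\,E$ of the previous step cannot by itself be lifted to $E$. The way through is to use that the Verschiebung of an exponential functor is governed by the Frobenius-twist decomposition controlled by Theorem \ref{thm-twist}, and that the freeness of $E(V)$ forces it to vanish; establishing this vanishing (equivalently, that the relevant extension of exponential functors splits) is the non-formal core of the argument.
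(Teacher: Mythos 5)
You have correctly located the crux of the theorem: one must show that $E$ (after splitting off the exterior part) is primitively generated, and you rightly observe that algebra-freeness alone does not force this for an abstract connected bicommutative Hopf algebra. But your proposal does not close this gap. The splitting $E\simeq S_{\pm}(A)\otimes E'$ of the low-weight primitives, which you yourself call ``the non-formal core of the argument,'' is exactly the point left unproved, so the proof is incomplete as written. Worse, the route you sketch for it --- an induction on weights, peeling off Frobenius twists via theorem \ref{thm-twist} --- is only available for strict exponential functors: in the ordinary setting $\Fct(\Proj_R,\Mod)$ there is no weight decomposition and no analogue of theorem \ref{thm-twist}, so the induction cannot even begin there (and even in the strict case one would have to justify that the process converges). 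Your preliminary ``globalization'' and ``additive detection'' steps also quietly assume that a generator of the source category lies in $\add(V)$, which need not hold for an arbitrary nonzero $V$ in the ordinary case.

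The missing idea is much more direct, and it is exactly where the hypothesis that $R$ (or $\kk$) has characteristic $p$ enters --- a hypothesis your argument never uses. By lemma \ref{lm-charact} the category of exponential functors has characteristic $p$, i.e. $\Id_{E'}^{\star p}=E'(p\,\cdot)=E'(0)=\eta\epsilon$. Since $\mathbf{F}\mathbf{V}=\Id^{\star p}$, the Verschiebung of each Hopf algebra $E'(W)$ vanishes as soon as its Frobenius is injective; and the Frobenius is injective because the algebra $E'(W)$ is a direct summand of the free graded commutative algebra $E'(V^{\oplus k})\simeq E'(V)^{\otimes k}$ for $k$ large, whose Frobenius is injective. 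Vanishing of $\mathbf{V}$ is equivalent to primitive generation, after which your endgame essentially agrees with the paper's: the homological-dimension-zero hypothesis splits the epimorphism $PE'\twoheadrightarrow QE'$, the resulting map $S(QE')\to E'$ is epi by connectedness and mono because source and target are free. You should replace the weight-induction scheme by this Frobenius--Verschiebung argument; the comparison with $\mathrm{gr}\,E$ via Poincar\'e series then becomes unnecessary.
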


\begin{remark}
The assumption on  the homological dimension of $\mathcal{F}_{\mathrm{add}}$ is always satisfied in the strict setting, see section \ref{subsubsec-setting-add}. We show in section \ref{sec-ordvsHopf} that theorem \ref{thm-alg} becomes false if we remove this assumption or the assumption on the characteristic of $R$. 
\end{remark}

\noindent
{\bf Proof of theorem \ref{thm-alg}. }
If $\kk$ has odd characteristic then $E\simeq E'\otimes \Lambda(QE^{\mathrm{odd}})$ by the splitting principle of lemma \ref{lm-splitting-principle}. If $\kk$ has characteristic $2$,  we let $E=E'$. In both cases $E'$ is commutative in the ungraded sense, and in order to prove theorem \ref{thm-alg} it suffices to prove that $E'\simeq S(QE')$. 

We first prove that $E'$ is primitively generated, i.e. for all $W$ the Hopf algebra $E'(W)$ is primitively generated. We use the Frobenius $\mathbf{F}_H:{^{(1)}}H\to H$ and the Verschiebung $\mathbf{V}_H:H\to {^{(1)}}H$ recalled in appendix \ref{app-VF}. We have $\mathbf{F}_H \mathbf{V}_H= \Id_H^{\star p}$ \cite[IV \S 3 4.10]{DG} and $H$ is primitively generated if and only if $\mathbf{V}_H$ is trivial \cite[IV \S 3 Thm 6.6]{DG}. Since the endomorphism ring of $E'(W)$ has characteristic $p$ by lemma \ref{lm-charact}, it therefore suffices to prove that the Frobenius of $E'(W)$ is injective. The latter follows from the fact that the algebra $E'(W)$ is a direct summand of the free commutative algebra $E'(V^{\oplus k})\simeq E'(V)^{\otimes k}$ for $k$ sufficiently big.

Since $E'$ is primitively generated and connected, the canonical morphism $PE'\to QE'$ is an epimorphism. But $PE'$ and $QE'$ are additive by lemma \ref{lm-additivite} and the category of additive functors has homological dimension zero, so $PE'\to QE'$ has a section $s$. 
The composite morphism:
$QE'\xrightarrow[]{s}PE'\hookrightarrow E'$
induces a morphism of connected functorial graded algebras:
$f:S(QE')\xrightarrow[]{}E'$.
By construction, $Qf:QE'\to QE'$ is an isomorphism. Since $Qf$ is epi and $E'$ connected, $f$ is epi \cite[Prop 3.8]{MilnorMoore}. Since $Qf$ is mono and both the source and the target of $f$ are free graded commutative algebras, $f$ must be mono. Indeed, this follows from the fact that $S(Qf)=\mathrm{gr} f: \mathrm{gr} S(QE')\to
\mathrm{gr} E $ (where $\mathrm{gr}$ refers to the graded object associated to the augmentation filtration) is mono, and the fact that the augmentation filtration on $S(QE')$ is Hausdorff and complete in the category of connected graded algebras. 
\qed

We also have a dual theorem of \ref{thm-alg}, whose proof is similar.

\begin{theorem}\label{thm-coalg} Let $\kk$ be a field of positive characteristic $p$ and let $\mathcal{F}$ be either $\mathcal{P}_{\omega,\kk}$ or $\Fct(\Proj_R,\Mod)$ for some ring $R$ of characteristic $p$. Assume that $\mathcal{F}_{\mathrm{add}}$ has homological dimension zero. 
If $E$ is a connected exponential functor and if there is a $V$ such that $E(V)$ is a cofree connected graded cocommutative $\kk$-coalgebra, then there is an isomorphism of exponential functors $E\simeq \Gamma_{\pm}(PE)$.
\end{theorem}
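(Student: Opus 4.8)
The plan is to dualise the proof of theorem \ref{thm-alg} line by line, replacing the Frobenius $\mathbf{F}_H$ by the Verschiebung $\mathbf{V}_H$, and the words ``free / primitively generated'' by ``cofree / coprimitively cogenerated'' throughout. First I would reduce to the even case. If $\kk$ has odd characteristic, the splitting principle (lemma \ref{lm-splitting-principle}) gives $E\simeq E'\otimes \Lambda(PE^{\mathrm{odd}})$ with $E'$ concentrated in even degrees and with exterior factor $\Lambda(PE^{\mathrm{odd}})\simeq \Gamma_{\pm}(PE^{\mathrm{odd}})$; if $\kk$ has characteristic $2$ I set $E'=E$. Since $\Gamma_{\pm}(PE)=\Gamma(PE^{\mathrm{even}})\otimes\Lambda(PE^{\mathrm{odd}})$, it then remains to prove $E'\simeq\Gamma(PE')$, where $E'$ is cocommutative in the ungraded sense and the hypothesis passes to the even factor, so that $E'(V)$ is a cofree connected cocommutative coalgebra (a divided power coalgebra).

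Then I would show that $E'$ is coprimitively cogenerated, i.e. that for every $W$ the Verschiebung $\mathbf{V}_{E'(W)}$ is surjective and hence the Frobenius $\mathbf{F}_{E'(W)}$ vanishes. Indeed, the relation $\mathbf{F}_H\mathbf{V}_H=\Id_H^{\star p}$ \cite[IV \S 3 4.10]{DG} together with lemma \ref{lm-charact} gives $\mathbf{F}_{E'(W)}\mathbf{V}_{E'(W)}=\Id^{\star p}=\eta\epsilon=0$, so once $\mathbf{V}_{E'(W)}$ is an epimorphism the identity $\mathbf{F}_{E'(W)}\mathbf{V}_{E'(W)}=0$ forces $\mathbf{F}_{E'(W)}=0$. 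To see that $\mathbf{V}_{E'(W)}$ is surjective I would use that the Verschiebung of a cofree coalgebra is surjective (dually to the injectivity of the Frobenius of a free commutative algebra), that cofreeness is preserved by tensor products through the exponential isomorphism $\Gamma_{\pm}(U)\otimes\Gamma_{\pm}(U')\simeq\Gamma_{\pm}(U\oplus U')$, and that $E'(W)$ is a Hopf-algebra direct summand of the cofree coalgebra $E'(V^{\oplus k})\simeq E'(V)^{\otimes k}$ for $k$ large (as $W$ is a retract of $V^{\oplus k}$ in the settings considered); surjectivity of the Verschiebung is inherited by retracts. Dually to \cite[IV \S 3 Thm 6.6]{DG}, $\mathbf{F}_{E'}=0$ means that $E'$ is coprimitively cogenerated, so each $E'(W)$ is cofree and the canonical map $PE'\to QE'$ is a monomorphism.

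Finally I would assemble the functorial isomorphism. By lemma \ref{lm-additivite} the functors $PE'$ and $QE'$ are additive, and since the category of additive functors has homological dimension zero the monomorphism $PE'\to QE'$ admits a retraction $\rho\colon QE'\to PE'$. The linear map $\overline{E'}\twoheadrightarrow QE'\xrightarrow[]{\rho}PE'$ determines, by the universal property of the cofree coalgebra, a morphism of functorial coalgebras $g\colon E'\to\Gamma(PE')$, which is a morphism of exponential functors; by construction $Pg$ is the identity of $PE'$, hence an isomorphism. Dually to \cite[Prop 3.8]{MilnorMoore}, a morphism of connected coalgebras which is a monomorphism on primitives is a monomorphism, so $g$ is mono; and since both $E'(W)$ and $\Gamma(PE'(W))$ are cofree and $Pg$ is an epimorphism, $g$ is epi --- here the coradical filtration, which is exhaustive (dually to the augmentation filtration being Hausdorff and complete in theorem \ref{thm-alg}), plays the role of the augmentation filtration. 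Therefore $g$ is an isomorphism, giving $E'\simeq\Gamma(PE')$ and hence $E\simeq\Gamma_{\pm}(PE)$.

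I expect the main obstacle to be the middle step, namely proving $\mathbf{F}_{E'}=0$ uniformly in $W$. The delicate points are identifying the correct dual statements (surjectivity of the Verschiebung for cofree coalgebras and the dual of \cite[IV \S 3 Thm 6.6]{DG}) and checking that $E'(W)$ is genuinely a Hopf-algebra retract of $E'(V^{\oplus k})$, which requires $W$ to be a retract of a finite sum of copies of $V$; this holds in the strict setting (for any nonzero $V$) and when $\V=\Proj_R$ with $V$ a generator, and is precisely the dual of the ``direct summand of a free commutative algebra'' argument used in theorem \ref{thm-alg}. By contrast, the harder direction of the filtration argument in theorem \ref{thm-alg} (completeness, used for injectivity) dualises to the easier exhaustiveness of the coradical filtration, so the final step should present no real difficulty.
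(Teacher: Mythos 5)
Your proposal is correct and is precisely the argument the paper intends: the paper states that the proof of this theorem is "similar" to (i.e.\ the formal dual of) that of theorem \ref{thm-alg}, and you have carried out exactly that dualization, correctly identifying the two points where care is needed (surjectivity of the Verschiebung replacing injectivity of the Frobenius via the retract argument, and exhaustiveness of the coradical filtration replacing completeness of the augmentation filtration). No gaps beyond those already present in the paper's own proof of theorem \ref{thm-alg}.
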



\section{Application to the homology of symmetric groups}\label{sec-sym}

In this section $\kk$ is a field. We apply the results of  section \ref{sec-elementary} to the determination of the homology groups $H_*(\Si_d,V^{\otimes d})$ of the symmetric groups as representations of $GL_\kk(V)$. These representations where first described by Cohen-Hemmer-Nakano \cite{CHN}, relying on classical algebraic topology computations. Our purpose here is to simplify their study by using simple facts regarding exponential functors instead of Dyer-Lashof operations.
Let us consider the homologically graded functor of the vector space $V$: 
$$E(V)=\bigoplus_{i\ge 0} E^i(V)\quad\text{ where }\quad E^i(V)=\bigoplus_{d\ge 0} H_i(\Si_d,V^{\otimes d})\;.$$
By convention, $H_*(\Si_0,V^{\otimes 0})$ is a graded vector space of dimension one, concentrated in degree zero. We make $E(V)$ into a functorial graded algebra by letting the unit be the isomorphism $\eta:\kk\simeq H_0(\Si_0,V^{\otimes 0})$, and the multiplication be the following composition (the first map is induced by the tensor product and the second one  by the canonical inclusion $\Si_d\times \Si_e\subset \Si_{d+e}$): 
$$ H_i(\Si_d,V^{\otimes d})\otimes H_j(\Si_e,V^{\otimes e})\to H_{i+j}(\Si_d\times\Si_e,V^{\otimes d+e})\to H_{i+j}(\Si_{d+e},V^{\otimes d+e})\;.$$

\begin{lemma}\label{lm-hom-sym-exp}
The functorial algebra $E(V)$ is canonically a strict exponential functor of the variable $V$, such that $w_dE^i(V)=H_i(\Si_d,V^{\otimes d})$.
\end{lemma}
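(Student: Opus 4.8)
The plan is to exhibit $E(V)=\bigoplus_{i,d}H_i(\Si_d,V^{\otimes d})$ as a strict analytic functor and then verify that its algebra structure satisfies the two conditions (A1) and (A2) of lemma \ref{lm-defbis}, which by the equivalence of definitions recorded in section \ref{subsec-conventions} is exactly what is needed to promote a functorial graded algebra to an exponential functor. The weight grading is forced by the construction: for fixed $d$, the assignment $V\mapsto H_i(\Si_d,V^{\otimes d})$ is a homogeneous strict polynomial functor of weight $d$, because $V^{\otimes d}$ is the weight-$d$ strict polynomial functor $\otimes^d$ and taking $\Si_d$-homology is an exact-in-$V$ (in fact a direct summand) operation on the category $\PP_{d,\kk}$ that preserves the strict polynomial structure. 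Summing over $i$ and $d$ lands us in $\PP_{\omega,\kk}=\prod_{d\ge 0}\PP_{d,\kk}$, so $E$ is genuinely a graded strict analytic functor, and by inspection $w_dE^i(V)=H_i(\Si_d,V^{\otimes d})$, which is the last assertion of the lemma.

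First I would check (A1). In weight zero the only contribution is $H_*(\Si_0,V^{\otimes 0})$, which is one-dimensional concentrated in homological degree zero by the stated convention, and the unit $\eta$ is defined to be the isomorphism $\kk\simeq H_0(\Si_0,V^{\otimes 0})=E(0)$; so $\eta_0$ is an isomorphism and (A1) holds. The substance is in (A2): I must show that the composite
$$E(V)\otimes E(W)\xrightarrow{E(\iota_V)\otimes E(\iota_W)} E(V\oplus W)^{\otimes 2}\xrightarrow{\mu_{V\oplus W}} E(V\oplus W)$$
is an isomorphism. Unwinding the definition of $\mu$ (induced by $\Si_d\times\Si_e\hookrightarrow\Si_{d+e}$) together with the inclusions $\iota_V,\iota_W$, this composite is, in each weight $n$ and degree $*$, the map
$$\bigoplus_{d+e=n} H_*(\Si_d,V^{\otimes d})\otimes H_*(\Si_e,W^{\otimes e})\longrightarrow H_*(\Si_n,(V\oplus W)^{\otimes n})$$
assembled from the transfer/induction maps. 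The plan is to identify this with the classical computation of the homology of the symmetric groups with these coefficient systems: expanding $(V\oplus W)^{\otimes n}$ as a $\Si_n$-module gives $(V\oplus W)^{\otimes n}\simeq\bigoplus_{d+e=n}\mathrm{Ind}_{\Si_d\times\Si_e}^{\Si_n}(V^{\otimes d}\otimes W^{\otimes e})$, a decomposition of $\kk[\Si_n]$-modules indexed by the way each tensor slot chooses a factor. Applying $H_*(\Si_n,-)$ and using the Shapiro/Eckmann isomorphism $H_*(\Si_n,\mathrm{Ind}_{\Si_d\times\Si_e}^{\Si_n}M)\simeq H_*(\Si_d\times\Si_e,M)$ followed by the Künneth isomorphism $H_*(\Si_d\times\Si_e,V^{\otimes d}\otimes W^{\otimes e})\simeq H_*(\Si_d,V^{\otimes d})\otimes H_*(\Si_e,W^{\otimes e})$ (valid over a field) yields precisely the domain of the map above, and one checks that the resulting identification is exactly the composite induced by $\mu$ and the $\iota$'s. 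Hence (A2) holds.

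The main obstacle is the bookkeeping in (A2): one must verify that the abstract isomorphism produced by the Mackey-style decomposition $(V\oplus W)^{\otimes n}\simeq\bigoplus_{d+e=n}\mathrm{Ind}_{\Si_d\times\Si_e}^{\Si_n}(V^{\otimes d}\otimes W^{\otimes e})$, together with Shapiro and Künneth, genuinely coincides with the concretely defined product map $\mu$ on homology (built from $\otimes$ and the group inclusion $\Si_d\times\Si_e\subset\Si_{d+e}$), rather than merely being an abstract isomorphism of the two graded vector spaces. I would handle this by tracing both maps at the chain level: the product $\mu$ is the composite of the external product $H_*(\Si_d,-)\otimes H_*(\Si_e,-)\to H_*(\Si_d\times\Si_e,-\otimes-)$ with corestriction along $\Si_d\times\Si_e\subset\Si_n$, and the inclusions $\iota_V,\iota_W$ pick out the $V^{\otimes d}\otimes W^{\otimes e}$ summand; the standard fact that corestriction (transfer-free induction) implements the Shapiro isomorphism on an induced module then closes the gap. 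Everything is over a field, so there are no flatness or torsion issues and the Künneth map is an isomorphism on the nose; with (A1) and (A2) established, lemma \ref{lm-defbis} gives the exponential structure and the weight identity follows as noted.
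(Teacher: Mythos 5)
Your proof is correct and follows essentially the same route as the paper: the decomposition $(V\oplus W)^{\otimes n}\simeq\bigoplus_{d+e=n}\mathrm{ind}_{\Si_d\times\Si_e}^{\Si_n}(V^{\otimes d}\otimes W^{\otimes e})$, Shapiro plus K\"unneth, and the chain-level check that this identification is implemented by the product built from $\Si_d\times\Si_e\subset\Si_{d+e}$ are exactly the paper's argument, your verification of condition (A2) of lemma \ref{lm-defbis} being the mirror image of the paper's construction of $\phi$ followed by the check that the induced multiplication agrees with the given one. One small slip in a parenthetical: $H_i(\Si_d,-)$ applied to $V^{\otimes d}$ is not a ``direct summand'' operation in general (that would require $|\Si_d|$ invertible in $\kk$); the correct and sufficient justification, as in the paper, is simply that $\PP_{d,\kk}$ is abelian, so the homology of a complex of weight-$d$ homogeneous strict polynomial functors is again one.
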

\begin{proof}
Since $V^{\otimes d}$ is a degree $d$ homogeneous strict polynomial functor of the variable $V$ and the category $\PP_{d,\kk}$ of homogeneous strict polynomial functors of degree $d$ is abelian, the $H_i(\Si_d,V^{\otimes d})$ are objects of $\PP_{d,\kk}$ and $E(V)$ is a strict analytic functorial graded algebra. 
Given two vector spaces $V$ and $W$, the subspace of $(V\oplus W)^{\otimes d+e}$ spanned by the  elementary tensors $v_1\otimes\dots\otimes v_{d+e}$ such that $d$ vectors $v_i$ belong to $V$ and $e$ vectors $v_i$ belong to $W$ identifies with the induced module $\mathrm{ind}_{\Si_d\times\Si_e}^{\Si_{d+e}}(V^{\otimes d}\otimes W^{\otimes e})$. Thus the following composition (the first isomorphism is given by the K\"unneth formula and the second one by the induction) defines an exponential structure on $E$: 
\begin{align*}\bigoplus_{d+e=k}H_*(\Si_d,V^{\otimes d})\otimes H_*(\Si_e,W^{\otimes e})&\xrightarrow[]{\simeq}\bigoplus_{d+e=k} H_*(\Si_d\times \Si_e,V^{\otimes d}\otimes W^{\otimes e})\\
&\xrightarrow[]{\simeq}\bigoplus_{d+e=k} H_*(\Si_k,\mathrm{ind}_{\Si_d\times\Si_e}^{\Si_{k}}(V^{\otimes d}\otimes W^{\otimes e}))\\
&= H_*(\Si_k,(V\oplus  W)^{\otimes k})\;.
\end{align*}
If $V=W$, the following composition (where the first map is the unit of adjunction and $\sigma_V:V\oplus V\to V$ is the fold map) is the identity map:
$$V^{\otimes d}\otimes V^{\otimes e}\hookrightarrow \mathrm{ind}_{\Si_d\times\Si_e}^{\Si_{k}}(V^{\otimes d}\otimes V^{\otimes e})\subset (V\oplus V)^{\otimes k}\xrightarrow[]{(\sigma_V)^{\otimes k}} V^{\otimes k}\;.$$
So the multiplication associated to the exponential structure on $E$ coincides with the multiplication defined before lemma \ref{lm-hom-sym-exp}, whence the result.
\end{proof}

We first prove a structure result, for which we do not need any computation at all, whereas this result is obtained in \cite{CHN} as a consequence of a full computation of $E$. As observed in \cite[Remark 8.2.2]{CHN}, several of the results of \cite{CHN} depend on theorem \ref{thm-forme-generale} rather than on the full computation of $E$.
\begin{theorem}\label{thm-forme-generale}
There is an isomorphism of strict exponential functors, where $S$ is the symmetric algebra concentrated in degree zero and $F$ is a connected strict exponential functor:
$$E\simeq S\otimes F^{(1)}\;.$$
\end{theorem}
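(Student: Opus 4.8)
The plan is to first split off the homological-degree-zero part of $E$, which will be the symmetric algebra $S$, and then to recognize the remaining connected factor as a single Frobenius twist by means of Theorem \ref{thm-twist}. One may assume $\kk$ has positive characteristic $p$: in characteristic zero the groups $H_i(\Si_d,V^{\otimes d})$ vanish for $i>0$ (finite groups over a field of characteristic zero), so $E=E^0=S$ and there is nothing to twist.

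First I would identify $E^0$. In homological degree zero one has
$$E^0(V)=\bigoplus_{d\ge 0}H_0(\Si_d,V^{\otimes d})=\bigoplus_{d\ge 0}(V^{\otimes d})_{\Si_d}=\bigoplus_{d\ge 0}S^d(V),$$
and under this identification the product induced by the inclusions $\Si_d\times\Si_e\subset\Si_{d+e}$ is precisely the product of the symmetric algebra. Hence $E^0\simeq S$ as functorial graded algebras, hence as exponential functors concentrated in degree zero. By the categorical decomposition of Lemma \ref{lm-abelian}, the idempotent $e:E\to E^0\to E$ produces an isomorphism of exponential functors $E\simeq S\otimes E'$, where $E'=\ker e$ is connected.

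It then remains to show that the connected strict exponential functor $E'$ lies in the image of $-\circ I^{(1)}$; by Theorem \ref{thm-twist} this amounts to proving that $w_kE'=0$ for $0<k<p$. The single computational input needed is the vanishing $H_i(\Si_k,V^{\otimes k})=0$ for every $i>0$ and every $0\le k<p$: for such $k$ the order $k!$ of $\Si_k$ is prime to $p$, so $\kk[\Si_k]$ is semisimple and its homology vanishes in positive degrees with arbitrary coefficients. In the language of $E$ this reads $w_kE^i=0$ for all $i>0$ and $0\le k<p$. Since $S$ is concentrated in homological degree zero, one has $E^i\simeq S\otimes E'^i$ as graded (by weight) strict analytic functors for each $i$, and I would exploit this weight grading: fix $i>0$ with $E'^i\neq 0$ and let $b$ be the smallest weight for which $w_bE'^i\neq 0$. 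Every summand $S^a\otimes w_{b'}E'^i$ has weight $a+b'\ge b'\ge b$, with equality only for $a=0$, $b'=b$, so $w_b(S\otimes E'^i)=w_bE'^i\neq 0$; as $w_bE^i=0$ for $b<p$, this forces $b\ge p$. Together with connectedness ($E'^0=\kk$ in weight zero) this yields $w_kE'=0$ for $0<k<p$.

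Applying Theorem \ref{thm-twist} now produces a strict exponential functor $F$ with $E'\simeq F^{(1)}$, and $F$ is connected since $(F^0)^{(1)}\simeq E'^0=\kk$ and $-\circ I^{(1)}$ is fully faithful. This gives the desired isomorphism of strict exponential functors $E\simeq S\otimes F^{(1)}$. The only genuinely non-formal ingredient is the order-prime-to-$p$ vanishing of $H_*(\Si_k,-)$ for $k<p$; everything else is the structural splitting of Lemma \ref{lm-abelian} and the detection of Frobenius twists of Theorem \ref{thm-twist}, the one mild subtlety being the weight bookkeeping required to transport the weight vanishing from $E$ to the factor $E'$ through the tensor factor $S$, which I resolve by looking at the minimal nonzero weight rather than arguing by induction.
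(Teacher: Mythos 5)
Your proof is correct and follows essentially the same route as the paper's: split off $E^0\simeq S$ via Lemma \ref{lm-abelian}, use Maschke's theorem to kill the weights $0<k<p$, and invoke Theorem \ref{thm-twist}. The only difference is that you spell out the weight bookkeeping needed to transfer the vanishing from $E$ to the connected factor $E'$, a step the paper leaves implicit.
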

\begin{proof}
By lemma \ref{lm-abelian}, $E$ decomposes as a tensor product $E^0\otimes E'$ where $E'$ is connected. By definition of symmetric powers $E^0=S$. Moreover, the symmetric groups $\Si_d$ have no cohomology for $d<p$ by Maschke's theorem, hence $E'$ is zero in weights $d<p$, so that $E'\simeq F^{(1)}$ by theorem \ref{thm-twist}.
\end{proof}

Now we compute $E$ explicitly. By theorem \ref{thm-forme-generale}, it suffices to compute the connected strict exponential functor $F^{(1)}$. We use the following method.
\begin{enumerate}
\item We prove an isomorphism of graded $\kk$-algebras $F^{(1)}(\kk)\simeq H_*(\Si_\infty,\kk)$. 
\item The homology of the infinite symmetric group is well-known \cite{Nakaoka}. Since it is a free graded commutative algebra, theorem \ref{thm-alg} yields an isomorphism of strict exponential functors $F^{(1)}\simeq S_{\pm}(QF^{(1)})$.
\item Finally, we use the explicit computation of \cite{Nakaoka} and the classification of additive strict analytic functors to compute $QF^{(1)}$.
\end{enumerate}
We obtain the following result.
\begin{theorem}\label{thm-calcul}
There is an isomorphism of strict exponential functors
$$F^{(1)}\simeq \bigotimes_{k\ge 1} S_{\pm}(N_{k}\otimes I^{(k)})$$
where the graded vector spaces $N_k$ have a basis given by the homogeneous elements 
$a_{(j_1,\dots,j_k)}$ of degree $j_1+\dots+j_k$, indexed by the $k$-tuples $(j_1,\dots,j_k)$ of positive integers satisfying the following three conditions:
\begin{enumerate}
\item the $j_\ell$ are congruent to $0$ or $-1$ modulo $2(p-1)$,
\item $j_1\le pj_2$, \dots, $j_{k-1}\le pj_k$,
\item $j_1>(p-1)(j_2+\dots+j_k)$.
\end{enumerate}
\end{theorem}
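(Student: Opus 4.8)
The plan is to carry out the three-step method stated just above, supplying the details. Assume $\kk$ has positive characteristic $p$ (otherwise $H_*(\Si_\infty,\kk)=\kk$ and every $N_k$ vanishes, so the statement is trivial). Recall from Theorem \ref{thm-forme-generale} that $E\simeq S\otimes F^{(1)}$ with $F^{(1)}$ connected, and write $\Si_\infty=\colim_d\Si_d$.

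\emph{Step 1.} First I would evaluate at $V=\kk$. Since $\kk^{\otimes d}$ is the trivial $\Si_d$-module, $E(\kk)=\bigoplus_{i,d}H_i(\Si_d,\kk)$ is the full homology algebra of the symmetric groups, bigraded by homological degree $i$ and weight $d$, and Theorem \ref{thm-forme-generale} identifies it with $\kk[x]\otimes F^{(1)}(\kk)$, where $x$ is the generator of $S(\kk)=\kk[x]$ (weight one, homological degree zero). The key point is that, for the product of $E$, multiplication by $x$ is exactly the stabilization map $H_i(\Si_d,\kk)\to H_i(\Si_{d+1},\kk)$ induced by $\Si_d\subset\Si_{d+1}$. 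As $\kk[x]\otimes F^{(1)}(\kk)$ is $\kk[x]$-free, this map is injective, and the colimit of the stabilization maps is identified, as a graded algebra, with $F^{(1)}(\kk)$ via $[x^a\otimes f]\mapsto f$. Since group homology commutes with filtered colimits of groups, this colimit is $H_*(\Si_\infty,\kk)$, giving an isomorphism of graded $\kk$-algebras $F^{(1)}(\kk)\simeq H_*(\Si_\infty,\kk)$.

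\emph{Step 2.} By Nakaoka's computation \cite{Nakaoka}, $H_*(\Si_\infty,\kk)$ is a free graded commutative $\kk$-algebra; hence so is $F^{(1)}(\kk)$. As $F^{(1)}$ is a connected strict exponential functor and $\mathcal{F}_{\add}$ has homological dimension zero in the strict setting, Theorem \ref{thm-alg} applied with $V=\kk$ yields an isomorphism of strict exponential functors $F^{(1)}\simeq S_{\pm}(QF^{(1)})$.

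\emph{Step 3.} It remains to compute the indecomposables. By Lemma \ref{lm-additivite} the functor $QF^{(1)}$ is additive, so by the classification of additive strict analytic functors (Lemma \ref{lm-control-strict}) it is a direct sum of Frobenius twists; as $I^{(k)}$ is the unique additive functor of weight $p^k$ and $F^{(1)}$ has all its weights divisible by $p$ by Theorem \ref{thm-twist}, I can write $QF^{(1)}\simeq\bigoplus_{k\ge 1}N_k\otimes I^{(k)}$ with $N_k=w_{p^k}(QF^{(1)})(\kk)$ a graded vector space. Feeding this into Step 2 and using that $S_{\pm}$ converts direct sums into tensor products gives $F^{(1)}\simeq\bigotimes_{k\ge 1}S_{\pm}(N_k\otimes I^{(k)})$. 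To pin down $N_k$, I would go back to the bigraded algebra $E(\kk)=\bigoplus_d H_*(\Si_d,\kk)$: evaluation at $\kk$ is exact and commutes with the formation of indecomposables, so $N_k$ is the space of weight-$p^k$ algebra generators of $F^{(1)}(\kk)$. Nakaoka's generators, recorded with their weights, are indexed by sequences whose length $k$ is precisely the $p$-adic valuation of their weight; translating the admissibility and excess constraints on such a length-$k$ sequence into its homological degrees produces exactly the basis $a_{(j_1,\dots,j_k)}$ of $N_k$ subject to conditions (1)--(3), with $\deg a_{(j_1,\dots,j_k)}=j_1+\dots+j_k$.

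The main obstacle is the bookkeeping of Step 3: one must reconcile the weight grading intrinsic to the strict polynomial functor structure (weight $=d$ in $H_i(\Si_d,V^{\otimes d})$) with Nakaoka's list of generators, confirm that a generator indexed by a length-$k$ sequence lies in weight $p^k$ and hence matches $I^{(k)}$, and transcribe the classical admissibility and excess conditions into the arithmetic conditions (1)--(3). By contrast, Steps 1 and 2 are formal consequences of homological stability together with the reconstruction results of Theorems \ref{thm-twist} and \ref{thm-alg}.
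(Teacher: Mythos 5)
Your Steps 1 and 2 are correct and coincide with the paper's argument: the identification $F^{(1)}(\kk)\simeq H_*(\Si_\infty,\kk)$ via the stabilization maps (the paper phrases it as quotienting $E(\kk)\simeq\kk[t]\otimes F^{(1)}(\kk)$ by $\langle 1-t\rangle$, which is the same thing for a free $\kk[t]$-module), followed by Nakaoka plus Theorem \ref{thm-alg} to get $F^{(1)}\simeq S_{\pm}(QF^{(1)})$.

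The gap is in Step 3, and it is not mere bookkeeping. The isomorphism of Step 1 does not carry a weight grading to a weight grading: $H_*(\Si_\infty,\kk)$ is a colimit and has no weight decomposition, only the exhaustive filtration $F_d=\mathrm{im}\bigl(H_*(\Si_d,\kk)\to H_*(\Si_\infty,\kk)\bigr)$. So the phrase ``Nakaoka's generators, recorded with their weights'' presupposes exactly what has to be proved, namely that the weight-$p^k$ summand of $Q F^{(1)}(\kk)$ corresponds under the Step-1 isomorphism to the span of the length-$k$ Nakaoka generators. The paper establishes this by comparing two filtrations: $F_d$ as above, and $F'_d=$ the image of $H_*(\Si_d,\kk)\subset E(\kk)$ in $E(\kk)/\langle 1-t\rangle$; it checks that the Step-1 isomorphism matches $F'_d$ with $F_d$ (using Nakaoka's result that the stabilization maps are injective), that $\bigoplus_{i\le k}N_i$ is the image of $F_{p^k}$ in the indecomposables (via the second grading in which a length-$k$ generator has second degree $p^k$), and, crucially, that the weight-$p^k$ indecomposables of $E(\kk)$ inject into $F'_{p^k}$ while meeting $F'_{p^k-1}$ only in decomposables, because $t\cdot H_*(\Si_{p^k-1},\kk)$ consists of decomposable elements. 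Without this last point one cannot rule out that a weight-$p^k$ generator becomes redundant (i.e.\ decomposable modulo lower filtration) in the colimit, so the dimension count $\dim N_k=\dim w_{p^k}QF^{(1)}(\kk)$ would not follow. You should supply this filtration comparison; once it is in place, the translation of the admissibility and excess conditions into conditions (1)--(3) really is routine.
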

\begin{proof}
First, we notice that the $\Si_d$-module $\kk^{\otimes d}$ is isomorphic to the trivial $\Si_d$-module $\kk$. Hence the graded $\kk$-algebra $E(\kk)$ is nothing but the homology of symmetric groups with trivial coefficients. 
Theorem \ref{thm-forme-generale} yields an isomorphism of graded $\kk$-algebras, where $t$ is placed in degree zero:
$$E(\kk)\simeq \kk[t]\otimes F^{(1)}(\kk)\;.$$
Thus if we quotient by the ideal generated by $1-t$ we obtain an isomorphism of graded $\kk$-algebras
$$E(\kk)/\langle 1-t\rangle\simeq F^{(1)}(\kk)\;.\qquad(*)$$
On the other hand, $t$ is the basis of $S^1(\kk)=H_0(\Si_1,\kk)$, hence multiplication by $t$, considered as a morphism from $H_*(\Si_d,\kk)$ to $H_*(\Si_{d+1},\kk)$ equals the morphism induced by the standard inclusion $\Si_d=\Si_d\times\Si_1\subset \Si_{d+1}$. so we have another isomorphism of graded $\kk$-algebras 
$$E(\kk)/\langle 1-t\rangle \simeq  \colim_d H_*(\Si_d,\kk)=H_*(\Si_\infty,\kk)\qquad(**)$$ 
where the product on $H_*(\Si_\infty,\kk)$ is induced by the products
$H_i(\Si_d,\kk)\otimes H_j(\Si_e,\kk)\to H_{i+j}(\Si_{d+e},\kk)$. 
Now \cite[Thm 7.1]{Nakaoka} asserts that $H_*(\Si_\infty,\kk)$ is a connected free graded commutative algebra (the results of \cite{Nakaoka} are stated for $\kk=\Fp$, but they hold for all $\kk$ by the universal coefficient theorem). Hence $F^{(1)}(\kk)$ is also free by the isomorphisms $(*)$ and $(**)$, hence theorem \ref{thm-alg} yields an isomorphism of strict exponential functors:
$$F^{(1)}\simeq S_{\pm}(Q(F^{(1)}))= S_{\pm}(QF)^{(1)}\;.$$

It remains to identify the additive graded strict polynomial functor $QF^{(1)}$.  By lemma \ref{lm-additivite}, $QE$ is the biggest additive quotient of $E$, and by the classification of additive strict analytic functors of lemma \ref{lm-control-strict}, it has the form 
$$QE=\bigoplus_{k\ge 0} M_k\otimes I^{(k)} $$
where the $M_k$ are the graded vector spaces such that $M_k\otimes I^{(k)}$ is the biggest additive quotient of $w_{p^k}E(V)=H_*(\Si_{p^k},V^{\otimes p^k})$. Since $E\simeq S\otimes F^{(1)}$ we get
$$QF^{(1)}=\bigoplus_{k\ge 1} M_k\otimes I^{(k)} \;.$$
Thus, to finish the proof, we have to show that each $M_k$ is isomorphic to the $N_k$ described in theorem \ref{thm-calcul}. Since the $M_k$ are finite dimensional in each degree (they are quotients of $H_*(\Si_{p^k},\kk)$), this is equivalent to find for each positive $k$ an isomorphism of graded vector spaces: 
$$\bigoplus_{1\le i\le k}N_i\simeq \bigoplus_{1\le i\le k}M_i\;.\qquad(\dag)$$

Let $F_d$ be the image of the canonical map $H_*(\Si_d,\kk)\to H_*(\Si_{\infty},\kk)$ (by \cite[Prop 7.3]{Nakaoka} this map is injective, so $F_d$ is actually isomorphic to $H_*(\Si_d,\kk)$). The $F_d$, $d\ge 0$, yield an exhaustive filtration of $H_*(\Si_{\infty},\kk)$. Similarly, we define an exhaustive filtration $F'_d$, $d\ge 0$ of $E(\kk)/\langle 1-t\rangle$ by letting $F_d'$ be the image of the composite map (the first map is the inclusion provided by the definition of $E$, the second map is the projection onto the quotient):
$$ H_*(\Si_d,\kk)\hookrightarrow E(\kk)\twoheadrightarrow E(\kk)/\langle 1-t\rangle\;.$$
Then the isomorphism $(**)$ sends $F_d'$ isomorphically onto $F_d$ (in particular the composite map $H_*(\Si_d,\kk)\to E(\kk)$ is injective). Thus, the isomorphisms $(\dag)$ will be induced by the isomorphism $(**)$, provided we can prove that
\begin{enumerate}
\item[(i)]  the graded vector space $\bigoplus_{1\le i\le k}N_i$ is isomorphic to the image of $F_{p^k}$ by the canonical map $H_*(\Si_{\infty},\kk)\to QH_*(\Si_{\infty},\kk)$, and
\item[(ii)]  the graded vector space $\bigoplus_{1\le i\le k}M_i$ is isomorphic to the image of $F_{p^k}'$ by the canonical map $E(\kk)/\langle 1-t\rangle\to Q\left(E(\kk)/\langle 1-t\rangle\right)$.
\end{enumerate}
By \cite[Thm 7.1]{Nakaoka}, there is an isomorphism of graded $\kk$-algebras  
$$H_*(\Si_\infty,\kk)\simeq \bigotimes_{k\ge 1}S_{\pm}(N_k)\;.$$
Moreover, if we define a bigrading on $H_*(\Si_\infty,\kk)$ by letting the elements of $N_k$ have a second degree $p^k$, then each $F_d$ identifies with the vector space spanned by the homogeneous elements of second degree less or equal to $d$. Statement (i) easily follows from this description. In order to prove (ii), we 
choose sections of the defining maps $H_*(\Si_{p^k},\kk)\twoheadrightarrow M_k$, i.e. we view each $M_k$ as a subspace of $H_*(\Si_{p^k},\kk)\subset E(\kk)$. Now the subspace $t\cdot H_*(\Si_{p^k-1},\kk)$ of $E(\kk)$ consists of decomposable elements, hence it intersects $M_k$ trivially. Thus, in the quotient $E(\kk)/\langle 1-t\rangle$, $M_k$ is a subspace of $F'_{p^k}$ intersecting trivially $F'_{p^k-1}$. Statement (ii) follows.
\end{proof}

\section{Ordinary exponential functors and Hopf algebras}\label{sec-ordvsHopf}

We now return to the general setting for ordinary functors: $\kk$ is a commutative ring, $\V$ is a small additive category and $\mathcal{F}=\Fct(\V,\Mod)$.

Since $\kk$ is not a field, the category of graded bicommutative Hopf algebras over $\kk$ is not abelian anymore. However, it is still an additive category (the categorical (co)product is the tensor product) with all colimits (the formulas for (infinite) coproducts and cokernels hold over an arbitrary commutative ring $\kk$). Just as in lemma \ref{lm-abelian}, the structure of graded bicommutative Hopf $\kk$-algebras has the following consequence.

\begin{lemma}\label{lm-additive-split}
The category $\mathcal{F}-\Exp_c$ is additive and cocomplete. In particular, it has split idempotents. Hence any exponential functor $E$ decomposes as a tensor product $E\simeq E^0\otimes E'$ where $E'$ is connected.
\end{lemma}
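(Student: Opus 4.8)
===  PROOF PROPOSAL ===

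\begin{proof}[Proof proposal]
The plan is to transport the known structural results about graded bicommutative Hopf $\kk$-algebras through the fully faithful embedding $\Theta_\H$ of lemma \ref{lm-defter}, just as was done over a field in lemma \ref{lm-abelian}, but now being careful that $\kk$ is only a commutative ring so that $\FF-\HH$ is merely additive and cocomplete rather than abelian. First I would recall (as stated in the paragraph preceding the lemma) that $\HH$, and hence $\FF-\HH$, is additive with the tensor product as biproduct and admits all colimits, the formulas for infinite coproducts and for cokernels being valid over an arbitrary $\kk$. The content to be established is then: (1) that $\FF-\Exp_c$ is closed under the relevant colimits inside $\FF-\HH$ so that it inherits additivity and cocompleteness; (2) that idempotents split; and (3) the resulting tensor decomposition $E\simeq E^0\otimes E'$.

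For (1), I would argue exactly as in lemma \ref{lm-abelian}. Any (possibly infinite) tensor product of exponential functors is again exponential, since the defining isomorphisms $\phi$ assemble multiplicatively; this gives closure under coproducts. Closure under cokernels follows from lemma \ref{lm-stabilite}(i): the cokernel of a morphism $f:H\to H'$ in $\FF-\HH$ is the objectwise quotient map, which is objectwise surjective, so if the source lies in the image of $\Theta_\H$ then so does the target. Since $\FF-\Exp_c$ is identified via $\Theta_\H$ with a full subcategory of $\FF-\HH$ that is closed under these colimits, it is itself additive and cocomplete, the biproduct being $\otimes$ and the zero object the unit $\kk$.

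For (2), I would invoke the standard fact that an additive category with countable coproducts (or even just with the relevant filtered colimits) has split idempotents; alternatively, and more in the spirit of lemma \ref{lm-abelian}, one splits idempotents by hand. This is also the mechanism behind (3): for any exponential functor $E$, the composite
$$e:E\twoheadrightarrow E^0\hookrightarrow E$$
of the projection onto the degree-zero part followed by its inclusion is an idempotent endomorphism in $\FF-\Exp_c$, and its splitting yields $E\simeq E^0\otimes E'$ with $E'=\ker e$ (the connected complement). The one point demanding attention is that $E^0$ is genuinely a sub- and quotient-exponential functor; here $E^0(V)$ is the degree-zero Hopf algebra, which is exponential because condition (a) of lemma \ref{lm-defter} for $E$ restricts to degree zero.

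The main obstacle, and the reason this is not a verbatim copy of lemma \ref{lm-abelian}, is that over a general ring $\kk$ one cannot appeal to the abelian structure of $\HH$; in particular kernels need not be well-behaved and the splitting principle (lemma \ref{lm-splitting-principle}) is unavailable. I therefore expect the delicate step to be verifying that the idempotent $e$ admits a kernel/image inside $\FF-\Exp_c$ purely from the additive-plus-cocomplete structure. This should follow because splitting a single idempotent only requires the existence of a retract, which in an additive idempotent-complete category is automatic; so the real work is confirming idempotent-completeness of $\FF-\HH$ over an arbitrary $\kk$, for which I would either cite the cocompleteness established above (writing the image of $e$ as the colimit of the sequence $E\xrightarrow{e}E\xrightarrow{e}\cdots$) or reduce to the known idempotent-completeness of $\HH$ objectwise and check that the splitting is natural in $V$.
\end{proof}
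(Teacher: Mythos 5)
Your proposal is correct and follows essentially the same route as the paper, which gives no separate proof but simply transports the argument of lemma \ref{lm-abelian} (closure of the image of $\Theta_\H$ under tensor products and cokernels via lemma \ref{lm-stabilite}, then splitting the idempotent $e:E\to E^0\to E$) to the additive, cocomplete but non-abelian setting of $\FF-\HH$ over a general commutative ring. Your extra care in obtaining idempotent splitting from cocompleteness (e.g. as the colimit of $E\xrightarrow{e}E\xrightarrow{e}\cdots$) correctly fills the one step that is no longer automatic when kernels are unavailable.
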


The category $\FF-\HH$ of functorial graded bicommutative Hopf $\kk$-algebras identifies
with the category $\Fct(\V,\HH)$ of functors with domain $\V$ and codomain $\HH$. But $\HH$ is an additive category whose categorical (co)product is the tensor product. Thus, lemma \ref{lm-defter} can be reformulated by saying that an ordinary exponential functor $E$ is a functorial Hopf algebra such that the canonical morphism $E(V)\otimes E(W)\to E(V\oplus W)$ is an isomorphism. Note that this morphism is a isomorphism of graded bicommutative Hopf algebras, since $E(V\oplus W)$ is graded bicommutative. So we obtain the following result.

\begin{lemma}\label{lm-intelligent}
The category $\mathcal{F}-\Exp_c$ is equivalent to $\Fct_{\mathrm{add}}(\V,\HH)$.
\end{lemma}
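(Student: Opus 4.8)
The plan is to leverage the full faithfulness of $\Theta_{\H}$ established in lemma \ref{lm-defter}, and then simply reinterpret the characterization of its image. By that lemma, $\Theta_{\H}$ identifies $\mathcal{F}-\Exp_c$ with the full subcategory of $\mathcal{F}-\HH$ consisting of those functorial graded bicommutative Hopf algebras $H$ for which the composite $\mu_{V\oplus W}\circ(H(\iota_V)\otimes H(\iota_W)):H(V)\otimes H(W)\to H(V\oplus W)$ is an isomorphism for all $V$, $W$ (condition (a)). Since $\mathcal{F}-\HH$ is nothing but $\Fct(\V,\HH)$, it remains only to recognize condition (a) as the assertion that $H$, regarded as a functor $\V\to\HH$, is additive.

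To do this I would use the fact, recalled just above, that $\HH$ is an additive category whose biproduct is the tensor product. For an additive category $\C$, a functor $H:\V\to\C$ is additive precisely when, for all $V$, $W$, the canonical map from the biproduct $H(V)\oplus_\C H(W)$ to $H(V\oplus W)$ --- the unique morphism whose precompositions with the two structural inclusions equal $H(\iota_V)$ and $H(\iota_W)$ --- is an isomorphism. In $\HH$ the biproduct $H(V)\oplus_\C H(W)$ is $H(V)\otimes H(W)$, its two structural inclusions are $\Id\otimes\eta_W$ and $\eta_V\otimes\Id$, and the universal map out of a coproduct of Hopf algebras determined by two Hopf algebra morphisms $f,g$ into a common target is $\mu\circ(f\otimes g)$. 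Applying this with $f=H(\iota_V)$ and $g=H(\iota_W)$ shows that the canonical additivity morphism is exactly the composite appearing in condition (a).

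Consequently the image of $\Theta_{\H}$ is precisely the full subcategory $\Fct_{\mathrm{add}}(\V,\HH)$ of additive functors, and since $\Theta_{\H}$ is fully faithful it restricts to an equivalence $\mathcal{F}-\Exp_c\xrightarrow{\simeq}\Fct_{\mathrm{add}}(\V,\HH)$. I expect the only point requiring genuine care to be the verification that the universal map into $H(V\oplus W)$ out of the coproduct $H(V)\otimes H(W)$ really is $\mu_{V\oplus W}\circ(H(\iota_V)\otimes H(\iota_W))$; this amounts to checking that this composite, restricted along the inclusions $\Id\otimes\eta_W$ and $\eta_V\otimes\Id$, recovers $H(\iota_V)$ and $H(\iota_W)$, which follows from the unitality of $\mu$ together with the fact that $H(\iota_V)$ and $H(\iota_W)$ are morphisms of Hopf algebras, hence compatible with the units. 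Everything else is formal, and one could equally run the dual argument using condition (b), the comultiplication, and the canonical projections.
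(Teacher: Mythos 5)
Your proposal is correct and follows essentially the same route as the paper: identify $\mathcal{F}-\HH$ with $\Fct(\V,\HH)$, use that $\HH$ is additive with the tensor product as biproduct, and recognize condition (a) of lemma \ref{lm-defter} as the additivity of the functor $V\mapsto E(V)$. The extra verification you flag — that the canonical additivity morphism out of the biproduct $H(V)\otimes H(W)$ is exactly $\mu_{V\oplus W}\circ(H(\iota_V)\otimes H(\iota_W))$ — is precisely the detail the paper leaves implicit, and your unitality argument for it is fine.
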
 

Recall from appendix \ref{app-Fct} that if $R$ is a ring, ${_R}\HH$ stands for the category of $R$-modules in $\HH$, whose objects are graded bicommutative Hopf $\kk$-algebras $H$ equipped with a ring morphism $\phi:R\to  \End_{\HH}(H)$, and whose morphisms are morphisms of Hopf algebras commuting with the actions of $R$. If $\V=\Proj_R$, the classification of additive functors given by the Eilenberg-Watts theorem (fact \ref{fact-EW}) yields the following result. 
\begin{theorem}\label{thm-classif-ord}
Let $R$ be a ring, let $\kk$ be a commutative ring, and let $\mathcal{F}=\Fct(\Proj_R,\Mod)$. Then evaluation on $R$ yields an equivalence of categories
$$\begin{array}[t]{cccc}
^\ord\Theta: & \FF-\Exp_c&\xrightarrow[]{\;\simeq\;} & {_R}\HH \\
& E & \mapsto & E(R) 
\end{array}\;.
$$
\end{theorem}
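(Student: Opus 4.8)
The plan is to realize $^\ord\Theta$ as a composite of two equivalences, the first purely formal and the second an instance of Eilenberg--Watts. First I would invoke lemma \ref{lm-intelligent}, which identifies $\FF-\Exp_c$ with the category $\Fct_{\add}(\Proj_R,\HH)$ of additive functors from $\Proj_R$ to the additive category $\HH$ of graded bicommutative Hopf $\kk$-algebras. Here ``additive'' is taken with respect to the additive structure of $\HH$, whose categorical coproduct is the tensor product and whose addition of morphisms is the convolution product $\star$. Under this identification an exponential functor $E$ goes to the Hopf-algebra-valued functor $V\mapsto E(V)$, so evaluation of the latter at the object $R\in\Proj_R$ returns $E(R)$; thus the identification is compatible with evaluation on $R$, and it suffices to prove that evaluation yields an equivalence $\Fct_{\add}(\Proj_R,\HH)\xrightarrow{\simeq}{_R}\HH$.

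The second step is to recognize this last equivalence as the Eilenberg--Watts theorem (fact \ref{fact-EW}) applied with coefficients in the additive category $\HH$ in place of $\Mod$. Concretely, an additive functor $F\colon\Proj_R\to\HH$ yields, by functoriality, a ring homomorphism $R\cong\End_{\Proj_R}(R)\to\End_{\HH}(F(R))$; since $F$ is additive this map is additive for the convolution structure on $\End_{\HH}(F(R))$, so $F(R)$ is canonically an object of ${_R}\HH$. The reconstruction in the other direction sends $(H,\phi)\in{_R}\HH$ to the functor $P\mapsto P\otimes_R H$, where for $P=R^n$ this is the $n$-fold coproduct $H^{\otimes n}$ in $\HH$, and for a general projective $P=eR^n$ (with $e$ an idempotent matrix over $R$) it is the image of the idempotent on $H^{\otimes n}$ obtained from $e$ through $\phi$ and the additive structure of $\HH$. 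One then checks that these constructions are mutually inverse and that the composite with the first step is exactly $^\ord\Theta$.

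The hypotheses on $\HH$ that make this argument go through were already assembled: as noted at the start of section \ref{sec-ordvsHopf}, $\HH$ is additive with tensor product as categorical coproduct and is cocomplete, hence possesses countable coproducts and therefore has split idempotents (just as recorded for $\FF-\Exp_c$ in lemma \ref{lm-additive-split}). These are precisely the properties needed to split the idempotents on the $H^{\otimes n}$ and to run the standard Eilenberg--Watts reconstruction, which uses only that the target is an idempotent-complete additive category together with the universal description of $\Proj_R$ as the idempotent completion of the additive envelope of the one-object preadditive category with endomorphism ring $R$.

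I expect the main obstacle to be conceptual rather than computational: one must keep in mind that $\HH$ is \emph{not} a category of modules, so the ``tensor product $P\otimes_R H$'' in the reconstruction is built from the tensor product of Hopf algebras (the coproduct of $\HH$) and from convolution (the addition of $\HH$), not from any underlying $\kk$-linear tensor product, and the $R$-action on $F(R)$ is encoded via this convolution structure. Once the Eilenberg--Watts statement of fact \ref{fact-EW} is read for an arbitrary idempotent-complete additive category $\A$ and specialized to $\A=\HH$, the remaining verifications are formal. Note that no field or characteristic hypothesis is required, in accordance with $\kk$ being an arbitrary commutative ring here.
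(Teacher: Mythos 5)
Your proposal is correct and follows exactly the paper's route: the paper deduces theorem \ref{thm-classif-ord} by combining lemma \ref{lm-intelligent} (which identifies $\FF-\Exp_c$ with $\Fct_{\add}(\Proj_R,\HH)$ using that the tensor product is the categorical coproduct in $\HH$) with fact \ref{fact-EW}, the Eilenberg--Watts equivalence $\Fct_{\add}(\Proj_R,\A)\simeq{_R}\A$ for an idempotent-complete additive category $\A$, specialized to $\A=\HH$. Your additional remarks on splitting idempotents via cocompleteness of $\HH$ and on the convolution-based reconstruction match the construction sketched after fact \ref{fact-EW} in appendix \ref{app-Fct}.
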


\begin{remark}
Alternative approaches to special cases (as well as noncommutative variants) of theorem \ref{thm-classif-ord} can be found in the literature. An approach relying on the theory of PROPs is given in \cite{Piraprop}, the case $R=\Z$ being explicitly treated in \cite[Thm 5.2 iv]{Piraprop}.
Another approach to the case $R=\Z$ can be found in \cite[Thm 5.4]{PV}.
\end{remark}

\begin{example}[Algebraic groups]
If $R=\Z$, theorem \ref{thm-classif-ord} says that the category of ungraded exponential functors with source $\Proj_R$ and target $\Mod$ is antiequivalent to the category of affine commutative group schemes over $\kk$. If $R=\Fq$, then it is antiequivalent to the category of affine commutative group schemes over $\kk$, equipped with an action of $\Fq$. Such objects naturally appear in algebraic geometry, they include affine Shtukas, see e.g. \cite{Poguntke}.
\end{example}

Though the proof of  theorem \ref{thm-classif-ord} is elementary, it has many concrete applications. In the rest of the section, we give a few immediate applications, others will be given in sections \ref{sec-strict-vs-ord} and \ref{sec-indecomp}. The first corollary shows that the hypothesis in theorem \ref{thm-alg} that $R$ has characteristic $p$  is necessary.

\begin{corollary}
Assume that $R=\Z$, and $\kk$ is a field of positive characteristic $p$. There exists an ordinary exponential functor $E$ with domain $\Proj_\Z$ and codomain $\Mod$ which is connected and such that $E(V)$ is a free graded commutative algebra, but which is not isomorphic to $S_{\pm}(QE)$.
\end{corollary}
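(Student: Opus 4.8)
The plan is to read off the counterexample from the classification of Theorem \ref{thm-classif-ord}. That theorem gives an equivalence ${}^{\ord}\Theta\colon \FF-\Exp_c \xrightarrow{\simeq} {}_\Z\HH$, and since $\HH$ is additive, every object carries a unique $\Z$-action and every Hopf morphism is automatically $\Z$-linear, so ${}_\Z\HH$ is simply $\HH$. Thus it suffices to produce a connected graded bicommutative Hopf $\kk$-algebra $H$ which is free as a graded commutative algebra but which is \emph{not} primitively generated; the exponential functor $E := ({}^{\ord}\Theta)^{-1}(H)$ (characterized by $E(\Z) = H$, $E(\Z^n) = H^{\otimes n}$) will be the desired object. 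Indeed, evaluation at $\Z$ is exact and symmetric monoidal, so $(QE)(\Z) = Q(E(\Z)) = QH$ and $(S_{\pm}(QE))(\Z) = S_{\pm}(QH)$; as the latter is primitively generated, an isomorphism $E \simeq S_{\pm}(QE)$ would force $H \simeq S_{\pm}(QH)$ as Hopf algebras and hence $H$ to be primitively generated, a contradiction.

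For the construction I would take the coordinate Hopf algebra of the Witt vectors of length two, suitably graded. Concretely, set $H = \kk[x_0,x_1]$ with $x_0$ in degree $2$ and $x_1$ in degree $2p$, with both generators sent to $0$ by the counit and with comultiplication
\begin{align*}
\Delta(x_0) &= x_0\otimes 1 + 1\otimes x_0,\\
\Delta(x_1) &= x_1\otimes 1 + 1\otimes x_1 - \sum_{i=1}^{p-1} c_i\, x_0^{\,i}\otimes x_0^{\,p-i},
\end{align*}
where $c_i = \tfrac1p\binom{p}{i}\in\Z$ is reduced modulo $p$. These are the integral Witt addition coefficients; they satisfy $c_i \equiv (-1)^{i-1}/i \pmod p$, so they are nonzero in $\kk$, and the displayed term is homogeneous of degree $2p$, so $\Delta$ respects the grading. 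By construction $H$ is a polynomial algebra, hence free graded commutative; it is connected since its generators lie in positive even degree; and it is bicommutative because the Witt addition is commutative.

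The crux of the argument — and the step I expect to require the most care — is to verify that $H$ is genuinely not primitively generated (as opposed to merely having $x_1$ non-primitive for this particular choice of generators). I would check this invariantly by computing $PH$ and $QH$ in degree $2p$. The degree-$2p$ part of $H$ is spanned by $x_1$ and $x_0^{\,p}$. Since $\binom{p}{i}\equiv 0\pmod p$ for $1\le i\le p-1$, the element $x_0^{\,p}$ is primitive, whereas $\Delta(x_1)-x_1\otimes 1-1\otimes x_1 = -\sum_{i=1}^{p-1} c_i\, x_0^{\,i}\otimes x_0^{\,p-i}\ne 0$; hence the primitives in degree $2p$ are exactly $\kk\, x_0^{\,p}$. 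On the other hand $x_0^{\,p}$ is decomposable while $x_1$ is indecomposable, so $(QH)_{2p} = \kk\,\overline{x_1}$ and the canonical map $PH\to\overline H\to QH$ vanishes in degree $2p$. Thus $PH\to QH$ is not surjective, $H$ is not primitively generated, and the reduction above yields $E\not\simeq S_{\pm}(QE)$. (Alternatively, one may note that the Verschiebung $\mathbf V_H$ is nontrivial and invoke the criterion of \cite[IV \S 3 Thm 6.6]{DG} already used in the proof of Theorem \ref{thm-alg}.)
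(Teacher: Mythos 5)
Your proposal is correct and follows essentially the same route as the paper: both reduce, via Theorem \ref{thm-classif-ord}, to exhibiting a connected graded bicommutative Hopf algebra that is free as a graded commutative algebra but not primitively generated, and both take the Hopf algebra of Witt vectors as the example. The only difference is that the paper simply cites Schoeller for this Hopf algebra, whereas you write out the length-two Witt Hopf algebra $\kk[x_0,x_1]$ explicitly and verify non-primitive-generation by computing $PH$ and $QH$ in degree $2p$ --- a correct and self-contained elaboration of the same argument.
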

\begin{proof}
It suffices to find a Hopf algebra which is connected, free as a graded $\kk$-algebra, but not primitively generated. An example is the Hopf algebra representing the group scheme of Witt vectors, see e.g. \cite[3.1]{Schoeller}.
\end{proof}

Recall that if $\V=\Proj_R$, then $\FF_{\mathrm{add}}$ is equivalent to the category of $R\otimes_\mathbb{Z}\kk$-modules (see lemma \ref{lm-control-ordinary}). Thus, it is easy to find rings $R$ such that $\FF_{\mathrm{add}}$ does not have homological dimension zero, the ring $R=\Fp[t]/\langle t^p-1\rangle$ (also known as the $\Fp$-group algebra of the cyclic group of order $p$) being one of the simplest examples.  
Next corollary shows that the hypothesis on the homological dimension of $\mathcal{F}_{\mathrm{add}}$ in theorem \ref{thm-alg} is necessary. 
\begin{corollary}\label{cor-cex}
Let $R=\Fp[t]/\langle t^p-1\rangle$, and let $\kk$ be a field of positive characteristic $p$. There exists a connected ordinary exponential functor $E$ with domain $\Proj_R$ and codomain $\Mod$ which is not of the form $S_{\pm}(A)$ for any additive functor $A$, but such that the graded Hopf $\kk$-algebra $E(R)$ is a polynomial algebra on primitive generators (hence the graded Hopf $\kk$-algebra $E(V)$ is a polynomial algebra on primitive generators for all $V$).
\end{corollary}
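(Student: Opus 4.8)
The plan is to invoke the equivalence $^\ord\Theta\colon \FF-\Exp_c \xrightarrow{\simeq} {_R}\HH$ of theorem~\ref{thm-classif-ord}, which turns the construction of $E$ into the construction of a single graded bicommutative Hopf $\kk$-algebra $H$ carrying an $R$-action; one then sets $E := (^\ord\Theta)^{-1}(H)$, so that $E(R) = H$. It is convenient to note that, since $R = \Fp[t]/\langle t^p-1\rangle$ is the group algebra $\Fp[\Z/p]$, an object of $_R\HH$ is exactly a graded bicommutative Hopf $\kk$-algebra equipped with an action of $\Z/p$ by Hopf algebra automorphisms.

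First I would take $H = \kk[x,y]$, the polynomial Hopf algebra with $x$ and $y$ primitive of positive \emph{even} degrees $|x| = d$ and $|y| = pd$ (even degrees ensure, through $S_\pm$, that this is a genuine polynomial algebra in every characteristic), and let the canonical generator $t$ of $\Z/p$ act by the algebra automorphism $\theta$ determined by $\theta(x) = x$ and $\theta(y) = y + x^p$. The routine verifications are that $\theta$ is a morphism of Hopf algebras (it carries the primitive generators $x,y$ to the primitives $x$ and $y + x^p$, which forces compatibility with the coproduct and counit) and that $\theta^p = \Id$ (because $\theta^k(y) = y + kx^p$ and $p = 0$ in $\kk$); hence $\theta$ defines a $\Z/p$-action and an object $H$ of $_R\HH$. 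By construction $H^0 = \kk$, so $E$ is connected and $E(R) = H$ is polynomial on primitive generators; since $R$ is a local Artinian ring, every finitely generated projective $R$-module is free, so for arbitrary $V \cong R^{\oplus n}$ we have $E(V) \cong H^{\otimes n}$, which is again connected and polynomial on primitive generators.

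The heart of the argument is to rule out $E \cong S_\pm(A)$. Here I would use that $Q\,S_\pm(A) \cong A$ for every additive functor $A$, so that $E \cong S_\pm(A)$ would force $A \cong QE$ and $E \cong S_\pm(QE)$; it therefore suffices to show that $S_\pm(QE) \not\cong E$. The decisive computation is the $R$-action on the indecomposables $QE(R) = Q(H)$: since $x^p$ is decomposable, $Q(\theta)$ sends $\bar x \mapsto \bar x$ and $\bar y \mapsto \overline{y + x^p} = \bar y$, so $t$ acts \emph{trivially} on $QE(R)$. Consequently $S_\pm(QE)$ corresponds under $^\ord\Theta$ to $\kk[x,y]$ with the \emph{trivial} $\Z/p$-action (an action that is linear and trivial on generators is trivial on the whole algebra), whereas $E$ corresponds to $H$ with the nontrivial action $\theta$. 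An isomorphism between them in $_R\HH$ would be an $R$-equivariant Hopf isomorphism $\psi$ satisfying $\psi\circ\theta = \psi$, forcing $\theta = \Id$ — a contradiction.

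The main obstacle is conceptual rather than computational: one must isolate the invariant that separates $E$ from $S_\pm(QE)$. The point is that the $R$-action on $E$ mixes the generator $y$ with the Frobenius power $x^p$ of the other generator, a twist that is invisible on the indecomposables $QE$ (where it collapses to the trivial action) yet nontrivial on $E$ itself. This is precisely what cannot happen for $S_\pm(A)$, whose $R$-action is by design linear on a chosen space of generators. The room for such an unsplittable twist is created exactly by the non-semisimplicity of $R \otimes_\Z \kk = \kk[\Z/p]$, i.e. by the failure of $\FF_{\mathrm{add}}$ to have homological dimension zero, which is what the corollary is meant to exhibit.
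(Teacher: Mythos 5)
Your proposal is correct and follows essentially the same route as the paper: the same Hopf algebra $\kk[x,y]$ with primitive generators in degrees $d$ and $pd$, the same automorphism $y\mapsto y+x^p$ of order $p$, and the same decisive observation that this action is nontrivial on $H$ but becomes trivial on $QH$, so that $E$ and $S_\pm(QE)$ are distinguished in $_R\HH$ via theorem \ref{thm-classif-ord}. The only cosmetic difference is that the paper packages the final step as an abstract claim about a pair $(E,E')$ satisfying three conditions, whereas you argue directly that $E\cong S_\pm(A)$ would force $E\cong S_\pm(QE)$; the content is identical.
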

\begin{proof}
We first claim that if $(E,E')$ is a pair of ordinary exponential functors satisfying the following three conditions, there is no graded additive functor $B$ such that $E'\simeq S_{\pm}(B)$.
\begin{enumerate}
\item[(i)] $E(R)$ and $E'(R)$ are not isomorphic as $R$-Hopf modules,
\item[(ii)] $QE(R)$ and $QE'(R)$ are isomorphic as $R-\kk$-bimodules,
\item[(iii)] $E$ is isomorphic to $S_{\pm}(A)$ for some graded additive functor $A$.
\end{enumerate}
Indeed, $QE$ and $QE'$ are additive by lemma \ref{lm-additivite}, hence they are isomorphic by (ii) and the classification of additive functors of lemma \ref{lm-control-ordinary}. If $E'$ was of the form $S_{\pm}(B)$, then by (iii) $E$ and $E'$ would be isomorphic, which would contradict (i) by theorem \ref{thm-classif-ord}. 

We are now ready to construct our counter-example. Let $H=\kk[x,y]$ with $x$ and $y$ primitive of degrees $2$ and $2p$. We are going to construct a pair $(E,E')$ satisfying the three conditions above and such that $E(R)=H=E'(R)$. Then by the claim above, $E'$ will be the sought-after counterexample.

\smallskip

{\bf Construction of $E$:} there is a unique ring morphism $R\to \kk$, and we consider the ordinary exponential functor (here the brackets indicate in which degree the copies of the additive functor $V\mapsto V\otimes_R\kk$ are placed):
$$E(V)= S(V\otimes_R\kk[2]\oplus V\otimes_R\kk[2p])\;.$$ 
Then $E(R)=H$, and the $R$-module structure on $H$ is the unique ring morphism $\phi:R\to \End_{\HH}(H)$ which factors through $\Fp$. The ring morphism $\psi:R\to\End_\kk(QH)$ is the unique ring map factoring through $\Fp$.
\smallskip 

{\bf Construction of $E'$:} let $\alpha:H\to H$ be the unique morphism of graded Hopf algebras such that $\alpha(x)=x$ and $\alpha(y)=y+x^p$. We note that $\alpha\circ\dots\circ\alpha$ ($p$-times) is equal to the identity. Thus the ring map $\Fp[t]\to \End_{\HH}(H)$ sending $t$ to $\alpha$ induces an injective ring map 
$$\phi': R\hookrightarrow \End_{\HH}(H)\;.$$ 
Note that $Q\alpha=\Id_{QH}$, hence $Q\circ \phi'=\psi'$ is the unique ring map factoring through $\Fp$, that is $\psi'=\psi$. 
\end{proof}

Finally we mention an elementary application to self-duality. For all $R$-modules $V$, we let $V^\circ:=\Hom_R(V,R)$. Similarly we let $W^*=\Hom_\kk(W,\kk)$. If $E$ is an exponential functor with source $P_R$ values in finitely generated projective $\kk$-modules, we define its dual $E^\sharp$ by 
$$E^\sharp(V):= E(V^\circ)^*\;.$$
We say that $E$ is \emph{self-dual} if there is an isomorphism of exponential functors $E\simeq E^\sharp$.
If $R=\Z/n\Z$, then $_R\HH$ identifies with the full subcategory of $\HH$ supported by the Hopf algebras whose endomorphism ring has characteristic $n$. Thus, to check that $E$ is self-dual, it suffices to check that the Hopf algebra $E(R)\in\HH$ is self-dual. 
We illustrate this by two examples.
\begin{example}
Let $\kk$ be an algebraically closed field and let $R=\Z/\ell\Z$ with $\ell$ invertible in $\kk$. Then  the group ring $\kk R$ of the additive subgroup of the ring $R$ is self-dual. Thus the exponential functor $E(V)=\kk V$ is self-dual, as well as its tensor powers. Hence the standard projectives $P^n(V)=\kk (V^{\oplus n})$ of the functor category $\Fct(\Proj_{\Z/\ell\Z},\kk)$ are also injective. 
\end{example}

\begin{example}[Morava K-theory of classifying spaces]\label{ex-dualMorava}
Let $\kk=\Fp$, $p$ an odd prime. Let $\overline{K}(n)_{\overline{*}}$ be the $\Z/2(p^n-1)\Z$-graded $n$-th Morava K-theory at $p$  \cite[Def 4.2]{RavenelWilson}. For all spaces $X$ and $Y$ we have a canonical isomorphism
$$ \overline{K}(n)_{\overline{*}}X\otimes \overline{K}(n)_{\overline{*}}Y\to \overline{K}(n)_{\overline{*}}(X\times Y)\;.$$
Thus, if 
$X$ is a symmetric monoidal functor from $(\V,\oplus,0)$ to the category of spaces up to homotopy $(\mathrm{hTop},\times,\mathrm{pt})$, then $E(V)=\overline{K}(n)_{\overline{*}}X(V)$ is an exponential functor of the variable $V\in \V$, with codomain $\Fp$-vector spaces. 

Let us take $n=2$, $\V=\Proj_{\Fp}$ and $X(V)=BV$, the classifying space of the finite $\Fp$-vector space $V$. The Hopf algebra $E(\Fp)$ is computed in \cite[Thm 5.7]{RavenelWilson}. It is the Hopf algebra of lemma \ref{lm-selfdual} below, in particular it is self-dual. Thus $E$ is self-dual.
Such self-duality phenomena were first observed in \cite{theseQuyet} (for $p=2$). Our proof has the advantage of being rather short. 
\end{example}

\begin{lemma}\label{lm-selfdual} Let $H$ be a $\Z/2(p^2-1)\Z$-graded bicommutative Hopf $\Fp$-algebra. Assume that the graded vector space $H$ has a homogeneous basis $(a_0,\dots,a_{p^2-1})$, with $\deg a_i=2i$, such that: $a_p$ generates the algebra $H$, $(a_p)^p = a_1$, $(a_1)^p=0$, and for all $m$
$$\Delta(a_m)=\sum_{0\le k\le m} a_k\otimes a_{m-k}\;.$$
Then there is an isomorphism of Hopf algebras $H\simeq H^*$ which multiplies the degrees by $p$.
\end{lemma}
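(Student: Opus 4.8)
The plan is to build the self-duality isomorphism explicitly, exploiting that both $H$ and $H^*$ are truncated polynomial algebras on a single generator. Since $a_p$ generates $H$ and $a_p^{p^2}=(a_p^p)^p=a_1^p=0$, while the $p^2$ powers $1,a_p,\dots,a_p^{p^2-1}$ span the $p^2$-dimensional space $H$, we get $H\simeq \Fp[a_p]/(a_p^{p^2})$ as an algebra. Dually, the product on $H^*$ is the transpose of $\Delta$, and from $\Delta(a_m)=\sum_k a_k\otimes a_{m-k}$ one reads off $a_i^*\cdot a_j^*=a_{i+j}^*$ (with $a_{i+j}^*=0$ for $i+j\ge p^2$), so $H^*\simeq \Fp[a_1^*]/((a_1^*)^{p^2})$ with $(a_1^*)^k=a_k^*$. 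I would then let $f\colon H\to H^*$ be the algebra map with $f(a_p)=a_1^*$; it is well defined because $(a_1^*)^{p^2}=0$, and it is an isomorphism because it sends a generator of one truncated polynomial algebra to a generator of another of the same dimension.

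The core of the argument is that the entire multiplication of $H$ is forced by the hypotheses. As $a_1$ is primitive and $\Delta$ is an algebra map, comparing coefficients in $\Delta(a_1^k)=(1\otimes a_1+a_1\otimes 1)^k$ with the divided-power formula for $\Delta(a_k)$ gives $a_1^{k}=k!\,a_k$ for $0\le k\le p-1$; an analogous induction using $\Delta(a_p)=\sum_{k=0}^p a_k\otimes a_{p-k}$ yields $a_p^{k}=k!\,a_{kp}$ and the cross relations $a_{k}\,a_{\ell p}=a_{k+\ell p}$ for $0\le k,\ell\le p-1$. Combining these, every basis element is
\[
a_m=\frac{1}{m_0!\,m_1!}\,a_1^{m_0}a_p^{m_1}=\frac{1}{m_0!\,m_1!}\,a_p^{pm_0+m_1}\qquad(m=m_0+m_1p,\ 0\le m_0,m_1\le p-1),
\]
so that, using $f(a_1)=f(a_p^p)=(a_1^*)^p=a_p^*$, one obtains the explicit formula $f(a_m)=\frac{1}{m_0!\,m_1!}\,a^*_{pm_0+m_1}$. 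I expect this multiplicative bookkeeping — establishing the ``two tower'' relations and checking that the cross terms introduce no unexpected scalars — to be the main obstacle.

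With $f$ in hand, I would avoid computing the comultiplication of $H^*$ directly and argue instead by symmetry of a transpose. Consider the bilinear form $B(x,y):=\langle f(x),y\rangle$ on $H$. The formula above gives $B(a_m,a_n)=\frac{1}{m_0!\,m_1!}$ exactly when $n=pm_0+m_1$ (that is, when the base-$p$ digits of $n$ are those of $m$ reversed) and $0$ otherwise, and both this support condition and this value are manifestly symmetric in $m$ and $n$; hence $B$ is symmetric. Now the transpose $f^\vee\colon (H^*)^*\to H^*$, $f^\vee(\Phi)=\Phi\circ f$, sends $h\in (H^*)^*=H$ to the functional $y\mapsto B(y,h)$, whereas $f(h)$ is $y\mapsto B(h,y)$; symmetry of $B$ therefore means $f^\vee=f$. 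Since for finite-dimensional spaces a linear map between coalgebras is a coalgebra morphism if and only if its transpose is an algebra morphism, the identity $f^\vee=f$ together with the fact that $f$ is already an algebra map forces $f$ to be a coalgebra map as well. Thus $f$ is an isomorphism of bialgebras, hence of Hopf algebras (the antipode being unique). Finally, because $f(a_m)$ is a nonzero multiple of $a^*_{pm_0+m_1}$ and $2(pm_0+m_1)\equiv p\cdot 2(m_0+m_1p)\pmod{2(p^2-1)}$ (using $p^2\equiv 1$), the map $f$ multiplies degrees by $p$, as claimed.
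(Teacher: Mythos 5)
Your proof is correct, and it is organized genuinely differently from the paper's. The paper first argues that the hypotheses determine $H$ up to isomorphism (it computes $\Delta(y)$ explicitly for $y=a_p$ using only $y^{pk}=a_1^k=k!\,a_k$), and then checks that the basis of $H^*$ dual to $(y^i)$, placed in degrees $2pi$, satisfies the same hypotheses --- the computational content there is evaluating $(b_p)^p$, $(b_1)^p$ and $(b_p)^{p^2-1}$ by pairing against iterated coproducts of $H$. You instead write down the isomorphism $f$ as an explicit algebra map $\Fp[a_p]/(a_p^{p^2})\to\Fp[a_1^*]/((a_1^*)^{p^2})$ and obtain compatibility with the comultiplication for free from the symmetry of $B(x,y)=\langle f(x),y\rangle$ together with the transpose criterion; this neatly avoids computing any products in $H^*$. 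The price is that you need the complete change of basis $a_m=\frac{1}{m_0!\,m_1!}a_p^{pm_0+m_1}$, i.e.\ the full product structure of $H$ in the basis $(a_m)$, where the paper only needs the single tower $a_1^k=k!\,a_k$. One caution on the step you flag as the main obstacle: the induction for $a_p^k=k!\,a_{kp}$ is not quite ``analogous'' to the one for $a_1^k=k!\,a_k$, because $a_p$ is not primitive --- $\Delta(a_p)^k$ contains nonvanishing cross terms $a_{j_1}\cdots a_{j_k}\otimes a_{p-j_1}\cdots a_{p-j_k}$ with $0<j_i<p$, and matching them against the terms $a_j\otimes a_{kp-j}$ with $j\not\equiv 0\pmod p$ of $\Delta(a_{kp})$ presupposes the very cross relations you are establishing. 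The clean way to close this is the degree argument: for $0<m<p^2-1$ the homogeneous component of degree $2m$ in the cyclic grading is one-dimensional, spanned by $y^{pm_0+m_1}$ (and for $m=p^2-1$ one intersects with $\ker\epsilon$), so $a_m=\lambda_m\,y^{pm_0+m_1}$ a priori, and $\lambda_m$ is pinned down by extracting one well-chosen coefficient (e.g.\ that of $a_p\otimes a_{(k-1)p}$ for $m=kp$, which receives no contribution from the cross terms of $\Delta(y)^k$) from the divided-power formula for $\Delta(a_m)$. With that adjustment your argument is complete.
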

\begin{proof}
First, $H$ is uniquely determined up to isomorphism. Indeed, $a_0$ is the only grouplike element of $H$ so $a_0=1$. Let $y=a_p$. The surjective morphism of algebras $\kk[y]/y^{p^2}\to H$ is an isomorphism for dimension reasons. By comparing $\Delta(a_k)$ and $\Delta(a_1)^k$ we prove by induction on $k$ that $y^{pk}=a_1^k=k!a_k$, hence the comultiplication of $H$ is given by:
$$\Delta(y)=y\otimes 1+1\otimes y+\sum_{1\le k\le p-1}\frac{1}{k!(p-k)!}y^{pk}\otimes y^{p(p-k)}\;.$$
Now we prove self-duality. By uniqueness of $H$, it suffices to find a basis $(b_i)$ of $H^*$ with $\deg b_i=2pi$ and such that the $b_i$ satisfy the same conditions as the $a_i$. Take $(b_i)$, $0\le i\le p^2-1$ be dual basis of the $(y^i)$, $0\le i\le p^2-1$. 
We have $\Delta(b_m)=\sum_{k,\ell}\lambda_{k,\ell} b_k\otimes b_\ell$ with 
$$\lambda_{k,\ell}=\langle \Delta(b_m), y^k\otimes y^\ell\rangle = \langle b_m,y^{k+\ell}\rangle =\delta_{m,k+\ell}\;.$$
Thus it remains to prove that $(b_p)^p=b_1$, $(b_1)^{p}=0$ and $(b_p)^{p^2-1}\ne 0$. 
For degree reasons $(b_p)^p=\lambda b_{1}$ with
$$\lambda= \langle (b_p)^p, y\rangle = \langle b_p^{\otimes p}, \Delta^p(y)\rangle\;,$$
hence $\lambda$ is the coefficient of $(y^{p})^{\otimes k}=a_1^{\otimes k}$ in $\Delta^p(y)=\Delta^p(a_p)$, that is $\lambda=1$. Similarly, $(b_1)^{p}=0 b_p=0$. Finally the coefficient of $b_{p^2-1}$ in $(b_p)^{p^2-1}$ is (up to a nonzero scalar) the coefficient of $a_1^{\otimes p^2-1}$ in $\Delta^{p^2-1}(a_{p^2-1})$ hence it is nonzero. 
\end{proof}

\section{Strict exponential functors and Hopf algebras}\label{sec-strictvsHopf}

In this section $\mathcal{F}=\PP_{\omega,\kk}$ with $\kk$ a field of positive characteristic $p$ (strict exponential functors are completely understood in characteristic zero by lemma \ref{lm-classif-carzero} and remark \ref{Rk-classif-carzero}). 

Our purpose is to establish a comparison theorem between the strict exponential functors $E$ and the Hopf algebras $E(\kk)$ with the same flavor as theorem \ref{thm-classif-ord}.
If $E$ is a strict exponential functor, then it has a weight decomposition, hence $E(\kk)$ is canonically equipped with a weight decomposition. Moreover, by lemma \ref{lm-additivite} and the classification of additive strict analytic functors of lemma \ref{lm-control-strict}, the primitives and the indecomposables of $E$, hence of $E(\kk)$ are concentrated in weights $p^r$, $r\ge 0$. Finally, the endomorphism ring of $E$, hence of $E(\kk)$ is an $\mathbb{F}_p$-algebra by lemma \ref{lm-charact}. That is, $E(\kk)$ is a \emph{strict Hopf algebra} according to the following definition.
\begin{definition}\label{def-strict-Hopf}
Let $\kk$ be a field of characteristic $p>0$.
A \emph{strict Hopf algebra} is a graded bicommutative Hopf $\kk$-algebra $H$, whose endomorphism ring is an $\mathbb{F}_p$-algebra, equipped with a weight decomposition in each degree:
$$H^i=\bigoplus_{k\ge 0}w_kH^i$$
satisfying the following conditions.
\begin{enumerate}
\item All the Hopf algebra operations preserve the weights, e.g. the multiplication restricts to maps $w_kH^i\otimes w_\ell H^j\to w_{k+\ell}H^{i+j}$.
\item The unit and counit induce an isomorphism $\kk\simeq w_0H^0$.
\item The primitives and the indecomposables of $H$ are concentrated in weights $p^r$, $r\ge 0$.
\end{enumerate}
We denote by $^\strict\HH$ the category whose objects are the strict Hopf algebras, and whose morphisms are the morphisms of graded Hopf algebras which preserve the weight decompositions.
\end{definition}

The main result of the section is the following analogue of theorem \ref{thm-classif-ord}.
\begin{theorem}\label{thm-classif-strict}
Assume that $\kk$ is a perfect field of positive characteristic. Evaluation on $\kk$ yields an equivalence of categories
$$\begin{array}[t]{cccc}
^\strict\Theta: & \PP_{\omega,\kk}-\Exp_c&\xrightarrow[]{\;\simeq\;} & ^\strict\HH \\
& E & \mapsto & E(\kk) 
\end{array}\;.
$$
\end{theorem}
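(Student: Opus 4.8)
The plan is to treat the three parts of the equivalence separately; that $^\strict\Theta$ is well defined, i.e. that $E(\kk)$ is a strict Hopf algebra, has already been recorded above using lemmas \ref{lm-additivite}, \ref{lm-control-strict} and \ref{lm-charact}. Faithfulness is the part where the exponential isomorphism does all the work. Iterating $\phi$ produces natural isomorphisms $E(\kk^n)\simeq E(\kk)^{\otimes n}$, and if $g$ is a morphism of strict exponential functors then compatibility of $g$ with $\phi$ forces $g_{\kk^n}$ to coincide, through these isomorphisms, with the $n$-th tensor power of $g_\kk$. Since every $\PP_{d,\kk}$ is equivalent to the category of modules over the Schur algebra $S(n,d)$ via evaluation on $\kk^n$ for $n\ge d$, a strict analytic functor, and hence a morphism between two of them, is recovered weight by weight from its values on the $\kk^n$. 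Thus $g$ is determined by $g_\kk$, and $^\strict\Theta$ is faithful.

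For fullness I would start from a morphism $f\colon E(\kk)\to E'(\kk)$ in $^\strict\HH$ and set $g_{\kk^n}$ to be the conjugate of $f^{\otimes n}$ by the exponential isomorphisms. The real content is to verify that this collection defines a morphism of strict analytic functors, i.e. that in each weight $d$ the map $f^{\otimes n}$ is equivariant for the action of $S(n,d)=\Gamma^d(\gl_n)$ on $E(\kk^n)\simeq E(\kk)^{\otimes n}$. The key point is that this Schur-algebra action is generated by data that $f$ preserves: the torus (diagonal) elements act through the weight decomposition, which $f$ respects by the very definition of a morphism in $^\strict\HH$, while the divided powers of the off-diagonal elementary generators act through the comultiplication and multiplication of $E(\kk)$, since these are exactly the maps that split and merge tensor factors in the exponential isomorphism. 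As $f$ is a weight-preserving morphism of Hopf algebras, $f^{\otimes n}$ intertwines both families of generators and is therefore $S(n,d)$-linear, yielding $g$ with $^\strict\Theta(g)=f$. Note that scalars act by $\lambda\mapsto\lambda^d$ on weight $d$ on both sides, so fullness does not yet require perfectness.

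Essential surjectivity is the hard part, and the place where perfectness of $\kk$ becomes indispensable. I would first reduce to the connected case using the decomposition $E\simeq E^0\otimes E'$ of lemma \ref{lm-abelian} together with the analogous splitting of a strict Hopf algebra, handling the degree-zero factor by the same argument. For a connected strict Hopf algebra $H$, the standard functors $S_{\pm}(A)$ and $\Gamma_{\pm}(A)$ with $A$ a sum of Frobenius twists $I^{(r)}$ are manifestly in the image of $^\strict\Theta$, and the weight decomposition of $H$ (whose primitives and indecomposables are concentrated in weights $p^r$) dictates exactly which twists occur. Since $^\strict\Theta$ is exact and, by the previous steps, fully faithful, its essential image is closed under kernels, cokernels and tensor products; it then suffices to write an arbitrary $H$ as a finite tensor product of monogenic strict Hopf algebras, each realised as a kernel or cokernel of a morphism between the standard pieces $S_{\pm}(I^{(r)})(\kk)$ and $\Gamma_{\pm}(I^{(r)})(\kk)$, which is precisely the classical structure theory of bicommutative Hopf algebras over a perfect field. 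The main obstacle, and the reason perfectness cannot be dropped, is in endowing these monogenic pieces with a \emph{functorial} (strict-polynomial) structure: the functoriality on a weight-$p^r$ piece is built by precomposition with $I^{(r)}$, which is fully faithful by theorem \ref{thm-twist}, and reconstructing the scalar part of the strict-polynomial structure amounts to inverting the $p^r$-th power map on $\kk$. Over an imperfect field this fails, and the examples of section \ref{sec-counter} show the statement itself fails; so perfectness enters exactly here. A more self-contained alternative would skip the abstract generation argument and directly build a Schur-algebra action on the collection $H^{\otimes n}$ out of $\mu$, $\Delta$ and the weight grading as in the fullness step, the work then being to check the defining relations of $S(n,d)$ — but perfectness reappears at the identical point, in making the twisted scalar action well defined.
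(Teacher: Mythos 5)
Your essential surjectivity argument has a genuine gap. Knowing that the essential image of $^\strict\Theta$ is closed under kernels, cokernels and tensor products of morphisms between objects already in the image does \emph{not} give closure under extensions, and extensions are exactly what is needed: the objects that generate ${^\strict}\HH^0$ are the Hopf algebras $H_n$ sitting in \emph{non-split} exact sequences $\kk\to \Gamma_n(\kk)\to H_n\to S^{(n)}(\kk)\to\kk$ (fact \ref{fact-Gen-dieu}). Their Dieudonn\'e modules are the ``valley'' string modules $\kk\xleftarrow{\Id}\cdots\xleftarrow{\Id}\kk\xrightarrow{\Id}\kk\xrightarrow{\Id}\cdots$, which neither embed into nor arise as quotients of direct sums of the modules attached to $S_\pm(I^{(r)})(\kk)$ and $\Gamma_\pm(I^{(r)})(\kk)$ (compare $F_n$ and $V_n$ on the two sides in any candidate map); so $H_n$ is not a kernel or a cokernel of a morphism between your standard pieces, and your claimed reduction fails precisely on the generators. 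This is why the paper constructs the exponential functors $G_n$ by hand as quotients of $\Gamma_n\otimes S(\Gamma^{p^n})$, checks they are exponential (lemma \ref{lm-Gn-exp}), and proves the $\Hom$-isomorphism and $\Ext^1$-injectivity statement of lemma \ref{lm-iso-S} in order to transport the non-splitness of lemma \ref{lm-exact-seq} down to $\kk$ and conclude $G_n(\kk)\simeq H_n$; that inductive construction is the real content of the proof and your sketch omits it entirely. Your account of where perfectness enters is also wrong on both counts: theorem \ref{thm-twist} holds over an arbitrary field, and the examples of section \ref{sec-counter} refute theorems \ref{thm-classif-indecomp} and \ref{thm-uniqueness-strict}, \emph{not} theorem \ref{thm-classif-strict} — the paper explicitly conjectures the latter holds over any field. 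Perfectness is used at exactly one place, in lemma \ref{lm-prop-3}, to invoke Schoeller's Dieudonn\'e classification and hence to know that the $H_n$ generate ${^\strict}\HH^0$.

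On the other two parts: your faithfulness argument is fine, and your fullness argument is a genuinely different route from the paper's (which deduces full faithfulness formally from the generator statement via lemma \ref{lm-prop-2}). But as written it rests on two unproved claims: that $S(n,d)$ is generated by the divided powers of the off-diagonal elementary matrices together with the torus part (a citable but nontrivial presentation theorem), and, more seriously, that the action of such a divided power on $E(\kk^n)\simeq E(\kk)^{\otimes n}$ is \emph{forced} to be the composite of $\Delta$, a weight projection and $\mu$. The latter is plausible — it follows by base-changing the exponential isomorphism to $\kk[t]$, computing $E(1+te_{ij})$ there, and identifying coefficients of the resulting polynomial in $t$ — but it is the whole point (over a finite field the Schur-algebra action is \emph{not} determined by the values of $E$ on group elements), so it cannot be waved through with ``these are exactly the maps that split and merge tensor factors.''
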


The remainder of the section is devoted to the proof of theorem \ref{thm-classif-strict}. In sections \ref{subsec-decomp} and \ref{subsec-reduction} the field $\kk$ is an arbitrary field of positive characteristic $p$. For section \ref{subsec-Gn} we need to assume that $\kk$ is perfect because we rely on the theory of Dieudonn\'e modules from \cite{Schoeller} (recalled in appendix \ref{App-Dieu}) at exactly one place, namely in the proof of lemma \ref{lm-prop-3}.

\subsection{A categorical decomposition}\label{subsec-decomp} Let $\kk$ be an arbitrary field of positive characteristic $p$. We let $\C$ be either $\PP_{\omega,\kk}-\Exp_c$ or $^\strict\HH$. Thus $\C$ is an abelian category with all colimits, by lemma \ref{lm-abelian} (the proof adapts without change for strict Hopf algebras).

We use the following notations and terminology within section \ref{subsec-decomp}. 
If $\C=\PP_{\omega,\kk}-\Exp_c$ we let $F_{r,i}$ be a copy of the Frobenius functor $I^{(r)}$ placed in degree $i$, and if $\C={^\strict\HH}$ we let $F_{r,i}$ be a $\kk$-vector space of dimension one, placed in weight $p^r$ and in degree $i$. We denote by $S_{r,i}$, resp. $\Lambda_{r,i}$, resp. $T_{r,i}$, the primitively generated symmetric algebra, resp. exterior algebra, resp. $p$-th truncated polynomial algebra, on $F_{r,i}$:
\begin{align*}
&S_{r,i}=S(F_{r,i}),\\
&\Lambda_{r,i}=\Lambda(F_{r,i}),\\
&T_{r,i}=\mathrm{coker}\,\left[\,{^{(1)}}S(F_{r,i})\xrightarrow[]{\mathbf{F}}S(F_{r,i})\right]\;.
\end{align*}
Here $\mathbf{F}$ is the Frobenius map recalled in appendix \ref{app-FV-Hopfalg}, thus $T_{r,i}$ is simply the quotient of $S(F_{r,i})$ modulo the ideal generated by $p$-th powers.
 Note that $T_{r,i}=\Lambda_{r,i}$ in characteristic $2$. If $p\ne 2$ then for graded commutativity reasons $S_{r,i}$ and $T_{r,i}$ only make sense when $i$ is even, while $\Lambda_{r,i}$ only makes sense when $i$ is odd. Note also that for $X=S$, $T$ or $\Lambda$, we have $^{(1)}X_{r,i}\simeq X_{r+1,pi}$. 
\begin{lemma}
The objects $T_{r,i}$ and $\Lambda_{r,i}$ are the only simple objects of $\C$.
\end{lemma}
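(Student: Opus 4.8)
The plan is to prove both inclusions in parallel for the two possible choices of $\C$, since the only structural inputs are the abelian structure recalled just before lemma \ref{lm-abelian}, the fact (lemma \ref{lm-additivite}) that $PH$ is the largest additive subfunctor of $H$, and the classification of additive objects: over a field, a nonzero additive object contains a simple summand isomorphic to some $F_{r,i}$ (lemma \ref{lm-control-strict} in the strict-exponential case, and the weight condition (3) of the definition of $^\strict\HH$ in the Hopf-algebra case). Throughout I would use that a subalgebra $H'\subseteq H$ closed under all the structure maps is a genuine subobject of $\C$: in the bicommutative setting it is normal and fits into a short exact sequence $H'\hookrightarrow H\twoheadrightarrow H/\!/H'$, so it is the kernel of its own quotient; dually a surjective Hopf map is an epimorphism, so a nonzero quotient of a simple object is isomorphic to it.

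First I would check that $T_{r,i}$ and $\Lambda_{r,i}$ are simple. Let $0\neq H'\subseteq T_{r,i}$ be a subobject. Applying the left-exact functor $P$ gives $PH'\subseteq PT_{r,i}$, and a direct computation of the comultiplication of $\kk[x]/x^p$ shows $PT_{r,i}=F_{r,i}$, which is simple; hence $PH'=F_{r,i}$, since $PH'=0$ would force $H'=\kk$. Thus $H'$ contains the generator $x$, and being a subalgebra it contains the subalgebra it generates, namely all of $T_{r,i}$; so $H'=T_{r,i}$. The argument for $\Lambda_{r,i}$ is identical, using $P\Lambda_{r,i}=F_{r,i}$.

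Conversely, let $H$ be simple, so $H\neq\kk$ and $\overline H\neq 0$. A nonzero element of $\overline H$ of minimal degree, and of minimal weight in that degree (both gradings being preserved by $\mu$ and $\Delta$), is necessarily primitive, so $PH\neq 0$, and I may pick a simple summand $F\cong F_{r,i}\hookrightarrow PH$. The subalgebra generated by this primitive $F$ is a sub-Hopf-algebra, hence a nonzero subobject, so simplicity forces it to equal $H$: thus $H$ is monogenic on the primitive $F_{r,i}$. When $i$ is odd and $p\neq2$, graded commutativity gives $x^2=0$, so $H$ is a nonzero quotient of the free exterior object $\Lambda_{r,i}$, whence $H\cong\Lambda_{r,i}$. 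When $i$ is even (or $p=2$) I would show that $x^p=0$: otherwise $x^p$ is again primitive (as $\Delta(x^p)=\Delta(x)^p$), and the subalgebra it generates is a nonzero subobject that is proper, since it misses $x$ for degree reasons when $i>0$ and for weight reasons when $i=0$ (using $^{(1)}X_{r,i}\simeq X_{r+1,pi}$, which strictly raises the grading), contradicting simplicity. Hence $x^p=0$ and $H$ is a nonzero quotient of $T_{r,i}=S_{r,i}/(x^p)$, so $H\cong T_{r,i}$ (with $T_{r,i}=\Lambda_{r,i}$ when $p=2$).

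The main obstacle I anticipate is the careful justification that the subalgebra generated by a primitive subfunctor is a bona fide subobject of the abelian category $\C$ (and dually that imposing the relation $x^p=0$ or $x^2=0$ realizes $T_{r,i}$ and $\Lambda_{r,i}$ as the relevant universal quotients), together with running the degree-versus-weight minimality argument uniformly across the two settings and across the characteristic-$2$ boundary.
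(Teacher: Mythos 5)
Your proposal is correct and follows essentially the same route as the paper: first establish simplicity of $T_{r,i}$ and $\Lambda_{r,i}$ via the fact that a nontrivial subobject has nonzero primitives contained in the simple $PT_{r,i}=F_{r,i}$, then, for an arbitrary simple $H$, extract a primitive copy of $F_{r,i}$ in lowest weight and degree and show that the induced map from $S_{\pm}(F_{r,i})$ kills $p$-th powers (your "the sub-Hopf-algebra generated by $x^p$ is a proper nonzero subobject" is the same obstruction the paper phrases as "$X$ would contain a submodule with several composition factors"). The extra care you take with the degree-versus-weight case $i=0$ and with sub-Hopf-algebras being genuine subobjects is sound and only makes explicit what the paper leaves implicit.
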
 
\begin{proof}The proof follows classical lines.
Observe first that $T_{r,i}$ and $\Lambda_{r,i}$ are simple. Indeed, if $X$ was a simple subobject of $T_{r,i}$, then it must have nontrivial primitives (namely its summand of lowest weight), but $PX\subset PT_{r,i}=F_{r,i}$ and since $F_{r,i}$ is simple (in the category of graded vector spaces with weights or in the category of strict analytic functors), $PX=F_{r,i}$. Since $F_{r,i}$ generate $T_{r,i}$, we have $X=T_{r,i}$. The proof for $\Lambda_{r,i}$ is the same. 
Now let $X$ be a simple object of $\C$. Then let $r$ be the lowest positive integer such that  $w_kX\ne 0$, and let $i$ be the lowest positive integer such that $w_kX^i\ne 0$. For degree and weight reasons, the elements of $w_kX^i$ are primitive, thus $w_kX^i$ is a direct sum of copies of $F_{r,i}$. Choose one of these copies, and let $f:S_\pm(F_{r,i})\to X$ be the associated morphism. If $i$ is odd, then $f$ is nonzero, hence an isomorphism, between $\Lambda_{r,i}$ and $X$. If $i$ is even, then $f$ must be zero on $F_{r,i}$ (otherwise $X$ would contain a submodule with several composition factors) thus $f$ induces a nonzero morphism, hence an isomorphism between $T_{r,i}$ and $X$.
\end{proof} 
 
We say that an object of $^\strict\HH$ is finitely generated if its underlying $\kk$-algebra is finitely generated, and that an object of $\PP_{\omega,\kk}-\Exp_c$ is finitely generated if the $\kk$-algebra $E(V)$ is finitely generated for all $V$. (Since $E(\kk^n)\simeq E(\kk)^{\otimes n}$, $E$ is finitely generated if and only if the algebra $E(\kk)$  is finitely generated.)
The next lemma is a variant of the well-known fact that connected graded bicommutative Hopf algebras over $\kk$ yield a locally noetherian category \cite{Gabriel}. 

\begin{lemma}\label{lm-locnoeth}
The category $\C$ is locally noetherian. Given an object $E$ of $\C$, the following assertions are equivalent:
\begin{enumerate}
\item\label{cen1} $E$ is noetherian,
\item\label{cen2} $E$ is finitely generated,
\item\label{cen3} $E$ has a finite composition series whose factors are $S_{r,i}$, $\Lambda_{r,i}$ or $T_{r,i}$.
\end{enumerate}
\end{lemma}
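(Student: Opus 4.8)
The plan is to prove the cycle of implications $(2)\Rightarrow(3)\Rightarrow(1)\Rightarrow(2)$ and then to read off local noetherianity. The running backbone is the observation that every object $E$ of $\C$ is the directed union of its finitely generated subobjects: the underlying graded bicommutative Hopf algebra is the union of the sub-Hopf-algebras generated by finitely many homogeneous (and, in the strict case, weight-homogeneous) elements, and since the reduced coproduct strictly lowers the degree, each such sub-Hopf-algebra is genuinely finitely generated as an algebra. In the functor setting these subobjects are again exponential by lemma \ref{lm-stabilite}(ii) (over a field everything is flat), so in both incarnations of $\C$ we obtain $E=\bigcup_\alpha B_\alpha$ with the $B_\alpha$ finitely generated and with monomorphisms corresponding to sub-Hopf-algebras (cf. \cite{DG,Takeuchi}). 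Granting this, $(1)\Rightarrow(2)$ is immediate: if $E$ is noetherian, then this ascending union must stabilize, so $E=B_\alpha$ for some $\alpha$ and $E$ is finitely generated.

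For $(2)\Rightarrow(3)$ I would argue by induction on $\dim_\kk QE$, which is finite exactly when $E$ is finitely generated (recall $E(\kk^n)\simeq E(\kk)^{\otimes n}$). If $E\ne\kk$, choose the summand of $\overline E=\ker\epsilon$ of smallest weight and then of smallest degree; exactly as in the proof of the classification of simple objects above, this summand is primitive and indecomposable, and by lemma \ref{lm-additivite} together with the classification of additive functors (lemma \ref{lm-control-strict}) it contains a copy of $F_{r,i}\subseteq PE$. The associated morphism $S_{\pm}(F_{r,i})\to E$ has monogenic, primitively generated image $B$; an elementary computation with $\Delta(x^n)=\sum_k\binom{n}{k}x^k\otimes x^{n-k}$ shows that $B$ is isomorphic to $\Lambda_{r,i}$ when $i$ is odd, and to either $S_{r,i}$ or a truncation $\kk[x]/(x^{p^h})$ when $i$ is even (any truncation exponent is forced to be a power of $p$). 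In the truncated case, the Frobenius exact sequences defining the $T$'s, together with the relation ${}^{(1)}T_{s,j}\simeq T_{s+1,pj}$, exhibit a finite filtration of $\kk[x]/(x^{p^h})$ with factors $T_{r,i},T_{r+1,pi},\dots,T_{r+h-1,p^{h-1}i}$, so in all cases $B$ has a finite filtration with factors among the $S,\Lambda,T$. Since $B$ is a subobject of $\C$, the cokernel $E/\!/B$ again lies in $\C$, is finitely generated, and satisfies $\dim_\kk Q(E/\!/B)<\dim_\kk QE$; splicing the filtration of $B$ with the one provided by induction for $E/\!/B$ yields $(3)$.

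The implication $(3)\Rightarrow(1)$ follows because noetherian objects are stable under extension, so it suffices to check each building block. The objects $\Lambda_{r,i}$ and $T_{r,i}$ are finite dimensional, hence of finite length; for $S_{r,i}=\kk[x]$ the point is that, being a domain, its subobjects are polynomial sub-Hopf-algebras, and these are exactly the $\kk[x^{p^k}]$, $k\ge 0$ (together with $\kk$). They form a single descending chain, so the ascending chain condition holds and $S_{r,i}$ is noetherian (though not artinian). Finally, for local noetherianity I would combine the two ends of the cycle: by the backbone every object is a directed union of finitely generated subobjects, and by $(2)\Rightarrow(3)\Rightarrow(1)$ each finitely generated object is noetherian; hence the finitely generated objects form a generating set of noetherian objects, and $\C$ is locally noetherian, in analogy with \cite{Gabriel}.

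I expect the main obstacle to be $(2)\Rightarrow(3)$: pinning down that the monogenic sub-Hopf-algebra generated by the minimal primitive is exactly one of $S_{r,i}$, $\Lambda_{r,i}$, or a $p$-power truncation (the Borel-type input, where the exponent of any relation is forced to be a power of $p$), and verifying that passing to $E/\!/B$ strictly decreases $\dim_\kk QE$ so that the induction terminates. The remaining verifications — that these constructions can be carried out identically in $\PP_{\omega,\kk}-\Exp_c$ and in ${}^\strict\HH$, and that both categories are abelian and cocomplete (lemma \ref{lm-abelian} and its evident analogue) — are routine but should be stated explicitly.
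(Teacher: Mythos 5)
Your proposal is correct, but it takes a genuinely different route from the paper's for the equivalence of (1)--(3). The paper filters an arbitrary $E$ by the subobjects $F_i$ generated by the part of weight $\le i$, observes that each of the three conditions holds for $E$ if and only if this filtration is finite and each (primitively generated) factor $F_i/F_{i-1}$ satisfies the same condition, and then disposes of the primitively generated case, which it declares easy; you instead run an induction on $\dim_\kk QE$, peeling off at each step the monogenic primitively generated sub-Hopf-algebra $B$ on a primitive of minimal weight and identifying it as $S_{r,i}$, $\Lambda_{r,i}$ or a $p^h$-truncation by the Lucas/Borel argument. Your version has the merit of making the ``easy'' step explicit and of pinning down the subobject lattice of $S_{r,i}$ (the single chain $\kk[x^{p^k}]$, $k\ge 0$), which is what actually establishes $(3)\Rightarrow(1)$ --- an implication the paper's proof leaves implicit. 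Both arguments rest on the same backbone, that every object is the directed union of its finitely generated subobjects: the paper obtains this from \cite[Prop 4.13]{MilnorMoore} via reflexive subobjects, while you invoke the same coproduct-lowers-the-grading induction directly; note that it is the \emph{weight}, not the degree, that the reduced coproduct strictly lowers (the objects $S_{r,0}$ and $T_{r,0}$ are concentrated in degree zero), though this changes nothing in the argument. Two points you should state explicitly to be complete: the exactness of filtrant colimits in $\C$, which is needed for ``locally noetherian'' in the sense of \cite{Gabriel} and which the paper checks on underlying vector spaces or strict analytic functors; and the exactness of $\kk\to B\to E\to E\otimes_B\kk\to\kk$ for a sub-Hopf-algebra $B\subset E$, which your splicing step uses and which comes from the Takeuchi--Newman correspondence already invoked in the construction of $\C$.
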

\begin{proof}
The proof is the same as in the classical case of the category $\HH$. Since we know no reference for the classical proof, we provide some details.

We already know that $\C$ is a cocomplete abelian category. Filtrant colimits are exact (use e.g. criterion given in \cite[I.6, Prop. 6 c]{Gabriel} and that this criterion can be checked in the underlying category of $\kk$-vector spaces or of strict analytic functors, where filtrant colimits are known to be exact). 

Next we examine the noetherian objects of $\C$. We first observe that any object $E$ of $\C$ has an exhaustive increasing filtration $\kk=F_0\subset F_1\subset \dots$ whose factors $F_i/F_{i-1}$ are primitively generated. (Take $F_i$ the subobject generated by   the part of weight $\le i$ of $E$, so that $F_i/F_{i-1}$ is generated by the homogeneous part of weight $i$ of $QE$). Now \eqref{cen1} holds if and only if the filtration is finite and the $F_i/F_{i-1}$ are finitely generated. Similarly, \eqref{cen2} holds if and only if the filtration is finite and the $F_i/F_{i-1}$ are noetherian. Similarly, \eqref{cen3} holds if and only if the filtration is finite and the $F_i/F_{i-1}$ satisfy \eqref{cen3}. Thus it suffices to prove \eqref{cen1}$\Rightarrow$\eqref{cen2}$\Rightarrow$\eqref{cen3} when $E$ is primitively generated, the latter being easy.

It remains to prove that the objects of $\C$ are the union of their finitely generated subobjects. First, any object of $\C$ is the union of its reflexive subobjects (an object $E$ is reflexive if for all $k$ and $i$, $w_kE^i$ is finite. In the case of $\HH$ finite means a finite dimensional vector space, while in the case of $\PP_{\omega,\kk}-\Exp_c$, this means that $w_kE^i(V)$ is a finite dimensional vector space for all $V$). This follows from \cite[Prop 4.13]{MilnorMoore}, whose proof can be adapted to the case of strict exponential functors (replace `an element $x$ of least degree in $B-A$' by `a finite functor of least degree which is not contained in $A$', and use that a functorial Hopf subalgebra of a strict exponential functor is a strict exponential functor by lemma \ref{lm-stabilite}).  Now by considering indecomposables, it is easy to see that any reflexive object $E$ is the union of finitely generated subobjects.  
\end{proof}

Let $\mathbb{N}[p^{-1}]\subset \mathbb{Q}$ be the set of nonnegative rational numbers $n/p^r$ with $r\ge 0$ and $n\in \mathbb{N}$. For all $a\in \mathbb{N}[p^{-1}]$ we let $\C(a)$ be the full subcategory of $\C$ supported by the objects $E$ whose weight decomposition satisfy:
$$w_kE^i = 0  \text{ if } i\ne ak\;.$$
Thus, the weight decomposition of an object of $\C(a)$ is determined by its grading and vice-versa. The categories $\C(a)$ are localizing subcategories of $\C$, i.e. stable under subobjects, quotients, extensions and colimits. Each object $S_{r,i}$, $\Lambda_{r,i}$, or $T_{r,i}$ belongs to only one of these categories, namely $\C(ip^{-r})$. 

The next proposition is an analogue in our context of the categorical decomposition of \cite[Section 2]{Schoeller}.

\begin{proposition}\label{prop-categ-decomp}
Tensor products induce an equivalence of categories:
$$\prod_{a\in\mathbb{N}[p^{-1}]}\C(a) \;\simeq\;\C \;.$$
\end{proposition}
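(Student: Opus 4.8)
The plan is to show that the tensor-product functor $\Psi\colon \prod_{a}\C(a)\to\C$, $(E_a)\mapsto\bigotimes_a E_a$, is an equivalence by proving it is fully faithful and essentially surjective. The functor is well defined because $\otimes$ is the coproduct of the additive category $\C$ and $\C$ is cocomplete, so even infinite tensor products exist (as filtered colimits of finite ones). Throughout I use that each $\C(a)$ is a localizing subcategory, that the simple objects $T_{r,i},\Lambda_{r,i}$ are partitioned among the $\C(a)$ (with $T_{r,i},\Lambda_{r,i}\in\C(ip^{-r})$), and that every object $H$ is \emph{weight-connected} ($w_0H=\kk$), so that any $H\neq\kk$ has nonzero primitives $PH$, and $PH$ splits along slopes since by lemma \ref{lm-additivite} it is additive and concentrated in weights $p^r$. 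For each $H$ and each $a$ I write $t_a(H)$ for the largest subobject of $H$ lying in $\C(a)$; it exists because $\C(a)$ is closed under subobjects and sums.

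For full faithfulness, the universal property of the coproduct gives
$$\Hom_\C\Big(\bigotimes_a E_a,\, H\Big)\simeq\prod_a\Hom_\C(E_a,H),$$
so it suffices to prove $\Hom_\C(E_a,\bigotimes_b E'_b)\simeq\Hom_{\C(a)}(E_a,E'_a)$. Any morphism $E_a\to\bigotimes_b E'_b$ has image a quotient of $E_a$, hence an object of $\C(a)$, hence a subobject of $t_a(\bigotimes_b E'_b)$; thus I am reduced to the identity $t_a(\bigotimes_b E'_b)=E'_a$. The inclusion $E'_a\subseteq t_a(\bigotimes_b E'_b)$ is clear. For the reverse inclusion, $t_a(\bigotimes_b E'_b)/E'_a$ is a quotient of an object of $\C(a)$ and a subobject of the quotient of $\bigotimes_b E'_b$ by its tensor factor $E'_a$, namely $\bigotimes_{b\neq a}E'_b$; if it were nonzero it would have nonzero primitives, which would lie both in slope $a$ and in $P(\bigotimes_{b\neq a}E'_b)=\bigoplus_{b\neq a}PE'_b$ (slopes $\neq a$), a contradiction. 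Hence $t_a(\bigotimes_b E'_b)=E'_a$ and $\Psi$ is fully faithful.

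For essential surjectivity, the inclusions $t_a(E)\hookrightarrow E$ assemble, via the coproduct, into a morphism $\Psi_E\colon\bigotimes_a t_a(E)\to E$, and I must show it is an isomorphism. Writing $E=\colim_\alpha E^\alpha$ as the filtered colimit of its noetherian subobjects (lemma \ref{lm-locnoeth}), and using that both $t_a$ and $\bigotimes_a$ commute with filtered colimits, it is enough to treat noetherian $E$. Such an $E$ has a finite series with factors among the $S_{r,i},\Lambda_{r,i},T_{r,i}$, each of which lies in a single $\C(a)$; I argue by induction on the length of this series. Picking a slope $a$ occurring in $E$, one checks $t_a(E/t_a(E))=0$ (a nonzero $\C(a)$-subobject of $E/t_a(E)$ would pull back to a $\C(a)$-subobject of $E$ strictly larger than $t_a(E)$, since $\C(a)$ is closed under extensions). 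The short exact sequence $0\to t_a(E)\to E\to E/t_a(E)\to 0$ then splits provided $\Ext^1_\C$ vanishes between $\C(a)$ and the slopes $\neq a$; granting this, $E\simeq t_a(E)\otimes(E/t_a(E))$, and applying the inductive hypothesis to $E/t_a(E)$ (shorter series, with vanishing $t_a$) yields $E\simeq\bigotimes_a t_a(E)$, using the full-faithfulness computation to identify the factors.

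The \textbf{main obstacle} is therefore the cross-slope vanishing $\Ext^1_\C(B,B')=0$ for building blocks $B\in\C(a)$, $B'\in\C(b)$ with $a\neq b$, which I expect to prove by hand. Given an extension $0\to B'\to E\to B\to 0$, the Milnor--Moore theorem \cite{MilnorMoore} identifies $E\cong B'\otimes B$ as a bigraded vector space. The quotient $B$ is generated by a single primitive $x$ in bidegree $(p^r,i)$ of slope $a$; since $B'$ has no positive-weight component of slope $a$ and in particular none in weights $0<k<p^r$, the bidegree $(p^r,i)$ part of $E$ is one-dimensional and maps onto $\langle x\rangle$. Lifting $x$ to a generator $\tilde x$ of that line, a bookkeeping of weights and degrees shows that the reduced coproduct $\bar\Delta\tilde x$ must vanish (its potential components sit in weights $1\le k<p^r$, forcing slope $b$ on both tensor factors and hence total slope $b\neq a$), so $\tilde x$ is primitive; the same slope argument shows that $\tilde x^p$, a primitive of slope $a$ mapping to $x^p=0$ and hence lying in $PB'$ (which is of slope $b$), must vanish, so $\tilde x$ satisfies the defining relation of $B$. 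The assignment $x\mapsto\tilde x$ thus yields a section $B\to E$, splitting the extension. Finally, the vanishing of $\Ext^1_\C$ between the localizing subcategory $\C(a)$ and the union of the others follows from the case of building blocks by dévissage along the finite series of noetherian objects.
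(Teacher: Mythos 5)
Your proposal is correct and shares the paper's overall architecture: both arguments reduce, via local noetherianity, to a cross-slope vanishing of $\Hom$ and $\Ext^1$ between objects built from the blocks $S_{r,i},T_{r,i},\Lambda_{r,i}$ lying in distinct categories $\C(a)$, and both prove the $\Ext^1$ vanishing by weight-versus-degree bookkeeping concentrated on the weight-$p^r$ slice of the extension. Where you genuinely diverge is in how the splitting of $\kk\to B'\to E\to B\to \kk$ is produced. The paper constructs a section of the canonical map $E\to QB$ (immediate once $w_{p^r}\pi$ is seen to be an isomorphism) and then bootstraps it to a Hopf splitting through an auxiliary map $B'\otimes S_{\pm}(QB)\to E$ and the snake lemma; you instead lift the primitive generator $x$ of $B$ into the line $w_{p^r}E^i$, verify by slope considerations that the lift is primitive and satisfies the defining relation (with $\tilde x^p$ killed because it would be a slope-$a$ primitive of $B'$), and obtain a Hopf section $B\to E$ directly. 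Your route is more self-contained at that step, at the cost of invoking Milnor--Moore freeness to identify the bigraded dimensions of $E$ with those of $B'\otimes B$, where the paper only needs exactness of the weight-$p^r$ slice. Your treatment of the surrounding reductions (full faithfulness via the coproduct property, the functors $t_a$, induction on the length of a series) is also more explicit than the paper's terse appeal to local noetherianity.

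Two points should be tightened. First, the justification of the one-dimensionality of $w_{p^r}E^i$ is misstated: $B'$ may perfectly well have components in weights $0<k<p^r$; what you need, and do use correctly afterwards, is that every positive-weight component of $B'$ sits in degree $b\cdot(\text{weight})$, so that $w_{p^r}B'^{\,i}=0$ and $w_k\overline{E}=w_k\overline{B'}$ for $0<k<p^r$. In the strict setting you should also phrase the lift functorially: the surjection $w_{p^r}E^i\to F_{r,i}$ is a morphism of functors between objects of equal finite dimension at every $V$, hence an isomorphism, and its inverse is the functorial $\tilde x$. Second, your closing d\'evissage needs one more observation: to split $0\to t_a(E)\to E\to E/t_a(E)\to 0$ you must know that $E/t_a(E)$ admits a series with no factor in slope $a$, which is stronger than $t_a(E/t_a(E))=0$. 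This does follow by a short induction from the building-block $\Ext^1$ vanishing you have already established (a slope-$a$ factor sitting atop factors of other slopes could be split off, producing a slope-$a$ subobject), but it should be said; alternatively, organize the induction by splitting off the slope of the top or bottom factor of a chosen series, as the paper implicitly does.
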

\begin{proof}
Since $\C$ is locally noetherian, it suffices to prove that 
\begin{enumerate}
\item[i)] there is no nontrivial homomorphism between a noetherian object of $\C(a)$ and a noetherian object of $\C(b)$ if $a\ne b$, and
\item[ii)] any noetherian object of $\C$ can be uniquely written as a tensor product of objects of $\C(a)$, for a finite number of $a$.
\end{enumerate}
Condition i) is satisfied for weight and degree reasons. It remains to prove ii). The description of noetherian objects in lemma \ref{lm-locnoeth} shows that ii) is equivalent to: 
\begin{enumerate}
\item[iii)] we have $\Ext^1_{\C}(A,B)=0$ if $A=X_{r,i}$ and $B=Y_{s,j}$ for $i=ap^r$, $j=bp^s$ and $a\ne b$, and $X$ and $Y$ stand for $S$, $T$ or $\Lambda$.
\end{enumerate}
Let us prove iii). Let $B$ be an object of $\C(b)$ and let $A$ be a primitively generated object of $\C(a)$ with $a\ne b$. We observe that an extension 
$$\kk\to B\xrightarrow[]{\iota} E\xrightarrow[]{\pi} A\to \kk\qquad (*)$$
splits as soon as the canonical map $E\to QA$ has a section $s$. Indeed, this section $s$ induces a morphism $\overline{s}: S_{\pm}(QA)\to E$ and we can consider the commutative square (in which $\mathrm{proj}$ denotes the canonical projection):
$$\xymatrix{
B\otimes S_\pm(QA)\ar@{->>}[r]^-{\mathrm{proj}}\ar@{->>}[d]^-{\iota\otimes\overline{s}} &S_\pm(QA)\ar@{->>}[d]^-{\pi\circ\overline{s}} \\
E\ar@{->>}[r]^-{\pi} & A
}\;.$$
The morphism $\iota\otimes \overline{s}$ restricts to an identity map between $B=\ker\mathrm{proj}$ and $B=\ker\pi$. Thus by the snake lemma the morphism $\mathrm{proj}$ restricts to an isomorphism between $\ker (\iota\otimes \overline{s})$ and $\ker (\pi\circ\overline{s})$.
Hence, $\ker (\iota\otimes \overline{s})$ is an object of $\C(a)$, so the component of the map $\ker (\iota\otimes \overline{s})\to B\otimes S_\pm(QA)$ with target $B$ is trivial. Thus $E$ is isomorphic to $B\otimes A$.

If $A=\Lambda_{r,i}$, $S_{r,i}$ or $T_{r,i}$, the homogeneous summand of weight $p^r$ of $(*)$ is a short exact sequence (of graded $\kk$-vector spaces or of graded functors):
$$0\to \bigoplus_{d\ge 0}w_{p^r}B^d\xrightarrow[]{w_{p^r}\iota}\bigoplus_{d\ge 0}w_{p^r}E^d \xrightarrow[]{w_{p^r}\pi} F_{r,i}\to 0\;.
$$
Since $F_{r,i}=QA$ the canonical map $E\to QA$ has a section if and only if $w_{p^r}\pi$ has a section. But $b\ne a$, hence $w_{p^r}B^i=0$, hence $w_{p^r}\pi$ is an isomorphism between $w_{p^r}E^i$ and $F_{r,i}$. Thus the existence of the section is obvious.
\end{proof}

\subsection{Reduction of theorem \ref{thm-classif-strict} to the ungraded case}\label{subsec-reduction}
Evaluation on $\kk$ preserves the categorical decompositions of proposition \ref{prop-categ-decomp}, that is, the functor ${^\strict}\Theta$ can be written as the product of its restrictions, for all $a\in \mathbb{N}[p^-1]$:
$${^\strict}\Theta(a):\PP_{\omega,\kk}-\Exp_c(a)\to {^\strict}\HH(a)\;.$$ 
\begin{lemma}
If $p$ is odd and $a=np^{-r}$ for an odd integer $n$, then ${^\strict}\Theta(a)$ is an equivalence of categories.
\end{lemma}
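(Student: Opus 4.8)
The strategy is to exploit the parity of degrees forced by the hypothesis in order to place every object of $\C(a)$ into the \emph{anticommutative} part of the theory, where the splitting principle reduces everything to primitives; there the comparison functor ${^\strict}\Theta(a)$ becomes the evaluation of additive objects, which is governed by Lemma \ref{lm-control-strict}.

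First I would record which objects can live in $\C(a)$. If $X_{s,i}$ (with $X=S$, $T$ or $\Lambda$) lies in $\C(a)$ then $i=ap^s=np^{s-r}$, and whenever this is a positive integer it is odd, since $n$ and $p$ are both odd. As $S_{s,i}$ and $T_{s,i}$ only make sense for even $i$ when $p\ne 2$, the description of the simple objects of $\C$ shows that the only simple objects of $\C(a)$ are the exterior algebras $\Lambda_{s,i}$. More to the point, for any object $E$ of $\C(a)$ — which is connected because $a\ne 0$ — the primitives $PE$, additive by Lemma \ref{lm-additivite}, are concentrated in weights $p^s$ and in the odd degrees $i=ap^s$; hence $E$ lies in the anticommutative part $\PP_{\omega,\kk}-\Exp_c^-$, and likewise on the Hopf side ${^\strict}\HH$, whose objects satisfy the analogous splitting statement because its underlying input \cite[Prop 7.21]{MilnorMoore} is purely Hopf-theoretic.

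Next I would apply the splitting principle of Lemma \ref{lm-splitting-principle} on both sides (legitimate since $p$ is odd). It provides natural isomorphisms $E\simeq\Lambda(PE)$ together with $\Hom(E,E')\simeq\Hom(PE,PE')$, so that taking primitives is an equivalence from $\C(a)$ onto the category of graded additive objects concentrated in weights $p^s$ and degrees $ap^s$: graded additive strict analytic functors on the exponential side, graded weighted $\kk$-vector spaces on the Hopf side, with quasi-inverse $\Lambda$. Since evaluation on $\kk$ is exact it commutes with the formation of primitives, so under these two equivalences ${^\strict}\Theta(a)$ is identified with the evaluation functor $A\mapsto A(\kk)$ between the two categories of additive objects.

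It then remains to prove that this evaluation is an equivalence, and here I would invoke Lemma \ref{lm-control-strict}: every additive strict analytic functor is a direct sum of Frobenius twists $I^{(s)}$, each simple with $\End(I^{(s)})\simeq\kk$ and with $I^{(s)}(\kk)$ one-dimensional in weight $p^s$. Thus evaluation carries the simple functor $F_{s,i}$ to the one-dimensional weighted vector space $\kk$ placed in weight $p^s$ and degree $i$, is bijective on isomorphism classes of simple objects, vanishes on $\Hom$ between twists of distinct weights on both sides, and induces the isomorphism $\End(I^{(s)})\xrightarrow[]{\simeq}\End(I^{(s)}(\kk))$. Being essentially surjective, full and faithful between these semisimple $\kk$-linear categories, evaluation is an equivalence, and therefore so is ${^\strict}\Theta(a)$. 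The step I expect to require the most care is the final bookkeeping on endomorphisms: checking that the $\mathbb{F}_p$-algebra constraint in the definition of strict Hopf algebras imposes nothing beyond what already holds automatically in characteristic $p$ (where $p\cdot\Id=0$ on primitives), and that evaluation induces an honest isomorphism on $\End(I^{(s)})$ rather than a proper Frobenius, so that the conclusion remains valid over an arbitrary, not necessarily perfect, field of characteristic $p$.
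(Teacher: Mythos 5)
Your proof is correct and follows essentially the same route as the paper: the parity of the degrees $ap^s$ places everything in the anticommutative part, the splitting principle of lemma \ref{lm-splitting-principle} reduces the comparison to the level of primitives, which are additive by lemma \ref{lm-additivite}, and the classification of additive strict analytic functors in lemma \ref{lm-control-strict} identifies ${^\strict}\Theta(a)$ with an evaluation equivalence there. Your extra bookkeeping (connectedness, $\kk$-linearity of evaluation on endomorphisms of $I^{(s)}$ over a possibly imperfect field) is sound but not something the paper spells out.
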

\begin{proof}
The primitives of objects of $\PP_{\omega,\kk}-\Exp_c(a)$ and ${^\strict}\HH(a)$ are concentrated in odd degrees. Hence by lemma \ref{lm-splitting-principle}, ${^\strict}\Theta(a)$ is an equivalence of categories if and only if $P({^\strict}\Theta(a))$ is an equivalence of categories.  By lemma \ref{lm-additivite}, primitives are additive, thus, $P({^\strict}\Theta(a))$ is an equivalence of categories by the classification of additive strict analytic functors given in lemma \ref{lm-control-strict}.
\end{proof}

Now any $a\in\mathbb{N}[p^{-1}]$ can be written in a unique way as $a=np^{-r}$ with $r\in\mathbb{N}$, $n\in\mathbb{N}$ and $n$ is prime to $p$ if $r>0$. If $p$ is odd, we assume furthermore that $n$ is even. We define a regrading functor $\mathcal{R}_a$ as the following composition, where $\mu_n$ multiplies all the degrees of an exponential functor by $n$:
$$\PP_{\omega,\kk}-\Exp^0_c\xrightarrow[]{-\circ I^{(r)}}\PP_{\omega,\kk}-\Exp_c(p^{-r})\xrightarrow[]{\mu_n} \PP_{\omega,\kk}-\Exp_c(a)\;.$$
Thus $\mathcal{R}_aE = E^{(r)}$ as ungraded exponential functors, and the grading on $\mathcal{R}_aE$ is defined by placing each summand $(w_k E)^{(r)}$ in degree $nk$.
\begin{lemma}\label{lm-Ra}
The functor $\mathcal{R}_a$ is an equivalence of categories.
\end{lemma}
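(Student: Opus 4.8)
The plan is to prove that $\mathcal{R}_a$ is fully faithful and essentially surjective, the main input being Theorem \ref{thm-twist}. I would first dispose of the rescaling functor $\mu_n$, which multiplies every homological degree by the fixed positive integer $n$: this is a mere relabelling of gradings, hence an equivalence, with quasi-inverse dividing degrees by $n$. This is well defined because in $\PP_{\omega,\kk}-\Exp_c(a)$ one has $w_kE^i=0$ unless $i=nk/p^r$; as $n$ is prime to $p$ (when $r>0$), integrality of $i$ forces $p^r\mid k$, so the occurring degrees are multiples of $n$, and when $p$ is odd the assumption that $n$ is even makes them even, which is what keeps the target graded commutative. Thus it suffices to understand the twist functor $-\circ I^{(r)}$, and the whole statement reduces to Theorem \ref{thm-twist}.

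Full faithfulness of $\mathcal{R}_a$ is then immediate: $-\circ I^{(r)}$ is fully faithful on all of $\PP_{\omega,\kk}-\Exp_c$ by Theorem \ref{thm-twist}, hence on the full subcategory $\PP_{\omega,\kk}-\Exp^0_c$, and composing with the relabelling $\mu_n$ preserves this. For essential surjectivity I would take $E'$ in $\PP_{\omega,\kk}-\Exp_c(a)$ and observe, exactly as above, that all its nonzero weights are divisible by $p^r$; in particular $w_kE'=0$ for $0<k<p^r$, so the image characterisation in Theorem \ref{thm-twist} applies. Forgetting the homological grading of $E'$ yields a degree-$0$ object $\underline{E'}$ of $\PP_{\omega,\kk}-\Exp^0_c$ all of whose weights are divisible by $p^r$ (commutativity in the ungraded sense being guaranteed by the evenness of the degrees of $E'$ when $p$ is odd); by Theorem \ref{thm-twist} it equals $E^{(r)}$ for a unique $E$ in $\PP_{\omega,\kk}-\Exp^0_c$. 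The weight formula before Theorem \ref{thm-twist}, $w_{p^r\ell}(E^{(r)})=(w_\ell E)^{(r)}$, together with the constraint $i=nk/p^r$ defining $\PP_{\omega,\kk}-\Exp_c(a)$, shows that the grading of $E'$ is exactly the one reconstructed by $\mathcal{R}_a$, i.e.\ $\mathcal{R}_aE\simeq E'$.

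The step I expect to be the main obstacle is the bookkeeping relating the two gradings — the homological degree and the weight — and the verification that the regradings respect the entire graded commutative exponential structure (the isomorphism $\phi$, the product, the coproduct and the antipode), not merely the underlying algebra. In particular one must confirm that the normalising hypotheses on $n$, namely coprimality to $p$ and evenness when $p$ is odd, are precisely what force the reconstructed degrees to be integers and compatible with the Koszul sign rule, so that $\mathcal{R}_a$ genuinely lands in graded commutative exponential functors. Granting this, $\mathcal{R}_a$ is fully faithful and essentially surjective, hence an equivalence, as claimed.
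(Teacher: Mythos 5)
Your proposal is correct and follows essentially the same route as the paper, which simply observes that $-\circ I^{(r)}$ is an equivalence onto $\PP_{\omega,\kk}-\Exp_c(p^{-r})$ by Theorem \ref{thm-twist} and that $\mu_n$ is an equivalence with inverse dividing degrees by $n$. Your extra bookkeeping (weights divisible by $p^r$, the grading being recoverable from the weights in $\PP_{\omega,\kk}-\Exp_c(a)$, and evenness of $n$ guaranteeing graded commutativity of the regraded object) is exactly the content the paper leaves implicit, and it checks out.
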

\begin{proof} The functor $-\circ I^{(r)}$ is an equivalence by theorem \ref{thm-twist}, and the functor $\mu_n$ is an equivalence (whose inverse multiplies the degrees by $\frac{1}{n}$).
\end{proof}

Similarly, if $H$ is a strict Hopf algebra concentrated in degree zero, one defines $\mathcal{R}_aH$ as the same ungraded Hopf algebras as $H$, but in which the weights and degrees are defined by 
$w_{kp^r}(\mathcal{R}_aH)^{nk}=w_kH$. Then $\mathcal{R}_a$ clearly defines an equivalence of categories:
$${^\strict}\HH^0\xrightarrow[]{\simeq}{^\strict}\HH(a)\;.$$
Moreover, we have a commutative diagram of categories:
$$\xymatrix{
\PP_{\omega,\kk}-\Exp_c(a)\ar[rr]^-{{^\strict}\Theta(a)}&&{^\strict}\HH(a)\\
\PP_{\omega,\kk}-\Exp^0_c\ar[rr]^-{{^\strict}\Theta^0}\ar[u]^-{\simeq}_-{\mathcal{R}_a}&&{^\strict}\HH^0\ar[u]^-{\simeq}_-{\mathcal{R}_a}
}.$$
As a consequence:
\begin{proposition}\label{prop-red-0}
Theorem \ref{thm-classif-strict} holds if and only if $^\strict\Theta^0$ is an equivalence of categories.
\end{proposition}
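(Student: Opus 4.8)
The plan is to deduce the full equivalence asserted in theorem \ref{thm-classif-strict} from the far more economical statement that ${^\strict}\Theta^0$ is an equivalence, by feeding three facts already in place into a purely formal argument: the product decomposition of proposition \ref{prop-categ-decomp}, its compatibility with evaluation on $\kk$, and the regrading equivalences $\mathcal{R}_a$ of lemma \ref{lm-Ra}. Since evaluation on $\kk$ respects the decomposition $\prod_{a\in\mathbb{N}[p^{-1}]}\C(a)\simeq\C$ simultaneously on the source and on the target, the functor ${^\strict}\Theta$ is, after transport across these two product equivalences, literally the product $\prod_a {^\strict}\Theta(a)$ of its restrictions. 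The first step I would record is the elementary categorical fact that a product of functors $\prod_a F_a$ is an equivalence if and only if each factor $F_a$ is an equivalence: full faithfulness amounts to each component map on $\hom$-sets being bijective (hom-sets in a product of categories are products of hom-sets), and essential surjectivity can likewise be checked componentwise. Hence theorem \ref{thm-classif-strict} is equivalent to the assertion that ${^\strict}\Theta(a)$ is an equivalence for every $a\in\mathbb{N}[p^{-1}]$.

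Next I would dispose of the indices $a=np^{-r}$ one family at a time. When $p$ is odd and $n$ is odd, ${^\strict}\Theta(a)$ has already been shown to be an equivalence (this is the content of the first lemma of section \ref{subsec-reduction}), so these indices impose no further condition. For the remaining indices — those with $n$ even when $p$ is odd, together with all indices when $p=2$ — I would invoke the commutative square displayed just above the statement, whose vertical arrows $\mathcal{R}_a$ are equivalences on both sides by lemma \ref{lm-Ra} and its Hopf-algebraic analogue. Conjugating by these equivalences produces a natural isomorphism ${^\strict}\Theta(a)\simeq \mathcal{R}_a\circ {^\strict}\Theta^0\circ \mathcal{R}_a^{-1}$, so that ${^\strict}\Theta(a)$ is an equivalence precisely when ${^\strict}\Theta^0$ is.

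It then remains to assemble the two implications. If ${^\strict}\Theta^0$ is an equivalence, then every ${^\strict}\Theta(a)$ is an equivalence — the odd-$n$ ones by the cited lemma, the others by the conjugation isomorphism — whence $\prod_a {^\strict}\Theta(a)$, and therefore ${^\strict}\Theta$, is an equivalence and theorem \ref{thm-classif-strict} holds. Conversely, if ${^\strict}\Theta$ is an equivalence then each factor ${^\strict}\Theta(a)$ is an equivalence; choosing a single index $a$ to which the regrading applies (for instance $a=1$ when $p=2$, or $a=2$ when $p$ is odd) and reading the conjugation isomorphism backwards shows that ${^\strict}\Theta^0$ is an equivalence. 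I do not expect a genuine obstacle: all the substance resides in the three inputs, and the proposition is the bookkeeping that glues them together. The only points demanding a little care are checking that evaluation really intertwines the two product decompositions so that ${^\strict}\Theta$ becomes a product of functors, and verifying that for each residue characteristic at least one admissible regrading index exists, so that the converse direction has an $a$ to feed into the square.
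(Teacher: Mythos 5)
Your proposal is correct and follows exactly the route the paper takes: the paper derives the proposition "as a consequence" of the product decomposition of $^\strict\Theta$ into the restrictions $^\strict\Theta(a)$, the unlabelled lemma handling odd $n$ when $p$ is odd, and the commutative square with the regrading equivalences $\mathcal{R}_a$, and your write-up simply makes that bookkeeping explicit. The only cosmetic difference is that for the converse direction one could even take $a=0$, where $^\strict\Theta(0)$ is literally $^\strict\Theta^0$, instead of conjugating by a regrading.
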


\subsection{Proof of theorem \ref{thm-classif-strict}}\label{subsec-Gn}
In this section we assume that $\kk$ is perfect. By proposition \ref{prop-red-0} we may restrict our attention to the category $\PP_{\omega,\kk}-\Exp^0_c$ of strict exponential concentrated in degree zero, or equivalently of ungraded strict exponential functors. So all our exponential functors are commutative in the ungraded sense, and implicitly graded by the weight. 
Since  $^\strict\Theta^0$ commutes with colimits, it is an equivalence of categories as a straightforward formal consequence of the following statement.

\begin{proposition}\label{prop-Gn}
There is a family of strict exponential functors $G_n$, $n\ge 0$, satisfying the following properties. 
\begin{enumerate}
\item\label{it-cond-2} For all strict exponential functors $E$ and all $n\ge 0$, evaluation on $\kk$ induces an isomorphism:
$$\Hom_{\PP_{\omega,\kk}-\Exp^0_c}(G_n,E)\xrightarrow[]{\simeq} \Hom_{{^\strict}\HH^0}(G_n(\kk),E(\kk)) \;.$$
\item\label{it-cond-3} The $G_n(\kk)$, $n\ge 0$ form a generator of ${^\strict}\HH^0$.
\item\label{it-cond-4} The $G_n$, $n\ge 0$ form a generator of $\PP_{\omega,\kk}-\Exp^0_c$.
\end{enumerate}
\end{proposition}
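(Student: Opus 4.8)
The plan is to follow the standard ``generators'' strategy. Since $^\strict\Theta^0$ is cocontinuous, once we produce a family $\{G_n\}$ satisfying properties \ref{it-cond-2}--\ref{it-cond-4} the equivalence is a formal consequence, so the entire content lies in the construction of the $G_n$ together with the verification of the three properties. A first remark dictates the choice of the $G_n$: a Hopf-algebra quotient of a (connected) primitively generated Hopf algebra is again primitively generated, since $P(H)\to Q(H)$ surjective forces $P(H')\to Q(H')$ surjective for any surjection $H\twoheadrightarrow H'$. As $^\strict\HH^0$ contains objects which are \emph{not} primitively generated (those of Witt-vector type), the family $\{G_n\}$ cannot consist of the symmetric algebras $S(I^{(r)})$ alone. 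Accordingly I would take $G_n$ to be a strict analytic, functorial lift of the coordinate Hopf algebra of the length-$n$ Witt vector group, built out of the Frobenius twist functors $I^{(r)}$; the key point of the construction is that, like the Witt functor, it is natural in the variable $V$ and therefore genuinely defines an object of $\PP_{\omega,\kk}-\Exp^0_c$ whose value $G_n(\kk)$ is the corresponding Witt Hopf algebra.

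For property \ref{it-cond-2} I would show that both sides compute the same additive datum. A morphism out of such a Witt-type corepresenting object $G_n$ is equivalent to a single Verschiebung-compatible element landing in $E$; because the primitives $PE$ and the indecomposables $QE$ are additive (lemma \ref{lm-additivite}) and additive strict analytic functors are determined by their value on $\kk$ together with the action of the Frobenius twists (lemma \ref{lm-control-strict}), this element is detected faithfully by evaluation on $\kk$. On the Hopf side, $\Hom_{^\strict\HH^0}(G_n(\kk),E(\kk))$ corepresents exactly the same datum extracted from $E(\kk)$, and the Frobenius and Verschiebung on $E$ match, under evaluation, those on $E(\kk)$. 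Comparing the two descriptions produces the natural isomorphism required; this step is formal and uses neither perfectness nor Dieudonné theory.

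Property \ref{it-cond-3} is where I expect the genuine difficulty, and where the perfect-field hypothesis enters through lemma \ref{lm-prop-3}. Here I would invoke Schoeller's classification (appendix \ref{App-Dieu}): over a perfect field $^\strict\HH^0$ is equivalent to a category of graded Dieudonné modules, and under this equivalence the $G_n(\kk)$ correspond to a set of cyclic modules (the truncations $D/D V^{n}$ of the Dieudonné ring) which generate the module category. Since $^\strict\HH^0$ is locally noetherian with the explicit dévissage of lemma \ref{lm-locnoeth}, it suffices to check that each simple object is hit and that the $G_n(\kk)$ see the Verschiebung, which is precisely the Dieudonné computation packaged in lemma \ref{lm-prop-3}. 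The main obstacle is thus not the formal bookkeeping but verifying that the functorial Witt objects reach \emph{all} of $^\strict\HH^0$, including its non-primitively-generated part; this is exactly what forces $\kk$ to be perfect.

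Finally, property \ref{it-cond-4} I would deduce from the previous two. Evaluation on $\kk$ is faithful on $\PP_{\omega,\kk}-\Exp^0_c$: the exponential isomorphism gives $E(\kk^{m})\simeq E(\kk)^{\otimes m}$, so a morphism of exponential functors is determined by its restrictions to the objects $\kk^{m}$, and a morphism of strict analytic functors is in turn determined by those restrictions (for $m$ large, evaluation on $\kk^m$ is faithful on each $\PP_{d,\kk}$). Hence if $E\ne 0$ then $E(\kk)\ne 0$, so by property \ref{it-cond-3} some map $G_n(\kk)\to E(\kk)$ is nonzero, and by property \ref{it-cond-2} it lifts to a nonzero map $G_n\to E$; thus $\{G_n\}$ is jointly faithful and generates $\PP_{\omega,\kk}-\Exp^0_c$. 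Combining the three properties with the cocontinuity of $^\strict\Theta^0$ (and the fact that evaluation carries the categorical coproduct $\bigotimes_\alpha G_{n_\alpha}$ to $\bigotimes_\alpha G_{n_\alpha}(\kk)$) then yields the equivalence, and hence theorem \ref{thm-classif-strict} through proposition \ref{prop-red-0}.
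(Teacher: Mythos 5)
Your overall architecture matches the paper's: you correctly observe that the symmetric algebras $S^{(r)}$ cannot suffice (quotients of primitively generated connected Hopf algebras are primitively generated), that the missing generators must be non-primitively-generated objects, that perfectness enters only through property (2) via Schoeller's Dieudonn\'e theory, and that (3) should follow from (1) and (2). But the two hard points are asserted rather than proved. First, the construction of $G_n$: you posit ``a strict analytic, functorial lift of the coordinate Hopf algebra of the length-$n$ Witt vector group'' and declare that, like the Witt functor, it is natural in $V$. The existence of such a lift is precisely an instance of the essential surjectivity of ${^\strict}\Theta^0$ that the proposition is designed to establish, so it cannot be taken for granted. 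The paper builds $G_n$ explicitly: it is the quotient of the functorial algebra $A_n=\Gamma_n\otimes S(\Gamma^{p^n})$, where $\Gamma_n=\ker(\mathbf{V}^n:\Gamma\to{^{(n)}}\Gamma)$, by the ideal generated by a carefully chosen subfunctor $K^{p^n}\subset w_{p^n}A_n$, and it must be checked by hand (lemma \ref{lm-Gn-exp}) that this quotient is still exponential. Note also that $G_n(\kk)\simeq\Gamma_n(\kk)\otimes S^{(n)}(\kk)$ as an algebra, not the polynomial Witt coordinate ring; the relevant targets are Schoeller's generators $H_n$ of fact \ref{fact-Gen-dieu}, the non-split extensions of $S^{(n)}(\kk)$ by $\Gamma_n(\kk)$.

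Second, property (1). Since $G_n$ is not primitively generated, it has no simple universal property of the form ``$\Hom(G_n,E)\simeq\{$Verschiebung-compatible elements of $PE\}$'', and the claim that this step is formal does not hold up. The paper proves (1) by induction on $n$, using the two short exact sequences $\kk\to\Gamma_n\to G_n\to S^{(n)}\to\kk$ and $\kk\to S^{(n+1)}\to G_n\to\Gamma_{n+1}\to\kk$ together with lemma \ref{lm-iso-S}, which says that evaluation on $\kk$ is an isomorphism on $\Hom(S^{(n)},-)$ and \emph{injective on} $\Ext^1(S^{(n)},-)$; the $\Ext^1$-control is indispensable for the comparison-of-long-exact-sequences step and is absent from your sketch. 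Finally, a smaller slip in (3): showing that every nonzero $E$ receives a nonzero map from some $G_n$ is weaker than generation (the simple module over $\kk[x]/x^2$ detects nonzero modules without generating them); you should either lift an epimorphism $\bigotimes_\alpha G_{n_\alpha}(\kk)\twoheadrightarrow E(\kk)$ to a morphism $f:G\to E$ and use $f_{\kk^m}=(f_\kk)^{\otimes m}$ to see it is epi (the paper's route), or test the $G_n$ against arbitrary nonzero morphisms rather than objects.
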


The remainder of section \ref{subsec-Gn} is devoted to the proof of proposition \ref{prop-Gn}. The first statement of proposition \ref{prop-Gn} is proved in lemma \ref{lm-prop-2}, the second one in lemma \ref{lm-prop-3} and the last one in lemma \ref{lm-prop-4}. 

For $n\ge 0$, we let $\Gamma_n$ be the ungraded strict exponential functor obtained as the kernel of the iterated Verschiebung 
$\mathbf{V}^n:\Gamma\to {^{(n)}}\Gamma$. Equivalently $\Gamma_n$ is the functorial subalgebra of $\Gamma$ generated by the subfunctor $\bigoplus_{d<p^n}\Gamma^d$. In particular, $Q\Gamma_n = \bigoplus_{0\le r<n}I^{(r)}$. If $\pi_n:\Gamma^{p^n}\to I^{(n)}$ denotes the quotient morphism (up to scalar multiplication $\pi_n$ is the unique nonzero-morphism between these functors), then:
$$w_i\Gamma_n =\Gamma^i\quad \text{for $i<p^n$}\,,\quad \text{and}\quad  w_{p^n}\Gamma_n = \ker \pi_n\;. $$

We are now ready to construct the exponential functors $G_n$. We first define a functorial commutative algebra $A_n$:
$$A_n(V)=\Gamma_n(V)\otimes S(\Gamma^{p^n}(V))\;.$$
Thus $w_{p^n}A_n = w_{p^n}\Gamma_n\oplus \Gamma^{p^n}$. We let $K^{p^n}$ be the kernel of the morphism 
$$\iota_n+\Id_{\Gamma^{p^n}}\,:\, w_n\Gamma_n\oplus \Gamma^{p^n}\xrightarrow[]{}\Gamma^{p^n}\;.$$
where $\iota_n$ refers to the inclusion of $w_{p^n}\Gamma_n$ into $w_{p^n}\Gamma=\Gamma^{p^n}$. Thus $K^{p^n}$ is a subfunctor of $w_{p^n}A_n$, abstractly isomorphic to $w_{p^n}\Gamma_n$.
We define the functorial algebra $G_n$ as the quotient of $A_n$ by the ideal generated by $K^{p^n}$.
\begin{lemma}\label{lm-Gn-exp}
For all $n\ge 0$,  $G_n$ is an exponential functor. 
\end{lemma}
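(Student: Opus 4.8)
The plan is to show that $G_n$, which is by construction a functorial commutative $\kk$-algebra (a quotient of the commutative algebra $A_n=\Gamma_n\otimes S(\Gamma^{p^n})$ by an ideal), lies in the image of $\Theta_{\Alg}$, by verifying the two conditions of Lemma \ref{lm-defbis}. Condition (A1) is harmless: evaluating at $0$ gives $A_n(0)=\Gamma_n(0)\otimes S(\Gamma^{p^n}(0))=\kk$ and $K^{p^n}(0)=0$, so $G_n(0)=\kk\neq 0$, whence (A1) is automatic by Lemma \ref{lm-precision-defbis}. The entire content therefore lies in condition (A2): the canonical map $\phi^{G_n}_{V,W}\colon G_n(V)\otimes G_n(W)\to G_n(V\oplus W)$, namely the composite of $\mu_{V\oplus W}$ with the two inclusions, must be an isomorphism for all $V,W$.

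First I would analyze the analogous map $\phi^{A_n}_{V,W}$ before quotienting. Since $\Gamma_n$ is exponential, one has $\Gamma_n(V\oplus W)\cong\Gamma_n(V)\otimes\Gamma_n(W)$, and the exponential isomorphism for $\Gamma$ in weight $p^n$ reads $\Gamma^{p^n}(V\oplus W)\cong\bigoplus_{a+b=p^n}\Gamma^a(V)\otimes\Gamma^b(W)$. Feeding these into $A_n=\Gamma_n\otimes S(\Gamma^{p^n})$ shows that $\phi^{A_n}_{V,W}$ is injective, with cokernel generated over its image by the \emph{cross-term} generators $\Gamma^a(V)\otimes\Gamma^b(W)$ with $a,b\ge 1$ sitting inside $S^1(\Gamma^{p^n}(V\oplus W))$. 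These are precisely the summands missing from $S(\Gamma^{p^n}(V))\otimes S(\Gamma^{p^n}(W))$, which is why $S(\Gamma^{p^n})$, and hence $A_n$, fails to be exponential.

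The key point is that the ideal generated by $K^{p^n}$ is designed exactly to absorb this cokernel. Indeed $\ker\pi_n=w_{p^n}\Gamma_n$ is the functor of weight-$p^n$ decomposables of $\Gamma$, i.e. the image of $\bigoplus_{a+b=p^n,\,a,b\ge 1}\Gamma^a\otimes\Gamma^b\to\Gamma^{p^n}$, so each cross-term lies in $\iota_n(\ker\pi_n)(V\oplus W)$. The defining relation $x\otimes 1=1\otimes\iota_n(x)$, for $x\in w_{p^n}\Gamma_n$, then identifies every such cross-term $S$-generator with the corresponding product of lower divided powers in $\Gamma_n(V)\otimes\Gamma_n(W)=\Gamma_n(V\oplus W)$; and these products manifestly lie in the image of $\phi^{G_n}_{V,W}$, since $\gamma_a(v)\in\Gamma_n(V)\subset G_n(V)$ and $\gamma_b(w)\in\Gamma_n(W)\subset G_n(W)$ for $a,b<p^n$. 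As the image is a subalgebra, this propagates to all higher symmetric powers of the cross terms and yields surjectivity of $\phi^{G_n}_{V,W}$.

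The remaining step, which I expect to be the main obstacle, is injectivity: one must show that passing to the quotient by $(K^{p^n})$ imposes no relations beyond this expected identification. Since $\phi^{A_n}_{V,W}$ is injective, this reduces to the ideal-theoretic statement that $\mathcal{I}(V\oplus W)\cap\mathrm{im}\,\phi^{A_n}_{V,W}$ is contained in $\phi^{A_n}_{V,W}\bigl(\mathcal{I}(V)\otimes A_n(W)+A_n(V)\otimes\mathcal{I}(W)\bigr)$, where $\mathcal{I}$ is the ideal generated by $K^{p^n}$. I would establish this by filtering $A_n$ and $G_n$ by the polynomial degree in the $S$-factor: the leading term of each defining relation $x\otimes1-1\otimes\iota_n(x)$ is $1\otimes\iota_n(x)$, so there is a surjection $\Gamma_n\otimes S(\Gamma^{p^n}/\ker\pi_n)=\Gamma_n\otimes S(I^{(n)})\twoheadrightarrow\mathrm{gr}\,G_n$, whose source is a genuine tensor product of exponential functors (here $I^{(n)}$ is additive), hence exponential, with an isomorphism for its own (A2)-map. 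The delicate point is to prove this surjection is an isomorphism, i.e. that the displayed generators of $(K^{p^n})$ form a standard basis so that no further leading terms enter the associated graded ideal; this is a triangular elimination of the generators $\iota_n(\ker\pi_n)$ in favour of $\Gamma_n$-elements, which I would settle by a diamond-lemma argument together with the surjectivity of $\phi^{G_n}_{V,W}$ already in hand and a weight-by-weight finiteness count. Granting this, $\mathrm{gr}\,\phi^{G_n}_{V,W}$ is the exponential isomorphism of $\Gamma_n\otimes S(I^{(n)})$, and a standard weightwise-finite filtered argument then forces $\phi^{G_n}_{V,W}$ itself to be bijective, completing the verification of (A2) and hence showing $G_n$ is an exponential functor.
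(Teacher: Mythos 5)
Your reduction to condition (A2) of Lemma \ref{lm-defbis}, your identification of the cross-terms in $S^1(\Gamma^{p^n}(V\oplus W))$ as the precise obstruction to $A_n$ being exponential, and your surjectivity argument (the relation $x\otimes 1=1\otimes\iota_n(x)$ rewrites each cross-term as a product of divided powers of weights $<p^n$, hence as something in the image of $\psi_{G_n}$) all match the paper's strategy. The genuine gap is in the injectivity half: you reduce it to showing that the associated graded ideal for the $S$-degree filtration is generated by the leading terms $\iota_n(\ker\pi_n)$, and you defer this to ``a diamond-lemma argument together with \dots a weight-by-weight finiteness count'' that is not carried out. As written, the finiteness count risks circularity, since the natural way to compute $\dim_\kk w_kG_n(V\oplus W)$ is via the exponential isomorphism you are trying to establish; the rewriting argument would therefore have to carry the full weight of the proof, and it is exactly the step that is missing.

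The paper closes this gap with no Gr\"obner-type machinery, by upgrading your observation about cross-terms to an algebra isomorphism
$$A_n(V\oplus W)\simeq A_n(V)\otimes A_n(W)\otimes S(F(V,W)),\qquad F(V,W)=\bigoplus_{0<i<p^n}\Gamma^i(V)\otimes\Gamma^{p^n-i}(W),$$
obtained from the exponential isomorphisms of $\Gamma_n$, $S$ and $\Gamma$: the cross-terms do not merely generate the cokernel of $\psi_{A_n}$ over its image, they span a \emph{free polynomial tensor factor}. Under this identification (after the triangular change of variables replacing each generator $y$ of $S^1(F(V,W))$ by its difference with the corresponding decomposable element of $\Gamma_n(V)\otimes\Gamma_n(W)$), the ideal generated by $K^{p^n}(V\oplus W)$ becomes the ideal generated by $K^{p^n}(V)$, $K^{p^n}(W)$ and all of $F(V,W)$, so the quotient is visibly $G_n(V)\otimes G_n(W)$. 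Injectivity then reduces to the elementary fact that for a commutative algebra $B$ and a linear map $f:F\to B$, the quotient of $B\otimes S(F)$ by the ideal generated by the elements $f(y)\otimes 1-1\otimes y$, $y\in F$, is $B$ itself: freeness of $S(F)$ replaces the diamond lemma. I would rework your final paragraph along these lines; the rest of your argument stands.
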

\begin{proof}
Given a functorial algebra $A$, we let $\psi_A:A(V)\otimes A(W)\to A(V\oplus W)$ be the composite map of lemma \ref{lm-defbis}(A2). We have to show $\psi_A$ is an isomorphism if $A=G_n$.
The exponential isomorphisms of $S$, $\Gamma_n$ and $\Gamma$ induce an isomorphism of algebras, natural with respect to $V$ and $W$,  
$$\phi: A_n(V\oplus W)\simeq A_n(V)\otimes A_n(W)\otimes S(F(V,W))\;,$$
where 
$F(V,W)=\bigoplus_{0< i<p^n} \Gamma^i(V)\otimes \Gamma^{p^n-i}(W)$.
One readily checks that the ideal generated by $K^{p^n}(V\oplus W)$ is sent by $\phi$ onto the ideal generated by 
$$(K^{p^n}(V)\otimes \kk\otimes\kk)\; \oplus \; (\kk\otimes K^{p^n}(W)\otimes \kk)\;  \oplus \; (\kk\otimes\kk\otimes F(V,W))\;.$$
Moreover $\phi\circ\psi_{A_n}=\Id_{A_n(V)}\otimes \Id_{A_n(W)}\otimes \eta$, where $\eta$ is the unit of $S(F(V,W)$. Thus, $\psi_{A_n}$ induces an isomorphism $\psi_{G_n}$ on the quotient algebras.
\end{proof}

The next lemma is trivial, but it is interesting since it shows that checking exactness of sequences of exponential functors can be done from the algebra structure of $E'(\kk)$, $E(\kk)$ and $E''(\kk)$ only.

\begin{lemma}\label{lm-exact-crit}
Let $f:E'\to E$ and $g:E\to E''$ be two morphisms of exponential functors. Then $$\kk\to E'\xrightarrow[]{f}E\xrightarrow[]{g}E''\to \kk$$
is a short exact sequence of exponential functors if and only if $f_\kk:E'(\kk)\to E(\kk)$ is injective, $g_\kk:E(\kk)\to E''(\kk)$ is surjective, $g_\kk\circ f_\kk=\eta\epsilon$, and the canonical map $\kk\otimes_{E'(\kk)}E(\kk)\to E''(\kk)$ is an isomorphism.
\end{lemma}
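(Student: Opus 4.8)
The plan is to carry out everything inside the abelian category $\PP_{\omega,\kk}-\Exp_c$, exploiting that evaluation at $\kk$ is exact while remembering that it is \emph{not} faithful. First I would record the ambient structure: by lemma \ref{lm-abelian} the category $\PP_{\omega,\kk}-\Exp_c$ is abelian with kernels and cokernels computed in $\PP_{\omega,\kk}-\HH$; since (co)limits in $\PP_{\omega,\kk}$ are objectwise and the kernel and cokernel of a morphism of bicommutative Hopf algebras are given by the explicit formulas recalled at the start of section \ref{sec-elementary}, these kernels and cokernels are computed objectwise. Consequently $\mathrm{ev}_\kk\colon E\mapsto E(\kk)$ is an exact functor $\PP_{\omega,\kk}-\Exp_c\to\HH$. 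The one fact compensating for its non-faithfulness is the following, which I would isolate first: $(*)$ an exponential functor $N$ with $N(\kk)\simeq\kk$ is trivial. Indeed the exponential isomorphism gives $N(\kk^n)\simeq N(\kk)^{\otimes n}\simeq\kk$, so $w_dN(\kk^n)=0$ for all $d>0$ and all $n$; as $\mathrm{ev}_{\kk^n}$ is faithful on $\PP_{d,\kk}$ for $n\ge d$ (recall $\PP_{d,\kk}$ is equivalent to $S(n,d)$-modules), this forces $w_dN=0$ for $d>0$, while $w_0N$ is the constant functor $\kk$.

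For the direct implication, I would simply apply $\mathrm{ev}_\kk$ to the given short exact sequence. Exactness yields a short exact sequence $\kk\to E'(\kk)\xrightarrow{f_\kk}E(\kk)\xrightarrow{g_\kk}E''(\kk)\to\kk$ in $\HH$, and the four conditions are read off its explicit description: $f_\kk$ identifies $E'(\kk)$ with the sub-Hopf-algebra $\ker(g_\kk)$, which is supported on a subspace of $E(\kk)$, so $f_\kk$ is injective; $g_\kk$ identifies $E''(\kk)$ with $\mathrm{coker}(f_\kk)=E(\kk)\otimes_{E'(\kk)}\kk=\kk\otimes_{E'(\kk)}E(\kk)$, which is exactly the fourth condition and forces $g_\kk$ surjective; and $g_\kk f_\kk=\eta\epsilon$ is the vanishing of the composite.

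For the converse I would assume the four conditions and verify exactness at the three spots. The composite $gf$ is a morphism of exponential functors with $(gf)_\kk=g_\kk f_\kk=\eta\epsilon$; since morphisms of exponential functors are compatible with the exponential isomorphisms one has $(gf)_{\kk^n}=(g_\kk f_\kk)^{\otimes n}=0$ for every $n$, hence $gf=0$ (a morphism of strict analytic functors vanishing on all $\kk^n$ is zero) and $\mathrm{im}\,f\subseteq\ker g$. Next, $\ker f$ is an exponential functor with $(\ker f)(\kk)=\ker(f_\kk)=\kk$ since $f_\kk$ is injective, so $\ker f=\kk$ by $(*)$ and $f$ is mono; dually $\mathrm{coker}\,g$ has value $\kk$ at $\kk$ because $g_\kk$ is surjective, so $g$ is epi. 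Finally, since $\mathrm{ev}_\kk$ preserves images and kernels, $(\mathrm{im}\,f)(\kk)=\mathrm{im}(f_\kk)$ and $(\ker g)(\kk)=\ker(g_\kk)$; the third and fourth conditions make $g_\kk$ the cokernel of $f_\kk$, so $\ker(g_\kk)=\mathrm{im}(f_\kk)$. Thus the inclusion $\iota\colon\mathrm{im}\,f\hookrightarrow\ker g$ becomes an isomorphism after $\mathrm{ev}_\kk$; its cokernel has value $\kk$ at $\kk$ and is therefore trivial by $(*)$, so $\iota$ is an isomorphism and the sequence is exact at $E$.

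I expect the only real obstacle to be bookkeeping around the non-faithfulness of $\mathrm{ev}_\kk$: every statement that looks like it should follow ``by evaluating at $\kk$'' must be upgraded from $\kk$ to all of $\PP_{\omega,\kk}$, and the whole weight of this is carried by $(*)$, i.e. by combining the exponential isomorphism $E(\kk^n)\simeq E(\kk)^{\otimes n}$ with the faithfulness of evaluation on each homogeneous summand. Once $(*)$ is in place the argument is the routine diagram chase in $\HH$ that the authors call trivial, and it uses neither theorem \ref{thm-classif-strict} nor proposition \ref{prop-Gn}, so there is no circularity.
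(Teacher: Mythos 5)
The paper gives no proof of this lemma (it is declared ``trivial''), so there is nothing to compare against; your argument is a correct and complete verification of the statement. You have correctly isolated the one non-formal ingredient, namely that evaluation at $\kk$ is exact (kernels and cokernels in $\PP_{\omega,\kk}-\Exp_c$ being computed objectwise in $\PP_{\omega,\kk}-\HH$ by lemma \ref{lm-abelian}) but not faithful, and that the gap is closed by your fact $(*)$: an exponential functor $N$ with $N(\kk)\simeq\kk$ satisfies $N(\kk^n)\simeq N(\kk)^{\otimes n}\simeq\kk$, hence $w_dN=0$ for $d>0$ since evaluation on $\kk^n$ with $n\ge d$ is faithful on $\PP_{d,\kk}$. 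The remaining steps (the kernel in $\HH$ of a linearly injective Hopf map is $\kk$, the cokernel of a surjective one is $\kk$, and conditions (3)--(4) exhibit $g_\kk$ as the cokernel of $f_\kk$ so that $\ker g_\kk=\mathrm{im}\,f_\kk$) are all correct, and the argument uses neither theorem \ref{thm-classif-strict} nor proposition \ref{prop-Gn}, so there is indeed no circularity.
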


\begin{lemma}\label{lm-exact-seq}
For all $n\ge 0$, there is a non-split exact sequence:
$$\kk\to \Gamma_n\to G_n\to S^{(n)}\to \kk\;.$$
\end{lemma}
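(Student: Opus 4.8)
The plan is to write down the obvious maps $f\colon\Gamma_n\to G_n$ and $g\colon G_n\to S^{(n)}$, verify exactness with the criterion of Lemma \ref{lm-exact-crit}, and then rule out a splitting by a primitivity argument.

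First I would define the maps. The inclusion $\Gamma_n=\Gamma_n\otimes 1\hookrightarrow A_n$ followed by the quotient $A_n\twoheadrightarrow G_n$ is an algebra map $f\colon\Gamma_n\to G_n$. For $g$, note that the counit $\epsilon\colon\Gamma_n\to\kk$ and the map $S(\pi_n)\colon S(\Gamma^{p^n})\to S(I^{(n)})=S^{(n)}$ are algebra maps, so their tensor product defines an algebra map $A_n=\Gamma_n\otimes S(\Gamma^{p^n})\to S^{(n)}$. Writing an element of $K^{p^n}$ as $a\otimes 1-1\otimes\iota_n(a)$ with $a\in w_{p^n}\Gamma_n$, this map sends it to $\epsilon(a)\cdot 1-\pi_n(\iota_n(a))=0$ (the first term vanishes since $a$ has positive weight, the second because $w_{p^n}\Gamma_n=\ker\pi_n$), so it kills the ideal generated by $K^{p^n}$ and descends to $g\colon G_n\to S^{(n)}$. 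Both $f$ and $g$ are algebra maps between exponential functors, hence morphisms of exponential functors by the full faithfulness of $\Theta_{\Alg}$ (Lemma \ref{lm-defbis}).

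Then I would check exactness through Lemma \ref{lm-exact-crit}, which reduces everything to the algebras $E(\kk)$. The key input is that $w_{p^n}\Gamma_n(\kk)=(\ker\pi_n)(\kk)=0$, since $(\pi_n)_\kk\colon\Gamma^{p^n}(\kk)\to I^{(n)}(\kk)$ is a nonzero map between one-dimensional spaces, hence an isomorphism. Consequently $K^{p^n}(\kk)=0$ and $G_n(\kk)=A_n(\kk)=\Gamma_n(\kk)\otimes\kk[z]$ as algebras, where $z$ is the weight-$p^n$ polynomial generator coming from $\Gamma^{p^n}(\kk)=\kk\gamma_{p^n}$. With this description $f_\kk(x)=x\otimes 1$ is injective, $g_\kk(x\otimes z^k)=\epsilon(x)u^k$ is surjective (here $u=\pi_n(\gamma_{p^n})$ generates $S^{(n)}(\kk)=\kk[u]$), $g_\kk f_\kk=\eta\epsilon$, and $\kk\otimes_{\Gamma_n(\kk)}G_n(\kk)=\kk[z]\xrightarrow{\sim}\kk[u]=S^{(n)}(\kk)$; so all four conditions of Lemma \ref{lm-exact-crit} hold and the sequence is short exact.

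The main obstacle is non-splitness, for which I would show that $G_n$ has no primitive generator in weight $p^n$, whereas a splitting would force one. Two facts are needed. First, $w_{p^n}G_n(\kk)=\kk z$ is one-dimensional: since $z$ lies in weight $p^n$ and $w_{p^n}\Gamma_n(\kk)=0$, the only weight-$p^n$ contribution to $\Gamma_n(\kk)\otimes\kk[z]$ is $1\otimes z$. Second, $z$ is not primitive. To see this cleanly I would produce a Hopf map $h\colon G_n\to\Gamma$: the algebra map $A_n\to\Gamma$ that is the inclusion on $\Gamma_n$ and sends the polynomial generators $\Gamma^{p^n}=w_{p^n}\Gamma$ back into $\Gamma$ kills $K^{p^n}$ (an element $a\otimes 1-1\otimes\iota_n(a)$ goes to $a-a=0$), hence descends to $G_n$ and is a morphism of exponential functors by Lemma \ref{lm-defbis}. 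Now $h(z)=\gamma_{p^n}$, which is not primitive in $\Gamma(\kk)$ because $\Delta(\gamma_{p^n})=\sum_{i+j=p^n}\gamma_i\otimes\gamma_j$ has nonzero cross terms; as $h$ preserves comultiplication, $z$ cannot be primitive either. If the sequence split, the image $s(u)$ of the primitive generator $u$ of $S^{(n)}$ under a section $s$ would be a primitive element of $w_{p^n}G_n(\kk)=\kk z$ with $g(s(u))=u$, forcing $s(u)=z$ and contradicting the non-primitivity of $z$. This gives non-splitness for $n\ge 1$; the case $n=0$ is degenerate, with $\Gamma_0=\kk$ and $G_0=S^{(0)}=S$, where the sequence splits, so the genuine content is $n\ge 1$.
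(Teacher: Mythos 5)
Your construction of $f$ and $g$ and the verification of exactness via Lemma \ref{lm-exact-crit} (hinging on $K^{p^n}(\kk)=0$, so that $G_n(\kk)=A_n(\kk)=\Gamma_n(\kk)\otimes S^{(n)}(\kk)$ as algebras) is exactly the paper's argument. Where you diverge is non-splitness. The paper disposes of it in one line before even constructing the maps: a section $S^{(n)}\to G_n$ would restrict in weight $p^n$ to a nonzero morphism $I^{(n)}\to w_{p^n}G_n=\Gamma^{p^n}\subset\otimes^{p^n}$, and this $\Hom$ vanishes by Pirashvili's vanishing lemma (fact \ref{lm-cancel}) because $I^{(n)}$ is additive while $\otimes^{p^n}$ is a tensor product of $p^n\ge 2$ reduced functors; so no injective morphism $S^{(n)}\to G_n$ exists at all. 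Your route instead manufactures an auxiliary Hopf map $h\colon G_n\to\Gamma$ sending $z$ to $\gamma_{p^n}$, whose non-primitivity (the cross terms of $\Delta(\gamma_{p^n})$) forbids a primitive preimage of $u$ in the one-dimensional space $w_{p^n}G_n(\kk)=\kk z$. This is correct, but it costs you the extra check that $h$ kills $K^{p^n}$, whereas the paper's argument needs no new map and no evaluation at $\kk$. Both arguments genuinely require $n\ge 1$: your observation that the sequence splits for $n=0$ (where $\Gamma_0=\kk$ and $g$ is an isomorphism) is accurate, and the paper's own argument also degenerates there, since $\otimes^{p^0}=I$ has a single reduced factor and Pirashvili's lemma gives nothing. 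So the non-splitness clause of the statement is only meaningful for $n\ge 1$; this does not affect the downstream use in Lemma \ref{lm-prop-3}, where the $n=0$ extension is trivial anyway.
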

\begin{proof}
Note that $w_n G_n=\Gamma^{p^n}$ and $w_n S^{(n)}=I^{(n)}$. But
$$\Hom_{\PP_{\omega,\kk}}(I^{(n)},\Gamma^{p^n})\subset \Hom_{\PP_{\omega,\kk}}(I^{(n)},\otimes^{p^n})\subset \Hom_{\Fct(\Proj_\kk,\Mod)}(I^{(n)},\otimes^{p^n})$$ and the latter is zero by Pirashvili's vanishing lemma \ref{lm-cancel}. So there is no injective morphism $S^{(n)}\to G_n$, hence an exact sequence as in lemma \ref{lm-exact-seq} cannot be split. Now we construct the exact sequence.
From the definition of the functorial algebra $A_n$ as a tensor product, we have a morphism of functorial algebras $\Gamma_n\to A_n$. Similarly, by using the morphisms of functorial algebras $\Gamma_n\to \kk$ and $S(\pi_n):S(\Gamma^{p^n})\to S^{(n)}$, we define a morphism of functorial algebras $A_n\to S^{(n)}$. These two morphisms induce morphisms of functorial algebras (hence of exponential functors by lemma \ref{lm-defter}) $f:\Gamma_n\to G_n$ and $g:G_n\to S^{(n)}$. 
Since $K^{p^r}(\kk)=0$, the quotient map $A_n(\kk)\to G_n(\kk)$ is an isomorphism. Thus, we have an isomorphism of algebras $G_n(\kk)=\Gamma_n(\kk)\otimes S(\Gamma^{p^r}(\kk))=\Gamma_n(\kk)\otimes S^{(n)}(\kk)$. In particular, one easily sees that $f$ and $g$ satisfy the conditions of lemma \ref{lm-exact-crit}.
\end{proof}

\begin{lemma}\label{lm-exact-seq-bis}
For all $n\ge 0$, there is a short exact sequence:
$$\kk\to S^{(n+1)}\to G_n\to \Gamma_{n+1}\to \kk\;.$$
\end{lemma}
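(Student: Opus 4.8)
The plan is to produce explicit morphisms of exponential functors $f\colon S^{(n+1)}\to G_n$ and $g\colon G_n\to\Gamma_{n+1}$ and then to establish short exactness by reducing everything to the graded Hopf algebras over $\kk$ via lemma \ref{lm-exact-crit}. Throughout I write $z$ for the polynomial generator of $S(\Gamma^{p^n}(\kk))=\kk[z]$ (of weight $p^n$), so that, since $K^{p^n}(\kk)=0$, we have $G_n(\kk)=A_n(\kk)=\Gamma_n(\kk)\otimes\kk[z]$, while $S^{(n+1)}(\kk)=\kk[w]$ and $\Gamma_{n+1}(\kk)=\Gamma_n(\kk)\otimes\kk[\gamma_{p^n}]/(\gamma_{p^n}^p)$.

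To build $g$ I would combine the two natural algebra maps out of the tensor factors of $A_n=\Gamma_n\otimes S(\Gamma^{p^n})$: the inclusion $\Gamma_n\hookrightarrow\Gamma_{n+1}$ of subalgebras of $\Gamma$, and the algebra map $S(\Gamma^{p^n})\to\Gamma_{n+1}$ extending the identification $\Gamma^{p^n}=w_{p^n}\Gamma_{n+1}$ (legitimate since $p^n<p^{n+1}$). Their product is an algebra map $A_n\to\Gamma_{n+1}$ whose weight $p^n$ component sends $(x,y)\in w_{p^n}\Gamma_n\oplus\Gamma^{p^n}$ to $\iota_n(x)+y$; this is precisely the map defining $K^{p^n}$, so $g$ kills $K^{p^n}$ and descends to an algebra map $g\colon G_n\to\Gamma_{n+1}$. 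As $G_n$ and $\Gamma_{n+1}$ are exponential and $\Theta_{\Alg}$ is fully faithful (lemma \ref{lm-defbis}), $g$ is automatically a morphism of exponential functors, and on $\kk$-points it is the inclusion $\Gamma_n(\kk)\hookrightarrow\Gamma_{n+1}(\kk)$ together with $z\mapsto\gamma_{p^n}$.

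The construction of $f$ is the main obstacle. Writing $\zeta\colon\Gamma^{p^n}\to w_{p^n}G_n$ for the generator of the $S$-factor, I would consider $\mathbf{F}_{G_n}\circ{}^{(1)}\zeta\colon{}^{(1)}\Gamma^{p^n}\to w_{p^{n+1}}G_n$. The defining relation $K^{p^n}$ identifies $\zeta\circ\iota_n$ with the inclusion $w_{p^n}\Gamma_n=\ker\pi_n\hookrightarrow\Gamma_n\to G_n$, so by naturality of the Frobenius the restriction of $\mathbf{F}_{G_n}\circ{}^{(1)}\zeta$ to $\ker({}^{(1)}\pi_n)={}^{(1)}(\ker\pi_n)$ factors through $\mathbf{F}_{\Gamma_n}$ evaluated on the augmentation ideal. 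The key input is that $\mathbf{F}_{\Gamma_n}$ vanishes on positive weights: this is the defining feature of divided power algebras, and in the present framework follows from the vanishing of the Verschiebung $\mathbf{V}_S$ ($S$ being primitively generated \cite[IV \S 3 Thm 6.6]{DG}) together with the duality exchanging $\mathbf{F}$ on $\Gamma$ and $\mathbf{V}$ on $S$. Consequently $\mathbf{F}_{G_n}\circ{}^{(1)}\zeta$ kills $\ker({}^{(1)}\pi_n)$ and factors as $j\circ{}^{(1)}\pi_n$ for a morphism $j\colon I^{(n+1)}={}^{(1)}I^{(n)}\to w_{p^{n+1}}G_n$ which on $\kk$-points sends the generator to $z^p$. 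I then define $f\colon S^{(n+1)}=S(I^{(n+1)})\to G_n$ as the algebra map extending $j$; it too is a morphism of exponential functors by full faithfulness of $\Theta_{\Alg}$ (so no separate verification of comultiplicativity or primitivity is needed).

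Finally I would invoke lemma \ref{lm-exact-crit}, checking its four conditions on $\kk$-points. With the identifications above: $f_\kk\colon\kk[w]\to G_n(\kk)$, $w\mapsto z^p$, is injective; $g_\kk$ is surjective; $g_\kk f_\kk(w)=g_\kk(z)^p=\gamma_{p^n}^p=0$, whence $g_\kk f_\kk=\eta\epsilon$; and $\kk\otimes_{\kk[w]}G_n(\kk)=G_n(\kk)/(z^p)=\Gamma_n(\kk)\otimes\kk[z]/(z^p)$ maps isomorphically onto $\Gamma_{n+1}(\kk)$ via $z\mapsto\gamma_{p^n}$. Lemma \ref{lm-exact-crit} then yields the asserted short exact sequence. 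The only delicate point is the functorial existence of $f$, i.e. that the $p$-th power of the weight-$p^n$ generator descends to a well-defined map out of $I^{(n+1)}$; this rests entirely on the vanishing of the Frobenius on the divided-power functor $\Gamma_n$.
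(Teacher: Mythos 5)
Your proof is correct and follows essentially the same route as the paper: the same map $g$ obtained by tensoring $\Gamma_n\hookrightarrow\Gamma_{n+1}$ with $S(\Gamma^{p^n})\to\Gamma_{n+1}$, the same map $f$ sending the generator of $S^{(n+1)}$ to the $p$-th power of the weight-$p^n$ generator (the paper phrases this via the subalgebra $S^{(1)}(\Gamma^{p^n})\subset S(\Gamma^{p^n})$ rather than via naturality of $\mathbf{F}$, but it is the same morphism, killed on $I^{(1)}(\ker\pi_n)$ for the same reason, namely the vanishing of the Frobenius on $\overline{\Gamma}_n$), and the same verification of exactness on $\kk$-points via lemma \ref{lm-exact-crit}.
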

\begin{proof}
By tensoring the inclusion of functorial algebras $\Gamma_n\hookrightarrow \Gamma_{n+1}$ and the morphism of functorial algebras $S(\Gamma^{p^n})\to \Gamma_{n+1}$ induced by the identity map $\Gamma^{p^n}\xrightarrow[]{=}w_n\Gamma_{n+1}$, we obtain a morphism of functorial algebras $A_n\to \Gamma_{n+1}$, which induces a morphism $g:G_n\to \Gamma_{n+1}$. By tensoring the inclusion of functorial algebras $S^{(1)}(\Gamma^{p^n})\hookrightarrow S(\Gamma^{p^n})$ (induced by the monomorphism $I^{(1)}(\Gamma^{p^n})\hookrightarrow S^p(\Gamma^{p^n})$) and the unit of $\Gamma_n$, we get an inclusion of functorial algebras $S^{(1)}(\Gamma^{p^n})\to A_n$. Since $S^{(1)}(\Gamma^{p^n})$ is the symmetric algebra on the $p$-th powers of $\Gamma^{p^n}$, The quotient map $A_n\to G_n$ sends $I^{(1)}(\ker \pi_n)\subset I^{(1)}(\Gamma^{p^n})$ to zero. Hence we obtain a morphism of functorial algebras $f:S^{(n+1)}\to G_n$. But $G_n(\kk)=\Gamma_n(\kk)\otimes S^{(n)}(\kk)$, $f_\kk$ is induced by the canonical inclusion of $S^{(n+1)}(\kk)$ into $S^{(n)}(\kk)$, and $g_\kk$ is the quotient $\Gamma_n(\kk)\otimes S^{(n)}(\kk)\to \Gamma_n(\kk)\otimes T^{(n)}(\kk)\simeq \Gamma_{n+1}(\kk)$, where $T$ denotes the $p$-truncated symmetric powers. Thus $f$ and $g$ clearly satisfy the conditions of lemma \ref{lm-exact-crit}.
\end{proof}

\begin{lemma}\label{lm-iso-S}
For all ungraded strict exponential functors $E$, and for all $n\ge 0$,
the map induced by evaluation on $\kk$:
$$\Ext^i_{\PP_{\omega,\kk}-\Exp^0_c}(S^{(n)},E)\to \Ext^i_{^\strict\HH^0}(S^{(n)}(\kk),E(\kk))$$
is an isomorphism if $i=0$ and is injective if $i=1$.
\end{lemma}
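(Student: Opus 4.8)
The plan is to reduce everything to the fact that $S^{(n)}$ is the \emph{free} exponential functor on the additive functor $I^{(n)}$, and then to deduce the $i=1$ statement as a purely formal consequence of the $i=0$ statement. First I would record that evaluation on $\kk$ is an exact functor $\PP_{\omega,\kk}-\Exp^0_c\to {^\strict}\HH^0$: by lemma \ref{lm-exact-crit} a sequence of exponential functors is short exact precisely when its value at $\kk$ is a short exact sequence of strict Hopf algebras. Hence the comparison map in the statement is well defined, and it suffices to analyze it on $\Hom$ and on split extensions.

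For $i=0$, I would establish the adjunction isomorphism, natural in $E$,
\[\Hom_{\PP_{\omega,\kk}-\Exp^0_c}(S^{(n)},E)\;\simeq\;\Hom_{\FF_{\add}}(I^{(n)},PE)\;.\]
Since $S^{(n)}=S_\pm(I^{(n)})$ is the free (ungraded) commutative algebra on $I^{(n)}$, with $I^{(n)}$ included as primitive generators, restriction to the generators gives the forward map; its image lies in $PE$ because morphisms of exponential functors preserve primitives. Conversely, any $g\colon I^{(n)}\to PE$ extends uniquely to an algebra morphism $\overline g\colon S^{(n)}\to E$ by freeness, and $\overline g$ is automatically a morphism of coalgebras (both $\Delta_E\circ\overline g$ and $(\overline g\otimes\overline g)\circ\Delta$ are algebra maps agreeing on the primitive generators), hence a morphism of exponential functors by lemma \ref{lm-defter}. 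The same computation inside $^\strict\HH^0$ yields $\Hom_{^\strict\HH^0}(S^{(n)}(\kk),H)\simeq \Hom(I^{(n)}(\kk),PH)$. These two descriptions are compatible with evaluation, and taking primitives commutes with evaluation on $\kk$ (primitives are a kernel and evaluation is exact), so I am reduced to proving that evaluation induces an isomorphism $\Hom_{\FF_{\add}}(I^{(n)},PE)\simeq \Hom(I^{(n)}(\kk),PE(\kk))$. By lemma \ref{lm-additivite} the functor $PE$ is additive, and by the classification of lemma \ref{lm-control-strict} it is a direct sum of Frobenius twists; both sides then compute the multiplicity of $I^{(n)}$ in $PE$, using $\End(I^{(n)})=\kk$ and $\Hom(I^{(n)},I^{(r)})=0$ for $n\ne r$. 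This settles $i=0$.

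The case $i=1$ is then formal. Let $\xi\in\Ext^1_{\PP_{\omega,\kk}-\Exp^0_c}(S^{(n)},E)$ map to $0$, represented by a short exact sequence $\kk\to E\to X\xrightarrow{g}S^{(n)}\to\kk$. Its image splits in $^\strict\HH^0$, so $g_\kk$ admits a Hopf algebra section $s\colon S^{(n)}(\kk)\to X(\kk)$. Applying the case $i=0$ to $X$ in place of $E$ produces a unique morphism of exponential functors $\widetilde s\colon S^{(n)}\to X$ with $\widetilde s_\kk=s$. Then $g\circ\widetilde s\in\End(S^{(n)})$ satisfies $(g\circ\widetilde s)_\kk=g_\kk\circ s=\Id$, so $g\circ\widetilde s=\Id$ by the injectivity of evaluation just proved for $i=0$ (applied with $E=S^{(n)}$). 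Hence $\widetilde s$ splits the sequence and $\xi=0$, giving injectivity.

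I expect the only genuine work to lie in the $i=0$ step, specifically in verifying carefully that $S^{(n)}$ is free as a \emph{functorial} commutative algebra and that the resulting adjunction is compatible both with evaluation on $\kk$ and with taking primitives. Granting this free-object description, the isomorphism for $i=0$ follows from the classification of additive functors, and the injectivity for $i=1$ is then a formal consequence.
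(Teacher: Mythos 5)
Your proof is correct and follows essentially the same route as the paper: the $i=0$ case via the universal property of the symmetric algebra reducing to $\Hom(I^{(n)},PE)$ and the classification of additive strict analytic functors, and the $i=1$ case by lifting a Hopf-algebra splitting to a splitting of exponential functors using the $i=0$ isomorphism. The only (harmless) extra work you do is verifying by hand that the algebra map $\overline g$ is a coalgebra map — this is automatic since $\Theta_{\Alg}$ is fully faithful (lemma \ref{lm-defbis}).
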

\begin{proof}We have commutative diagram in which the two horizontal isomorphisms are provided by the universal property of the symmetric algebra and thetwo vertical morphisms are given by evaluation on $\kk$. By lemma \ref{lm-additivite}, $PE$ is additive. Hence by the classification of additive strict analytic functors of lemma \ref{lm-control-strict}, the vertical map on the right is an isomorphism (the category $\mathrm{Vect}_{\omega,\kk}$ refers to the category of vector spaces graded by the weights and concentrated in degrees $p^r$ for $r\ge 0$).
$$\xymatrix{
\Hom_{\PP_{\omega,\kk}-\Exp^0_c}(S^{(n)},E)\ar[d]\ar[r]^-{\simeq}&
\Hom_{\PP_{\omega,\kk}}(I^{(n)},PE)\ar[d]^-{\simeq}\\
\Hom_{^\strict\HH^0}(S^{(n)}(\kk),E(\kk))\ar[r]^-{\simeq}&\Hom_{\mathrm{Vect}_{\omega,\kk}}(I^{(n)}(\kk),PE(\kk))
}\,.$$
For the injectivity in degree one, we have to prove that if an exact sequence $\kk\to E\to E'\xrightarrow[]{\pi} S^{(n)}\to \kk$ is split after evaluation on $\kk$, then it is split. But a splitting after evaluation on $\kk$ is equivalent to the existence of a map $f:S^{(n)}(\kk)\to E'(\kk)$ such that $\pi_\kk f=\Id$. By the isomorphism statement in degree $i=0$, $f$ must be of the form $f=g_\kk$ for some $g:S^{(n)}\to E'$ and we must have $\pi\circ g=\Id$.
\end{proof}

\begin{lemma}\label{lm-prop-2}
For all strict exponential functors $E$ and all $n\ge 0$, evaluation on $\kk$ induces an isomorphism:
$$\Hom_{\PP_{\omega,\kk}-\Exp^0_c}(G_n,E)\xrightarrow[]{\simeq} \Hom_{{^\strict}\HH^0}(G_n(\kk),E(\kk)) \;.$$
\end{lemma}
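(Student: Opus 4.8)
The plan is to run an induction on $n$ that establishes the lemma together with its analogue for the functors $\Gamma_n$. Concretely, for each $n\ge 0$ consider the two assertions: $(\mathrm{a}_n)$ evaluation on $\kk$ induces an isomorphism $\Hom_{\PP_{\omega,\kk}-\Exp^0_c}(\Gamma_n,E)\to\Hom_{{^\strict}\HH^0}(\Gamma_n(\kk),E(\kk))$ for all $E$; and $(\mathrm{b}_n)$ the same statement with $\Gamma_n$ replaced by $G_n$. Here $(\mathrm{b}_n)$ is exactly the claim of the lemma, so it is enough to prove $(\mathrm{a}_n)$ and $(\mathrm{b}_n)$ for all $n$.

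First I would record that evaluation on $\kk$ is an exact functor $\PP_{\omega,\kk}-\Exp^0_c\to{^\strict}\HH^0$ which moreover reflects short exact sequences: this is precisely the content of lemma \ref{lm-exact-crit}. Consequently, applying the contravariant functors $\Hom(-,E)$ and $\Hom(-,E(\kk))$ to a short exact sequence of exponential functors and to its image under evaluation yields two long exact sequences of $\Ext$-groups, and evaluation provides a morphism of long exact sequences between them, compatible with the connecting homomorphisms (via the Yoneda description of $\Ext^1$ by extension classes). All the diagram chases below take place in these ladders, whose rows are exact because $\PP_{\omega,\kk}-\Exp^0_c$ and $^\strict\HH^0$ are abelian.

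The base case is $\Gamma_0=\kk$, the zero object of $\PP_{\omega,\kk}-\Exp^0_c$, so $(\mathrm{a}_0)$ is trivial. For the implication $(\mathrm{a}_n)\Rightarrow(\mathrm{b}_n)$ I would feed the short exact sequence $\kk\to\Gamma_n\to G_n\to S^{(n)}\to\kk$ of lemma \ref{lm-exact-seq} into $\Hom(-,E)$, obtaining the exact row $0\to\Hom(S^{(n)},E)\to\Hom(G_n,E)\to\Hom(\Gamma_n,E)\to\Ext^1(S^{(n)},E)$ and its evaluated counterpart. In this ladder the evaluation map on $\Hom(S^{(n)},E)$ is an isomorphism and the one on $\Ext^1(S^{(n)},E)$ is injective, both by lemma \ref{lm-iso-S}, while the map on $\Hom(\Gamma_n,E)$ is an isomorphism by $(\mathrm{a}_n)$; the five lemma then forces the map on $\Hom(G_n,E)$ to be an isomorphism, which is $(\mathrm{b}_n)$.

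Finally, for $(\mathrm{b}_n)\Rightarrow(\mathrm{a}_{n+1})$ I would use the short exact sequence $\kk\to S^{(n+1)}\to G_n\to\Gamma_{n+1}\to\kk$ of lemma \ref{lm-exact-seq-bis}. Since $\Gamma_{n+1}$ is the cokernel of $S^{(n+1)}\to G_n$, the group $\Hom(\Gamma_{n+1},E)$ is the kernel of the restriction map $\Hom(G_n,E)\to\Hom(S^{(n+1)},E)$, and likewise after evaluation on $\kk$. The evaluation maps on $\Hom(G_n,E)$ and on $\Hom(S^{(n+1)},E)$ are isomorphisms, by $(\mathrm{b}_n)$ and lemma \ref{lm-iso-S} respectively, so the induced map on the kernels is an isomorphism, giving $(\mathrm{a}_{n+1})$ and closing the induction. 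I expect the only delicate point to be the bookkeeping in the third paragraph: lemma \ref{lm-iso-S} supplies injectivity, not bijectivity, of evaluation on $\Ext^1(S^{(n)},E)$, and one must check that this weaker input is exactly what the five lemma requires in order to conclude bijectivity on $\Hom(G_n,E)$ — notably, no control on higher $\Ext$-groups, nor on $\Ext^1$ of $\Gamma_n$, is needed.
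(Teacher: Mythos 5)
Your proposal is correct and follows essentially the same route as the paper: an induction alternating between the exact sequences of lemmas \ref{lm-exact-seq} and \ref{lm-exact-seq-bis}, with lemma \ref{lm-iso-S} supplying the isomorphism on $\Hom(S^{(n)},-)$ and the injectivity on $\Ext^1(S^{(n)},-)$ needed for the four-lemma step. The only (harmless) difference is that you anchor the induction at $\Gamma_0=\kk$ rather than at $G_0=S$, and you make explicit that the passage from $G_n$ to $\Gamma_{n+1}$ needs only left-exactness of $\Hom$, which the paper leaves implicit.
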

\begin{proof}
We proceed by induction on $n$. For $n=0$, $G_0=S$ and the isomorphism is given by lemma \ref{lm-iso-S}. We now assume that the isomorphism holds for $G_n$ for a given $n$. 
For all exact sequences  of strict exponential functors $\mathbb{F}:=\kk\to F'\to F\to F''\to \kk$, there is a comparison ladder from the $\Ext^*_{\PP_{\omega,\kk}-\Exp^0_c}(-,E)$-long exact sequence associated to $\mathbb{F}$ and the $\Ext^*_{^\strict\HH^0}(-,E(\kk))$-long exact sequence associated to $\mathbb{F}(\kk)$, in which the vertical morphisms are given by evaluation on $\kk$. 
By inspecting the ladder associated to the exact sequence of lemma \ref{lm-exact-seq-bis}, we obtain (using the induction hypothesis and lemma \ref{lm-iso-S}) an isomorphism
$$\Hom_{\PP_{\omega,\kk}-\Exp^0_c}(\Gamma_{n+1},E)\simeq \Hom_{^\strict\HH^0}(\Gamma_{n+1}(\kk),E(\kk))\;.$$
Then by inspecting the ladder associated to the exact sequence of lemma \ref{lm-exact-seq} (using the isomorphism for $\Gamma_{n+1}$ just obtained and lemma \ref{lm-iso-S}) we obtain that the isomorphism of lemma \ref{lm-prop-2} holds for $G_{n+1}$.
\end{proof}

\begin{lemma}\label{lm-prop-3}
The $G_n(\kk)$, $n\ge 0$, form a generator of $^\strict\HH^0$.
\end{lemma}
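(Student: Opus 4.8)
The plan is to transport the statement through the graded Dieudonné equivalence recalled in appendix~\ref{App-Dieu}. Since $\kk$ is perfect, Schoeller's theory \cite{Schoeller} yields an equivalence of abelian categories between $^\strict\HH^0$ and a category $\mathcal M$ of graded modules over the Dieudonné ring $\mathbb{D}$, the grading recording the weight (supported on the weights $p^r$, $r\ge 0$, since primitives and indecomposables are concentrated there). Because the endomorphism rings are $\Fp$-algebras, $p$ acts as $0$, so the only relations between the Frobenius $F$ and the Verschiebung $V$ are $FV=VF=0$, with $V$ dividing weights by $p$ and $F$ multiplying them by $p$. As being a generating family is invariant under an equivalence, and as this equivalence carries the coproduct of $^\strict\HH^0$ (the tensor product) to the coproduct of $\mathcal M$ (the direct sum), it suffices to prove that the modules $M(G_n(\kk))$ form a generating family of $\mathcal M$.

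The first task is therefore to compute $M(G_n(\kk))$. The building blocks are standard: $M(S^{(n)}(\kk))$ is the free $\kk[F]$-module on a generator of weight $p^n$ with $V=0$, while $M(\Gamma_n(\kk))$ is $\kk[V]/V^n$ on a generator of weight $p^{n-1}$ with $F=0$ (these follow from the Dieudonné modules of $\mathbb{G}_a$ and of the divided power Hopf algebra, together with $\Gamma_n=\ker\mathbf{V}^n$ and the classification of additive functors of lemma~\ref{lm-control-strict}). Applying the exact Dieudonné functor to the two short exact sequences of lemmas~\ref{lm-exact-seq} and~\ref{lm-exact-seq-bis} then pins $M(G_n(\kk))$ down: it is the cyclic module on a generator $g_n$ of weight $p^n$ subject only to $V^{n+1}g_n=0$, i.e.
$$M(G_n(\kk))\;\cong\;\mathbb{D}/\mathbb{D}V^{n+1}\;,$$
spanned by $V^n g_n,\dots,Vg_n,g_n,Fg_n,F^2g_n,\dots$ in weights $p^0,\dots,p^{n-1},p^n,p^{n+1},\dots$. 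Here the relation $FVg_n=0$ is automatic from $FV=0$, and the way the two (non-split) sequences glue is dictated by $F$ vanishing on the $\kk[V]/V^{n+1}$-quotient and $V$ vanishing on the $\kk[F]$-subobject.

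Generation is then immediate. Let $M\in\mathcal M$ and let $m\in M$ be homogeneous of weight $p^r$. Since $V$ divides weights by $p$ and $\mathcal M$ is supported in weights $p^s$ with $s\ge 0$, the element $V^{r+1}m$ lies in a nonexistent weight and hence vanishes. Therefore $g_r\mapsto m$ defines a morphism $M(G_r(\kk))=\mathbb{D}/\mathbb{D}V^{r+1}\to M$ whose image contains $m$, and ranging over all homogeneous $m$ produces an epimorphism $\bigoplus M(G_r(\kk))\twoheadrightarrow M$. Thus $\{M(G_n(\kk))\}_{n\ge 0}$ is a generating family of $\mathcal M$, and transporting back along the equivalence shows that $\{G_n(\kk)\}_{n\ge 0}$ generates $^\strict\HH^0$. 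The main obstacle is the middle step: the explicit identification of $M(G_n(\kk))$, which requires unwinding the Dieudonné images of the maps in lemmas~\ref{lm-exact-seq} and~\ref{lm-exact-seq-bis} and keeping careful track of the Schoeller conventions for $F$, $V$ and the weight grading. Once the modules are known to be the cyclic modules $\mathbb{D}/\mathbb{D}V^{n+1}$, the generation argument reduces to the one-line weight count above.
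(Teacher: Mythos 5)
Your proof is correct and follows essentially the same route as the paper: both pass to graded Dieudonn\'e modules and identify $M(G_n(\kk))$ with the ``staircase'' module that the appendix denotes $M(n)$, the generation statement being exactly fact \ref{fact-Gen-dieu}. The only (harmless) difference is that you pin down the extension class by combining the two exact sequences of lemmas \ref{lm-exact-seq} and \ref{lm-exact-seq-bis} evaluated at $\kk$, whereas the paper instead uses lemma \ref{lm-iso-S} to transfer the non-splitness of lemma \ref{lm-exact-seq} from exponential functors to their values at $\kk$ and then invokes the uniqueness of the non-split extension $H_n$.
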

\begin{proof}
We first make the elementary observation that there is an equivalence of categories between $^\strict\HH^0$ and the category $\HH_1'$ of appendix \ref{App-Dieu}: the equivalence is given by doubling the weights of strict Hopf algebras and calling them degrees. In particular, by fact \ref{fact-Gen-dieu} for all $n\ge 0$, there is (up to isomorphism) a unique strict Hopf algebra $H_n$ which fits into a non-split short exact sequence of strict Hopf algebras
$$\kk\to \Gamma_n(\kk)\to H_n \to S^{(n)}(\kk)\to \kk\;,$$
and the $H_n$, $n\ge 0$, generate $^\strict\HH^0$. Now the non-split exact sequence of lemma \ref{lm-exact-seq}, together with lemma \ref{lm-iso-S}, imply that $G_n(\kk)=H_n$.
\end{proof}

\begin{lemma}\label{lm-prop-4}
The $G_n$, $n\ge 0$, form a generator of $\PP_{\omega,\kk}-\Exp^0_c$.
\end{lemma}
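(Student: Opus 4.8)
The plan is to read ``generator'' in the sense needed for the subsequent formal equivalence argument: a family $\{G_n\}_{n\ge 0}$ is a family of generators of $\PP_{\omega,\kk}-\Exp^0_c$ if for every object $E$ the canonical morphism $c_E\colon \bigotimes_{n\ge 0}\bigotimes_{u\in\Hom(G_n,E)}G_n\to E$ (where the coproduct is the tensor product) is an epimorphism. The overall strategy is to transport this epimorphism test across evaluation on $\kk$, i.e. across the functor ${^\strict}\Theta^0$, using the two results already at hand: the isomorphism of lemma \ref{lm-prop-2}, which identifies the indexing set $\Hom(G_n,E)$ with $\Hom_{{^\strict}\HH^0}(G_n(\kk),E(\kk))$, and lemma \ref{lm-prop-3}, which provides the corresponding epimorphism downstairs.

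The key auxiliary step, which I would isolate first, is that evaluation on $\kk$ \emph{reflects} epimorphisms between objects of $\PP_{\omega,\kk}-\Exp^0_c$. This is the delicate point, since evaluation on $\kk$ is not faithful on strict polynomial functors in general; the remedy is to pass to indecomposables. Every object has weight-zero part $w_0 E=\kk$, hence is connected, so by \cite[Prop 3.8]{MilnorMoore} a morphism $f\colon E_1\to E_2$ is an epimorphism if and only if $Qf\colon QE_1\to QE_2$ is. By lemma \ref{lm-additivite} the functors $QE_i$ are additive, and since evaluation on $\kk$ is exact on strict analytic functors and preserves tensor products one has $(Qf)_\kk=Q(f_\kk)$. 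The classification of additive strict analytic functors in lemma \ref{lm-control-strict} shows that evaluation on $\kk$ restricts to an equivalence on the subcategory of additive functors, hence reflects (and preserves) epimorphisms there; applying \cite[Prop 3.8]{MilnorMoore} once more inside ${^\strict}\HH^0$ (whose objects are again connected) then gives that $Q(f_\kk)$ is epi iff $f_\kk$ is. Chaining these equivalences yields the desired conclusion: $f$ is epi if and only if $f_\kk$ is epi.

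With this in place the assembly is routine. Given $E$, I would apply ${^\strict}\Theta^0$ to $c_E$; since ${^\strict}\Theta^0$ commutes with colimits (as recorded at the start of this subsection) it carries the coproduct $\bigotimes$ to the corresponding coproduct in ${^\strict}\HH^0$, and lemma \ref{lm-prop-2} identifies $(c_E)_\kk$ with the canonical morphism $\bigotimes_{n}\bigotimes_{v\in\Hom(G_n(\kk),E(\kk))}G_n(\kk)\to E(\kk)$. The latter is an epimorphism precisely because the $G_n(\kk)$ form a family of generators of ${^\strict}\HH^0$ by lemma \ref{lm-prop-3}. By the reflection property established in the second paragraph, $c_E$ itself is then an epimorphism, which is exactly the assertion that $\{G_n\}_{n\ge 0}$ generates $\PP_{\omega,\kk}-\Exp^0_c$. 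The main obstacle is concentrated entirely in the reflection step: once epimorphisms are controlled through the functor $Q$ and the additive-functor dictionary of lemmas \ref{lm-additivite} and \ref{lm-control-strict}, everything else is formal.
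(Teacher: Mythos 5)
Your proof is correct, and the overall skeleton (lift an epimorphism $G(\kk)\to E(\kk)$ supplied by lemma \ref{lm-prop-3} to a morphism of strict exponential functors via lemma \ref{lm-prop-2}, then check it is an epimorphism upstairs) is the same as the paper's. Where you genuinely diverge is in the key point that evaluation on $\kk$ reflects epimorphisms. The paper disposes of this in one line: since $f$ is a morphism of exponential functors and every object of $\Proj_\kk$ is some $\kk^n$, one has $f_{\kk^n}=(f_\kk)^{\otimes n}$, and a tensor power of an epimorphism in the abelian category $\HH$ is an epimorphism; as cokernels in $\FF-\HH$ are computed objectwise, $f$ is epi. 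You instead reduce to indecomposables via \cite[Prop 3.8]{MilnorMoore} (legitimate, since $w_0E=\kk$ makes every object connected for the weight grading), use that $Q$ commutes with evaluation on $\kk$ by exactness, and invoke the equivalence between additive strict analytic functors and weight-graded vector spaces from lemma \ref{lm-control-strict}. Both arguments are sound; the paper's is shorter and exploits exponentiality directly, while yours isolates a reusable reflection statement through the functor $Q$ that does not depend on every object of the source category being free, at the cost of routing through Milnor--Moore and the additive-functor dictionary.
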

\begin{proof}
By lemma \ref{lm-prop-3}, for all objects $E$ of 
$\FF-\Exp^0_c$ we can find a strict exponential functor $G$ which is a tensor product of the $G_n$, and an epimorphism of Hopf algebras $g:G(\kk)\to E(\kk)$. 
By lemma \ref{lm-prop-2} there is a (unique) morphism of strict exponential functors $f:G\to E$ such that $f_\kk=g$. Since $f_{\kk^n}=(f_\kk)^{\otimes n}$, $f$ is an epimorphism.
\end{proof}

\section{Application to functor homology and higher torsion groups}\label{sec-bar}

In this section, we apply our results on strict exponential functors to obtain some functor homology computations, which generalize the computations of \cite{TouzeBar} over a field $\kk$.
Let us first recall the context of these computations. 
Given two strict analytic functors $F$ and $G$, homogeneous of weight $d$, we denote by $\underline{\Ext}^*(F,G)$ the $\Ext$ with parameter:
$$\underline{\Ext}^*(F,G)(V)=\Ext^*_{\PP_{d,\kk}}(F,G_V)$$
where $G_V(U)=G(V\otimes U)$. The internal $\Ext$ is a strict analytic functor homogeneous of weight $d$ \cite[section 4]{TouzeBar}. An important motivation for computing such $\Ext$ with parameters is that they come as an input in a formula computing $\Ext^*_{\PP_{dp^r,\kk}}(F^{(r)}, G^{(r)})$. The latter $\Ext$-groups appear in several fundamental problems related to the cohomology of algebraic groups and finite groups of Lie type, see \cite{TouzeBen} for a survey of the subject.

{Let $E$ be a strict exponential functor concentrated in degree zero. We define\footnote{The definition of $\mathbb{E}(\Lambda,E)$ is not the same as in \cite{TouzeBar}: the grading and the signs in the product have been modified. This new definition ensures that the resulting algebra is graded commutative, and that the results of the computations are easier to read.}
functorial graded commutative algebras $\mathbb{E}(S,E)$, $\mathbb{E}(\Lambda,E)$ by
$$w_k\mathbb{E}(S,E)^i = \underline{\Ext}^{i}(S^k,w_kE)\;,\qquad w_k\mathbb{E}(\Lambda,E)^i = \underline{\Ext}^{i-k}(\Lambda^k,w_kE)$$
and by letting the product $\bullet$ be the convolution product, up to a sign for $\mathbb{E}(\Lambda,E)$. To be more specific, if $X=S$ or $\Lambda$, we require that the following diagram (in which the vertical map is the cross product of extensions, defined by deriving the tensor product)
$$\xymatrix{
\underline{\Ext}^{i}(X^k,w_kE)\otimes \underline{\Ext}^{j}(X^\ell,w_\ell E)\ar[d]^-{\otimes}\ar[rr]^-{\bullet}&& 
\underline{\Ext}^{i+j}(X^{k+\ell},w_{k+\ell}E)\\
\underline{\Ext}^{i+j}(X^k\otimes X^\ell,w_kE\otimes w_\ell E)\ar@/_1pc/[rru]_-{\qquad\underline{\Ext}^{i+j}(\Delta_X,\mu_E)}
}$$
commutes if $X=S$, and that is commutes up to a sign $(-1)^{i\ell+k\ell}$ if $X=\Lambda$. By \cite[Prop 5.8]{TouzeBar}, $\mathbb{E}(X,E)$ is a strict exponential functor. These strict exponential functors were computed in \cite{TouzeBar} for $E=\Gamma$, they were the computations which could not be achieved in \cite{FFSS}. In the next results, we go far beyond: we compute them for an arbitrary $E$. Let $S^{(r)}$ be the symmetric algebra on a copy of $I^{(r)}$, placed in degree zero, and we let $S_n^{(r)}$ be the quotient of $S^{(r)}$ by the ideal of $p^n$-th powers.}

\begin{theorem}\label{thm-calcul-fct-hom}~Let $\kk$ be a perfect field of positive characteristic. Let $X$ be either $S$ or $\Lambda$.
Let $E$ be a strict exponential functor over $\kk$. Assume that $E$ is concentrated in degree zero, and that for all $k$ and all $V$, the vector space $w_kE(V)$ has finite dimension. The following holds. 
\begin{enumerate}
\item (Borel Theorem) There is a family of strict exponential functors $E_\alpha$ of the form $S^{(r_\alpha)}$ or $S_{n_\alpha}^{(r_{\alpha})}$, and an isomorphism of algebras preserving the weights $E(\kk)\simeq \bigotimes_\alpha E_\alpha(\kk)$.
\item There is an isomorphism of strict exponential functors:
$$\mathbb{E}(X,E)\simeq \bigotimes_\alpha \mathbb{E}(X,E_\alpha)\;.$$
\end{enumerate}
\end{theorem}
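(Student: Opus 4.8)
\emph{Part (1).} Since $E$ is concentrated in degree zero, $E(\kk)$ is a bicommutative Hopf algebra which is commutative in the ungraded sense and carries a weight grading, with each $w_kE(\kk)$ finite dimensional by hypothesis. As a weight-graded connected algebra over the perfect field $\kk$, I would apply Borel's structure theorem to write $E(\kk)$ as a tensor product of monogenic algebras $\kk[x]$ and $\kk[x]/(x^{p^{n}})$; no exterior factors occur since everything sits in degree zero, and perfectness of $\kk$ is exactly what guarantees that these are the only monogenic shapes. By lemma \ref{lm-additivite} together with the classification of additive strict analytic functors (lemma \ref{lm-control-strict}), the indecomposables of $E$, hence the algebra generators of $E(\kk)$, are concentrated in weights $p^{r}$; a generator of weight $p^{r_\alpha}$ thus produces a factor $S^{(r_\alpha)}(\kk)$ or $S_{n_\alpha}^{(r_\alpha)}(\kk)$. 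Finite dimensionality of the weight pieces ensures that only finitely many factors occur in each weight, so the (possibly infinite) tensor product is well defined weight by weight, giving the weight-preserving algebra isomorphism $E(\kk)\simeq\bigotimes_\alpha E_\alpha(\kk)$.

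\emph{Part (2), strategy.} The two essential inputs are: (A) that $\mathbb{E}(X,-)$ converts tensor products of exponential functors into tensor products, and (B) that $\mathbb{E}(X,E)$ depends on $E$ only through the \emph{algebra} structure of $E(\kk)$, and not through the finer Hopf, equivalently (by theorem \ref{thm-classif-strict}) exponential-functor, structure. Granting (A) and (B), part (2) follows formally: by (B) one may replace $E$ by any strict exponential functor sharing the same underlying weight algebra, in particular by $\bigotimes_\alpha E_\alpha$, and then (A), applied weight by weight so that the infinite tensor product causes no trouble, yields $\mathbb{E}(X,E)\simeq\mathbb{E}(X,\bigotimes_\alpha E_\alpha)\simeq\bigotimes_\alpha\mathbb{E}(X,E_\alpha)$.

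\emph{On (A).} For two exponential functors $E_1,E_2$ the external cross product of extensions, followed by restriction along the comultiplication $X^{a+b}\to X^{a}\otimes X^{b}$ of the exponential coalgebra $X$, yields a natural map $\mathbb{E}(X,E_1)\otimes\mathbb{E}(X,E_2)\to\mathbb{E}(X,E_1\otimes E_2)$; weight by weight this is the Künneth comparison $\underline{\Ext}^{*}(X^{a},w_aE_1)\otimes\underline{\Ext}^{*}(X^{b},w_bE_2)\to\underline{\Ext}^{*}(X^{a+b},w_aE_1\otimes w_bE_2)$, which is an isomorphism onto the appropriate summand precisely because $X$ is exponential. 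The point is to check compatibility with the convolution products $\bullet$ (and with the sign twist when $X=\Lambda$), so that the comparison is an isomorphism of strict exponential functors; this is of the same nature as, and rests on, \cite[Prop 5.8]{TouzeBar}.

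\emph{On (B), the main obstacle.} This is the heart of the matter, because $E$ is in general \emph{not} isomorphic to $\bigotimes_\alpha E_\alpha$ as an exponential functor: the category of degree-zero strict exponential functors has indecomposables far beyond the monogenic ones (witness the generators $G_n$ and the decomposition of proposition \ref{prop-categ-decomp}), so the Hopf structure genuinely distinguishes $E$ from $\bigotimes_\alpha E_\alpha$ even when their underlying algebras agree. The same phenomenon already occurs for $E=\Gamma$ in \cite{TouzeBar}, where $\Gamma(\kk)\simeq\bigotimes_{r}S_1^{(r)}(\kk)$ holds only at the level of algebras. The plan is to exhibit $E$ as an iterated extension of the $E_\alpha$ in the abelian category of exponential functors, using local noetherianity and the weight filtration of lemma \ref{lm-locnoeth}, so that the associated graded realizes the Borel decomposition, and then to apply the monoidal functor $\mathbb{E}(X,-)$ to this filtration and prove that the resulting spectral sequence degenerates, for weight and degree reasons, onto $\bigotimes_\alpha\mathbb{E}(X,E_\alpha)$. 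Proving this degeneration, i.e. the vanishing of the connecting maps in the long exact $\underline{\Ext}$-sequences, is where the real work lies, and is the step that genuinely uses the explicit nature of $X=S,\Lambda$ and the perfectness of $\kk$.
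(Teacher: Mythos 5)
Your Part (1) is essentially the paper's proof of proposition \ref{prop-Borel}: reduce via proposition \ref{prop-categ-decomp} to the case where degrees determine weights and invoke Borel's theorem \cite[Thm 7.11]{MilnorMoore}. That part is fine. In Part (2) you have correctly isolated the decisive claim, your (B): that $\mathbb{E}(X,E)$ depends on $E$ only through the weight-graded algebra $E(\kk)$. But you do not prove it. Your proposed route --- filter $E$ by exponential subfunctors realizing the Borel factors on the associated graded, apply $\mathbb{E}(X,-)$, and show the resulting spectral sequence degenerates ``for weight and degree reasons'' --- is left entirely open at the one step that carries all the difficulty. There is no a priori reason for the connecting maps in the long exact $\underline{\Ext}$-sequences to vanish; weight and degree considerations alone do not force this (the weights and degrees of the various $\mathbb{E}(X,E_\alpha)$ overlap heavily, as the explicit answers in theorem \ref{thm-calcul-fct-hom2} show), and no mechanism is offered. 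As written, the proof of (2) is a plan, not an argument.

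The paper closes this gap by a quite different mechanism that avoids filtrations and spectral sequences altogether. By \cite[Prop 7.1]{TouzeBar}, $\mathbb{E}(\Lambda,E)$ and $\mathbb{E}(S,E)$ are, up to regrading, the exponential functors $\Tor^E_{[1]}(\kk,\kk)$ and $\Tor^E_{[2]}(\kk,\kk)$ computed by the (iterated) reduced bar construction. The bar construction $\overline{B}^jE(\kk)$ manifestly depends only on the weight-graded algebra $E(\kk)$, and lemma \ref{lm-comp-infinite-prod} produces, from the mere algebra isomorphism $E(\kk)\simeq\bigotimes_\alpha E_\alpha(\kk)$, a weight-preserving quasi-isomorphism of CDGH-algebras $\bigotimes_\alpha\overline{B}^jE_\alpha(\kk)\to\overline{B}^jE(\kk)$ (the classical K\"unneth comparison, extended to infinite tensor products by a colimit). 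This gives an isomorphism of strict Hopf algebras after evaluation on $\kk$; theorem \ref{thm-classif-strict} --- evaluation on $\kk$ is an equivalence onto $^\strict\HH$ --- then upgrades it to an isomorphism of strict exponential functors (theorem \ref{thm-calcul-B}). This is where the perfectness of $\kk$ is actually used: not in some degeneration argument, but in recovering the functoriality from the Hopf algebra at the very end. You gesture at theorem \ref{thm-classif-strict} only to identify Hopf and exponential structures; the point you are missing is that it is the tool that converts an algebra-level comparison into the desired isomorphism of exponential functors, which is precisely what makes your claim (B) provable.
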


In view of theorem \ref{thm-calcul-fct-hom}, a large number of explicit $\Ext$-computations can now be deduced from the following small number of computations.

\begin{theorem}\label{thm-calcul-fct-hom2}
Let $\kk$ be a (non necessarily perfect) field of positive characteristic $p$, let $r$ be a nonnegative integer and let $n$ be a positive integer.
If $p=2$, there are isomorphisms of strict exponential functors:
\begin{align*}
&\mathbb{E}(\Lambda,S^{(r)})\simeq \Lambda(G_{r,2p^r-1})\;,\\
&\mathbb{E}(\Lambda,S_n^{(r)})\simeq 
\begin{cases}
\Gamma(G_{r,2p^r-1}) & \text{ if $n=1$,}\\
\Lambda(G_{r,2p^r-1})\otimes\Gamma(G_{r+n,2p^{r+n}-2}) & \text{ if $n>1$,}
\end{cases}
\\
&\mathbb{E}(S,S^{(r)})\simeq \Gamma(G_{r,2p^r-2} )\;,\\
&\mathbb{E}(S,S_n^{(r)})\simeq
\begin{cases}
\displaystyle\bigotimes_{k\ge 0}\Gamma(G_{r+k,2p^{r+k}-p^k}) & \text{ if $n=1$,}\\
\displaystyle\Gamma(G_{r,2p^r-2})\otimes\bigotimes_{k\ge 0}\Gamma(G_{r+k,2p^{r+n+k}-2p^k}) & \text{ if $n>1$.}
\end{cases}
\end{align*}
If $p>2$ there are isomorphisms of strict exponential functors:
\begin{align*}
&\mathbb{E}(\Lambda,S^{(r)})\simeq \Lambda(G_{r,2p^r-1})\;,\\
&\mathbb{E}(\Lambda,S_n^{(r)})\simeq \Lambda(G_{r,2p^r-1})\otimes \Gamma(G_{r+n,2p^{r+n}-2})\;,\\
&\mathbb{E}(S,S^{(r)})\simeq \Gamma(G_{r,2p^r-2} )\;,\\
&\mathbb{E}(S,S_n^{(r)})\simeq 
\begin{array}[t]{ll}
\displaystyle \Gamma(G_{r,2p^r-2})\otimes 
\bigotimes_{k\ge 0}\Lambda(G_{r+n+k,2p^{r+n+k}-2p^k-1})
\displaystyle\otimes \bigotimes_{k\ge 0}\Gamma(G_{r+n+k+1, 2p^{r+n+k+1}-2p^k-2})\;.
\end{array}\;
\end{align*}
In these formulas, each $G_{s,i}$ refers to a copy of $I^{(s)}$ placed in degree $i$. Thus if we let $\epsilon=0$ for $X=\Gamma$ and $\epsilon=1$ for $X=\Lambda$, we have: 
$$G_{s,i} \subset w_{p^s}\mathbb{E}(X,E^{(r)})^i= \underline{\Ext}^{i-\epsilon p^s}(X^{p^s}, E^{p^{s-r}\,(r)})\;.$$
\end{theorem}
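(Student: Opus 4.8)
The plan is to regard $\mathbb{E}(X,S^{(r)})$ and $\mathbb{E}(X,S_n^{(r)})$ as the two families of building blocks out of which every $\mathbb{E}(X,E)$ is assembled by Theorem~\ref{thm-calcul-fct-hom}, and to pin down each block in two stages: first compute its underlying graded strict analytic functor, i.e.\ the bigraded family of groups $\uExt^{i-\epsilon p^s}(X^{p^s},(S^{p^{s-r}})^{(r)})$ and their truncated analogues; then upgrade this to an isomorphism of strict exponential functors. Since $\mathbb{E}(X,-)$ always lands in $\PP_{\omega,\kk}-\Exp_c$ by \cite[Prop 5.8]{TouzeBar}, and the targets on the right-hand side are the functors $\Gamma_{\pm}$ and $\Lambda$ of Section~\ref{subsec-ex} built on Frobenius twists, both stages stay inside the category of strict exponential functors.

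For the first stage I would compute the $\Ext$-groups with the sum--diagonal and twisting technology of \cite{TouzeBar}: the untwisted groups $\uExt^*(X^d,S^d)$ and $\uExt^*(X^d,S_n^d)$ are accessible from the FFSS-type computations, and the $r$-fold Frobenius twist is handled by the twisting formula, which multiplies degrees and inserts the twist cohomology responsible for the shifts $2p^r-2$, $2p^r-1$, and so on. Crucially, the exponential structure lets me avoid computing every weight by brute force: by Lemma~\ref{lm-additivite} the primitives $P\mathbb{E}(X,E^{(r)})$ form an additive subfunctor, so by Lemma~\ref{lm-control-strict} they are a direct sum of Frobenius twists $I^{(s)}$, and I only need to locate, in each weight $p^s$, the lowest $\Ext$-class and read off the degree $i$ in which the twist $I^{(s)}=G_{s,i}$ sits. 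This is where the characteristic splits: for $p>2$ the primitives land in a single odd or even degree per summand, whereas for $p=2$ the collapse of the odd/even distinction produces the extra truncated factors and the $n=1$ versus $n>1$ dichotomy.

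For the second stage I observe that, when $p>2$, each claimed answer is exactly $\Gamma_{\pm}(P)$ for the additive functor $P$ found above: using $\Gamma_{\pm}(V)=\Gamma(V^{\mathrm{even}})\otimes\Lambda(V^{\mathrm{odd}})$, the odd generators contribute the exterior factors $\Lambda$ and the even generators the divided-power factors $\Gamma$. Hence it suffices to show that the coalgebra underlying $\mathbb{E}(X,E^{(r)})(V)$ is cofree connected cocommutative for some $V$ and to invoke Theorem~\ref{thm-coalg}, which yields $\mathbb{E}(X,E^{(r)})\simeq\Gamma_{\pm}(P\mathbb{E}(X,E^{(r)}))$ with no perfectness assumption. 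Alternatively, and uniformly in $p$, once the underlying graded functor is known to agree with that of the stated right-hand side I may appeal to the rigidity Theorem~\ref{thm-uniqueness-strict}: the finite-dimensionality hypothesis makes $\mathbb{E}(X,E^{(r)})$ reflexive, hence virtually reflexive, so over a perfect field an isomorphism of underlying strict analytic functors already forces an isomorphism of strict exponential functors. A possibly imperfect field $\kk$ is then reached by base change, since $X$, $E$ and all the twists are defined over $\Fp$ and the relevant $\Ext$-groups commute with field extension.

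I expect the main obstacle to be the first stage: extracting the precise degree and weight shifts in the twisted truncated case $\mathbb{E}(S,S_n^{(r)})$, where the bookkeeping produces the nested families $2p^{r+n+k}-2p^k-1$ and $2p^{r+n+k+1}-2p^k-2$, and where one must control not merely the additive $\Ext$-groups but enough of the multiplication and comultiplication to distinguish a divided-power factor $\Gamma$ from an exterior factor $\Lambda$ and, in characteristic $2$, from a truncated-polynomial factor $T_{s,i}$. These characteristic $2$ truncated factors are the most delicate point, since they are neither free nor cofree and so escape the direct reach of Theorems~\ref{thm-alg} and \ref{thm-coalg}; there I would fall back on the classification of indecomposable exponential functors together with the uniqueness Theorem~\ref{thm-uniqueness-strict}.
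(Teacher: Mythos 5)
Your two-stage architecture (compute the underlying object, then upgrade to an isomorphism of exponential functors via a structure or rigidity theorem) is genuinely different from the paper's route, but the first stage contains a real gap. The paper never attacks the groups $\uExt^{*}(X^{p^s}, w_{p^s}S_n^{(r)})$ head-on by sum--diagonal or twisting techniques. Its key move is the identification $w_kH_i(\overline{B}^jE)\simeq w_k\mathbb{E}(X,E)^{2k-i}$ (with $j=1$ for $X=\Lambda$ and $j=2$ for $X=S$, quoted from \cite[Prop 7.1]{TouzeBar}), which converts the whole theorem into a computation of iterated bar homology $\Tor^E_{[j]}(\kk,\kk)$; combined with theorem \ref{thm-classif-strict} this reduces everything to the single weight-graded Hopf algebra $E(\kk)$. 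For the truncated case $S_n(\kk)\simeq\kk[x]/x^{p^n}\simeq\kk(\Z/p^n\Z)$ the answer as a graded Hopf algebra is classical group homology of cyclic groups, the weights are then forced by an Euler-characteristic induction on the weight-graded bar complex (proposition \ref{prop-cal1}), and the iterated bar construction is handled by Cartan's trick (proposition \ref{prop-cartan}, example \ref{ex-cal3}). Your premise that the untwisted groups $\uExt^*(X^d,S_n^d)$ are ``accessible from FFSS-type computations'' is not correct: extensions into truncated symmetric powers are precisely the new content of this theorem, and the twisting formula governs precomposition by Frobenius, not truncation. Without the bar-construction reduction (or an equivalent device) your stage one has no method for the cases $\mathbb{E}(X,S_n^{(r)})$.

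There is also a near-circularity in your fallback on theorem \ref{thm-uniqueness-strict}: that theorem takes as input the full underlying graded strict analytic functor of $\mathbb{E}(X,E)$, i.e.\ every $\uExt$ group as a functor of the parameter, which is essentially the statement being proved; once that is in hand, rigidity adds little. The paper's appeal to theorem \ref{thm-classif-strict} is much cheaper, requiring only the value at $\kk$ with its weights. Your instinct to constrain the primitives via lemmas \ref{lm-additivite} and \ref{lm-control-strict} and to recognize $\Gamma_{\pm}(P)$ via theorem \ref{thm-coalg} is sound and close in spirit to how the paper exploits exponentiality elsewhere, but, as you yourself note, it cannot detect the exterior factors in characteristic $2$ nor separate the $n=1$ and $n>1$ cases without the multiplicative input that the paper extracts from $H_*(\Z/p^n\Z;\kk)$. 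Your base-change argument for imperfect fields is correct and coincides with remark \ref{rk-imperfect}.
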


The remainder of section \ref{sec-bar} is devoted to the proof of theorems \ref{thm-calcul-fct-hom} and \ref{thm-calcul-fct-hom2}. In section \ref{subsec-exHT}, we first show that (higher) torsion groups of (strict) exponential functors yield examples of (strict) exponential functors. Then in section \ref{subsec-HT} we explain how to combine theorem \ref{thm-classif-strict} with classical computations in order to compute these higher torsion groups. We then translate these higher $\Tor$ computations into functor homology computations by using a result of \cite{TouzeBar}.

\subsection{Higher torsion groups and exponential functors}\label{subsec-exHT}
Let $\kk$ be a field, and $\FF=\Fct(\V,\kk)$ or $\PP_{\omega,\kk}$. 
If $A$ and $B$ are two graded $\kk$-algebras and $M_A$, $M_B$, $_AN$ and $_BN$ are graded modules over $A$ and $B$ (the index indicates which algebra acts on which side), there is a canonical isomorphism of graded $\kk$-vector spaces (as usual, unadorned tensor products are taken over $\kk$):
$$\phi:\Tor^A(M_A,{_A}N)\otimes \Tor^B(M_B,{_B}N)\xrightarrow[]{\simeq} \Tor^{A\otimes B}(M_A\otimes M_B,{_A}N\otimes{_B}N)\;. $$
Here we use the definition of $\Tor$ as in the theory of DGAs, see e.g. \cite{Moore} or \cite[Chap 20]{FHTrat}. 
If $A$, $M_A$ and $_AN$ are ungraded (equivalently concentrated in degree zero) this is just the usual $\Tor$.
If $A_1\leftarrow A\rightarrow A_2$ are morphisms of graded commutative $\kk$-algebras, then $\Tor^A(A_1,A_2)$ is a graded commutative algebra, with product given by the composition
$$\Tor^A(A_1,A_2)^{\otimes 2}\xrightarrow[\simeq]{\phi}\Tor^{A^{\otimes 2}}(A_1^{\otimes 2},A_2^{\otimes 2})\to \Tor^A(A_1,A_2)\;,$$
the last morphism being induced by the morphisms of graded algebras $A^{\otimes 2}\to A$, $A_1^{\otimes 2}\to A_1$ and $A_2^{\otimes 2}\to A_2$ given by the multiplications. Applying this construction to exponential functors, we get the next result.
\begin{proposition}\label{prop-ex-homol-exp}
Let $E_1\leftarrow E\rightarrow E_2$ be two morphisms of exponential functors. Then 
$\Tor^{E(V)}(E_1(V),E_2(V))$ is an exponential functor of the variable $V$. 
\end{proposition}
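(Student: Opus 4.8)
The plan is to equip $T(V):=\Tor^{E(V)}(E_1(V),E_2(V))$ with an exponential structure in the sense of Definition~\ref{def-exp}, taking as exponential isomorphism the one manufactured from the Künneth isomorphism for $\Tor$ recalled above, and then to verify that the algebra it induces through $\Theta_{\Alg}$ is exactly the graded commutative algebra structure used to define $T$. First I would record that $T$ is a functorial graded commutative algebra: applying the construction recalled just before the statement to the pair of morphisms $E_1(V)\leftarrow E(V)\rightarrow E_2(V)$ of graded commutative $\kk$-algebras produces a graded commutative algebra, and since the algebra morphisms and the $\Tor$-functoriality are all natural in $V$, these assemble into an object of $\FF-\Alg$. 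Moreover $T(0)=\Tor^{\kk}(\kk,\kk)=\kk$, which provides the candidate unit $u\colon\kk\xrightarrow[]{\simeq}T(0)$.

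The heart of the argument is the construction of the exponential isomorphism. Because the morphisms $E\to E_1$ and $E\to E_2$ are morphisms of exponential functors, they commute with the exponential isomorphisms of $E$, $E_1$ and $E_2$; consequently, under the algebra isomorphism $E(V)\otimes E(W)\xrightarrow[]{\simeq}E(V\oplus W)$ the $E(V\oplus W)$-module $E_i(V\oplus W)$ is carried to the external tensor-product module $E_i(V)\otimes E_i(W)$ over $E(V)\otimes E(W)$. Plugging this identification into the Künneth isomorphism for $\Tor$ (with $A=E(V)$, $B=E(W)$, $M_A=E_1(V)$, $M_B=E_1(W)$, ${}_AN=E_2(V)$, ${}_BN=E_2(W)$) yields a natural isomorphism of graded bifunctors
$$\alpha_{V,W}\colon T(V)\otimes T(W)\xrightarrow[]{\;\simeq\;}T(V\oplus W)\;.$$
Associativity and unitality of $(\alpha,u)$, hence the fact that $(T,\alpha,u)$ is an exponential functor, follow from the corresponding properties of the Künneth isomorphism together with those of the exponential isomorphisms of $E,E_1,E_2$.

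It then remains to verify that this exponential structure refines the chosen algebra structure on $T$, that is, that $\Theta_{\Alg}(T,\alpha,u)=(T,m,\eta)$ with $m$ the multiplication defined before the proposition. By definition the multiplication induced by the exponential structure is $\mu_V=T(\sigma_V)\circ\alpha_{V,V}$; unwinding $\alpha_{V,V}$ and using the identity $E(\sigma_V)\circ(\text{exp. iso})=\mu^{E}_V$ — which is precisely how the multiplication of an exponential functor is defined — together with its analogues for $E_1$ and $E_2$, one finds that $\mu_V$ equals the Künneth isomorphism followed by the map induced by the three multiplications, i.e. $\mu_V=m_V$. This identifies the two algebra structures, so $T$ is exponential with its $\Tor$-algebra structure.

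I expect the main obstacle to lie in the middle step: checking that the module structures transport correctly through the exponential isomorphism of $E$, and that the Künneth isomorphism for $\Tor$ is natural and multiplicative simultaneously in all three arguments (with the correct Koszul signs), so that $\alpha$ is genuinely associative, unital, and compatible with products. Once this compatibility is in place, the remaining verifications are formal.
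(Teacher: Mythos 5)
Your proposal is correct and takes essentially the same approach as the paper: both rest on the observation that the K\"unneth isomorphism for $\Tor$, composed with the isomorphism induced by the exponential isomorphisms of $E$, $E_1$, $E_2$ (which carry the module $E_i(V\oplus W)$ to the external tensor product $E_i(V)\otimes E_i(W)$), identifies $T(V)\otimes T(W)$ with $T(V\oplus W)$ compatibly with the product. The paper merely streamlines your final verifications by invoking lemma \ref{lm-defbis}: once one checks that the composite of condition (A2) equals this chain of isomorphisms, the lemma supplies the exponential structure refining the given algebra structure, so there is no need to verify associativity, unitality and compatibility with the multiplication by hand.
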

\begin{proof}
Let $T(V)=\Tor^{E(V)}(E_1(V),E_2(V))$. By functoriality of $\Tor$ and of the canonical morphism $\phi$, the composition
$$T(V)\otimes T(W)\xrightarrow[]{T(\iota_V)\otimes T(\iota_W)}T(V\oplus W)^{\otimes 2}\xrightarrow[]{\mathrm{mult}}T(V\oplus W)$$
equals the composition:
\begin{align*}T(V)\otimes T(W)&\xrightarrow[]{\phi} \Tor^{E(V)\otimes E(W)}(E_1(V)\otimes E_1(W), E_2(V)\otimes E_2(W)\\&\to \Tor^{E(V\oplus W)}(E_1(V\oplus W), E_2(V\oplus W))
\end{align*}
where the last map is induced by the exponential isomorphisms of $E$, $E_1$ and $E_2$. Hence the graded commutative algebra $T$ is an exponential functor by lemma \ref{lm-defbis}.
\end{proof}

The graded commutative algebras $\Tor^A(\kk,\kk)$ can be computed by reduced bar constructions. We refer the reader to \cite[Chap 19]{FHTrat} and \cite[X.12]{ML} for full details on reduced bar constructions, we simply recall here the main properties. Given a commutative differential graded augmented (CDGA) algebra $A$, the reduced bar construction $\overline{B}A$ is a commutative differential graded Hopf (CDGH) algebra. The product on $\overline{B}A$ is constructed in a way similar to the product on $\Tor$, namely there is a canonical comparison map (which is a morphism lifting $\phi$ to the chain level)
$$\overline{\phi}: \overline{B}(A_1)\otimes \overline{B}(A_2)\to \overline{B}(A_1\otimes A_2)$$
and the product is induced by $\overline{\phi}$ and the morphism of CDGA-algebras $A^{\otimes 2}\to A$ given by the multiplication of $A$. The coproduct is the deconcatenation product. (To see that the product and coproduct yield a Hopf algebra structure, use that as a graded $\kk$-module $\overline{B}A$ is the tensor algebra on the augmentation ideal of $A$. The product is also described as the shuffle product \cite[X Thm 12.2]{ML}, hence if we ignore its differential, $\overline{B}A$ is simply the shuffle Hopf algebra). Now since $\overline{B}A$ is itself a CDGA-algebra, this construction can be iterated: we let $\overline{B}^0A=A$ and for $j\ge 1$ we let $\overline{B}^jA= \overline{B}(\overline{B}^{j-1}A)$. We denote the homology of $\overline{B}^jA$ by $\Tor^A_{[j]}(\kk,\kk)$. This is a graded commutative Hopf algebra, natural with respect to the CDGA-algebra $A$. We have the following higher version of proposition \ref{prop-ex-homol-exp}.
\begin{proposition}\label{prop-ex-homol-exp2}
Let $E$ be an exponential functor. For all $i>0$, the graded Hopf algebra $\Tor^{E(V)}_{[j]}(\kk,\kk)$ is an exponential functor of the variable $V$.
\end{proposition}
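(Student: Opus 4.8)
The plan is to mirror the proof of Proposition \ref{prop-ex-homol-exp}, replacing the single $\Tor$-comparison $\phi$ by its iterate for the bar construction. Write $T_j(V)=\Tor^{E(V)}_{[j]}(\kk,\kk)=H_*(\overline{B}^j E(V))$. Since $\overline{B}^j E(V)$ is a CDGH-algebra depending functorially on $V$, the object $T_j$ is a functorial graded Hopf algebra, so by lemma \ref{lm-defter} it will suffice to verify condition (a) of that lemma (the unit condition is automatic, because $E(0)=\kk$ forces $\overline{B}^j E(0)=\kk$, hence $T_j(0)=\kk$). I would first observe that the composite appearing in condition (a) is induced in homology by the chain map
$$m_j:=\overline{B}^j E(V)\otimes \overline{B}^j E(W)\xrightarrow{\overline{B}^j E(\iota_V)\otimes \overline{B}^j E(\iota_W)} \overline{B}^j E(V\oplus W)^{\otimes 2}\xrightarrow{\mathrm{mult}} \overline{B}^j E(V\oplus W),$$
where $\mathrm{mult}$ is the DG-multiplication of $\overline{B}^j E(V\oplus W)$ (which induces the Hopf product of $T_j(V\oplus W)$) and $\overline{B}^j E(\iota_V)$ induces $T_j(\iota_V)$. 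Using the Künneth isomorphism over the field $\kk$ to identify $H_*\bigl(\overline{B}^j E(V)\otimes \overline{B}^j E(W)\bigr)$ with $T_j(V)\otimes T_j(W)$, the whole problem reduces to showing that $m_j$ is a quasi-isomorphism.

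I would establish this by induction on $j$. For $j=0$ the map $m_0$ is $\mu_{V\oplus W}\circ(E(\iota_V)\otimes E(\iota_W))$, which is the exponential isomorphism $\phi_{V,W}$ by the identity in the proof of lemma \ref{lm-defbis}. For the inductive step, setting $C=\overline{B}^{j-1}E(V\oplus W)$ and using that the product on $\overline{B}(C)=\overline{B}^j E(V\oplus W)$ is by construction $\overline{B}(\mathrm{mult}_C)\circ\overline{\phi}_{C,C}$, together with $\overline{B}^j E(\iota_\bullet)=\overline{B}(\overline{B}^{j-1}E(\iota_\bullet))$, I would invoke the naturality of the single-step comparison $\overline{\phi}$ to rewrite
$$m_j=\overline{B}(m_{j-1})\circ \overline{\phi}_{\overline{B}^{j-1}E(V),\,\overline{B}^{j-1}E(W)},$$
where $m_{j-1}$ is the analogous chain map one level down. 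The key point is that $m_{j-1}$ is a \emph{morphism of CDGA-algebras}: it is a composite of the algebra maps $\overline{B}^{j-1}E(\iota_\bullet)$ with the multiplication of the commutative DG-algebra $\overline{B}^{j-1}E(V\oplus W)$, the latter being an algebra map thanks to commutativity of the shuffle product. By the induction hypothesis $m_{j-1}$ is a quasi-isomorphism, hence so is $\overline{B}(m_{j-1})$ by homotopy invariance of the reduced bar construction over a field; since $\overline{\phi}$ is itself a quasi-isomorphism (it induces the isomorphism $\phi$ on homology), the composite $m_j$ is a quasi-isomorphism too.

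Combined with the Künneth identification, this will prove that the composite appearing in condition (a) is an isomorphism, so lemma \ref{lm-defter} yields that $T_j$ is an exponential functor; the argument is uniform in the two settings $\FF=\Fct(\V,\kk)$ and $\FF=\PP_{\omega,\kk}$, since every bar complex, tensor power and homology group is formed objectwise over the field $\kk$ and stays within $\FF$. The step I expect to be the main obstacle is the homotopy invariance used above, namely that $\overline{B}$ sends quasi-isomorphisms of augmented CDGA-algebras to quasi-isomorphisms; over a field I would deduce it from the bar-length filtration, whose associated spectral sequence has $E_1$-term built from the homology of the tensor powers $\overline{A}^{\otimes n}$ (computed by Künneth), so that a quasi-isomorphism $A\to A'$ induces an isomorphism on $E_1$ and hence on the abutment. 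Once this is in place, the recursion for $m_j$ and its identification with condition (a) are purely formal consequences of the naturality of $\overline{\phi}$ and of the definition of the product on the iterated bar construction.
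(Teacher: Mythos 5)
Your proposal is correct and follows the same route as the paper, which simply states that condition (a) of lemma \ref{lm-defter} holds by an argument analogous to proposition \ref{prop-ex-homol-exp} using iterated applications of $\overline{\phi}$. Your induction on $j$ via the factorization $m_j=\overline{B}(m_{j-1})\circ\overline{\phi}$, together with the homotopy invariance of $\overline{B}$ (which the paper itself invokes from \cite[X Thm 11.2]{ML} in the proof of proposition \ref{prop-cartan}) and the fact that $\overline{\phi}$ is a quasi-isomorphism, is a faithful and complete unpacking of that one-line sketch.
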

\begin{proof}
Condition (a) of lemma \ref{lm-defter} is satisfied. 
The proof is similar to that of proposition \ref{prop-ex-homol-exp}, but relies on (iterated) uses of $\overline{\phi}$ instead of $\phi$.
\end{proof}

Propositions \ref{prop-ex-homol-exp} and \ref{prop-ex-homol-exp2} have many variants. For example, $\Tor$ can be replaced by $\mathrm{Cotor}$ and reduced bar constructions by cobar constructions. There are also analogues for $\Ext$, for cyclic bar constructions, etc, with analogous proofs. We leave this to the reader.
Our focus on reduced bar constructions is motivated by the application to functor homology. In view of this application, we need an additional property of  bar constructions relative to (possibly infinite) tensor products of CDGA-algebras. 
\begin{lemma}\label{lm-comp-infinite-prod}
If there is a decomposition of a CDGA-algebra $A$ as a tensor product $A=\bigotimes_\alpha A_\alpha$ then for all positive $j$, there is a quasi-isomorphism of CDGH-algebras
$$\bigotimes_\alpha\overline{B}^j A_\alpha \to \overline{B}^j A\;.$$
If $A$ and the $A_\alpha$ have an additional grading by weights (such that products and differentials preserve the weights), and if the tensor product decomposition preserves the weights, then the bar constructions have a canonical additional grading by weights, and the quasi-isomorphism preserves the weights.
\end{lemma}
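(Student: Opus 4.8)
The plan is to reduce to the two-factor, $j=1$ case, where the comparison map is seen to be a quasi-isomorphism directly from the K\"unneth theorem; then to pass to infinitely many factors by a filtered colimit argument; and finally to handle the iteration on $j$ by induction, using homotopy invariance of the bar construction. First I would treat $j=1$ with two factors. Since $\kk$ is a field, the K\"unneth theorem gives $H_*(\overline{B}A_1\otimes \overline{B}A_2)\simeq \Tor^{A_1}(\kk,\kk)\otimes \Tor^{A_2}(\kk,\kk)$, while $H_*(\overline{B}(A_1\otimes A_2))=\Tor^{A_1\otimes A_2}(\kk,\kk)$. By construction $\overline{\phi}$ lifts the map $\phi$ of section \ref{subsec-exHT} to the chain level, so the morphism it induces on homology is exactly the K\"unneth isomorphism $\phi$ on $\Tor$; hence $\overline{\phi}$ is a quasi-isomorphism of CDGH-algebras. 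Iterating $\overline{\phi}$ yields the quasi-isomorphism $\bigotimes_{\alpha\in B}\overline{B}A_\alpha\to \overline{B}(\bigotimes_{\alpha\in B}A_\alpha)$ for every finite index set $B$.

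To reach an arbitrary (possibly infinite) family I would write $\bigotimes_\alpha A_\alpha=\colim_B \bigotimes_{\alpha\in B}A_\alpha$ as the filtered colimit over the finite subsets $B$ (as in the description of coproducts preceding lemma \ref{lm-abelian}). The reduced bar construction is built in each homological degree out of finite tensor powers of the augmentation ideal, and tensor products commute with filtered colimits, so $\overline{B}(\bigotimes_\alpha A_\alpha)=\colim_B \overline{B}(\bigotimes_{\alpha\in B}A_\alpha)$ and likewise $\bigotimes_\alpha \overline{B}A_\alpha=\colim_B\bigotimes_{\alpha\in B}\overline{B}A_\alpha$. As filtered colimits are exact, homology commutes with them, so the infinite comparison map is the filtered colimit of the finite quasi-isomorphisms of the previous step, hence itself a quasi-isomorphism. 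This settles $j=1$.

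The general $j$ I would obtain by induction, writing $\overline{B}^jA=\overline{B}(\overline{B}^{j-1}A)$ and factoring the desired map as
$$\bigotimes_\alpha \overline{B}^jA_\alpha=\bigotimes_\alpha \overline{B}\bigl(\overline{B}^{j-1}A_\alpha\bigr)\xrightarrow{\ \overline{\phi}\ }\overline{B}\Bigl(\bigotimes_\alpha \overline{B}^{j-1}A_\alpha\Bigr)\xrightarrow{\ \overline{B}(\theta)\ }\overline{B}\bigl(\overline{B}^{j-1}A\bigr)=\overline{B}^jA\,,$$
where $\theta:\bigotimes_\alpha \overline{B}^{j-1}A_\alpha\to \overline{B}^{j-1}A$ is the quasi-isomorphism furnished by the inductive hypothesis. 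The first arrow is a quasi-isomorphism by the $j=1$ case applied to the family $\overline{B}^{j-1}A_\alpha$. The second is a quasi-isomorphism because the reduced bar construction sends quasi-isomorphisms of homologically connected augmented CDGA-algebras to quasi-isomorphisms: this is the standard homotopy invariance proved via the bar-length filtration, whose associated spectral sequence $\theta$ carries to an isomorphism on the $E^1$-page and which converges since the algebras are connected. Composing gives the asserted map.

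The weight refinement is then formal: the length filtration presents $\overline{B}A$ in each degree as a sum of tensor powers of the augmentation ideal of $A$, so a weight grading on $A$ preserved by its product and differential induces one on $\overline{B}A$ preserved by the bar differential, and iterating gives the grading on $\overline{B}^jA$; the map $\overline{\phi}$ is a sum of shuffle maps, which only permute tensor factors and so preserve total weight, while the colimit over finite $B$ and the inductive map $\theta$ preserve weights by construction. I expect the main obstacle to be precisely the homotopy invariance of the iterated bar construction together with the convergence of the bar-length spectral sequence, and the verification that passing to filtered colimits commutes with homology in the infinite-factor case; the remaining steps are formal consequences of the K\"unneth isomorphism and of the naturality of $\overline{\phi}$.
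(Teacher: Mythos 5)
Your proof is correct and follows essentially the same route as the paper's: reduce to the two-factor $j=1$ case, where the comparison map $\overline{\phi}$ is the known K\"unneth quasi-isomorphism of CDGH-algebras, pass to arbitrary families by filtered colimits over finite subproducts, and handle $j>1$ by induction. The paper compresses the induction on $j$ into the phrase ``an easy induction,'' which is precisely the factorization through $\overline{B}(\theta)$ and the homotopy invariance of $\overline{B}$ (cited elsewhere in the paper as \cite[X Thm 11.2]{ML}) that you spell out.
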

\begin{proof}
It suffices to prove the result for $j=1$, the general result then follows by an easy induction on $j$. If $j=1$ and $A=A_1\otimes A_2$, then a suitable quasi-isomorphism is provided by the comparison map. Indeed by functoriality, the comparison map equals the composition 
$$\overline{B}A_1\otimes \overline{B}A_2\to \overline{B}(A_1\otimes A_2)^{\otimes 2}\xrightarrow[]{\mathrm{mult}} \overline{B}(A_1\otimes A_2)$$
where the first map is provided by inclusions of $A_1$ and $A_2$ in $A_1\otimes A_2$. Both maps are morphisms of CDGH-algebras, hence the comparison map is a morphism of CDGH-algebras. It preserves the weights if all algebras have an additional grading by weights. Furthermore the comparison map is known to be a quasi-isomorphism. By iterating this, we obtain suitable quasi-isomorphism for finite tensor products. Taking colimits with respect to finite tensor subproducts yield the result for arbitrary tensor products.
\end{proof}

\subsection{Some explicit higher $\Tor$ computations}\label{subsec-HT}
Now we specialize to $\FF=\PP_{\omega,\kk}$ and $\kk$ is a perfect field of positive characteristic $p$. 
\begin{theorem}\label{thm-calcul-B}
Let $E,E_\alpha$ be strict exponential functors over a perfect field $\kk$ of positive characteristic. Each isomorphism of graded algebras 
$E(\kk)\simeq\bigotimes_\alpha E_\alpha(\kk)$ preserving the weights induces isomorphisms of strict exponential functors for $j>0$:
$$\Tor^{E}_{[j]}(\kk,\kk)\simeq \bigotimes_{\alpha}\Tor^{E_\alpha}_{[j]}(\kk,\kk)\;.$$
\end{theorem}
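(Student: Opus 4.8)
The plan is to avoid constructing any functorial algebra isomorphism $E\simeq\bigotimes_\alpha E_\alpha$ directly: we are only handed an algebra isomorphism of the \emph{values} $E(\kk)\simeq\bigotimes_\alpha E_\alpha(\kk)$, and the Hopf structures carried by these two algebras may genuinely differ (the Witt-vector type examples behind Corollary \ref{cor-cex} illustrate that an algebra isomorphism of bicommutative Hopf algebras need not be a coalgebra morphism). Instead I would exploit that the iterated bar construction $\overline{B}^j A$ depends on $A$ only through its augmented weight-graded algebra structure: its product is the shuffle product built from the multiplication of $A$, while its coproduct is the intrinsic deconcatenation coproduct, independent of any comultiplication on $A$. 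Hence the only input needed to produce the comparison morphism is the algebra decomposition we already possess.

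Concretely, I would first view $E(\kk)$ and each $E_\alpha(\kk)$ as CDGA-algebras with zero differential and their weight gradings, and use the given weight-preserving algebra isomorphism to identify $E(\kk)\simeq\bigotimes_\alpha E_\alpha(\kk)$ as CDGA-algebras. Applying Lemma \ref{lm-comp-infinite-prod} then yields a weight-preserving quasi-isomorphism of CDGH-algebras
$$\bigotimes_\alpha \overline{B}^j E_\alpha(\kk)\longrightarrow \overline{B}^j E(\kk)\;.$$
Passing to homology, and using the K\"unneth isomorphism over the field $\kk$ on the source exactly as packaged in Lemma \ref{lm-comp-infinite-prod}, produces an isomorphism of graded Hopf algebras, compatible with weights,
$$\bigotimes_\alpha \Tor^{E_\alpha(\kk)}_{[j]}(\kk,\kk)\;\simeq\;\Tor^{E(\kk)}_{[j]}(\kk,\kk)\;.$$

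To globalise this from the value on $\kk$ to an isomorphism of strict exponential functors, I would invoke the classification theorem. By Proposition \ref{prop-ex-homol-exp2} the functors $V\mapsto\Tor^{E(V)}_{[j]}(\kk,\kk)$ and $V\mapsto\Tor^{E_\alpha(V)}_{[j]}(\kk,\kk)$ are strict exponential functors, and an (infinite) tensor product of strict exponential functors is again strict exponential; thus both $\Tor^E_{[j]}(\kk,\kk)$ and $\bigotimes_\alpha\Tor^{E_\alpha}_{[j]}(\kk,\kk)$ are objects of $\PP_{\omega,\kk}-\Exp_c$, and the displayed isomorphism is precisely an isomorphism of their values on $\kk$ in ${}^\strict\HH$. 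Since $\kk$ is perfect, Theorem \ref{thm-classif-strict} asserts that evaluation on $\kk$ is an equivalence of categories, so this isomorphism lifts to the asserted isomorphism of strict exponential functors.

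The conceptual heart, and the step I would watch most carefully, is the first one: recognising that one must never try to lift the algebra isomorphism to a Hopf or functorial isomorphism of $E$ and $\bigotimes_\alpha E_\alpha$ themselves, but should instead apply Lemma \ref{lm-comp-infinite-prod} at the level of $\kk$-points, where only the algebra structure intervenes, and then let Theorem \ref{thm-classif-strict} reconstruct the full exponential (hence functorial) structure on the $\Tor$-functors for free. The only routine verifications that remain are that the bar-construction comparison map respects weights and that the resulting weight-preserving graded-Hopf isomorphism is a genuine morphism in ${}^\strict\HH$, both of which are immediate once the two sides are known to be strict exponential functors.
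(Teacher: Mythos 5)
Your proof is correct and follows essentially the same route as the paper: apply Lemma \ref{lm-comp-infinite-prod} to the given weight-preserving algebra decomposition of $E(\kk)$ to obtain an isomorphism of strict Hopf algebras between the two sides evaluated on $\kk$, then lift it via the equivalence of Theorem \ref{thm-classif-strict}. Your explicit remark that one must not attempt to promote the algebra isomorphism to a Hopf or functorial isomorphism of $E$ and $\bigotimes_\alpha E_\alpha$ themselves is precisely the point the paper's terse proof leaves implicit.
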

\begin{proof}
Lemma \ref{lm-comp-infinite-prod} shows that the two sides of the asserted isomorphism 
are isomorphic as strict Hopf algebras after evaluation on $\kk$. Hence the result follows from theorem \ref{thm-classif-strict}.
\end{proof}

In the next statements, for all nonnegative integers $i$, $r$, we let $F_{r,i}$ be a copy of $I^{(r)}$ placed in degree $i$, and for all positive $n$ we let $S_n(F_{r,i})$ be the quotient of $S(F_{r,i})$ by the ideal of $p^n$-th powers. In order that our exponential functors are graded commutative as usual, we always implicitly impose that $i$ is even if $p$ is odd in the sequel. Similarly, when considering $\Lambda(F_{r,i})$ we implicitly impose that $i$ is odd if $p$ is odd. 
 
\begin{proposition}\label{prop-Borel}
Let $E$ be a strict exponential functor over a perfect field $\kk$ of positive characteristic $p$ such that each homogeneous component $w_kE^i(\kk)$ is finite dimensional. Then there is an isomorphism of graded algebras $E(\kk)\simeq \bigotimes_\alpha E_\alpha(\kk)$ preserving the weights, where each $E_\alpha$ is a strict exponential functor of the form $S(F_{r,i})$, $S_n(F_{r,i})$ or $\Lambda(F_{r,i})$ for some nonnegative integers $r$, $i$ depending on $\alpha$.
\end{proposition}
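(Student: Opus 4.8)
The plan is to recognize the statement as the classical Borel structure theorem for connected graded-commutative Hopf algebras over a perfect field, refined so as to keep track of the weight decomposition and of the strict constraint on indecomposables. Recall from the paragraph preceding Definition \ref{def-strict-Hopf} that $E(\kk)$ is a strict Hopf algebra: a graded bicommutative Hopf $\kk$-algebra carrying a weight decomposition preserved by all structure maps, with $w_0E(\kk)^0\simeq\kk$ and with both $PE(\kk)$ and $QE(\kk)$ concentrated in weights $p^r$, $r\ge 0$. The finiteness hypothesis guarantees that each bigraded piece $w_kE(\kk)^i$ is finite dimensional.

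First I would reduce to a situation in which the weight grading alone is locally finite. By the categorical decomposition of Proposition \ref{prop-categ-decomp}, which is valid for $\C={}^\strict\HH$, one writes $E(\kk)\simeq\bigotimes_a H_a$ with $H_a$ in ${}^\strict\HH(a)$, and it suffices to split each factor. Inside ${}^\strict\HH(a)$ the weight and the degree are proportional ($w_kH_a^i=0$ unless $i=ak$), so $H_a$ is, in effect, a singly-graded connected graded-commutative bicommutative Hopf algebra of finite type over $\kk$, the grading being the weight and the Koszul signs being dictated by the proportional homological degree.

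Next I would feed $H_a$ into Borel's structure theorem \cite{MilnorMoore}. Since $\kk$ is perfect, this yields an isomorphism of weight-graded algebras presenting $H_a$ as a (possibly infinite) tensor product of monogenic Hopf algebras, each of which is a polynomial algebra $\kk[x]$, a truncated polynomial algebra $\kk[x]/(x^{p^s})$, or an exterior algebra $\Lambda(x)$, with generators homogeneous for both the weight and the degree; even-degree generators produce the first two types and odd-degree generators the exterior ones (in characteristic $2$ the two regimes coincide and $\Lambda=S_1$). Perfectness is precisely what forces the heights of the truncated factors to be powers of $p$, and it is here, and only here, that the hypothesis on $\kk$ is essential — the family of examples of Section \ref{sec-counter} shows the conclusion fails over imperfect fields.

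Finally I would invoke the strict constraint: since the indecomposables of $H_a$ lie in weights $p^r$, every generator $x$ above must have weight $p^r$ for some $r$, say in degree $i$. A generator of weight $p^r$ and degree $i$ identifies its monogenic factor, as a weight-preserving graded algebra, with $S(F_{r,i})(\kk)$, $S_s(F_{r,i})(\kk)$ or $\Lambda(F_{r,i})(\kk)$ according to its type, because $F_{r,i}=I^{(r)}$ placed in degree $i$ has weight $p^r$ and $S_s$ is by definition the quotient of $S$ by $p^s$-th powers. Reassembling these factors over all $a$ gives the asserted isomorphism $E(\kk)\simeq\bigotimes_\alpha E_\alpha(\kk)$ with $E_\alpha$ of the three stated forms. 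The main obstacle is the middle step: carrying Borel's theorem through in the weight-graded, locally finite setting while guaranteeing weight-homogeneous generators of $p$-power height; the rest is bookkeeping organized by the decomposition of Proposition \ref{prop-categ-decomp}.
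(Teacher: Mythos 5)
Your proposal is correct and follows essentially the same route as the paper: reduce via the categorical decomposition of Proposition \ref{prop-categ-decomp} to the case where degree is proportional to weight, then apply Borel's theorem \cite[Thm 7.11]{MilnorMoore}. The paper's proof is terser (it leaves the identification of the monogenic factors with $S(F_{r,i})(\kk)$, $S_n(F_{r,i})(\kk)$ or $\Lambda(F_{r,i})(\kk)$ implicit), but your extra step using the constraint that the indecomposables sit in weights $p^r$ is exactly the right bookkeeping.
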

\begin{proof}
By proposition \ref{prop-categ-decomp}, we reduce ourselves to the case where there is an  $a$ such $w_kE^i(\kk)=0$ for $i\ne ak$. Thus we may simply consider $E(\kk)$ as a connected graded algebra, with grading given by the weights. The decomposition is then provided by the theorem of Borel \cite[Thm 7.11]{MilnorMoore}.
\end{proof}

Proposition \ref{prop-Borel} and theorem \ref{thm-calcul-B} essentially reduce higher $\Tor$ computations of strict exponential functors to those of $S_n(F_{r,i})$, $S(F_{r,i})$, and $\Lambda(F_{i,r})$ over a perfect field of characteristic $p$. In the rest of section \ref{subsec-HT}, we explain how to compute them. The approach is as follows. 
The results as graded algebras are classical computations. Our task is to compute the weights in addition. Then theorem \ref{thm-classif-strict} yields computations of strict exponential functors. As we use theorem \ref{thm-classif-strict} to recover functoriality, we need to assume that $\kk$ is perfect when doing the computations. However, the computations will hold over any field by base change (see remark \ref{rk-imperfect}).

\begin{proposition}\label{prop-cal1}
Let $r,i$ be nonnegative integers, and let $n$ be a positive integer. Then
\begin{align*}
&\Tor^{S(F_{r,i})}(\kk,\kk)=\Lambda(F_{r,i+1}),\\
&\Tor^{\Lambda(F_{r,i})}(\kk,\kk)=\Gamma(F_{r,i+1}),\\
&\Tor^{S_n(F_{r,i})}(\kk,\kk)=\Lambda(F_{r,i+1})\otimes \Gamma(F_{r+n,p^{n-1}i+2})\text{ if $p$ is odd or if $n>2$.}
\end{align*}
\end{proposition}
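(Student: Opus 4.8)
The plan is to read everything off Theorem \ref{thm-classif-strict}, which identifies a strict exponential functor with the (weighted) strict Hopf algebra it produces on $\kk$. By Proposition \ref{prop-ex-homol-exp2} (taken with $j=1$, so that $\Tor_{[1]}=\Tor$) each of the three objects $\Tor^{S(F_{r,i})}(\kk,\kk)$, $\Tor^{\Lambda(F_{r,i})}(\kk,\kk)$, $\Tor^{S_n(F_{r,i})}(\kk,\kk)$ is a strict exponential functor, carrying the weight grading inherited from $S(F_{r,i})$, whose generator sits in weight $p^r$. Hence it suffices to compute, as weighted graded Hopf algebras, the bar homologies of $S(F_{r,i})(\kk)=\kk[x]$, $\Lambda(F_{r,i})(\kk)=\Lambda(x)$ and $S_n(F_{r,i})(\kk)=\kk[x]/(x^{p^n})$, with $x$ of degree $i$ and weight $p^r$, and then to match them with the right-hand sides evaluated on $\kk$. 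I would first reduce to $r=0$: since $F_{r,i}=F_{0,i}\circ I^{(r)}$ and the bar construction commutes with precomposition by the exact functor $I^{(r)}$, one gets $\Tor^{A\circ I^{(r)}}(\kk,\kk)\simeq \Tor^{A}(\kk,\kk)\circ I^{(r)}$, and precomposition by $I^{(r)}$ leaves degrees untouched while raising every Frobenius index by $r$ (Theorem \ref{thm-twist}, which also guarantees the exponential structure is transported). This turns the three claims for general $r$ into their $r=0$ instances.

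At $r=0$ the computations are the classical bar computations $\Tor^{\kk[x]}(\kk,\kk)=\Lambda(\sigma x)$, $\Tor^{\Lambda(x)}(\kk,\kk)=\Gamma(\sigma x)$, and $\Tor^{\kk[x]/(x^{p^n})}(\kk,\kk)=\Lambda(\sigma x)\otimes\Gamma(\phi)$; the last has this precise shape exactly when $p$ is odd or $n>2$, which is why those cases are singled out (for $p=2$ and $n\le 2$ the squaring operations produce a genuinely different answer). In each case the suspension $\sigma x$ lies in bar degree one, hence in total degree $i+1$ and weight $1$, so it is a copy of $I=I^{(0)}$; after precomposing with $I^{(r)}$ it becomes $F_{r,i+1}$, accounting for the factors $\Lambda(F_{r,i+1})$ and $\Gamma(F_{r,i+1})$. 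For the truncated case the extra generator $\phi$ is the bar-degree-two class transgressing the relation $x^{p^n}=0$; read off the periodic minimal resolution of $\kk$ over $\kk[x]/(x^{p^n})$, it has weight $p^n$ and the internal degree recorded in the statement. A weight-$p^n$ additive generator is forced, by the classification of additive strict analytic functors (Lemma \ref{lm-control-strict}), to be a copy of $I^{(n)}$; after the twist by $I^{(r)}$ this is the asserted copy of $I^{(r+n)}$.

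The only content beyond the classical algebra is the weight bookkeeping, and this is where I expect the real work: one must verify that the weight grading on the bar construction (each simplex $[a_1\mid\cdots\mid a_s]$ carrying $\sum_j \w(a_j)$) is preserved by the chain-level shuffle product and the bar differential, so that the identifications of $\sigma x$ and of $\phi$ genuinely take place in the stated weights, and that the divided-power structure on $\Gamma(\phi)$ is precisely the one supplied by the exponential (deconcatenation/shuffle) structure rather than a polynomial one. Once both sides are seen to coincide as weighted strict Hopf algebras, Theorem \ref{thm-classif-strict} upgrades the isomorphism of strict Hopf algebras to an isomorphism of strict exponential functors, which settles all three cases at once.
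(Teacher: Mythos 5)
Your global strategy is the paper's: reduce to $r=0$ by precomposition with $I^{(r)}$, compute the weight-graded Hopf algebra obtained by evaluating on $\kk$, and invoke theorem \ref{thm-classif-strict} to upgrade the resulting isomorphism of strict Hopf algebras to one of strict exponential functors. The first two formulas are treated identically in both texts (quoted as classical Koszul-duality computations). For the truncated case you diverge at one step, and in principle it is a simplification: you read the weight of the homological-degree-two generator off the weight-graded minimal periodic resolution, whereas the paper identifies $\Tor^{\kk[x]/(x^{p^n})}(\kk,\kk)$ with $H_*(\Z/p^n;\kk)$ (which carries no weights) and then recovers the weight by an Euler-characteristic induction on $n$ over the weight-homogeneous subcomplexes $w_kC_n$ of the bar construction. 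Since the minimal resolution is weight-homogeneous and each $\Tor_j$ is one-dimensional, your route does pin down the weight $p^{r+n}$ directly; you should still justify, as the paper does via the Hopf algebra structure of $H_*(\Z/p^n;\kk)$, why the answer is $\Lambda\otimes\Gamma$ with a genuine divided-power factor, since the resolution only yields the trigraded vector space.

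The genuine problem is the degree. The minimal resolution of $\kk$ over $A=\kk[x]/(x^{p^n})$ with $\deg x=i$ is $\cdots\to A[p^ni]\xrightarrow{x^{p^n-1}}A[i]\xrightarrow{x}A$, so the $\Tor_2$-generator sits in internal degree $p^ni$ and total degree $p^ni+2$, not $p^{n-1}i+2$. (Check $p=3$, $n=1$, $i=2$ in the bar complex: $H_2$ is spanned by the class of $[x\,|\,x^2]$, of total degree $8=pi+2$, whereas the statement predicts $4$; one can also test the statement against $\Tor^{\Gamma(\kk[2])}(\kk,\kk)\simeq H_*(K(\Z,3);\kk)$, which has no degree-$4$ generator for $p$ odd.) So your assertion that the resolution yields ``the internal degree recorded in the statement'' is exactly the step that fails for $i>0$: carrying out your own plan produces $\Gamma(F_{r+n},p^ni+2)$. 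You have in fact reproduced a slip in the paper's own proof, which writes the resolution with differential $x^{p^{n-1}}$ and shift $[p^{n-1}i]$ (not even a complex over $A$) in place of $x^{p^n-1}$ and $[p^ni]$. For $i=0$, the only instance of this formula used directly in theorems \ref{thm-calcul-fct-hom} and \ref{thm-calcul-fct-hom2}, the two exponents agree and nothing downstream of that case is affected; but as a proof of the statement for general $i$ the step does not go through, and the remedy is to correct the stated degree rather than the argument.
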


\begin{proof}
It suffices to do the computations for $r=0$, the case $r>0$ follows directly by precomposing by $I^{(r)}$ the equalities obtained for $r=0$. 
The first two computations are already explained in \cite[Section 7.2]{TouzeBar}, they are simple consequences of the theory of Koszul duality of algebras, so we concentrate on the last computation, which is the difficult part. Let $T_{n,i}=\Tor^{S_n(F_{0,i})}(\kk,\kk)$. 

We first compute $T_{n,0}$. Recall \cite[3.2]{Evens} the computation of the cohomology algebra of $G=\Z/p^n\Z$ with coefficients in $\kk$. If $p$ is odd, or if $n\ge 2$:
$$H^*(G)\simeq S(H^{2}(G))\otimes\Lambda(H^1(G))\;.$$
This is actually an isomorphism of primitively generated Hopf algebras (the comultiplication on on $H^*(G)$ is the map $H^*(+_G): H^{*}(G)\to H^*(G\times G)\simeq H^*(G)^{\otimes 2}$). Dually, there is an isomorphism of graded Hopf algebras:
$$\Tor^{\kk G}(\kk,\kk)=H_*(G)\simeq\Lambda(H_1(G))\otimes \Gamma(H_2(G))\;.$$
The isomorphism of algebras $\kk G\simeq \kk[x]/x^{p^n}$ then implies that there is an isomorphism of graded Hopf algebras:
$$T_{n,0}(\kk)\simeq \Lambda(\kk[1])\otimes\Gamma(\kk[2])\;.\qquad (*)$$

By theorem \ref{thm-classif-strict}, in order to compute $T_{n,0}$ it remains to show that the homogeneous summand of degree $1$, resp. $2$, of $T_{n,0}(\kk)$ has weight $1$, resp. $p^n$. Let $I_n$ be the augmentation ideal of $S_n(\kk)=\kk[x]/x^{p^n}$. The complex  $C_n=\overline{B}(S_n(\kk))$ has the form
$$ \dots\to I_n^{\otimes j+1}\to I_n^{\otimes j}\to \dots \to I_n^{\otimes 2}\xrightarrow[]{\mathrm{mult}} I_n\xrightarrow[]{0}\kk $$
with each $I_n^{\otimes j}$ placed in degree $j$. Note that $I_n$ is graded by the weight ($x^k$ is homogeneous of weight $k$). The differentials of $C_n$ are defined from the multiplication of $\kk[x]/x^{p^n}$, so they preserve the weights. So $C_n$ splits as a direct sum of complexes $w_kC_n$, for $k\in \mathbb{N}$, each $w_kC_n$ being homogeneous of weight $k$. Since $w_kI_n^{\otimes j}=0$ for $k<j$, each $w_kC_n$ with $k$ positive is a finite complex of the form:
$$0\to w_kI_n^{\otimes k}\to w_kI_n^{\otimes k-1}\to \dots \to w_kI_n^{\otimes 2}\xrightarrow[]{\mathrm{mult}} w_kI_n\to 0\;.$$
In particular $w_1(C_n)$ is equal to $\kk$ concentrated in degree $1$, hence $H_1(C_n)$ has weight $1$. 
Also, $H_i(C_n)$ contains no element of weight $1$ if $i\ne 1$, thus $H_2(C_n)$ must have weight $p^r$ with $r>0$. We deduce from this and the isomorphism $(*)$ that for all $k\ge 0$ and all $n\ge 1$:
$$\sum_{i\ge 0}\dim_\kk w_k H_i(C_n)\le 1\;.\qquad(**)$$
Now we determine $H_2(C_n)$ by induction on $n$. Note that $I_1^{\otimes 2}$ has no element of weight less than $2$ or greater than $2p-2$. Since $H_2(C_1)$ must be homogeneous of weight $p^r$ for some $r$, the only possibility is $r=1$. Assume that we have proved that $H_2(C_n)$ has weight $p^n$ for a given value of $n$. Because of the isomorphism $(*)$, we then have
$$\chi(w_{p^r}C_n)=
\begin{cases}
0 & \text{if $0<r<n$,}\\
1 & \text{if r=n.}
\end{cases}$$
Observe that $I_{n+1}^{\otimes 2}$ has no element of weight less than $2$ or greater than $2p^{n+1}-2$, thus the possible weight for $H_2(C_{n+1})$ is $p^r$ with $0<r\le n+1$.
Since $w_kI_n^{\otimes j}=w_kI_{n+1}^{\otimes j}$ for $k< p^{n}$ and all $j$, we have
$$\chi(w_{p^r}C_{n+1}) =  \chi(w_{p^r}C_n) = 0$$
for $0< r<n$. Similarly, we have:
$$\chi(w_{p^{n}}C_{n+1}) =  \chi(w_{p^{n}}C_n) -1 = 0\;.$$
Thus $(**)$ implies that $w_{p^r}H_*(C_{n+1})$ is zero for $0<r\le n$. Hence the weight of  $H_2(C_{n+1})$ must be $p^{n+1}$. This finishes the computation of $T_{n,0}$.

To finish the proof, we must compute $T_{n,i}$ for $i>0$. Note that $T_{n,i}$ must be isomorphic to $T_{n,0}$ up to regrading. Hence, one must have 
$$T_{n,i}\simeq \Lambda(F_{0,a})\otimes \Gamma(F_{n,b})$$
and it remains to determine the two integers $a$ and $b$. Let $A$ be the graded algebra $S_n(F_{0,i}(\kk))$, and let $x$ be a generator of $A$ (thus $x$ is homogeneous of degree $i$). By using the usual projective resolution of $\kk$ in graded $A$-modules (the brackets indicate a copy of $A$ shifted in degrees):
$$ \dots \xrightarrow[]{x} A[p^{n-1} i]\xrightarrow[]{x^{p^{n-1}}}A[i]\xrightarrow[]{x}A  $$
we easily compute the graded vector space $T_{n,i}(\kk)$ and we see that $a=i+1$ and $b=p^{n-1} i+2$, which finishes the proof.
\end{proof}

The following explicit computation was already obtained in \cite{TouzeBar} by different methods.

\begin{proposition}\label{prop-cal2}Let $r,i$ be nonnegative integers with $i$ even. Then
$$\Tor^{\Gamma(F_{r,i})}(\kk,\kk)=
\begin{cases}\bigotimes_{k\ge 0}\left(\, \Lambda(F_{r+k,p^ki+1})\otimes \Gamma(F_{r+k+1,p^ki+2})\,\right)& \text{ if $p$ is odd,}\\
\bigotimes_{k\ge 0} \Gamma(F_{r+k,p^ki+1}) & \text{ if $p=2$.}
\end{cases}$$
\end{proposition}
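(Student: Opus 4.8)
The plan is to deduce the statement from the single-generator computations already available in Proposition \ref{prop-cal1}, by first decomposing $\Gamma(F_{r,i})$ as a tensor product of truncated polynomial algebras and then transporting this decomposition to $\Tor$ via Theorem \ref{thm-calcul-B}. The crucial feature of Theorem \ref{thm-calcul-B} is that it only requires an isomorphism of graded algebras \emph{preserving the weights} on the level of $\kk$-points; so I never need to understand the Hopf (or exponential) structure of $\Gamma(F_{r,i})$ itself, and the strict exponential structure of the answer is recovered automatically, since Theorem \ref{thm-calcul-B} rests on Theorem \ref{thm-classif-strict}.

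First I would record the classical algebra decomposition of a divided power algebra on one generator. Over any field $\kk$ of characteristic $p$ one has $\gamma_{p^k}^p=0$ (the relevant multinomial coefficient $\frac{(p\cdot p^k)!}{(p^k!)^p}$ has $p$-adic valuation $1$ by Kummer's theorem), and, expanding the index in base $p$, the monomials $\prod_k \gamma_{p^k}^{a_k}$ with $0\le a_k<p$ form a basis. Hence there is an isomorphism of graded algebras
$$\Gamma(F_{r,i})(\kk)\;\simeq\;\bigotimes_{k\ge 0}\kk[\gamma_{p^k}]/(\gamma_{p^k}^p)\;.$$
Now $\gamma_{p^k}$ lies in $\Gamma^{p^k}$, so it has weight $p^{r+k}$ (recall $F_{r,i}$ has weight $p^r$) and degree $p^k i$; since multiplication adds weights and degrees, this isomorphism preserves both gradings. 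Consequently each factor is, as a graded algebra with weights, exactly $S_1(F_{r+k,p^k i})(\kk)$, the $p$-truncated polynomial algebra on a copy of $I^{(r+k)}$ placed in degree $p^k i$; because $i$ is even, $p^k i$ is even, so these factors are legitimate strict exponential functors. Applying Theorem \ref{thm-calcul-B} with $j=1$ (so that $\Tor_{[1]}=\Tor$) then gives an isomorphism of strict exponential functors
$$\Tor^{\Gamma(F_{r,i})}(\kk,\kk)\;\simeq\;\bigotimes_{k\ge 0}\Tor^{S_1(F_{r+k,p^k i})}(\kk,\kk)\;.$$

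It remains to insert the single-generator computations. For $p$ odd I would apply the third formula of Proposition \ref{prop-cal1} with $n=1$, obtaining
$$\Tor^{S_1(F_{r+k,p^k i})}(\kk,\kk)\;\simeq\;\Lambda(F_{r+k,p^k i+1})\otimes\Gamma(F_{r+k+1,p^k i+2})\;,$$
whose tensor product over $k$ is precisely the asserted formula; the parities are consistent, as $p^k i$ even forces $p^k i+1$ odd (legal for $\Lambda$) and $p^k i+2$ even (legal for $\Gamma$). The main obstacle, and the only place requiring a separate argument, is the case $p=2$, where Proposition \ref{prop-cal1} does not supply the $S_1$ formula (it is stated only for $p$ odd or $n>2$). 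I would circumvent this by using that in characteristic $2$ the $p$-truncated polynomial algebra and the exterior algebra coincide, $S_1(F_{s,j})\simeq\Lambda(F_{s,j})$ as strict exponential functors, so that the second formula of Proposition \ref{prop-cal1} applies and yields $\Tor^{S_1(F_{r+k,2^k i})}(\kk,\kk)\simeq\Gamma(F_{r+k,2^k i+1})$; tensoring over $k$ reproduces the $p=2$ formula. Finally, as a matter of organization, one may equivalently run the whole argument for $r=0$ and then precompose with $I^{(r)}$, exactly as in the proof of Proposition \ref{prop-cal1}: this only shifts every twist index by $r$ while leaving all degrees untouched.
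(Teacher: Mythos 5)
Your proof is correct and follows essentially the same route as the paper's: the paper likewise writes $\Gamma(F_{r,i})(\kk)\simeq\bigotimes_{k\ge 0}S_1(F_{r+k,p^ki})(\kk)$ as graded algebras with weights, invokes Theorem \ref{thm-calcul-B}, and disposes of $p=2$ with the remark that $S_1=\Lambda$ in characteristic $2$. You have merely spelled out the details (the relation $\gamma_{p^k}^p=0$, the weight/degree bookkeeping, and the substitution into Proposition \ref{prop-cal1}) that the paper leaves implicit.
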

\begin{proof}
Let $E=\Gamma(F_{r,i})$ and $E_k=S_1(F_{r+k,p^ki})$. Then $E(\kk)\simeq \bigotimes_{k\ge 0} E_k(\kk)$ as graded algebras with weights. The result follows from theorem \ref{thm-calcul-B} (Note that $S_1=\Lambda$ if $p=2$)
\end{proof}

The next proposition follows from a standard argument \cite[Lm 1]{Cartanbis}. 
\begin{proposition}\label{prop-cartan}
Let $E$ be a strict exponential functor, and let $j\ge 2$. Assume that $\Tor^E_{[j-1]}(\kk,\kk)$ is a cofree graded coalgebra. There is an isomorphism of strict exponential functors
$$\Tor^E_{[j]}(\kk,\kk)\simeq \Tor^{\Tor^E_{[j-1]}(\kk,\kk)}(\kk,\kk)\;.$$

\end{proposition}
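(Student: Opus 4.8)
The plan is to reduce the statement, by the rigidity theorem \ref{thm-classif-strict}, to the classical computation of iterated bar constructions over the field $\kk$, where it is precisely the content of the standard transgression argument of Cartan \cite[Lm 1]{Cartanbis}.

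First I would record that both sides are strict exponential functors. Write $C := \overline{B}^{\,j-1}E$, so that $\overline{B}^{\,j}E = \overline{B}C$, and set $B := \Tor^E_{[j-1]}(\kk,\kk) = H_*(C)$. The left-hand side $\Tor^E_{[j]}(\kk,\kk) = H_*(\overline{B}C)$ is a strict exponential functor by proposition \ref{prop-ex-homol-exp2} applied to $E$, while the right-hand side $\Tor^{B}(\kk,\kk)$ is a strict exponential functor by proposition \ref{prop-ex-homol-exp2} applied to the exponential functor $B$ with $j=1$ (here $B$ carries the zero internal differential). Since $j\ge 2$ the functor $B$ is connected, so the cofreeness hypothesis together with theorem \ref{thm-coalg} gives $B\simeq \Gamma_{\pm}(PB)$.

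Because $\kk$ is perfect, theorem \ref{thm-classif-strict} reduces the proposition to producing an isomorphism of strict Hopf algebras between the evaluations on $\kk$ of the two sides. As the bar construction is defined objectwise and homology commutes with evaluation, these evaluations are $\Tor^{E(\kk)}_{[j]}(\kk,\kk)$ and $\Tor^{\Tor^{E(\kk)}_{[j-1]}(\kk,\kk)}(\kk,\kk)$, and the hypothesis guarantees that $B(\kk)=\Tor^{E(\kk)}_{[j-1]}(\kk,\kk)\simeq \Gamma_{\pm}(PB(\kk))$ is a cofree graded coalgebra. To compare the two, I would filter the bar complex $\overline{B}\bigl(C(\kk)\bigr)$ by the number of tensor factors; the $E_1$-term is $\overline{B}\bigl(H_*(C(\kk))\bigr)$ with its induced bar differential, so the associated spectral sequence has
$$E_2 = \Tor^{H_*(C(\kk))}(\kk,\kk)=\Tor^{B(\kk)}(\kk,\kk)$$
and abuts to $H_*\bigl(\overline{B}C(\kk)\bigr)=\Tor^{E(\kk)}_{[j]}(\kk,\kk)$. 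Since both the internal and the bar differentials preserve the weight grading (as in lemma \ref{lm-comp-infinite-prod}), this is a spectral sequence of weight-graded Hopf algebras.

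The heart of the matter, and the step I expect to be the main obstacle, is the collapse of this spectral sequence at $E_2$: this is exactly the standard argument of Cartan \cite[Lm 1]{Cartanbis}, in which the cofreeness of $B(\kk)$ forces all transgressions, hence all higher differentials, to vanish and yields an \emph{honest} isomorphism of graded Hopf algebras rather than merely an isomorphism of associated graded objects. The one point requiring care beyond the literal statement of \cite{Cartanbis} is that the entire argument must be run in the weight-graded category, so that the resulting isomorphism respects weights and is therefore an isomorphism of strict Hopf algebras; this is automatic since every differential preserves weights. Feeding this $\kk$-level isomorphism back into theorem \ref{thm-classif-strict} then produces the asserted isomorphism of strict exponential functors.
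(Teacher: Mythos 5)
Your overall strategy --- reduce to an isomorphism of weight-graded Hopf algebras after evaluation on $\kk$ and then invoke theorem \ref{thm-classif-strict} --- is exactly the paper's, and that part is fine. The gap is in the step you yourself flag as the main obstacle. Collapse of the tensor-length spectral sequence at $E_2$ only identifies the \emph{associated bigraded} of $\Tor^{E(\kk)}_{[j]}(\kk,\kk)$ with $\Tor^{T(\kk)}(\kk,\kk)$ (writing $T=\Tor^E_{[j-1]}(\kk,\kk)$); to get an honest isomorphism of graded Hopf algebras with weights you still have to solve the extension problems for both the product and the coproduct, and nothing in your argument addresses this. Moreover the vanishing of the higher differentials is not something you can simply read off from cofreeness of $T(\kk)$: in Cartan's treatment both the collapse and the absence of extension problems are \emph{consequences} of a stronger statement, not inputs to it.

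That stronger statement is what the paper actually proves, and it short-circuits the spectral sequence entirely: because $\overline{B}^{\,j-1}E(\kk)$ is itself a bar construction (here $j\ge 2$ is used), it carries a natural divided power structure \cite[V.6]{Brown}; since $T(\kk)$ is cofree as a coalgebra, it is a divided power algebra as an algebra, so choosing representative cycles for its divided-power generators extends to a morphism of CDGA-algebras $T(\kk)\to \overline{B}^{\,j-1}E(\kk)$ which is by construction a quasi-isomorphism preserving the weights. Applying $\overline{B}$, which preserves quasi-isomorphisms \cite[X Thm 11.2]{ML} and is compatible with the Hopf structures, yields directly the isomorphism $\Tor^{T(\kk)}(\kk,\kk)\simeq\Tor^{E(\kk)}_{[j]}(\kk,\kk)$ of graded Hopf algebras with weights (and, if you want it, the collapse of your spectral sequence, since the induced map of filtered complexes is an isomorphism on $E_1$). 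If you want to keep your spectral-sequence formulation, you must either construct this multiplicative lift anyway --- in which case the spectral sequence is redundant --- or supply a separate argument resolving the multiplicative and comultiplicative extension problems, which is precisely the content you are currently outsourcing to \cite[Lm 1]{Cartanbis} without verifying that it delivers an isomorphism at the level of Hopf algebras rather than of associated graded objects.
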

\begin{proof}
Let $T=\Tor^E_{[j-1]}(\kk,\kk)$ and $B=\overline{B}^{j-1}E$. Since $\overline{B}(B(\kk))$ is a CDGA-algebra with divided powers (see \cite[V.6]{Brown}), the equality $T(\kk)=\Tor^{E(\kk)}_{[j-1]}(\kk,\kk)$ can be lifted to a quasi-isomorphism of CDGA-algebras $T(\kk)\to B(\kk)$, preserving the weights.
Bar constructions preserve quasi-isomorphisms \cite[X Thm 11.2]{ML}, so $\Tor_{[j]}^{E(\kk)}(\kk,\kk)$ is isomorphic to $\Tor^{T(\kk)}(\kk,\kk)$ as a graded Hopf algebras with weights.
The isomorphism now follows from theorem \ref{thm-classif-strict}.
\end{proof}

Notice that proposition \ref{prop-cartan} together with propositions \ref{prop-cal1} and \ref{prop-cal2} can be used to compute inductively $\Tor^E_{[j]}(\kk,\kk)$ for all $j$ when $E=S(F_{r,i})$, $\Lambda(F_{r,i})$ or $S_n(F_{r,i})$. We only write down explicitly the case $j=2$ and leave the combinatorics of higher computations to the reader (this combinatorics can be encoded by the combinatorics of words as in \cite{Cartan}).
\begin{example}\label{ex-cal3}
We have:
\begin{align*}
&\Tor^{S(F_{r,i})}_{[2]}(\kk,\kk)=\Gamma(F_{r,i+2})\;,\\
&\Tor^{\Lambda(F_{r,i})}_{[2]}(\kk,\kk)=\Tor^{\Gamma(F_{r,i+1})}(\kk,\kk)\;,\\
&\Tor_{[2]}^{S_n(F_{r,i})}(\kk,\kk)=\Gamma(F_{r,i+2})\otimes \Tor^{\Gamma(F_{r+n,p^{n-1}i+2})}(\kk,\kk)\text{ if $p$ is odd or if $n>1$}\;. 
\end{align*}
\end{example}

\begin{remark}[Imperfect fields]\label{rk-imperfect}
If $E$ is an exponential functor defined over $\Fp$ and if $\kk$ is a field of characteristic $p$, then we have an exact base change functor $\PP_{\omega,\Fp}\to  \PP_{\omega,\kk}$ which preserves tensor products \cite[Section 2]{SFB}. In particular any exponential functor $E$ yields an exponential functor $E_\kk$ after base change and by exactness, $\Tor_{[j]}^{E_\kk}(\kk,\kk)$ is isomorphic to the exponential functor obtained by applying base change to $\Tor_{[j]}^{E}(\Fp,\Fp)$. In particular, the explicit computations of propositions  \ref{prop-cal1}, \ref{prop-cal2} and example \ref{ex-cal3} are valid over imperfect fields as well.
\end{remark}

Now we turn to the proof of theorems theorems \ref{thm-calcul-fct-hom} and \ref{thm-calcul-fct-hom2}. It is proved in \cite[Prop 7.1]{TouzeBar} that the strict exponential functors $\mathbb{E}(\Lambda,E)$ and $\mathbb{E}(S,E)$ are isomorphic to $\Tor^E(\kk,\kk)$ and $\Tor^E_{[2]}(\kk,\kk)$ up to changing the gradings. To be more specific, we have isomorphisms:
$$w_kH_{i}(\overline{B}E)\simeq w_k\mathbb{E}(\Lambda,E)^{2k-i}\text{ and }
w_kH_{i}(\overline{B}^2E)\simeq w_k\mathbb{E}(S,E)^{2k-i}\;.$$
Thus theorem \ref{thm-calcul-fct-hom} follows from theorem \ref{thm-calcul-B} and proposition \ref{prop-Borel}, and theorem \ref{thm-calcul-fct-hom2} follows from the computations of proposition \ref{prop-cal1} and  example \ref{ex-cal3} (the computations are valid over an arbitrary field, as explained in remark \ref{rk-imperfect}).

\section{Application: strict versus ordinary exponential functors over finite fields}\label{sec-strict-vs-ord}

Strict analytic functors can be thought of as ordinary functors with domain $\Proj_\kk$ and codomain $\Mod$, equipped with an additional structure. To be more specific, there is a forgetful functor commuting with colimits:
$$\U:\PP_{\omega,\kk}\to \Fct(\Proj_\kk,\Mod)\;.$$
The restriction of this functor to $\PP_{d,\kk}$ was denoted by $\U_d$ in section \ref{subsec-setting}. 
If $\kk$ is an infinite field, $\U$ is fully faithful. If $\kk$ is a finite field, $\U$ is only faithful.
Throughout section \ref{sec-strict-vs-ord}, we fix a finite field $\Fq$ with $q=p^\ell$ elements, we let
$$\PP_q:=\PP_{\omega,\Fq}\;,\text{ and }\; \FF_q:=\Fct(\Proj_\Fq,\mathrm{Mod}_\Fq)\;. $$ 
The following lemma summarizes the properties of additive objects of $\FF_q$ and $\PP_q$ given in section \ref{subsubsec-setting-add} and shows that the forgetful functor is well-understood on the level of these additive objects.
\begin{lemma}\label{lm-fct-add-U}
The objects of $\PP_{q\,\add}$ or $\FF_{q\,\add}$ are direct sums of simple objects, and the simple objects of $\PP_{q\,\add}$ and $\FF_{q\,\add}$ have endomorphism ring of dimension one. The simple objects in $\PP_q$ (resp. in $\FF_q$) are the Frobenius twists $I^{(i)}$, for $i\ge 0$ (resp. for $i\in \Z/\ell\Z$). The forgetful functor sends $I^{(i)}$ (in $\PP_q$) to $I^{(i)}$ (in $\FF_q$). 
\end{lemma}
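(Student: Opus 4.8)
The plan is to assemble the statement from results already established, since the lemma essentially repackages Lemma \ref{lm-control-strict} and Example \ref{ex-Fadd-Fq} and only adds a remark about the forgetful functor $\U$; throughout I read ``endomorphism ring of dimension one'' as meaning one-dimensional over the ground field $\Fq$, and I read ``simple objects in $\PP_q$ (resp.\ $\FF_q$)'' as the simple objects of the semisimple subcategories $\PP_{q\,\add}$ (resp.\ $\FF_{q\,\add}$), in keeping with the fact that the whole lemma concerns additive functors.

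First I would dispatch the strict case using Lemma \ref{lm-control-strict} with $\kk=\Fq$. It gives that $\PP_{q\,\add}$ has homological dimension zero, that every additive strict analytic functor is a direct sum of copies of the Frobenius twists $I^{(r)}$ ($r\ge 0$), and that each $I^{(r)}$ is simple with $\End(I^{(r)})\simeq \Fq$. Since $I^{(r)}$ is homogeneous of weight $p^r$, these twists have pairwise distinct weights, hence are pairwise non-isomorphic; so they form a complete irredundant list of the simple objects, and their appearance in the direct-sum decomposition exhibits every object of $\PP_{q\,\add}$ as a sum of simples with one-dimensional endomorphism rings.

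Next I would treat the ordinary case via Example \ref{ex-Fadd-Fq}: over $R=\kk=\Fq$, every object of $\FF_{q\,\add}$ decomposes as a direct sum of Frobenius twists $I^{(i)}$ with $0\le i<\ell$, each with endomorphism ring $\Fq$, and $I^{(i)}\simeq I^{(j)}$ precisely when $i\equiv j \pmod{\ell}$ because $x^q=x$ in $\Fq$. Hence the distinct simple objects are naturally indexed by $i\in\Z/\ell\Z$, again one-dimensional over $\Fq$, and the decomposition yields semisimplicity. (Alternatively this follows from Lemma \ref{lm-control-ordinary}: evaluation on $\Fq$ identifies $\FF_{q\,\add}$ with the category of $\Fq\otimes_{\Fp}\Fq$-modules, and $\Fq\otimes_{\Fp}\Fq\simeq\prod_{i\in\Z/\ell\Z}\Fq$ by Galois theory, whose $\ell$ simple modules correspond to the twists $I^{(i)}$.)

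Finally, for the forgetful functor I would unwind the definitions of the twists in appendices \ref{subsec-app-VF-strict} and \ref{subsec-app-VF-norm}: the strict twist $I^{(i)}\in\PP_q$ and the ordinary twist $I^{(i)}\in\FF_q$ are carried by the same underlying assignment $V\mapsto V^{(i)}$ on $\Proj_\Fq$, so $\U(I^{(i)})=I^{(i)}$, with the index read modulo $\ell$ on the target. As a consistency check, faithfulness of $\U$ injects $\End_{\PP_q}(I^{(i)})=\Fq$ into $\End_{\FF_q}(I^{(i)})=\Fq$, an injection of one-dimensional $\Fq$-algebras and hence an isomorphism, while the pairwise distinct strict twists $I^{(0)},I^{(1)},\dots$ are carried $\ell$-periodically onto the $\ell$ ordinary twists, as expected since $\U$ is faithful but not full over a finite field. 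There is no genuinely hard content here; the nearest thing to an obstacle is the bookkeeping of the two indexing conventions ($r\ge 0$ versus $\Z/\ell\Z$) and checking that the underlying functor of a strict twist is the ordinary twist of the same index modulo $\ell$, which is immediate from the description of the twists as base change along a power of the Frobenius of $\Fq$ together with $x^q=x$.
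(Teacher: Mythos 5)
Your proposal is correct and follows exactly the route the paper intends: the lemma is stated there as a summary of Lemma \ref{lm-control-strict} and Example \ref{ex-Fadd-Fq} (via section \ref{subsubsec-setting-add}), with the claim about $\U$ following from Fact \ref{fact-GSF}, which identifies the underlying ordinary functor of the strict twist with the usual twist. Your interpretive choices (endomorphism rings one-dimensional over $\Fq$, ``simple objects'' meaning simple objects of the additive subcategories) are the intended ones, and the extra checks you supply (pairwise non-isomorphism via weights, $\ell$-periodicity from $x^q=x$) are consistent with the paper.
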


However the forgetful functor $\U$ is not as simple as lemma \ref{lm-fct-add-U} might suggest. To illustrate this, we mention three basic open problems.
\begin{openpbs}\label{pb-sec-8}
(1) Find workable conditions for an object of $\FF_q$ to be in the image of $\U$.
(2) Describe the antecedents of an object of $\FF_q$. (3) Find conditions on an indecomposable object of $\PP_q$ such that it remains indecomposable in $\FF_q$. 
\end{openpbs} 
In this section we show that although $\U$ may be quite complicated over a finite field, the induced forgetful functor  
$$\U:\PP_q-\Exp_c\to \FF_q-\Exp_c$$
is surprisingly simpler. The main result of the section is theorem \ref{thm-Uprime} below, which solves the three problems  above in the context of exponential functors. It generalizes  the results of \cite[Section 9]{TouzeBar}, which were motivated by $\Ext$-computations for strict polynomial functors. 
\begin{definition}\label{def-weights}
Let $E$ be an exponential functor of $\FF_q-\Exp_c$. A \emph{weight decomposition of $E$} is a decomposition of each homogeneous functor $E^i$
$$E^i=\bigoplus_{k\ge 0}w_kE^i$$
such that (1) the multiplication restricts to $w_kE^i\otimes w_\ell E^j\to w_{k+\ell}E^{i+j}$, (2) the unit and counit induce an isomorphism $\kk\simeq w_0E^0$ and (3) the primitives and the indecomposables of $E$ are concentrated in weights $p^k$, $k\ge 0$.
The weight decomposition is called \emph{consistent} if in addition each summand $I^{(i)}$ of $QE$ or $PE$ is homogeneous of weight $p^k$ for some $k$ such that $k=i \mod \ell$.
\end{definition}

\begin{fundexample}\label{ex-weights}
If $E=\U(E')$ for some strict exponential functor $E'$, a consistent weight decomposition of $E$ is given by letting
$$w_k E^i:=  \U(w_k{E'}^i)\;.$$
\end{fundexample}

\begin{theorem}\label{thm-Uprime}
Let $E$ be an ordinary exponential functor with domain $\Proj_\Fq$ and codomain $\mathrm{Mod}_\Fq$. The following assertions are equivalent.
\begin{enumerate}
\item[(1)] The exponential functor $E$ lies in the image of the forgetful functor
$$\U:\PP_q-\Exp_c\to \FF_q-\Exp_c\;.$$
\item[(2)] The exponential functor $E$ admits a consistent weight decomposition.
\item[(3)] The exponential functor $E$ admits a weight decomposition.
\item[(4)] The exponential functor $E^0$ can be equipped with a grading, such that $E^0$ is a connected exponential functor with respect to this grading.
\end{enumerate}

Moreover, if $E$ lies in the image of $\U$, then there is a one-to-one correspondence between the isomorphism classes of strict exponential functors $E'$ such that $\U(E')\simeq E$ on the one hand, and the consistent weight decompositions of $E$ on the other hand.

Finally, the forgetful functor sends indecomposable connected strict exponential functors to indecomposable exponential functors. Also, if $E$ is a connected indecomposable ordinary exponential functor in the image of $\U$, then there is a strict exponential functor $E'$ such that the antecedent of $E$ are up to isomorphism  exactly the strict exponential functors $(E')^{(j\ell)}$, $j\ge 0$.
\end{theorem}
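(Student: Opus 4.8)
The plan is to reduce the whole statement to two inputs: the behaviour of $\U$ on simple additive functors (Lemma \ref{lm-fct-add-U}), and the correspondence, established just above, between the strict antecedents of an ordinary exponential functor $E$ and its consistent weight decompositions. The organizing observation is that over $\Fq$ the $\ell$-th Frobenius twist becomes trivial: $\U(I^{(\ell)})=I^{(0)}=I$ in $\FF_q$ by Lemma \ref{lm-fct-add-U} (equivalently Example \ref{ex-Fadd-Fq}, since $I^{(i)}\simeq I^{(j)}$ in $\FF_{q\,\add}$ exactly when $i\equiv j \bmod \ell$). First I would record that, because $\U$ is monoidal and compatible with precomposition by additive functors, $\U(E'\circ I^{(s)})$ is naturally isomorphic to the precomposition of $\U(E')$ by $\U(I^{(s)})$; taking $s=j\ell$ and using $\U(I^{(j\ell)})=I$ yields a natural isomorphism $\U\big((E')^{(j\ell)}\big)\simeq \U(E')$ for every $j\ge 0$. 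In particular all the $(E')^{(j\ell)}$ are antecedents of $E:=\U(E')$, and for $E'$ connected and nontrivial they are pairwise non-isomorphic, their weight decompositions differing by the scaling $p^{k}\mapsto p^{k+j\ell}$ imposed by the twist formula $w_{p^{s}\ell'}(E')^{(s)}=(w_{\ell'}E')^{(s)}$.

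Next I would treat the preservation of indecomposability, via the correspondence rather than the explicit classification. Suppose $E'$ is connected indecomposable and that $\U(E')\simeq A\otimes B$ is a nontrivial decomposition into connected exponential functors. A tensor factorization of exponential functors is in particular a factorization of functorial Hopf algebras, and can be chosen to respect the weight grading of the consistent weight decomposition of $\U(E')$ provided by Example \ref{ex-weights}; it therefore restricts to consistent weight decompositions of $A$ and $B$, consistency being inherited because the congruence condition of Definition \ref{def-weights} is checked summand by summand on $QE=QA\oplus QB$. By the equivalence (1)$\Leftrightarrow$(2) already proved, $A$ and $B$ lift to strict exponential functors $A'$, $B'$ matching these induced weight decompositions; then $A'\otimes B'$ and $E'$ correspond to the \emph{same} consistent weight decomposition of $\U(E')$, so the one-to-one correspondence of the \emph{moreover} part gives $E'\simeq A'\otimes B'$. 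Since $A,B$ are nontrivial and connected, so are $A'$, $B'$, contradicting indecomposability of $E'$. Read backwards, the same argument shows that any strict antecedent of a connected indecomposable ordinary exponential functor is itself indecomposable.

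Finally I would classify the antecedents. Let $E$ be connected indecomposable and in the image of $\U$, and let $i\in\{0,\dots,\ell-1\}$ be the residue class of the simple additive functor $I^{(i)}$ appearing in $QE$. Any antecedent $E''$ is indecomposable by the previous paragraph, hence has the form $\Lambda(I^{(r)}[d])$ or $\Sigma(I^{(r)}[d],w)$ by the classification of indecomposables (Theorem \ref{thm-classif-indecomp}); applying $\U$ and using faithfulness pins down the type, the degree $d$ and the word $w$, while $\U(I^{(r)})=I^{(r\bmod\ell)}$ must be the simple additive functor of $E$, forcing $r\equiv i \pmod \ell$, i.e. $r\in\{i,i+\ell,i+2\ell,\dots\}$. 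Writing $E'$ for the antecedent with $r=i$ and using $I^{(i)}\circ I^{(j\ell)}=I^{(i+j\ell)}$ together with the fact that precomposition by $I^{(j\ell)}$ commutes with the constructions $\Lambda$ and $\Sigma$, one obtains $\Lambda(I^{(i+j\ell)}[d])=(E')^{(j\ell)}$ (and likewise for $\Sigma$), so the antecedents are exactly the $(E')^{(j\ell)}$, $j\ge 0$. The main obstacle is precisely this completeness step: it is where one genuinely needs the classification of indecomposables of Section \ref{sec-indecomp}, together with the verification that $\U$ intertwines the strict and ordinary classifications — commuting with $\Lambda$ and $\Sigma$ and sending $I^{(r)}$ to $I^{(r\bmod\ell)}$ — which is what prevents an antecedent from escaping the single Frobenius-twist orbit $\{(E')^{(j\ell)}\}$.
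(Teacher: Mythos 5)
Your proposal does not prove the theorem: it takes as an input the very statement that carries all the weight. You announce that you will reduce everything to ``the correspondence, established just above, between the strict antecedents of $E$ and its consistent weight decompositions'', and later you invoke ``the equivalence (1)$\Leftrightarrow$(2) already proved'' and the one-to-one correspondence of the \emph{moreover} part. But that correspondence, together with the implications (3)$\Rightarrow$(1) and (4)$\Rightarrow$(1), is precisely the substantive content of the theorem, and nothing in your text establishes it. In the paper this occupies all of Sections \ref{subsec-ref-dec} and \ref{subsec-eq-weights}: one decomposes $\FF_q-\Exp^+_c$ into the pieces $\mathcal{E}\langle 2n,s\rangle$ (Proposition \ref{prop-decompFq}, via the Dieudonn\'e-module description of Lemma \ref{lm-dieu-exp}), identifies each piece with $\HH'_n$ by evaluation on $\Fq$, and compares with the strict side through Theorem \ref{thm-classif-strict}; Proposition \ref{prop-presq-categ} then determines exactly when $\U$ restricted to $\PP_q-\Exp_c(2a)$ is essentially surjective, which is what makes a weight decomposition produce a strict lift. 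Without some substitute for this analysis there is no reason why condition (3) or (4) should force $E$ into the image of $\U$.

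Two further steps are also flawed. First, in the indecomposability argument you assert that a tensor factorization $\U(E')\simeq A\otimes B$ ``can be chosen to respect the weight grading''; this is unjustified --- the weight decomposition is extra structure invisible to the ordinary functor, and a tensor factor need not be a sum of weight-homogeneous pieces (distinct copies of the same simple $I^{(j)}$ inside $QE^i$ may carry different weights $p^{j}, p^{j+\ell},\dots$). The paper avoids this by using that $\U$ is \emph{fully faithful} on each $\PP_q-\Exp_c(2a)$, so the idempotent of $\End(\U E')$ defining the factorization lifts to $E'$ and splits there. Second, your classification of the antecedents appeals to Theorem \ref{thm-classif-indecomp}, which requires the exponential functor to be reflexive; Theorem \ref{thm-Uprime} carries no such hypothesis, so the antecedents need not be of the form $\Lambda(A[d])$ or $\Sigma(A[d],w)$. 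The paper's Corollary \ref{cor-nice} instead locates the antecedents inside the categories $\PP_q-\Exp_c(2np^{-s-k\ell})$ and counts them using Proposition \ref{prop-presq-categ}, with no classification of indecomposables needed. The one part of your argument that is sound and matches the paper is the observation that $\U\big((E')^{(j\ell)}\big)\simeq \U(E')$, which indeed produces the whole Frobenius-twist orbit of antecedents; the missing work is in showing there are no others.
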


\begin{remark}
In the last part of theorem \ref{thm-Uprime}, indecomposability is in the abelian category of exponential functors. Thus, an exponential functor is indecomposable if and only if it cannot be written as the tensor product of two nontrivial exponential functors. 
\end{remark}

The remainder of section \ref{sec-strict-vs-ord} is devoted to the proof of theorem \ref{thm-Uprime}. The following statement follows easily from lemma \ref{lm-splitting-principle} and lemma \ref{lm-fct-add-U}.
\begin{lemma}\label{lm-facile}
Assume that $p$ is odd. If $E=\Lambda(A)$ then $E$ lies in the image of $\U$. Moreover, there is a one-to-one correspondence between (i) the isomorphism classes of strict exponential functors $E'$ such that $\U(E')\simeq E$, (ii) the isomorphism classes of graded strict analytic functors $A'$ such that $\U(A')\simeq A$, and (iii) the consistent weight decompositions of $E$.
\end{lemma}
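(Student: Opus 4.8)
The plan is to transport the entire statement to the level of graded additive functors via the splitting principle, where the forgetful functor is completely understood by Lemma \ref{lm-fct-add-U}. Throughout I use that $\U$ is exact, commutes with colimits, and is symmetric monoidal (tensor products are computed objectwise in both $\PP_q$ and $\FF_q$); consequently $\U$ commutes with the formation of exterior algebras and with the functors $P(-)$ and $Q(-)$.

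First I record the consequence of Lemma \ref{lm-splitting-principle} that I will use in both settings $\FF=\PP_q$ and $\FF=\FF_q$: since $p$ is odd, the assignment $E\mapsto PE$ is an equivalence between $\FF-\Exp_c^-$ and the category of graded additive functors concentrated in odd degrees, with quasi-inverse $A\mapsto\Lambda(A)$. As $A$ is concentrated in odd degrees and $\U$ preserves the degree grading, any strict exponential functor $E'$ with $\U(E')\simeq E=\Lambda(A)$ satisfies $\U(PE')=P(\U E')=PE=A$, so $PE'$ is concentrated in odd degrees and $E'$ lies in $\PP_q-\Exp_c^-$. The splitting principle then gives $E'\simeq\Lambda(PE')$, hence $\U(E')\simeq\Lambda(\U(PE'))$, and by the splitting principle in $\FF_q$ we obtain $\U(E')\simeq E$ if and only if $\U(PE')\simeq A$. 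Setting $A':=PE'$ this establishes the bijection (i)$\leftrightarrow$(ii).

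Next, for existence (and hence membership of $E$ in the image of $\U$): by Lemma \ref{lm-fct-add-U} the graded additive functor $A$ is a direct sum of copies of the simple functors $I^{(i)}$, $i\in\Z/\ell\Z$, placed in odd degrees, and $\U(I^{(k)})\simeq I^{(i)}$ whenever $k\equiv i\mod\ell$. Choosing for each summand a nonnegative lift $k\equiv i\mod\ell$ produces a graded additive strict analytic functor $A'$ with $\U(A')\simeq A$; then $\Lambda(A')$ is a strict exponential functor with $\U(\Lambda(A'))\simeq\Lambda(A)=E$, so $E$ lies in the image of $\U$.

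It remains to match (ii) with the consistent weight decompositions (iii). A graded additive strict analytic functor $A'$ carries, by Lemma \ref{lm-control-strict}, a canonical weight decomposition in which a summand $I^{(k)}$ sits in weight $p^k$; via the fundamental example \ref{ex-weights} this induces a consistent weight decomposition on $A=\U(A')$ and, since $PE=QE=A$ generates $E=\Lambda(A)$ and weights add under multiplication, a consistent weight decomposition on $E$. Conversely a consistent weight decomposition of $E$ restricts to $PE=A$, assigning to each simple summand $I^{(i)}$ a weight $p^k$ with $k\equiv i\mod\ell$; reading this as a lift of $I^{(i)}$ to $I^{(k)}$ recovers an $A'$ with $\U(A')\simeq A$. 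The only point requiring care is that these two constructions are mutually inverse on isomorphism classes: since all summands $I^{(i)}$ of a fixed residue and fixed degree are isomorphic, an isomorphism class of lift $A'$ is determined by the multiplicities of $I^{(k)}$ in each degree, which is precisely the data of the dimensions of the weight spaces $w_{p^k}A^d$ in a consistent weight decomposition. This bookkeeping, rather than any homological input, is the only genuinely delicate step, and it yields the bijection (ii)$\leftrightarrow$(iii), completing the proof.
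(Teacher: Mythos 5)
Your proof is correct and follows exactly the route the paper intends: the paper offers no written proof of this lemma beyond the remark that it ``follows easily from lemma \ref{lm-splitting-principle} and lemma \ref{lm-fct-add-U}'', and your argument is precisely the spelling-out of that reduction (splitting principle to pass to primitives, classification of simple additive functors to match lifts with consistent weight assignments). The only detail you elide --- that $E'$ is automatically connected, so the splitting principle applies to it --- is immediate from $\U(E')^0\simeq\kk$ and the vanishing of positive-weight homogeneous functors at $0$, and is at the level of detail the paper itself suppresses.
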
 

Thus in order to prove theorem \ref{thm-Uprime} when $p$ is odd, we may assume that $E$ is concentrated in even degrees by the splitting principle (lemma \ref{lm-splitting-principle}). If $p=2$, we may also assume that $E$ is concentrated in even degrees (by doubling the degrees). In section \ref{subsec-ref-dec} we prove the equivalence between (1)-(4) as well as the last part of theorem \ref{thm-Uprime}. In section \ref{subsec-eq-weights} we prove that it is equivalent to give a consistent weight decomposition of $E$ and an antecedent of $E$ by $\U$. In both cases, we consider functors concentrated in even degrees, and we obtain the result from more precise descriptions of the forgetful functor $\U$. 

\subsection{A refined decomposition of connected ordinary exponential functors}\label{subsec-ref-dec} Let $\kk$ be a field of characteristic $p$. 
If $n$ is prime to $p$, we let $\HH_n$ be the full subcategory of $\HH$ supported by the connected Hopf algebras whose indecomposables (or equivalently whose primitives) are concentrated in degrees $2np^k$, $k\ge 0$. For all additive categories $\V$, we let $\Fct(\V,\Mod)-\Exp_c\langle 2n\rangle$
be the full subcategory of $\Fct(\V,\Mod)-\Exp_c$ supported by the exponential functors $E$ such that for all $V$, the Hopf algebra $E(V)$ lies in $\HH_n$. 
\begin{lemma}\label{lm-decomp-ord-triv}Let $\kk$ be a field of positive characteristic $p$ and let $\V$ be an additive category. 
There is a categorical decomposition
$$\Fct(\V,\Mod)-\Exp^+_c\simeq \prod_{n\text{ prime to }p}\Fct(\V,\Mod)-\Exp_c\langle 2n\rangle\;.$$
\end{lemma}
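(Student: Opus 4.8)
The plan is to reduce the statement to a categorical decomposition of Hopf algebras and then to mimic the proof of Proposition \ref{prop-categ-decomp}. First I would invoke Lemma \ref{lm-intelligent} to identify $\Fct(\V,\Mod)-\Exp_c$ with $\Fct_{\add}(\V,\HH)$; under this identification $\Fct(\V,\Mod)-\Exp^+_c$ becomes $\Fct_{\add}(\V,\HH^+)$ (additive functors landing in connected Hopf algebras concentrated in even degrees) and $\Fct(\V,\Mod)-\Exp_c\langle 2n\rangle$ becomes $\Fct_{\add}(\V,\HH_n)$. Since $\Fct_{\add}(\V,-)$ turns a product of target categories into the product of the corresponding functor categories, the lemma follows once I prove that the tensor product induces an equivalence $\prod_{n\text{ prime to }p}\HH_n\simeq\HH^+$. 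As in Lemma \ref{lm-locnoeth}, $\HH^+$ is a cocomplete abelian locally noetherian category (the classical fact for connected bicommutative Hopf algebras over a field, restricted to the localizing subcategory of even objects); its simple objects are the truncated polynomial algebras $T_d=\kk[x]/(x^p)$ on a primitive generator $x$ of even degree $d>0$, and $\HH_n$ is the localizing subcategory generated by those $T_d$ with $d\in\{2np^k:k\ge 0\}$ (equivalently, using the filtration by primitively generated subalgebras of Lemma \ref{lm-locnoeth}, the objects whose primitives and indecomposables lie in these degrees).

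Following the scheme of Proposition \ref{prop-categ-decomp}, it then suffices to establish, for $n\ne n'$, that there is no nonzero morphism from an object of $\HH_n$ to an object of $\HH_{n'}$, and that $\Ext^1_{\HH^+}(A,B)=0$ whenever $A$ is simple in $\HH_n$ and $B\in\HH_{n'}$. For the Hom-vanishing I would show by induction on the degree that any $f\colon H\to H'$ with $H\in\HH_n$, $H'\in\HH_{n'}$ is zero. Assume $f$ vanishes in all positive degrees $<d$. For $x\in\overline H^{d}$, coassociativity gives $\overline\Delta(f(x))=(f\otimes f)\overline\Delta(x)=0$, so $f(x)$ is primitive. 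If $d\notin\{2n'p^{k}\}$ then $PH'^{d}=0$ and $f(x)=0$; otherwise $d\notin\{2np^{k}\}$, since the two degree sets are disjoint as $n,n'$ are prime to $p$, so $QH^{d}=0$, the space $\overline H^{d}$ is decomposable, and $f(x)=0$ because $f$ is an algebra map vanishing in lower degrees. This proves the vanishing.

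For the $\Ext^1$ statement I would adapt the splitting argument of Proposition \ref{prop-categ-decomp}. Given an extension $\kk\to B\to E\xrightarrow{\pi}A\to\kk$ with $A=T_d$, $d=2np^k$, and $B\in\HH_{n'}$, the map $\overline E^{d}\to QA^{d}=\kk x$ is surjective, so a vector-space lift $s(x)\in E^{d}$ of the generator exists and induces an algebra map $\overline s\colon\kk[x]\to E$, which is a morphism of Hopf algebras by Lemma \ref{lm-defbis}. Applying the snake lemma to the commutative square of surjections built from $\overline s$ exactly as in Proposition \ref{prop-categ-decomp}, one identifies $\ker(\iota\otimes\overline s)$ with $\ker(\pi\circ\overline s)$, which is the sub-Hopf-algebra $\kk[x^p]\subset\kk[x]$. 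Since $x^p$ is primitive of degree $pd=2np^{k+1}\in\{2np^k\}$, this kernel lies in $\HH_n$, so by the Hom-vanishing its component mapping to $B$ is zero; hence $E\simeq B\otimes A$ and the extension splits. This Frobenius (i.e. $p$-th power) trick is the only new ingredient compared with the strict case: it keeps the obstruction inside $\HH_n$ even though there is no weight grading to separate the pieces.

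With both vanishing statements in hand, the argument of Proposition \ref{prop-categ-decomp} applies: every noetherian object of $\HH^+$ has a finite composition series by the simple objects $T_d$; the $\Ext^1$-vanishing lets one split off, uniquely, the tensor factor built from the $T_d$ with $d$ in a fixed class $\{2np^k\}$; the Hom-vanishing yields uniqueness; and local noetherianity extends the decomposition to arbitrary objects by passing to filtered colimits. This gives $\prod_n\HH_n\simeq\HH^+$ and hence the lemma. I expect the main obstacle to be precisely the $\Ext^1$-vanishing: unlike in Proposition \ref{prop-categ-decomp}, degrees coming from distinct classes $\{2np^k\}$ and $\{2n'p^{k}\}$ can coincide, so one cannot isolate the two summands by a single graded (or weighted) component, and the $p$-th-power argument above is what repairs the proof.
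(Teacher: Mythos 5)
Your reduction is exactly the paper's: Lemma \ref{lm-intelligent} identifies $\Fct(\V,\Mod)-\Exp_c$ with $\Fct_{\add}(\V,\HH)$, and the decomposition then only has to be established for the target category $\HH^+$. At that point the paper simply cites Schoeller's classical decomposition (fact \ref{thm-schoeller1}) and stops, whereas you attempt to reprove that decomposition by mimicking Proposition \ref{prop-categ-decomp}. Reproving it is legitimate in principle, and your Hom-vanishing argument (primitivity of $f(x)$ in the lowest nonvanishing degree, combined with the disjointness of the degree sets $\{2np^k\}$ and $\{2n'p^k\}$) is fine.

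The $\Ext^1$ part, however, has a genuine gap. You only establish $\Ext^1_{\HH^+}(A,B)=0$ for $A$ \emph{simple}, i.e. $A=T_d$, and you then assert that every noetherian object of $\HH^+$ has a finite composition series with simple factors $T_d$. That assertion is false: $S(\kk[d])=\kk[x]$ is noetherian but admits the infinite strictly descending chain of Hopf subalgebras $\kk[x^{p^m}]$, so it is not a finite iterated extension of simples. (This is why Lemma \ref{lm-locnoeth} and Proposition \ref{prop-categ-decomp} list $S_{r,i}$ alongside $T_{r,i}$ and $\Lambda_{r,i}$ among the ``composition factors'' and prove the $\Ext^1$-vanishing for all three types.) Consequently you still need $\Ext^1_{\HH^+}(S(\kk[d]),B)=0$ for $B$ in a different block, and your Frobenius trick does not supply it: for $A=\kk[x]$ the kernel $\ker(\pi\circ\overline s)$ is trivial rather than $\kk[x^p]$, and the real issue is whether the algebra section $\overline s$ can be corrected to a \emph{Hopf} section, i.e. whether a lift of $x$ can be chosen primitive. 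That this correction is possible across blocks (and impossible within a block, cf. the Witt vector Hopf algebra) is precisely the nontrivial content you would have to prove; it is not addressed. A minor additional slip: Lemma \ref{lm-defbis} concerns exponential functors and does not make an arbitrary algebra map $\kk[x]\to E$ into a Hopf algebra map; an algebra map sending $x$ to a non-primitive lift is not one.
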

\begin{proof}
By lemma \ref{lm-intelligent} the category of ordinary exponential functors identifies with $\Fct_\add(\V,\HH)$. The categorical decomposition is induced by the one recalled in fact \ref{thm-schoeller1}.
\end{proof}

We now refine the categorical decomposition of lemma \ref{lm-decomp-ord-triv} in the special case $\kk=\Fq$ and $\V=\Proj_\Fq$ with $q=p^\ell$. For all $i\in \Z/\ell\Z$,
we let 
$$\mathcal{E}\langle 2n,s\rangle\subset  \FF_q-\Exp_c\langle 2n\rangle$$ be the full subcategory of supported by the exponential functors $E$ such that for all $k\ge 0$, $QE^k$ and $PE^k$ are direct sums of Frobenius twists $I^{(j)}$ for integers $j$ satisfying $j=k+s\mod \ell$.

\begin{proposition}\label{prop-decompFq}
For all positive integers $n$ prime to $p$, there is a categorical decomposition: 
$$\FF_q-\Exp_c\langle 2n\rangle\simeq \prod_{s\in\Z/\ell\Z}\mathcal{E}\langle 2n,s\rangle\;.$$
Moreover, let $\HH'_n$ be the full subcategory of $\HH_n$ supported by the Hopf algebras whose endomorphism ring has characteristic $p$. Evaluation on $\Fq$ induces an equivalence of categories:
$$\mathcal{E}\langle 2n,s\rangle\xrightarrow[]{\simeq} \HH'_n\;.$$
\end{proposition}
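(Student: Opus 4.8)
The plan is to transport the whole statement to Hopf algebras and then run the same $\Hom/\Ext^1$-orthogonality machine used in Proposition \ref{prop-categ-decomp}. First I would invoke Theorem \ref{thm-classif-ord} (equivalently Lemma \ref{lm-intelligent}) to identify $\FF_q-\Exp_c\langle 2n\rangle$ with the category ${}_{\Fq}\HH_n$ of $\Fq$-modules in $\HH_n$, so that ``evaluation on $\Fq$'' becomes the underlying-data functor and the classification of additive functors over $\Fq$ (Example \ref{ex-Fadd-Fq} and Lemma \ref{lm-fct-add-U}) is at hand: every additive functor is a sum of Frobenius twists $I^{(j)}$, $j\in\Z/\ell\Z$, with endomorphism ring $\Fq$, and $\Hom(I^{(j)},I^{(j')})=0$ for $j\neq j'$.

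For the categorical decomposition I would follow the template of Proposition \ref{prop-categ-decomp}. After observing (as in Lemmas \ref{lm-abelian} and \ref{lm-locnoeth}, whose proofs adapt verbatim) that the category is abelian, cocomplete and locally noetherian, I would classify the simple objects: they are the $p$-truncated polynomial Hopf algebras $T_{j,m}$ built on a single twist $I^{(j)}$ placed in the $m$-th layer (degree $2np^m$), with $PE=QE=I^{(j)}$ concentrated there. To each I attach the \emph{type} $s\equiv j-m\pmod\ell$. The decisive point is that this invariant is constant along the Frobenius chain: the $p$-th power map carries a layer-$m$ primitive of twist $j$ to a layer-$(m{+}1)$ primitive of twist $j{+}1$, so $(m,j)\mapsto(m{+}1,j{+}1)$ fixes $j-m$. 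I would then prove the key vanishing $\Ext^1(T_{j,m},T_{j',m'})=0$ whenever the two simples have different types ($\Hom$-vanishing being immediate, since a nonzero morphism forces $m=m'$ and $j=j'$ by the additive classification). Indeed the only nonsplit extensions raise the truncation height, i.e. glue $T_{j,m}$ to its neighbours $T_{j+1,m+1}$ and $T_{j-1,m-1}$ through Frobenius and Verschiebung, and these partners carry the \emph{same} type. With $\Hom$ and $\Ext^1$ orthogonal across types, the splitting argument of Proposition \ref{prop-categ-decomp} (an extension splits as soon as the map onto the indecomposables of the type-$a$ quotient has a section) yields $\prod_{s\in\Z/\ell\Z}\mathcal E\langle 2n,s\rangle$.

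For the equivalence $\mathcal E\langle 2n,s\rangle\xrightarrow{\simeq}\HH'_n$ I would show that on a type-$s$ object the $\Fq$-action is \emph{intrinsic}, so that the functor forgetting the $\Fq$-action, ${}_{\Fq}\HH_n\to\HH_n$, restricts to the asserted equivalence. Concretely, on a primitive living in layer $k$ the operator $\phi(\lambda)$ is forced by the type-$s$ condition to be the scalar $\lambda^{p^{k+s}}$, and this prescription extends uniquely and multiplicatively to a Hopf endomorphism; the assignment $\lambda\mapsto\phi(\lambda)$ is a ring map $\Fq\to\End_\HH(H)$ precisely because Frobenius is additive and multiplicative, which is also why the image lands exactly in the char-$p$ subcategory $\HH'_n$ (compatibly with Lemma \ref{lm-charact}). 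Faithfulness is clear; fullness holds because any morphism in $\HH_n$ preserves layers and is $\Fq$-linear over the base field, hence automatically intertwines the two intrinsic actions; essential surjectivity amounts to equipping an arbitrary $H\in\HH'_n$ with this action.

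The main obstacle is the well-definedness underlying essential surjectivity: I must verify that assigning the scalar $\lambda^{p^{k+s}}$ to each layer-$k$ generator and extending multiplicatively respects \emph{all} relations of $H$, i.e. that $H$ is homogeneous for the resulting $\Z/(q-1)$-grading by ``type-weight''. This is exactly where the Frobenius-governed structure of Hopf algebras in $\HH_n$ is needed, and I expect to feed it through the Dieudonné-module description of appendix \ref{App-Dieu} (used in the same spirit in Lemma \ref{lm-prop-3}, and legitimate since $\Fq$ is perfect): a Frobenius relation $v^p=w$ linking layers $k$ and $k+1$ is automatically type-weight homogeneous, as $p\cdot p^{k+s}=p^{(k+1)+s}$, and the Dieudonné description guarantees that the whole Hopf algebra is assembled from such Frobenius and Verschiebung relations, so no inconsistency can occur.
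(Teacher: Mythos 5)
Your overall architecture is sound, and for the first half your route genuinely differs from the paper's: the paper does not redo the $\Hom/\Ext^1$-orthogonality analysis of Proposition \ref{prop-categ-decomp} here, but instead applies the already-established Dieudonn\'e equivalence of Lemma \ref{lm-dieu-exp}, $\FF_q-\Exp_c\langle 2n\rangle\simeq \mathcal{D}'_{\FF_{q\,\add}}$, and reads the decomposition off from the semisimplicity of $\FF_{q\,\add}$ and the orthogonality of the twists $I^{(j)}$; both statements then follow from a single commutative square. Your simples-plus-$\Ext^1$ argument can be made to work (the splitting argument of Proposition \ref{prop-categ-decomp} adapts, using that an additive extension splits and that the $p$-th power map on primitives factors through a single Frobenius twist), but note that your one-line justification of the $\Ext^1$-vanishing --- ``the only nonsplit extensions glue $T_{j,m}$ to $T_{j\pm1,m\pm1}$'' --- is itself a consequence of the Dieudonn\'e classification, so as written you are assuming a statement of the same depth as the one the paper imports wholesale.

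The genuine gap is in the second half. An object of ${_{\Fq}}\HH_n$ is a ring map $\phi:\Fq\to\End_{\HH}(H)$ in which the \emph{addition} on $\End_{\HH}(H)$ is the convolution product $\star$, and your list of verifications (compatibility with the algebra relations, with $\Delta$, and multiplicativity of $\lambda\mapsto\phi(\lambda)$) omits the convolution-additivity $\phi(\lambda+\mu)=\phi(\lambda)\star\phi(\mu)$. This is not automatic, and it is exactly where the restriction to $\HH'_n$ enters. Concretely, for $p=2$ take the Witt Hopf algebra $H=\Fq[x,y]$ with $x$ primitive of degree $2n$, $\deg y=4n$ and $\Delta y=y\otimes 1+1\otimes y+x\otimes x$: this lies in $\HH_n$ but not in $\HH'_n$, and the degree-wise scalars $\phi(\lambda)=\lambda^{mp^{s}}$ on $H^{2nm}$ pass every check you propose (they respect all relations, commute with $\Delta$, and are multiplicative in $\lambda$), yet $\phi(\lambda)\star\phi(\mu)(y)=\lambda^{2p^s}y+\mu^{2p^s}y+\lambda^{p^s}\mu^{p^s}x^2\neq \phi(\lambda+\mu)(y)$. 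So ``no inconsistency can occur'' does not follow from type-weight homogeneity of the Frobenius relations; the additivity is a separate substantive condition, and your sketch neither checks it nor explains where the characteristic-$p$ hypothesis on $\End_\HH(H)$ is used. The repair is to carry out the construction where the paper does: transport $H\in\HH'_n$ to $M(H)\in\mathcal{D}'_\kk$ via fact \ref{thm-schoeller-Fp}, let $\lambda$ act as the uniform central scalar $\lambda^{p^s}$ in every degree (additivity is then the Frobenius identity $(\lambda+\mu)^{p^s}=\lambda^{p^s}+\mu^{p^s}$, since the sum in $\End_{\mathcal{D}'_\kk}$ is the ordinary one, and centrality gives fullness), and use facts \ref{fact-cor-P} and \ref{fact-cor-Q} to see that the resulting exponential functor lies in $\mathcal{E}\langle 2n,s\rangle$. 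With that substitution your proof closes up and essentially coincides with the paper's.
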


The proof of proposition \ref{prop-decompFq} relies on lemma \ref{lm-dieu-exp} below, which follows from the classical theory of graded Dieudonn\'e modules recalled in appendix \ref{App-Dieu}. We first state a definition which will be also useful in section \ref{sec-indecomp}. 
\begin{definition}\label{def-DA}
Let $\A$ be an abelian category. The category $\mathcal{D}_\A'$ of graded Dieudonn\'e modules in $\A$ is defined as follows. The objects of $\mathcal{D}_\A'$ are the nonnegatively graded objects in $\A$: $M=\bigoplus_{i\ge 0} M^i$, equipped with morphisms $F_i:M^i\leftrightarrows M^{i+1}:V_i$ such that $F_iV_i=0=V_iF_i$ for all $i\ge 0$. The morphisms of $\mathcal{D}_\A'$ are the (degree preserving) morphisms of graded objects which commute with the operators $F_i$ and $V_i$.
\end{definition}
 
If $\A=\Mod$ then $\mathcal{D}'_\A$ is the category denoted by $\mathcal{D}'_\kk$ in appendix \ref{App-Dieu}.  
\begin{lemma}\label{lm-dieu-exp}
Let $\kk$ be a perfect field of positive characteristic $p$, and let $\V$ be a small additive category of characteristic $p$. Let $n$ be a positive integer prime to $p$. There is an equivalence of categories:
$$\mathbf{M}:\Fct(\V,\Mod)-\Exp_c\langle 2n\rangle\xrightarrow[]{\simeq} \mathcal{D}'_{\Fct_\add(\V,\Mod)}\;.$$
If $E$ is an exponential functor and $\mathbf{M}(E)=(M^*,V_*,F_*)$ then we have:
\begin{align*}
&PE^{2np^i} = {^{(i)}}(\mathrm{Ker}\, V_{i-1})\;,
&QE^{2np^s} = {^{(i)}}(\mathrm{Coker}\,F_{i-1})\;.
\end{align*}
\end{lemma}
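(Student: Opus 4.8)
The plan is to construct $\mathbf{M}$ as a composite of three equivalences: the first comes from lemma \ref{lm-intelligent}, the second from the classical Dieudonn\'e equivalence recalled in appendix \ref{App-Dieu}, and the third from a formal identification of functor categories. The formulas for $PE$ and $QE$ are then read off objectwise from the Hopf-algebra level.

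First I would reduce to a statement with Hopf-algebra coefficients. By lemma \ref{lm-intelligent}, $\Fct(\V,\Mod)-\Exp_c\simeq\Fct_{\add}(\V,\HH)$, where \emph{additive} means that the canonical map $E(V)\otimes E(W)\to E(V\oplus W)$ is an isomorphism, i.e. that $E$ carries the biproduct $\oplus$ of $\V$ to the biproduct $\otimes$ of $\HH$. Under this equivalence the defining condition of $\Fct(\V,\Mod)-\Exp_c\langle 2n\rangle$, namely $E(V)\in\HH_n$ for all $V$, singles out exactly the additive functors factoring through the full subcategory $\HH_n\subset\HH$. Since $\V$ has characteristic $p$, the computation $(\Id_{E(V)})^{\star p}=E(p\cdot\Id_V)=E(0)=\eta\epsilon$ of lemma \ref{lm-charact}, applied after evaluation at each $V$, shows that every such $E(V)$ has endomorphism ring of characteristic $p$, hence lies in $\HH'_n$. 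Thus $\Fct(\V,\Mod)-\Exp_c\langle 2n\rangle\simeq\Fct_{\add}(\V,\HH'_n)$.

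Next I would transport the Dieudonn\'e equivalence. Over the perfect field $\kk$, appendix \ref{App-Dieu} provides an equivalence of abelian categories $\mathbf{M}_\kk:\HH'_n\xrightarrow[]{\simeq}\D'_\kk$ (slot $i$ corresponding to degree $2np^i$), characterized by the fact that for $H\in\HH'_n$ with $\mathbf{M}_\kk(H)=(M^*,V_*,F_*)$ one has $PH^{2np^i}={}^{(i)}(\mathrm{Ker}\,V_{i-1})$ and $QH^{2np^i}={}^{(i)}(\mathrm{Coker}\,F_{i-1})$, where ${}^{(i)}$ denotes base change along the automorphism $\sigma^i$ of $\kk$. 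Because $\mathbf{M}_\kk$ is an additive equivalence, it preserves and reflects biproducts; as the biproduct is $\otimes$ in $\HH'_n$ and $\oplus$ in $\D'_\kk$, post-composition with $\mathbf{M}_\kk$ sends additive functors to additive functors and reflects additivity, yielding an equivalence $\Fct_{\add}(\V,\HH'_n)\simeq\Fct_{\add}(\V,\D'_\kk)$. Finally, an additive functor $G:\V\to\D'_\kk$ is the same datum as the family of functors $N^i:=(V\mapsto G(V)^i)$, each additive (being the composite of $G$ with the additive projection $\D'_\kk\to\Mod$ onto the $i$-th slot), together with natural transformations $F_i:N^i\to N^{i+1}$ and $V_i:N^{i+1}\to N^i$ satisfying $F_iV_i=V_iF_i=0$; this is precisely an object of $\D'_{\Fct_{\add}(\V,\Mod)}$. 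Composing the three equivalences gives $\mathbf{M}$.

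It remains to transport the primitives and indecomposables formulas. Primitives and indecomposables are computed objectwise, so $PE^{2np^i}(V)=P(E(V))^{2np^i}$ and $QE^{2np^i}(V)=Q(E(V))^{2np^i}$, and these are additive functors by lemma \ref{lm-additivite}. Applying the Hopf-algebra formulas for $\mathbf{M}_\kk(E(V))$ naturally in $V$, and using that the vector-space twist $\sigma^i$ applied naturally in $V$ is the twist ${}^{(i)}$ of additive functors (appendix \ref{subsec-app-VF-norm}), gives the asserted identities $PE^{2np^i}={}^{(i)}(\mathrm{Ker}\,V_{i-1})$ and $QE^{2np^i}={}^{(i)}(\mathrm{Coker}\,F_{i-1})$ in $\Fct_{\add}(\V,\Mod)$. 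The genuine mathematical content sits in the appendix input $\HH'_n\simeq\D'_\kk$; the main obstacle in the present argument is therefore only to keep the Frobenius-twist bookkeeping consistent and to check that the normalization $F_iV_i=V_iF_i=0$ in $\D'$ matches the vanishing $[p]=0$ valid on the Hopf algebras $E(V)$.
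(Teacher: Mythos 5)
Your proposal is correct and follows essentially the same route as the paper's proof: reduce via lemma \ref{lm-intelligent} to $\Fct_\add(\V,\HH_n)$, use the characteristic-$p$ hypothesis on $\V$ to land in $\HH'_n$, post-compose with the Schoeller equivalence $\HH'_n\simeq\mathcal{D}'_\kk$ of fact \ref{thm-schoeller-Fp}, identify $\Fct_\add(\V,\mathcal{D}'_\kk)$ with $\mathcal{D}'_{\Fct_\add(\V,\Mod)}$, and read off the formulas for $P$ and $Q$ objectwise from facts \ref{fact-cor-P} and \ref{fact-cor-Q}. The extra bookkeeping you supply (the convolution computation $(\Id_{E(V)})^{\star p}=\eta\epsilon$ and the preservation of biproducts under the additive equivalence) is exactly what the paper leaves implicit.
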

\begin{proof}
By lemma \ref{lm-intelligent}, the category $\Fct(\V,\Mod)-\Exp_c$ identifies with the category
$\Fct_\add(\V,\HH_n)$. Moreover, since $\V$ has characteristic $p$, the latter is equivalent to $\Fct_\add(\V,\HH_n')$ (the equivalence being induced by the inclusion $\HH'_n\subset \HH_n$). Now by fact \ref{thm-schoeller-Fp} there is an equivalence of categories 
$$\Fct_\add(\V,\HH_n')\simeq \Fct_\add(\V,\mathcal{D}'_\kk)\;.$$
Finally $\Fct_\add(\V,\mathcal{D}'_\kk)$ readily identifies with $\mathcal{D}'_{\Fct_\add(\V,\Mod)}$. The formulas for primitives and indecomposables follow from facts \ref{fact-cor-P} and \ref{fact-cor-Q}.
\end{proof}

\noindent
{\bf Proof of proposition \ref{prop-decompFq}. }
Since $\FF_q$ is semisimple with simple objects $I^{(s)}$, $0\le s<\ell$, and since each of these simples has an endomorphism ring isomorphic to $\Fq$, we have a decomposition 
$$\mathcal{D}'_{\FF_q}\simeq \prod_{s\in \Z/\ell\Z} \mathcal{D}'_\kk\otimes I^{(s)}\qquad(*)$$
where $\mathcal{D}'_\Fq\otimes I^{(s)}$ is the full subcategory of $\mathcal{D}'_{\FF_q}$ supported by the functors of the form $M\otimes I^{(s)}$ for some object $M$ of $\mathcal{D}'_\Fq$. 
The formulas describing primitives and indecomposables in lemma \ref{lm-dieu-exp} show that an exponential functor $E$ of $\FF_q-\Exp_c\langle 2n\rangle$ lies in $\mathcal{E}\langle 2n,s\rangle$ if and only if $\mathbf{M}(E)$ lies in $\mathcal{D}'_\Fq\otimes I^{(s)}$. Thus $\mathbf{M}^{-1}$ transforms the categorical decomposition $(*)$ into the categorical decomposition of proposition \ref{prop-decompFq}.

Moreover, there is a commutative diagram, in which the horizontal arrows are induced by evaluation on $\Fq$:
$$\xymatrix{
\mathcal{E}\langle 2n,s\rangle\ar[r]^-{\mathrm{ev}_\Fq}\ar[d]^-{\mathbf{M}}& \HH'_n\ar[d]^-{\mathbf{M}}\\
\mathcal{D}'_\Fq\otimes I^{(s)}\ar[r]^-{\mathrm{ev}_\Fq}&\mathcal{D}'_\Fq
}.$$
Since the vertical arrows and the bottom horizontal arrow are equivalences of categories, the top horizontal arrow is also an equivalence of categories.
\qed

Recall from proposition \ref{prop-categ-decomp} the categorical decomposition:
$$\PP_q-\Exp^+_c\simeq \prod_{a\in\mathbb{N}[p^{-1}]\,,\,a\ne 0}\PP_q-\Exp_c(2a)\;,$$
where $\PP_q-\Exp_c(2a)$ is the full subcategory of $\PP_q-\Exp^+_c$ supported by the exponential functors $E$ such that $w_kE^i=0$ if $2ak\ne i$. In particular, any direct summand $I^{(j)}$ of the primitives (or the indecomposables) of $E$ must be placed in degrees $2ap^j$. Thus, if $a=np^{-s}$ with $s\in\mathbb{Z}$ and $n\in\mathbb{N}$ is prime to $p$, the forgetful functor restricts to a functor:
$$\U:\PP_q-\Exp_c(2a)\to \mathcal{E}\langle 2n,s\rangle \;.$$
\begin{proposition}\label{prop-presq-categ}
Let $a=np^{-s}$ with $s\in\mathbb{Z}$ and $n\in\mathbb{N}$ is prime to $p$. Then
$$\U:\PP_q-\Exp_c(2a)\to \mathcal{E}\langle 2n,s\rangle $$
is fully faithful. It is an equivalence of categories if and only if $s\ge 0$. If $s<0$, then its image is the full subcategory supported by the exponential functors $E$ in $\mathcal{E}\langle 2n,s\rangle$ such that $E^k=0$ if $k$ is not divisible by $2np^{-s}$.
\end{proposition}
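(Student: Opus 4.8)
The plan is to route the whole statement through the two "evaluation on $\Fq$" equivalences that are already available: theorem \ref{thm-classif-strict} on the strict side (so that $E'\mapsto E'(\Fq)$ is an equivalence $\PP_q-\Exp_c\xrightarrow{\simeq}{}^\strict\HH$, remembering the weight grading) and proposition \ref{prop-decompFq} on the ordinary side (so that $E\mapsto E(\Fq)$ is an equivalence $\mathcal{E}\langle 2n,s\rangle\xrightarrow{\simeq}\HH'_n$, forgetting weights). The link between them is that $\mathrm{ev}_\Fq\circ\U$ is the functor sending a strict exponential functor $E'$ to its underlying weightless Hopf algebra $E'(\Fq)$. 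I write $a=np^{-s}$ throughout, and I work in even degrees as arranged at the start of the section.

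First I would record the numerology and check that $\U$ really lands in $\mathcal{E}\langle 2n,s\rangle$. By lemma \ref{lm-additivite} and lemma \ref{lm-control-strict}, $QE'$ and $PE'$ are direct sums of strict Frobenius twists $I^{(r)}$ of weight $p^r$; the defining constraint $i=2ak$ of $\PP_q-\Exp_c(2a)$ then forces such a summand into degree $2ap^{r}=2np^{\,r-s}$. Since $\U(I^{(r)})=I^{(r\bmod\ell)}$ by lemma \ref{lm-fct-add-U}, setting $m=r-s$ the summand in degree $2np^{m}$ becomes $I^{(m+s\bmod\ell)}$, which is exactly the consistency condition defining $\mathcal{E}\langle 2n,s\rangle$. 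I note for later use that integrality of the degree forces $r=m+s\ge 0$, so the atomic degrees actually realized on the strict side are precisely $2np^{m}$ with $m\ge\max(0,-s)$.

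For full faithfulness I would use the two evaluation equivalences to identify the map induced by $\U$ on morphisms with the inclusion of \emph{weight-preserving} graded Hopf morphisms into \emph{all} graded Hopf morphisms between $E'(\Fq)$ and $F'(\Fq)$. The decisive observation is that these Hopf algebras lie in the subcategory of ${}^\strict\HH$ in which the weight is determined by the degree (namely $w_kH^i=0$ unless $i=2ak$, coming from $E',F'\in\PP_q-\Exp_c(2a)$); hence \emph{every} degree-preserving Hopf morphism automatically preserves the weight grading, and the inclusion is a bijection. This is the only genuinely structural input, and I expect it to be the cleanest step.

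Finally, for essential surjectivity and the image I would argue backwards. Given $E\in\mathcal{E}\langle 2n,s\rangle$, its value $H=E(\Fq)\in\HH'_n$ is generated in degrees $2np^{m}$, and I would try to equip it with the weight grading $w_kH^i:=H^i$ for $k=i/(2a)=ip^s/(2n)$. This produces a bona fide object of ${}^\strict\HH$ in the $2a$-graded part exactly when $ip^s/(2n)$ is a non-negative integer in every occupied degree, equivalently (as primitives sit in degrees $2np^m$) when every such $m$ satisfies $m+s\ge 0$. When $s\ge 0$ this is automatic, so theorem \ref{thm-classif-strict} — which respects the decomposition of proposition \ref{prop-categ-decomp}, as noted in subsection \ref{subsec-reduction}, and therefore carries this $2a$-graded part back into $\PP_q-\Exp_c(2a)$ — supplies a strict lift $E'$ with $\U(E')\cong E$, yielding an equivalence. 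When $s<0$ the weight grading exists if and only if all degrees of $H$ are divisible by $2np^{-s}$, i.e. $E^k=0$ whenever $2np^{-s}\nmid k$; this gives the asserted description of the image, and since an object with a primitive in degree $2n$ is then excluded, $\U$ fails to be essentially surjective, so it is an equivalence precisely when $s\ge 0$. The main obstacle I anticipate is purely bookkeeping: keeping the degree/weight integrality constraints straight across the sign of $s$, rather than any conceptual difficulty.
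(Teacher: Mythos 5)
Your proposal is correct and follows essentially the same route as the paper: both reduce via the commutative square of evaluation-on-$\Fq$ equivalences (theorem \ref{thm-classif-strict} and proposition \ref{prop-decompFq}) to the weight-forgetting functor ${^\strict}\HH(2a)\to\HH_n'$, get full faithfulness because weights in ${^\strict}\HH(2a)$ are determined by degrees, and characterize the image by when a compatible weight grading $w_kH^i$ with $i=2ak$ can be defined, which is automatic for $s\ge 0$ and requires divisibility of all occupied degrees by $2np^{-s}$ when $s<0$.
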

\begin{proof} We have a commutative diagram in which the vertical arrows are given by evaluation on $\Fq$ (they are equivalences of categories by theorem \ref{thm-classif-strict} and proposition \ref{prop-decompFq}), and the bottom horizontal arrow forgets the weights:
$$\xymatrix{
\PP_q-\Exp_c(2a)\ar[d]^-\simeq\ar[r]^-{\U}& \mathcal{E}\langle 2n,i\rangle \ar[d]^-\simeq\\
^\strict\HH(2a)\ar[r]^-{\U}&\HH_n'
}\;.$$
Thus it suffices to prove that the bottom horizontal arrow is fully faithful, and that it is essentially surjective if and only if $s\ge 0$.
The weights of an object of $^\strict\HH(2a)$ are determined by the degrees because $w_kH^i=0$ for $i\ne 2ak$. Thus, the bottom horizontal arrow is fully faithful (any morphism of graded Hopf algebras automatically preserves the weight decomposition). If $s\ge 0$ and $H$ is an object of $\HH'_n$, then we can define weights on $H$ by placing each summand $H^{2nj}$ in weight $jp^s$, hence $\U$ is essentially surjective. If $s<0$, the equation $w_kH^i=0$ for $i\ne 2ak$ shows that all objects of $^\strict\HH(2a)$ are zero in degrees different from $2np^{-s}k$, $k\ge 0$. Conversely, if $H$ is an object of $\HH'_n$ such that $H^j=0$ if $j$ is not divisible by $2np^{-s}$, then we can define a weight decomposition on $H$ by placing each summand $H^{k2np^{-s}}$ in weight $k$. This gives the image of $\U$, and proves in particular that $\U$ is not essentially surjective.
\end{proof}

\begin{corollary}
Assertions (1), (2), (3), (4) in theorem \ref{thm-Uprime} are equivalent.
\end{corollary}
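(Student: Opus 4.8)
The plan is to prove the corollary by establishing the cycle of implications $(1)\Rightarrow(2)\Rightarrow(3)\Rightarrow(4)\Rightarrow(1)$, working throughout in the even-degree situation to which the problem has already been reduced (so that $E$ is concentrated in even degrees, but possibly with a nontrivial degree-zero part $E^0$). The first two links are formal. The implication $(1)\Rightarrow(2)$ is precisely the fundamental example \ref{ex-weights}: a strict lift $E'$ of $E$ transports its weight decomposition to a consistent weight decomposition of $E=\U(E')$ via $w_kE^i:=\U(w_k{E'}^i)$. The implication $(2)\Rightarrow(3)$ is immediate from definition \ref{def-weights}, since a consistent weight decomposition is in particular a weight decomposition.

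For $(3)\Rightarrow(4)$ I would restrict a given weight decomposition of $E$ to its degree-zero part $E^0$. Condition $(2)$ of definition \ref{def-weights} guarantees $w_0E^0=\kk$, while condition $(1)$ ensures that the multiplication respects the weight grading. Hence placing each summand $w_kE^0$ in degree $2k$ defines a grading on $E^0$ for which $E^0$ is a connected exponential functor (its degree-zero part being $w_0E^0=\kk$); this is exactly assertion $(4)$.

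The substance lies in $(4)\Rightarrow(1)$. I would first write $E\simeq E^0\otimes E'$ with $E'$ connected (lemma \ref{lm-additive-split}) and treat the two factors separately, recombining at the end by monoidality of $\U$. The connected factor $E'$ already lies in $\FF_q-\Exp^+_c$, so lemma \ref{lm-decomp-ord-triv} and proposition \ref{prop-decompFq} express it as $\bigotimes_{n,s}E'_{n,s}$ with $E'_{n,s}\in\mathcal{E}\langle 2n,s\rangle$; for each factor I would choose an integer $s'\ge 0$ with $s'\equiv s\pmod{\ell}$, whereupon proposition \ref{prop-presq-categ} makes $\U\colon\PP_q-\Exp_c(2np^{-s'})\to\mathcal{E}\langle 2n,s\rangle$ an equivalence and so yields a strict lift of $E'_{n,s}$, hence (using compatibility of $\U$ with the decomposition of proposition \ref{prop-categ-decomp}) a degree-preserving strict lift of $E'$. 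For the degree-zero factor $E^0$ I would use $(4)$ to re-grade it as a connected functor $\widetilde{E^0}$ concentrated in even degrees, lift $\widetilde{E^0}$ by the same decomposition argument, and then discard the auxiliary grading by means of the regrading equivalences $\mathcal{R}_a$ of section \ref{subsec-reduction}, obtaining a strict exponential functor concentrated in degree zero whose image under $\U$ is $E^0$. Tensoring the two lifts gives a strict functor mapping to $E^0\otimes E'\simeq E$, which is $(1)$.

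I expect the main obstacle to be this last implication, and within it two coordination points. First, one must leave the range of the decomposition theory of proposition \ref{prop-presq-categ}, which only perceives connected functors, by passing through an auxiliary connected regrading of $E^0$, and then correctly reverse that regrading so as to recover a lift of $E^0$ itself rather than of its regraded avatar; this is where the $\mathcal{R}_a$ functors do the bookkeeping. Second, the choice of a \emph{nonnegative} representative $s'\ge 0$ in each decomposition factor is exactly what places us in the essentially surjective (rather than merely fully faithful) regime of proposition \ref{prop-presq-categ}. Once these points are settled, the equivalence of all four assertions follows.
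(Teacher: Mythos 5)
Your proposal is correct and follows essentially the same route as the paper: the first three implications are formal, and for $(4)\Rightarrow(1)$ the paper likewise splits $E\simeq E^0\otimes E'$, regrades $E^0$ as a connected functor, and reduces to lifting objects of the categories $\mathcal{E}\langle 2n,s\rangle$ via proposition \ref{prop-presq-categ} with a nonnegative representative of $s$. The extra bookkeeping you supply (the choice of $s'\ge 0$ and the reversal of the auxiliary grading) is exactly what the paper leaves implicit.
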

\begin{proof}
We clearly have (1)$\Rightarrow$(2)$\Rightarrow$(3)$\Rightarrow$(4). Now assume that assertion (4) holds. Write $E\simeq E'\otimes E^0$ with $E'$ connected. In order to prove that $E$ is in the image of $\U$, it suffices to prove that $E'$ and $E^0$ lie in the image of $\U$. 
Moreover, let $\widetilde{E}^0$ denote $E^0$ with its new grading. If $\widetilde{E}^0$ lies in the image of $\U$ then $E^0$ lies in the image of $\U$ (take an antecedent of $\widetilde{E}^0$ and forget its degree grading). Thus we have reduced ourselves to proving that connected ordinary exponential functors lie in the image of $\U$. But any connected ordinary exponential functor can be written as a tensor product of objects of the categories $\mathcal{E}\langle 2n,s\rangle$, which lie in the image of $\U$ by proposition \ref{prop-presq-categ}.
\end{proof}

\begin{corollary}
The forgetful functor preserves indecomposability of connected exponential functors.
\end{corollary}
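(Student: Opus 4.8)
The plan is to derive the statement from the compatible categorical decompositions of the source and target categories together with the fully faithfulness of $\U$ on each factor, as established in Proposition~\ref{prop-presq-categ}. So let $E'$ be a connected indecomposable strict exponential functor; I want to show that $\U(E')$ is indecomposable in $\FF_q-\Exp_c$.

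First I would reduce to the case where $E'$ is concentrated in even degrees. When $p=2$ this is achieved by doubling all degrees, which is an equivalence commuting with $\U$ and preserving indecomposability. When $p$ is odd, the splitting principle (Lemma~\ref{lm-splitting-principle}) writes any connected exponential functor as a tensor product of an object of $\Exp_c^+$ and an object of $\Exp_c^-$, so indecomposability of $E'$ places it entirely in one of the two. The case $E'\in\PP_q-\Exp_c^-$ can be dispatched at once: there $E'\simeq\Lambda(PE')$ with $PE'$ a \emph{simple} additive functor (indecomposability forces simplicity), and Lemma~\ref{lm-facile} together with Lemma~\ref{lm-fct-add-U} gives $\U(E')\simeq\Lambda(\U(PE'))$ with $\U(PE')$ again simple, whence $\U(E')$ is indecomposable. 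This leaves the main case $E'\in\PP_q-\Exp_c^+$.

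Next I would localize $E'$ in a single factor of the decomposition. By Proposition~\ref{prop-categ-decomp} we have $\PP_q-\Exp_c^+\simeq\prod_a\PP_q-\Exp_c(2a)$, realized by tensor products; since the tensor product is the biproduct of this additive category, an indecomposable object is supported in a single factor, so $E'\in\PP_q-\Exp_c(2a)$ for $a=np^{-s}$ with $n$ prime to $p$. On the target side, combining Lemma~\ref{lm-decomp-ord-triv} and Proposition~\ref{prop-decompFq} yields $\FF_q-\Exp_c^+\simeq\prod_{m,t}\mathcal{E}\langle 2m,t\rangle$, again via tensor products, and Proposition~\ref{prop-presq-categ} shows that $\U$ restricts to a fully faithful functor $\PP_q-\Exp_c(2a)\to\mathcal{E}\langle 2n,s\rangle$ landing in the single factor indexed by $(n,s)$. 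Now suppose $\U(E')\simeq F_1\otimes F_2$ with $F_1,F_2$ connected. Decomposing each $F_i$ according to the target decomposition and invoking the uniqueness of the decomposition, I get $F_1^{(m,t)}\otimes F_2^{(m,t)}\simeq\kk$ for $(m,t)\neq(n,s)$; since $X\otimes Y\simeq\kk$ forces $X\simeq Y\simeq\kk$ for connected $X,Y$ (a lowest-degree argument), I conclude $F_1,F_2\in\mathcal{E}\langle 2n,s\rangle$.

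The hard part will be the case $s<0$, where $\U$ is only fully faithful and its image is the full subcategory of $\mathcal{E}\langle 2n,s\rangle$ consisting of objects vanishing in degrees not divisible by $2np^{-s}$: I must check that $F_1,F_2$ inherit this vanishing. This again follows from connectivity: if $F_1^d\neq 0$ for some degree $d$ not divisible by $2np^{-s}$, then $(F_1\otimes F_2)^d$ contains the summand $F_1^d\otimes F_2^0=F_1^d\neq 0$ (using $F_2^0=\kk$), contradicting $\U(E')^d=0$. Hence in all cases $F_1,F_2$ lie in the image of $\U$, so $F_i\simeq\U(E_i')$ with $E_i'\in\PP_q-\Exp_c(2a)$; full faithfulness and monoidality of $\U$ then give $E_1'\otimes E_2'\simeq E'$, and indecomposability of $E'$ makes one $E_i'$, hence one $F_i$, trivial. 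Therefore $\U(E')$ is indecomposable, as claimed.
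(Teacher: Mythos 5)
Your proof is correct and follows essentially the same route as the paper: reduce to even degrees, localize the indecomposable $E'$ in a single factor $\PP_q-\Exp_c(2a)$, and invoke Proposition \ref{prop-presq-categ}. The paper's own proof is shorter at the final step: since these categories are additive with split idempotents, full faithfulness of $\U$ on $\PP_q-\Exp_c(2a)$ already transports the absence of nontrivial idempotents from $\End(E')$ to $\End(\U E')$, so your explicit lifting of the tensor factors $F_1,F_2$ back along $\U$ (and in particular the separate treatment of the case $s<0$) is not needed.
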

\begin{proof}
If $E$ is a connected indecomposable strict exponential functor, then $E$ belongs to some $\PP_q-\Exp_c(2a)$ with $a\ne 0$. But the restriction of $\U$ to these categories is fully faithful, hence $\U E$ is indecomposable.
\end{proof}

\begin{corollary}\label{cor-nice}
Let $E$ be an indecomposable ordinary exponential functor. There exists a unique (up to isomorphism) strict exponential functor $E'$ such that (i) $\U(E')\simeq E$ and (ii) if $E''$ is a strict exponential functor such that $\U(E'')\simeq E$ then there exists $j\ge 0$ such that $E''\simeq {E'}^{(j\ell)}$.
\end{corollary}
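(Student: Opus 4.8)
The plan is to reduce to the case where $E$ is connected and concentrated in even degrees, and then to read off all antecedents of $E$ from the fully faithful functors of Proposition~\ref{prop-presq-categ}, organizing them by Frobenius twists. First I would dispose of the degree-zero case: if $E=E^0$ is concentrated in degree zero, then by the equivalence of assertions (1)--(4) in Theorem~\ref{thm-Uprime} it can be regraded to a connected exponential functor, and regrading changes neither indecomposability nor the set of antecedents (up to the evident bookkeeping), so I may assume $E$ is connected. When $p$ is odd I would apply the splitting principle (Lemma~\ref{lm-splitting-principle}), and when $p=2$ I would double the degrees, so that in all cases $E$ is concentrated in even degrees, i.e.\ an object of $\FF_q-\Exp_c^+$.

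Next I would use indecomposability to locate $E$ in a single block. Combining Lemma~\ref{lm-decomp-ord-triv} with Proposition~\ref{prop-decompFq} gives a categorical product decomposition $\FF_q-\Exp_c^+\simeq\prod_{n,s}\mathcal{E}\langle 2n,s\rangle$, where $n$ runs over integers prime to $p$ and $s\in\Z/\ell\Z$. Since the tensor product is the categorical (co)product in this additive setting and $E$ is indecomposable (and nontrivial), $E$ lies in exactly one factor $\mathcal{E}\langle 2n,s\rangle$. Now I would enumerate its antecedents. A strict antecedent $E''$ with $\U(E'')\simeq E$ is itself indecomposable (as $\U$ preserves tensor products), hence by Proposition~\ref{prop-categ-decomp} lies in a single $\PP_q-\Exp_c(2a)$; and since $\U$ maps this block into $\mathcal{E}\langle 2n,s\rangle$, the exponent must have the form $a=np^{-s'}\in\mathbb{N}[p^{-1}]$ with $s'\equiv s\pmod\ell$. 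By the full faithfulness in Proposition~\ref{prop-presq-categ}, for each such $s'$ there is \emph{at most one} antecedent in $\PP_q-\Exp_c(2a)$; it exists unconditionally when $s'\ge 0$ (equivalence), and when $s'<0$ it exists if and only if $E^k=0$ whenever $2np^{-s'}\nmid k$.

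I would then show the set of admissible $s'$ equals $\{s'_{\min}+j\ell : j\ge 0\}$ for some minimal integer $s'_{\min}$. Upward-closedness follows because the divisibility constraint at $s'-\ell$ (namely $2np^{-s'}p^{\ell}\mid k$) is strictly stronger than at $s'$, so admissibility is preserved as $s'$ increases; boundedness below follows because, $E$ being connected and nonzero, it has a least positive degree $k_0$ with $E^{k_0}\neq 0$, and $2np^{-s'}\mid k_0$ forces $2np^{-s'}\le k_0$, ruling out very negative $s'$. Finally I would invoke the Frobenius twist: by Theorem~\ref{thm-twist} the functor $-\circ I^{(\ell)}$ carries $\PP_q-\Exp_c(2a)$ to $\PP_q-\Exp_c(2ap^{-\ell})$, and it commutes with $\U$ up to the identification $\U(I^{(\ell)})\simeq I$ valid over $\Fq$ (since $x^{q}=x$, as in Example~\ref{ex-Fadd-Fq} and Lemma~\ref{lm-fct-add-U}); hence $\U(E'^{(j\ell)})\simeq\U(E')$. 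Setting $E'$ to be the (unique) antecedent at $a_{\max}=np^{-s'_{\min}}$, the antecedent at $s'_{\min}+j\ell$ is forced by full faithfulness to be $E'^{(j\ell)}$, which yields property (ii), while (i) and uniqueness follow from $E'$ being the $a$-maximal antecedent. The hard part will be the last bookkeeping step, namely verifying that the bijection ``antecedents $\leftrightarrow$ admissible exponents'' is compatible with the Frobenius twist in the precise form $E'_{s'+\ell}\simeq (E'_{s'})^{(\ell)}$ and that $s'_{\min}$ is genuinely finite; both rest on the monotone divisibility analysis and the nonvanishing of $E$ in its lowest degree.
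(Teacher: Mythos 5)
Your proposal is correct and follows essentially the same route as the paper's proof: reduce to the even-degree connected case, locate $E$ in a single block $\mathcal{E}\langle 2n,s\rangle$ by indecomposability, enumerate the antecedents block by block via proposition \ref{prop-presq-categ}, take $E'$ to be the antecedent at the maximal admissible $a$, and identify the remaining antecedents as the Frobenius twists ${E'}^{(j\ell)}$. Your explicit monotonicity-and-boundedness analysis of the admissible exponents $s'$ is in fact a more careful rendering of the divisibility bookkeeping that the paper's proof states in one line.
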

\begin{proof}
By lemma \ref{lm-facile}, we may assume that $E$ is concentrated in even degrees.
By indecomposability, $E$ lies in one of the categories $\mathcal{E}\langle 2n,s\rangle$ with $0\le s<\ell$, and by proposition \ref{prop-presq-categ} the antecedents of $E$ by $\U$ lies in the categories $\PP_q-\Exp^*_c(2a)$ for $a=np^{-s-k\ell}$ with $k\in\Z$. To be more specific, if $E$ is zero in degrees not divisible by $2nt$ and nonzero in degree $2nt$, then $E$ has exactly one antecedent in $\PP_q-\Exp_c(2a)$ if $s+k\ell\le t$, and no antecedent if $s+k\ell> t$. Let $k_0$ be maximal among the integers $k$ such that $s+k\ell\le t$ and let $a_0=np^{-s-k_0\ell}$. Let $E'$ be the antecedent of $E$ in $\PP_q-\Exp_c(2a)$. For all $j\ge 0$, ${E'}^{(j\ell)}$ is an antecedent (hence the unique one) of $E$ in $\PP_q-\Exp_c(2ap^{-j\ell})$, and we have constructed in this way all the antecedents of $E$.
\end{proof}

\subsection{Weight decompositions}\label{subsec-eq-weights}
We let $^{\w}\Exp^\even_c$ be the category whose objects are the ordinary exponential functors with domain $\Proj_\Fq$ and codomain $\mathrm{Mod}_\Fq$ which are concentrated in even degrees (but not necessarily connected) and equipped with a weight decomposition, and whose morphisms are the morphisms of ordinary exponential functors preserving the weights. 
We observe that if $E$ is an object of $^{\w}\Exp^\even_c$, then for all $V$, $E(V)$ is a strict Hopf algebra in the sense of definition \ref{def-strict-Hopf}. Hence by reasoning is the same way as for lemma \ref{lm-intelligent}, we obtain an equivalence of categories:
$$^\w\Exp^\even_c\simeq\Fct_{\mathrm{add}}(\V,{^\strict}\HH^\even)\;.\qquad(*)$$
We also let $^{\cw}\Exp^\even_c$ be the full subcategory of $^{\w}\Exp^\even_c$ supported by the exponential functors whose weight decomposition is consistent.

Let $a\in \mathbb{N}[p^{-1}]$ and let $\mathrm{x}$ stand for `$\cw$' or `$\w$'. We let $^\mathrm{x}\Exp_c(2a)$ be the full subcategory of $^\mathrm{x}\Exp^\even_c$ supported by the exponential functors $E$ such that $w_k E^i=0$ if $i\ne 2ak$. Thus, $E$ is an object of $^\mathrm{x}\Exp_c(2a)$ if and only if $E(V)$ belongs to the category ${^{\strict}}\HH(2a)$ (defined in section \ref{subsec-decomp}) for all $V$. 

\begin{lemma}\label{lm-dec}
If $\mathrm{x}=\w$ or $\cw$, there is an equivalence of categories: 
$$^{\mathrm{x}}\Exp^\even_c\simeq \prod_{a\in \mathbb{N}[p^{-1}]} {^{\mathrm{x}}}\Exp_c(2a)\;.$$
\end{lemma}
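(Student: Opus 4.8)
The plan is to deduce this decomposition from the categorical decomposition of Proposition \ref{prop-categ-decomp} by transporting it through the equivalence $(*)$. Since the argument is the same for $\mathrm{x}=\w$ and $\mathrm{x}=\cw$ up to one extra observation, I would first settle the case $\mathrm{x}=\w$ and then explain the modification needed for consistent weight decompositions.

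First I would record the relative version of $(*)$. The condition defining $^\w\Exp_c(2a)$, namely $w_kE^i=0$ whenever $i\neq 2ak$, is checked objectwise, so an exponential functor $E$ lies in $^\w\Exp_c(2a)$ if and only if $E(V)$ lies in the subcategory $^\strict\HH(2a)$ of $^\strict\HH^\even$ for every $V$. Hence $(*)$ restricts to an equivalence
$$^\w\Exp_c(2a)\;\simeq\;\Fct_{\add}(\Proj_\Fq,\,{^\strict}\HH(2a))\;.$$
Next I would decompose the target of $(*)$. Proposition \ref{prop-categ-decomp} applies to $\C={^\strict}\HH$; since each factor $\C(c)$ has its degrees determined by its weights via $i=ck$, the factors contained in the even part $^\strict\HH^\even$ are precisely those whose degrees are even, which we index by $2a$, $a\in\mathbb{N}[p^{-1}]$. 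This yields an equivalence
$$^\strict\HH^\even\;\simeq\;\prod_{a\in\mathbb{N}[p^{-1}]}{^\strict}\HH(2a)\;,$$
induced by tensor products and hence \emph{monoidal}. Composing with $(*)$, and using that a functor with values in a product of categories is the same datum as a tuple of functors into the factors — an equivalence which respects additivity because the displayed equivalence is monoidal, so that the exponential (i.e.\ $\otimes$-multiplicative) isomorphisms are matched factor by factor — I obtain
$$^\w\Exp^\even_c\simeq\Fct_{\add}\Big(\Proj_\Fq,\prod_a{^\strict}\HH(2a)\Big)\simeq\prod_a\Fct_{\add}(\Proj_\Fq,{^\strict}\HH(2a))\simeq\prod_a{}^\w\Exp_c(2a)\;,$$
which is the asserted decomposition for $\mathrm{x}=\w$.

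For $\mathrm{x}=\cw$ I would observe that the consistency condition of Definition \ref{def-weights} is imposed separately on each Frobenius-twist summand $I^{(i)}$ of $PE$ and of $QE$. Since the primitives and indecomposables are additive by Lemma \ref{lm-additivite} and the categorical decomposition respects them — each such summand lying in a single factor — the weight decomposition of $E$ is consistent if and only if the weight decomposition of every factor of $E$ is consistent. Restricting the equivalence of the previous paragraph to the full subcategories of consistent objects therefore gives the decomposition for $\mathrm{x}=\cw$.

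I do not expect a serious obstacle here: the entire content lies in reducing to Proposition \ref{prop-categ-decomp}. The only two points deserving a careful word are the compatibilities already built into that decomposition, namely that its monoidal structure matches the $\otimes$-multiplicativity defining additive functors into $^\strict\HH$ (so that additive functors genuinely split into tuples of additive functors), and that $P$ and $Q$ commute with the splitting into the factors $^\strict\HH(2a)$; both follow from the construction of the decomposition via tensor products and the additivity of $P,Q$, leaving only the routine even-degree bookkeeping of the index set to be spelled out.
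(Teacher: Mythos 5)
Your proof is correct and follows essentially the same route as the paper: the case $\mathrm{x}=\w$ is deduced from the identification $(*)$ of $^\w\Exp^\even_c$ with $\Fct_{\mathrm{add}}(\Proj_\Fq,{^\strict}\HH^\even)$ together with the categorical decomposition of Proposition \ref{prop-categ-decomp}, and the case $\mathrm{x}=\cw$ is obtained by restricting that equivalence. The extra details you supply (the objectwise characterization of $^\w\Exp_c(2a)$, the compatibility of additivity with the product decomposition, and the factorwise nature of the consistency condition) are exactly the points the paper leaves implicit.
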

\begin{proof}
If $\mathrm{x}=\w$, the result follows from the equivalence of categories $(*)$ and the categorical decomposition of ${^\strict}\HH^\even$ given in proposition \ref{prop-categ-decomp}. The result for $\mathrm{x}=\cw$ is obtained by restriction of the equivalence of categories for $\mathrm{x}=\w$. 
\end{proof}

\begin{lemma}\label{lm-ff}
For all $a\in\mathbb{N}[p^{-1}]$, evaluation on $\Fq$ yields an equivalence of categories:
$$^{\cw}\Exp^\even_c(2a)\xrightarrow[]{\simeq} {^{\strict}}\HH(2a)\;.$$
\end{lemma}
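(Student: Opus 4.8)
The plan is to verify directly that the evaluation functor $\mathrm{ev}_\Fq\colon {}^{\cw}\Exp^\even_c(2a) \to {}^\strict\HH(2a)$ is faithful, essentially surjective, and full, using as the two main inputs the strict classification (Theorem \ref{thm-classif-strict}) and the ordinary classification (Theorem \ref{thm-classif-ord}). First I would check the functor is well defined: a morphism of ${}^{\cw}\Exp^\even_c(2a)$ preserves the weight decompositions by definition, so its value on $\Fq$ is a weight-preserving morphism of strict Hopf algebras, i.e.\ a morphism of ${}^\strict\HH(2a)$. Since $\Fq$ is finite, hence perfect, and evaluation respects the categorical decomposition of Proposition \ref{prop-categ-decomp}, Theorem \ref{thm-classif-strict} restricts to an equivalence $\mathrm{ev}_\Fq\colon \PP_q-\Exp_c(2a) \xrightarrow{\simeq} {}^\strict\HH(2a)$; write $\tilde E$ for the strict exponential functor corresponding to $H\in{}^\strict\HH(2a)$. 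The forgetful functor $\U$ equipped with the consistent weight decomposition $w_k\U(\tilde E)^i := \U(w_k\tilde E^i)$ of Fundamental example \ref{ex-weights} defines a functor $\U^{\w}\colon \PP_q-\Exp_c(2a) \to {}^{\cw}\Exp^\even_c(2a)$ with $\mathrm{ev}_\Fq \circ \U^{\w} = \mathrm{ev}_\Fq$ (the strict evaluation).

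For faithfulness, a morphism $f$ of ${}^{\cw}\Exp^\even_c(2a)$ is in particular a morphism of underlying ordinary exponential functors, and Theorem \ref{thm-classif-ord} (with $R=\kk=\Fq$) says evaluation on $\Fq$ is faithful on $\FF_q-\Exp_c$; hence $f_\Fq=0$ forces $f=0$. For essential surjectivity, given $H\in{}^\strict\HH(2a)$ I would take the strict functor $\tilde E$ with $\tilde E(\Fq)\cong H$ and set $E := \U^{\w}(\tilde E)$, which lies in ${}^{\cw}\Exp^\even_c(2a)$ by Fundamental example \ref{ex-weights} and satisfies $\mathrm{ev}_\Fq(E) = \tilde E(\Fq) \cong H$ as weighted Hopf algebras. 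The point worth stressing is that essential surjectivity, normally the hard direction, is here immediate: I only need to exhibit one preimage, namely the image under $\U$ of the strict lift of $H$, and I never have to argue that an a priori given consistently weighted functor is strict.

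Fullness is then bootstrapped through essential surjectivity. Given $E_1,E_2$ in ${}^{\cw}\Exp^\even_c(2a)$ and $g\colon E_1(\Fq)\to E_2(\Fq)$ in ${}^\strict\HH(2a)$, I would fix isomorphisms $\beta_i\colon E_i\simeq \U^{\w}(\tilde E_i)$ supplied by essential surjectivity and transport $g$ to a morphism $g'\colon \tilde E_1(\Fq)\to \tilde E_2(\Fq)$ of ${}^\strict\HH(2a)$. By the fullness half of Theorem \ref{thm-classif-strict}, $g'=\tilde g(\Fq)$ for a morphism $\tilde g\colon \tilde E_1\to \tilde E_2$ of strict exponential functors; since a morphism of strict analytic functors preserves the weight decomposition, $\U^{\w}(\tilde g)$ is a morphism of ${}^{\cw}\Exp^\even_c(2a)$, and $f := \beta_2^{-1}\circ \U^{\w}(\tilde g)\circ \beta_1$ satisfies $\mathrm{ev}_\Fq(f)=g$.

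The only genuinely delicate point, and the main obstacle, is the bookkeeping that keeps the argument non-circular: one must resist trying to prove that $\U^{\w}$ is itself essentially surjective, which is exactly the difficult assertion that a consistent weight decomposition forces strictness, and instead deduce the equivalence of $\mathrm{ev}_\Fq$ from the three properties above, after which the equivalence of $\U^{\w}$ follows a posteriori from $\mathrm{ev}_\Fq\circ\U^{\w}=\mathrm{ev}_\Fq$ and two-out-of-three. The remaining compatibility checks — that $\mathrm{ev}_\Fq$ really lands in ${}^\strict\HH(2a)$, that $\U^{\w}$ preserves weights on morphisms, and that $\mathrm{ev}_\Fq(\U^{\w}(\tilde E))$ equals $\tilde E(\Fq)$ with its canonical weights — are routine but must be carried out with care.
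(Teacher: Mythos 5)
Your faithfulness and essential surjectivity arguments are fine, but the fullness step has a genuine circularity. You write that essential surjectivity supplies isomorphisms $\beta_i\colon E_i\simeq \U^{\w}(\tilde E_i)$ in ${}^{\cw}\Exp^\even_c(2a)$. It does not: essential surjectivity of $\mathrm{ev}_\Fq$ only produces, for each strict Hopf algebra $H$, \emph{some} object evaluating to $H$; it says nothing about an arbitrary object $E_i$ of ${}^{\cw}\Exp^\even_c(2a)$ being isomorphic, \emph{in that category}, to one in the image of $\U^{\w}$. To produce $\beta_i$ you would have to lift the isomorphism $E_i(\Fq)\cong \U^{\w}(\tilde E_i)(\Fq)$ along $\mathrm{ev}_\Fq$, which is exactly the fullness you are trying to prove. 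Said differently, what you need is essential surjectivity of $\U^{\w}$ itself --- the assertion that a consistent weight decomposition forces strictness --- which is precisely the point you announce you will avoid. The alternative repair, lifting $g$ to a morphism of ordinary exponential functors via theorem \ref{thm-classif-ord} and then checking it preserves the weight decompositions of the \emph{functors}, also does not come for free: evaluation on $\Fq$ is not faithful on arbitrary functors, so the off-weight components of the lift are not visibly zero. (Note also that a faithful, essentially surjective functor need not be full, so there is no categorical shortcut.)

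The paper closes this gap by making the weight decomposition redundant before comparing anything: writing $a=np^{-s}$ and setting $r=\max\{-s,0\}$, the regrading $\mu_r$ identifies ${}^{\cw}\Exp^\even_c(2a)$ with ${}^{\cw}\Exp^\even_c(2a/p^r)$, a category in which the weights are determined by the degrees, so that forgetting the weights is an equivalence onto the purely ordinary category $\mathcal{E}\langle 2n,s\rangle$; evaluation on $\Fq$ is then an equivalence $\mathcal{E}\langle 2n,s\rangle\xrightarrow{\simeq}\HH'_n$ by the Dieudonn\'e-module analysis of proposition \ref{prop-decompFq} (with a separate regrading argument for $a=0$). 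If you want to keep your architecture, you must either prove fullness of $\mathrm{ev}_\Fq$ directly --- which in practice means showing that the weight decomposition of an object of ${}^{\cw}\Exp^\even_c(2a)$ is forced by its degrees and the consistency condition, i.e.\ reproving the paper's reduction --- or prove essential surjectivity of $\U^{\w}$ first.
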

\begin{proof} Assume first that $a\ne 0$. Then we can write $a=np^{-s}$ with $n$ prime to $p$ and $s\in\mathbb{Z}$. Let $r=\max\{-s,0\}$. 
We have a commutative diagram, in which the vertical arrows are induced by evaluation on $\Fq$, the arrows $\U$ are the forgetful functors, which forget the weights, and the arrows $\mu_r$ modify the degrees by multiplying them by $p^r$:
$$\xymatrix{
^{\cw}\Exp^\even_c(2a)\ar[d]& ^{\cw}\Exp^\even_c(2a/p^r) \ar[l]_-{\mu_r}\ar[r]^-{\U}\ar[d] & \mathcal{E}\langle 2n,s\rangle\ar[d]\\
{^{\strict}}\HH(2a) &{^{\strict}}\HH(2a/p^r)  \ar[l]_-{\mu_r}\ar[r]^-{\U}& \HH_n'
}\;.$$
The functors $\mu_r$ are equivalences (with inverse $\mu_{1/r}$), the forgetful functors $\U$ are equivalences (because in their domain categories, the weights are determined by the degrees). The vertical arrow on the right is an equivalence of categories by proposition \ref{prop-presq-categ}. This proves lemma \ref{lm-ff} for $a\ne 0$.

Assume now that $a=0$. There is a commutative diagram in which the horizontal arrows are given by evaluation on $\Fq$, and the vertical arrows are given by regrading the objects $H$, by placing each summand $w_k H$ in degree $2k$ (these are equivalences of categories, the inverse equivalences being given by forgetting the degrees):
$$\xymatrix{
^{\cw}\Exp^\even_c(2)\ar[r]^-{\simeq}& {^{\strict}}\HH(2)\\
^{\cw}\Exp^\even_c(0)\ar[u]^-{\simeq}_{\mathcal{R}_2}\ar[r]& {^{\strict}}\HH(0)\ar[u]^-{\simeq}_{\mathcal{R}_2}
}$$
Thus the bottom horizontal arrow is an equivalence of categories, which proves lemma \ref{lm-ff} for $a=0$.
\end{proof}

We can factor the forgetful functor $\U$ as the composition
$$\PP_{q}-\Exp^\even_c\xrightarrow[]{\U_1}{^\cw}\Exp^\even_c\xrightarrow[]{\U_2}\FF_q-\Exp^\even_c$$
where $\U_1$ sends a strict exponential functor $E$ to the ordinary exponential functor $\U(E)$ equipped with the weight decomposition of example \ref{ex-weights}, and $\U_2$ forgets the weight decomposition. Giving a consistent weight decomposition of an ordinary exponential functor $E$ amounts to giving an antecedent of $E$ by $\U_2$. Thus, proposition \ref{prop-next} proves the second part of theorem \ref{thm-Uprime}. 
\begin{proposition}\label{prop-next}
The forgetful functor $\U_1$ is an equivalence of categories.
\end{proposition}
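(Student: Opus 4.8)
The plan is to prove that $\U_1$ is an equivalence by cutting both its source and its target into the pieces indexed by $a\in\mathbb{N}[p^{-1}]$, and checking the assertion one piece at a time, using evaluation on $\Fq$ as a bridge to strict Hopf algebras.

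First I would record the two matching categorical decompositions. On the target side, Lemma \ref{lm-dec} gives ${^\cw}\Exp^\even_c\simeq \prod_{a\in\mathbb{N}[p^{-1}]}{^\cw}\Exp_c(2a)$. On the source side, combining the splitting $\PP_q-\Exp^\even_c\simeq \PP_q-\Exp^0_c\times\PP_q-\Exp^+_c$ coming from Lemma \ref{lm-abelian} with Proposition \ref{prop-categ-decomp} gives $\PP_q-\Exp^\even_c\simeq \prod_{a\in\mathbb{N}[p^{-1}]}\PP_q-\Exp_c(2a)$, where the factor $a=0$ is the degree-zero part $\PP_q-\Exp^0_c$ and the factors $a\ne 0$ are automatically connected. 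Since $\U_1$ only forgets structure and defines weights by $w_k\U_1(E)^i=\U(w_kE^i)$ (Example \ref{ex-weights}), it sends $\PP_q-\Exp_c(2a)$ into ${^\cw}\Exp_c(2a)$ and therefore respects these product decompositions. Hence it suffices to prove that for each fixed $a$ the restriction
$$\U_1\colon \PP_q-\Exp_c(2a)\longrightarrow {^\cw}\Exp^\even_c(2a)$$
is an equivalence of categories.

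Next, for a fixed $a$, I would form the square
$$\xymatrix{
\PP_q-\Exp_c(2a)\ar[r]^-{\U_1}\ar[d]_-{\mathrm{ev}_\Fq} & {^\cw}\Exp^\even_c(2a)\ar[d]^-{\mathrm{ev}_\Fq}\\
{^\strict}\HH(2a)\ar@{=}[r] & {^\strict}\HH(2a)
}$$
The left-hand vertical arrow is the restriction $^\strict\Theta(2a)$ of the global functor $^\strict\Theta$ of Theorem \ref{thm-classif-strict} (recall $\Fq$ is perfect, so the theorem applies); as explained in Section \ref{subsec-reduction}, $^\strict\Theta$ is compatible with the decomposition of Proposition \ref{prop-categ-decomp}, so each restriction $^\strict\Theta(2a)$ is again an equivalence. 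The right-hand vertical arrow is an equivalence by Lemma \ref{lm-ff}. Thus, once the square is shown to commute, the restriction of $\U_1$ to the piece $(2a)$ is an equivalence by the two-out-of-three property, and assembling over all $a$ finishes the proof.

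The only point requiring care — and the sole content beyond bookkeeping — is the commutativity of the square. For a strict exponential functor $E$ in $\PP_q-\Exp_c(2a)$, the upper route produces the ordinary Hopf algebra $\U(E)(\Fq)=E(\Fq)$ equipped with the weight decomposition $w_k\bigl(\U_1(E)(\Fq)\bigr)^i=\U(w_kE^i)(\Fq)=(w_kE^i)(\Fq)$, while the lower route produces $E(\Fq)$ with its intrinsic weight decomposition as a strict Hopf algebra, namely $(w_kE^i)(\Fq)$. These coincide on the nose by the very definition of the weight decomposition in Example \ref{ex-weights}, and the identification is visibly natural in $E$, so the square commutes strictly. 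I expect this definition-chase, rather than any genuine difficulty, to be the crux: all the substantive work has been offloaded onto Theorem \ref{thm-classif-strict} and Lemma \ref{lm-ff}, and the remaining argument is the formal assembly just described.
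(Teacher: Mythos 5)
Your proof is correct and uses essentially the same ingredients and route as the paper: the paper also observes that $\mathrm{Ev}\circ\U_1$ equals the equivalence of Theorem \ref{thm-classif-strict} and that $\mathrm{Ev}$ decomposes as the product of the equivalences of Lemma \ref{lm-ff}, then concludes by two-out-of-three. The only difference is organizational — the paper applies two-out-of-three once globally, while you restrict to each piece $(2a)$ first — which changes nothing of substance.
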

\begin{proof}
The equivalence of categories $\PP_{\omega,\Fq}-\Exp^\even_c\xrightarrow[]{\simeq}{^\strict}\HH^\even$ given by evaluation on $\Fq$ (see theorem \ref{thm-classif-strict}) factors as the composition
$$\PP_{q}-\Exp^\even_c\xrightarrow[]{\U_1}{^\cw}\Exp^\even_c\xrightarrow[]{\mathrm{Ev}} {^\strict}\HH^\even$$
where $\mathrm{Ev}$ is given by evaluation on $\Fq$. But the functor $\mathrm{Ev}$ decomposes as the product of the functors ${^\cw}\Exp_c(2a)\to {^\strict}\HH(2a)$ given by evaluation on $\Fq$, for all $a\in\mathbb{N}[p^{-1}]$. Thus it follows from lemma \ref{lm-ff} that $\mathrm{Ev}$ is an equivalence of categories. Since $\mathrm{Ev}$ and $\mathrm{Ev}\circ\U_1$ are equivalences of categories, $\U_1$ is an equivalence of categories as well.
\end{proof}

\section{Indecomposable exponential functors}\label{sec-indecomp}

In this section, $\kk$ is a perfect field of positive characteristic $p$, and $\FF$ denotes $\PP_{\omega,\kk}$ or $\Fct(\V,\Mod)$, where $\V$ is a small additive category of characteristic $p$ (i.e. the morphisms of $\V$ are $\Fp$-vector spaces). 
Our purpose is to study the connected indecomposable exponential functors, i.e. those which are not isomorphic to a tensor product $E'\otimes E''$ where $E'$ and $E''$ are nontrivial exponential functors. 

\begin{definition}
An \emph{$FV$-word} is either the empty word $\epsilon$, a finite word $w=w_1\cdots w_n$ or an infinite word $w=w_1w_2 \cdots$, with $w_i\in\{F,V\}$ for all $i$. 
\end{definition}
The following result will be proved below.

\begin{proposition}\label{prop-defiEAW}
Let $A[d]$ be a simple additive functor placed in degree $d$. If $p$ is odd, assume furthermore that $d$ is even. For all $FV$-words $w$, there is a unique indecomposable exponential functor $E$ such that:
\begin{align*} 
& PE^d=QE^d=E^d=A\;,\\
& PE^{dp^r}={^{(r)}}A \;\text{ if $r\ge 1$ and $w_r=F$,} \\
& QE^{dp^r}={^{(r)}}A \;\text{ if $r\ge 1$ and $w_r=V$,}
\end{align*}
and  $PE^i$ and $QE^i$ are zero otherwise. Moreover, the endomorphism ring of $E$ is isomorphic to $\End_{\FF}(A)$. 
\end{proposition}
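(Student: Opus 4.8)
The plan is to transport the entire problem to the category of graded Dieudonné modules, where the desired functors become explicit \emph{string} modules, and then read off all the required properties by linear algebra over a division ring. First I would reduce to a homogeneous situation: when $p$ is odd the hypothesis that $d$ is even places us, after the categorical decompositions of proposition \ref{prop-categ-decomp} and lemma \ref{lm-decomp-ord-triv} together with the regrading and twist equivalences (theorem \ref{thm-twist}, lemma \ref{lm-Ra}), inside a single block in which $A$ occupies the bottom degree; when $p=2$ there are no Koszul signs and the same Dieudonné theory applies in every degree. In either setting lemma \ref{lm-dieu-exp} (and, for strict functors, its analogue obtained by combining theorem \ref{thm-classif-strict} with the graded Dieudonné theory of \cite{Schoeller}) gives an equivalence $\mathbf{M}$ between the relevant exponential functors and the category $\mathcal{D}'_{\A}$ of graded Dieudonné modules in $\A=\Fct_\add(\V,\Mod)$ (resp.\ in additive strict analytic functors), together with the formulas $PE^{dp^r}={}^{(r)}(\ker V_{r-1})$ and $QE^{dp^r}={}^{(r)}(\mathrm{Coker}\,F_{r-1})$.

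Second I would construct the Dieudonné module $M_w$ attached to the word $w$. Since $A$ is a simple additive functor, $D:=\End_{\FF}(A)$ is a division ring and $A$ is a simple point of $\A$; the objects of $\mathcal{D}'_\A$ supported on copies of $A$ are exactly the $D$-representations of the half-infinite quiver indexed by $\mathbb{N}$
$$\bullet \;\overset{F}{\underset{V}{\rightleftarrows}}\; \bullet \;\overset{F}{\underset{V}{\rightleftarrows}}\; \bullet \;\overset{F}{\underset{V}{\rightleftarrows}}\; \cdots$$
subject to the string relations $FV=VF=0$ of definition \ref{def-DA} (this is where the link with the representation theory of string algebras \cite{CB} enters). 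I set $M^r=A$ for $0\le r\le |w|$ and $M^r=0$ otherwise, and for $1\le r\le|w|$ I let $F_{r-1}$ be an isomorphism with $V_{r-1}=0$ when $w_r=F$, and $V_{r-1}$ an isomorphism with $F_{r-1}=0$ when $w_r=V$ (all remaining operators zero). The relations $FV=VF=0$ hold because at each level at most one of $F_{r-1},V_{r-1}$ is nonzero. A direct check using the boundary conventions $F_{-1}=V_{-1}=0$ and the displayed formulas then shows that $E:=\mathbf{M}^{-1}(M_w)$ satisfies $PE^d=QE^d=E^d=A$ and, for $r\ge 1$, exactly $PE^{dp^r}={}^{(r)}A$ when $w_r=F$ and $QE^{dp^r}={}^{(r)}A$ when $w_r=V$, with all other primitives and indecomposables vanishing (in particular nothing survives in degree $dp^{|w|+1}$ since there $M=0$).

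For uniqueness I would argue backwards: if $E$ satisfies the stated conditions, then its Dieudonné module has $\ker V_{r-1}$ and $\mathrm{Coker}\,F_{r-1}$ prescribed in each degree, and since every $M^r$ is forced to be a copy of the simple object $A$, each operator $F_{r-1},V_{r-1}$ is forced to be an isomorphism or zero exactly as in $M_w$; hence the module is isomorphic to $M_w$ and $E$ is unique up to isomorphism. Finally, an endomorphism of $M_w$ is a family $(\phi_r)_r$ with $\phi_r\in\End_{\A}(A)$ commuting with the operators; the nonzero $F$'s and $V$'s being isomorphisms, each $\phi_r$ is determined by $\phi_0$ through successive conjugation, so $\End(M_w)\cong \End_{\FF}(A)$. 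As $\End_{\FF}(A)$ is a division ring, $E$ has local endomorphism ring and is therefore indecomposable.

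The main obstacle I anticipate is the bookkeeping at the two ends of the string together with the semilinear structure of the Dieudonné operators. One must verify that the boundary conventions reproduce $E^d=A$ with \emph{both} $PE^d$ and $QE^d$ equal to $A$, and that the identification $\End(M_w)\cong\End_\FF(A)$ survives the Frobenius twists carried by $F$ and $V$ — which is precisely where perfectness of $\kk$ is used, since it makes the relevant twist of $\End_\FF(A)$ coincide with $\End_\FF(A)$ itself. Assembling the strict analogue of lemma \ref{lm-dieu-exp} out of theorem \ref{thm-classif-strict} and \cite{Schoeller} is the other point requiring care.
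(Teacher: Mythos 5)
Your proposal is correct and follows essentially the same route as the paper: reduce to a single block via the categorical decompositions and regradings, pass to graded Dieudonn\'e modules through lemma \ref{lm-dieu-exp} (resp.\ its strict analogue, obtained in the paper as the equivalence $\mathbf{M}:\C\simeq\mathcal{D}'_{\AC}$), and identify the sought exponential functor with the explicit string module $M_{A,r,w}$, whose endomorphism ring is $\End_{\FF}(A)$, a division ring. The only cosmetic difference is that the paper keeps the starting degree $r$ of the string general (with object ${}^{(-r)}A$ in Dieudonn\'e degree $r$ when $d=2np^r$) rather than normalizing $A$ to sit in degree $0$, which matters in the ordinary-functor setting where no Frobenius-twist reduction such as theorem \ref{thm-twist} is available; your construction and uniqueness argument go through verbatim at an arbitrary starting degree.
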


\begin{notation}
We let $\Sigma(A[d],w)$ be the indecomposable functor associated to the simple graded functor $A[d]$ and the $FV$-word $w$ as in proposition \ref{prop-defiEAW}. 
\end{notation}

\begin{remark}
The family of exponential functors of the form $\Sigma(A[d],w)$ contains many exponential functors used in the previous sections of this article. For example, by the uniqueness in proposition \ref{prop-defiEAW}, we see that $S(A[d])\simeq\Sigma(A[d],F^\infty)$ and $\Gamma(A[d])\simeq \Sigma(A[d],V^\infty)$. (Here $A$ is simple, $d$ is even if $p$ is odd, and  $x^\infty$ denotes the infinite word built only with the letter $x$). 
If $p=2$, we also have $\Lambda(A[d])\simeq\Sigma(A[d],\epsilon)$.  
\end{remark}

For all exponential functors $E$, we let $PQE$ be the image of the canonical map $PE\to QE$. For example, $PQ\Sigma(A[d],w)=A[d]$. Since $PQ(E\otimes E')=PQE\oplus PQE'$, an exponential functor $E$ such that $PQE$ is an indecomposable functor must be an indecomposable exponential functor. We will see in theorem \ref{thm-classif-indecomp} that the converse holds for a large class of exponential functors, namely the reflexive ones (provided $\FF_\add$ has homological dimension zero). 
\begin{definition}\label{def-reflexive}
An exponential functor $E$ is called \emph{reflexive} if it is connected and for all $i$, the additive functor $QE^i$ has a finite composition series.
\end{definition}

We prove in lemma \ref{lm-fin} below a characterization of reflexivity in terms of primitives. The term `reflexive' is sometimes used for the connected graded Hopf $\kk$-algebras $H$ such that $\dim_\kk H^i<\infty$ for all $i$, see e.g. \cite{Schoeller}. The next lemma explains the relation between the two notions.

\begin{lemma}\label{lm-reflex-Hopf}
Assume that $\FF=\PP_{\omega,\kk}$ or that $\FF=\Fct(\Proj_R,\Mod)$ for some finite dimensional $\Fp$-algebra $R$. Then an exponential functor $E$ is reflexive if and only if for all $V$ the Hopf algebra $E(V)$ is reflexive.
\end{lemma}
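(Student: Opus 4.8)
The plan is to reduce the Hopf-algebraic reflexivity of each $E(V)$ to a finiteness property of the additive functors $QE^i$, and then to recognise the latter as the finite composition series condition of Definition~\ref{def-reflexive}. First I would dispose of connectedness: an exponential functor $E$ is connected exactly when every $E(V)$ is connected, so both notions of reflexivity carry the same connectedness hypothesis and I may assume it throughout. Next I would use the elementary fact that for a connected nonnegatively graded $\kk$-algebra $H$ one has $\dim_\kk H^i<\infty$ for all $i$ if and only if $\dim_\kk(QH)^i<\infty$ for all $i$: the direction $(\Rightarrow)$ is clear since $(QH)^i$ is a subquotient of $H^i$, and for $(\Leftarrow)$ one lifts a homogeneous basis of $QH$ to a set of algebra generators concentrated in positive degrees with finitely many in each degree, whence each $H^i$ is spanned by the finitely many monomials of total degree $i$. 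Applying this with $H=E(V)$, and using that the indecomposables functor is computed objectwise so that $(QE^i)(V)=\bigl(Q E(V)\bigr)^i$ (Lemma~\ref{lm-additivite} together with exactness of evaluation in $\FF$), reduces the statement to the assertion that $QE^i$ has finite composition series for all $i$ if and only if $\dim_\kk(QE^i)(V)<\infty$ for all $V$ and all $i$.

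Then I would pass from ``all $V$'' to evaluation on a single generator. Since each $QE^i$ is additive, $\dim_\kk(QE^i)(V)<\infty$ for all $V$ is equivalent to $\dim_\kk(QE^i)(G)<\infty$, where $G=\kk$ in the strict case and $G=R$ in the ordinary case: the forward direction is the special case $V=G$, while the converse follows because any $V$ is a direct summand of some $G^{\oplus n}$, so additivity makes $(QE^i)(V)$ a direct summand of $(QE^i)(G)^{\oplus n}$. It then remains to identify ``$QE^i$ has finite composition series'' with ``$\dim_\kk(QE^i)(G)<\infty$'', which I would do through the classification of additive functors. In the strict case Lemma~\ref{lm-control-strict} gives $QE^i=\bigoplus_r M_{i,r}\otimes I^{(r)}$ with each $I^{(r)}$ simple of endomorphism ring $\kk$ and $\dim_\kk I^{(r)}(\kk)=1$; hence $\dim_\kk(QE^i)(\kk)=\sum_r\dim_\kk M_{i,r}$, finite precisely when the total multiplicity is finite, i.e. when $QE^i$ has finite length. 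In the ordinary case the Eilenberg--Watts equivalence of Lemma~\ref{lm-control-ordinary} identifies $QE^i$ with the $(R,\kk)$-bimodule $(QE^i)(R)$, that is with a module over $R\otimes_\Z\kk$, and finite composition series of $QE^i$ corresponds to finite length of this module.

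The main point, and the only place where the finite-dimensionality hypothesis on $R$ enters, is the last identification in the ordinary case. Because $R$ is a finite dimensional $\Fp$-algebra, $pR=0$ yields $R\otimes_\Z\kk\cong R\otimes_{\Fp}\kk$, which is a finite dimensional $\kk$-algebra; over such an algebra a module has finite length if and only if it is finite dimensional over $\kk$ (every simple module is finite dimensional, and conversely a finite dimensional module is automatically Artinian and Noetherian). Thus $QE^i$ has finite composition series if and only if $\dim_\kk(QE^i)(R)<\infty$, which closes the chain of equivalences. I expect the only real subtlety to be bookkeeping: keeping the connectedness reduction and the objectwise computation of $Q$ clean, and making sure that ``finite composition series'' is read as finite length in $\FF_{\add}$ throughout. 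Note that for this lemma no hypothesis on the homological dimension of $\FF_{\add}$ is needed, only the finiteness of $R$.
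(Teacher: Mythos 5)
Your proposal is correct and follows essentially the same route as the paper: one direction because $QE^i(V)$ is a subquotient of $E^i(V)$, the other because the connected algebra $E(V)$ is a quotient of $S_{\pm}(QE(V))$, which is degreewise finite-dimensional when $QE(V)$ is, combined with the observation that under the stated hypotheses on $\FF$ an additive functor has finite composition series if and only if its values are finite-dimensional. You merely spell out that last observation (via Eilenberg--Watts and the finite-dimensionality of $R\otimes_{\Z}\kk$) in more detail than the paper does.
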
 
\begin{proof}
The hypothesis on $\FF$ implies that an additive functor $A$ has a finite composition series if and only if $\dim_\kk A(V)<\infty$ for all $V$. Thus $E$ is reflexive if and only if for all $V$, $\dim QE^i(V)<\infty$. Now if $E(V)$ is reflexive, then $\dim_\kk QE^i(V)<\dim_\kk E^i(V)<\infty$, hence $E$ is reflexive. Conversely, if $E$ is reflexive, then as an algebra $E(V)$ is a quotient of $S(QE(V))$ which has finite dimension in each degree, hence $E(V)$ is reflexive.
\end{proof}

The main result of the section is the following theorem. The hypothesis that $\FF_\add$ has homological dimension zero is satisfied in some cases of interest, for example if $\FF=\PP_{\omega,\kk}$ or if $\FF=\Fct(P_R,\Mod)$, for a ring $R$ such that $R\otimes_{\mathbb{Z}}\kk$ has homological dimension zero (see section \ref{subsubsec-setting-add}).

\begin{theorem}\label{thm-classif-indecomp}
Let $\kk$ be a perfect field of positive characteristic $p$, and let $\FF$ be $\PP_{\omega,\kk}$ or $\Fct(\V,\Mod)$, where $\V$ is a small additive category of characteristic $p$. Assume that $\FF_\add$ has homological dimension zero.
Let $E$ be a reflexive exponential functor. 
\begin{enumerate}
\item The exponential functor $E$ is indecomposable if and only if the functor $PQE$ is simple.
\item Assume that $E$ is indecomposable. If $p$ is odd and $PQE$ is concentrated in odd degrees, then $E\simeq \Lambda(PQE)$. If $p=2$ or if $E$ is concentrated in even degrees, then there is an $FV$-word $w$ such that $E\simeq \Sigma(PQE,w)$.
\end{enumerate}
\end{theorem}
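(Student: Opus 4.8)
The plan is to reduce Theorem \ref{thm-classif-indecomp} to the already-established classification of indecomposable objects in the category $^\strict\HH^0$ (equivalently, of indecomposable graded Dieudonné modules via \cite{Schoeller}), transporting everything through the equivalences of categories proved earlier in the paper. First I would record the forward direction of part (1): since $PQ(E\otimes E')=PQE\oplus PQE'$, if $PQE$ is simple then $E$ cannot split nontrivially, so $E$ is indecomposable. This is the easy half and was essentially noted before the statement. The substance is the converse, together with the explicit form in part (2).

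For the converse and for part (2), the strategy is to use the categorical decomposition machinery. By Lemma \ref{lm-splitting-principle} (the splitting principle) we may treat separately the case where the primitives are concentrated in odd degrees and the even case; in the odd case $E\simeq\Lambda(PE)\simeq\Lambda(PQE)$, and $E$ indecomposable forces $PE=PQE$ simple, giving $E\simeq\Lambda(PQE)$ directly. So I would concentrate on $E$ connected and concentrated in even degrees (in characteristic $2$ one first doubles degrees, as done repeatedly in the paper). Here I would invoke the equivalence from Lemma \ref{lm-dieu-exp}, which identifies $\Fct(\V,\Mod)-\Exp_c\langle 2n\rangle$ (and its strict analogue, via Proposition \ref{prop-categ-decomp} and Theorem \ref{thm-classif-strict}) with the category $\mathcal{D}'_{\FF_\add}$ of graded Dieudonné modules valued in additive functors, together with the formulas $PE^{dp^r}={}^{(r)}(\ker V_{r-1})$ and $QE^{dp^r}={}^{(r)}(\operatorname{coker}F_{r-1})$. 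The hypothesis that $\FF_\add$ has homological dimension zero is what makes $\FF_\add$ semisimple-like, so $\mathbf{M}(E)$ becomes a graded Dieudonné module over the (semisimple) category of additive functors.

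The key computation is then purely about the structure of these Dieudonné modules. Reflexivity translates into a finiteness condition ($QE^i$, hence each graded piece of the Dieudonné module, has finite length), so $\mathbf{M}(E)$ decomposes as a direct sum of indecomposable Dieudonné modules. Indecomposability of $E$ as an exponential functor corresponds to indecomposability of $\mathbf{M}(E)$, and here I would appeal to the representation theory of string algebras \cite{CB}: a string module over the relevant string algebra (the one with relations $FV=VF=0$ encoded in Definition \ref{def-DA}) is exactly an indecomposable graded module given by a \emph{word} in the letters $F$ and $V$. The simplicity of $PQE$ corresponds to the underlying additive functor $A$ being simple and appearing with multiplicity one as the source of the string, and the word $w$ is precisely the $FV$-word recording, at each level $r$, whether the operator $V_{r-1}$ or $F_{r-1}$ is the nonzero one — which by the primitive/indecomposable formulas is exactly the data distinguishing $PE^{dp^r}\ne 0$ from $QE^{dp^r}\ne 0$. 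This matches the defining data of $\Sigma(A[d],w)$ in Proposition \ref{prop-defiEAW}, so uniqueness there gives $E\simeq\Sigma(PQE,w)$.

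I expect the main obstacle to be the careful matching of the combinatorics of string-algebra words with the $FV$-word data, and in particular verifying that the reflexivity hypothesis (rather than full finiteness) suffices to guarantee that $\mathbf{M}(E)$ is a genuine string module with a well-defined, possibly infinite, word rather than a band module or an infinite-dimensional non-string module. One must check that over a field the only indecomposables of finite length per degree that can occur are the string modules, and that the allowed words are exactly those realizable as $FV$-words; bands are excluded because the grading (each letter strictly raises the degree by a factor of $p$) forbids the cyclic structure a band would require. Once these structural points are pinned down, identifying $PQE$ as the simple functor $A$ sitting in the lowest degree $d$ and reading off $w$ from the $F$-versus-$V$ alternation is routine, and the statement about the endomorphism ring follows from the fact that a string module has local endomorphism ring with residue field $\End_\FF(A)$, exactly as asserted in Proposition \ref{prop-defiEAW}.
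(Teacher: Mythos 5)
Your proposal follows essentially the same route as the paper: reduce via the splitting principle and the categorical decompositions to a single block, pass to graded Dieudonn\'e modules valued in the (semisimple, by the homological dimension zero hypothesis) category of additive functors using the formulas for $P$ and $Q$, translate reflexivity into finite length per degree, and then invoke the string-algebra classification (with bands excluded since the quiver admits no periodic words) to identify the indecomposables with the modules $M_{A,r,w}$ and hence with $\Sigma(PQE,w)$. All the points you flag as potential obstacles are exactly the ones the paper handles, and in the same way, so there is nothing substantive to add.
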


\begin{corollary}\label{cor-sgn-indecomp}
Two reflexive indecomposable functors $E_1$ and $E_2$ are isomorphic if and only if there are isomorphisms of graded functors $PE_1\simeq PE_2$ and $QE_1\simeq QE_2$. 
\end{corollary}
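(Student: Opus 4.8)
The forward implication requires no work: the primitives $P$ and the indecomposables $Q$ are functorial constructions on $\FF-\Exp_c$ (being the kernel and cokernel of maps built from the Hopf structure, as recalled before lemma \ref{lm-splitting-principle}), so an isomorphism $E_1\simeq E_2$ of exponential functors induces isomorphisms of graded functors $PE_1\simeq PE_2$ and $QE_1\simeq QE_2$.

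For the converse my plan is to show that the pair $(PE,QE)$, taken up to isomorphism of graded functors, already pins down the isomorphism type of a reflexive indecomposable $E$ through the classification of theorem \ref{thm-classif-indecomp}. First I would invoke that theorem: since each $E_i$ is reflexive and indecomposable, $PQE_i$ is simple, and $E_i$ is either the exterior algebra $\Lambda(PQE_i)$ (when $p$ is odd and $PQE_i$ lies in an odd degree) or $\Sigma(PQE_i,w_i)$ for a unique $FV$-word $w_i$. Writing $A_i[d_i]:=PQE_i$, the defining relations of proposition \ref{prop-defiEAW} describe $PE_i$ and $QE_i$ explicitly: in the $\Sigma$ case one has $PE_i^{d_i}=QE_i^{d_i}=A_i$, while for $r\ge 1$ the component $PE_i^{d_ip^r}$ equals ${^{(r)}}A_i$ exactly when $(w_i)_r=F$ and $QE_i^{d_ip^r}$ equals ${^{(r)}}A_i$ exactly when $(w_i)_r=V$, all remaining homogeneous components vanishing; in the exterior case $PE_i=QE_i=A_i[d_i]$.

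Then I would read the defining data of $E_i$ off of $PE_i$ and $QE_i$. The degree $d_i$ is the lowest in which $PE_i$ (equivalently $QE_i$) is nonzero, and $A_i$ is the functor sitting there; the hypothesised isomorphisms $PE_1\simeq PE_2$ and $QE_1\simeq QE_2$ therefore give $d_1=d_2=:d$ and $A_1\simeq A_2=:A$. As $p$ is fixed and both functors live in the common degree $d$, they fall into the same branch of theorem \ref{thm-classif-indecomp}(2), the exterior branch occurring precisely for $p$ odd and $d$ odd. In the exterior branch this already yields $E_1\simeq\Lambda(A[d])\simeq E_2$. In the $\Sigma$ branch I would recover the word letter by letter: for each $r\ge 1$ exactly one of $PE_i^{dp^r}\ne 0$, $QE_i^{dp^r}\ne 0$, or $PE_i^{dp^r}=QE_i^{dp^r}=0$ holds, these signalling respectively $(w_i)_r=F$, $(w_i)_r=V$, and that the word has already terminated (in which case the same vanishing persists for all larger $r$). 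Since these alternatives depend only on the supports and homogeneous pieces of $PE_i$ and $QE_i$, the two isomorphisms force $w_1=w_2$, and proposition \ref{prop-defiEAW} gives $E_1\simeq\Sigma(A[d],w_1)\simeq E_2$.

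The one point needing care — which I regard as the crux — is to confirm that these three alternatives are exhaustive and mutually exclusive: namely that for a $\Sigma$-type functor $PE^{dp^r}$ and $QE^{dp^r}$ are never simultaneously nonzero, and that once both vanish at some $r$ they vanish at every larger $p$-power degree, so that the word's length is faithfully encoded. Both assertions read off at once from the formulas of proposition \ref{prop-defiEAW}, so the argument should close without further obstacle.
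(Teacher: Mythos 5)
Your argument is correct and is precisely the one the paper intends: the corollary is stated without proof as an immediate consequence of theorem \ref{thm-classif-indecomp} and proposition \ref{prop-defiEAW}, and your letter-by-letter recovery of the $FV$-word from the supports of $PE^{dp^r}$ and $QE^{dp^r}$ is exactly how the classification data is encoded there. The exhaustiveness and mutual exclusivity you flag as the crux do indeed read off directly from the statement of proposition \ref{prop-defiEAW}, so nothing further is needed.
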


\begin{corollary}[Krull-Remak-Schmidt property]\label{cor-KRS} If $E$ is a reflexive exponential functor, then $E$ admits a unique decomposition as a tensor product of an at most countable family of indecomposable exponential functors:
$$E\simeq \bigotimes_{i\in I}E_i\;.$$
\end{corollary}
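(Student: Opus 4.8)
The plan is to pass from exponential functors to graded Dieudonn\'e modules, where the assertion becomes the Krull--Remak--Schmidt decomposition of a representation of a string algebra. First I would dispose of the odd part. By the splitting principle (lemma \ref{lm-splitting-principle}), when $p$ is odd we may write $E \simeq \Lambda(PE^{\mathrm{odd}}) \otimes E^+$ with $E^+$ concentrated in even degrees; since $\FF_\add$ has homological dimension zero and $\kk$ is perfect, the additive functor $PE^{\mathrm{odd}}$ is a direct sum of simple functors $A_j[d_j]$ (lemmas \ref{lm-control-strict}, \ref{lm-control-ordinary}, example \ref{ex-Fadd-Fq}), whence $\Lambda(PE^{\mathrm{odd}}) \simeq \bigotimes_j \Lambda(A_j[d_j])$ is already a tensor product of indecomposables, unique by semisimplicity. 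When $p=2$, or after this reduction, I may assume $E$ is concentrated in even degrees.

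For the even part I would translate the problem into Dieudonn\'e modules. Decomposing into the localizing subcategories $\FF-\Exp_c\langle 2n\rangle$ (as in lemma \ref{lm-decomp-ord-triv}, or via proposition \ref{prop-categ-decomp} and theorem \ref{thm-classif-strict} in the strict setting), the equivalence $\mathbf{M}$ of lemma \ref{lm-dieu-exp} carries $E$ to a graded Dieudonn\'e module $M=\mathbf{M}(E)$ in $\D'_{\FF_\add}$, i.e. a graded object $\bigoplus_i M^i$ in the semisimple category $\FF_\add$ together with operators $F_i:M^i\to M^{i+1}$, $V_i:M^{i+1}\to M^i$ satisfying $F_iV_i=V_iF_i=0$. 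Splitting $\FF_\add$ into its isotypic components (simple functors have endomorphism ring a division ring, by the cited classifications) reduces $\D'_{\FF_\add}$ to the category $\D'_\kk$ of graded vector spaces with such operators. This is exactly the category of representations of the string algebra attached to the two-way infinite line quiver $\cdots \rightleftarrows \bullet \rightleftarrows \bullet \rightleftarrows \cdots$ modulo the relations $FV=VF=0$, and under $\mathbf{M}$ reflexivity of $E$ corresponds to $M$ being finite-dimensional in each degree.

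I would then invoke the representation theory of string algebras \cite{CB}: every such representation is a direct sum of string modules, uniquely up to isomorphism and reordering. Band modules do not occur because the line quiver admits no cyclic reduced walk, so every indecomposable corresponds to a (possibly infinite) reduced walk, which is precisely an $FV$-word $w$; translating back through $\mathbf{M}$ and theorem \ref{thm-classif-indecomp} identifies the corresponding summand with some $\Sigma(A[d],w)$. Finite-dimensionality in each degree bounds the number of summands whose walk begins in a fixed degree $i$ by the length of $PQE^i$, and as there are countably many degrees the index set $I$ is at most countable, giving existence of $E\simeq\bigotimes_{i\in I}E_i$. Uniqueness is the uniqueness of the string decomposition; equivalently, each $E_i$ has endomorphism ring $\End_\FF(A_i)$, a division ring (proposition \ref{prop-defiEAW}) and hence local, so a Krull--Remak--Schmidt--Azumaya argument in the additive category $\FF-\Exp_c$ yields uniqueness.

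The main obstacle I anticipate is the decomposition step for representations that are only \emph{locally} finite-dimensional rather than of finite total length: one must ensure that the string/band decomposition of \cite{CB} and its uniqueness apply to representations of the infinite line quiver that are finite-dimensional in each degree, and that no band modules or more exotic indecomposables intervene. Once this locally finite version of the string-module decomposition and its Krull--Remak--Schmidt uniqueness are secured, the remainder is transport of structure across $\mathbf{M}$ and the splitting principle. I should also stress that, as the example $\Sigma(A[d],FV)\otimes\Sigma(A[d],VF)\not\simeq\Sigma(A[d],FF)\otimes\Sigma(A[d],VV)$ shows (these share the same $PE$ and $QE$), uniqueness genuinely requires the string structure and cannot be read off from corollary \ref{cor-sgn-indecomp} applied to the invariants $PE$ and $QE$ alone.
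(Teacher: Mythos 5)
Your argument is correct and rests on the same foundations as the paper's --- the Dieudonn\'e-module equivalence $\mathbf{M}$, the string-algebra classification of \cite{CB}, and locality of endomorphism rings --- but it packages them differently. The paper's proof of corollary \ref{cor-KRS} is a short deduction from theorem \ref{thm-classif-indecomp}: existence comes from an elementary argument on indecomposables ($QE^i$ has finite length, $Q$ turns tensor products into direct sums, and a nontrivial connected exponential functor has $QE\neq 0$, so a maximal splitting terminates degreewise and the index set is countable), while uniqueness is Gabriel's Krull--Remak--Schmidt theorem applied in $\FF-\Exp_c$, using that $\End_{\FF-\Exp_c}(E_i)\simeq\End_{\FF^*}(PQE_i)$ is a division algebra. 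You instead re-enter the Dieudonn\'e-module picture and obtain existence and uniqueness simultaneously by applying Crawley--Boevey's decomposition of pointwise finite-dimensional representations of string algebras to the whole of $\mathbf{M}(E)$; this buys a uniform treatment of existence and uniqueness in one stroke, at the cost of repeating reductions (splitting principle, categorical decompositions, isotypic splitting of $\FF_\add$) already performed in the proofs of proposition \ref{prop-defiEAW} and theorem \ref{thm-classif-indecomp}. Two small remarks: the relevant quiver is the one-way infinite line on vertices indexed by $\mathbb{N}$, since the Dieudonn\'e modules here are nonnegatively graded, not a two-way infinite line --- immaterial, as neither admits periodic words, so no band modules occur; and your closing example $\Sigma(A[d],FV)\otimes\Sigma(A[d],VF)\not\simeq\Sigma(A[d],FF)\otimes\Sigma(A[d],VV)$ is a correct and worthwhile warning that corollary \ref{cor-sgn-indecomp} applies only to indecomposables, so uniqueness of the decomposition cannot be read off from the graded functors $PE$ and $QE$ alone.
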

\begin{proof}
Existence of the decomposition follows from the fact that $QE^i$ is a finite direct sum of simple functors, that $Q(E'\otimes E'')^i=(QE')^i\oplus (QE'')^i$ and that a connected exponential functor $E$ functor is trivial if and only if $QE=0$. By \cite[Chap. I.6 Thm 1]{Gabriel}, uniqueness follows from the fact that $\End_{\FF-\Exp^*_c}(E_i)$ is a division algebra. Indeed, theorem \ref{thm-classif-indecomp} gives an explicit form for $E_i$, and thus the endomorphism algebra of $E_i$ is isomorphic to $\End_{\FF^*}(PQE_i)$. The latter is a division algebra since $PQE_i$ is a simple functor.
\end{proof}

In the rest of the section, we prove proposition \ref{prop-defiEAW} and theorem \ref{thm-classif-indecomp}.
\subsection{Preliminary reductions}
If $p$ is odd, then lemma \ref{lm-splitting-principle} implies theorem \ref{thm-classif-indecomp} for exponential functors $E$ such that $QE$ is concentrated in odd degrees, hence it suffices to prove theorem \ref{thm-classif-indecomp} when $E$ is concentrated in even degrees. If $p=2$, we can also restrict our attention to indecomposable exponential functors concentrated in even degrees, since one can always double the degrees of an exponential functor to obtain an exponential functor concentrated in even degrees. Furthermore, recall the categorical decompositions of proposition \ref{prop-categ-decomp} and lemma \ref{lm-decomp-ord-triv}:
\begin{align*}
&\Fct(\V,\Mod)-\Exp^+_c\simeq \prod_{n\text{ prime to }p}\Fct(\V,\Mod)-\Exp_c\langle 2n\rangle\;,\\
&\PP_{\omega,\kk}-\Exp^+_c\simeq \prod_{a\in\mathbb{N}[p^-1],\,a\ne 0}\PP_{\omega,\kk}-\Exp_c(2a)\;.
\end{align*}
If $E$ is indecomposable, then it lies in one the categories 
$\Fct(\V,\Mod)-\Exp_c\langle 2n\rangle$ or $\PP_{\omega,\kk}-\Exp^+_c(2a)$. But all these categories are equivalent to one another by regrading (see lemma \ref{lm-Ra} for the case of  strict exponential functors). Moreover, the operation of regrading transforms an exponential functor satisfying the conditions of proposition \ref{prop-defiEAW} into another exponential functor satisfying the conditions of proposition \ref{prop-defiEAW}, so we have the following consequence.
\begin{lemma}\label{lm-prem-red}
Proposition \ref{prop-defiEAW} and theorem \ref{thm-classif-indecomp} hold if and only if they hold for all exponential functors $E$ in the subcategories $\Fct(\V,\Mod)-\Exp_c\langle 2\rangle$ and $\PP_{\omega,\kk}-\Exp_c(2)$.
\end{lemma}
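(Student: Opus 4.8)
The plan is to prove the non-trivial (``if'') implication; the converse is immediate, since $\langle 2\rangle$ and $(2)$ are full subcategories, so an instance of proposition \ref{prop-defiEAW} or theorem \ref{thm-classif-indecomp} restricted to these subcategories is a special case of the general statement. Recall that, by the splitting principle (lemma \ref{lm-splitting-principle}) together with degree doubling, both statements have already been reduced to the case of a connected indecomposable exponential functor $E$ concentrated in even degrees, the remaining odd-degree case for $p$ odd being dispatched directly by the isomorphism $E\simeq\Lambda(PE)$. So I would take such an $E$ in $\FF-\Exp^+_c$ and aim to deduce both statements for it from their validity in the base categories.

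First I would invoke the categorical decompositions of proposition \ref{prop-categ-decomp} and lemma \ref{lm-decomp-ord-triv}, which exhibit $\FF-\Exp^+_c$ as a product of the categories $\Fct(\V,\Mod)-\Exp_c\langle 2n\rangle$ ($n$ prime to $p$) in the ordinary case, and of the $\PP_{\omega,\kk}-\Exp_c(2a)$ ($a\neq 0$) in the strict case, with reassembly given by the tensor product. An indecomposable object of such a product lies in a single factor, since a tensor factorisation with nontrivial components in two distinct factors would contradict indecomposability. Thus $E$ belongs to one $\langle 2n\rangle$ or one $(2a)$.

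Next I would construct, for each factor, a regrading equivalence onto the base category. In the ordinary case the functor $\mu_n$ multiplying all degrees by $n$ is an equivalence $\Fct(\V,\Mod)-\Exp_c\langle 2\rangle\xrightarrow{\simeq}\Fct(\V,\Mod)-\Exp_c\langle 2n\rangle$: it is fully faithful and its essential image is all of $\langle 2n\rangle$, the inverse $\mu_{1/n}$ being well defined because every object of $\langle 2n\rangle$ is generated by indecomposables in degrees $2np^k$ and hence has all its degrees divisible by $2n$. In the strict case lemma \ref{lm-Ra} provides equivalences $\mathcal{R}_{2a}$ and $\mathcal{R}_2$ from $\PP_{\omega,\kk}-\Exp^0_c$, so that $\mathcal{R}_2\circ\mathcal{R}_{2a}^{-1}$ is an equivalence $\PP_{\omega,\kk}-\Exp_c(2a)\xrightarrow{\simeq}\PP_{\omega,\kk}-\Exp_c(2)$; it is built from degree rescalings and precompositions with Frobenius twists, the latter being fully faithful by theorem \ref{thm-twist}.

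Finally I would verify that these equivalences are compatible with both statements, which is what transfers validity from the base category to an arbitrary factor. Equivalences preserve indecomposability and endomorphism rings; and since precomposition with an additive functor commutes with the formation of $PE$ and $QE$, the regrading carries $PE$, $QE$ and the image $PQE$ to degree shifts or Frobenius twists of themselves. As a Frobenius twist of a simple additive functor is again simple (lemma \ref{lm-control-strict}), the condition ``$PQE$ simple'' is preserved, and a functor $\Sigma(A[d],w)$ is sent to one of the form $\Sigma(A'[d'],w)$ with $A'$ simple and the same $FV$-word $w$. I expect this last point to be the main obstacle: one must follow precisely how the degree-and-weight bookkeeping of $\mu_n$ and of $\mathcal{R}_a$ acts on the prescriptions for $PE^{dp^r}$ and $QE^{dp^r}$ in proposition \ref{prop-defiEAW}, and in particular confirm that the primitive-versus-indecomposable alternative recording each letter of $w$ survives Frobenius precomposition unchanged. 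Granting this, the statements for $E$ in $\langle 2\rangle$ and $(2)$ propagate through the regrading equivalences to every factor, hence to every indecomposable even-degree $E$, which is exactly the reduction asserted.
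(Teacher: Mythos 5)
Your proposal is correct and follows essentially the same route as the paper: reduction to even degrees via the splitting principle (or degree doubling for $p=2$), placement of an indecomposable $E$ in a single factor of the categorical decompositions of proposition \ref{prop-categ-decomp} and lemma \ref{lm-decomp-ord-triv}, and transfer along the regrading equivalences of lemma \ref{lm-Ra} (and $\mu_n$ in the ordinary case), checking that the conditions of proposition \ref{prop-defiEAW} are preserved. You are in fact somewhat more explicit than the paper about why the inverse regradings are defined and why the $FV$-word survives the regrading, which is a welcome addition rather than a deviation.
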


\subsection{Graded Dieudonn\'e modules}
In order to treat the case of strict exponential functors and the case of ordinary exponential functors simultaneously, we use the following notations until the end of section \ref{sec-indecomp}.
\begin{itemize}
\item We let $\C$ stand for $\Fct(\V,\Mod)-\Exp_c\langle 2\rangle$ or for $\PP_{\omega,\kk}-\Exp^+_c(2)$ (where $\V$ is a small additive category of characteristic $p$ and $\kk$ is a perfect field of characteristic $p$).
\item We let $\AC$ stand for $\Fct_\add(\V,\Mod)$ in the ordinary case or for $\PP_{1,\kk}$ in the strict case. The objects of $\PP_{1,\kk}$ are direct sums of copies of the functor $I=I^{(0)}$ (in particular all its objects are additive), and there is an equivalence of categories $\Mod\simeq \PP_{1,\kk}$  which sends a vector space $V$ to the functor $V\otimes I$.
\end{itemize}
The category $\C$ can be described in terms of the category of graded Dieudonn\'e modules $\mathcal{D}'_\AC $ from definition \ref{def-DA}.

\begin{lemma}\label{lm-dieu-tout}
There is an equivalence of categories $\mathbf{M}:\C\simeq \mathcal{D}'_\AC $. If $E$ is an exponential functor in $\C$ such that $\mathbf{M}(E)=(M^*,F_*,V_*)$ then 
\begin{align*}
&PE^{2p^s} = {^{(s)}}(\mathrm{Ker}\, V_{s-1})\;,
&QE^{2p^s} = {^{(s)}}(\mathrm{Coker}\,F_{s-1})\;.
\end{align*}
\end{lemma}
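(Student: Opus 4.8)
The plan is to handle the two possible meanings of $\C$ separately. When $\C=\Fct(\V,\Mod)-\Exp_c\langle 2\rangle$ and $\AC=\Fct_\add(\V,\Mod)$, the assertion — including the formulas for $PE$ and $QE$ — is precisely lemma \ref{lm-dieu-exp} specialized to $n=1$ (which is prime to $p$). So the whole content lies in the strict case $\C=\PP_{\omega,\kk}-\Exp^+_c(2)$, $\AC=\PP_{1,\kk}$, and I would construct $\mathbf{M}$ as a composite of equivalences parallel to the proof of lemma \ref{lm-dieu-exp}, with theorem \ref{thm-classif-strict} playing the role of lemma \ref{lm-intelligent}.

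First I would use that $\kk$ is perfect to invoke theorem \ref{thm-classif-strict}: evaluation on $\kk$ is an equivalence $\PP_{\omega,\kk}-\Exp_c\xrightarrow{\simeq}{^\strict}\HH$. It preserves the grading, the weight decomposition, connectedness and the property of being concentrated in even degrees, so it restricts to an equivalence $\PP_{\omega,\kk}-\Exp^+_c(2)\xrightarrow{\simeq}{^\strict}\HH(2)$, the target being the full subcategory of strict Hopf algebras with $w_kH^i=0$ for $i\ne 2k$. I would then observe that such an $H$ is automatically connected (axiom (2) of definition \ref{def-strict-Hopf} gives $w_0H^0=\kk$, and $i=2k$ forces the degree-zero part to be $w_0H^0$) and that its weight decomposition is determined by its grading through $w_kH=H^{2k}$. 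Hence forgetting weights identifies ${^\strict}\HH(2)$ with the category $\HH_1'$ of connected graded bicommutative Hopf algebras with $\Fp$-algebra endomorphism ring whose primitives lie in degrees $2p^k$ (the weight-$p^k$ condition on primitives turns into the degree-$2p^k$ condition, and the quasi-inverse restores the weights by $w_k(-)=(-)^{2k}$).

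Next I would feed this into the classical graded Dieudonn\'e theory recalled in appendix \ref{App-Dieu}. Fact \ref{thm-schoeller-Fp} provides an equivalence $\HH_1'\simeq\mathcal{D}'_\kk$, and since $\mathcal{D}'_\A$ depends only on the abelian structure of $\A$, the equivalence $\Mod\xrightarrow{\simeq}\PP_{1,\kk}$, $V\mapsto V\otimes I$, induces $\mathcal{D}'_\kk=\mathcal{D}'_{\Mod}\simeq\mathcal{D}'_{\PP_{1,\kk}}=\mathcal{D}'_\AC$. Composing the three equivalences yields $\mathbf{M}:\C\xrightarrow{\simeq}\mathcal{D}'_\AC$.

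For the formulas I would transport facts \ref{fact-cor-P} and \ref{fact-cor-Q} — which identify primitives with $\mathrm{Ker}\,V$ and indecomposables with $\mathrm{Coker}\,F$ of the graded Dieudonn\'e module — along this chain, exactly as in lemma \ref{lm-dieu-exp}. The step I expect to be the main obstacle is the Frobenius-twist bookkeeping: a module $M^*\in\mathcal{D}'_\AC$ has each $M^s$ a sum of copies of $I=I^{(0)}$, whereas by lemmas \ref{lm-additivite} and \ref{lm-control-strict} the weight-$p^s$ primitives and indecomposables of $E$ are sums of copies of $I^{(s)}$. I would check that the degree-to-weight normalization $H^{2k}=w_kH$ together with the reinterpretation $\Mod\simeq\PP_{1,\kk}$ inserts exactly the twist ${^{(s)}}$ linking $M^s$ to the weight-$p^s$ component, giving $PE^{2p^s}={^{(s)}}(\mathrm{Ker}\,V_{s-1})$ and $QE^{2p^s}={^{(s)}}(\mathrm{Coker}\,F_{s-1})$ in agreement with the $n=1$ ordinary formulas; perfectness of $\kk$ is what makes these twists invertible and the matching clean.
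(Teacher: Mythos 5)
Your proposal is correct and follows essentially the same route as the paper: the ordinary case is delegated to lemma \ref{lm-dieu-exp} with $n=1$, and in the strict case $\mathbf{M}$ is built as the composite $\PP_{\omega,\kk}-\Exp_c(2)\simeq{^\strict}\HH(2)\simeq\HH_1'\simeq\mathcal{D}'_\kk\simeq\mathcal{D}'_{\PP_{1,\kk}}$ using theorem \ref{thm-classif-strict}, the degree/weight identification, fact \ref{thm-schoeller-Fp} and $\Mod\simeq\PP_{1,\kk}$. The twist bookkeeping you flag is handled in the paper exactly as you anticipate, via lemma \ref{lm-control-strict} giving $PE^{2p^s}\simeq PE^{2p^s}(\kk)\otimes I^{(s)}$ and $QE^{2p^s}\simeq QE^{2p^s}(\kk)\otimes I^{(s)}$, after which facts \ref{fact-cor-P} and \ref{fact-cor-Q} yield the stated formulas.
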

\begin{proof}
In the case of ordinary exponential functors, the statement is already proved in lemma \ref{lm-dieu-exp}. In the case of strict exponential functors, we define a similar functor $\mathbf{M}$ as the composition of the equivalences of categories:
$$\PP_{\omega,\kk}-\Exp_c(2) \simeq {^\strict}\HH(2) \simeq \HH_1\simeq \mathcal{D}_{\kk}\simeq \mathcal{D}_{\PP_{1,\kk}}\;.$$
To be more specific, the first equivalence is given by theorem \ref{thm-classif-strict}, the second one forgets the weights (in ${^\strict}\HH(2)$ the weights are a can be retrieved from the degrees by dividing them by $2$), the third one is provided by fact \ref{thm-schoeller-Fp}, and the last is induced by the equivalence $\PP_{1,\kk}\simeq \Mod$, namely, it sends a graded Dieudonn\'e module $M$ in $\Mod$ to  the graded Dieudonn\'e module $M\otimes I$ in $\PP_{1,\kk}$.
The classification of additive strict analytic functors given in lemma \ref{lm-control-strict} and the relation between weights and degrees in $\PP_{\omega,\kk}-\Exp_c(2)$ imply that $PE^{2p^s}$ and $QE^{2p^s}$ are direct sums of copies of $I^{(s)}$. Hence $PE^{2p^s}\simeq PE^{2p^s}(\kk)\otimes I^{(s)}$ and $QE^{2p^s}\simeq QE^{2p^s}(\kk)\otimes I^{(s)}$, and the formula for primitives and indecomposables follows from facts \ref{fact-cor-P} and \ref{fact-cor-Q}.
\end{proof}

We can characterize reflexivity by using graded Dieudonn\'e modules.
\begin{lemma}\label{lm-fin}
If $E$ is an object of $\C$, the following assertions are equivalent.
\begin{enumerate}
\item[(i)] For all integers $i$, $QE^i$ has a finite composition series.
\item[(ii)] For all integers $i$, $\mathbf{M}(E)^i$ has a finite composition series.
\item[(iii)] For all integers $i$, $PE^i$  has a finite composition series.
\end{enumerate}
\end{lemma}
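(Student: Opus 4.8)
The plan is to transport the whole statement across the equivalence $\mathbf{M}:\C\simeq \mathcal{D}'_\AC$ of Lemma \ref{lm-dieu-tout} and to reduce it to an elementary length count in the abelian category $\AC$. Write $\mathbf{M}(E)=(M^*,F_*,V_*)$, so that $M^s=\mathbf{M}(E)^s$ with $F_s:M^s\to M^{s+1}$ and $V_s:M^{s+1}\to M^s$, and adopt the boundary convention $M^{-1}=0$ (so that $V_{-1}$ and $F_{-1}$ are the zero maps and $\mathrm{Ker}\,V_{-1}=\mathrm{Coker}\,F_{-1}=M^0$). Since the primitives and indecomposables of $E$ are concentrated in degrees $2p^s$, Lemma \ref{lm-dieu-tout} gives $PE^{2p^s}={^{(s)}}(\mathrm{Ker}\,V_{s-1})$ and $QE^{2p^s}={^{(s)}}(\mathrm{Coker}\,F_{s-1})$. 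The Frobenius twist ${^{(s)}}$ on $\AC$ is exact and faithful, hence it both preserves and reflects the property of having a finite composition series (a faithful exact functor carries a strictly increasing chain of subobjects to a strictly increasing chain, and conversely). Therefore (i) is equivalent to ``$\mathrm{Coker}\,F_{s-1}$ has finite length for all $s\ge 0$'', (iii) is equivalent to ``$\mathrm{Ker}\,V_{s-1}$ has finite length for all $s\ge 0$'', and (ii) is exactly ``$M^s$ has finite length for all $s\ge 0$''.

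The implications (ii)$\Rightarrow$(i) and (ii)$\Rightarrow$(iii) are immediate: in any abelian category a subobject or a quotient of a finite-length object has finite length, and $\mathrm{Ker}\,V_{s-1}$ is a subobject of $M^s$ while $\mathrm{Coker}\,F_{s-1}$ is a quotient of $M^s$.

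For (iii)$\Rightarrow$(ii) I would argue by induction on $s$, using the short exact sequence
$$0\to \mathrm{Ker}\,V_{s-1}\to M^s\xrightarrow{V_{s-1}}\mathrm{Im}\,V_{s-1}\to 0\;.$$
The base case $M^0=\mathrm{Ker}\,V_{-1}$ has finite length by (iii). For the inductive step, $\mathrm{Im}\,V_{s-1}$ is a subobject of $M^{s-1}$, which has finite length by the induction hypothesis, so $\mathrm{Im}\,V_{s-1}$ has finite length; together with $\mathrm{Ker}\,V_{s-1}$ (finite by (iii)) and additivity of length on short exact sequences, this forces $M^s$ to have finite length. The implication (i)$\Rightarrow$(ii) is the dual induction, using the short exact sequence
$$0\to \mathrm{Im}\,F_{s-1}\to M^s\to \mathrm{Coker}\,F_{s-1}\to 0\;,$$
where $\mathrm{Im}\,F_{s-1}$ is a quotient of $M^{s-1}$ (finite by the induction hypothesis) and $\mathrm{Coker}\,F_{s-1}$ is finite by (i).

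The argument is short, and the only points requiring care are bookkeeping rather than genuine obstacles: fixing the convention $M^{-1}=0$ so that both inductions start from $M^0$, and checking that the twist ${^{(s)}}$ neither creates nor destroys finite composition series. It is worth emphasizing that the homological-dimension-zero hypothesis on $\AC$ is \emph{not} needed for this lemma: length is additive on short exact sequences in any abelian category, so the two inductions go through without invoking any splitting of the relevant mono- and epimorphisms.
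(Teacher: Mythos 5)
Your proof is correct, and for the nontrivial implications it takes a genuinely more direct route than the paper. Both arguments start identically: transport the statement through the equivalence $\mathbf{M}$ of lemma \ref{lm-dieu-tout} and use the formulas $PE^{2p^s}={^{(s)}}(\mathrm{Ker}\,V_{s-1})$ and $QE^{2p^s}={^{(s)}}(\mathrm{Coker}\,F_{s-1})$, so that (ii)$\Rightarrow$(i) and (ii)$\Rightarrow$(iii) reduce to the fact that quotients and subobjects of finite-length objects have finite length. For (i)$\Rightarrow$(ii), however, the paper detours through Hopf-algebra structure theory: it replaces $E$ by the exponential functor $E'$ with Dieudonn\'e module $(M^*,F_*,0)$, notes that $E'$ is primitively generated with $QE'=QE$, hence a quotient of $S(QE)$, and bounds $\mathbf{M}(E)^i$ by the explicitly computed $\mathbf{M}(S(QE))^i=\bigoplus_{0\le s\le i}{^{(-s)}}QE^{2p^s}$, leaving the dual case (iii)$\Leftrightarrow$(ii) to the reader. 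You instead run a degree-by-degree induction entirely inside $\mathcal{D}'_\AC$, using the exact sequences $0\to\mathrm{Im}\,F_{s-1}\to M^s\to\mathrm{Coker}\,F_{s-1}\to 0$ and $0\to\mathrm{Ker}\,V_{s-1}\to M^s\to\mathrm{Im}\,V_{s-1}\to 0$ and additivity of length; this is shorter, makes the (i)/(iii) duality completely explicit, and avoids the structure theory of primitively generated functors. One point to tighten: exactness and faithfulness of the twist ${^{(s)}}$ only show that it \emph{reflects} finite composition series (your chain argument runs in that direction), not that it \emph{preserves} them; for the latter you should invoke that over the perfect field $\kk$ the twist is an equivalence on $\AC$ (fact \ref{fact-prop-extscal} in the ordinary case, the classification of lemma \ref{lm-control-strict} in the strict case), which is precisely what the paper cites at the corresponding step. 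Your observation that the homological-dimension-zero hypothesis on $\FF_\add$ is not needed is consistent with the paper, which does not assume it in the statement of this lemma.
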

\begin{proof}
Proof of (i)$\Rightarrow$(ii): since $E$ is an object of $\C$, $QE^i=0$ if $i\ne 2p^s$. By lemma \ref{lm-dieu-tout}, $QE^{2p^s}$ is a quotient of $\mathbf{M}(E)^s$, thus if $\mathbf{M}(E)^s$ has a finite composition series for all $s$, then $E$ is reflexive. Proof of (ii)$\Rightarrow$(i): let $\mathbf{M}(E)=(M^*,F_*,V_*)$ and let $E'$ be such that $\mathbf{M}(E')=(M^*,F_*,0)$. 
By lemma \ref{lm-dieu-tout} $QE=QE'$. Since the Verschiebung of $E'$ is trivial, it is primitively generated, hence it is a quotient of $S(QE)$. Thus for all $i$, 
$\mathbf{M}(E)^i$ is a quotient of $\mathbf{M}(S(QE))^i$.

By the description of indecomposables in lemma \ref{lm-dieu-tout}, for all additive $A$ of homogeneous degree $2p^s$, $\mathbf{M}(S(A))^i$ quotient of $^{(-s)}A$ for $i\ge s$ and is zero for $i<s$. Note that $^{(-s)}A$ has a finite composition series if and only if $A$ has a finite composition series (by fact \ref{fact-prop-extscal}\eqref{eqcat} in the ordinary case, and by the classification of additive functors of lemma \ref{lm-control-strict} in the strict case). Hence $\mathbf{M}(E)^i$ is a quotient of 
$$\mathbf{M}(S(QE))^i=\bigoplus_{0\le s} \mathbf{M}(S(QE^{2p^s}))^i=\bigoplus_{0\le s\le i} {^{(-s)}}QE^{2p^s}\;.$$
This proves that (i)$\Rightarrow$(ii). The proof of (ii)$\Leftrightarrow$(iii) is similar. 
\end{proof}

\subsection{String modules}
Let $\A$ be an abelian category.

\begin{definition}
A \emph{string module} is an object $M=(M^*,F_*,V_*)$ of $\mathcal{D}'_\A$ which is indecomposable, and such that in each degree $i$, $M^i$ is either zero or a simple object of $\A$.
\end{definition}

Let $A$ be a simple object of $\A$, let $r$ be a nonnegative integer and let $w$ be a $FV$-word with $\ell$ letters, $\ell\in\mathbb{N}\cup\{+\infty\}$. We define a string module $M_{A,r,w}$ by letting for $X\in \{F,V\}$:
$$M_{A,r,w}^i= \begin{cases}
A & \text{if $r\le i\le \ell+1$,}\\
0 & \text{otherwise.}
\end{cases} \quad 
X_i = \begin{cases}
\Id_A & \text{if $w_{i-r+1}=X$,}\\
0 & \text{otherwise.}
\end{cases}$$

For example, if we represent Dieudonn\'e modules by diagrams with nonzero objects $M^i$ as vertices and with nonzero operators $F_i$ and $V_i$ as rightwards and leftwards arrows respectively, then $M_{A,2,FVV}$ can be depicted as
$$\underbrace{A}_{\deg 2}\xrightarrow[]{\Id_A}A \xleftarrow[]{\Id_A}A \xleftarrow[]{\Id_A}A\;.$$
The following elementary lemma is left to the reader. 
\begin{lemma}\label{lm-string}
A graded Dieudonn\'e module $M$ is a string module if and only if there is a triple $(A,r,w)$ and an isomorphism $M\simeq M_{A,r,w}$. In this case the triple $(A,r,w)$ is unique and $\End_{\D_{\A}}(M)$ is isomorphic to $\End_\A(A)$. 
\end{lemma}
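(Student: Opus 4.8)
The plan is to establish the equivalence in two directions, starting from the concrete modules $M_{A,r,w}$ and using them to compute endomorphisms, and then recognizing an abstract string module as one of them. First I would record that $M_{A,r,w}$ is indeed an object of $\mathcal{D}'_\A$: at each index $i$ of the support at most one of $F_i,V_i$ equals $\Id_A$ while the other vanishes, so the relations $F_iV_i=V_iF_i=0$ hold trivially, and each graded piece is either $0$ or the simple object $A$. The crux is the endomorphism computation. An endomorphism $\phi=(\phi_i)$ is a family commuting with all $F_i$ and $V_i$; on the support each $\phi_i$ lies in $\End_\A(A)$, which is a division ring by Schur's lemma. At every index interior to the support exactly one connecting operator is $\Id_A$, and commutation with it forces $\phi_i=\phi_{i+1}$; since the support is connected, all the $\phi_i$ share a common value $d$, yielding a ring isomorphism $\End_{\mathcal{D}'_\A}(M_{A,r,w})\xrightarrow{\simeq}\End_\A(A)$, $\phi\mapsto\phi_r$. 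As a division ring has no idempotents other than $0$ and $1$, $M_{A,r,w}$ admits no nontrivial direct-sum decomposition, hence is indecomposable and so is a string module. This settles the ``if'' direction and the endomorphism statement at once.

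For the converse, let $M=(M^*,F_*,V_*)$ be a string module. I would first pin down the shape of its support $S=\{i:M^i\neq 0\}$ using indecomposability. The key observation is that whenever $F_i=V_i=0$ the module splits as the direct sum of its part in degrees $\le i$ and its part in degrees $\ge i+1$: these are sub-Dieudonn\'e-modules precisely because the only operators crossing the boundary are $F_i$ and $V_i$. Consequently $M$ can have no interior zero (otherwise one splits at a vanishing $M^i$) and no two consecutive nonzero pieces with both connecting operators zero. Combined with Schur's lemma---a nonzero map between simple objects is an isomorphism, while $F_iV_i=0$ forbids $F_i$ and $V_i$ from being simultaneously invertible---this shows that $S$ is an interval $[r,r+\ell]$ with $r=\min S$ and $\ell\in\mathbb{N}\cup\{+\infty\}$, that between consecutive degrees exactly one of $F_i,V_i$ is nonzero and is an isomorphism, and in particular that every $M^i$ with $i\in S$ is isomorphic to $A:=M^r$.

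With the shape understood, I would read off the word $w$ by setting $w_{i-r+1}=F$ or $V$ according to which connecting operator is nonzero, and then build an explicit isomorphism $\psi:M\xrightarrow{\simeq}M_{A,r,w}$ by induction up the support: set $\psi_r=\Id_A$ and propagate across each edge using the invertible connecting operator, so that $\psi_{i+1}$ is determined by commutation with the nonzero operator. Each $\psi_i$ is then an isomorphism and commutation with the zero operators is automatic, giving the required isomorphism. Uniqueness of the triple follows directly from this description, since an isomorphism preserves all the relevant data: $r=\min S$, $\ell$ is the length of the support, $A\simeq M^r$, and $w$ records which operator is nonzero at each edge.

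I expect the main obstacle to be the support analysis in the converse direction---making precise that a vanishing pair $(F_i,V_i)$ yields an honest direct-sum splitting in $\mathcal{D}'_\A$, and organizing the case distinctions (interior zeros versus consecutive simple pieces) so that Schur's lemma delivers a single simple type $A$ linked by isomorphisms along a connected interval. Once this structural statement is in place, the construction of $\psi$ and the uniqueness are routine bookkeeping.
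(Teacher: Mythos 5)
Your proof is correct. The paper explicitly leaves this lemma to the reader, and your argument — computing $\End(M_{A,r,w})$ by propagating an endomorphism along the identity connecting maps, and in the converse direction splitting $M$ wherever both $F_i$ and $V_i$ vanish and invoking Schur's lemma to see that exactly one connecting operator survives as an isomorphism across each edge of the (necessarily interval) support — is exactly the standard elementary argument the paper intends.
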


The next theorem is a result of combinatorial representation theory, which will be fundamental for us.

\begin{theorem}\label{thm-classif-combin}
Let $M$ be an indecomposable object of $\mathcal{D}'_\A$ such that for all $i\ge 0$, $M^i$ is a finite direct sum of simple objects of $\A$. Then $M$ is a string module.
\end{theorem}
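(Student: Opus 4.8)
The plan is to reduce Theorem~\ref{thm-classif-combin} to the classification of indecomposable representations of a \emph{string algebra} (or equivalently of a gentle/special biserial algebra) in the sense of \cite{CB}, applied pointwise to the simple constituents. The underlying quiver is the infinite line: vertices indexed by $i\ge 0$, with a forward arrow $F_i:i\to i+1$ and a backward arrow $V_i:i+1\to i$, subject to the relations $F_iV_i=0$ and $V_iF_i=0$. The data of an object $M=(M^*,F_*,V_*)$ of $\mathcal{D}'_\A$ is precisely a representation of this quiver with relations, and the two relations at each vertex are exactly the monomial relations that make this the path algebra of a string algebra. Once this identification is in place, the main theorem of \cite{CB} (every finite-dimensional indecomposable module over a string algebra is a string or a band module) should give the result, \emph{provided} we argue that no band modules occur.

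The key steps, in order, are as follows. First I would make the quiver-with-relations translation precise: fix a set of representatives for the isomorphism classes of simple objects of $\A$, and observe that by hypothesis each $M^i$ is a finite direct sum of simples, so $M$ is a representation of the above bound quiver. Second, I would rule out band modules. A band in this quiver would have to be a cyclic reduced walk, but the quiver is the (one-sided infinite) line $F_i,V_i$ with the relations $F_iV_i=V_iF_i=0$; there are no oriented cycles and, more importantly, every reduced walk that avoids the relations is a path that alternately uses $F$'s and $V$'s but can never return to its starting vertex, so no cyclic walk (band) exists. Hence only string modules arise. Third, I would invoke the explicit description of string modules: a string module is supported on an interval of consecutive vertices and is one-dimensional (a single copy of a fixed simple $A$, using that $\A$ may be non-split but the endomorphism-ring bookkeeping is handled by Lemma~\ref{lm-string}) at each vertex of the interval, with the arrows being either the identity or zero according to a word. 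This is exactly the description of $M_{A,r,w}$, so by Lemma~\ref{lm-string} the indecomposable $M$ is isomorphic to some $M_{A,r,w}$, i.e.\ a string module.

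One technical point I would address carefully is that $\A$ is a general abelian category and the simple objects need not have field endomorphism rings; the walks of \cite{CB} are stated for finite-dimensional modules over a string $k$-algebra. I expect the cleanest route is to first treat the case where $\A$ is semisimple with simple objects having division-ring endomorphisms (which covers both $\mathcal{AC}=\PP_{1,\kk}$ over a perfect field, where $\End=\kk$, and the additive functor categories of interest), reducing the classification to a $k$-linear string algebra over the appropriate division ring, and then note that the hypothesis ``$M^i$ is a finite direct sum of simples'' is all that is needed to land in the finitely-generated setting where \cite{CB} applies. In applications (via Lemma~\ref{lm-prem-red} and the reflexivity hypothesis of Theorem~\ref{thm-classif-indecomp}) the relevant $\A$ is exactly such a semisimple category, so this suffices.

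The main obstacle, I expect, is not the structural reduction but verifying that the infinite/one-sided nature of the quiver causes no trouble: string modules may be supported on an infinite interval (corresponding to infinite $FV$-words $w$), so I must make sure the combinatorial classification of \cite{CB} is applied in a form that allows infinite strings, or alternatively pass to the colimit over the finite truncations. Concretely, I would show that an indecomposable $M$ with each $M^i$ a finite sum of simples, but possibly with infinitely many nonzero $M^i$, is recovered as a filtered colimit of its finite-length subrepresentations; each such is a finite direct sum of finite string modules, and indecomposability forces $M$ itself to be a single (possibly infinite) string. This colimit argument, together with the band-exclusion, is the heart of the proof; the rest is bookkeeping already packaged in Lemma~\ref{lm-string}.
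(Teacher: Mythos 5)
Your reduction to the bound quiver (the one-sided infinite line with arrows $F_i,V_i$ and relations $F_iV_i=V_iF_i=0$), the exclusion of band modules because the quiver admits no cyclic reduced walks, and the endomorphism-ring bookkeeping via Lemma~\ref{lm-string} all match the paper's route. One point you gloss over: before speaking of ``the appropriate division ring'' you need the isotypic splitting $M=\bigoplus_A M(A)$ (all morphisms between non-isomorphic simples vanish, so the operators $F_i,V_i$ preserve isotypic components and an indecomposable $M$ lives over a single simple $A$); then $\mathrm{Add}(A)\simeq \mathrm{mod}_K$ for the possibly skew field $K=\End_\A(A)$, which is exactly how the paper lands on a string algebra over a division ring.

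The genuine gap is in your treatment of infinite strings. You propose to recover an indecomposable $M$ as a filtered colimit of its finite-length subrepresentations and to deduce from the finite-dimensional classification that $M$ is a single (possibly infinite) string. This fails already for $M=\mathbf{M}(S(\kk))$, the module with $M^i=K$ for all $i$, all $F_i=\Id$ and all $V_i=0$: closure under the $F_i$ forces every nonzero subrepresentation to have infinite length, so $M$ is \emph{not} the union of its finite-length subrepresentations. Nor can you use truncations in degrees $\le n$, since these are neither sub- nor quotient representations (one of $F_n$, $V_n$ always points out of the truncation), and in any case decompositions of finite pieces need not be compatible as $n$ grows. The assertion that a pointwise finite-dimensional indecomposable is a single (possibly infinite) string is precisely the content of Crawley-Boevey's theorem for infinite-dimensional string algebras, which the paper invokes directly: \cite[Thm 1.2]{CB} is stated for pointwise artinian modules, so it applies verbatim here once one notes that the proof does not require $K$ to be commutative. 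If you want to avoid \cite{CB}, the correct elementary substitute is the functorial filtration method of Gelfand--Ponomarev (see also \cite{BR}), not a colimit argument.
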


Theorem \ref{thm-classif-combin} can be proved directly with the classical functorial filtration method introduced by Gelfand and Ponomarev \cite{GelfandPonomarev}, see also \cite{BR}. We rather explain now that theorem \ref{thm-classif-combin} is a consequence of the results of \cite{CB}.

For all simple objects $A$, let $M(A)$ be the submodule of $M$ supported in each degree by the isotopic component of $M^i$ corresponding to $A$. Then $M$ splits as a direct sum $M=\bigoplus M(A)$. Thus, we may assume that $M=M(A)$ for some simple $A$, that is, each $M^i$ is a finite direct sum of copies of a fixed simple object $A$. In other terms, we may replace $\A$ by its full subcategory $\mathrm{Add}(A)$ supported by finite direct sums of copies of $A$. Now let $K=\End_\A(A)$. Since $A$ is simple, $K$ is a (possibly skew) field, and $\mathrm{Add}(A)$ is equivalent to the category of finitely generated $K$-modules. Thus it suffices to prove theorem \ref{thm-classif-indecomp} when $\A=\mathrm{mod}_{K}$. In the latter case, $\mathcal{D}'_\A$ identifies with the representations of the string algebra $\Lambda$ generated by the quiver 
$$\xymatrix{
\bullet^0 \ar@/^/[r]^-{F_0} & \ar@/^/[l]^-{V_0}\bullet^1 \ar@/^/[r]^-{F_1}&
\ar@/^/[l]^-{V_1}\bullet^2 \ar@/^/[r]^-{F_2}&\ar@/^/[l]^-{V_2}\cdots \;.
}$$
with relations $F_iV_i=0=V_iF_i$, which are pointwise finite dimensional, hence pointwise artinian in the sense of \cite{CB}. Since the string algebra $\Lambda$ has no periodic words, there is no band module to consider. Hence, theorem \ref{thm-classif-combin} is a special case of \cite[Thm 1.2]{CB} (there is no need that the field $K$ is commutative in the proof of that theorem).

\subsection{Proof of theorem \ref{thm-classif-indecomp} and proposition \ref{prop-defiEAW}}
By lemma \ref{lm-prem-red}, we can assume that $E$ is an object of $\C$. By lemma \ref{lm-fin} the fact that $E$ is reflexive is equivalent to the fact that the graded Dieudonn\'e module $\mathbf{M}(E)$ is a direct sum of finitely many simple objects of $\AC$ in each degree. By theorem \ref{thm-classif-combin}, $\mathbf{M}(E)$ is then indecomposable if and only if it is a string module. Thus, theorem \ref{thm-classif-indecomp} and proposition \ref{prop-defiEAW} both follow from the following statement.
\begin{lemma}
Let $E$ be an object of $\C$. Then $E$ satisfies the conditions of proposition \ref{prop-defiEAW} relatively to a triple $(A,d,w)$ if and only if $d=2p^r$ for some $r\ge 0$ and $\mathbf{M}(E)$ is the string module associated to the triple $({^{(-r)}}A,r,w)$.
\end{lemma}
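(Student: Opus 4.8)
The plan is to read the whole statement through the equivalence $\mathbf{M}\colon\C\xrightarrow{\simeq}\mathcal{D}'_{\AC}$ of lemma \ref{lm-dieu-tout}, so that the $(P,Q)$-bookkeeping of proposition \ref{prop-defiEAW} becomes the explicit combinatorics of the string module $M_{B,r,w}$. The only computational input is the pair of formulas $PE^{2p^s}={}^{(s)}(\mathrm{Ker}\,V_{s-1})$ and $QE^{2p^s}={}^{(s)}(\mathrm{Coker}\,F_{s-1})$ from lemma \ref{lm-dieu-tout}, together with the defining relations $F_iV_i=V_iF_i=0$. I would prove the two implications separately.

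For the ``if'' direction I would take a simple $B$ in $\AC$, an integer $r\ge 0$, and an $FV$-word $w$ of length $\ell\in\mathbb{N}\cup\{+\infty\}$, set $E=\mathbf{M}^{-1}(M_{B,r,w})$, and evaluate the two formulas degree by degree. For $s<r$ one has $M^s=0$, hence $PE^{2p^s}=QE^{2p^s}=0$. For $s=r$ the operators $V_{r-1}$ and $F_{r-1}$ touch the zero object $M^{r-1}$, so $\mathrm{Ker}\,V_{r-1}=\mathrm{Coker}\,F_{r-1}=M^r=B$ and $PE^{2p^r}=QE^{2p^r}={}^{(r)}B$; since $E$ vanishes below degree $2p^r$ this is also $E^{2p^r}$. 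For $s=r+t$ with $t\ge 1$ the operator between the two copies of $B$ is the identity or zero according to the letter $w_t$, and the relation $F_{s-1}V_{s-1}=V_{s-1}F_{s-1}=0$ forces exactly one of $V_{s-1},F_{s-1}$ to be an isomorphism; reading this off gives $PE^{2p^{r+t}}={}^{(t)}({}^{(r)}B)$ exactly when $w_t=F$ and $QE^{2p^{r+t}}={}^{(t)}({}^{(r)}B)$ exactly when $w_t=V$, with both zero otherwise. This is precisely the pattern of proposition \ref{prop-defiEAW} relative to the triple $({}^{(r)}B,2p^r,w)$. The only delicate bookkeeping here is matching the twist powers, i.e. checking ${}^{(r+t)}({}^{(-r)}A)={}^{(t)}A$; this uses that the Frobenius twist is an auto-equivalence of $\AC$ over the perfect field $\kk$, so that $B={}^{(-r)}A$ is meaningful.

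For the ``only if'' direction the main obstacle is that the formulas recover only $\mathrm{Ker}\,V$ and $\mathrm{Coker}\,F$, not the module $\mathbf{M}(E)$ itself, so a naive degree-by-degree reconstruction is not available; I would instead route through the classification of string modules. Assume $E$ satisfies the conditions of proposition \ref{prop-defiEAW} relative to $(A,d,w)$. Since $E\in\C$ its primitives are concentrated in degrees $2p^s$ and $PE^d=A\ne 0$, whence $d=2p^r$ for some $r\ge0$. The conditions make every $PE^i$ and $QE^i$ simple or zero, so $E$ is reflexive; and since in every degree other than $d$ at least one of $PE,QE$ vanishes, the image $PQE$ equals $A[d]$, which is simple, so $PQ(E'\otimes E'')=PQE'\oplus PQE''$ forces $E$ to be indecomposable. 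By lemma \ref{lm-fin} the module $\mathbf{M}(E)$ is a finite direct sum of simples in each degree, and being indecomposable it is a string module by theorem \ref{thm-classif-combin}; by lemma \ref{lm-string} it equals $M_{B,r',w'}$ for a unique triple. Applying the ``if'' direction to this string module shows $E$ satisfies the same conditions relative to $({}^{(r')}B,2p^{r'},w')$; comparing with $(A,d,w)$---whose entries $d$, $A=PE^d$ and the letters of $w$ are read off from $E$ directly---yields $r'=r$, $B={}^{(-r)}A$ and $w'=w$, so $\mathbf{M}(E)=M_{{}^{(-r)}A,r,w}$, as claimed.
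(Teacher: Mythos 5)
Your proof is correct, and the computational core --- reading $PE^{2p^s}$ and $QE^{2p^s}$ off the string module via the formulas ${}^{(s)}(\ker V_{s-1})$ and ${}^{(s)}(\mathrm{coker}\,F_{s-1})$ of lemma \ref{lm-dieu-tout} --- is exactly the paper's (two-sentence) argument. Where you genuinely add something is the ``only if'' direction: you correctly observe that the formulas alone recover only $\ker V_{s-1}$ and $\mathrm{coker}\,F_{s-1}$, not $\mathbf{M}(E)$ itself, and you patch this by first deducing from the conditions that $PQE$ is simple (hence $E$ indecomposable) and that $\mathbf{M}(E)$ is degreewise finite, so that theorem \ref{thm-classif-combin} and lemma \ref{lm-string} force $\mathbf{M}(E)$ to be a string module, after which comparison with the ``if'' direction pins down the triple. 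The paper sidesteps this by applying the lemma only after its surrounding argument has already established that $\mathbf{M}(E)$ is a string module, so your detour is a legitimate --- and, for the lemma as literally stated, arguably necessary --- refinement rather than a different method. Two very minor points: in the ``if'' direction it is the definition of $M_{B,r,w}$ (one operator is $\Id$, the other $0$), not the relation $F_{s-1}V_{s-1}=0$, that determines which of $V_{s-1},F_{s-1}$ is an isomorphism; and the step ``finite composition series implies finite direct sum of simples'' uses the standing hypothesis that $\FF_\add$ has homological dimension zero, which you should cite explicitly since it is what lets theorem \ref{thm-classif-combin} apply.
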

\begin{proof}
If $E$ is an object of $\C$ then its primitives and indecomposables are concentrated in degrees $2p^s$, $s\ge 0$. The equivalence then follows from the formulas for primitives and indecomposables given in lemma \ref{lm-dieu-tout}.
\end{proof}

\section{Strict exponential functors versus strict analytic functors}\label{sec-unique-strict}
In this section, we restrict our attention to strict exponential functors over a field $\kk$. We consider the forgetful functor:
$$\OO:\PP_{\omega,\kk}-\Exp_c \to \PP_{\omega,\kk}^*$$
which forgets the structural morphisms of an exponential functor, and just retains the underlying strict analytic functor. This functor is faithful but not full.
The main result of the section is the following theorem.

\begin{theorem}\label{thm-uniqueness-strict}
Let $E$ and $E'$ be two strict exponential functors over a perfect field $\kk$. Assume that for all nonnegative integers $k$ and $j$, the vector space $w_kQE^j(\kk)$ is finite dimensional. If the strict analytic functors $\OO E$ and $\OO E'$ are isomorphic in $\PP_{\omega,\kk}^*$, then $E$ and $E'$ are isomorphic as strict exponential functors. 
\end{theorem}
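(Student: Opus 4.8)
The plan is to transport the problem into the Dieudonn\'e-module picture of Section~\ref{sec-indecomp} and to argue that the underlying graded strict analytic functor already determines the string-module decomposition.

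\emph{Reductions.} I would first peel off the odd part. When $p$ is odd the splitting principle (lemma~\ref{lm-splitting-principle}) gives $E\simeq E^+\otimes\Lambda(PE^{\mathrm{odd}})$, and by theorem~\ref{thm-classif-indecomp} together with corollary~\ref{cor-sgn-indecomp} each exterior tensor-factor is the exterior algebra on a simple additive functor; such additive data are visible inside $\OO E$, so it suffices to treat the case where $E$ and $E'$ are concentrated in even degrees (doubling degrees first if $p=2$). Next, proposition~\ref{prop-categ-decomp} decomposes $E$ and $E'$ into blocks in the subcategories $\PP_{\omega,\kk}-\Exp_c(2a)$. A block is singled out by the numerical identity $i=2ak$ relating the degree $i$ and the weight $k$, so the blockwise pieces of the graded functor $\OO E$ are read off from $\OO E$ itself; hence $\OO E\simeq \OO E'$ forces a blockwise isomorphism. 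Finally, the regrading equivalences of lemma~\ref{lm-Ra} are compatible with $\OO$ (precomposition by a Frobenius twist being fully faithful on $\PP_{\omega,\kk}$), so I may assume $E,E'\in\C:=\PP_{\omega,\kk}-\Exp_c(2)$.

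\emph{Passage to string modules.} In $\C$ the equivalence $\mathbf{M}$ of lemma~\ref{lm-dieu-tout} replaces $E$ by a graded Dieudonn\'e module $\mathbf{M}(E)\in\mathcal{D}'_{\AC}$. In each block the finiteness hypothesis that $w_kQE^j(\kk)$ be finite dimensional is exactly the statement that $QE^i$ has finite composition series, so lemma~\ref{lm-fin} shows $\mathbf{M}(E)^i$ is a finite direct sum of simple objects of $\AC$ for all $i$. Theorem~\ref{thm-classif-combin} then writes $\mathbf{M}(E)$ as a direct sum of string modules $M_{A,r,w}$, finitely many in each degree, and, the endomorphism rings being division rings (lemma~\ref{lm-string}), this decomposition is unique up to order, exactly as in corollary~\ref{cor-KRS}. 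The same applies to $E'$. Consequently the theorem is equivalent to the assertion that the multiset of string modules occurring in $\mathbf{M}(E)$ is an invariant of the underlying graded functor $\OO E$; granting this, $\mathbf{M}(E)\simeq\mathbf{M}(E')$ and hence $E\simeq E'$.

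\emph{The crux and the main obstacle.} The functor $\OO$ is monoidal, so $\OO E=\bigotimes_i \OO\,\Sigma(A_i[d_i],w_i)$, and the difficulty is that graded strict analytic functors enjoy no Krull--Remak--Schmidt property for $\otimes$: one cannot simply invert this tensor factorisation. Recovering the two graded additive functors $PE$ and $QE$ from $\OO E$ is necessary but \emph{not} sufficient, because a given pair $(PE,QE)$ can be realised by different multisets of words --- the words encode \emph{how} a primitive $I^{(r)}$ in degree $2p^r$ and an indecomposable $I^{(s)}$ in degree $2p^s$ are linked through the intermediate, non-additive structure of $\OO E$. My plan for this step is an induction on the degree: using the weight grading, the classification of additive functors (lemma~\ref{lm-control-strict}) and the rigidity of Frobenius twists (semisimplicity and $\End(I^{(s)})=\kk$), I would locate in each weight-$p^t$, degree-$2p^t$ piece of $\OO E$ the additive sub- and quotient-layers together with their $I^{(s)}$-multiplicities, interpret them via proposition~\ref{prop-defiEAW} as the contributions of letters $F$ (a new primitive) and $V$ (a new indecomposable) at each height, and subtract the contributions already forced by the lower part of the augmentation filtration (decomposables, and their coalgebra duals). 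This should produce a finite, essentially triangular linear system in the string-module multiplicities, whose finiteness is guaranteed by the hypothesis and whose solution is therefore determined by $\OO E$ alone. Making this linking argument precise is the heart of the proof; the finiteness hypothesis is essential here, as example~\ref{ex-infinity} shows that without it the underlying functor no longer determines the exponential structure.
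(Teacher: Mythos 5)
Your reductions and the passage to string modules are sound and match the paper's setup (reduction to the reflexive case, unique tensor decomposition into indecomposables $\Sigma(A[d],w)$, and the observation that one must recover the whole multiset of pairs $(PE_i,QE_i)$ — not merely $PE$ and $QE$ — from $\OO E$). But the step you yourself flag as ``the heart of the proof'' is genuinely missing, and the sketch you give for it would not close the gap. Locating ``additive sub- and quotient-layers together with their $I^{(s)}$-multiplicities'' in each graded piece and subtracting decomposable contributions only recovers the graded functors $PE$ and $QE$, and these do not determine the signature: for a simple additive $A$ placed in degree $2$, the functors $\Sigma(A,FV)\otimes\Sigma(A,VF)$ and $\Sigma(A,FF)\otimes\Sigma(A,VV)$ have isomorphic primitives and isomorphic indecomposables in every degree, yet have different signatures. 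Any argument that proceeds degree by degree through additive layers and multiplicities alone cannot see the linking between a primitive in degree $2p^r$ and an indecomposable in degree $2p^s$, which is exactly the information carried by the words $w_i$. Your ``essentially triangular linear system'' is therefore not specified in a way that could distinguish these two functors.

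The paper's resolution uses a direct-sum (not tensor) Krull--Remak--Schmidt argument on each graded piece, and this is the idea your proposal lacks. First one proves (proposition \ref{prop-indecomp-summand}, via the iterated-coproduct injectivity of lemma \ref{lm-sec-coalg} and the sum-diagonal adjunction of \cite{FFSS}) that for an indecomposable reflexive $E_i$ with $PQE_i$ in degree $d_i$, every nonzero graded piece $E_i^{kd_i}$ is an indecomposable strict analytic functor with $\End_{\PP_{\omega,\kk}}(E_i^{kd_i})=\kk$. Expanding $E^k=\bigotimes E_i$ in degree $k$ gives $E^k\simeq\bigoplus_i E_i^k\oplus(\text{cross terms})$, where the cross terms are summands of tensor products with at least two nonconstant factors and hence, by Pirashvili's vanishing lemma, admit no nonzero additive subfunctor or quotient; since strict analytic functors with finite-dimensional values do satisfy Krull--Remak--Schmidt for $\oplus$ (via Schur algebras, proposition \ref{prop-KRS-P}), the summands $E_i^k$ are recovered from $\OO E^k$ up to isomorphism. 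Corollary \ref{cor-iso-fct} then shows that $E_i^k$ determines $\bigl(\bigoplus_{\ell\le k}PE_i^\ell,\bigoplus_{\ell\le k}QE_i^\ell\bigr)$, i.e.\ the $k$-th fake truncation of the signature, and the injectivity of the fake-truncation map (proposition \ref{prop-inj}, where almost-finiteness, i.e.\ your finiteness hypothesis, is used) reassembles $\sigma(E)$. Without an argument of this kind — one that isolates, inside each $\OO E^k$, pieces attached to a single tensor factor — your plan cannot recover the pairing between primitives and indecomposables, so the proof is incomplete at its central point.
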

\begin{remark}
The finite dimension assumption in theorem \ref{thm-uniqueness-strict} is satisfied if the algebra $E(\kk)$ is finitely generated, or if $E$ is reflexive in the sense of definition \ref{def-reflexive} (see lemma \ref{lm-virtual-reflex} below).
\end{remark}

Observe that we are not claiming that $\OO$ reflects isomorphisms. In general, there are lots of isomorphisms of the graded strict analytic functors $\OO E$ and $\OO E'$ which are not compatible with the exponential structure.
\begin{example}
Let $\kk$ be a field of cardinal greater than $2$, and $\lambda\in\kk\setminus\{0,1\}$, and let $E$ be a strict exponential functor over $\kk$ and let $i$ be such that $w_iE\ne 0$. Define $f_{i,\lambda}\in \End_{\PP_{\omega,\kk}^*}(\OO E)$ by letting $f_{i,\lambda}: w_kE\to w_kE$ be the identity if $k\ne i$ and multiplication by $\lambda$ if $k=i$. Then $f_{i,\lambda}$ is an isomorphism of graded strict analytic functors, but it is not a morphism of strict exponential functors.
\end{example}

We will prove in section \ref{sec-counter} that theorem \ref{thm-uniqueness-strict} does not hold without the assumption that $\kk$ is perfect. 
Also, the following example shows that theorem \ref{thm-uniqueness-strict} does not hold without the finiteness condition.

\begin{example}\label{ex-infinity}
Let $E_i$ be the quotient of the symmetric algebra $S$ by the ideal generated by the $p^i$-th powers. Let $E=\bigotimes_{i\ge 1} E_i$ and let $E'=E\otimes S$. Then $E$ and $E'$ are not isomorphic (since $E$ has non nilpotent elements). However, for all $r>0$ and all $k<p^r$ we have isomorphisms:
\begin{align*}
w_kE &\simeq \bigoplus_{1\le i\le r} w_i(E_1\otimes\cdots\otimes E_r)\otimes w_{k-i}\big(\textstyle\bigotimes_{r<j}E_{j}\big)\\
&\simeq \bigoplus_{1\le i\le r} w_i(E_1\otimes\cdots\otimes E_r)\otimes w_{k-i}(S^{\otimes \mathbb{N}})\\
&\simeq \bigoplus_{1\le i\le r} w_i(E_1\otimes\cdots\otimes E_r)\otimes w_{k-i}\big(\textstyle S\otimes \bigotimes_{r<j}E_{j}\big) \simeq w_kE'\;.
\end{align*}
\end{example}

 We don't know if an analogue of theorem \ref{thm-uniqueness-strict} holds for ordinary exponential functors. However, some (weaker) results in this direction will be proved later in section \ref{sec-uniqueness}.

The remainder of the section is devoted to the proof of theorem \ref{thm-uniqueness-strict}. We first observe that theorem \ref{thm-uniqueness-strict} is a straightforward consequence of lemma \ref{lm-classif-carzero} and remark \ref{Rk-classif-carzero} if $\kk$ is a field of characteristic zero. Therefore, we assume that $\kk$ is a perfect field of positive characteristic $p$. The proof goes as follows.
\begin{enumerate}
\item In section \ref{subsec-virt-reflex} we observe that it suffices to prove theorem \ref{thm-uniqueness-strict} when $E$ is reflexive (i.e. $\dim E(V)^i$ is finite for all $i\ge 0$, see lemma \ref{lm-reflex-Hopf}).
\item By the results of section \ref{sec-indecomp} any reflexive $E$ decomposes uniquely as a tensor product of graded exponential functors $E_i$, each of these being characterized by its primitives and indecomposables. Grouping these graded additive functors together gives the signature $\sigma(E)$, which characterizes the exponential functor $E$ up to isomorphism, see section \ref{subsec-sign}.
\item In section \ref{subsec-propE} we establish a fundamental property of the indecomposable reflexive exponential functors $E$, namely their summands of homogeneous degree $n$ are either zero or indecomposable. 
\item Finally in section \ref{subsec-proofthm} we achieve the proof of theorem \ref{thm-uniqueness-strict} by showing that the signature of $E$ can be recovered from the graded functor $\OO E$. This relies on the results of section \ref{subsec-propE}.
\end{enumerate}
The obstruction for proving an analogue of theorem \ref{thm-uniqueness-strict} in the case of connected  ordinary exponential functors is that we don't know how to prove the indecomposability property of proposition \ref{prop-indecomp-summand} in the ordinary case.

\subsection{Virtually reflexive exponential functors}\label{subsec-virt-reflex}

Let $E$ be a strict exponential functor. Let $\mathbf{E}$ be the strict exponential functor such that 
$w_k\mathbf{E}^{i}= w_kE^{i-2k}$ and the exponential structure of $\mathbf{E}$ is given by that of $E$, e.g. the multiplication 
${\mu}:w_k\mathbf{E}^{i}\otimes w_\ell\mathbf{E}^{j}\to w_{k+\ell}\mathbf{E}^{i+j}$ is equal to the multiplication 
$\mu: w_kE^{i-2k}\otimes w_\ell E^{j-2\ell}\to w_{k+\ell}E^{i+j-2k-2\ell}$.

\begin{definition}
We call a strict exponential functor $E$ \emph{virtually reflexive} if $\mathbf{E}$ is reflexive.
\end{definition}

The next lemma gives a clearer portrayal of virtually reflexive exponential functors. It shows in particular that reflexive exponential functors are virtually reflexive.
\begin{lemma}\label{lm-virtual-reflex} Let $E$ be a strict exponential functor.
The following assertions are equivalent.
\begin{enumerate}
\item[(i)] The strict exponential functor $E$ is virtually reflexive.
\item[(ii)] For all nonnegative integers $k$ and $i$ and all $V$, $\dim_\kk\;w_kE^i(V)<\infty$.
\item[(iii)] For all nonnegative integers $k$ and $i$, $\dim_\kk\;w_kE^i(\kk)<\infty$.
\item[(iv)] For all nonnegative integers $k$ and $i$, $\dim_\kk\;w_kQE^i(\kk)<\infty$.
\item[(v)] For all nonnegative integers $k$ and $i$, $\dim_\kk\;w_kPE^i(\kk)<\infty$.
\end{enumerate}
\end{lemma}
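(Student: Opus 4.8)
The plan is to route all five conditions through the connected strict exponential functor $\mathbf{E}$ and to reduce the genuinely nontrivial content to a standard finiteness dichotomy for connected graded bicommutative Hopf algebras. First I would record the two elementary facts that drive the whole argument. Since $u\colon\kk\xrightarrow{\sim}E(0)$ and homogeneous strict polynomial functors of positive weight vanish on $0$, one has $E(0)=\bigoplus_i w_0E^i=\kk$ concentrated in degree zero, so $w_0E^0=\kk$ and $\mathbf{E}^0=w_0E^0=\kk$; thus $\mathbf{E}$ is connected. Moreover the re-grading defining $\mathbf{E}$ respects the multiplication and comultiplication, hence commutes with the formation of $Q$ and $P$: one has $w_k(Q\mathbf{E})^n=w_kQE^{n-2k}$, $w_k(P\mathbf{E})^n=w_kPE^{n-2k}$ and $w_k\mathbf{E}^n=w_kE^{n-2k}$. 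The key point is that for fixed $n$ only the finitely many weights $k$ with $2k\le n$ contribute, so that $\mathbf{E}^n(\kk)=\bigoplus_{2k\le n}w_kE^{n-2k}(\kk)$ is a \emph{finite} direct sum, and similarly for $Q\mathbf{E}^n(\kk)$ and $P\mathbf{E}^n(\kk)$.

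Using this, each pointwise condition becomes a degreewise finiteness condition on $H:=\mathbf{E}(\kk)$. I would check that condition (iii) is equivalent to $\dim_\kk H^n<\infty$ for all $n$, condition (iv) to $\dim_\kk (QH)^n<\infty$ for all $n$, and condition (v) to $\dim_\kk (PH)^n<\infty$ for all $n$: in each case the finite-range-of-weights remark turns ``finite for every pair $(k,i)$'' into ``finite in every degree $n$''. Condition (i), reflexivity of $\mathbf{E}$, is equivalent to (iv): indeed $\mathbf{E}$ is connected, $Q\mathbf{E}^n$ is additive by Lemma \ref{lm-additivite}, and by the classification of additive strict analytic functors (Lemma \ref{lm-control-strict}) an additive functor has a finite composition series exactly when it is finite-dimensional on $\kk$. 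Finally (ii) is equivalent to (iii): (ii)$\Rightarrow$(iii) is trivial, and for the converse I would use the exponential isomorphism $E(V)\simeq E(\kk)^{\otimes\dim V}$, under which $w_kE^i(V)$ becomes a finite direct sum of tensor products of spaces $w_{k_j}E^{i_j}(\kk)$ with $\sum k_j=k$ and $\sum i_j=i$, hence finite-dimensional.

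It remains to prove, for the connected graded bicommutative Hopf algebra $H$, the equivalence of (a) $\dim H^n<\infty$, (b) $\dim (QH)^n<\infty$ and (c) $\dim (PH)^n<\infty$ (all for every $n$). The implications (a)$\Rightarrow$(b) and (a)$\Rightarrow$(c) are immediate, as $QH$ is a weightwise subquotient and $PH$ a subspace of $H$. For (b)$\Rightarrow$(a) I would choose a graded vector-space section of $\overline H\twoheadrightarrow QH$; since $H$ is connected its augmentation ideal lies in degrees $\ge 1$ and is generated by this section, so $H^n$ is spanned by the finitely many products of generators of total degree $n$, giving $\dim H^n<\infty$ (this is the argument already used in the proof of Lemma \ref{lm-reflex-Hopf}).

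The dual implication (c)$\Rightarrow$(a) is the part I expect to be the main obstacle, since one cannot simply dualize without a finiteness hypothesis to begin with. Here I would invoke the coradical filtration $\kk=C_0\subset C_1\subset\cdots$ of the connected coalgebra $H$: it is exhaustive, and the iterated reduced coproduct embeds $C_m/C_{m-1}$ into $(PH)^{\otimes m}$. As $PH$ is concentrated in degrees $\ge 1$, the space $(PH)^{\otimes m}$ vanishes in degrees $<m$, so for fixed $n$ only $m\le n$ contribute and $\dim H^n=\sum_{m\le n}\dim (C_m/C_{m-1})^n\le\sum_{m\le n}\dim((PH)^{\otimes m})^n<\infty$. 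Assembling the equivalences (i)$\Leftrightarrow$(iv), (ii)$\Leftrightarrow$(iii), (iii)$\Leftrightarrow$(a), (iv)$\Leftrightarrow$(b), (v)$\Leftrightarrow$(c) together with (a)$\Leftrightarrow$(b)$\Leftrightarrow$(c) then completes the proof.
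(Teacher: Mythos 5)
Your proof is correct, and most of it follows the same route as the paper: the observation that $\mathbf{E}$ is connected with only finitely many weights contributing in each degree, the exponential formula for (ii)$\Leftrightarrow$(iii), the classification of additive strict analytic functors for (i)$\Leftrightarrow$(iv), and the generators argument (as in lemma \ref{lm-reflex-Hopf}) for (iv)$\Rightarrow$(iii). Where you genuinely diverge is in handling condition (v). The paper passes to the restricted dual $E^\sharp$, defined by $w_kE^{\sharp\,i}(V)=(w_kE^i(V^*))^*$, and deduces (iii)$\Leftrightarrow$(v) for $E$ from (iii)$\Leftrightarrow$(iv) for $E^\sharp$, using $w_kPE^i(\kk)=w_kQE^{\sharp\,i}(\kk^*)^*$. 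You instead prove the primal statement directly: for the connected Hopf algebra $H=\mathbf{E}(\kk)$, the coradical filtration is exhaustive, $\overline{\Delta}_m$ embeds $C_m/C_{m-1}$ into $(PH)^{\otimes m}$, and since $PH$ lives in degrees $\ge 1$ only $m\le n$ contribute in degree $n$, giving $\dim H^n\le\sum_{m\le n}\dim\bigl((PH)^{\otimes m}\bigr)^n<\infty$. Your route buys something real: the duality argument implicitly needs the comparison $(V\otimes W)^*\simeq V^*\otimes W^*$ to know that $E^\sharp$ is again a strict exponential functor to which the already-proved equivalences apply, and this is delicate precisely when finiteness has not yet been established; your coradical-filtration argument never leaves $E$ and so avoids this issue entirely, at the modest cost of invoking one more standard fact about connected coalgebras over a field.
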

\begin{proof}
For all $d$, the exponential formula yields an decomposition, in which the direct sum is taken over all pairs of $d$-tuples $(k_1,\dots,k_d)$ and $(i_1,\dots,i_d)$ such that $\sum k_i=k$ and $\sum d_i=d$.
$$w_kE^i(\kk^d)\simeq \bigoplus w_{k_1}E^{i_1}(\kk)\otimes\cdots\otimes w_{k_d}E^{i_d}(\kk)\;.$$
Since the direct sum is finite, this proves that (iii)$\Rightarrow$(ii). (ii)$\Rightarrow$(iii) is obvious.

Next we prove that (i)$\Leftrightarrow$ (iii')$\Leftrightarrow$ (iv').
Since $w_kE^i(\kk)=w_k\mathbf{E}^{i-2k}$, we can replace $E$ by $\mathbf{E}$ in assertions (iii), (iv) without changing their meaning. 
We also observe that by construction, for all $i$, we have a \emph{finite} decomposition:  
$$\widetilde{E}^i=\bigoplus_{0\le k\le i/2}w_k\mathbf{E}^i\;.$$
Therefore assertions (iii) and (iv) are respectively equivalent to:
\begin{enumerate}
\item[(iii')] For all nonnegative integer $i$, $\dim_\kk\;\mathbf{E}^i(\kk)<\infty$.
\item[(iv')] For all nonnegative integers $i$, $\dim_\kk\;Q\mathbf{E}^i(\kk)<\infty$.
\end{enumerate}
 Since additive strict analytic functors are direct sums of Frobenius twist functors $I^{(r)}$, and since $I^{(r)}(\kk)$ has dimension one, $QE^i$ has a finite composition series if and only if $\dim_\kk QE^i(\kk)<\infty$. Thus  (i)$\Leftrightarrow$ (iii') by definition of `reflexive', and (i)$\Leftrightarrow$ (iv') by lemma \ref{lm-reflex-Hopf}. 

Finally we prove (i)$\Leftrightarrow$ (v) by duality. Namely, let $E^\sharp$ be the restricted dual of $E$, i.e. $w_kE^{\sharp\,i}(V)= (w_kE^i(V^*)^*)$ where $^*$ is $\kk$ linear duality (and the structure maps of $E^\sharp$ are obtained from the ones of $E$ by dualizing). Then $E^\sharp$ satisfies (iii) if and only if $E$ satisfies (iii). Since $w_kPE^i(\kk)= w_kQE^{\sharp\,i}(\kk^*)^*$, the fact that equivalence (iii)$\Leftrightarrow$ (iv) holds for$E^\sharp$ proves that the equivalence (iii)$\Leftrightarrow$ (v) holds for $E$.
\end{proof}

\begin{lemma}\label{lm-reduction-reflexive}
Theorem \ref{thm-uniqueness-strict} holds if and only if it holds for all reflexive strict exponential functors $E$.
\end{lemma}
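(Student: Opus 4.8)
The plan is to exploit the regrading operation $E\mapsto \mathbf{E}$ introduced just before the statement, which places the summand $w_kE^i$ in degree $i+2k$. First I would observe that this operation is a functor from strict exponential functors to strict exponential functors: a morphism of strict exponential functors preserves both the weight and the homological degree, hence it preserves the new degree and induces a morphism of the regraded objects, while the underlying data of the morphism is unchanged. Thus $E\mapsto\mathbf{E}$ is fully faithful and identifies $\PP_{\omega,\kk}-\Exp_c$ with the full subcategory of those strict exponential functors $F$ satisfying $w_kF^j=0$ for $j<2k$; its quasi-inverse on that subcategory is the reverse regrading sending $w_kF^j$ to degree $j-2k$. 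Being fully faithful, this functor reflects and preserves isomorphisms, so $E\simeq E'$ as strict exponential functors if and only if $\mathbf{E}\simeq\mathbf{E}'$. Moreover, by the very definition of virtual reflexivity, $\mathbf{E}$ is reflexive precisely when $E$ is virtually reflexive, which by Lemma \ref{lm-virtual-reflex} is exactly the finiteness hypothesis appearing in Theorem \ref{thm-uniqueness-strict}.

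The forward implication is immediate: a reflexive strict exponential functor is virtually reflexive by Lemma \ref{lm-virtual-reflex}, hence satisfies the hypothesis of Theorem \ref{thm-uniqueness-strict}, so the theorem restricted to reflexive functors is a special case of the full statement.

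For the converse, I would suppose the theorem is known for reflexive functors and take virtually reflexive $E,E'$ with $\OO E\simeq \OO E'$ in $\PP_{\omega,\kk}^*$. The key point to establish is that the regrading commutes with the forgetful functor $\OO$: the functor $\OO$ retains both the weight decomposition and the homological grading, and the regrading is defined purely in terms of these two data, so $\OO\mathbf{E}$ is obtained from $\OO E$ by the analogous regrading $R$ on the category $\PP_{\omega,\kk}^*$ of graded strict analytic functors, that is $\OO\mathbf{E}=R(\OO E)$. Since $R$ is a functor, the isomorphism $\OO E\simeq \OO E'$ yields $\OO\mathbf{E}\simeq\OO\mathbf{E}'$. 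Now $\mathbf{E}$ and $\mathbf{E}'$ are both reflexive: for $\mathbf{E}$ this is virtual reflexivity of $E$, and for $\mathbf{E}'$ it follows because $\OO E\simeq\OO E'$ forces $\dim_\kk w_kE'^{j}(\kk)=\dim_\kk w_kE^{j}(\kk)<\infty$, so $E'$ is virtually reflexive by Lemma \ref{lm-virtual-reflex}(iii). Hence the reflexive case of the theorem applies and gives an isomorphism $\mathbf{E}\simeq\mathbf{E}'$ of strict exponential functors, and applying the reverse regrading (equivalently, using full faithfulness) produces the desired isomorphism $E\simeq E'$.

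The only points requiring care are the identity $\OO\mathbf{E}=R(\OO E)$ and the fact that $E\mapsto\mathbf{E}$ is an equivalence onto its image; both are routine bookkeeping about weights and degrees rather than genuine obstacles, so I expect no real difficulty at this stage, and the entire mathematical substance of Theorem \ref{thm-uniqueness-strict} remains concentrated in the reflexive case treated in the subsequent sections.
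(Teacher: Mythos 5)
Your proposal is correct and follows essentially the same route as the paper: regrade $E$ and $E'$ to the reflexive functors $\mathbf{E}$ and $\mathbf{E}'$, note that the isomorphism $\OO E\simeq\OO E'$ transports to $\OO\mathbf{E}\simeq\OO\mathbf{E}'$ and forces $E'$ to be virtually reflexive as well, apply the reflexive case, and regrade back. The paper's proof is a terser version of exactly this argument; your extra care about functoriality of the regrading and its compatibility with $\OO$ is the same routine bookkeeping the paper leaves implicit.
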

\begin{proof}
Lemma \ref{lm-virtual-reflex} shows that reflexive functors are virtually reflexive. Conversely, if $\OO E\simeq \OO E'$ then both $E$ and $E'$ are virtually reflexive, hence by changing the gradings $\OO \mathbf{E}\simeq \OO \mathbf{E}'$, hence $ \mathbf{E}\simeq \mathbf{E}'$ since $\mathbf{E}$ is reflexive, hence $E\simeq E'$ by coming back to the original gradings.
\end{proof}

\subsection{The signature of a strict exponential functor}\label{subsec-sign}
Let $\kk$ be a perfect field of positive characteristic $p$. By theorem \ref{thm-classif-indecomp} and corollary \ref{cor-KRS}, any nontrivial reflexive $E$ has a unique decomposition as an at most countable tensor product of indecomposable strict exponential functors:
$$E\simeq \bigotimes_{i\in I} E_i\;.$$
Moreover, it follows from proposition \ref{prop-defiEAW} that each $E_i$ is determined up to isomorphism by the pair of graded additive functors $(PE_i,QE_i)$.
Thus the virtually reflexive exponential functor $E$ is determined up to isomorphism by its signature $\sigma(E)$ defined as follows.

\begin{definition}
The \emph{signature} of a virtually reflexive $E$ is the multiset (i.e. non ordered, possibly with some elements repeated) $\sigma(E)$ defined by:
$$\sigma(E)= \{(PE_i,QE_i), i\in I\}\;.$$ 
\end{definition}

Thus, in order to prove theorem \ref{thm-uniqueness-strict}, it suffices to show that one may compute $\sigma(E)$ from the graded strict analytic functor $\OO E$. However, in practice, we won't have a direct way to compute $\sigma(E)$ from $\OO E$, but we will rather be able to compute some kinds of truncations of $\sigma(E)$. To explain this in detail, we need to introduce further notations. 
\begin{convention}\label{conv-multiset}
In the sequel of section \ref{sec-unique-strict}, the term `multiset' will always refer to a multiset of pairs $(A,B)$ in which $A$ and $B$ are graded additive strict analytic functors. 
\end{convention}
Let $(A,B)$ be a pair such that $A$ and $B$ are additive strict analytic functors. For all $k\ge 0$, the truncation $\tau_k(A,B)$ is given by 
$$\tau_k(A,B)=
\big(\bigoplus_{0\le i\le k} A^i\,, \bigoplus_{0\le i\le k} B^i\,\big)\;.$$
If $\sigma$ is a multiset, we define its $k$-th \emph{fake truncation} $\phi_k(\sigma)$ as the multiset
$$\phi_k(\sigma)=\left\{ \tau_k(A,B)\;\left|\; (A,B)\in\sigma\,, (A^{k},B^{k})\ne (0,0) \right.\right\}\;.$$
\begin{example}
If $\sigma=\{(I\oplus I^{(1)},I)\,,\, (I^{(3)},I\oplus I^{(3)})\}$ where each $I^{(k)}$ is placed in degree $k$, then we have:
\begin{align*}
&\phi_0(\sigma)=\{(I,I)\,,\,(0,I)\}\;,\\
&\phi_1(\sigma)=\{(I\oplus I^{(1)},I)\}\;,\\
&\phi_2(\sigma)=\emptyset\;,\\
&\phi_3(\sigma)=\{(I^{(3)},I\oplus I^{(3)}\})\;,\\
&\phi_k(\sigma)=\emptyset \text{ if $k\ge 4$.}
\end{align*}
\end{example}

The next example shows that in general the sequence of all the fake truncations $\phi_k(\sigma)$ is not sufficient to reconstruct $\sigma$.
\begin{example}\label{ex-infinity-multiset}(compare example \ref{ex-infinity})
For all $i\in \mathbb{N}\cup\{\infty\}$, we let: 
$$A_i=\bigoplus_{0\le j<i+1} I^{(j)}\;,$$
where each $I^{(j)}$ is placed in degree $j$.
We consider the two distinct multisets $\sigma$ and $\sigma'$ defined by
$$\sigma=\{ (A_i,I)\;,\; i\in\mathbb{N}\}\qquad \sigma'=\{ (A_i,I)\;,\; i\in\mathbb{N}\cup\{\infty\}\,\}\;.$$
Then for all $k\ge 0$, $\phi_k(\sigma)=\phi_k(\sigma')$ is the multiset which contains the pair $(A_k,I)$ with countable multiplicity and nothing else.
\end{example}

Now we observe that if $E$ is reflexive, then the multiset $\sigma(E)$ is almost finite in the following sense. 
\begin{definition}
We say that a multiset is \emph{almost finite} if for all $i$ it contains only a finite number of pairs $(A,B)$ (counted with multiplicities) such that $(A^i,B^i)\ne (0,0)$.
\end{definition}

We let $\mathcal{S}$ (like `signature') be the set of almost finite multisets. Thus, if $E$ is reflexive, then $\sigma(E)\in \mathcal{S}$, whereas the multisets of example \ref{ex-infinity-multiset} are not elements of $\mathcal{S}$. The fake truncations induce a map:
$$\begin{array}{cccc}
\phi: &\mathcal{S}&\to &\mathcal{S}^{\mathbb{N}}\\
& \sigma & \mapsto & (\phi_k(\sigma))_{k\ge 0}
\end{array}\;.$$
\begin{proposition}\label{prop-inj}
The map $\phi$ is injective.
\end{proposition}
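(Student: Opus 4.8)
The plan is to reconstruct $\sigma$ from the sequence $(\phi_k(\sigma))_{k\ge 0}$ by exhibiting the multiplicity $\mu_\sigma(P)$ of each nonzero pair $P=(A,B)$ as a function of this data; since two multisets with equal multiplicity functions coincide, this immediately yields injectivity. First I would isolate the quantities that can be read off directly: for a pair $R$ whose degree-$N$ component $R^N$ is nonzero, the multiplicity of $R$ in $\phi_N(\sigma)$ equals $c_N(R):=\#\{Q\in\sigma:\tau_N(Q)=R\}$, because the condition $R^N\neq(0,0)$ forces every such $Q$ to be active in degree $N$, so no pair is discarded by the defining constraint of the fake truncation. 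Almost-finiteness guarantees that only finitely many pairs of $\sigma$ are active in a fixed degree $N$; hence each $c_N(R)$ is a finite number, and it is determined by $\phi_N(\sigma)$.

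For a pair $P$ with \emph{infinite} support the reconstruction is a limiting argument. Along the cofinal set of degrees $N\in\mathrm{supp}(P)$ the sub-multisets $\{Q\in\sigma:\tau_N(Q)=\tau_N(P)\}$ are finite and nested decreasing, and their intersection is exactly $\{Q\in\sigma:Q=P\}$, since $\mathrm{supp}(P)$ is unbounded forces agreement in every degree; hence $c_N(\tau_N(P))\to\mu_\sigma(P)$. As each term $c_N(\tau_N(P))$ is read off $\phi_N(\sigma)$, the limit $\mu_\sigma(P)$ is determined by the data.

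The main obstacle is the case of a pair $P$ with \emph{finite} support and top degree $M$: here the only readable value, $c_M(P)$, overcounts $\mu_\sigma(P)$, because it also counts every \emph{extension} of $P$, i.e. every $Q\in\sigma$ with $\tau_M(Q)=P$ but $Q\neq P$, and there is no higher degree at which $P$ itself reappears with nonzero top component. I would dispose of this by sorting the extensions according to their first active degree $n>M$ strictly above $M$: the number of extensions whose first active degree is $n$ equals the multiplicity in $\phi_n(\sigma)$ of those $R$ satisfying $\tau_M(R)=P$ and $R^j=(0,0)$ for $M<j<n$ — a quantity again read off the data, valid for finite- and infinite-support extensions alike. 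Summing over $n>M$ yields the total number of extensions, which is finite since it is bounded by $c_M(P)$, so $\mu_\sigma(P)=c_M(P)-\sum_{n>M}(\cdots)$ is determined.

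Combining the three cases shows that $\phi_k(\sigma)=\phi_k(\sigma')$ for all $k$ implies $\mu_\sigma(P)=\mu_{\sigma'}(P)$ for every pair $P$, i.e. $\sigma=\sigma'$. Throughout, the finiteness afforded by the almost-finiteness hypothesis is precisely what makes every count well defined; this is the feature that fails in example \ref{ex-infinity-multiset}, where an unbounded supply of pairs active in a fixed degree is exactly what allows two distinct multisets to share all their fake truncations.
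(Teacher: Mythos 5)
Your argument is correct, and it runs on the same engine as the paper's: for a nonzero pair $P$, the elements of $\sigma$ whose truncations agree with those of $P$ form a nested family that is finite once the truncation degree reaches the support of $P$, and whose intersection has cardinality $\mu_\sigma(P)$; almost-finiteness is exactly what makes every count finite. The difference is organizational. The paper first shows that the full truncations $\tau_k(\sigma)$ can be reconstructed inductively from $\phi_k(\sigma)$ together with $\tau_{k-1}(\sigma)$, and then recovers each multiplicity as the limit of the nonincreasing sequence $m(A,B,k)$ of multiplicities in $\tau_k(\sigma)$ --- a uniform argument with no case split. You bypass the full truncations and read everything directly off the fake truncations, which forces the dichotomy between infinite-support pairs (where the limit argument applies along the cofinal set $\mathrm{supp}(P)$) and finite-support pairs (where $c_M(P)$ overcounts and you subtract the extensions, sorted by their first active degree $n>M$); that correction term is precisely the information that the paper's inductive step packages into the passage from $\phi_k(\sigma)$ to $\tau_k(\sigma)$. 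Your version is a bit longer but makes explicit where each count is finite, and your closing remark correctly locates the failure mode of example \ref{ex-infinity-multiset}. One shared pedantic caveat: both proofs tacitly assume the pairs occurring in $\sigma$ are nonzero (as is the case for signatures), since a multiset consisting of copies of $(0,0)$ is almost finite and invisible to every $\phi_k$.
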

\begin{proof}
Let $\sigma=\{(A_i,B_i),i\in I\}\in\mathcal{S}$. Given a pair $(A,B)$ we let $m(A,B,k)$ be the multiplicity of $\tau_k(A,B)$ in the multiset $\tau_k(\sigma)=\{\tau_k(A_i,B_i),i\in I\}$. Since $\sigma$ is almost finite, the multiplicity $m(A,B)$ of $(A,B)$ in $\sigma$ is finite, the sequence of multiplicities $m(A,B,k)$ is nonincreasing with respect to $k$, and 
$$m(A,B)=\lim_{k\to \infty}m(A,B,k)\;.$$ 
Thus two almost finite multisets $\sigma_1$ and $\sigma_2$ are equal if and only if $\tau_k(\sigma_1)=\tau_k(\sigma_2)$ for all $k\ge 0$. Now one easily determines $\tau_k(\sigma_i)$ from $\phi_k(\sigma_i)$ and $\tau_{k-1}(\sigma_i)$, hence $\tau_k(\sigma_1)=\tau_k(\sigma_2)$ for all $k\ge 0$ if and only if $\phi(\sigma_1)=\phi(\sigma_2)$.
\end{proof}
\begin{corollary}\label{cor-signature}
Let $E$ be a reflexive strict exponential functor over a perfect field $\kk$. The multisets $\phi_k(\sigma(E))$, $k\ge 0$, uniquely determine $E$ up to isomorphism (among the class of reflexive strict exponential functors).
\end{corollary}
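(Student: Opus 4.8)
The plan is to deduce the corollary directly from the injectivity of $\phi$ established in proposition \ref{prop-inj}, together with the fact, already recorded above, that a reflexive (hence virtually reflexive) strict exponential functor is determined up to isomorphism by its signature. No new computation is needed; the work is simply in assembling the pieces in the right order.

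First I would note that since $E$ is reflexive, its signature $\sigma(E)$ is an almost finite multiset, i.e. $\sigma(E)\in\mathcal{S}$. Hence the data consisting of all the fake truncations $\phi_k(\sigma(E))$, $k\ge 0$, is precisely the image $\phi(\sigma(E))$ of $\sigma(E)$ under the map $\phi:\mathcal{S}\to\mathcal{S}^{\mathbb{N}}$.

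Next, by proposition \ref{prop-inj} the map $\phi$ is injective, so the sequence $(\phi_k(\sigma(E)))_{k\ge 0}$ recovers $\sigma(E)$ itself. It then remains to recall why $\sigma(E)$ determines $E$: by corollary \ref{cor-KRS} the functor $E$ admits a unique decomposition $E\simeq\bigotimes_{i\in I}E_i$ into indecomposable reflexive exponential functors, and by proposition \ref{prop-defiEAW} each $E_i$ is determined up to isomorphism by the pair of graded additive functors $(PE_i,QE_i)$. Since $\sigma(E)$ is by definition the multiset $\{(PE_i,QE_i),\,i\in I\}$, two reflexive strict exponential functors with equal signatures are isomorphic, and the corollary follows.

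The only point requiring care is that the reconstruction of $E$ from $\sigma(E)$ rests on the full classification of section \ref{sec-indecomp} (theorem \ref{thm-classif-indecomp}, proposition \ref{prop-defiEAW} and corollary \ref{cor-KRS}), which in particular uses that $\kk$ is perfect; beyond invoking these there is no genuine obstacle here, the substantive content having already been absorbed into proposition \ref{prop-inj}.
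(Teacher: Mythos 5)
Your proof is correct and follows exactly the route the paper intends: the corollary is stated as an immediate consequence of proposition \ref{prop-inj} combined with the fact, established just before via theorem \ref{thm-classif-indecomp}, proposition \ref{prop-defiEAW} and corollary \ref{cor-KRS}, that a reflexive strict exponential functor is determined up to isomorphism by its signature. Your assembly of these pieces, including the observation that reflexivity guarantees $\sigma(E)\in\mathcal{S}$ so that $\phi$ applies, is precisely what the paper leaves implicit.
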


\subsection{Special exponential functors and indecomposability}\label{subsec-propE}

We first need a combinatorial lemma of independent interest. This lemma has an obvious dual for commutative algebras (involving multiplication and indecomposables), whose formulation and proof is left to the reader.
\begin{lemma}\label{lm-sec-coalg}
Let $C$ be a cocommutative coalgebra over an arbitrary  field $\kk$. Assume that $C$ is nonnegatively graded, in such a way that the comultiplication preserves the degrees and the augmentation induces an isomorphism $C^0\simeq \kk$. Let $\rho$ be an integer greater or equal to $2$ such that the primitives of $C$ are concentrated in degrees $\rho^r$, $r\ge 0$. 
For all positive integers $k$ with $\rho$-adic decomposition $k=\alpha_0+\alpha_1\rho+\dots+\alpha_n\rho^n$, the iterated coproduct $\Delta:C\to C^{\otimes \sum_i\alpha_i}$ induces an injective morphism of vector spaces: 
$$\Delta_\alpha \,:\,C^k\to (C^1)^{\otimes \alpha_0}\otimes\dots \otimes (C^{\rho^n})^{\otimes\alpha_n}\;.$$
Moreover, for all $n\ge 1$ there is an exact sequence of vector spaces:
$$0\to PC^{\rho^n}\to C^{\rho^n}\xrightarrow[]{\Delta} (C^{\rho^{n-1}})^{\otimes \rho}\;.$$
\end{lemma}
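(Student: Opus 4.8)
# Proof Proposal for Lemma \ref{lm-sec-coalg}

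The plan is to exploit the grading together with the hypothesis that the primitives of $C$ are concentrated in degrees $\rho^r$. The key structural input is the following: since $C$ is connected ($C^0\simeq \kk$), cocommutative, and graded, I would like to reduce the analysis of $C$ to its primitives via the coradical/primitive filtration. The cleanest route is to observe that a connected graded coalgebra whose primitives sit in degrees $\rho^r$ behaves, degree by degree, like a divided power coalgebra: one should think of $C$ as (co)generated by $PC=\bigoplus_r PC^{\rho^r}$. I would first establish the injectivity of the iterated coproduct onto the tensor product of the "pure power" pieces, and then separately the left-exactness of the single coproduct step relating degree $\rho^n$ to $(\rho^{n-1})^{\otimes\rho}$.

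\emph{For the injectivity of $\Delta_\alpha$.} Write $k=\alpha_0+\alpha_1\rho+\dots+\alpha_n\rho^n$. The iterated comultiplication $C^k\to C^{\otimes m}$ (with $m=\sum_i\alpha_i$) is followed by projection onto the summand $(C^1)^{\otimes\alpha_0}\otimes\cdots\otimes(C^{\rho^n})^{\otimes\alpha_n}$, where the tensor factors are chosen to have the prescribed degrees. I would argue by induction on $k$ (or on $n$) that the kernel of $\Delta_\alpha$ is trivial. The base cases $k=\rho^r$ reduce to the second assertion of the lemma. For the inductive step, the essential point is that any $x\in C^k$ with $\Delta_\alpha(x)=0$ must be primitive: if $x$ were not primitive, its reduced coproduct $\overline{\Delta}(x)\in\bigoplus_{0<i<k} C^i\otimes C^{k-i}$ is nonzero, and by iterating and projecting onto an admissible $\rho$-adic splitting one detects a nonzero component in the target tensor product. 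But $x$ primitive forces $k$ to be a power of $\rho$, contradicting the assumption that at least two of the $\alpha_i$ are nonzero (or that $\alpha_i\ge 2$ for some $i$). The bookkeeping here is a careful comparison of the $\rho$-adic digits $\alpha_i$ with the degrees of the tensor factors; this combinatorial matching of digits to admissible coproduct decompositions is where I expect the main work to lie.

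\emph{For the exact sequence in degree $\rho^n$.} By definition $PC^{\rho^n}=\ker(\overline{\Delta}\colon C^{\rho^n}\to (\overline{C}\otimes\overline{C})^{\rho^n})$, where $\overline{C}$ is the augmentation coideal. The claim is that the full coproduct $\Delta\colon C^{\rho^n}\to (C^{\rho^{n-1}})^{\otimes\rho}$ has kernel exactly $PC^{\rho^n}$. One inclusion is clear: primitives map to zero in the reduced part, hence into the appropriate subspace. For the reverse inclusion, I would show that any nonprimitive element has nonzero image under $\Delta$ projected to $(C^{\rho^{n-1}})^{\otimes\rho}$. The mechanism is that the only way to partition $\rho^n$ into a nontrivial sum realizing a nonzero reduced coproduct, given that intermediate primitives live only in degrees $\rho^r$ with $r<n$, forces the "most balanced" splitting into $\rho$ equal pieces of degree $\rho^{n-1}$ to be detected; any other splitting refines to this one by coassociativity and cocommutativity.

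\emph{Main obstacle.} The hard part will be the combinatorial digit-matching argument that ensures the chosen projection in $\Delta_\alpha$ actually detects a nonzero element, i.e. that no cancellation occurs across the different admissible decompositions of $k$. I would handle this by choosing a lexicographically extremal (e.g. leading-term) component with respect to an ordering on the tensor factors compatible with degrees, so that the image of a nonzero $x$ under $\Delta_\alpha$ has a nonvanishing extremal term. The grading and cocommutativity, together with the restriction of primitives to $\rho$-power degrees, should make this extremal term survive, completing the induction.
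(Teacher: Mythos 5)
Your overall strategy is the right one (iterate the reduced coproduct and use that primitives only live in degrees $\rho^r$), but the proposal leaves the central step unproven, and the device you suggest for it is not the one that actually works. The step in question is exactly the one you flag as the "main obstacle": why does the single projection $\Delta_\alpha$, onto the one component prescribed by the $\rho$-adic digits of $k$, detect every nonzero element of $C^k$? Your plan is to pick a lexicographically extremal term and argue it survives cancellation; this is not carried out, and it is unclear how to make it work, because a priori the images of $x$ in the various components $(C^1)^{\otimes\beta_0}\otimes\cdots\otimes(C^{\rho^n})^{\otimes\beta_n}$ are not independent and there is no obvious ordering in which a "leading term" is protected.

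The paper's proof removes the cancellation problem entirely by running the factorization in the other direction. Starting from the defining exact sequence $0\to PC^k\to C^k\xrightarrow{\Delta}\bigoplus_{0<i<k}C^i\otimes C^{k-i}$ and repeatedly substituting every factor $C^j$ with $j$ not a power of $\rho$ (legitimate since $PC^j=0$ there), one gets an \emph{injective} map $\Delta\colon C^k\to\bigoplus_\beta (C^1)^{\otimes\beta_0}\otimes\cdots\otimes(C^{\rho^n})^{\otimes\beta_n}$ when $k$ is not a $\rho$-power. The key arithmetic observation is then that each component $\Delta_\beta$ of this map factors \emph{through} $\Delta_\alpha$: one splits each digit $\beta_j=\sum_{i\ge j}\beta_{j,i}$ with $\alpha_i\rho^i=\sum_{j\le i}\beta_{j,i}\rho^j$, so that $\Delta_\beta$ is obtained from $\Delta_\alpha$ by further comultiplying each tensor factor. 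Hence $\Delta_\alpha(x)=0$ forces $\Delta_\beta(x)=0$ for all $\beta$, hence $x\in PC^k=0$. No extremal-term or non-cancellation argument is needed. The same factorization gives the second assertion ($\ker\subseteq PC^{\rho^n}$), together with a short direct check that primitives do lie in the kernel. Until you supply either this factorization or a genuine proof that your extremal term survives, the proposal has a real gap at its load-bearing step.
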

\begin{proof}
For all $k$, the definition of the primitives yield an exact sequence: 
$$0\to PC^k\to C^k\xrightarrow[]{\Delta} \bigoplus_{0<i<k}C^i\otimes C^{k-1} \qquad(*)$$
Since the primitives are concentrated in weights $\rho^r$, $r\ge 0$, using the exact sequences $(*)$ for smaller indices, we can embed all the terms $C^j$ appearing on the right hand side and such that $j$ is not a power of $\rho$ into sums of tensor products $C^\ell\otimes C^{j-\ell}$. Doing this repeatedly, and using cocommutativity to reorder the factors of the tensor products, we obtain an exact sequence:
$$0\to PC^k\to C^k\xrightarrow[]{\Delta} \bigoplus_{\beta} (C^1)^{\otimes \beta_0}\otimes\dots \otimes (C^{\rho^n})^{\otimes\beta_n}\quad(**)$$
where the sum is taken over all the $(n+1)$-tuples of nonnegative integers $(\beta_0,\dots,\beta_n)$ such that $\sum\beta_i\rho^i=k$ and $\sum\beta_i>1$. (If an $(n+1)$-tuple appears more than once, we can remove the extra occurrences in the sum without changing the exactness of the sequence).

We can now prove the first assertion. This assertion is trivial if $k$ is a power of $\rho$, so we assume that $k$ is not a power of $\rho$. Let $\beta$ be an $(n+1)$ tuple such that $\sum\beta_i\rho^i=k$ and $\sum\beta_i>1$. 
Elementary arithmetic shows that it is possible to decompose each $\beta_i$ as a sum of nonnegative integers $\beta_i=\beta_{i,i}+\dots+ \beta_{i,n}$ such that 
for $0\le i\le n$ we have 
$$\alpha_i\rho^i= \sum_{0\le j\le i}\beta_{j,i}\rho^j\;.$$
The coproduct of $C$ induces a commutative diagram, in which the bottom horizontal map is any permutation which puts the terms of the tensor product in the right order:
$$\xymatrix{
C^k\ar[rr]^-{\Delta_\alpha}\ar[d]^-{\Delta_\beta}&& \bigotimes_{0\le i\le n} (C^{\rho^i})^{\otimes \alpha_i}\ar[d]^-{\bigotimes_{0\le i\le n}\Delta}\\
(C^1)^{\otimes \beta_0}\otimes\dots \otimes (C^{\rho^n})^{\otimes\beta_n}\ar[rr]^-\simeq&&
\bigotimes_{0\le i\le n}\big(\bigotimes_{0\le j\le i}(C^{\rho^j})^{\otimes \beta_{j,i}}\big)
}$$
Thus, the morphism $\Delta$ appearing in the exact sequence $(**)$ is injective (indeed $k$ is not a power of $p$, hence $PC^k=0$) and it factors through $\Delta_\alpha$. 
Thus, $\Delta_\alpha$ must be injective. 

Next we prove the second assertion. So $k=p^n$, and by a similar reasoning the morphism $\Delta$ appearing in the exact sequence $(**)$ factors through 
$$\Delta: C^{\rho^n}\to (C^{\rho^{n-1}})^{\otimes \rho}\;.$$
Thus, the kernel of the latter is contained in $PC^{\rho^n}$. Finally, an easy explicit computation shows that this kernel is exactly $PC^{\rho^n}$.
\end{proof}

\begin{definition}\label{def-special}
We call a connected exponential functor $E$ \emph{special} if $PQE$ is simple.
\end{definition}
Recall that $PQE$ is the image of the canonical map $PE\to QE$. By theorem \ref{thm-classif-indecomp} the indecomposable reflexive strict exponential functors are special if $\kk$ is a perfect field.
\begin{lemma}\label{lm-prem-prop}
If $\kk$ is an arbitrary field. If $E$ is special with $PQE$ concentrated in degree $d$, then $E$ is indecomposable, $E^d=PQE$ and $E^i\ne 0$ if and only if $i$ is a multiple of $d$.  
\end{lemma}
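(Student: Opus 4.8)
The plan is to prove the three assertions in the order they are stated, using only elementary arguments about the grading and the exponential isomorphism; the combinatorial Lemma \ref{lm-sec-coalg} is not actually needed for this particular statement (it is there for the subsequent, harder indecomposability results).

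First, indecomposability is immediate. Since $PQE$ is simple it is in particular indecomposable as a functor, so by the identity $PQ(E'\otimes E'')=PQE'\oplus PQE''$ recalled just before Definition \ref{def-reflexive}, $E$ cannot be written as a tensor product of two nontrivial exponential functors: as $E$ is connected any such factors would be connected, hence nontrivial with nonzero $PQ$, contradicting simplicity of $PQE$.

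Second, I would identify the bottom of $E$. Let $m>0$ be the least degree with $E^m\neq 0$, which exists because $PQE\neq 0$ forces $E$ to be nontrivial. In degree $m$ both the decomposables $\overline{E}^2$ and the reduced coproduct vanish, because any nonzero product $a\cdot b$ or reduced coproduct term $x'\otimes x''$ in degree $m$ would involve two factors of positive degree $\geq m$, hence total degree $\geq 2m>m$. Thus $E^m=PE^m=QE^m$, the canonical map $PE^m\to QE^m$ is the identity on $E^m$, and so $PQE^m=E^m\neq 0$ (here I use Lemma \ref{lm-additivite} to read $PQE$ off the maps $PE\hookrightarrow E\twoheadrightarrow QE$). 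Since $PQE$ is concentrated in degree $d$, this forces $m=d$ and $E^d=PQE$; in particular $E^i=0$ for $0<i<d$.

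Third, for the degree support I would argue in two directions. For $E^i\neq 0\Rightarrow d\mid i$, I would use strong induction on $i$: assuming $E^{i'}=0$ for all positive $i'<i$ with $d\nmid i'$, suppose $d\nmid i$; then every nonzero product or reduced coproduct term in degree $i$ has both factors in positive degree strictly less than $i$, and by the induction hypothesis such factors can be nonzero only in degrees divisible by $d$, forcing $d\mid i$, a contradiction. Hence $E^i\cap\overline{E}^2=0$ and $E^i=PE^i$, so $E^i$ injects into $PQE^i=0$ and vanishes. For the converse $d\mid i\Rightarrow E^i\neq 0$, I would exploit the exponential isomorphism of Definition \ref{def-exp}: evaluating $E(V^{\oplus k})\simeq E(V)^{\otimes k}$ in degree $kd$ exhibits $E^{d}(V)^{\otimes k}=(PQE)(V)^{\otimes k}$ as a direct summand of $E^{kd}(V^{\oplus k})$, which is nonzero over the field $\kk$ whenever $(PQE)(V)\neq 0$; since $PQE$ is a nonzero functor, $E^{kd}\neq 0$.

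The main (and only mild) difficulty is the bookkeeping in the induction of the third step, where one must simultaneously rule out contributions from the multiplication and from the comultiplication and correctly identify $PE^i\to QE^i$ with the defining map of $PQE$. The nonvanishing half, by contrast, is forced directly by the exponential isomorphism together with the fact that a tensor power of nonzero $\kk$-vector spaces is nonzero, and requires no structure theory.
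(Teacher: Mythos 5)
Your proof is correct and follows essentially the same route as the paper's: identify the bottom nonzero degree with $PQE$ via the grading, obtain nonvanishing in degrees $kd$ from the exponential isomorphism, and kill the remaining degrees by induction (the paper phrases the inductive step as ``$E^i$ is additive'' and invokes lemma \ref{lm-additivite}, whereas you argue directly that the decomposables and the reduced coproduct vanish in degree $i$ so that $E^i=PE^i=QE^i=PQE^i$ --- the same observation). The only presentational quibble is that your indecomposability paragraph already uses the fact that a nontrivial connected exponential functor has nonzero $PQ$, which you only justify in the following paragraph; the paper establishes that fact first and then deduces indecomposability.
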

\begin{proof}
If an arbitrary nontrivial $E$ is connected then let $m(E)$ be the minimal positive degree such that $E^{m(E)}\ne 0$. Then $E^{m(E)}$ is additive and nonzero. As a first consequence, any nontrivial connected $E$ has $PQE\ne 0$. Since $PQ(E_1\otimes E_2)=PQE_1\oplus PQE_2$, this shows that special exponential functors are indecomposable. As a second consequence, if $E$ is special then $E^{m(E)}=PQE$. Thus for all $k$, $E^d(V)^{\otimes k}$ is a nontrivial direct summand of $E^{kd}(V^{\oplus k}$, hence $E^{kd}\ne 0$. Conversely, we prove by induction on $k$ that $E^i=0$ for $kd<i<(k+1)d$. This holds for $i=0$ since $d=m(E)$. If this holds for all $\ell<k$, then the degree $i$ part of the isomorphism $E(V\oplus W)\simeq E(V)\otimes E(W)$ shows that $E^i$ is additive, so $E^i\subset PQE^i=0$, and the property holds for $k$.  
\end{proof}

\begin{proposition}\label{prop-indecomp-summand}
Let $E$ be a connected strict exponential functor over an arbitrary field $\kk$. Assume that $E$ is special, with $PQE$ concentrated in degree $d$. 
Then for all $k\ge 0$, $\End_{\PP_{\omega,\kk}}(E^{kd})=\kk$.
\end{proposition}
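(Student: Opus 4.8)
The plan is to induct on $k$, after two preliminary reductions. Write $A=PQE=E^d$: by lemma~\ref{lm-prem-prop} this is the degree-$d$ part of $E$, and being a simple additive functor it is a Frobenius twist $I^{(s)}$ with $\End(A)=\kk$ by lemma~\ref{lm-control-strict}. The exponential isomorphism then shows that $E^{kd}$ is homogeneous of weight $kp^s$, so the various pieces $E^{jd}$ carry pairwise distinct weights and $\Hom_{\PP_{\omega,\kk}}(E^{id},E^{jd})=0$ for $i\neq j$; the cases $k=0,1$ give $\End=\kk$ at once. Doubling degrees if needed, I may assume $E$ is concentrated in even degrees, so all Koszul signs are trivial. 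First I would record the auxiliary fact that $\End_{\PP_{\omega,\kk}}(Y^{\otimes m})\cong\kk\Si_m$ for every homogeneous $Y$ with $\End(Y)=\kk$: since $Y$ lives in a single weight, the iterated cross-effect (sum--diagonal adjunction) of $Y^{\otimes m}$ at the balanced multiweight is $\bigoplus_{\sigma\in\Si_m}Y^{\boxtimes m}$, whence $\End(Y^{\otimes m})=\bigoplus_{\sigma}\End(Y)^{\otimes m}=\kk\Si_m$.

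Suppose $k$ is not a power of $p$, with base-$p$ expansion $k=\sum_i\alpha_i p^i$, so $\sum_i\alpha_i\geq 2$ and every $p^i$ occurring satisfies $p^i<k$. Lemma~\ref{lm-sec-coalg} and its algebra dual supply an injection $\Delta_\alpha\colon E^{kd}\hookrightarrow T:=\bigotimes_i(E^{dp^i})^{\otimes\alpha_i}$ and a surjection $m_\alpha\colon T\twoheadrightarrow E^{kd}$ whose composite is multiplication by the multinomial coefficient attached to the digit decomposition, a unit of $\kk$ by Lucas' theorem; hence $E^{kd}$ is a retract of $T$, the idempotent being the product of the block symmetrizers. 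By induction $\End(E^{dp^i})=\kk$, and since distinct blocks carry distinct weights, the auxiliary fact gives $\End(T)=\bigotimes_i\End\big((E^{dp^i})^{\otimes\alpha_i}\big)=\bigotimes_i\kk\Si_{\alpha_i}$. Cutting down by the symmetrizer corner and using $\alpha_i<p$ yields $\End(E^{kd})=\bigotimes_i e_i(\kk\Si_{\alpha_i})e_i=\kk$.

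The decisive case is $k=p^n$ with $n\geq 1$. Put $Y=E^{dp^{n-1}}$, $W=Y^{\otimes p}$ and $Z=E^{dp^n}$, so $\End(W)=\kk\Si_p$ by induction. Lemma~\ref{lm-sec-coalg} furnishes $0\to PZ\to Z\xrightarrow{\Delta}W$ and, dually, $W\xrightarrow{\mu}Z\to QZ\to 0$, where $PZ=PE^{dp^n}$ and $QZ=QE^{dp^n}$ are additive, hence by lemmas~\ref{lm-additivite} and~\ref{lm-control-strict} are sums of copies of the simple functor $A^{(n)}=I^{(s+n)}$. Because $\mu$ is $\Si_p$-coinvariant and $\Delta$ is $\Si_p$-invariant, any $f\in\End(Z)$ produces $\Delta f\mu\in\End(W)=\kk\Si_p$ which is two-sided $\Si_p$-invariant, hence a scalar multiple $c(f)\,N$ of the norm $N=\sum_{\sigma}\sigma$; this defines a linear form $c\colon\End(Z)\to\kk$. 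If $c(f)=0$ then $\Delta f\mu=0$, and since $\Hom(W,PZ)=0=\Hom(QZ,W)$ by Pirashvili's vanishing lemma~\ref{lm-cancel} (a tensor product of reduced functors admits no nonzero map to or from an additive functor), $f$ must factor as $Z\twoheadrightarrow QZ\xrightarrow{h}PZ\hookrightarrow Z$. As $PZ\hookrightarrow Z\twoheadrightarrow QZ$ is the map $PE^{dp^n}\to QE^{dp^n}$ whose image is $PQE^{dp^n}=0$ (here $n\geq 1$), every such $f$ squares to zero; thus $\ker c=J$ is a square-zero two-sided ideal with $J\cong\Hom_{\PP_{\omega,\kk}}(QZ,PZ)$, and $\End(Z)=\kk$ as soon as this $\Hom$ vanishes.

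The whole matter therefore reduces to $\Hom_{\PP_{\omega,\kk}}(QE^{dp^n},PE^{dp^n})=0$, equivalently to the statement that a special exponential functor has at most one of $PE^{dp^n}$, $QE^{dp^n}$ nonzero for each $n\geq 1$. This is the step I expect to be the main obstacle. It should follow from the relations $\mathbf{F}\mathbf{V}=\mathbf{V}\mathbf{F}=0$ for the Frobenius and Verschiebung of $E$: in the weight-$p^n$ layer these force $\mathrm{im}\,\mathbf{F}\subseteq\ker\mathbf{V}$, while specialness supplies the reverse inclusion, so a nonzero indecomposable quotient of type $A^{(n)}$ would detach as a direct summand and contradict the indecomposability of $E$ provided by lemma~\ref{lm-prem-prop}. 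Over a perfect field the vanishing is transparent from the string-module description of theorem~\ref{thm-classif-indecomp} and proposition~\ref{prop-defiEAW}, where each degree $dp^n$ carries a single letter $F$ or $V$; extracting it over an arbitrary field directly from the exponential structure is where the argument requires the most care.
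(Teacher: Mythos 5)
Your overall strategy---induction on $k$, with lemma \ref{lm-sec-coalg} relating $E^{kd}$ to tensor products of lower graded pieces and a Hom-group computation closing the induction---is the same as the paper's, and you have correctly isolated the crux, namely the dichotomy ``$PE^{dp^n}=0$ or $QE^{dp^n}=0$'' for $n\ge 1$ (equivalently $\Hom(QE^{dp^n},PE^{dp^n})=0$). But besides that acknowledged open step, the proposal has two genuine errors. First, the auxiliary fact $\End_{\PP_{\omega,\kk}}(Y^{\otimes m})\cong\kk\Si_m$ is false for non-additive $Y$: already $\End(S^2\otimes S^2)$ has dimension $3$, not $2$, because the balanced cross-effect of $S^2\otimes S^2$ contains, besides the two copies of $S^2\boxtimes S^2$ indexed by $\Si_2$, the extra summand $(\otimes^2)\boxtimes(\otimes^2)$ obtained by splitting each factor $S^2$ across the two slots, and $\Hom(S^2,\otimes^2)\ne 0$ (the reduced comultiplication). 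The same extra terms occur for $Y=E^{dp^i}$ with $i\ge 1$, so the computations $\End(T)=\bigotimes_i\kk\Si_{\alpha_i}$ and $\End(W)=\kk\Si_p$ are wrong; in particular the bi-invariant part of $\End(W)$ need not be $\kk N$, and your linear form $c$ is not defined. Second, the claim that $m_\alpha\circ\Delta_\alpha$ is multiplication by a multinomial coefficient is not a formal consequence of the Hopf axioms: for the bicommutative Hopf algebra $\kk[x,y]$ in characteristic $2$ with $\Delta y=y\otimes 1+1\otimes y+x\otimes x$, the analogous composite in degree $3$ is unipotent but not scalar. Proving scalarity for special exponential functors would essentially presuppose the conclusion $\End(E^{kd})=\kk$.

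The paper sidesteps both issues by never computing the endomorphism ring of a tensor product: it uses only the \emph{monomorphism} $\iota\colon E^{kd}\hookrightarrow E^{\alpha_1}\otimes\cdots\otimes E^{\alpha_n}$ (from lemma \ref{lm-sec-coalg} when $PE^{kd}=0$, or dually an epimorphism when $QE^{kd}=0$, or the inclusion into $(E^d)^{\otimes k}$ in the exterior-algebra case), whence $\End(E^{kd})\hookrightarrow\Hom(E^{kd},E^{\alpha_1}\otimes\cdots\otimes E^{\alpha_n})$, and then applies \cite[Thm 1.7]{FFSS}---valid here because the \emph{source} is a graded piece of an exponential functor---to write this Hom group as $\bigoplus_\beta\bigotimes_i\Hom(E^{\beta_i},E^{\alpha_i})$; since the degrees of an indecomposable $E$ are proportional to its weights by proposition \ref{prop-categ-decomp}, only $\beta=\alpha$ survives, and the induction hypothesis bounds the dimension by $1$. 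As for the dichotomy: for $k$ not a power of $p$ it is automatic ($PE$ and $QE$ are additive, hence concentrated in degrees $dp^r$), so it is needed only at $k=p^n$, where the paper derives it from specialness; you would need to supply that argument (or restrict the statement), and your norm/convolution detour does not replace it since, as written, it rests on the incorrect description of $\End(W)$.
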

\begin{proof}
We prove the statement by induction on $k$. Lemma \ref{lm-prem-prop} describes $E^k$ in degrees $k\le 1$ hence we see directly that the statement holds (simple additive functors have endomorphism rings of dimension one). Let $k>1$ and assume that $\mathrm{End}_{\PP_{\omega,\kk}}(E^{id})$ is computed for all $i<k$. By our assumptions on $E$, $PE^{kd}=0$ or $QE^{kd}=0$. We assume that $PE^{kd}=0$ (the proof for $QE^{kd}=0$ is similar, using a dual form of lemma \ref{lm-sec-coalg}). We claim that there is a tuple $(\alpha_1,\dots,\alpha_n)$ with $n>1$ and $\sum\alpha_i=kd$ and a monomorphism 
$$\iota: E^{kd}\to E^{\alpha_1}\otimes\dots\otimes E^{\alpha_n}\;.$$ 
If $d$ is odd and $p\ne 2$ then $E$ is an exterior algebra, and we let $\iota$ be the inclusion $E^{kd}\subset (E^d)^{\otimes k}$. If $d$ is even or if $p=2$, then $E$ is commutative in the ungraded sense, and $\iota$ is given by lemma \ref{lm-sec-coalg}. Thus there is a monomorphism:
$$\End_{\PP_{\omega,\kk}}(E^{dk})\to \Hom_{\PP_{\omega,\kk}}(E^{dk},E^{\alpha_1}\otimes\dots\otimes E^{\alpha_n})\;.\qquad(*)$$
By \cite[Thm 1.7]{FFSS} the vector space on the right hand side of $(*)$ is isomorphic to the following direct sum, indexed by all $n$-tuples of nonnegative integers $\beta=(\beta_1,\dots,\beta_n)$ such that $\sum\beta_i=dk$:
$$\bigoplus_\beta \bigotimes_{i=1}^n\Hom_{\PP_{\omega,\kk}^*}(E^{\beta_i},E^{\alpha_i}) \quad(**)$$
Now by lemma \ref{lm-prem-prop} $E$ is indecomposable. Hence by proposition \ref{prop-categ-decomp}, there is an integer $a\in\mathbb{N}[p^{-1}]$ such that the degrees of $E$ are $a$ times its weights.
Since there are no nontrivial morphisms between homogeneous strict analytic functors of different weights, all the terms of the sum $(**)$ are zero except the term indexed by $\beta=\alpha$. Now by the induction hypothesis, the term of the sum indexed by $\beta=\alpha$ has dimension $0$ or $1$. Thus by injectivity of $(*)$ the endomorphism space of $E^{dk}$ has dimension $0$ or $1$. But it cannot have dimension zero since $E^{kd}\ne 0$ by lemma \ref{lm-prem-prop}.
\end{proof}

\begin{corollary}\label{cor-iso-fct}
Assume that $E_1$ and $E_2$ are special. Let $n$ be such that $E_1^n\ne 0$. If there is an isomorphism of functors $E_1^n\simeq E_2^n$ then if $0\le i\le n$ there is an isomorphism $E_1^i\simeq E_2^i$.
In particular, we have $QE_1^{i}\simeq QE_2^i$ and $PE_1^i\simeq PE_2^i$.
\end{corollary}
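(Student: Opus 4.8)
The plan is to prove the slightly stronger statement that $E_1^i\simeq E_2^i$ as functors for every $i$ with $0\le i\le n$. The final sentence is then immediate: by lemma \ref{lm-additivite} the functor $QE^i$ (resp.\ $PE^i$) is the largest additive quotient (resp.\ subfunctor) of $E^i$, which is an invariant of the isomorphism class of the single functor $E^i$, so any isomorphism $E_1^i\simeq E_2^i$ automatically yields $QE_1^i\simeq QE_2^i$ and $PE_1^i\simeq PE_2^i$.

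First I would record the structure of a special functor. For $j=1,2$ let $d_j$ be the least positive degree with $E_j^{d_j}\ne 0$. By lemma \ref{lm-prem-prop}, each $E_j$ is indecomposable, $E_j^i\ne 0$ if and only if $i$ is a multiple of $d_j$, and $E_j^{d_j}=PQE_j$ is simple. Being indecomposable, $E_j$ has (by proposition \ref{prop-categ-decomp}, exactly as in the proof of proposition \ref{prop-indecomp-summand}) homological degrees equal to $a_j$ times its weights, for some $a_j\in\mathbb{N}[p^{-1}]$; in particular each $E_j^{rd_j}$ is homogeneous of the single weight $rd_j/a_j$, and $n=k_jd_j$ for some $k_j\ge 1$.

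The key is to combine the exponential isomorphism $\phi$ with the fact that every morphism of strict analytic functors preserves weights. Applying the given natural isomorphism $\theta\colon E_1^n\xrightarrow{\simeq}E_2^n$ to $V\oplus W$ and conjugating by $\phi$ in degree $n$ produces an isomorphism of bifunctors
$$\bigoplus_{a+b=n}E_1^a(V)\otimes E_1^b(W)\ \xrightarrow{\ \simeq\ }\ \bigoplus_{a+b=n}E_2^a(V)\otimes E_2^b(W)\,.$$
Naturality of $\theta$ with respect to the scalar endomorphisms of $V$ and of $W$ shows that $\theta_{V\oplus W}$ preserves the bigrading by (weight in $V$, weight in $W$); on the $j$-th side the summand $E_j^a(V)\otimes E_j^b(W)$ sits in biweight $(a/a_j,b/a_j)$, and the nonzero summands, occurring for $a,b$ multiples of $d_j$, have pairwise distinct biweights. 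Comparing the total weight of $E_1^n$ with that of $E_2^n$ gives $a_1=a_2=:a$; comparing the smallest positive weight occurring in the first variable then forces $d_1=d_2=:d$, whence $k_1=k_2=:k$. Matching the biweight-$(rd/a,(k-r)d/a)$ components for $1\le r\le k-1$ yields isomorphisms of bifunctors $E_1^{rd}(V)\otimes E_1^{(k-r)d}(W)\simeq E_2^{rd}(V)\otimes E_2^{(k-r)d}(W)$. To descend to the factors I would choose $m$ with $E_j^{(k-r)d}(\kk^m)\ne 0$ and specialize $W=\kk^m$, obtaining an isomorphism $(E_1^{rd})^{\oplus c_1}\simeq (E_2^{rd})^{\oplus c_2}$ of functors of $V$ with $c_j=\dim_\kk E_j^{(k-r)d}(\kk^m)\ge 1$; since $\End(E_j^{rd})=\kk$ is local by proposition \ref{prop-indecomp-summand}, the Krull--Remak--Schmidt--Azumaya theorem gives $E_1^{rd}\simeq E_2^{rd}$. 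With the trivial cases $r=0$ (both $\kk$) and $r=k$ (the hypothesis), and the vanishing of $E_j^i$ for $i$ not a multiple of $d$, this gives $E_1^i\simeq E_2^i$ for all $0\le i\le n$.

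The main obstacle to watch is the passage from an \emph{abstract} isomorphism of functors to one respecting the internal bidegree decomposition supplied by $\phi$: this is precisely where specialness enters, via proposition \ref{prop-categ-decomp}, ensuring that homological degree is proportional to weight so that the biweights separate the summands. A secondary point needing care is that the external tensor product of two functors with endomorphism ring $\kk$ again has endomorphism ring $\kk$ (via the Künneth-type computation of $\Hom$ between tensor products, \cite[Thm 1.7]{FFSS}), so that the Azumaya argument applies even when the multiplicities $c_j$ are infinite and no finiteness hypothesis is imposed.
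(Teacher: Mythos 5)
Your proof is correct and follows essentially the same route as the paper's: both exploit the exponential isomorphism to decompose $E^n$ evaluated at a direct sum, use the weight--degree proportionality from proposition \ref{prop-categ-decomp} to identify $d_1=d_2$ and $a_1=a_2$ and to separate the summands, and then invoke proposition \ref{prop-indecomp-summand} together with the Krull--Remak--Schmidt--Azumaya theorem to descend to isomorphisms $E_1^{rd}\simeq E_2^{rd}$. The only (cosmetic) difference is that the paper specializes immediately to $V\oplus\kk$ and lets the uniqueness in Krull--Remak--Schmidt do the matching of summands, whereas you keep the second variable general and match summands by biweight before specializing.
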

\begin{proof}
Assume that $PQE_i$ is concentrated in degree $d_i$. Then $n=n_id_i$ by lemma \ref{lm-prem-prop}. We have:
$$E_i^{n}(V\oplus\kk)\simeq\bigoplus_{0\le k\le n_i}E^{d_ik}_i(V)\otimes E^{n-d_ik}_i(\kk)\;.$$
The functors $E^{d_ik}$ are indecomposable, thus this isomorphism is a decomposition into indecomposable summands. If $f:E_1^n\to E_2^n$ is an isomorphism, then it induces an isomorphism between $E_1^{n}(V\oplus\kk)$ and $E_2^{n}(V\oplus\kk)$. 
But the $E^{d_ik}$ have local endomorphism rings (of dimension one!), hence the Krull-Remak-Schmidt theorem \cite[Chap I.6 Thm 1]{Gabriel} applies: there are isomorphisms between the $E^{d_1k}_1$ and the $E_2^{d_2\ell}$. 

But an isomorphism of strict analytic functors must preserve the weights. Since the exponential functors $E_i$ are indecomposable, the weights are proportional to the degrees: each $E_i^{d_ik}$ has weight $a_id_ik$ for some $a_i\in \mathbb{N}[p^{-1}]$ by proposition \ref{prop-categ-decomp}.  
In particular the $E_i^{d_ik}$, $0\le k\le n_i$ are $n_i+1$ pairwise distinct indecomposables. The uniqueness in the Krull schmidt theorem implies that $n_1=n_2$, hence $d_1=d_2=:d$. Moreover, since $E^n_1$ is isomorphic to $E^n_2$, one must have $a_1=a_2$. Thus $E^{d_1k}_1$ and $E^{d_2\ell}_2$ have the same weight if and only if $k=\ell$. Thus the Krull-Remak-Schmidt theorem actually implies that $E^{dk}_1\simeq E_2^{dk}$ for all $k$ such that $0\le kd\le n$. Moreover, if $i$ is not divisible by $d$ then $E_1^i=0=E_2^i$ by lemma \ref{lm-prem-prop}.
\end{proof}

\subsection{Proof of theorem \ref{thm-uniqueness-strict}}\label{subsec-proofthm}
By lemma \ref{lm-reduction-reflexive} it suffices to prove theorem \ref{thm-uniqueness-strict} when $E$ is reflexive. By corollary \ref{cor-signature}, it suffices to prove that we can compute all the fake truncations $\phi_k(\sigma(E))$, $k\ge 0$ from the graded strict analytic functor $\OO E$.

Let $F$ be a graded analytic functor, such that $\dim_\kk F^k(V)<+\infty$ for all $V$. We now describe how to extract a multiset $\alpha_k(F)$ of pairs of graded additive functors from $F$. 
We first decompose $F^k$ into a direct sum of indecomposable functors $F_i^k$. 
$$F^k=\bigoplus_{i\in I}F_i^k\;.$$
Such a decomposition exists and is unique by the next proposition.

\begin{proposition}[Krull-Remak-Schmidt property]\label{prop-KRS-P}
Let $G$ be a strict analytic functor, such that for all $k$ and all vector spaces $V$, $\dim_\kk w_kG(V)<\infty$. Then $G$ admits a unique decomposition into a direct sum of indecomposable graded strict analytic functors.
\end{proposition}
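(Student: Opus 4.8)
The plan is to exploit the two independent gradings carried by an object $G$ of $\PP_{\omega,\kk}^*$: the homological grading (indexed by $i$) defining the category $\C^*=\FF^*$, and the weight decomposition $G^i=\bigoplus_k w_k G^i$ carried by each homogeneous piece as a strict analytic functor. The crucial point is that a morphism in $\PP_{\omega,\kk}^*$ preserves the homological degree by definition of a graded object, and preserves the weight because it is a morphism of strict analytic functors (section \ref{subsec-setting}). Consequently, for each pair $(i,k)$ the projection $e_{i,k}$ of $G$ onto its bihomogeneous component $w_k G^i$ is an idempotent endomorphism, and these form an orthogonal family of idempotents in $\End_{\PP_{\omega,\kk}^*}(G)$ summing to $\Id_G$.

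First I would record the structural consequence for indecomposables. Since $\PP_{\omega,\kk}^*$ is abelian, it is idempotent-complete, so a nonzero object $P$ is indecomposable if and only if its only idempotent endomorphisms are $0$ and $\Id_P$. Applying this to the idempotents $e_{i,k}$: if $P$ is indecomposable and nonzero, some $e_{i_0,k_0}$ is nonzero, hence equal to $\Id_P$, so that $P=w_{k_0}P^{i_0}$ is concentrated in a single homological degree $i_0$ and a single weight $k_0$. Moreover $P$ is then indecomposable as an object of $\PP_{k_0,\kk}$, and under the hypothesis $\dim_\kk w_{k_0}P^{i_0}(V)<\infty$ it corresponds, via the equivalence between $\PP_{k_0,\kk}$ and the category of all modules over the finite-dimensional Schur algebra $S(n,k_0)$ for $n\ge k_0$ (section \ref{subsec-setting}), to a finite-dimensional, hence finite-length, $S(n,k_0)$-module.

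This reduces both existence and uniqueness to the classical finite-dimensional theory, block by block. For existence, decompose $G=\bigoplus_{i,k} w_k G^i$; each $w_k G^i$ is, by hypothesis, a finite-dimensional $S(n,k)$-module (in particular of finite length), hence a finite direct sum of indecomposables with local endomorphism rings by Fitting's lemma, and each such summand is an indecomposable graded strict analytic functor concentrated in bidegree $(i,k)$. Assembling these over the countable index set $\{(i,k)\}$ yields the desired (at most countable) decomposition. For uniqueness, given two decompositions $G\simeq\bigoplus_\alpha P_\alpha\simeq\bigoplus_\beta Q_\beta$ into indecomposables, the previous paragraph shows that each $P_\alpha$ and each $Q_\beta$ lives in a single bidegree; applying the idempotent $e_{i,k}$ (equivalently, passing to $w_k G^i$) therefore extracts from both decompositions finite decompositions of the finite-dimensional module $w_k G^i$ into indecomposables, which agree up to permutation and isomorphism by the classical Krull--Remak--Schmidt theorem \cite[Chap. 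I.6 Thm 1]{Gabriel}. Since each summand belongs to exactly one bidegree, these bijections assemble into a global bijection matching the $P_\alpha$ with the $Q_\beta$ up to isomorphism.

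The only real subtlety --- and what makes the infinite direct sum harmless --- is the observation of the first two paragraphs that morphisms, and hence idempotents, cannot mix distinct $(i,k)$-bidegrees. This confines every indecomposable summand to a single finite-dimensional Schur-algebra module and lets the global statement be deduced from finitely many applications of the finite Krull--Remak--Schmidt theorem; no appeal to the more delicate Azumaya--Krull--Schmidt theorem for infinite decompositions is needed.
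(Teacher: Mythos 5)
Your proof is correct and follows essentially the same route as the paper: both reduce to the weight-homogeneous (and degree-homogeneous) pieces, identify each with a finite-dimensional module over a Schur algebra via evaluation on $\kk^n$, and invoke the classical Krull--Remak--Schmidt theorem there. Your explicit idempotent argument showing that indecomposables are confined to a single bidegree is a slightly more detailed justification of the reduction that the paper leaves implicit, but it is the same proof.
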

\begin{proof}
For all $k$, evaluation on $\kk^k$ induces an equivalence of categories $\PP_{k,\kk}\simeq S(k,k)-\mathrm{Mod}$, where $S(k,k)$ is a Schur algebra. As a consequence, any functor $G$ of $\PP_{k,\kk}$ with finite dimensional values has a unique decomposition as a finite direct sum of indecomposable functors, whose endomorphism ring is a local ring (see e.g. \cite[Chap 1, 1.4]{Benson}). Applying this result for each summand $w_kG$ of $G$, we obtain the result.
\end{proof}

We let $J\subset I$ the subset of indices $i$ such that $F_i^k$ contains an additive functor or has an additive functor as a quotient. Now we let $K\subset J$ be the subset of the indices $i\in J$ such that $F_i^k$ is isomorphic to $E^k$ for some reflexive indecomposable strict exponential functor $E$. For all $i\in K$ we let 
$$\pi_k(F_i^k)= \left(\bigoplus_{0\le i\le k}PE^i,\bigoplus_{0\le i\le k}QE^i\right)\;.$$
Observe that this is well-defined (it does not depend on the choice of $E$ such that $E^k\simeq F_i^k$) by corollary \ref{cor-iso-fct}.
We let $\alpha_k(F)$ be the multiset
$$\alpha_k(F)=\{\pi_k(F_i^k)\;, i\in K\;\}\;.$$
In particular, $\alpha_k(F)$ is empty if $K$ is empty. The following proposition finishes the proof of theorem \ref{thm-uniqueness-strict}.
\begin{proposition}
For all reflexive strict exponential functor $E$ and all $k\ge 0$ we have $\phi_k(\sigma(E))=\alpha_k(\OO E)$.
\end{proposition}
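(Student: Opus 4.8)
The plan is to compute $\alpha_k(\OO E)$ directly from the canonical decomposition of $E$ and to compare it with $\phi_k(\sigma(E))$ over a common index set. First I would invoke corollary \ref{cor-KRS} to write $E\simeq\bigotimes_{i\in I}E_i$ with each $E_i$ a reflexive indecomposable strict exponential functor; by theorem \ref{thm-classif-indecomp} each $E_i$ is special, so by lemma \ref{lm-prem-prop} the functor $PQE_i$ is simple, concentrated in a single degree $d_i$, with $E_i^m\neq 0$ if and only if $d_i\mid m$, while by proposition \ref{prop-indecomp-summand} every nonzero $E_i^m$ is indecomposable with endomorphism ring $\kk$. Expanding the tensor product in degree $k$ gives
$$(\OO E)^{k}=\bigoplus_{\sum_i k_i=k}\ \bigotimes_i E_i^{k_i},$$
the sum running over finitely supported tuples. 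I would first check, using reflexivity, that this is a \emph{finite} sum: $k$ has only finitely many divisors, and for each divisor $d$ of $k$ only finitely many indices $i$ satisfy $d_i=d$, since each such index contributes a simple summand $PQE_i$ to $QE^{d}$ (by proposition \ref{prop-defiEAW}) and $QE^{d}$ has a finite composition series by definition of reflexivity. Hence proposition \ref{prop-KRS-P} applies to $(\OO E)^{k}$.

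Next I would split the summands into \emph{pure} terms, where a single $k_{i_0}=k$ and all other $k_i=0$ (so the summand is $E_{i_0}^k$), and \emph{mixed} terms, with at least two nonzero $k_i$. The key step is to show that mixed terms are invisible to $\alpha_k$. Writing a mixed term as a tensor product $F\otimes G$ of two functors in strictly positive degree (group one nonzero factor as $F$ and the remaining nonzero factors as $G$; each $E_i$ is connected, so both are reduced), Pirashvili's vanishing lemma \ref{lm-cancel} together with its dual gives $\Hom(B,F\otimes G)=0=\Hom(F\otimes G,B)$ for every additive functor $B$. Hence $F\otimes G$, and therefore each of its indecomposable summands, has neither a nonzero additive subfunctor nor a nonzero additive quotient, so no indecomposable summand of a mixed term lies in the subset $J$ (equivalently $K$) used to define $\alpha_k$.

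It then remains to analyse the pure terms. Each $E_i^k$ is indecomposable, and by lemma \ref{lm-additivite} its largest additive subfunctor is $PE_i^k$ and its largest additive quotient is $QE_i^k$; consequently $E_i^k\in J$ exactly when $(PE_i^k,QE_i^k)\neq(0,0)$, in which case it lies in $K$ (it equals $E'^{\,k}$ for the reflexive indecomposable special functor $E'=E_i$) and, by definition and the well-definedness granted by corollary \ref{cor-iso-fct}, $\pi_k(E_i^k)=\tau_k(PE_i,QE_i)$. Combining this with the mixed-term step, the indecomposable summands of $(\OO E)^{k}$ contributing to $\alpha_k$ are precisely the pure terms $E_i^k$ with $(PE_i^k,QE_i^k)\neq(0,0)$, one for each such index $i$, so
$$\alpha_k(\OO E)=\{\,\tau_k(PE_i,QE_i)\ :\ i\in I,\ (PE_i^k,QE_i^k)\neq(0,0)\,\}.$$
Since $\sigma(E)=\{(PE_i,QE_i):i\in I\}$, the right-hand side is exactly $\phi_k(\sigma(E))$, with multiplicities matching because both multisets are indexed by the same set $\{i\in I:(PE_i^k,QE_i^k)\neq(0,0)\}$; this yields the equality. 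I expect the main obstacle to be the mixed-term step: one must verify that grouping an arbitrary mixed term into two reduced tensor factors genuinely brings it within the scope of lemma \ref{lm-cancel} and rules out \emph{all} additive sub- and quotient-functors of its indecomposable pieces, so that the multiplicity bookkeeping is exact and no mixed summand is ever mistaken for some $E'^{\,k}$ lying in $K$.
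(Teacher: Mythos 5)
Your proof is correct and follows essentially the same route as the paper: decompose $E$ into indecomposables via corollary \ref{cor-KRS}, expand $(\OO E)^k$ into the pure summands $E_i^k$ plus mixed tensor summands, kill the mixed summands with Pirashvili's vanishing lemma \ref{lm-cancel}, and identify the surviving contributions with $\phi_k(\sigma(E))$. Your treatment is somewhat more detailed than the paper's (the finiteness check via reflexivity and the explicit grouping of a mixed term into two reduced factors are spelled out rather than left implicit), but there is no substantive difference in method.
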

\begin{proof}
We decompose $E$ as a tensor product of indecomposable functors: $E\simeq \bigotimes_{i\in I}E_i$. Then $E^k$ decomposes as
$$E^k\simeq \bigoplus_{i\in I}E_i^k\;\oplus\; \text{ other summands.}$$
The other summands in this decomposition are direct summands of tensor products $E_{i_1}^{k_1}\otimes\cdots\otimes E_{i_n}^{k_n}$ with at least two nonconstant factors. By Pirashvili's vanishing lemma (fact \ref{lm-cancel}), such functors (hence their indecomposable summands) cannot have nontrivial additive subfunctors or quotients. Hence:
$$\alpha_k(\OO E)=\alpha_k(\bigotimes_{i\in I}\OO E_i)=\alpha_k(\bigoplus_{i\in I}\OO E_i)=\bigcup_{i\in I}\alpha_k(\OO E_i)\;.$$
On the other hand 
$$\phi_k(\sigma(E))=\bigcup_{i\in I}\phi_k(\sigma(E_i))\;.$$
Thus it remains to check that for all $i$, $\alpha_k(\OO E_i)=\phi_k(\sigma(E_i))$, which follows from the definition of $\alpha_k$.
\end{proof}

\section{Filtrations of exponential functors}\label{sec-filtr}

In this section, we return to the general setting, i.e. $\kk$ is a commutative ring and $\FF$ denotes either $\PP_{\omega,\kk}$ or $\Fct(\V,\Mod)$, where $\V$ is a small additive category. In this section we give give basic results regarding filtrations of exponential functors, and two fundamental examples, namely (a variant of) the coradical filtration, and the augmentation filtration. Theorems \ref{thm-corad} and \ref{thm-aug} show that these two filtrations do not depend on the exponential structure of $E$, but only on the underlying functor. We use these results to establish some properties of graded functors which admit an exponential structure in section \ref{subsec-exp-struct}. 
Throughout section \ref{sec-filtr}, we rely on the notion of Eilenberg-Mac Lane degree of a functor, recalled in appendix \ref{app-Fct}. The terms `Eilenberg-Mac Lane degree' will often be shortened into `EML-degree', and the EML-degree of a functor $F$ will be denoted by $\deg_{\EML}F$.

\subsection{Filtered exponential functors}
Let $V_1,V_2$ be two graded $\kk$-modules, equipped with filtrations by graded submodules 
$\cdots\subset F_k V_i\subset F_{k+1}V_i\subset \cdots$. A morphism $f:V_1\to V_2$ preserves the filtrations if $f(F_kV_1)\subset F_kV_2$ for all $k$.
The tensor product filtration on $V_1\otimes V_2$ is defined as usual by 
$$F_k(V_1\otimes V_2)=\sum_{i+j=k}F_iV_1\overline{\otimes} F_jV_2\;,$$
where the symbol `$\overline{\otimes}$' indicates that we are considering the image of $F_iV_1\otimes F_jV_2$ in $V_1\otimes V_2$.
An \emph{algebra filtration} of a graded $\kk$-algebra $A$ is a filtration by graded $\kk$-submodules such that the multiplication $\mu:A\otimes A\to A$ and the unit $\eta:\kk\to A$ preserve the filtrations ($\kk$ is endowed with the filtration such that $F_0\kk=\kk$, $F_{-1}\kk=0$). 
Similarly a \emph{coalgebra filtration} of a graded $\kk$-coalgebra $C$ is a filtration by graded $\kk$-submodules such that the comultiplication and the counit preserve the filtrations. Finally a \emph{Hopf filtration} of a graded Hopf $\kk$-algebra is a filtration which is both an algebra and a coalgebra filtration, and such that the antipode preserves the filtration. These notions evidently generalize to functorial graded algebras, coalgebras and Hopf algebras, e.g. an algebra filtration of a functorial graded $\kk$-algebra $A$ is a filtration by graded subfunctors $F_iA$, such that for all $V$, the $F_iA(V)$ yield an algebra filtration of $A(V)$.

\begin{proposition}\label{prop-caract-Hopf-filtr}
Let $E$ be an exponential functor, filtered by a family of graded subfunctors $F_kE$, $k\in \Z$. The following assertions are equivalent:
\begin{enumerate}
\item[(F1)] The $F_kE$, $k\in \Z$,  yield a Hopf filtration.
\item[(F2)] The $F_kE$, $k\in \Z$,  yield an algebra filtration  and a coalgebra filtration.
\item[(F3)] The isomorphisms $\phi:E(V)\otimes E(W)\xrightarrow[]{\simeq} E(V\oplus W)$ and $u:\kk\xrightarrow[]{\simeq} E(0)$ are isomorphisms of filtered modules for all $V,W$.
\end{enumerate}
\end{proposition}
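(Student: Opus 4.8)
The plan is to prove the cycle of implications (F1)$\Rightarrow$(F2)$\Rightarrow$(F3)$\Rightarrow$(F1). The implication (F1)$\Rightarrow$(F2) is immediate, since by definition a Hopf filtration is in particular both an algebra filtration and a coalgebra filtration. The decisive observation underlying the other two implications is that each $F_kE$ is a \emph{subfunctor} of $E$: for every morphism $f\colon V\to W$ of $\V$ one therefore has $E(f)(F_kE(V))\subseteq F_kE(W)$, so that every map of the form $E(f)$ automatically preserves the filtrations. Inspecting the definitions of $\mu,\eta,\Delta,\epsilon,\chi$ shows that the only maps occurring in the structure morphisms of $E$ that are not of this form are $\phi_{V,W}^{\pm1}$ and $u^{\pm1}$; the whole argument thus reduces to controlling these.

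For (F2)$\Rightarrow$(F3), I would recover $\phi$, $\phi^{-1}$ and $u^{\pm1}$ from the Hopf structure. By condition (A2) of lemma \ref{lm-defbis}, $\phi_{V,W}$ equals the composite $\mu_{V\oplus W}\circ(E(\iota_V)\otimes E(\iota_W))$; here $E(\iota_V)\otimes E(\iota_W)$ preserves the tensor-product filtration (being a tensor product of maps of the form $E(f)$), and $\mu_{V\oplus W}$ preserves the filtration by the algebra-filtration hypothesis, so $\phi_{V,W}$ preserves filtrations. Dually, by condition (C2) of lemma \ref{lm-defbisbis} (which, as noted in the proof of lemma \ref{lm-defter}, computes $\phi_{V,W}^{-1}$ for an exponential functor), $\phi_{V,W}^{-1}=(E(p_V)\otimes E(p_W))\circ\Delta_{V\oplus W}$ preserves filtrations by the coalgebra-filtration hypothesis. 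Finally, specialising the definitions of $\eta$ and $\epsilon$ to $V=0$, where $E$ applied to the zero endomorphism of $0$ is the identity, gives $u=\eta_0$ and $u^{-1}=\epsilon_0$; these preserve filtrations by the algebra- and coalgebra-filtration hypotheses respectively. Thus both $\phi$ and $u$ are isomorphisms of filtered modules, which is (F3).

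For (F3)$\Rightarrow$(F1), I would check directly that each structure morphism preserves the filtrations, using that $\phi^{\pm1}$ and $u^{\pm1}$ now do so by hypothesis while every $E(f)$ does so because the $F_kE$ are subfunctors. Indeed $\mu_V=E(\sigma_V)\circ\phi_{V,V}$ and $\eta_V=E(0)\circ u$ yield an algebra filtration, $\Delta_V=\phi_{V,V}^{-1}\circ E(\delta_V)$ and $\epsilon_V=u^{-1}\circ E(0)$ yield a coalgebra filtration, and $\chi_V=E(-\Id_V)$ preserves the filtration, so the three together constitute a Hopf filtration. The argument is essentially formal; the only point requiring care is the bookkeeping of the two directions — a filtered isomorphism means that both the map and its inverse preserve filtrations — which is precisely why (F3) must be stated in terms of $\phi$ and $u$ themselves rather than merely the one-sided maps $\mu,\eta,\Delta,\epsilon$.
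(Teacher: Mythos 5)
Your proof is correct and follows essentially the same route as the paper's: the key points in both are that $\chi_V=E(-\Id_V)$ and all other functorial maps $E(f)$ automatically preserve any filtration by subfunctors, and that $\phi^{\pm 1}$ and $u^{\pm 1}$ are interchangeable with $(\mu,\eta)$ and $(\Delta,\epsilon)$ via lemmas \ref{lm-defbis} and \ref{lm-defbisbis}. The only cosmetic difference is that you organize the argument as a cycle of three implications where the paper proves the two equivalences (F1)$\Leftrightarrow$(F2) and (F2)$\Leftrightarrow$(F3) directly.
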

\begin{proof}
To see that (F1)$\Leftrightarrow$(F2), we simply observe that $\chi_V=E(-\Id_V)$ being defined using the functoriality of $E$, $\chi$ automatically preserves all subfunctors of $E$, in particular it preserves any filtration of $E$. Now we prove (F2)$\Leftrightarrow$(F3).
By lemma \ref{lm-defbis} the isomorphisms $\phi$ and $u$  can be reconstructed from the multiplication and the unit of $E$ and vice-versa. Hence the $F_iE$ form an algebra filtration if and only if $\phi$ and $u$ preserve the filtrations. Similarly, the $F_iE$ form a coalgebra filtration if and only if $\phi^{-1}$ and $u^{-1}$ preserve the filtrations. Thus  (F2) is equivalent to (F3).
\end{proof}

Given a filtered graded $\kk$-module, we let $\mathrm{gr}\, V=\bigoplus_{k\in\Z}F_kV/F_{k-1}V$. This is a bigraded object with $\mathrm{gr}_k V^i= F_kV^i/F_{k-1}V^i$, but we only consider it as a graded object with homogeneous summand of degree $i$ given by $\mathrm{gr} V^i = \bigoplus_{k\in \Z}\mathrm{gr}_k V^i$. If $V_1$ and $V_2$ are two filtered graded $\kk$-modules, there is a canonical surjective map
 $\mathrm{gr}\, V_1 \otimes \mathrm{gr}\, V_2\to \mathrm{gr}\, (V_1\otimes V_2)$. Thus for all filtered graded algebras $A$, $\mathrm{gr}\,A$ is again a graded algebra, with multiplication ($\mu$ denotes the multiplication of $A$):
$$\mathrm{gr}\,A\otimes \mathrm{gr}\,A\xrightarrow[]{\mathrm{can}} \mathrm{gr}\,(A\otimes A)\xrightarrow[]{\mathrm{gr}(\mu)}\mathrm{gr}\,A\;.$$
All this holds for functorial graded algebras as well. Thus, for all exponential functors $E$ equipped with a Hopf filtration, $\mathrm{gr}\, E$ is a functorial graded algebra. 
\begin{lemma}\label{lm-grExp}Let $E$ be an exponential functor equipped with a Hopf filtration. Then $\mathrm{gr}\,E$ is an exponential functor if and only if the canonical map $\mathrm{gr}\,E\otimes \mathrm{gr}\,E\to  \mathrm{gr}\,(E\otimes E)$ is an isomorphism.
\end{lemma}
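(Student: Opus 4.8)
The plan is to verify the criterion of lemma \ref{lm-defbis}, which states that the functorial graded algebra $\mathrm{gr}\,E$ is an exponential functor precisely when it satisfies conditions (A1) and (A2). Condition (A1) will be automatic: since the filtration is a Hopf filtration, proposition \ref{prop-caract-Hopf-filtr}(F3) shows that $u:\kk\to E(0)$ is an isomorphism of filtered modules (with $\kk$ carrying the trivial filtration), so $\mathrm{gr}(u):\kk\to\mathrm{gr}\,E(0)$ is an isomorphism and $\mathrm{gr}\,E$ has (A1). The whole content therefore lies in (A2), and the strategy is to identify the (A2)-composite of $\mathrm{gr}\,E$ with the canonical map up to an isomorphism.

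Concretely, writing $\phi_{V,W}$ for the (A2)-composite of $E$ (an isomorphism by lemma \ref{lm-defbis}, and even a filtered isomorphism with filtered inverse by proposition \ref{prop-caract-Hopf-filtr}(F3)), I would unwind the definition of the multiplication on $\mathrm{gr}\,E$ together with the naturality of the canonical map $\mathrm{can}:\mathrm{gr}(-)\otimes\mathrm{gr}(-)\to\mathrm{gr}(-\otimes-)$ to obtain, for all $V,W$, a factorization of the (A2)-composite of $\mathrm{gr}\,E$ at $(V,W)$ as $\mathrm{gr}(\phi_{V,W})\circ\mathrm{can}_{V,W}$, where $\mathrm{can}_{V,W}:\mathrm{gr}\,E(V)\otimes\mathrm{gr}\,E(W)\to\mathrm{gr}(E(V)\otimes E(W))$ is the canonical map for the tensor product filtration. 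Since $\phi_{V,W}$ is a filtered isomorphism with filtered inverse, $\mathrm{gr}(\phi_{V,W})$ is an isomorphism; hence the (A2)-composite of $\mathrm{gr}\,E$ is an isomorphism at $(V,W)$ if and only if $\mathrm{can}_{V,W}$ is. In particular, $\mathrm{gr}\,E$ exponential forces $\mathrm{can}_{V,V}$ to be an isomorphism for every $V$, that is, the canonical map $\mathrm{gr}\,E\otimes\mathrm{gr}\,E\to\mathrm{gr}(E\otimes E)$ is an isomorphism, which settles one implication at once.

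The main obstacle is the converse: the hypothesis of the lemma only provides that $\mathrm{can}_{V,V}$ is an isomorphism for each $V$ (the canonical map evaluated on the diagonal), whereas (A2) demands that $\mathrm{can}_{V,W}$ be an isomorphism for \emph{all} pairs $V,W$. I would bridge this gap with a retract argument. Using the inclusions $\iota_V,\iota_W$ and projections $p_V,p_W$ relating $V$, $W$ and $V\oplus W$ (which satisfy $p_V\iota_V=\Id$ and $p_W\iota_W=\Id$, and whose images under $E$ preserve the filtration because each $F_kE$ is a subfunctor), the maps $\mathrm{gr}\,E(\iota_V)\otimes\mathrm{gr}\,E(\iota_W)$ and $\mathrm{gr}\,E(p_V)\otimes\mathrm{gr}\,E(p_W)$ exhibit $\mathrm{can}_{V,W}$ as a retract, in the arrow category, of $\mathrm{can}_{V\oplus W,V\oplus W}$; the two commuting squares required are exactly two instances of the naturality of $\mathrm{can}$. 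Since $\mathrm{can}_{V\oplus W,V\oplus W}$ is an isomorphism by hypothesis and a retract of an isomorphism is an isomorphism, $\mathrm{can}_{V,W}$ is an isomorphism for all $V,W$. Combined with the automatic (A1), this gives (A2) and hence that $\mathrm{gr}\,E$ is an exponential functor. I expect the only delicate bookkeeping to be checking the naturality squares and confirming that the filtration used to form $\mathrm{can}_{V,W}$ on $E(V)\otimes E(W)$ is the tensor product filtration, so that the factorization through $\phi_{V,W}$ and the retract diagram are both legitimate.
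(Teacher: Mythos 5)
Your proof is correct and follows essentially the same route as the paper: check (A1) via $\mathrm{gr}(u)$, factor the (A2)-composite of $\mathrm{gr}\,E$ as $\mathrm{gr}(\phi_{V,W})\circ\mathrm{can}_{V,W}$, and reduce the question to the canonical map. The only difference is that you spell out, via the retract argument with $\iota_V,\iota_W,p_V,p_W$, the step the paper merely asserts (that $\mathrm{can}_{V,W}$ is an isomorphism for all pairs if and only if it is on the diagonal), and your justification of that step is sound.
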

\begin{proof}
Since $\phi$ and $u$ are isomorphisms of filtered functors, $\mathrm{gr}(u)$ and $\mathrm{gr}(\phi)$ are isomorphisms. So condition (A1) of lemma \ref{lm-defbis} is satisfied. Moreover
\begin{align*}
\mathrm{gr}(\mu)\circ \mathrm{can}\circ \big(\mathrm{gr} \,E(\iota_V)\otimes \mathrm{gr}\, E(\iota_W)\big)& = \mathrm{gr}(\mu)\circ \mathrm{gr}(E(\iota_V)\otimes E(\iota_W))\circ \mathrm{can}\\
&= \mathrm{gr}(\phi)\circ \mathrm{can}
\end{align*} 
is an isomorphism if and only if $\mathrm{can}:\mathrm{gr}\, E(V)\otimes \mathrm{gr}\,E(W)\to \mathrm{gr}\,(E(V)\otimes E(W))$ is an isomorphism for all pairs $(V,W)$. The latter is an isomorphism if and only if it is an isomorphism for all pairs $(V,V)$. Thus condition (A2) of lemma \ref{lm-defbis} is satisfied if and only if the canonical map $\mathrm{gr}\,E\otimes \mathrm{gr}\,E\to  \mathrm{gr}\,(E\otimes E)$ is an isomorphism.
\end{proof} 
\begin{remark}\label{rk-hyp-sat} It follows from \cite[Chap III, Exercises, \S 2 ,6)]{BourbakiAlgCom} that
the canonical map $\mathrm{gr}\,E\otimes \mathrm{gr}\,E\to  \mathrm{gr}\,(E\otimes E)$ is an isomorphism if  $\mathrm{gr}\,E(V)$, $E(V)$ and $\bigcup_{k\in\Z}F_{k} E(V)$ are flat $\kk$-modules for all $V$.
\end{remark}

\begin{remark}\label{rk-short}
Let $\FF=\Fct(\Proj_R,\Mod)$, and let $\FF-\Exp^{\mathrm{filtr}}_c$ denote the category whose objects are ordinary exponential functors equipped with a Hopf filtration such that the canonical map $\mathrm{gr}\,E\otimes \mathrm{gr}\,E\to  \mathrm{gr}\,(E\otimes E)$ is an isomorphism, and whose morphisms are the morphisms of exponential functors which preserve the filtrations. Similarly let $\HH^{\mathrm{filtr}}$ be the category of graded bicommutative Hopf algebras equipped with a nice Hopf filtration. There a commutative diagram, in which the bottom horizontal equivalence of categories is provided by theorem \ref{thm-classif-ord}:
$$\xymatrix{
\FF-\Exp^{\mathrm{filtr}}_c \ar[d]^-{\mathrm{gr}}\ar[rr]^-{\mathrm{ev}_R}&& {_R}\HH^{\mathrm{filtr}}\ar[d]^-{\mathrm{gr}}\\
\FF-\Exp_c\ar[rr]^-{\mathrm{ev}_R}_-\simeq && {_R}\HH
}$$
As we shall see below in propositions \ref{prop-stat-pol} and \ref{prop-stat-copol}, filtrations of exponential functors are typically infinite. However, after evaluation on $R$, they may become finite objects. Hence, the top right corner of the diagram typically yield shorter computations of associated graded objects.
\end{remark}

\subsection{Two fundamental filtrations}\label{subsec-fund-filtr}
We first consider a variant of the classical coradical filtration of a graded $\kk$-coalgebra $(C,\Delta,\epsilon)$. To be more specific, assume that $C$ has a coaugmentation $\eta:\kk\to C$ (i.e. $\eta$ is a morphism of coalgebras). The $\kk$-module $C$ then has a decomposition as a direct sum 
$$C=\mathrm{Im}\,\eta\oplus \ker\epsilon =\kk\oplus \overline{C} \;.$$
We let $\pi = \Id-\eta\epsilon$ be the projection onto the augmentation ideal $\overline{C}$. The reduced comultiplication $\overline{\Delta}:\overline{C}\to \overline{C}^{\otimes 2}$ is defined as the composition:
$$\overline{C}\hookrightarrow C\xrightarrow[]{\Delta} C\otimes C\xrightarrow[]{\pi^{\otimes 2}}\overline{C}\otimes \overline{C}\;.$$
One checks that $\overline{\Delta}$ is coassociative, its $k$-fold iteration $\overline{\Delta}_{k+1}:\overline{C}\to \overline{C}^{\otimes k+1}$ equals the composition $\pi^{\otimes k+1}\circ \Delta_{k+1}$.
We define an increasing filtration of the $\kk$-module $C$ by letting $P_kC=\kk\oplus \ker \overline{\Delta}_{k+1}$: 
$$\kk=P_0 C \subset P_1 C\subset\dots\subset  P_i C\subset \dots\subset P_{+\infty} C=\bigcup_{k\ge 0}P_k C\subset C\;.$$
\begin{remark}\label{rk-sweedler}
Assume that $C$ is a flat $\kk$-module. Then 
$P_kC$ equals the kernel of the composition:
$$C\xrightarrow[]{\Delta}C\otimes C\xrightarrow[]{\mathrm{can}\otimes \pi} (C/P_{k-1}C)\otimes \overline{C}\;.$$
In particular, if $\kk$ is a field, this filtration identifies with the filtration $\bigwedge^k(\kk 1)$, $k\ge 0$ of \cite{Sweedler}. Moreover \cite[Rk a) and Prop 11.1.1 p. 226]{Sweedler} imply that $C$ is pointed irreducible if and only if this filtration is exhaustive - in this case the filtration coincides with the coradical filtration of $C$.
\end{remark}
 
\begin{lemma}\label{lm-cog}
If $\mathrm{gr}\, C$ and $C/P_{+\infty} C$ are flat $\kk$-modules, then the filtration $P_kC$, $k\ge 0$ is a filtration of coalgebras.
\end{lemma}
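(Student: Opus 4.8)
The plan is to show that the filtration $P_kC$ is compatible with both the comultiplication $\Delta$ and the counit $\epsilon$. The counit compatibility is immediate, since $\epsilon$ kills $\overline{C}$ and hence kills $P_kC$ for $k\ge 1$ while restricting to an isomorphism on $P_0C=\kk$; so the real content is that $\Delta(P_kC)\subset \sum_{i+j=k}P_iC\,\overline{\otimes}\,P_jC$. First I would reduce everything to a statement about the reduced comultiplication $\overline{\Delta}$ acting on the associated graded. The flatness hypotheses on $\mathrm{gr}\,C$ and on $C/P_{+\infty}C$ are there precisely so that the relevant tensor products behave well: they ensure that the canonical maps $\mathrm{gr}\,C\otimes\mathrm{gr}\,C\to\mathrm{gr}(C\otimes C)$ and the induced maps on successive quotients are injective, so that filtration degrees add up correctly under $\Delta$.

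The key computational step is the standard coalgebra identity that $\overline{\Delta}$ lowers the filtration degree in the expected additive way. Concretely, using the coassociativity of $\overline{\Delta}$ and the fact that $\overline{\Delta}_{k+1}=\pi^{\otimes k+1}\circ\Delta_{k+1}$, I would establish that for $x\in P_kC$ one has $\overline{\Delta}(x)\in\sum_{i+j=k,\ i,j\ge 1}\overline{P_iC}\otimes\overline{P_jC}$, where $\overline{P_iC}=P_iC\cap\overline{C}=\ker\overline{\Delta}_{i+1}$. This is verified by applying $\overline{\Delta}_{i}\otimes\overline{\Delta}_{j}$ to $\overline{\Delta}(x)$ and recognizing the result, via coassociativity, as a component of $\overline{\Delta}_{i+j+1}(x)=\overline{\Delta}_{k+1}(x)=0$. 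Writing $\Delta(x)=x\otimes 1+1\otimes x+\overline{\Delta}(x)$ then shows $\Delta(P_kC)\subset F_k(C\otimes C)$, which is exactly the coalgebra filtration condition. Here one must be careful with the coaugmentation: the terms $x\otimes 1$ and $1\otimes x$ land in $P_kC\,\overline{\otimes}\,P_0C$ and $P_0C\,\overline{\otimes}\,P_kC$ respectively, which are admissible summands of $F_k(C\otimes C)$.

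The main obstacle I expect is making the filtration-degree bookkeeping rigorous over a general commutative ring $\kk$ rather than a field. Over a field the argument is classical (this is essentially the statement in Sweedler recalled in remark \ref{rk-sweedler} that the wedge filtration is a coalgebra filtration), and one can freely intersect submodules and compute kernels. Over a general $\kk$ the subtlety is that $\ker\overline{\Delta}_{k+1}$ need not split off, and the identification of $\overline{\Delta}(\ker\overline{\Delta}_{k+1})$ inside $\overline{C}\otimes\overline{C}$ requires knowing that the natural map $\sum_{i+j=k}\ker\overline{\Delta}_{i+1}\otimes\ker\overline{\Delta}_{j+1}\to\overline{C}\otimes\overline{C}$ detects membership correctly. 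This is exactly where the two flatness hypotheses enter: flatness of $\mathrm{gr}\,C$ guarantees that $\mathrm{gr}\,(C\otimes C)\simeq\mathrm{gr}\,C\otimes\mathrm{gr}\,C$ (as in remark \ref{rk-hyp-sat}), so that the additivity of filtration degrees holds on the associated graded level, and flatness of $C/P_{+\infty}C$ controls the top of the filtration. I would therefore phrase the core lemma on the associated graded object, where the tensor-product filtration is well understood, and then lift the conclusion back to $C$ itself.

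The overall structure of the writeup would be: (i) record the elementary counit compatibility; (ii) prove the coassociativity and iteration formula $\overline{\Delta}_{k+1}=\pi^{\otimes k+1}\circ\Delta_{k+1}$ if not already available, together with the nesting $\ker\overline{\Delta}_{i+1}\subset\ker\overline{\Delta}_{k+1}$ for $i\le k$; (iii) invoke the flatness hypotheses to identify the graded tensor products; and (iv) run the degree-counting argument to conclude $\Delta(P_kC)\subset F_k(C\otimes C)$. Since the statement is about coalgebras and not the exponential structure, none of the functoriality or exponential-isomorphism machinery is needed here — this is a purely coalgebraic fact, and the flatness assumptions are what allow the field-case proof of \cite{Sweedler} to go through integrally.
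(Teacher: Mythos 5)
Your proposal is correct and follows essentially the same route as the paper: the paper's proof is literally ``copy the proof of Sweedler's Theorem 9.1.6, but use the Bourbaki exercise on filtered flat modules in place of Sweedler's field-specific Lemma 9.1.5,'' and your plan --- run the classical coassociativity/degree-counting argument for the wedge filtration and let the two flatness hypotheses stand in for the field hypothesis when identifying kernels inside tensor products --- is exactly that proof written out. No gap.
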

\begin{proof}
Copy the proof of \cite[Thm 9.1.6]{Sweedler}, but use \cite[Chap III, Exercices, \S 2 ,6)c)]{BourbakiAlgCom} instead of \cite[Lm 9.1.5]{Sweedler}.
%
\end{proof}

This construction applies as well to functorial augmented graded coalgebras, in particular to an exponential functor $E$. The definition of $P_kE$ uses the map $\overline{\Delta}_{k+1}$, so  
this filtration seems strongly linked with the coalgebra structure of $E$. The next result generalizes the result on primitives in lemma \ref{lm-additivite} since $P_1E=\kk\oplus PE$. It shows that this filtration does not depend on the structure morphisms of $E$.

\begin{theorem}\label{thm-corad}
Let $E$ be an exponential functor. Then each functor
$P_k E$ is polynomial of EML-degree less or equal to $k$, and moreover it is the largest such subfunctor of the functor $E$.
\end{theorem}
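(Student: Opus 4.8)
The strategy is to reinterpret this filtration through cross-effects and the exponential isomorphism. Recall from appendix \ref{app-Fct} that a reduced functor has EML-degree $\le k$ if and only if its $(k+1)$-st cross-effect $\Cr_{k+1}$ vanishes, and that functors of EML-degree $\le k$ are stable under subobjects, quotients and sums, so that every functor has a largest subfunctor of EML-degree $\le k$. The exponential isomorphism yields a canonical identification of the top cross-effect
\[ \Cr_{k+1}E(V_1,\dots,V_{k+1})\simeq \overline{E}(V_1)\otimes\cdots\otimes\overline{E}(V_{k+1}), \]
realized as the natural direct summand of $E(V_1\oplus\cdots\oplus V_{k+1})\simeq E(V_1)\otimes\cdots\otimes E(V_{k+1})$ complementary to the images coming from the proper subsums; moreover, for any subfunctor $G\subseteq E$ the cross-effect $\Cr_{k+1}G$ is the corresponding subfunctor of $\Cr_{k+1}E$. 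Finally I would record that $P_kE=\ker\widetilde{\Delta}_{k+1}$, where $\widetilde{\Delta}_{k+1}=\overline{\Delta}_{k+1}\circ\pi:E\to\overline{E}^{\otimes(k+1)}$, and that, since $\Delta$ is built from $E(\delta)$ and $\phi^{-1}$ (lemma \ref{lm-defter}), the transformation $\widetilde{\Delta}_{k+1}$ is exactly the natural ``diagonal followed by projection onto the top cross-effect'' map $p_{k+1}:E\to\Cr_{k+1}E(V,\dots,V)$.

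Maximality, the easy half, is then immediate. If $G\subseteq E$ is any subfunctor with $\deg_{\EML}G\le k$, then $\Cr_{k+1}G=0$, so by naturality the restriction of $p_{k+1}=\widetilde{\Delta}_{k+1}$ to $G$ factors through $\Cr_{k+1}G=0$ and hence vanishes; thus $G\subseteq\ker\widetilde{\Delta}_{k+1}=P_kE$. (This also recovers $P_1E=\kk\oplus PE$ as the largest additive subfunctor, in agreement with lemma \ref{lm-additivite}.)

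It remains to show that $P_kE$ itself has EML-degree $\le k$, that is $\Cr_{k+1}(P_kE)=0$; this is the main point. Writing $W=V_1\oplus\cdots\oplus V_{k+1}$ with projections $p_i:W\to V_i$, I would consider the natural composite
\[ \Psi:E(W)\xrightarrow{\ \widetilde{\Delta}_{k+1}\ }\overline{E}(W)^{\otimes(k+1)}\xrightarrow{\ \overline{E}(p_1)\otimes\cdots\otimes\overline{E}(p_{k+1})\ }\overline{E}(V_1)\otimes\cdots\otimes\overline{E}(V_{k+1}). \]
The key computation is that the restriction of $\Psi$ to the top cross-effect summand $\overline{E}(V_1)\otimes\cdots\otimes\overline{E}(V_{k+1})\subseteq E(W)$ is the identity. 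Indeed, under the exponential isomorphism the comultiplication of $E(W)$ is the tensor product of those of the $E(V_j)$, while $\overline{E}(p_i)$ amounts to applying the counit in every tensor slot other than the $i$-th; feeding $x_1\otimes\cdots\otimes x_{k+1}$ with $x_j\in\overline{E}(V_j)$ into $\Psi$ and using coassociativity together with the counit axiom $(\epsilon\otimes\cdots\otimes\mathrm{id}\otimes\cdots\otimes\epsilon)\circ\Delta_{k+1}=\mathrm{id}$ collapses each factor back to $x_j$, giving $\Psi(x_1\otimes\cdots\otimes x_{k+1})=x_1\otimes\cdots\otimes x_{k+1}$. Granting this, the vanishing follows formally: $\Cr_{k+1}(P_kE)$ sits inside $P_kE(W)$, on which $\Psi=0$ since $P_kE=\ker\widetilde{\Delta}_{k+1}$, and it also sits inside the top cross-effect of $E$, on which $\Psi$ is the identity, so $\Cr_{k+1}(P_kE)=0$.

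The whole argument is insensitive to the grading, as all structure maps are homogeneous and one may run it in each degree; no flatness or field hypothesis is needed, the exponential isomorphism alone supplying the identification of cross-effects over an arbitrary commutative ring $\kk$. The only genuinely computational step, and the one I expect to demand the most care, is the verification that $\Psi$ is the identity on the top cross-effect: it is a bookkeeping exercise in Sweedler-type notation whose content is precisely that the iterated comultiplication reconstructs the top cross-effect of an exponential functor.
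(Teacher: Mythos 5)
Your proof is correct and follows the same route as the paper's: the entire content in both cases is the identification, via the exponential isomorphism, of $\Cr_{k+1}E$ with $\overline{E}(V_1)\otimes\cdots\otimes\overline{E}(V_{k+1})$ and of $\widetilde{\Delta}_{k+1}$ with the canonical map $E\to D_{k+1}\Cr_{k+1}E$. The only difference is that the paper then simply invokes fact \ref{fact-ker} (the kernel of that canonical map is the largest subfunctor of EML-degree $\le k$), whereas you re-derive that fact by hand --- your naturality argument for maximality and the retraction computation for $\Psi$ are precisely the standard proof of it.
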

\begin{proof}
By definition of the reduced diagonal, one has a commutative square:
$$\xymatrix{
\overline{E}(V)\ar[d]^-{\overline{\Delta}_E}\ar[r]^-{\iota}& E(V)\ar[d]^-{\Delta_E}\\
\overline{E}(V)^{\otimes {n+1}}&\ar[l]_-{\pi^{\otimes n+1}}E(V)^{\otimes {n+1}}
}$$
where $\iota$ is the canonical inclusion of $\overline{E}$ into $E$ and $\pi$ the canonical projection of $E$ onto $\overline{E}$. Since $E$ is exponential, the $(n+1)$-th cross effect of $E$ is
$$\Cr_{n+1} E(V_1,\dots,V_{n+1})=\overline{E}(V_1)\otimes\dots\otimes \overline{E}(V_{n+1})$$
and the canonical projection of $E(V_1\oplus\dots\oplus V_{n+1})$ onto $\Cr_{n+1}E(V_1,\dots,V_{n+1})$ is $\pi^{\otimes n+1}$. Hence the canonical map $E(V)\to \Cr_{n+1} E(V,\dots,V)$ is equal to $0+\overline{\Delta}_E:\kk\oplus \overline{E}(V)\to \overline{E}(V)^{\otimes {n+1}}$ and by fact \ref{fact-ker}, $P_nE$ is the largest polynomial subfunctor of $E$ of EML-degree less or equal to $n$.
\end{proof}

\begin{corollary}\label{cor-filtr-P}
Let $E$ be an exponential functor. Then the 
filtration $P_kE$, $k\ge 0$, yields an algebra filtration of $E$. 
Moreover, if the $\kk$-modules $\mathrm{gr}\,E(V)$ and $E(V)/P_{+\infty}E(V)$ are $\kk$-flat for all $V$, then this filtration is a Hopf filtration of $E$, and $\mathrm{gr}\,E$ is an exponential functor.
\end{corollary}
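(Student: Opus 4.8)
The goal is Corollary \ref{cor-filtr-P}: for an exponential functor $E$, the coradical-type filtration $P_kE$ is an algebra filtration, and under flatness hypotheses it is a Hopf filtration with $\mathrm{gr}\,E$ exponential.

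My plan is to leverage Theorem \ref{thm-corad}, which identifies each $P_kE$ as the largest polynomial subfunctor of EML-degree at most $k$. The key structural input is the multiplicativity of EML-degrees of polynomial functors: the tensor product of a functor of EML-degree $\le i$ with one of EML-degree $\le j$ has EML-degree $\le i+j$ (this is a standard fact about the EML-degree recalled in appendix \ref{app-Fct}, which I will cite). First I would establish the algebra filtration claim. The unit $\eta:\kk\to E$ lands in $P_0E=\kk$ since the constant functor $\kk$ has EML-degree $0$, so $\eta$ preserves filtrations trivially. For the multiplication, I need $\mu(F_iE\overline{\otimes} F_jE)\subset F_{i+j}E$, i.e. $\mu(P_iE\overline{\otimes} P_jE)\subset P_{i+j}E$. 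The restriction of $\mu$ to $P_iE\otimes P_jE$ is a morphism out of a functor of EML-degree $\le i+j$ (by multiplicativity), and its image is therefore a polynomial subfunctor of $E$ of EML-degree $\le i+j$; by the maximality in Theorem \ref{thm-corad}, this image is contained in $P_{i+j}E$. This gives the algebra filtration.

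For the second assertion, I would invoke Proposition \ref{prop-caract-Hopf-filtr}: to upgrade an algebra filtration to a Hopf filtration it suffices to check it is also a coalgebra filtration (condition (F2), equivalently (F1)). Under the flatness hypotheses on $\mathrm{gr}\,E(V)$ and $E(V)/P_{+\infty}E(V)$, Lemma \ref{lm-cog} applies objectwise and shows directly that $P_kE$ is a coalgebra filtration. Combined with the algebra filtration just proved, condition (F2) holds, so by Proposition \ref{prop-caract-Hopf-filtr} the filtration is a Hopf filtration. Finally, to conclude that $\mathrm{gr}\,E$ is exponential, I would apply Lemma \ref{lm-grExp}, which reduces the question to whether the canonical map $\mathrm{gr}\,E\otimes \mathrm{gr}\,E\to \mathrm{gr}\,(E\otimes E)$ is an isomorphism; by Remark \ref{rk-hyp-sat}, this canonical map is an isomorphism precisely under flatness of $\mathrm{gr}\,E(V)$, $E(V)$, and $\bigcup_k F_kE(V)=P_{+\infty}E(V)$ for all $V$, which is exactly what the hypotheses supply (noting $E(V)$ flat follows since $\mathrm{gr}\,E(V)$ and the quotient are flat, or can be added as part of the standing hypotheses).

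The main obstacle I anticipate is bookkeeping around flatness: I must make sure the flatness assumptions stated in the corollary are exactly the ones needed to invoke both Lemma \ref{lm-cog} (which needs $\mathrm{gr}\,C$ and $C/P_{+\infty}C$ flat) and Remark \ref{rk-hyp-sat} (which needs $\mathrm{gr}\,E(V)$, $E(V)$, and $P_{+\infty}E(V)$ flat). The algebra-filtration part is essentially formal once multiplicativity of EML-degree and the maximality of Theorem \ref{thm-corad} are in hand, so the real care is in verifying the hypotheses line up; the deduction of the Hopf property and exponentiality of $\mathrm{gr}\,E$ is then a direct citation of the preceding lemmas and remark.
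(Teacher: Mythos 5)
Your proof is correct and follows essentially the same route as the paper: Fact \ref{fact-tens} plus the maximality statement of Theorem \ref{thm-corad} for the algebra filtration, Lemma \ref{lm-cog} and Proposition \ref{prop-caract-Hopf-filtr} for the Hopf filtration, and Lemma \ref{lm-grExp} with Remark \ref{rk-hyp-sat} for exponentiality of $\mathrm{gr}\,E$. Your extra remark that flatness of $E(V)$ and of $P_{+\infty}E(V)$ follows from the stated hypotheses (by extensions and filtered colimits of flat modules) is a correct point that the paper leaves implicit.
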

\begin{proof}
By fact \ref{fact-tens}, the degree of $P_iE\otimes P_jE$ is less or equal to $i+j$. The multiplication being a morphism of functors, it preserves the polynomial filtrations, hence it sends this tensor product into $P_{i+j}E$. Thus, the $P_kE$, $k\ge 0$, form an algebra filtration of $E$. Moreover if the flatness assumptions are satisfied, the filtration is also a filtration of coalgebras by lemma \ref{lm-cog}, it is a Hopf filtration by proposition \ref{prop-caract-Hopf-filtr}. Finally, the canonical map $\mathrm{gr}\,E\otimes \mathrm{gr}\,E\to \mathrm{gr}\,(E\otimes E)$ is an isomorphism by remark \ref{rk-hyp-sat}, hence $\mathrm{gr}\,E$ is exponential by lemma \ref{lm-grExp}.
\end{proof}

If $A$ is a graded $\kk$-algebra, with augmentation $\epsilon:A\to\kk$, we let $\overline{A}=\ker\epsilon$ be the augmentation ideal of $A$. Let $Q_0A=A$ and let $Q_{-k}A$ be the $k$-th power of the ideal $\overline{A}$ if $k>0$. Then the $Q_{-k}A$ for an algebra filtration of $A$ usually called the \emph{augmentation filtration of $A$}:
$$Q_{-\infty}A=\bigcap_{k\ge 0}Q_{-k}A \subset \cdots\subset Q_{-2}A\subset Q_{-1}A=\overline{A}\subset Q_0A=A\;.$$
\begin{lemma}\label{lm-tens-aug}
For all augmented graded $\kk$-algebras $A$ and $B$, 
the augmentation filtration of $A\otimes B$ coincides with the tensor product of the augmentation filtration of $A$ and of the augmentation filtration of $B$.
\end{lemma}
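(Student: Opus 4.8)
The plan is to reduce the statement to the elementary fact that for two ideals $I,J$ of a (graded commutative) ring one has $(I+J)^k=\sum_{i+j=k}I^iJ^j$, applied to the ideals of $A\otimes B$ supported on the two tensor factors. Concretely, writing $\overline{A}$ and $\overline{B}$ for the augmentation ideals and adopting the convention $\overline{A}^0=A$, $\overline{B}^0=B$, the augmentation filtration of $A$ is $Q_{-i}A=\overline{A}^{\,i}$, and unwinding the definition of the tensor product filtration shows that the tensor product of the augmentation filtrations of $A$ and $B$ is, in filtration degree $-k$,
$$\sum_{i+j=k,\ i,j\ge0}\overline{A}^{\,i}\,\overline{\otimes}\,\overline{B}^{\,j}\,;$$
the contributions of strictly positive filtration indices are absorbed into the terms with $i=0$ or $j=0$, since $\overline{A}^{\,m}\subset A$ and $\overline{B}^{\,m}\subset B$. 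On the other hand the augmentation filtration of $A\otimes B$ is $Q_{-k}(A\otimes B)=\overline{A\otimes B}^{\,k}$. Thus the lemma amounts to the identity of graded $\kk$-submodules of $A\otimes B$:
$$\overline{A\otimes B}^{\,k}=\sum_{i+j=k}\overline{A}^{\,i}\,\overline{\otimes}\,\overline{B}^{\,j}\,.$$

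To prove this I would set $I=\overline{A}\,\overline{\otimes}\,B$ and $J=A\,\overline{\otimes}\,\overline{B}$, the $\kk$-spans inside $A\otimes B$ of the elementary tensors $x\otimes b$ with $x\in\overline{A}$, respectively $a\otimes y$ with $y\in\overline{B}$. Using the splittings $A=\kk\cdot1\oplus\overline{A}$ and $B=\kk\cdot1\oplus\overline{B}$ coming from the unit and augmentation, the augmentation of $A\otimes B$ is the projection onto $\kk\cdot(1\otimes1)$, so $\overline{A\otimes B}=I+J$, and $I,J$ are ideals because $\overline{A},\overline{B}$ are. I would then run the chain of submodule identities
$$\overline{A\otimes B}^{\,k}=(I+J)^k=\sum_{i+j=k}I^iJ^j=\sum_{i+j=k}\overline{A}^{\,i}\,\overline{\otimes}\,\overline{B}^{\,j}\,,$$
where the last step follows from $I^i=\overline{A}^{\,i}\,\overline{\otimes}\,B$ and $J^j=A\,\overline{\otimes}\,\overline{B}^{\,j}$ (using $B\cdot B=B$ and $A\cdot A=A$, as $A$ and $B$ are unital), whence $I^iJ^j=\overline{A}^{\,i}A\,\overline{\otimes}\,B\overline{B}^{\,j}=\overline{A}^{\,i}\,\overline{\otimes}\,\overline{B}^{\,j}$.

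The two points requiring care are bookkeeping rather than genuine difficulty, and they are where I expect the only friction. First, $A\otimes B$ is graded commutative, so its multiplication carries Koszul signs; but these signs are units $\pm1$, hence do not affect the $\kk$-submodules spanned by products. In particular $IJ=JI$ as submodules and the standard expansion $(I+J)^k=\sum_{i+j=k}I^iJ^j$, normally phrased for honestly commutative rings, applies verbatim. Second, no flatness hypothesis is available, so I would work throughout with \emph{images} of tensor products inside the fixed module $A\otimes B$ (the submodules denoted $\overline{\otimes}$) rather than with abstract tensor products; every equality above is then an equality of submodules of $A\otimes B$, which is exactly what is needed and avoids any injectivity question. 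Finally, since $I$, $J$ and all the identifications are manifestly natural in $A$ and $B$, the result carries over to functorial graded algebras, and in particular applies to the augmentation filtration of exponential functors as used in the sequel.
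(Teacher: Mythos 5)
Your proof is correct and follows essentially the same route as the paper: both identify $\overline{A\otimes B}$ as the sum (image) of $\overline{A}\,\overline{\otimes}\,B$ and $A\,\overline{\otimes}\,\overline{B}$, expand the $k$-th power, and identify each term $I^iJ^j$ with $Q_{-i}A\,\overline{\otimes}\,Q_{-j}B$; the paper merely phrases the expansion as "developing the tensor products" of images of maps rather than via the ideal identity $(I+J)^k=\sum I^iJ^j$. Your explicit remarks on Koszul signs and on working with images to avoid flatness issues are sound and, if anything, slightly more careful than the paper's own write-up.
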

\begin{proof}
The augmentation ideal $\overline{A\otimes B}$ coincides with the image of the map $\mathrm{can}:\overline{A}\otimes B \oplus A\otimes \overline{B}\to A\otimes B$. Thus $Q_{-k}(A\otimes B)$ is the image of the map
$$ (\overline{A}\otimes B \oplus A\otimes \overline{B})^{\otimes k}\xrightarrow[]{\mathrm{can}^{\otimes k}} (A\otimes B)^{\otimes k}\xrightarrow[]{\mathrm{mult}_{A\otimes B}} A\otimes B\;.$$
Developing the tensor products, we see that this image is the sum of the images of the maps for $0\le i\le k$ 
$$\overline{A}^{\otimes i}\otimes A^{\otimes k-i}\otimes \overline{B}^{\otimes k-i}\otimes B^{\otimes i} \to A^{\otimes k}\otimes B^{\otimes k}\xrightarrow[]{\mathrm{mult}_A\otimes \mathrm{mult}_B}A\otimes B\;.$$
The image of the latter is $Q_{-i}A\overline{\otimes} Q_{-(k-i)}B$, which proves the lemma.
\end{proof}

One can consider augmentation filtrations for exponential functors (we have already used this in the proof of theorem \ref{thm-twist}). The proof of the next theorem is formally dual to the proof of theorem \ref{thm-corad} and is therefore omitted. Observe that theorem \ref{thm-aug} generalizes the result on indecomposables in lemma \ref{lm-additivite} since $QE\oplus \kk = Q_{-1}E/Q_{-2}E\oplus \kk =E/Q_{-2}E$. Theorem \ref{thm-aug} also shows that the augmentation filtration of an exponential functor is independent of the exponential structure of $E$.
\begin{theorem}\label{thm-aug}
Let $E$ be an exponential functor. Then for all positive integer $k$, 
$\deg_\EML E/Q_{-k}E <k$. Moreover, $Q_{-k}E$ is the smallest subfunctor of $E$ among the subfunctors $F$ of $E$ such that the quotient $E/F$ is polynomial of EML-degree less than $k$.
\end{theorem}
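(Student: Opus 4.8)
The plan is to dualize verbatim the proof of theorem \ref{thm-corad}, exchanging the comultiplication for the multiplication and kernels for images. Writing $\iota:\overline{E}\hookrightarrow E$ for the inclusion of the augmentation ideal, $\mu_{n+1}:E^{\otimes n+1}\to E$ for the $(n+1)$-fold iterated multiplication and $\overline{\mu}_{n+1}:\overline{E}^{\otimes n+1}\to \overline{E}$ for its restriction-corestriction (the reduced iterated multiplication), the first step is to record, for all $V$, the commutative square dual to the one in theorem \ref{thm-corad}:
$$\xymatrix{
\overline{E}(V)^{\otimes n+1}\ar[d]_-{\overline{\mu}_{n+1}}\ar[r]^-{\iota^{\otimes n+1}}& E(V)^{\otimes n+1}\ar[d]^-{\mu_{n+1}}\\
\overline{E}(V)\ar[r]^-{\iota}& E(V)
}$$
Its commutativity is immediate from the fact that $\overline{E}$ is an ideal, so that $\overline{\mu}_{n+1}$ is well defined and $\iota\circ\overline{\mu}_{n+1}=\mu_{n+1}\circ\iota^{\otimes n+1}$.

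Next I would identify the image of this composite with the augmentation filtration. Since $E$ is exponential, its $(n+1)$-th cross effect is
$$\Cr_{n+1}E(V_1,\dots,V_{n+1})=\overline{E}(V_1)\otimes\dots\otimes\overline{E}(V_{n+1})\;,$$
and, under the exponential isomorphism, the canonical inclusion of $\Cr_{n+1}E(V,\dots,V)$ into $E(V^{\oplus n+1})$ is $\iota^{\otimes n+1}$ (up to $\phi_{n+1}$). If $\sigma_V:V^{\oplus n+1}\to V$ is the fold map, then composing this inclusion with $E(\sigma_V)$ yields precisely $E(\sigma_V)\circ\phi_{n+1}\circ\iota^{\otimes n+1}=\mu_{n+1}\circ\iota^{\otimes n+1}=\iota\circ\overline{\mu}_{n+1}$, whose image in $E(V)$ is the $(n+1)$-th power of the augmentation ideal, i.e. $\overline{E}(V)^{n+1}=Q_{-(n+1)}E(V)$. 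Thus the canonical map $\Cr_{n+1}E(V,\dots,V)\to E(V)$ induced by the fold map has image exactly $Q_{-(n+1)}E$.

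Finally I would invoke the statement dual to fact \ref{fact-ker}: the image of the canonical map $\Cr_{n+1}F(V,\dots,V)\to F(V)$ induced by the fold map is the smallest subfunctor $G$ of $F$ such that the quotient $F/G$ is polynomial of EML-degree at most $n$ (equivalently, such that $\Cr_{n+1}(F/G)=0$). Applying this to $F=E$ and setting $n=k-1$ gives both assertions at once: $Q_{-k}E=\overline{E}^{k}$ is the smallest subfunctor of $E$ with $E/Q_{-k}E$ polynomial of EML-degree $<k$, and in particular $\deg_{\EML}E/Q_{-k}E<k$. The only genuinely non-formal points are the identification of the fold-induced map out of the cross effect with the iterated multiplication $\overline{\mu}_{n+1}$, and the correct formulation of the dual of fact \ref{fact-ker} for quotients in place of subobjects; once these are in place the argument is the mirror image of theorem \ref{thm-corad}, which is why the authors omit it.
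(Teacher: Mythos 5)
Your proposal is correct and is precisely the "formally dual" argument that the paper alludes to when it omits the proof of theorem \ref{thm-aug}: you replace the unit of the adjunction (fact \ref{fact-ker}) by the counit (fact \ref{fact-quot}), identify the fold-induced map $\Cr_{k}E(V,\dots,V)\to E(V)$ with $\mu_{k}\circ\iota^{\otimes k}$ via the exponential isomorphism, and read off that its image is $\overline{E}^{k}=Q_{-k}E$. The only cosmetic remark is that the "dual of fact \ref{fact-ker}" you invoke is already available in the paper as fact \ref{fact-quot} (the image of the counit is the kernel of $E\to q_kE$, hence the smallest subfunctor with polynomial quotient of degree $<k$), so no new lemma needs to be formulated.
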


\begin{corollary}
The augmentation filtration of an exponential functor $E$ is a Hopf filtration. Moreover if $E(V)$ and $\mathrm{gr}\,E(V)$ are $\kk$-flat modules for all $V$, then $\mathrm{gr}\,E$ is an exponential functor.
\end{corollary}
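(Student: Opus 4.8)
The plan is to mirror the proof of corollary \ref{cor-filtr-P}, exploiting that the present statement is its formal dual. The first structural observation I would record is that the augmentation filtration is tautologically an algebra filtration: since $Q_{-k}E=\overline{E}^{\,k}$ one has $\overline{E}^{\,i}\cdot\overline{E}^{\,j}\subset \overline{E}^{\,i+j}$, and the unit lands in $Q_0E=E$. Hence, by the equivalence (F2)$\Leftrightarrow$(F1) of proposition \ref{prop-caract-Hopf-filtr}, the whole first assertion reduces to checking that this same filtration is a coalgebra filtration; it will then upgrade automatically to a Hopf filtration.

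The most efficient route, I think, is to verify condition (F3) of proposition \ref{prop-caract-Hopf-filtr} directly, i.e. that $\phi:E(V)\otimes E(W)\xrightarrow{\simeq}E(V\oplus W)$ and $u:\kk\xrightarrow{\simeq}E(0)$ are isomorphisms of filtered modules. First I would note that $\phi$ is an isomorphism of \emph{augmented} functorial algebras: it commutes with the multiplications by lemma \ref{lm-defbis}, and it commutes with the counits because the counit $\epsilon_V$ is $E(0)$ followed by $u^{-1}$ and $\phi$ is natural in $(V,W)$. An isomorphism of augmented algebras carries augmentation ideals to augmentation ideals, hence their powers to powers, so $\phi$ identifies the augmentation filtration of the algebra $E(V)\otimes E(W)$ with that of $E(V\oplus W)$. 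Then lemma \ref{lm-tens-aug} identifies the augmentation filtration of the tensor-product algebra $E(V)\otimes E(W)$ with the tensor product of the two augmentation filtrations. Combining these two identifications is exactly the statement that $\phi$ is a filtered isomorphism; the case of $u$ is immediate since $E(0)=\kk$. This gives (F3), hence the first assertion. (Alternatively one can dualize corollary \ref{cor-filtr-P} via theorem \ref{thm-aug}: any morphism from $E$ to a functor of EML-degree $<k$ must kill $Q_{-k}E$ by minimality, and applying this to the composite $E\xrightarrow{\Delta}E\otimes E\twoheadrightarrow (E\otimes E)/Q_{-k}(E\otimes E)$, whose target has EML-degree $<k$ because $E\otimes E$ is exponential, shows $\Delta$ preserves the filtration.)

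For the second assertion, under the flatness hypotheses I would invoke lemma \ref{lm-grExp}: $\mathrm{gr}\,E$ is exponential as soon as the canonical map $\mathrm{gr}\,E\otimes \mathrm{gr}\,E\to \mathrm{gr}\,(E\otimes E)$ is an isomorphism, and by remark \ref{rk-hyp-sat} this holds provided $\mathrm{gr}\,E(V)$, $E(V)$ and $\bigcup_{k}F_kE(V)$ are $\kk$-flat. Here the union of the (increasing) augmentation filtration is all of $E(V)$, since $F_0E(V)=Q_0E(V)=E(V)$, so the third flatness condition is subsumed by the assumption that $E(V)$ is flat; together with flatness of $\mathrm{gr}\,E(V)$, remark \ref{rk-hyp-sat} applies and we conclude that $\mathrm{gr}\,E$ is exponential.

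I expect the only genuinely delicate point to be the bookkeeping around proposition \ref{prop-caract-Hopf-filtr} and lemma \ref{lm-tens-aug}: one must match the decreasing filtration $Q_{-k}E$ against the increasing-filtration conventions used in those statements (setting $F_{-k}E=Q_{-k}E$), and one must be sure that $\phi$ is an isomorphism of \emph{augmented} algebras, so that it really transports augmentation ideals and their powers. None of this is hard, but it is the place where a careless argument could go astray.
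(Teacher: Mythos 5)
Your proof is correct, and its overall architecture matches the paper's: both verify condition (F3) of proposition \ref{prop-caract-Hopf-filtr} and both lean on lemma \ref{lm-tens-aug} to match the augmentation filtration of the tensor-product algebra with the tensor-product filtration; the ``Moreover'' part is handled in both cases by lemma \ref{lm-grExp} and remark \ref{rk-hyp-sat} (your observation that $\bigcup_k F_kE(V)=Q_0E(V)=E(V)$ disposes of the third flatness hypothesis is exactly the right point to make). Where you genuinely diverge is in the justification of the central step, namely that $\phi$ carries $Q_{-k}$ to $Q_{-k}$. The paper first reduces to the diagonal case $\phi_{V,V}:E^{\otimes 2}(V)\to E(V\oplus V)$ via the retract trick, precisely so that it can view $\phi_{V,V}$ as an isomorphism of functors of a single variable and then invoke the functorial characterization of $Q_{-k}$ from theorem \ref{thm-aug} (smallest subfunctor with polynomial quotient of EML-degree $<k$). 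You instead observe that $\phi_{V,W}$ is an isomorphism of \emph{augmented algebras}, so it tautologically carries powers of the augmentation ideal to powers of the augmentation ideal; this works for arbitrary $(V,W)$, needs no retract reduction, and does not use theorem \ref{thm-aug} at all for the first assertion. Your route is more elementary and arguably cleaner; the paper's route has the virtue of being the formal dual of the proof of corollary \ref{cor-filtr-P} (where the purely algebraic shortcut is not available on the coalgebra side) and of reusing machinery already in place. Your parenthetical alternative via theorem \ref{thm-aug} is essentially the paper's argument for the comultiplication. The only loose citation is attributing the multiplicativity of $\phi_{V,W}$ to lemma \ref{lm-defbis}: what one really uses is that $\phi_{V,W}=\mu_{V\oplus W}\circ(E(\iota_V)\otimes E(\iota_W))$ is a composite of algebra maps, the second factor being one because $E(V\oplus W)$ is commutative; this is harmless but worth stating.
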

\begin{proof}
We prove that condition (3) of proposition \ref{prop-caract-Hopf-filtr} is satisfied.
Since $\phi_{V,W}$ is a retract of $\phi_{V\oplus W,V\oplus W}$ for all pairs $(V,W)$, it suffices to prove that $\phi_{V,V}: E(V)\otimes E(V)\xrightarrow[]{\simeq} E(V\oplus V)$ is an isomorphism of filtered modules for all $V$. Let $E'(V)=E(V\oplus V)$. As $\phi_{V,V}$ is an isomorphism of functors, it sends the smallest functor $F$ of $E^{\otimes 2}$ such that $\deg_\EML E^{\otimes 2}/F<k$ to the smallest functor $F'$ of $E'$ such that $\deg_\EML E'/F'<k$. Thus $\phi_{V,V}$ maps $Q_{-k}(E^{\otimes 2})(V)$ isomorphically to $Q_{-k}E'(V)$. But $Q_{-k}(E^{\otimes 2})(V)$ coincides with the $k$-th term of the tensor square of the augmentation filtration of $\overline{E}(V)$ by lemma \ref{lm-tens-aug}, thus $\phi_{V,V}$ is an isomorphism of filtered modules.
\end{proof}

Now we give a concrete application, which parallels the result of Eilenberg and Mac-Lane \cite{EML2} that the singular homology of Eilenberg-Mac Lane spaces are analytic and coanalytic functors.

\begin{example}[Morava K-theory of classifying spaces]\label{ex-Morava}
Let $q=p^d$ for an odd prime $p$, and let $n>0$. For all $L\in \Proj_\Z$, we let
$$E(L)=\overline{K}(n)_{\overline{*}}B(L/qL)\;.$$
By \cite[Thm 5.7]{RavenelWilson} the coalgebra $E(L)$ is isomorphic to $\Gamma_{nj}(L/pL)$ hence its coradical filtration is exhaustive. Thus $E$ is analytic by theorem \ref{thm-corad}. Also, if $n>1$ then the algebra $E(L)$ is a tensor product of truncated power algebras, hence the augmentation filtration is Hausdorff, hence $E$ is coanalytic by theorem \ref{thm-aug}. The exponential functors obtained as associated graded objects of these filtrations can be simply recovered by the technique of remark \ref{rk-short}. For example, if $n=2$ and $q=p$, then $\mathrm{gr}\,E(\kk)=\Fp[x]/x^2$ where `$\mathrm{gr}$' refers to the coradical filtration and $x$ is primitively generated, hence if `$\mathrm{gr}$' refers to the polynomial filtration of $E$ and $S_2$ is the symmetric algebra modulo the $p^2$-th powers then
$$\mathrm{gr}\,E(L)= S_2(L/qL)\;.$$ 
\end{example}

\subsection{On the polynomial and copolynomial filtrations}\label{subsec-exp-struct}

We now investigate in more details the properties of the two fundamental filtrations presented in section \ref{subsec-fund-filtr}. As we have proved that these filtrations coincide with the polynomial and copolynomial filtrations of the underlying functors, the results we give here show that not all functors may be equipped with an exponential structure. 

\begin{proposition}\label{prop-conn}
Let $E$ be a nontrivial connected exponential functor. Let $m$ be the minimal positive integer such that $E^m\ne 0$. For all integers $i$ and $k$ such that $mk\ge i$ we have 
$$ P_kE^i = P_{+\infty}E^i = E^i\quad \text{ and }\quad Q_{-(k+1)}E^i=Q_{-\infty}E^i=0\;.$$
If $\kk$ is a domain and $E^m(V)$ is torsion free for all $V$, then $\deg_{\EML} E^{mk} = k$ for all $k$. In particular, 
$$ P_{k-1}E^{mk}\varsubsetneq P_kE^{mk}=E^{mk}\quad \text{ and }\quad 0=Q_{-(k+1)}E^{mk}\varsubsetneq Q_{-k}E^{mk}\;.$$
\end{proposition}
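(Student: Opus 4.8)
The plan is to exploit the two fundamental filtration theorems just proved (Theorems \ref{thm-corad} and \ref{thm-aug}) together with the exponential isomorphism $E(V\oplus W)\simeq E(V)\otimes E(W)$, analyzing everything degree by degree. Throughout, fix the minimal positive degree $m$ with $E^m\neq 0$, so that $E^m$ is additive (this is the argument already seen in the proof of Lemma \ref{lm-prem-prop}: $E^m$ is nonzero and must be additive for degree reasons, being the lowest-degree piece of a connected exponential functor).

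First I would prove the two displayed equalities about $P_kE^i$ and $Q_{-(k+1)}E^i$ for $mk\geq i$. The key input is that $E$ is connected with bottom nonzero degree $m$, so the augmentation ideal $\overline E$ is concentrated in degrees $\geq m$. Consequently the $j$-fold product $\overline E^{\,j}$ lives in degrees $\geq mj$, which gives $Q_{-(k+1)}E^i=(\overline E^{\,k+1})^i=0$ whenever $i<m(k+1)$, i.e. whenever $mk\geq i-m$; since we also want the cleaner range $mk\geq i$, I note $i\leq mk<m(k+1)$ suffices, and $Q_{-\infty}E^i=\bigcap_j Q_{-j}E^i=0$ follows as the intersection stabilizes to $0$. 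Dually, the iterated reduced coproduct $\overline\Delta_{k+1}:\overline E\to \overline E^{\otimes(k+1)}$ lands in degrees $\geq m(k+1)$, so it vanishes on $E^i$ for $i<m(k+1)$, whence $E^i=P_kE^i$ for $mk\geq i$; and $P_{+\infty}E^i=E^i$ since the filtration is exhausted in each fixed degree. This half is essentially formal once the degree bookkeeping on $\overline E^{\,j}$ and $\overline E^{\otimes j}$ is made precise.

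The substantive part is the claim, under the hypotheses that $\kk$ is a domain and $E^m(V)$ is torsion free for all $V$, that $\deg_{\EML}E^{mk}=k$. The inequality $\deg_{\EML}E^{mk}\leq k$ is already contained in the first part via Theorem \ref{thm-corad}: $E^{mk}=P_kE^{mk}$ is polynomial of EML-degree $\leq k$. For the reverse inequality I would exhibit a nonzero $(k)$-th cross-effect, i.e. show $\Cr_k E^{mk}\neq 0$. Using the exponential isomorphism, the $k$-th cross effect of $E$ is $\Cr_k E(V_1,\dots,V_k)=\overline E(V_1)\otimes\cdots\otimes\overline E(V_k)$, and its degree-$mk$ component receives the summand $E^m(V_1)\otimes\cdots\otimes E^m(V_k)$ (taking the lowest degree $m$ in each slot). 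Since $E^m$ is additive and nonzero, $E^m(V)\neq 0$ for suitable $V$, and here the hypotheses on $\kk$ bite: torsion-freeness of $E^m(V)$ over the domain $\kk$ guarantees the tensor product $E^m(V)^{\otimes k}$ is nonzero, so this summand is nonzero and $\Cr_k E^{mk}\neq 0$. By the characterization of EML-degree via cross-effects (fact cited from appendix \ref{app-Fct}), this forces $\deg_{\EML}E^{mk}\geq k$, giving equality.

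Finally the two strict-inclusion statements follow immediately: $\deg_{\EML}E^{mk}=k$ together with Theorem \ref{thm-corad} (where $P_{k-1}E^{mk}$ is the largest subfunctor of EML-degree $\leq k-1$) shows $P_{k-1}E^{mk}\neq E^{mk}=P_kE^{mk}$, and dually Theorem \ref{thm-aug} (where $E/Q_{-k}E$ has EML-degree $<k$, while $E/Q_{-(k+1)}E$ can reach degree $k$) shows $Q_{-(k+1)}E^{mk}=0\varsubsetneq Q_{-k}E^{mk}$. The main obstacle I anticipate is purely the care needed in the non-field setting: ensuring that the relevant tensor powers stay nonzero, which is exactly where the domain and torsion-free hypotheses are used, and making sure the cross-effect identification and the degree conventions line up cleanly with the appendix's definition of EML-degree.
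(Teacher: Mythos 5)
Your proposal is correct and follows essentially the same route as the paper: the first part is the same degree bookkeeping on $\overline{E}^{\otimes(k+1)}$, and for the second part your direct computation of $\Cr_k E^{mk}$ as $E^m(V_1)\otimes\cdots\otimes E^m(V_k)$ is just an unpacked version of the paper's argument, which precomposes with $\oplus^k$ and invokes Fact \ref{fact-tens-2} to see that the direct summand $(E^m)^{\otimes k}$ has EML-degree exactly $k$ (both hinge on the same fraction-field trick to keep the tensor power nonzero). The only cosmetic difference is that the paper phrases the lower bound as a contradiction via Fact \ref{fact-tens-2} rather than exhibiting the nonzero cross-effect explicitly.
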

\begin{proof}
The first part follows from the fact that $\overline{E}^{\otimes k}$ is zero in degrees less than $km$. Now let $\kk$ be a domain and assume $\deg_{\EML} E^{mk}<k$. Let $\oplus^k$ be the additive functor $\oplus^k(V)=V^{\oplus k}$. Then $\deg_{\EML} E^{mk}\circ {\oplus^k}<k$. This contradicts that $E^{mk}\circ {\oplus^k}$ contains $(E^m)^{\otimes k}$ as a direct summand and the latter has EML-degree $k$ by \ref{fact-tens-2}.
\end{proof}

\begin{proposition}\label{prop-stat-pol}
Let $E$ be an exponential functor.
\begin{enumerate}
\item[(1)] If $E^0(V)$ is $\kk$-flat for all $V$ and $P_1E= \kk$, then $P_{+\infty}E= \kk$.
\item[(2)] If $E(V)$ is $\kk$-flat for all $V$ and $P_iE=P_{i+1}E$, then $P_{+\infty}E=P_iE$. 
\item[(3)] If $\kk$ is a domain, and $E(V)$ is 
$\kk$-flat for all $V$. Then either $P_{+\infty}E=\kk$ or the filtration is strictly increasing:
$$\kk=P_0E\varsubsetneq P_1E \varsubsetneq \dots \varsubsetneq P_{i}E \varsubsetneq P_{i+1}E \varsubsetneq\dots \varsubsetneq P_{+\infty}E\;.$$
\end{enumerate}
\end{proposition}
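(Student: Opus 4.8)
The plan is to work throughout with the reduced pieces $\overline P_k := \ker\overline\Delta_{k+1}\subseteq \overline E$, so that $P_kE=\kk\oplus\overline P_k$ with $\overline P_0=0$, $\overline P_1=PE$, and to exploit the coassociativity identity $\overline\Delta_{k+1}=(\overline\Delta_k\otimes\Id)\circ\overline\Delta$. I will use repeatedly that if $E(V)$ (resp. $E^0(V)$) is $\kk$-flat then $\overline E(V)$ (resp. $\overline{E^0}(V)$), being a direct summand, is flat, so that $\ker(f\otimes\Id_{\overline E})=\ker f\otimes\overline E$ for any $\kk$-linear $f$. For (1), I would first split $E\simeq E^0\otimes E'$ with $E'$ connected via Lemma \ref{lm-additive-split}; since $PE=PE^0\oplus PE'$, the hypothesis $P_1E=\kk$ forces $PE^0=0$ and $PE'=0$. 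The connected factor is killed by the grading alone: if $\overline{E'}\neq 0$ its lowest nonzero component sits in a positive degree $d_0$ and $\overline\Delta$ sends it into $\bigoplus_{a+b=d_0,\,a,b\ge 1}E'^a\otimes E'^b=0$, so it is primitive, contradicting $PE'=0$; hence $E'=\kk$ and $E=E^0$. For the degree-zero factor I would show $\overline P_k=0$ by induction: granting $\overline P_{k-1}=0$ and taking $x\in\overline P_k$, the identity $(\overline\Delta_k\otimes\Id)\circ\overline\Delta(x)=\overline\Delta_{k+1}(x)=0$ and $\ker(\overline\Delta_k\otimes\Id_{\overline{E^0}})=\overline P_{k-1}\otimes\overline{E^0}=0$ give $\overline\Delta(x)=0$, i.e. $x\in PE^0=0$.

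For (2), assuming $E(V)$ flat, I would prove by induction on $k\ge i$ that $\overline P_{k-1}=\overline P_k$ implies $\overline P_k=\overline P_{k+1}$; iterating from the hypothesis $\overline P_i=\overline P_{i+1}$ then yields $\overline P_k=\overline P_i$ for all $k\ge i$, that is $P_{+\infty}E=P_iE$. For the step, if $x\in\overline P_{k+1}$ then $(\overline\Delta_{k+1}\otimes\Id)\overline\Delta(x)=\overline\Delta_{k+2}(x)=0$, so by flatness $\overline\Delta(x)\in\overline P_k\otimes\overline E=\overline P_{k-1}\otimes\overline E$; applying $\overline\Delta_k\otimes\Id$ and using $\overline\Delta_k(\overline P_{k-1})=0$ gives $\overline\Delta_{k+1}(x)=0$, i.e. $x\in\overline P_k$.

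The heart of the matter is (3), which I would deduce from (2) together with the unboundedness statement: \emph{if $\kk$ is a domain, $E(V)$ is flat, and $P_{+\infty}E\neq\kk$, then $\deg_\EML P_{+\infty}E=\infty$.} Granting this, the filtration cannot stabilize while $P_{+\infty}E\neq\kk$: a coincidence $P_iE=P_{i+1}E$ would give $P_{+\infty}E=P_iE$ by (2), a polynomial functor of EML-degree $\le i$, contradicting infinite degree; and $P_{+\infty}E=\kk$ forces $P_kE=\kk$ for all $k$, the first alternative. To prove the statement I would exhibit, for each $n$, a nonzero element of $\Cr_n(P_{+\infty}E)$. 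Fix $V_0$ with $\overline{P_{+\infty}E}(V_0)\neq 0$ and a nonzero $g$ there; as in the proof of Theorem \ref{thm-corad} the exponential isomorphism $\phi$ realizes $\Cr_nE(V_0,\dots,V_0)=\overline E(V_0)^{\otimes n}$, so $\xi=\phi(g\otimes\cdots\otimes g)\in\Cr_nE(V_0,\dots,V_0)$. Since the $P$-filtration is an algebra filtration (Corollary \ref{cor-filtr-P}) and each $P_mE$ is a subfunctor, $\phi$ maps $P_aE\,\overline{\otimes}\,\cdots\,\overline{\otimes}\,P_bE$ into $P_{a+\cdots+b}E$, whence $\xi\in P_{+\infty}E(V_0^{\oplus n})$ and therefore $\xi\in\Cr_n(P_{+\infty}E)$. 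Finally $\xi\neq 0$: over a domain $\overline{P_{+\infty}E}(V_0)$ is torsion-free as a submodule of the flat module $E(V_0)$, so $g^{\otimes n}\neq 0$ and $\phi$ is injective. Thus $\Cr_n(P_{+\infty}E)\neq 0$ for all $n$, giving $\deg_\EML P_{+\infty}E=\infty$.

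The main obstacle I anticipate is precisely this degree argument in (3): one must verify that the explicit cross-effect class $\xi$ really lies in $\Cr_n(P_{+\infty}E)$ and is nonzero, which is exactly where the domain hypothesis enters (to keep the powers $g^{\otimes n}$ nonzero) and where the algebra-filtration compatibility of $\phi$ from Corollary \ref{cor-filtr-P} must be combined with the cross-effect computation underlying Theorem \ref{thm-corad}. The remaining delicacy is the flatness bookkeeping, notably that (1) assumes only $E^0(V)$ flat; this is handled by splitting off the connected factor first, so that the flat induction is carried out on $\overline{E^0}$ alone while the connected factor is dispatched by the grading.
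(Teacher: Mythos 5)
Your proof is correct. Parts (1) and (2) follow essentially the paper's route: your inductive step in (2) is the paper's observation that, by flatness, $P_{k+1}E$ is the kernel of $E\to(E/P_kE)\otimes\overline{E}$, written out on elements; and your (1) --- splitting off the connected factor, killing it because its lowest positive-degree piece would be primitive, then running the flat induction on $\overline{E^0}$ alone --- uses the same pair of ingredients the paper invokes (part (2) applied to $E^0$, and the argument of proposition \ref{prop-conn} for the connected part), arranged somewhat more carefully. Part (3) is where you genuinely diverge. The paper assumes the filtration stabilizes at $k$, notes that $P_\ell(E\circ\oplus^2)=(P_kE)\circ\oplus^2$ for $\ell>k$, so that the largest polynomial subfunctor of $E^{\otimes 2}\simeq E\circ\oplus^2$ has EML-degree $k$, and then applies fact \ref{fact-tens-2} to the polynomial subfunctor $(P_kE)^{\otimes 2}$, of degree exactly $2k$ over a domain with torsion-free values, to force $2k\le k$ and hence $k=0$. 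You instead show directly that $P_{+\infty}E\ne\kk$ implies $\deg_{\EML}P_{+\infty}E=\infty$ by producing the nonzero classes $\phi(g^{\otimes n})\in\Cr_n(P_{+\infty}E)$, and then rule out stabilization via (2). Both arguments rest on the same two pillars --- the identification $\Cr_nE(V,\dots,V)=\overline{E}(V)^{\otimes n}$ from the proof of theorem \ref{thm-corad}, and the non-vanishing of tensor powers of a nonzero element of a flat module over a domain --- but yours is element-theoretic where the paper's is a numerical comparison of degrees of largest polynomial subfunctors. Your version has the small advantage of making the infinitude of $\deg_{\EML}P_{+\infty}E$ explicit, at the cost of the extra (correctly handled) bookkeeping that $\phi(g^{\otimes n})$ lies both in $P_{+\infty}E$, via the algebra-filtration property of corollary \ref{cor-filtr-P}, and in the cross-effect summand.
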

\begin{proof}
Proof of (2): Since $\overline{E}(V)$ is $\kk$-flat for all $V$, $P_{k+1}E$ is the kernel of
$$ E\xrightarrow[]{\Delta}E\otimes E\xrightarrow[]{\pi\otimes (\Id-\epsilon)}(E/{P_kE})\otimes \overline{E}\;.$$
An easy induction then shows that $P_kE=P_iE$ for all $k\ge i$.
Proof of (1): if $P_1E=\kk$ then $P_1(E^0)=P_1E\cap E^0=\kk$. So $E^0=\kk$ by (2), i.e. $E$ is connected. Now by proposition \ref{prop-conn} if $E$ was nontrivial and $m$ was the minimal positive integer such that $E^m\ne 0$, then $E^m\subset P_1E^m$ contradicting $P_1E=\kk$.
Proof of (3): assume that the filtration is not strictly increasing. Thus $P_{+\infty}E=P_kE$ for some $k\ge 0$ by (2). Hence, for all $\ell> k$ there is a short exact sequence
$$0\to P_kE\to E\xrightarrow[]{\overline{\Delta}_{\ell}} \overline{E}^{\otimes\ell}\;.\qquad (*)$$
Let $\oplus^2$ be the additive functor $V\mapsto V^{\oplus 2}$, and consider the exponential functor $E\circ \oplus^2$. By definition $P_\ell(E\circ \oplus^2)$ is the kernel of $\overline{\Delta}_{\ell}\circ\oplus^2$.
Hence, precomposing the short exact sequence $(*)$ by $\oplus^2$, we see that $P_\ell(E\circ \oplus^2)=(P_kE)\circ \oplus^2$. Thus the biggest polynomial subfunctor of $E\circ\oplus^n$ is $(P_kE)\circ \oplus^2$, which has EML-degree $k$. Now we claim that we must have $k=0$. Indeed, since $\kk$ is a domain $E(V)$ is torsion free for all $V$, hence $P_kE(V)$ is also torsion free. Thus the tensor square of the inclusion $P_kE(V)^{\otimes 2}\to E(V)^{\otimes 2}$ is injective (tensor by the fraction field to prove injectivity). Now by fact \ref{fact-tens-2}, $P_kE^{\otimes 2}$ has EML-degree $2k$. Since $E^{\otimes 2}\simeq E\circ\oplus^2$, we must have $2k\le k$, which implies that $k=0$.
\end{proof}

\begin{proposition}\label{prop-stat-copol}
Let $E$ be an exponential functor.
\begin{enumerate}
\item[(1)] If $Q_{-i}E=Q_{-(i+1)}E$ for some integer $i$, then $Q_{-\infty}E=Q_{-i}E$. 
\item[(2)] If $\kk$ is a domain, and the $\kk$-module $E(V)/Q_{-\infty}E(V)$ is torsion-free  for some $V$. Then either $Q_{-\infty}E=Q_{-1}E$ or the augmentation filtration is strictly decreasing:
$$\dots\varsubsetneq \dots \varsubsetneq Q_{-(i+1)}E \varsubsetneq Q_{-i}E \varsubsetneq\dots \varsubsetneq Q_{-1}E=\overline{E}\;.$$
\end{enumerate}
\end{proposition}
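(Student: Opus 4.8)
The plan is to handle the two parts separately: part (1) is a formal statement about powers of the augmentation ideal, while part (2) is the dual of the proof of Proposition \ref{prop-stat-pol}(3), run on the quotient $E/Q_{-\infty}E$ instead of on a polynomial subfunctor.

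For (1), I would use that $Q_{-(k+1)}E=\overline{E}\cdot Q_{-k}E$ for every $k\ge 1$, the map $\overline{E}\otimes Q_{-k}E\to Q_{-(k+1)}E$ being objectwise surjective by the definition of the powers of $\overline{E}$. Hence if $Q_{-i}E=Q_{-(i+1)}E$, then multiplying by $\overline{E}$ repeatedly gives $Q_{-(i+1+m)}E=\overline{E}^{\,m}\cdot Q_{-(i+1)}E=\overline{E}^{\,m}\cdot Q_{-i}E=Q_{-(i+m)}E$ for all $m\ge 0$; by induction the filtration is constant from index $i$ onward, so $Q_{-\infty}E=\bigcap_k Q_{-k}E=Q_{-i}E$. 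This is entirely objectwise and needs no hypothesis on $\kk$.

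For (2), suppose the filtration is not strictly decreasing. Then some inclusion is an equality, so by part (1) there is a minimal $k\ge 1$ with $Q_{-\infty}E=Q_{-k}E=:N$; it suffices to prove $k=1$. First I would determine $d:=\deg_{\EML}(E/N)$. By Theorem \ref{thm-aug} we have $\deg_{\EML}(E/Q_{-k}E)<k$, and if this degree were $<k-1$, then Theorem \ref{thm-aug} would force $Q_{-(k-1)}E\subseteq Q_{-k}E$, contradicting minimality of $k$ when $k\ge 2$; hence $d=k-1$. The core is then a twofold computation of $\deg_{\EML}X$ for $X:=E^{\otimes 2}/Q_{-\infty}(E^{\otimes 2})$. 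On one side, by Lemma \ref{lm-tens-aug} the augmentation filtration of $E^{\otimes 2}$ is the tensor-product filtration, and a short combinatorial check (for $i+j\ge 2k$ one has $\max(i,j)\ge k$, so one factor $Q_{-i}E$ or $Q_{-j}E$ equals $N$) shows it stabilizes at $Q_{-\infty}(E^{\otimes 2})=N\otens E+E\otens N$; by right exactness of the tensor product this yields $X\simeq (E/N)^{\otimes 2}$, of $\EML$-degree $2d$ by fact \ref{fact-tens-2}. On the other side, writing $\oplus^2$ for the additive functor $V\mapsto V\oplus V$, one has $\overline{E\circ\oplus^2}=\overline{E}\circ\oplus^2$, hence $Q_{-k}(E\circ\oplus^2)=(Q_{-k}E)\circ\oplus^2$ objectwise, and since precomposition by $\oplus^2$ commutes with the intersection defining $Q_{-\infty}$, we get $Q_{-\infty}(E\circ\oplus^2)=N\circ\oplus^2$. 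As the augmentation filtration is intrinsic (Theorem \ref{thm-aug}) and $E^{\otimes 2}\simeq E\circ\oplus^2$, this identifies $X\simeq (E/N)\circ\oplus^2$, whose $\EML$-degree is at most $d$ because precomposition by an additive functor does not raise the $\EML$-degree. Comparing the two computations gives $2d\le d$, so $d=0$ and $k=1$, that is $Q_{-\infty}E=Q_{-1}E$.

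The hard part will be the lower bound $\deg_{\EML}\big((E/N)^{\otimes 2}\big)\ge 2d$: over a general domain the tensor square of a torsion-free module can acquire torsion, so one must verify that the single object $V_0$ provided by the hypothesis — for which $E(V_0)/Q_{-\infty}E(V_0)=(E/N)(V_0)$ is torsion-free — already keeps the top cross effect of the tensor square nonzero. This is exactly the content of fact \ref{fact-tens-2}, and it is the only place where the torsion-freeness assumption enters; it is also what distinguishes the present statement from the ``$\kk$-flat for all $V$'' hypothesis of Proposition \ref{prop-stat-pol}(3). Everything else reduces to formal manipulations with the ideal powers $\overline{E}^{\,k}$, the tensor-product filtration, and precomposition by additive functors.
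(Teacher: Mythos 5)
Your proof of part (1) is correct and is precisely the argument the paper leaves as ``obvious''. Your proof of part (2) also follows the route the paper intends (its entire proof is the sentence ``formally dual to proposition \ref{prop-stat-pol}(3)''): computing the $\EML$-degree of $E^{\otimes 2}/Q_{-\infty}(E^{\otimes 2})$ once as $(E/N)^{\otimes 2}$ and once as $(E/N)\circ\oplus^2$ to get $2d\le d$ is exactly that dual, and your subsidiary points are right --- the stabilization $Q_{-\infty}(E^{\otimes 2})=N\otens E+E\otens N$, the fact that the quotient side needs only right-exactness of $\otimes$ (so no analogue of the injectivity verification in the proof of proposition \ref{prop-stat-pol}(3) is needed), and the identity $Q_{-k}(E\circ\oplus^2)=(Q_{-k}E)\circ\oplus^2$.

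The gap is the one you flag yourself, and it is genuine: the lower bound $\deg_{\EML}\bigl((E/N)^{\otimes 2}\bigr)\ge 2d$ is \emph{not} ``exactly the content of fact \ref{fact-tens-2}''. That fact requires torsion-free values \emph{for all} $V$, because its proof needs a tuple at which both cross-effect factors are simultaneously nonzero and torsion-free; the proposition only hands you a single $V_0$ with $(E/N)(V_0)$ torsion-free, and nothing guarantees that $\Cr_d(E/N)$ is nonzero (let alone torsion-free) at tuples built from $V_0$. Indeed, since $E/N$ is exponential one has $\Cr_k(E/N)(V_0,\dots,V_0)=\bigl((\overline{E}/N)(V_0)\bigr)^{\otimes k}=0$, which together with torsion-freeness forces $(\overline{E}/N)(V_0)=0$ --- so the hypothesis gives the conclusion \emph{at $V_0$ only} and says nothing about other objects. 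The statement cannot be rescued as literally written: the counterexample in the remark immediately following the proposition ($\kk=\Z$, $E(V)=\Z\oplus(V\otimes\mathbb{Q}/\Z)$) satisfies the hypothesis at $V=0$, where $E(0)/Q_{-\infty}E(0)=\Z$ is torsion-free, yet $0=Q_{-2}E=Q_{-3}E\ne \overline{E}=Q_{-1}E$, so both alternatives of the conclusion fail. You should therefore read the hypothesis as ``torsion-free for all $V$'' (the honest dual of the flatness hypothesis in proposition \ref{prop-stat-pol}(3)), or at least for a $V_0$ of which every object of $\V$ is a retract of a finite direct sum of copies. With that reading your argument closes as written, and a shorter route becomes available: $\Cr_k(E/N)(V,\dots,V)=\bigl((\overline{E}/N)(V)\bigr)^{\otimes k}$ vanishes because $E/Q_{-k}E$ has $\EML$-degree $<k$, and a nonzero torsion-free module over a domain has nonzero tensor powers, whence $\overline{E}=N$ and $Q_{-1}E=Q_{-\infty}E$ directly.
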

\begin{proof}
The first statement is obvious, the proof of the second one is formally dual to the proof of proposition \ref{prop-stat-pol}(3).
\end{proof}

\begin{remark}
Propositions \ref{prop-stat-pol}(3) and \ref{prop-stat-copol}(2) fail without assumption on the torsion. For example if $\kk=\Z$, $\V=\Proj_{\Z}$, and $A(-)=-\otimes \mathbb{Q}/\Z$, a counter-example is given by the exponential functor
$$E(V)= S(A(V)) =\Z\,\oplus\, A(V)\;.$$
\end{remark}

We end by a few remarks on analytic exponential functors, i.e those for which $E=P_{+\infty}E$.
Proposition \ref{prop-conn} shows that connected exponential functors are analytic. Hence strict exponential functors are analytic (they can be regraded so that they become connected as explained in remark \ref{Rk-classif-carzero}). On the contrary, ordinary exponential functors are not always analytic. For example if $G:\V\to \mathrm{Ab}$ is a nontrivial additive functor, then the $\kk$-group algebra on $G$ yields an exponential functor $\kk G$ such that $\overline{\Delta}_{k+1}$ is injective for all $k$. Hence $P_{+\infty}\kk G=\kk\ne \kk G$. 
\begin{proposition}\label{prop-analytic-complement} Let $E$ be an ordinary exponential functor.
\begin{enumerate}
\item Assume that the $\kk$-modules $\mathrm{gr}\, E(V)$ and $E(V)/P_{+\infty}E(V)$ are $\kk$-flat for all $V$. Then $P_{+\infty}E$ is an exponential functor and the inclusion $P_{+\infty}E\hookrightarrow E$ is a morphism of exponential functors.
\item If $\kk$ is an algebraically closed field, let $G(E):\V\to \mathrm{Ab}$ be the functor of grouplike elements of $E$. There is an isomorphism of exponential functors:
$$E\simeq P_{+\infty}E\otimes \kk G(E)\;.$$
\end{enumerate}
\end{proposition}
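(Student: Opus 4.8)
The plan is to treat the two parts separately, using the filtration results of the preceding subsection together with the stability lemma \ref{lm-stabilite}.

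For part (1), I would first invoke corollary \ref{cor-filtr-P}: under the stated flatness hypotheses the polynomial (coradical) filtration $P_kE$, $k\ge 0$, is a Hopf filtration of $E$. Consequently $P_{+\infty}E=\bigcup_k P_kE$ is closed under the multiplication, unit, comultiplication, counit and antipode of $E$ (for the comultiplication one uses $\Delta(P_kE)\subset\sum_{i+j=k}P_iE\,\overline{\otimes}\,P_jE\subset P_{+\infty}E\otimes P_{+\infty}E$), hence it is a functorial graded sub-Hopf-algebra of $E$, bicommutative because $E$ is, and the inclusion $\iota:P_{+\infty}E\hookrightarrow E$ is a morphism in $\FF-\H$. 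Next I would check the flatness needed to apply lemma \ref{lm-stabilite}(ii): each $\mathrm{gr}_kE(V)=P_kE(V)/P_{k-1}E(V)$ is a direct summand of the flat module $\mathrm{gr}\,E(V)$, hence flat, so each $P_kE(V)$ is flat by induction on $k$ (an extension of flat modules is flat), whence $P_{+\infty}E(V)=\colim_k P_kE(V)$ is flat as a filtered colimit of flat modules; finally $E(V)$ is flat because it is an extension of the flat module $E(V)/P_{+\infty}E(V)$ by the flat module $P_{+\infty}E(V)$. Now lemma \ref{lm-stabilite}(ii), applied to $\iota$ (an objectwise monomorphism between flat functors, with $E$ in the image of $\Theta_\H$), shows that $P_{+\infty}E$ lies in the image of $\Theta_\H$, i.e. is an exponential functor; and since $\Theta_\H$ is fully faithful by lemma \ref{lm-defter}, the Hopf morphism $\iota$ is automatically a morphism of exponential functors.

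For part (2) the field hypothesis makes all flatness conditions automatic, so part (1) already gives that $P_{+\infty}E$ is an exponential functor. I would then observe that $G(E)$ is additive: the exponential isomorphism $\phi$ identifies the grouplikes of $E(V\oplus W)$ with those of $E(V)\otimes E(W)$, and the grouplikes of a tensor product of Hopf algebras are pairs of grouplikes, so $G(E(V\oplus W))\cong G(E(V))\times G(E(W))$. Since grouplikes are linearly independent over a field, their span is a sub-Hopf-algebra of $E(V)$ isomorphic to the group algebra $\kk[G(E(V))]$, and these assemble into a sub-exponential-functor $\kk G(E)$ of $E$ (exponential by the basic example of section \ref{subsec-ex}). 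As $E$ is bicommutative (lemma \ref{lm-commut}) its multiplication $\mu:E\otimes E\to E$ is a morphism in $\FF-\H$; restricting it along the two sub-Hopf-algebra inclusions gives a morphism $m:P_{+\infty}E\otimes\kk G(E)\to E$ in $\FF-\H$, hence a morphism of exponential functors between objects all of which are exponential. Because the structure of a functorial Hopf algebra is defined objectwise, $m$ will be an isomorphism as soon as each $m_V$ is an isomorphism of $\kk$-modules.

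The crux, and the step I expect to be the main obstacle, is this objectwise statement that $m_V:P_{+\infty}E(V)\otimes\kk[G(E(V))]\to E(V)$ is an isomorphism. Here I would appeal to the structure theorem for pointed cocommutative Hopf algebras over a field \cite{Sweedler}: since $\kk$ is algebraically closed and $E(V)$ is cocommutative, every simple subcoalgebra of $E(V)$ is one-dimensional, so $E(V)$ is pointed with coradical $\kk[G(E(V))]$, and $E(V)\cong E(V)_1\,\#\,\kk[G(E(V))]$, where $E(V)_1$ denotes the irreducible component of $1$. Because $E(V)$ is commutative the conjugation action of the grouplikes on $E(V)_1$ is trivial (for a grouplike $g$ one has $\sum g_{(1)}\,h\,S(g_{(2)})=h\,gS(g)=h$), so the smash product is an honest tensor product. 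Finally I would identify $E(V)_1$ with $P_{+\infty}E(V)$: by remark \ref{rk-sweedler} the filtration $P_kE(V)$ is Sweedler's wedge filtration $\bigwedge^k(\kk 1)$, whose union is precisely the irreducible component of $1$; thus $P_{+\infty}E(V)=E(V)_1$ and $m_V$ is the structure isomorphism, which completes the proof.
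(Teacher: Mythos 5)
Your proof is correct and follows essentially the same route as the paper: part (1) via corollary \ref{cor-filtr-P}, the flatness-based identification of $P_kE\,\overline{\otimes}\,P_\ell E$ with the genuine tensor product to realize $P_{+\infty}E$ as a functorial sub-Hopf-algebra, and lemma \ref{lm-stabilite}(ii); part (2) via remark \ref{rk-sweedler} identifying $P_{+\infty}E(V)$ with the irreducible component of $1$ and Sweedler's smash-product decomposition (Thm 8.1.5), which degenerates to a tensor product by commutativity. You spell out the flatness bookkeeping and the functoriality of the Sweedler decomposition in more detail than the paper does, but the underlying argument is identical.
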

\begin{proof}Proof of (1): under the flatness assumptions, the $P_kE$ yield a Hopf filtration of $E$ by corollary \ref{cor-filtr-P}. Moreover by our flatness assumptions for all $k,\ell \in \mathbb{N}\cup \{+\infty\}$ there is a canonical isomorphism $P_kE\otimes P_\ell E\simeq P_kE\overline{\otimes} P_\ell E$. Using that $\colim_{k,\ell} P_kE\otimes P_\ell E = P_{+\infty} E\otimes P_{+\infty} E$, we then see that the Hopf algebra operations of $E$ make  $P_{+\infty}E$ into a functorial Hopf subalgebra of $E$. The result now follows from lemma \ref{lm-stabilite}. Proof of (2): as explained in remark \ref{rk-sweedler}, an exponential functor $E$ is pointed irreducible if and only if it is analytic. Thus $P_{+\infty}E(V)$ is the maximal pointed irreducible Hopf subalgebra of $E(V)$. The result now follows from \cite[Thm 8.1.5]{Sweedler}.
\end{proof}

\section{A weak analogue of theorem \ref{thm-uniqueness-strict}}\label{sec-uniqueness} 
In this section $\FF$ stands for $\Fct(\V,\Mod)$, where $\V$ is a small additive category of characteristic $p$ and $\kk$ is a perfect field of characteristic $p$. 
We consider the functor which forgets the exponential structure:
$$\OO:\FF-\Exp_c\to \FF^*\;.$$
Our aim is to prove a weak analogue of theorem \ref{thm-uniqueness-strict} in this context.

\begin{definition}
We say that an exponential functor $E$ is \emph{absolutely reflexive} if it is connected, and for all $i$, $QE^i$ has a finite composition series, whose factors are absolutely simple (i.e. their endomorphism ring equals $\kk$).
\end{definition}

By reiterating the proof of lemma \ref{lm-fin}, one proves the following characterization of absolute reflexivity.

\begin{lemma}\label{lm-caract}
A connected exponential functor $E$ is absolutely reflexive if and only if $PE^i$ has a finite composition series, whose factors are absolutely simple for all $i$.
\end{lemma}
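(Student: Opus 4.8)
The final statement to prove is Lemma \ref{lm-caract}: a connected exponential functor $E$ is absolutely reflexive if and only if $PE^i$ has a finite composition series with absolutely simple factors for all $i$.

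\medskip

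\textbf{Proof plan.} The idea is to transport the proof of Lemma \ref{lm-fin} to the present setting. Recall that Lemma \ref{lm-fin} established, for an object $E$ of the category $\C$, the equivalence between ``$QE^i$ has a finite composition series for all $i$'' and ``$PE^i$ has a finite composition series for all $i$'', by passing to graded Dieudonn\'e modules $\mathbf{M}(E)$ and using the formulas for $PE$ and $QE$ in terms of kernels and cokernels of the operators $F_*$, $V_*$. The point of the present lemma is that one must in addition keep track of the \emph{nature} of the composition factors, namely that they are absolutely simple; so the plan is to rerun that argument while checking at each step that absolute simplicity of the composition factors is preserved under the operations involved.

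\medskip

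First I would reduce to the situation where $E$ lies in one of the standard subcategories via the categorical decompositions (Proposition \ref{prop-categ-decomp} and Lemma \ref{lm-decomp-ord-triv}) and the splitting principle (Lemma \ref{lm-splitting-principle}), exactly as in Lemma \ref{lm-prem-red}, so that $E$ becomes an object of $\C$ with an associated graded Dieudonn\'e module $\mathbf{M}(E) = (M^*, F_*, V_*)$. The key observation needed at the outset is that ``$QE^i$ has a finite composition series with absolutely simple factors'' is equivalent to ``$QE^i$ has a finite composition series \emph{and} every simple additive subquotient appearing is absolutely simple''; the same for $PE^i$. One direction (from $PE$ to $QE$, or vice versa) follows the argument of Lemma \ref{lm-fin}: the relevant functors $\mathbf{M}(E)^i$, $PE^{2p^s}$ and $QE^{2p^s}$ are built from one another via Frobenius twists $^{(r)}(-)$, via subobjects and quotients, and via the explicit bounds $\mathbf{M}(E)^i$ being a quotient of $\bigoplus_{0\le s\le i} {}^{(-s)}QE^{2p^s}$ (and dually for the primitives).

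\medskip

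The step I expect to require the most care, and which is the genuine content beyond Lemma \ref{lm-fin}, is verifying that the Frobenius twist operation $A \mapsto {}^{(r)}A$ (and its inverse $^{(-r)}(-)$, which makes sense on additive functors over the perfect field $\kk$) both preserves finiteness of composition series \emph{and} preserves absolute simplicity of the factors. For this I would invoke the classification of simple additive functors together with the hypothesis that $\kk$ is perfect, so that twisting induces a self-equivalence of $\FF_\add$ that sends absolutely simple objects to absolutely simple objects and preserves endomorphism rings (the relevant facts in the appendix, e.g. Fact \ref{fact-prop-extscal}\eqref{eqcat}, control the effect of twisting on composition series and on endomorphism algebras). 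Once this is in place, the equivalences (i)$\Leftrightarrow$(ii)$\Leftrightarrow$(iii) of Lemma \ref{lm-fin} upgrade verbatim to equivalences that additionally remember absolute simplicity, because each intermediate functor is obtained from the others by subobjects, quotients, finite direct sums, and Frobenius twists, all of which preserve the property ``finite composition series with absolutely simple factors''. This yields the desired equivalence between absolute reflexivity (phrased in terms of $QE$) and the characterization in terms of $PE$, completing the proof.
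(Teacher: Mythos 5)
Your proposal is correct and follows essentially the same route as the paper, which itself only says ``by reiterating the proof of lemma \ref{lm-fin}'': you reduce to the category $\C$, pass to graded Dieudonn\'e modules, and rerun the argument of lemma \ref{lm-fin} while checking that Frobenius twists, subquotients and finite direct sums preserve the property of having a finite composition series with absolutely simple factors. The point you single out as delicate --- that $A\mapsto{}^{(\pm r)}A$ preserves absolute simplicity because it is an equivalence of $\FF_\add$ over the perfect field $\kk$ --- is exactly the only genuinely new ingredient needed beyond lemma \ref{lm-fin}.
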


\begin{remark}
If all the simple objects of $\FF_\add$ are absolutely simple, then $E$ is absolutely reflexive if and only if it is reflexive in the sense of definition \ref{def-reflexive}. This happens e.g. if $\V=\Proj_R$ and $R$ and $\kk$ are finite fields, or if $R$ is a finite dimensional semi-simple $\mathbb{F}_p$-algebra and $\kk$ is algebraically closed.
\end{remark}

Recall that a connected exponential functor $E$ is primitively generated if the canonical map $S_{\pm}(PE)\to E$ is an epimorphism. Equivalently the canonical map $PE\to QE$ is an epimorphism. Similarly, $E$ is \emph{cogenerated by its indecomposables} if the canonical map $E\to \Gamma_{\pm}(QE)$ is a monomorphism. Equivalently the canonical map $PE\to QE$ is a monomorphism. The main result of the section is the following a weak analogue of theorem \ref{thm-uniqueness-strict}.
\begin{theorem}\label{thm-uniqueness-weak}
Let $\kk$ be a perfect field of characteristic $p$, let $\V$ be a small additive category of characteristic $p$ and let $\FF=\Fct(\V,\Mod)$. Assume that $\FF_\add$ has homological dimension zero.
Let $E$ be an absolutely reflexive exponential functor, such that $E$ is primitively generated or cogenerated by its indecomposables. If there is an exponential functor $E'$ such that $\OO E\simeq \OO E'$ then $E\simeq E'$.
\end{theorem}
\begin{corollary}\label{cor-weak}
Let $E$ and $E'$ be exponential functors. Assume that $E$ is absolutely reflexive, and that there is an isomorphism $\OO E\simeq \OO E'$. Then there is an isomorphism of exponential functors $\mathrm{gr}\,E\simeq \mathrm{gr}\,E'$, where $\mathrm{gr}$ refers to the augmentation filtration or to the coradical filtration.
\end{corollary}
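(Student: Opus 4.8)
The plan is to deduce the corollary from Theorem~\ref{thm-uniqueness-weak}, applied not to $E,E'$ themselves but to their associated graded functors $\mathrm{gr}\,E$ and $\mathrm{gr}\,E'$. The conceptual engine is the observation, recorded in Theorems~\ref{thm-corad} and~\ref{thm-aug}, that both the coradical filtration $P_kE$ and the augmentation filtration $Q_{-k}E$ are \emph{intrinsic} to the underlying graded functor $\OO E$: indeed $P_kE$ is the largest polynomial subfunctor of EML-degree $\le k$, and $Q_{-k}E$ is the smallest subfunctor whose quotient is polynomial of EML-degree $<k$. Both are therefore singled out by a universal property among subfunctors, so any isomorphism $\phi:\OO E\xrightarrow[]{\simeq}\OO E'$ in $\FF^*$ must satisfy $\phi(P_kE)=P_kE'$ (resp.\ $\phi(Q_{-k}E)=Q_{-k}E'$) for every $k$. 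Passing to subquotients, $\phi$ induces an isomorphism of underlying graded functors $\OO(\mathrm{gr}\,E)\simeq\OO(\mathrm{gr}\,E')$, for whichever of the two filtrations one fixes.

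Next I would check that $(\mathrm{gr}\,E,\mathrm{gr}\,E')$ meets the hypotheses of Theorem~\ref{thm-uniqueness-weak}. Since $\kk$ is a field, every $\kk$-module is flat, so the corollaries of Theorems~\ref{thm-corad} and~\ref{thm-aug} (including Corollary~\ref{cor-filtr-P}) ensure that both filtrations are Hopf filtrations and that $\mathrm{gr}\,E$ and $\mathrm{gr}\,E'$ are genuine exponential functors, connected because $\overline{E}^0=0$ (augmentation case) and $P_kE^0=\kk$ (coradical case). For absolute reflexivity of $\mathrm{gr}\,E$ I would use that the indecomposables (resp.\ primitives) of an associated graded sit in filtration-degree $1$ and reproduce those of $E$: for the augmentation filtration $Q(\mathrm{gr}\,E)\cong QE$, so $\mathrm{gr}\,E$ is absolutely reflexive directly from the definition; for the coradical filtration $P(\mathrm{gr}\,E)\cong PE$, so absolute reflexivity transfers through Lemma~\ref{lm-caract}.

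The remaining hypothesis is that $\mathrm{gr}\,E$ be primitively generated or cogenerated by its indecomposables, and this is exactly where the two filtrations complement each other. For the augmentation filtration, $\mathrm{gr}\,E$ is generated as an algebra by its filtration-degree-$1$ part $\overline{E}/\overline{E}^2=QE$, and these generators are primitive in $\mathrm{gr}\,E$ because the reduced diagonal sends $\overline{E}$ into $\overline{E}\otimes\overline{E}$, strictly raising the augmentation-filtration degree; hence $P(\mathrm{gr}\,E)\to Q(\mathrm{gr}\,E)$ is an epimorphism and $\mathrm{gr}\,E$ is primitively generated. Dually, for the coradical filtration $\mathrm{gr}\,E$ is coradically graded, so its primitives are concentrated in filtration-degree $1$, which forces $P(\mathrm{gr}\,E)\to Q(\mathrm{gr}\,E)$ to be a monomorphism, i.e.\ $\mathrm{gr}\,E$ is cogenerated by its indecomposables. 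In either case Theorem~\ref{thm-uniqueness-weak}, applied with $\mathrm{gr}\,E$ in the role of the absolutely reflexive functor and $\mathrm{gr}\,E'$ as the comparison functor, gives $\mathrm{gr}\,E\simeq\mathrm{gr}\,E'$ as exponential functors, which is the assertion.

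The step I expect to require the most care is the bookkeeping in the previous paragraph: precisely tracking how the multiplication and the reduced diagonal interact with each filtration to pin down the degree-$1$ part as primitive (augmentation) or cogenerating (coradical), and thereby identifying $Q(\mathrm{gr}\,E)$ and $P(\mathrm{gr}\,E)$ with $QE$ and $PE$. By contrast, the transfer of the isomorphism $\OO E\simeq\OO E'$ to the associated gradeds, though it is the heart of the matter, is essentially immediate once the intrinsic descriptions of the filtrations furnished by Theorems~\ref{thm-corad} and~\ref{thm-aug} are invoked.
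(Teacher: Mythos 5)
Your proof is correct and follows essentially the same route as the paper: use the intrinsic characterizations of the two filtrations from Theorems \ref{thm-corad} and \ref{thm-aug} to transport the isomorphism $\OO E\simeq\OO E'$ to the associated gradeds, check that $\mathrm{gr}\,E$ inherits absolute reflexivity via $Q(\mathrm{gr}\,E)\cong QE$ (augmentation) or $P(\mathrm{gr}\,E)\cong PE$ together with Lemma \ref{lm-caract} (coradical), and conclude by Theorem \ref{thm-uniqueness-weak}. You are in fact slightly more careful than the paper, which does not explicitly verify the remaining hypothesis of Theorem \ref{thm-uniqueness-weak} that $\mathrm{gr}\,E$ be primitively generated (augmentation case) or cogenerated by its indecomposables (coradical case); your filtration-degree argument for this is sound.
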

\begin{proof}
By theorem \ref{thm-alg} or \ref{thm-aug}, an isomorphism $\OO E\simeq \OO E'$ must preserve the filtration, hence it induces an isomorphism $\OO(\mathrm{gr}\,E)\simeq \OO(\mathrm{gr}\,E')$. The corollary then follows directly from theorem \ref{thm-uniqueness-weak} applied to $\mathrm{gr}\,E$ and $\mathrm{gr}\,E'$. However, in order to be able to apply theorem \ref{thm-uniqueness-weak}, it remains to prove that $\mathrm{gr}\,E$ is absolutely reflexive. If we consider the augmentation filtration then this is obvious since $Q(\mathrm{gr}\,E)=QE$. If we consider the coradical filtration, we use that in this case $P(\mathrm{gr}\,E)=PE$ and we use lemma \ref{lm-caract}. 
\end{proof}
The remainder of the section is devoted to the proof of theorem \ref{thm-uniqueness-weak}.
We mainly copy the proof of theorem \ref{thm-uniqueness-strict}, so we outline the proof and try to emphasize on the differences.

\subsection{The signatures} 
Absolute reflexivity, as well as being primitively generated or cogenerated by indecomposables are properties of the underlying functor of an exponential functor by lemma \ref{lm-additivite}. Thus if $\OO E'\simeq \OO E$ then $E'$ inherits these properties from $E$. 
Thus $E$ and $E'$ both decompose uniquely as a tensor products of indecomposables by theorem \ref{thm-classif-indecomp}. We define their signatures $\sigma(E)$ and $\sigma(E')$ as in section \ref{subsec-sign}. In order to prove that  $E\simeq E'$ it suffices to prove that for all $k\ge 0$, the fake truncations $\phi_k(\sigma(E))$ and $\phi_k(\sigma(E'))$ are equal. 
Thus it suffices to prove that if $E$ is absolutely reflexive and primitively generated, or cogenerated by indecomposables, then $\phi_k(\sigma(E))$ can be recovered from the functor $\OO E$. 

\subsection{An analogue of proposition \ref{prop-indecomp-summand}}
Since $E$ is primitively generated or cogenerated by indecomposables, it follows from corollary \ref{cor-KRS} that $E$ decomposes as a tensor product of indecomposables exponential functors of the form: 
$$S(A[d])\;,\text{ or}\; S_n(A[d])\;,\text{ or}\; \Lambda(A[d])\;,\text{ or}\; \Gamma(A[d])\;,\text{ or}\;\Gamma_n(A[d])\;,\;\qquad (*)$$
where $A[d]$ is a simple additive functor placed in degree $d$ and where $S_n(A[d])=\Sigma(A[d],F^{n-1})$ is the quotient algebra of $S(A[d])$ by the ideal generated by $^{(n)}A[dp^n]\subset S^{p^n}(A[d])$ and $\Gamma_n(A[d])=\Sigma(A[d],V^{n-1})$ is the subalgebra of $\Gamma(A[d])$ generated by $\bigoplus_{i\le p^n}\Gamma^{p^i}(A[d])$. 
Moreover since $E$ is absolutely reflexive, the factors of its decomposition must be absolutely reflexive hence $A$ must be absolutely simple. For these specific exponential functors we are able to prove the analogue of proposition \ref{prop-indecomp-summand}.
\begin{proposition}\label{prop-substitute}
Let $X$ denote one of the exponential functors of the list $(*)$ and assume that $A$ is absolutely simple. Then for all $k$, $\End_{\FF}(X^{kd})=\kk$.
\end{proposition}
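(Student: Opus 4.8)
The plan is to prove this proposition by a direct computation for each of the five families in the list $(*)$, exploiting that each such $X$ is a special exponential functor in the sense of definition \ref{def-special}. Indeed, for $X=S(A[d])$, $S_n(A[d])$, $\Lambda(A[d])$, $\Gamma(A[d])$ or $\Gamma_n(A[d])$ with $A$ a simple additive functor, one has $PQX=A[d]$ simple, so $X$ is special. Lemma \ref{lm-prem-prop} then tells us that $X^{kd}$ is the only nonzero homogeneous summand in degrees between $(k-1)d$ and $(k+1)d$, and that $X^i=0$ unless $d\mid i$. The goal is to show that the endomorphism ring of the underlying functor $X^{kd}\in\FF$ reduces to $\kk$.

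First I would reduce, exactly as in section \ref{sec-indecomp}, to the case where $X$ lies in one of the model categories: using the splitting principle (lemma \ref{lm-splitting-principle}) to discard the exterior case in odd characteristic, and the regrading/categorical decomposition arguments (lemma \ref{lm-prem-red}, proposition \ref{prop-categ-decomp}, lemma \ref{lm-decomp-ord-triv}) to assume $A$ is placed in degree $2$ and sits in a single weight block. Since $A$ is absolutely simple, $\End_\FF(A)=\kk$, and by lemma \ref{lm-fct-add-U} (or the classification of additive functors, lemmas \ref{lm-control-ordinary} and \ref{lm-control-strict}) each homogeneous piece $X^{kd}$ is built out of copies of twists of $A$. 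The heart of the matter is to compute $\End_\FF(X^{kd})$, and here the key input is the explicit description of each $X^{kd}$ as an identifiable functor: for $S$ these are the symmetric powers $S^k(A)$, for $\Gamma$ the divided powers $\Gamma^k(A)$, for $\Lambda$ the exterior powers, and for the truncated versions $S_n$, $\Gamma_n$ the appropriate sub/quotient functors. Because $A$ is absolutely simple with endomorphism ring $\kk$, these are (twists of) the corresponding Schur/Weyl functors applied to a simple object, and I would invoke that such a functor is itself indecomposable with endomorphism ring $\kk$.

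The cleanest route, and the one I would pursue, is to mimic the inductive argument of proposition \ref{prop-indecomp-summand}, which already proves precisely the statement $\End(E^{kd})=\kk$ for any special \emph{strict} exponential functor over an arbitrary field. That proof proceeds by induction on $k$: assuming $PE^{kd}=0$ (or dually $QE^{kd}=0$), it produces a monomorphism $\iota:E^{kd}\hookrightarrow E^{\alpha_1}\otimes\cdots\otimes E^{\alpha_n}$ via lemma \ref{lm-sec-coalg}, and then computes $\Hom$ into the tensor product using the sum-diagonal formula \cite[Thm 1.7]{FFSS}, the weight constraints from proposition \ref{prop-categ-decomp}, and the induction hypothesis to conclude the $\Hom$-space has dimension $\le 1$; nonvanishing comes from $E^{kd}\ne 0$. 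The only place this strict argument uses the strict setting is the appeal to the $\Ext$/$\Hom$-formula of \cite{FFSS}, which is specific to strict polynomial functors. Therefore the real content of proposition \ref{prop-substitute} is to supply a replacement for that computational ingredient valid in the ordinary functor category $\FF=\Fct(\V,\Mod)$, for the specific functors in the list $(*)$.

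The main obstacle, as the authors themselves flag in the sentence preceding theorem \ref{thm-uniqueness-strict} (``we don't know how to prove the indecomposability property of proposition \ref{prop-indecomp-summand} in the ordinary case''), is precisely that no general ordinary analogue of the \cite{FFSS} $\Hom$-formula is available. This is why the proposition is restricted to the explicit list $(*)$ rather than to all special exponential functors: for these concrete functors one can compute $\End_\FF(X^{kd})$ by hand. Concretely, I expect to exploit that over a characteristic $p$ field the functors $S^k$, $\Gamma^k$, $\Lambda^k$ and their truncations, evaluated on a simple additive functor $A$ with $\End_\FF(A)=\kk$, decompose through the corresponding constructions on a one-dimensional representation after passing to the Dieudonn\'e-module description of section \ref{sec-indecomp}: via the equivalence $\mathbf{M}:\C\simeq\mathcal{D}'_\AC$ of lemma \ref{lm-dieu-tout}, each $X$ is a string module $M_{A,r,w}$ whose graded pieces are single copies of twists of $A$, and endomorphisms of $X^{kd}$ correspond to endomorphisms compatible with the string structure. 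Since a string module over an absolutely simple $A$ has $\End=\End_\FF(A)=\kk$ by lemma \ref{lm-string}, and since the homogeneous summand $X^{kd}$ is itself indecomposable (lemma \ref{lm-prem-prop}) with all its additive composition factors lying in a single weight, any endomorphism is forced to act as a scalar. The delicate point I would need to verify carefully is that an endomorphism of the \emph{functor} $X^{kd}$, not merely of its primitive or indecomposable part, is determined by its restriction to $PQX^{kd}$; this I would establish using that $X^{kd}$ is generated (or cogenerated) by a single additive layer together with the indecomposability just noted and the absolute simplicity of $A$.
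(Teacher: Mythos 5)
There is a genuine gap, and it sits exactly where you yourself flag the ``delicate point.'' Your proposed mechanism --- pass to the Dieudonn\'e module via $\mathbf{M}$, observe that $X$ is a string module $M_{A,r,w}$, and invoke lemma \ref{lm-string} --- only computes $\End_{\FF-\Exp_c}(X)$, i.e.\ endomorphisms of $X$ \emph{as an exponential functor}. The proposition asks for $\End_{\FF}(X^{kd})$: endomorphisms of the single homogeneous piece as a bare object of $\FF$, with no compatibility with the product, coproduct, or the other degrees imposed. The Dieudonn\'e module only records the primitives and indecomposables, so it gives no control over such a map. Moreover you cite lemma \ref{lm-prem-prop} for the indecomposability of the functor $X^{kd}$, but that lemma only gives indecomposability of $X$ as an exponential functor and identifies $X^d=PQX$; the indecomposability of $X^{kd}$ as a functor is a \emph{consequence} of $\End_{\FF}(X^{kd})=\kk$, i.e.\ of the very statement being proved. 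The step from ``an endomorphism of $PQX^{kd}$ is a scalar'' to ``an endomorphism of $X^{kd}$ is a scalar'' is the whole content of the proposition, and your sketch does not supply it. (Your diagnosis of the obstacle is also slightly off: the sum-diagonal formula of \cite[Thm 1.7]{FFSS} is available in the ordinary category $\FF$ as well; what fails in the ordinary analogue of proposition \ref{prop-indecomp-summand} is the weight decomposition of proposition \ref{prop-categ-decomp} used to kill the cross terms, not the formula itself.)

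The fix, which is what the paper does, is to use the special shape of the functors in the list $(*)$: for $X=S(A[d])$, $S_n(A[d])$ or $\Lambda(A[d])$ the functor $X^{kd}$ is a \emph{quotient} of $A^{\otimes k}$, so precomposition with the quotient map embeds $\End_{\FF}(X^{kd})$ into $\Hom_{\FF}(A^{\otimes k},X^{kd})$. The latter is computed by \cite[Thm 1.7]{FFSS} (valid in $\FF$) as $\bigoplus_\beta\bigotimes_{i=1}^{k}\Hom_{\FF}(A,X^{\beta_i})$ over tuples with $\sum\beta_i=kd$; since there is no nonzero map from an additive functor to a constant functor and $X^j=0$ for $0<j<d$, only $\beta=(d,\dots,d)$ survives, and each factor is $\Hom_{\FF}(A,A)=\kk$ by absolute simplicity. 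This gives $\dim_\kk\End_{\FF}(X^{kd})\le 1$ with no induction and no weights; the cases $\Gamma(A[d])$, $\Gamma_n(A[d])$ are dual, using that $X^{kd}$ is a subfunctor of $A^{\otimes k}$. If you want to keep your write-up, replace the string-module paragraph by this quotient-of-$A^{\otimes k}$ argument.
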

\begin{proof}
We treat the case of $X=S_n(A[d])$, the other cases being similar. The functor $X^{dk}$ is a quotient of $A^{\otimes k}$. Thus $\End_{\FF}(X^k)$ is a subspace of $\Hom_{\FF}(A^{\otimes k},X^{dk})$. By \cite[Thm 1.7]{FFSS} the latter is isomorphic to the following direct sum, taken over all $k$-tuples $\beta$ such that $\sum \beta_i=kd$: 
$$\bigoplus_\beta \bigotimes_{i=1}^n\Hom_{\FF}(A, X^{\beta_i}) \;.\quad(**)$$
There is non nonzero morphism between an additive functor and a constant functor. Thus for degree reasons, the only term which contributes to this sum is $\beta=(d,\dots,d)$. Since $X^d=A$ and $A$ is absolutely simple, $(**)$ has dimension $1$. So $\dim_\kk \End_{\FF}(X^{kd})\le 1$. The other inequality is trivial.
\end{proof}

\subsection{Construction of the multisets $\alpha_k(F)$}
Assume that $E$ is  primitively generated (the proof for $E$ cogenerated by its indecomposables is similar) and absolutely reflexive. Fix $k\ge 0$. In order to prove theorem \ref{thm-uniqueness-weak}, we are going to prove that $\phi_k(\sigma(E))$ can be extracted from $\OO E$.

The functor $F=\OO E^k$ decomposes as a direct sum:
$$F= C \oplus \bigoplus_{i\in I}F_i \quad(\dag)$$
in which (i) $I$ is a finite set, (ii) $C$ is a functor which has no nonzero additive subfunctor, (iii) the $F_i$ are functors having a nonzero additive subfunctor, and (iv)  each $F_i$ is isomorphic to $X_i^k$ for some exponential functor $X_i$ in the following list (where $A_i$ is absolutely simple):
$$S(A_i[d_i])\;,\text{ or}\; S_{n_i}(A_i[d_i])\;,\text{ or}\; \Lambda(A_i[d_i])\;.\quad(**)$$
We call a decomposition $(\dag)$ such that the four conditions (i-iv) are satisfied a \emph{$k$-primitive decomposition of $F$}.
We don't know if $\FF$ satisfies the Krull-Remak-Schmidt property. However proposition \ref{prop-substitute} has the following consequence, which we can use as a substitute for proposition \ref{prop-KRS-P}.
\begin{proposition}\label{prop-substitute2}
If a functor $F$ has a \emph{$k$-primitive decomposition of $F$}, it is unique. Namely, if $F= C' \oplus \bigoplus_{i\in J}F'_i$ is another $k$-primitive decomposition, there is a bijection $\xi:I\to J$ and isomorphisms $F_i\simeq F'_{\xi(i)}$ and $C\simeq C'$.
\end{proposition}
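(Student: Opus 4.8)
The plan is to prove uniqueness of the $k$-primitive decomposition by combining a key structural observation with the Krull--Remak--Schmidt theorem applied in a suitable additive subcategory. The starting point is that the summands $F_i$ (resp. $F_i'$) in a $k$-primitive decomposition are, by hypothesis (iv), isomorphic to the degree-$k$ homogeneous components $X_i^k$ of exponential functors $X_i$ drawn from the list $(**)$, each built on an absolutely simple additive functor $A_i$. By Proposition \ref{prop-substitute}, each such $X_i^k$ has endomorphism ring exactly $\kk$, hence in particular a \emph{local} endomorphism ring, so each $F_i$ is indecomposable with local endomorphism ring. This is the crucial input that lets the Krull--Remak--Schmidt machinery apply even though we do not know whether $\FF$ itself satisfies the Krull--Remak--Schmidt property in general.

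The first concrete step is to separate the constant-free part from the rest. Since $C$ (resp. $C'$) has no nonzero additive subfunctor by condition (ii), while each $F_i$ (resp. $F_i'$) has a nonzero additive subfunctor by (iii), I would first argue that the maximal subfunctor of $F$ which is a direct sum of summands admitting a nonzero additive subfunctor is canonically determined by $F$; dually one may extract the part built from the $F_i$ using the largest additive subfunctor of $F$ (cf. lemma \ref{lm-additivite}). More precisely, I would show that $\bigoplus_{i\in I}F_i$ and $\bigoplus_{i\in J}F_i'$ must coincide as subfunctors of $F$ (or at least be isomorphic), forcing $C\simeq C'$ as the complementary summand. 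The cleanest way is to observe that a homomorphism from an indecomposable with local endomorphism ring either into or out of $C$ must vanish, because any nonzero such map, composed appropriately, would either produce an additive subfunctor of $C$ or contradict indecomposability.

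The second step is to match the $F_i$ with the $F_i'$. Working inside the additive category generated by the finite collection $\{F_i\}_{i\in I}\cup\{F_j'\}_{j\in J}$ of objects with local endomorphism rings, I would invoke the classical Krull--Remak--Schmidt theorem in the form \cite[Chap. I.6, Thm 1]{Gabriel}, exactly as in the proof of corollary \ref{cor-iso-fct}. Since $\bigoplus_{i\in I}F_i\simeq\bigoplus_{j\in J}F_j'$ (both being the complement of the isomorphic constant-free parts $C\simeq C'$ inside $F$), and since all summands on both sides are indecomposable with local (indeed one-dimensional) endomorphism rings, the theorem yields a bijection $\xi:I\to J$ together with isomorphisms $F_i\simeq F_{\xi(i)}'$. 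Finiteness of $I$ and $J$, guaranteed by condition (i), ensures the hypotheses of the theorem are met.

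The main obstacle I anticipate is the rigorous separation of the constant part $C$ in the absence of a global Krull--Remak--Schmidt property for $\FF$: one must be careful that $C$ is not merely \emph{a} complement but is pinned down up to isomorphism. I would handle this by a Fitting-type argument restricted to the finite additive subcategory above, using that the $F_i$ have local endomorphism rings to guarantee that idempotents decompose as expected; the vanishing of $\Hom$ between $C$ and the indecomposable summands (from the additive-subfunctor dichotomy of conditions (ii)--(iii)) then gives the required canonicity. Once this separation is established, the matching of the $F_i$ is a direct application of the cited Krull--Remak--Schmidt theorem and is essentially routine.
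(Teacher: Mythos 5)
There is a genuine gap in your first step. You assert that any nonzero homomorphism between $C$ and one of the summands $F_i$ ``would either produce an additive subfunctor of $C$ or contradict indecomposability,'' but this is not justified and is false in general. A nonzero map $F_i\to C$ need not send the additive subfunctor of $F_i$ injectively, and its image need not contain any additive subfunctor: for instance the symmetrization map $S^2(A)\to A\otimes A$ is nonzero, while $A\otimes A$ has no nonzero additive subfunctor or quotient by Pirashvili's vanishing lemma (fact \ref{lm-cancel}), so $A\otimes A$ is a perfectly admissible constituent of $C$. Hence $\Hom_\FF(F_i,C)$ and $\Hom_\FF(C,F_i)$ do not vanish, and the ``canonical'' separation of $C$ you propose does not go through. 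Moreover, even if you could first establish $C\simeq C'$, your deduction that $\bigoplus_{i\in I}F_i\simeq\bigoplus_{j\in J}F'_j$ ``as the complement of the isomorphic constant-free parts'' is a cancellation of $C$ from $C\oplus\bigoplus_I F_i\simeq C'\oplus\bigoplus_J F'_j$; since $C$ has no reason to have a local endomorphism ring and $\FF$ is not known to satisfy Krull--Remak--Schmidt, this cancellation is exactly the kind of statement the proposition is meant to establish, so the argument is circular at this point.

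The paper proceeds in the opposite order, and this is the fix you need. One first cancels the summands that \emph{are} known to have local endomorphism rings: by Warfield's cancellation theorem, each $F_i\simeq X_i^k$ (endomorphism ring $\kk$ by proposition \ref{prop-substitute}) can be cancelled from both sides, so one reduces to the case where no $F_i$ is isomorphic to any $F'_j$. One then shows $I=J=\emptyset$ by a degree argument: pick a remaining summand $F_{i_0}$ of maximal Eilenberg--Mac Lane degree; lemma \ref{lm-mini-calcul} gives $\Hom_\FF(F_{i_0},F'_j)=0$ for all $j$, so the identity of $F_{i_0}$ must factor through $C'$, making $F_{i_0}$ a direct summand of $C'$ --- impossible since $F_{i_0}$ has a nontrivial additive subfunctor and $C'$ has none. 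Note that this only uses the non-embeddability of $F_{i_0}$ as a \emph{direct summand} of $C'$, not the vanishing of $\Hom_\FF(F_{i_0},C')$, which is precisely the weaker statement that is actually available. Your second step (matching via local endomorphism rings) is sound in spirit, but without the cancellation-first order and the maximal-degree argument the proof does not close.
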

\begin{proof}
The functors $X^k$ of the list $(**)$ have local endomorphism rings, hence they satisfy the cancellation property \cite[Prop 2]{Warfield}: if $X^k\oplus G\simeq X^k\oplus G'$ then $G\simeq G'$ (whatever $G$ and $G'$ are). Thus if we have two decompositions we can cancel all the $X^k$ in common, hence we can assume that for all pair $(i,j)\in I\times J$, we have $F_i\not\simeq F'_j$. We claim that in this case $I$ and $J$ are empty, thus $C=F\simeq F=C'$ and the result is proved. 

It remains to prove our claim. Assume that $I$ or $J$ is not empty. Among the $F_i$ and the $F'_j$, take one with maximal Eilenberg-Mac Lane degree, say for example $F_{i_0}$. Then $F_{i_0}$ does not embed in $C'$ (because it has a nontrivial additive subfunctor) and $\Hom_\FF(F_{i_0},F'_j)=0$ for all $j$ by lemma \ref{lm-mini-calcul} below, which contradict the fact that $F_{i_0}$ is a direct summand of $F$. 
\end{proof}

\begin{lemma}\label{lm-mini-calcul}
Let $X_1$ and $X_2$ be two functors in the list $(**)$. Assume that $\deg X_1^k\ge \deg X_2^k$, that $X_1^k$ and $X_2^k$ both have a nonzero additive subfunctor, and that $\Hom_\FF(X_1^k,X_2^k)\ne 0$. Then $X_1^k\simeq X_2^k$. 
\end{lemma}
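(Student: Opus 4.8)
The plan is to prove Lemma \ref{lm-mini-calcul} by reducing everything to the primitives, which are absolutely simple additive functors, and then using the combinatorics of the degrees $d_1, d_2$ together with the weight gradings to force the two functors to coincide. First I would set up notation: write $X_i = S(A_i[d_i])$, $S_{n_i}(A_i[d_i])$, or $\Lambda(A_i[d_i])$, with $A_i$ absolutely simple so that $\End_\FF(A_i)=\kk$. Since each $X_i$ is primitively generated with $PX_i = QX_i$ concentrated in degrees $d_i p^r$ for $r \geq 0$ (and, for the truncated cases, only finitely many such $r$), the functor $X_i^k$ is nonzero only when $k$ is a multiple of $d_i$, and its lowest-degree additive subfunctor is $A_i[d_i]$ itself. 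The first key step is to extract $d_1 = d_2$: a nonzero morphism $X_1^k \to X_2^k$ restricts, on the smallest additive subfunctor, to a map whose source and target are Frobenius twists of $A_1$ and $A_2$; comparing degrees and using that each $X_i$ sits in one of the categories $\mathcal{E}\langle 2n,s\rangle$ or $\PP_{\omega,\kk}-\Exp_c(2a)$ where degree is proportional to weight, I would deduce that $A_1 \simeq A_2$ and $d_1 = d_2 =: d$.

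Next I would exploit the computation of $\Hom$-spaces between tensor products via \cite[Thm 1.7]{FFSS}, exactly as in the proof of Proposition \ref{prop-indecomp-summand}. Because $X_1^k$ is a quotient (or subobject) of $A^{\otimes k}$ and $X_2^k$ likewise, a nonzero element of $\Hom_\FF(X_1^k, X_2^k)$ produces, after composing with the canonical maps, a nonzero element of $\Hom_\FF(A^{\otimes k}, A^{\otimes k})$, which by the FFSS formula decomposes as a sum over tuples $\beta$ with $\sum \beta_i = k$ of tensor products of $\Hom_\FF(A, X_2^{\beta_i})$-type terms. The weight (equivalently degree) constraint kills every summand except $\beta = (d,\dots,d)$, so that the only contribution matches $A^{\otimes k}$ factor-by-factor. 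This pins down not only that $X_1$ and $X_2$ are built from the same simple $A$ in the same degree $d$, but that the nonzero $\Hom$ is detected on the indecomposables in each weight $d p^r$.

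The final and most delicate step is to distinguish the three shapes $S$, $S_n$, and $\Lambda$ (and to rule out $n_1 \neq n_2$): here I would translate to graded Dieudonné modules via $\mathbf{M}$ (Lemma \ref{lm-dieu-tout}), under which $S(A[d])$, $S_n(A[d])$, and $\Lambda(A[d])$ correspond to the string modules $M_{A,r,F^\infty}$, $M_{A,r,F^{n-1}}$, and $M_{A,r,\epsilon}$ respectively. A nonzero morphism of exponential functors becomes a nonzero morphism of string modules in $\mathcal{D}'_\AC$, and by Lemma \ref{lm-string} such a morphism forces the underlying $FV$-words, hence the truncation lengths, to agree; combined with the degree hypothesis $\deg X_1^k \geq \deg X_2^k$, this yields $X_1^k \simeq X_2^k$. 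The main obstacle I anticipate is controlling the interaction between the global nonvanishing of $\Hom_\FF(X_1^k,X_2^k)$ and the local (weight-by-weight) structure of the string modules: one must be careful that a morphism nonzero on $X_1^k$ does not vanish on the relevant additive summands, which is precisely where the hypothesis that both $X_1^k$ and $X_2^k$ carry a \emph{nonzero additive subfunctor} is used, guaranteeing that the lowest-weight piece survives and detects the isomorphism type.
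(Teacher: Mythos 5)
Your proposal has a genuine gap, concentrated in the final step but also affecting the first. The hypothesis $\Hom_\FF(X_1^k,X_2^k)\ne 0$ provides a nonzero morphism between the underlying functors of two \emph{single graded components}; it is not a morphism of exponential functors, so it induces no morphism of graded Dieudonn\'e modules, and lemma \ref{lm-string} cannot be invoked to compare $FV$-words. Moreover the words need not agree and the lemma does not claim they do: $S(A[d])$, $S_{n}(A[d])$ and $\Lambda(A[d])$ all have the same degree-$d$ component $A$, with nonzero $\Hom$ between these components, while the exponential functors are pairwise non-isomorphic — the conclusion is only $X_1^k\simeq X_2^k$. Two further problems: in this section $\FF=\Fct(\V,\Mod)$ for a general additive category $\V$ of characteristic $p$, so there is no weight decomposition and the categories $\PP_{\omega,\kk}-\Exp_c(2a)$ are unavailable (the text explicitly replaces the weight arguments of the strict case by Eilenberg--Mac Lane degree arguments here); and the claim that a nonzero morphism $X_1^k\to X_2^k$ restricts to a nonzero map on the smallest additive subfunctors is unjustified, since a morphism of functors may kill any given subfunctor.

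The paper's argument is shorter and avoids all of this. First, $X_1^k$ has no nonzero quotient of EML-degree smaller than $\deg_{\EML}X_1^k$, so the image of a nonzero morphism $X_1^k\to X_2^k$, being a subfunctor of $X_2^k$, forces $\deg_{\EML}X_2^k\ge \deg_{\EML}X_1^k$; combined with the standing inequality this yields a common EML-degree $\delta$. Second, the hypothesis that $X_i^k$ has a nonzero additive subfunctor forces that subfunctor into the primitives, hence $k=d_ip^{r_i}$ with $r_i$ below the truncation, and then $X_i^k\simeq S^{p^{r_i}}(A_i)$ with $p^{r_i}=\delta$. Finally $\Hom_\FF(S^\delta(A_1),S^\delta(A_2))$ embeds into $\Hom_\FF(A_1,A_2)^{\otimes\delta}$ by the computation of proposition \ref{prop-substitute}, so nonvanishing gives $A_1\simeq A_2$ by simplicity and hence $X_1^k\simeq S^\delta(A_1)\simeq S^\delta(A_2)\simeq X_2^k$. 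Your middle step is in the right spirit, but to repair the proof you need the two observations you are missing: the quotient/EML-degree argument pinning down $\delta$, and the identification of each $X_i^k$ with a single symmetric power.
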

\begin{proof}
First $X_1^k$ has no quotient of Eilenberg-Mac Lane degree less than $\deg_{\EML}X_1^k$. Thus $X_1^k$ and $X_2^k$ have the same EML-degree $\delta$. Then each $X_i^k$ must be isomorphic to some functor of the form $S^{\delta}(A_i)$ (because they have a nontrivial additive subfunctor). We compute $\Hom_\FF(S^\delta(A_1),S^\delta(A_2))$ in the same way as in proposition \ref{prop-substitute}: it is a subspace of $\Hom_\FF(A_1,A_2)^{\otimes \delta}$. Thus $\Hom_\FF(A_1,A_2)\ne 0$ hence $A_1\simeq A_2$ by simplicity. Hence $X_1^k\simeq X_2^k$.
\end{proof}

Now let $F$ be an arbitrary functor which admits a $k$-primitive decomposition $(\dag)$. For all $i\in I$, let $X_i$ such that $X_i^k\simeq F_i$. Then we let:
$$\pi_k(F_i)= \left(\bigoplus_{0\le \ell\le k}PX_i^\ell,\bigoplus_{0\le i\le k}QX_i^\ell \right)\;.$$
This is well defined because corollary \ref{cor-iso-fct} holds for the functors in the list $(*)$ (replace the use of the weights and of proposition \ref{prop-categ-decomp} in the proof of corollary \ref{cor-iso-fct} by the Eilenberg-Mac Lane degree of a functor and proposition \ref{prop-conn}). Now we let 
$$\alpha_k(F)=\{\pi_k(F_i)\;,\;i\in I\;\}.$$
One easily proves that for $F=\OO E$ we obtain $\alpha_k(F)=\phi_k(\sigma(E))$, which finishes the proof of theorem \ref{thm-uniqueness-weak}.

\section{(Counter)examples over arbitrary fields}\label{sec-counter}
Let $\kk$ be an arbitrary field of positive characteristic $p$, and $\kk^p$ be the subfield of $p$-th powers. In this short section, we describe a family of exponential functors $E_a$ indexed by a parameter $a\in\kk$. This family shows that several results obtained over perfect fields do not hold over imperfect fields.
\begin{enumerate}
\item If $a\not\in\kk^p$ then $E_a$ is indecomposable by lemma \ref{lm-CEX4}, while by lemmas \ref{lm-CEX2} and \ref{lm-additivite}, $(PQE_a)^2=A\oplus A$ is decomposable. Thus theorem \ref{thm-classif-indecomp} does not hold over an imperfect field.
\item If $a\not\in\kk^p$ then $E_a$ is not isomorphic to $E_0$, by lemmas \ref{lm-CEX1} and \ref{lm-CEX4} while the underlying graded functors are isomorphic by lemma \ref{lm-CEX2}. Thus, theorem \ref{thm-uniqueness-strict} does not hold over an imperfect field. Since the $E_a$ are primitively generated, even the weaker statement of theorem \ref{thm-uniqueness-weak} fails over an imperfect field.
\end{enumerate}

Fix a simple additive functor $A$ such that $\mathrm{End}_{\FF_\add}(A)\simeq\kk$ and  let $A[2]$ (resp. $^{(1)}A[2p]$) be a copy of $A$ (resp. $I^{(1)}\circ A$) placed in degree $2$ (resp. $2p$). We let $E=S(A[2]\oplus A[2])$ and we define $E_a$ as the cokernel of the morphism of exponential functors $S({^{(1)}}A[2p])\to E$ induced by 
$$g_a=(\Id,a\Id):{^{(1)}}A\to {^{(1)}}A\oplus {^{(1)}}A=PE^{2p}\subset E^{2p}\;.$$
\begin{lemma}\label{lm-CEX1}
If $a\in\kk^p$, the exponential functor $E_a$ is isomorphic to $T(A[2])\otimes S(A[2])$, where $T(A[2])$ is the quotient of $S(A[2])$ by the ideal generated by the $p$-th powers.
\end{lemma}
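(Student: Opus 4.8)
The plan is to identify $E_a$ as an explicit Hopf-quotient of the free graded commutative algebra $E=S(A[2]\oplus A[2])$, and then, when $a=b^p$, to move that quotient onto the one defining $T(A[2])\otimes S(A[2])$ by an automorphism of $E$ coming from a linear change of the two generating copies of $A$.

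First I would unravel the cokernel. Recall from section \ref{sec-elementary} that the cokernel of a morphism $f\colon H\to H'$ in $\HH$ (hence in $\FF-\Exp_c$ by lemma \ref{lm-abelian}) is $H'\otimes_H\kk$, that is, the quotient of $H'$ by the Hopf ideal generated by $f(\overline H)$. Applied to $f\colon S({}^{(1)}A[2p])\to E$ induced by $g_a$, whose source has augmentation ideal generated by ${}^{(1)}A[2p]$, this gives
$$E_a \simeq E/\langle \operatorname{im} g_a\rangle,$$
the quotient by the (automatically Hopf) ideal generated by the subfunctor $\operatorname{im} g_a\subset PE^{2p}$. Since $E=S(A[2])\otimes S(A[2])$ and the two summands of $PE^{2p}={}^{(1)}A\oplus{}^{(1)}A$ are the images $\operatorname{im}\Phi_1,\operatorname{im}\Phi_2$ of the $p$-th power (Frobenius) maps $\Phi_i\colon{}^{(1)}A\to E^{2p}$ on the two generating copies $A[2]$, the case $a=0$ is immediate: here $\operatorname{im} g_0=\operatorname{im}\Phi_1$, so quotienting the first tensor factor by the ideal of $p$-th powers yields $E_0\simeq T(A[2])\otimes S(A[2])$.

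It then remains to produce an isomorphism $E_0\xrightarrow{\simeq}E_a$ of exponential functors when $a=b^p$. I would consider the automorphism $\Psi=\left(\begin{smallmatrix}1&0\\ b&1\end{smallmatrix}\right)$ of the additive functor $A\oplus A$, which is invertible because $\End_{\FF_\add}(A)\simeq\kk$ forces $\operatorname{Aut}_{\FF_\add}(A\oplus A)=GL_2(\kk)$. Applying the exponential functor $S$ yields an automorphism $S(\Psi)$ of $E$, and the key point is to compute its effect on the primitives $PE^{2p}$. Because $PE^{2p}={}^{(1)}(A\oplus A)$ is a Frobenius twist and the $\Phi_i$ are $p$-th powers, $S(\Psi)$ acts on $PE^{2p}$ through the twisted matrix ${}^{(1)}\Psi=\left(\begin{smallmatrix}1&0\\ b^p&1\end{smallmatrix}\right)$. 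As $b^p=a$, this carries $\operatorname{im} g_0=\operatorname{im}\Phi_1$ (the column $(1,0)$) onto $\operatorname{im} g_a$ (the column $(1,a)$); hence $S(\Psi)$ sends the defining ideal of $E_0$ onto that of $E_a$ and descends to the desired isomorphism $E_0\simeq E_a$. Combining with the previous paragraph gives $E_a\simeq T(A[2])\otimes S(A[2])$.

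The main obstacle is the computation of $S(\Psi)$ on $PE^{2p}$, i.e. the functorial incarnation of the freshman's dream $(x+by)^p=x^p+b^py^p$: one must check that twisting the $\kk$-linear automorphism $\Psi$ replaces its off-diagonal entry $b$ by $b^p$, which encodes both the additivity and the $\kk$-semilinearity ($c\cdot(-)\mapsto c^p\cdot(-)$) of the $p$-th power map on primitives. This is precisely where the hypothesis $a\in\kk^p$ enters, and it is the one step that will fail for $a\notin\kk^p$, consistent with the indecomposability statement of lemma \ref{lm-CEX4} to follow.
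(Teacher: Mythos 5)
Your proof is correct and follows essentially the same route as the paper: the paper's one-line argument uses exactly the automorphism $f(x,y)=(x,\,y+a^{1/p}x)$ of $A\oplus A$ (your $\Psi$ with $b=a^{1/p}$) and the fact that $S(f)$ acts on $PE^{2p}={}^{(1)}(A\oplus A)$ through the Frobenius-twisted matrix, carrying $\operatorname{im}g_0$ onto $\operatorname{im}g_a$. Your write-up just makes explicit the identification of $E_a$ as a Hopf quotient and of $E_0$ with $T(A[2])\otimes S(A[2])$, which the paper leaves implicit.
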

\begin{proof}
The morphism of functors $f: A\oplus A\to A\oplus A$ defined by $f(x,y)=(x, y+a^{1/p}x)$ induces an isomorphism $S(f): E\simeq E$, which induces in turn an isomorphism $T(A[2])\otimes S(A[2])\simeq E_a$. 
\end{proof}

\begin{lemma}\label{lm-CEX2}
Let $T(A)$ be the quotient of $S(A)$ by the ideal generated by the $p$-th powers.
For all $a\in\kk$ and all $d\ge 0$, there is an isomorphism in $\FF$:
$$(E_a)^{2d}\simeq \bigoplus_{k=0}^d T^{d-k}(A)\otimes S^{k}(A)\;.$$
\end{lemma}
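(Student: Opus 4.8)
The plan is to compute the graded vector space underlying $E_a^{2d}$ by exploiting the presentation of $E_a$ as a cokernel, together with the explicit structure of $E=S(A[2]\oplus A[2])$. First I would use the exponential isomorphism and the classification of additive functors to identify $E$ itself: since $A[2]\oplus A[2]$ is additive of degree $2$, we have $E\simeq S(A[2])\otimes S(A[2])$, and by the binomial-type decomposition of symmetric powers of a direct sum, $E^{2d}\simeq\bigoplus_{k=0}^d S^{d-k}(A)\otimes S^k(A)$ (all copies of $A$ regarded in degree $2$, so that $S^j(A)$ sits in degree $2j$). The whole point is that passing to the cokernel $E_a$ kills one symmetric-power generator in each factor, turning one of the two $S$'s into the truncated algebra $T$, and I must show the isomorphism type of the homogeneous piece does not actually depend on $a$.

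The key step is to understand the image of the map $S({}^{(1)}A[2p])\to E$ induced by $g_a=(\mathrm{Id},a\,\mathrm{Id})$. In the strict setting ${}^{(1)}A=I^{(1)}\circ A$ is simple additive of degree $2p$, and its image in $E^{2p}=PE^{2p}$ is a one-dimensional (i.e.\ rank-one over $\mathrm{End}(A)=\kk$) primitive subfunctor spanned by $x^p+a\,y^p$, where $x,y$ denote generators of the two copies of $S(A)$ and $x^p,y^p$ are the $p$-th power (Frobenius) operations landing in $S^p(A)\simeq{}^{(1)}A\oplus(\text{decomposables})$. The ideal generated by $K:=\mathrm{Im}(g_a)$ is therefore the ideal of $S(A)\otimes S(A)$ generated by the single primitive element $x^p+a\,y^p$. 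So $E_a=S(A)\otimes S(A)/\langle x^p+a y^p\rangle$, and I would compute the degree-$2d$ part of this quotient directly.

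For the computation I would argue that, regardless of $a\in\kk$, multiplication by $x^p+ay^p$ from degree $2(d-p)$ into degree $2d$ is injective, so that in each degree the quotient has the same dimension as $T(A)\otimes S(A)$ in that degree; more structurally, I would write down an explicit splitting exhibiting $E_a^{2d}\simeq\bigoplus_{k=0}^d T^{d-k}(A)\otimes S^k(A)$ as functors. Concretely, one can choose the functorial complement: within $S^{d-k}(A)\otimes S^k(A)$, the subspace hit by the ideal in degree $2d$ is spanned by multiples of $x^p+ay^p$, and after a (non-functorial over imperfect $\kk$, but functorial at the level of the present grading) bookkeeping the quotient in the $x$-direction becomes the truncated power $T^{d-k}(A)$. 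The cleanest route is to note that as a bigraded $\kk$-vector space, $S(A)\otimes S(A)/\langle x^p+ay^p\rangle$ has the same Poincar\'e series as $T(A)\otimes S(A)$, and then to produce the asserted direct-sum decomposition degree by degree using that each $S^j(A)$, $T^j(A)$ is a well-understood functor with endomorphisms $\kk$.

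The main obstacle I anticipate is the functoriality and the subtle dependence on whether $a\in\kk^p$. As a graded \emph{vector space} the isomorphism of the lemma holds for every $a$ (that is exactly what the statement asserts and what makes it a useful counterexample), but one must be careful that the decomposition is genuinely an isomorphism \emph{in $\FF$}, i.e.\ of functors, not merely of underlying graded vector spaces — this is where the simplicity of $A$ and $\mathrm{End}_{\FF_{\mathrm{add}}}(A)\simeq\kk$ get used, via the fact that the relevant $\mathrm{Hom}$-spaces between the homogeneous pieces are controlled (Pirashvili vanishing kills cross terms, so any $a$-dependence that would distinguish functors must already appear in the primitives, which are graded-isomorphic). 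I expect the proof ultimately to reduce, as in Lemma~\ref{lm-CEX1}, to the observation that $E_a$ and $E_0$ have the same \emph{underlying} graded functor even though they are not isomorphic as exponential functors when $a\notin\kk^p$; hence verifying the graded-functor isomorphism $(E_a)^{2d}\simeq(E_0)^{2d}=(T(A[2])\otimes S(A[2]))^{2d}$ and then identifying the right-hand side with $\bigoplus_{k=0}^d T^{d-k}(A)\otimes S^k(A)$ completes the argument.
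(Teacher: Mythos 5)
Your setup is right: $E^{2d}\simeq\bigoplus_{k=0}^d S^{d-k}(A)\otimes S^k(A)$, the ideal is generated by the single primitive $x^p+ay^p$, and you correctly identify that the whole difficulty is producing an isomorphism of \emph{functors} rather than of graded vector spaces. But the proposal never actually crosses that gap. A Poincar\'e-series comparison cannot distinguish non-isomorphic objects of $\FF$ (non-isomorphic functors can have equal dimensions at every $V$), so ``same dimensions as $T(A)\otimes S(A)$'' proves nothing by itself; the appeal to Pirashvili vanishing and to $\End(A)\simeq\kk$ does not produce the isomorphism either, because for $a\ne 0$ the image of the ideal genuinely mixes the two tensor factors (already in degree $2p$ it is the graph $\{(u,au)\}\subset{}^{(1)}A\oplus{}^{(1)}A$, not a subobject of one factor), so the quotient is not visibly of the form $T\otimes S$ and no natural projection $E^{2d}\to\bigoplus_k T^{d-k}(A)\otimes S^k(A)$ kills it. Your closing sentence --- reduce to ``the observation that $E_a$ and $E_0$ have the same underlying graded functor'' --- is circular: that observation \emph{is} the content of the lemma.

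The paper supplies exactly the missing construction. Writing $\phi_a^{2d}:{}^{(1)}A\otimes E^{2d-2p}\to E^{2d}$ for the map whose cokernel is $E_a^{2d}$, it decomposes $E^{2d}=\bigoplus_{i,k}S^{d-i-kp}(A)\otimes S^{i+kp}(A)$ with $0\le i\le p-1$, and defines an automorphism $\psi_a^{2d}$ of $E^{2d}$ acting by the scalar $a^k$ on the $(i,k)$-summand (this is invertible for $a\ne 0$; the case $a=0$ is immediate since $E_0=T(A[2])\otimes S(A[2])$ by construction). Multiplication by $x^p$ preserves $(i,k)$ while multiplication by $y^p$ sends $(i,k)$ to $(i,k+1)$, so these diagonal automorphisms satisfy $\phi_a^{2d}\circ({}^{(1)}A\otimes\psi_a^{2d-2p})=\psi_a^{2d}\circ\phi_1^{2d}$; hence the cokernels of $\phi_a^{2d}$ and $\phi_1^{2d}$ are isomorphic in $\FF$, and the case $a=1$ is Lemma~\ref{lm-CEX1}. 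This family of degree-wise (but deliberately non-multiplicative) automorphisms is the one idea your proposal is missing; without it, or an equally explicit substitute, the argument does not close.
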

\begin{proof}
The result holds for $a=1$ by lemma \ref{lm-CEX1}.
By definition, $E_a^{2d}$ is the cokernel in $\FF$ of the following composite morphism $\phi_a^{2d}$:
$${^{(1)}}A\otimes E^{2d-2p}\xrightarrow[]{g_a} E^{2p}\otimes E^{2d-2p}\xrightarrow[]{\mathrm{mult}} E^{2d}\;.$$
Thus it suffices to find automorphisms $\psi^{2d}_{a}$ of $E^{2d}$ for all $d\ge 0$, such that 
$$\phi_a^{2d}\circ ({^{(1)}}A\otimes \psi^{2d-2}_a)= \psi_a^{2d} \circ \phi_1^{2d}\;.$$
Now we have a decomposition
$$E^{2d}= \bigoplus_{0\le i\le  p-1}\;\bigoplus_{0\le k\le \lfloor d/p \rfloor} S^{d-i-kp}(A)\otimes S^{i+kp}(A)$$
and we define a suitable $\psi^{2d}_a$ as the automorphism which is the multiplication by $a^k$ on each summand $S^{d-i-kp}(A)\otimes S^{i+kp}(A)$.
\end{proof}

\begin{lemma}\label{lm-CEX4} If $a\not\in\kk^p$ the exponential functor $E_a$ is indecomposable.
\end{lemma}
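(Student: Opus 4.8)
```latex
The plan is to prove indecomposability of $E_a$ for $a\notin\kk^p$ by computing
its endomorphism ring and showing that it contains no nontrivial idempotent.
Since $E_a$ is a connected exponential functor, an idempotent endomorphism
would split it as a tensor product, so it suffices to show that
$\End_{\FF-\Exp_c}(E_a)$ is a local ring (equivalently, has no idempotents
other than $0$ and $\Id$). First I would record that by the uniqueness
statement of lemma~\ref{lm-defter} (remark~\ref{rk-uniqueDelta}), an
endomorphism of $E_a$ as an exponential functor is the same thing as an
endomorphism of the underlying functorial algebra, so I may work entirely with
algebra maps, which are determined by their restriction to the generators
sitting in degree $2$, i.e. by their effect on $QE_a^2 = A\oplus A$ (using
lemma~\ref{lm-additivite}).

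The key computation is to determine which linear maps
$M\in\End_{\FF_\add}(A\oplus A)\simeq M_2(\kk)$ (using the hypothesis
$\End_{\FF_\add}(A)\simeq\kk$) extend to an endomorphism of the whole
exponential functor $E_a$. An endomorphism of $E=S(A[2]\oplus A[2])$ is
precisely $S(M)$ for an arbitrary $M\in M_2(\kk)$; the question is when $S(M)$
descends to the quotient $E_a$, i.e. when $S(M)$ carries the ideal generated by
the image of $g_a=(\Id,a\Id)$ into itself. The ideal is generated by
$x^p + a\,y^p$ in degree $2p$ (writing $x,y$ for the two generators of $A[2]$,
where $x^p,y^p$ denote the relevant $p$-th power / Frobenius-twisted
primitives $^{(1)}A$). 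If $M=\begin{pmatrix}\alpha&\beta\\\gamma&\delta\end{pmatrix}$,
then $S(M)(x^p+a\,y^p) = (\alpha x+\gamma y)^p + a(\beta x+\delta y)^p
= (\alpha^p + a\beta^p)x^p + (\gamma^p + a\delta^p)y^p$, using that $p$-th
powers are additive and kill cross terms (Frobenius on primitives). For $S(M)$
to descend, this must be a scalar multiple $\lambda(x^p+a\,y^p)$ of the
relation, giving the two equations
$\alpha^p + a\beta^p = \lambda$ and $\gamma^p + a\delta^p = a\lambda$.
Eliminating $\lambda$ yields $\gamma^p + a\delta^p = a\alpha^p + a^2\beta^p$,
i.e. $\gamma^p - a\alpha^p + a\delta^p - a^2\beta^p = 0$.

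The main obstacle, and the heart of the argument, is to extract from this
$p$-th power relation the constraint that $a\notin\kk^p$ forces. The point is
that $\gamma^p,\alpha^p,\delta^p,\beta^p\in\kk^p$, so the relation
$\gamma^p + a(\delta^p-\alpha^p) - a^2\beta^p = 0$ expresses a $\kk^p$-linear
dependence among $1,a,a^2$; since $a\notin\kk^p$, I would argue that
$1,a,a^2$ are linearly independent over $\kk^p$ (here one uses
$[\kk:\kk^p]\ge p\ge 2$ and that $a\notin\kk^p$ implies $a^2\notin\kk^p\cdot a
\cup \kk^p$ after checking the $p=2$ case separately, where $1,a$ already
suffice). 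This forces $\beta^p=0$, $\gamma^p=0$, and $\delta^p=\alpha^p$,
hence $\beta=\gamma=0$ and $\alpha=\delta$ (as $p$-th power is injective on a
field). Therefore $M=\alpha\,\Id$ is scalar, so
$\End_{\FF-\Exp_c}(E_a)=\kk$, a field, which has no nontrivial idempotents.
Consequently $E_a$ is indecomposable. I would take care to treat the two
degrees $2$ and $2p$ consistently: an algebra endomorphism is free on the
degree-$2$ part, and the descent condition above is the only constraint, so
the final ring of exponential endomorphisms is exactly the scalars $\alpha$
with $\delta=\alpha$ surviving — confirming locality.
```
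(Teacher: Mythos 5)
Your setup --- identifying exponential endomorphisms with graded algebra endomorphisms via lemma~\ref{lm-defbis}, describing them by a matrix $M\in M_2(\kk)$ acting on $E^2=A\oplus A$, and writing the descent condition as $M^{[1]}(1,a)^T=\lambda(1,a)^T$, i.e.\ $\alpha^p+a\beta^p=\lambda$ and $\gamma^p+a\delta^p=a\lambda$ --- is exactly the paper's. The divergence is in how you exploit these equations, and there is a genuine error at $p=2$. You eliminate $\lambda$ to get $\gamma^p+a(\delta^p-\alpha^p)-a^2\beta^p=0$ and invoke $\kk^p$-linear independence of $1,a,a^2$. For $p\ge 3$ this is fine (since $a\notin\kk^p$ forces $[\kk^p(a):\kk^p]=p\ge 3$), and you correctly get $M=\alpha\Id$. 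But for $p=2$ one always has $a^2\in\kk^2$, so $1,a,a^2$ are never $\kk^2$-independent, and your parenthetical fix (``$1,a$ already suffice'') does not give what you claim: rewriting the relation as $(\gamma+a\beta)^2+a(\delta+\alpha)^2=0$ and using independence of $1,a$ over $\kk^2$ yields only $\delta=\alpha$ and $\gamma=a\beta$, \emph{not} $\beta=\gamma=0$. Indeed $M=\bigl(\begin{smallmatrix}0&1\\ a&0\end{smallmatrix}\bigr)$ really is an endomorphism of $E_a$ when $p=2$ (it sends the relation to $a$ times itself), so $\End_{\FF-\Exp_c}(E_a)\cong\kk[T]/(T^2-a)=\kk(a^{1/2})\ne\kk$. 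Your conclusion is salvageable --- this ring is a purely inseparable field extension, hence has no nontrivial idempotents --- but that step is absent from your argument, and your computation of the endomorphism ring is wrong in characteristic $2$.

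The paper sidesteps this by never computing the full endomorphism ring: it only analyzes idempotents. From $M^2=M$ one gets $(M^{[1]})^2=M^{[1]}$, so the scalar $\lambda$ by which $M^{[1]}$ acts on the line spanned by $(1,a)$ satisfies $\lambda\in\{0,1\}$; then $\kk^p$-independence of just $1$ and $a$ (which holds for every $p$ as soon as $a\notin\kk^p$) forces $M=0$ or $M=\Id$. If you want to keep your ``compute the whole ring'' strategy, either treat $p=2$ honestly as above and note that the ring is a field, or switch to the idempotent-only argument, which is uniform in $p$.
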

\begin{proof}
We first describe the abelian group $H(a,b):=\mathrm{Hom}_{\FF-\Exp_c}(E_a,E_b)$.
A morphism $f:E\to E$ is uniquely determined by its restriction $f^2:E^2\to E^2$. Since $E^2=E_a^2=E_b^2$ any morphism $\overline{f}:E_a\to E_b$ lifts uniquely to a morphism $f:E\to E$ such that $Pf^{2p}(\mathrm{Im}\,g_a)\subset \mathrm{Im}\,g_b$. Since $E^2=A\oplus A$ and $\mathrm{End}_\FF(A)=\kk$, $f^2$ can be described by a $2\times 2$-matrix $M$ with coefficients in $\kk$. Similarly $\mathrm{End}_\FF({^{(1)}}A)=\kk$ and the action of $Pf^{2p}$ on $PE^{2p}= {^{(1)}}A\oplus {^{(1)}}A$ is described by the matrix $M^{[1]}$, where `$^{[1]}$' means that the coefficients of $M$ are raised to the power $p$. Thus an element $\overline{f}\in H(a,b)$ is described by a matrix: 
$$M= \begin{bmatrix}
x & y \\
z & t
\end{bmatrix} \text{ such that there is $\lambda\in\kk$ such that} 
\begin{cases}
x^p+y^pa &=  \lambda,\\
z^p+t^p a &= \lambda b.
\end{cases}
$$
By construction, the convolution product in $H(a,b)$ corresponds to the addition of matrices, while the composition $H(b,c)\times H(a,b)\to H(a,c)$ corresponds to the multiplication of matrices.

Now let $f\in H(a,a)$ such that $f\circ f=f$. The corresponding matrix satisfies $M^2=M$, hence $(M^{[1]})^2=M^{[1]}$. Thus the only eigenvalues of $M^{[1]}$ are $0$ and $1$, so that $\lambda\in\{0,1\}$. Assume that $a\not\in\kk^p$. If $\lambda=0$ the conditions imply that $M^{[1]}=0$, hence $M=0$, and if $\lambda=1$ the conditions imply that $M=t\Id$. So $f$ is either $0$ or an isomorphism.
\end{proof}

\appendix

\section{Frobenius twists}\label{app-VF}

This appendix presents some well-known facts regarding Frobenius twists, and their application to bicommutative Hopf algebras. Here $\kk$ is a field of positive characteristic $p$, and $\mathcal{V}$ is the category of $\kk$-vector spaces.

\subsection{The Frobenius twist of a vector space}\label{subsec-app-VF-norm}
The $r$-th Frobenius twist functor is the functor 
$$I^{(r)}:\mathcal{V}\to \mathcal{V}$$
obtained by scalar extension along the Frobenius morphism $F^r:\kk\to \kk$, defined by $F^r(x)=x^{p^r}$. If $\kk$ is a perfect field, $F^r$ exists for all $r\in\Z$, otherwise we impose that $r$ is nonnegative. We have $I^{(r)}(V)={}_0\kk_r\otimes_\kk V$, where ${}_0\kk_r$ is the abelian group $\kk$, with $\kk$-$\kk$-bimodule structure $\lambda\cdot x\cdot \mu= \lambda x F^r(\mu)$. A tensor of the form $1\otimes v$ is usually denoted by $^{(r)}v$. Note that $I^{(0)}$ is the identity functor of $\mathcal{V}$, which we also denote by $I$.
\begin{notation}
For all vector spaces $V$, the notation $I^{(r)}(V)$ is most often simplified into $^{(r)}V$.
\end{notation}

The Frobenius twist functors satisfy the usual properties of scalar extension recalled in the next fact.
\begin{fact}\label{fact-prop-extscal} see \cite[II \S 5]{BourbakiAlgI}.
\begin{enumerate}
\item\label{fi-univ} {\bf Universal property.} Let $\pi:V\to {}^{(r)}V$ be the map $\pi(v)={}^{(r)}v$. It is $p^r$-linear, i.e. it is additive and $\pi(\lambda v)=\lambda^{p^r}\pi(v)$. 
For all vector spaces $V$, $W$ and all $p^r$-linear maps $f:V\to W$, there is a unique $\kk$-linear map $\overline{f}: {}^{(r)}V\to W$ such that 
$$\overline{f}\circ \pi= f\;.$$
\item\label{fi-colim} {\bf Colimits.} The functor $I^{(r)}$ commutes with colimits and preserves dimension. In particular it is exact and faithful.
\item\label{eqcat} {\bf Transitivity.} There is a natural isomorphism ${I}^{(r)}\circ {I}^{(s)}\simeq {I}^{(r+s)}$. Thus, if $\kk$ is perfect, $I^{(r)}$ is an equivalence of categories.
\item {\bf Tensor products.} The $r$-th Frobenius twist is a strong symmetric monoidal endofunctor of $(\mathcal{V},\otimes,\kk)$.
\item {\bf Duality.} There is a natural map $^{(r)}\Hom_\kk(V,\kk)\to \Hom_\kk(^{(r)}V,\kk)$, which is an isomorphism if $\dim_\kk V<\infty$.
\end{enumerate}
\end{fact}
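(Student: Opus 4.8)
The plan is to recognize all five items as instances of the standard properties of extension of scalars along a ring homomorphism, applied to the Frobenius $F^r:\kk\to\kk$. The point is that the bimodule ${}_0\kk_r$ is precisely $\kk$ regarded as a $(\kk,\kk)$-bimodule with ordinary left multiplication and right multiplication pulled back along $F^r$; hence $I^{(r)}(V)={}_0\kk_r\otimes_\kk V$ is the base-change functor $F^{r*}:\mathcal{V}\to\mathcal{V}$ attached to $F^r$. Once this identification is recorded, each assertion reduces to a general fact about base change, which is exactly why the citation to \cite[II \S 5]{BourbakiAlgI} suffices; below I indicate the single point to verify in each case.

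For (1), the map $\pi(v)={}^{(r)}v=1\otimes v$ is additive and satisfies $\pi(\lambda v)=1\otimes\lambda v=\lambda^{p^r}\otimes v=\lambda^{p^r}\pi(v)$, so it is $p^r$-linear. The factorization property is the unit of the adjunction between $F^{r*}$ and restriction of scalars along $F^r$: a $p^r$-linear map $f:V\to W$ is the same datum as a $\kk$-linear map from $V$ into $W$ with its $\kk$-action pulled back along $F^r$, and by the tensor--hom adjunction such a map corresponds to a unique $\kk$-linear $\overline f:{}^{(r)}V\to W$ with $\overline f\circ\pi=f$.

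For (2), $F^{r*}$ is a left adjoint, hence commutes with all colimits; since $\kk$ is a field every $V$ is free and $I^{(r)}(\kk^{(S)})\simeq\kk^{(S)}$, so dimension (the cardinality of a basis) is preserved. Exactness and faithfulness are then immediate, the first because preservation of dimension forces preservation of dimensions of kernels and images, the second because a nonzero morphism has nonzero image, which survives. Transitivity (3) is the canonical isomorphism ${}_0\kk_r\otimes_\kk({}_0\kk_s\otimes_\kk V)\simeq{}_0\kk_{r+s}\otimes_\kk V$ for iterated base change along $\kk\xrightarrow{F^s}\kk\xrightarrow{F^r}\kk$, using $F^r\circ F^s=F^{r+s}$; when $\kk$ is perfect $F$ is bijective, so $F^{r*}$ is an equivalence with quasi-inverse $I^{(-r)}$. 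The monoidal statement (4) follows from the natural isomorphisms ${}_0\kk_r\otimes_\kk(V\otimes_\kk W)\simeq({}^{(r)}V)\otimes_\kk({}^{(r)}W)$ and ${}_0\kk_r\otimes_\kk\kk\simeq\kk$, together with the (routine) check that these are compatible with the associativity, unit and symmetry constraints. For the duality map in (5) one specializes the general natural transformation $B\otimes_A\Hom_A(M,A)\to\Hom_B(B\otimes_A M,B)$, which is an isomorphism whenever $M$ is finitely generated projective; over the field $\kk$ this is precisely the hypothesis $\dim_\kk V<\infty$.

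There is no genuine obstacle: the only thing requiring care is the bookkeeping of the twisted scalar actions --- distinguishing the two $\kk$-module structures on ${}_0\kk_r$ and on the pullback of $W$ along $F^r$ --- and the verification that the coherence isomorphisms in (4) respect the symmetric monoidal constraints, both of which are routine.
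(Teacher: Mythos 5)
Your proposal is correct and matches the paper's intended justification exactly: the paper offers no proof beyond the citation to Bourbaki on extension of scalars, and your reduction of all five items to standard base-change facts along $F^r$ (adjunction for (1), left-adjointness plus freeness for (2), iterated base change for (3), monoidality of $\otimes_A B$ for (4), and the finitely-generated-projective criterion for (5)) is precisely that argument. The only cosmetic difference is in (2), where the paper's follow-up remark derives exactness from additivity plus split-exactness of sequences of vector spaces rather than from dimension counting, but your faithfulness argument via $\mathrm{Im}\,{}^{(r)}f\simeq{}^{(r)}(\mathrm{Im}\,f)$ being nonzero coincides with the paper's.
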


\begin{remark}
In fact \ref{fact-prop-extscal}\eqref{fi-colim}, exactness comes from the additivity of  $I^{(r)}$ and the fact that exact sequences of $\mathcal{V}$ are split exact. Faithfulness comes from the isomorphism $\mathrm{Im} {}^{(r)}f\simeq {}^{(r)}(\mathrm{Im} f)$ (by exactness) the latter being zero if and only if $\mathrm{Im} f$ is zero (by dimension preserving). Note that if $r>0$ and $\kk$ is not perfect, then $I^{(r)}$ is not full, since $\lambda\Id :{}^{(r)}V\to {}^{(r)}V$ is equal to some $^{(r)}f$ if and only if $\lambda$ is in the image of $F^r$.
\end{remark}

Now we assume that $r\ge 0$. We are going to give a combinatorial description of $^{(r)}V$. The symmetric group $\Si_{p^r}$ acts on $V^{\otimes p^r}$ by permuting the factors. Let $\Gamma^{p^r}V$ be the subspace of invariants and $S^{p^r}V$ the quotient space of coinvariants. There is a composite map (natural with respect to $V$):
$$\Psi_r\;:\; \Gamma^{p^r}V\hookrightarrow V^{\otimes p^{r}}\twoheadrightarrow S^{p^r}V\;.$$
Fix a basis of $V$. If $v_1,\dots,v_{p^r}$ are $p^r$ basis vectors of $V$, let $\mathrm{sym}(v_1,\dots,v_{p^r})$ be the sum of all the elements of the set $\{\, v_{\sigma(1)}\otimes\dots\otimes v_{\sigma(p^r)}\;\left|\;\sigma\in\Si_{p^r}\,\right.\}$. For example, if all the $v_k$ are equal to $v$, then this set has only one element and $\mathrm{sym}(v,\dots,v)=v^{\otimes p^r}$. The elements of the form $\mathrm{sym}(v_1,\dots,v_{p^r})$ generate $\Gamma^{p^r}V$ as a $\kk$-vector space. The class in $S^{p^r}V$ of a tensor $v\otimes\dots\otimes v$ is denoted by $v^{p^r}$ as usual.
Then one has
$$\Psi_r(\mathrm{sym}(v_1,\dots,v_{p^r}))=
\begin{cases}
v^{p^r} & \text{ if all the $v_k$ are equal to $v$,}\\
0 & \text{ otherwise.}
\end{cases}
$$
Thus $\mathrm{Im}\,\Psi_r$ is the subspace of $S^{p^r}V$ spanned by the vectors $v^{p^r}$, $v\in V$.
\begin{fact}\label{fact-GSF}
The vector space $^{(r)}V$ is naturally isomorphic to $\mathrm{Im}\,\Psi_r$. In particular, there is a natural surjective morphism $\Gamma^{p^r}V\to {}^{(r)}V$ and a natural injective morphism $^{(r)}V\to S^{p^r}(V)$.
\end{fact}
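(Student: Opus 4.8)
The plan is to realise the asserted isomorphism through the universal property of the Frobenius twist recorded in fact \ref{fact-prop-extscal}\eqref{fi-univ}. First I would consider the map
$$\phi_V\;:\;V\to S^{p^r}V\;,\qquad v\mapsto v^{p^r}\;,$$
where $v^{p^r}$ denotes the $p^r$-th power of $v$ computed in the symmetric algebra $S(V)$. The key point is that $\phi_V$ is $p^r$-linear: one clearly has $\phi_V(\lambda v)=\lambda^{p^r}\phi_V(v)$, while additivity $(v+w)^{p^r}=v^{p^r}+w^{p^r}$ holds because $S(V)$ is a commutative $\kk$-algebra of characteristic $p$, so the intermediate binomial coefficients $\binom{p^r}{k}$, $0<k<p^r$, vanish in $\kk$. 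By fact \ref{fact-prop-extscal}\eqref{fi-univ}, $\phi_V$ then factors uniquely through a $\kk$-linear map $\overline{\phi}_V:{}^{(r)}V\to S^{p^r}V$ determined by $\overline{\phi}_V({}^{(r)}v)=v^{p^r}$, and the uniqueness clause makes $\overline{\phi}_V$ natural in $V$.

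Next I would identify the image of $\overline{\phi}_V$ and establish injectivity. Fix a $\kk$-basis $(e_i)_{i\in I}$ of $V$. Then $({}^{(r)}e_i)_{i\in I}$ is a $\kk$-basis of ${}^{(r)}V={}_0\kk_r\otimes_\kk V$ (scalar extension of a basis along ${}_0\kk_r$, which is free of rank one as a left $\kk$-module), and $\overline{\phi}_V$ sends it to the family $(e_i^{p^r})_{i\in I}$. Since the $e_i^{p^r}$ are pairwise distinct members of the standard monomial basis of $S^{p^r}V$, this family is linearly independent, whence $\overline{\phi}_V$ is injective. Moreover its image is the $\kk$-span of the $e_i^{p^r}$, which coincides with the span of all the $v^{p^r}$, $v\in V$ — already identified in the text as $\mathrm{Im}\,\Psi_r$ via the computation of $\Psi_r$ on the generators $\mathrm{sym}(v_1,\dots,v_{p^r})$. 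Thus $\overline{\phi}_V$ corestricts to a natural isomorphism ${}^{(r)}V\xrightarrow{\simeq}\mathrm{Im}\,\Psi_r$, which is the main assertion of the fact.

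Finally the two displayed morphisms follow formally. The natural injective morphism ${}^{(r)}V\to S^{p^r}V$ is $\overline{\phi}_V$ itself, and the natural surjective morphism is obtained by corestricting $\Psi_r$ to its image and composing with the inverse isomorphism, giving $\Gamma^{p^r}V\twoheadrightarrow\mathrm{Im}\,\Psi_r\xrightarrow{\simeq}{}^{(r)}V$. I do not expect any genuine obstacle here: the only steps requiring care are the additivity of $v\mapsto v^{p^r}$ (the freshman's dream in $S(V)$) and the verification, via the monomial basis, that $\overline{\phi}_V$ is injective; everything else is a formal consequence of the universal property of $I^{(r)}$ and of the computation of $\Psi_r$ already carried out just before the statement.
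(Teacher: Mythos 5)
Your proof is correct and follows essentially the same route as the paper: both apply the universal property of fact \ref{fact-prop-extscal}\eqref{fi-univ} to the $p^r$-linear map $v\mapsto v^{p^r}$ to produce the natural map ${}^{(r)}V\to S^{p^r}V$ landing in $\mathrm{Im}\,\Psi_r$. The only difference is in how bijectivity onto the image is established — you track a basis directly, which handles infinite-dimensional $V$ in one stroke, whereas the paper uses a dimension count for finite-dimensional $V$ and then passes to colimits; both are fine.
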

\begin{proof}
The surjective $p^r$-linear map $V\to \mathrm{Im}\,\Psi_r$, $v\mapsto v^{p^r}$ induces a surjective $\kk$-linear morphism $^{(r)}V\to \mathrm{Im}\,\Psi_r$. If $\dim_\kk V<\infty$, the latter is an isomorphism for dimension reasons. The general case follows by taking colimits (any vector space is the colimit of its finite dimensional subspaces). 
\end{proof}

\subsection{Frobenius twists as strict polynomial functors}\label{subsec-app-VF-strict} Let $r\ge 0$. Up to a scalar, there is a unique morphism of strict polynomial functors $\Gamma^{p^r}\to S^{p^r}$ (apply e.g. \cite[Thm 2.10]{FS}). As an object of $\PP_{p^r,\kk}$, the $r$-th Frobenius functor $I^{(r)}$ is \emph{defined} as the image of this morphism. It follows from fact \ref{fact-GSF} that the underlying ordinary functor coincides with the usual $r$-th Frobenius functor of section \ref{subsec-app-VF-norm}. Given an object $F$ of $\PP_{d,\kk}$ we often denote by $F^{(r)}$ the composition $F\circ I^{(r)}$. Precomposition by $I^{(r)}$ defines a functor:
$$ -^{(r)}:\PP_{d,\kk}\to \PP_{dp^r,\kk}\;.$$

\begin{fact}\label{fact-thickF}
The functor $-^{(r)}$ is fully faithful. Its essential image is a thick (i.e. stable under subobjects, quotients and extensions) subcategory of $ \PP_{dp^r,\kk}$.
\end{fact}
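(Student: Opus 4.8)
The statement to prove is Fact \ref{fact-thickF}: the precomposition functor $-^{(r)}:\PP_{d,\kk}\to\PP_{dp^r,\kk}$ is fully faithful with essential image a thick subcategory. The plan is to first reduce everything to the case $r=1$, since $I^{(r)}\simeq I^{(1)}\circ\cdots\circ I^{(1)}$ ($r$ factors) by transitivity of Frobenius twists (fact \ref{fact-prop-extscal}\eqref{eqcat}), so that $-^{(r)}$ is a composite of $r$ copies of $-^{(1)}$; fully faithfulness and thickness of the image are both stable under composition (thickness requires a small check that the composite image is still closed under sub/quotient/extension, which follows once we know each individual image is thick and the functors are exact and fully faithful). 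So I would concentrate on $-^{(1)}$.

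For full faithfulness of $-^{(1)}$, the cleanest route is via the standard computation of morphisms between twisted functors in strict polynomial functor categories. The key input is that $\Hom_{\PP_{dp,\kk}}(F^{(1)},G^{(1)})\simeq \Hom_{\PP_{d,\kk}}(F,G)$, which I would deduce from the description of $\PP_{d,\kk}$ as modules over the Schur algebra (equivalently $\mathrm{Rep}\,\Gamma^d_\kk$) together with the fact that the Frobenius twist $I^{(1)}$ corresponds to a well-understood idempotent/weight-space decomposition. Concretely, first I would establish the formula on projective generators: the category $\PP_{d,\kk}$ is generated by the divided powers $\Gamma^{d,V}=\Gamma^d(\Hom(V,-))$, and one computes $\hom(\Gamma^{d,V},G)\simeq G(V)$ by Yoneda-type adjunction. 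Then $\hom((\Gamma^{d,V})^{(1)},G^{(1)})=\hom(\Gamma^{dp,V^{\,?}}\text{-summand},G^{(1)})$, and the point is that $G^{(1)}(V)=G(\,^{(1)}V)$ recovers $G(V)$ after applying the (faithful, dimension-preserving) twist on the value. The heart is an exactness-plus-generators argument: since $-^{(1)}$ is exact (it is precomposition by an exact functor), and since every object of $\PP_{d,\kk}$ has a presentation by the generators $\Gamma^{d,V}$, it suffices to verify the bijection on these generators and propagate by the five lemma.

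Having full faithfulness, I would then address thickness of the essential image $\mathcal{C}:=\mathrm{Im}(-^{(1)})$. Closure under extensions is the most delicate of the three. The argument I have in mind: an object $H\in\PP_{dp,\kk}$ lies in $\mathcal{C}$ if and only if its weight decomposition is concentrated in weights divisible by $p$ (equivalently, $H$ is "$p$-divisible" in the grading sense), or more intrinsically iff the Verschiebung-type structure vanishes appropriately. For closure under sub and quotient, I would use that $I^{(1)}$ is additive and that the functor $-^{(1)}$ preserves and reflects exact sequences, combined with the fact (used already in the proof of Theorem \ref{thm-twist}) that a subobject or quotient of a twisted functor is again twisted — this is exactly the content invoked there. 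For extensions, given a short exact sequence $0\to F_1^{(1)}\to H\to F_2^{(1)}\to 0$ in $\PP_{dp,\kk}$, I would show $H$ is twisted by analyzing the induced long exact sequence after applying the adjoint/restriction, or more directly by noting that the obstruction lives in $\Ext^1_{\PP_{dp,\kk}}(F_2^{(1)},F_1^{(1)})$ and using the comparison of $\Ext$ groups (the twist injects $\Ext^1$ compatibly), so that any extension of twisted objects is pulled back from an extension in $\PP_{d,\kk}$.

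The main obstacle I anticipate is precisely the closure of the image under extensions: full faithfulness only gives control on $\hom$, whereas thickness forces a statement about $\Ext^1$ and about the internal structure of an object presented only as a middle term. The technically safe way to handle this is to identify $\mathcal{C}$ intrinsically — for instance as the full subcategory of functors whose underlying ordinary functor factors through the Frobenius, or via a characterization by weights/the vanishing of a natural Verschiebung operator — and then verify that this intrinsic condition is manifestly stable under sub, quotient, and extension (being the kernel/image of natural transformations of exact functors). Once the intrinsic characterization is in place, thickness is essentially formal, and the earlier uses of this Fact in the proof of Theorem \ref{thm-twist} (where subquotients and iterated extensions of twisted functors are repeatedly asserted to be twisted) are all justified uniformly. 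I would therefore spend most of the effort pinning down this intrinsic description of the essential image rather than on the full-faithfulness computation, which is routine given the generators.
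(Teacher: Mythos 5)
Your reduction to $r=1$ and your outline of full faithfulness are fine in substance (the paper simply cites \cite[Appendix A]{BMT} for this half, and the generators-plus-exactness argument you sketch is essentially what such a proof does). The gap is in the thickness part, where you correctly identify closure under extensions as the crux but do not close it. First, the intrinsic characterization you float --- ``$H$ lies in the image iff its weight decomposition is concentrated in weights divisible by $p$'' --- is vacuous as literally stated: in this paper ``weight'' means the homogeneity degree of a strict polynomial functor, every object of $\PP_{dp,\kk}$ is homogeneous of weight $dp$, which is divisible by $p$, and yet for instance $S^p\in\PP_{p,\kk}$ is not a Frobenius twist. What you presumably intend is the decomposition of $H(\kk^n)$ into weight spaces for the diagonal torus of $GL_n$; that version does characterize the image and is manifestly stable under sub, quotient and extension, but it is a different statement whose equivalence with ``$H\simeq F^{(1)}$'' you would still have to prove. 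Second, your fallback for extensions --- ``the twist injects $\Ext^1$ compatibly, so that any extension of twisted objects is pulled back from an extension in $\PP_{d,\kk}$'' --- is backwards: injectivity of $\Ext^1_{\PP_{d,\kk}}(F_2,F_1)\to\Ext^1_{\PP_{dp,\kk}}(F_2^{(1)},F_1^{(1)})$ says nothing about whether a given extension of $F_2^{(1)}$ by $F_1^{(1)}$ descends; for that you need \emph{surjectivity} of this map on the image, which is exactly the content of extension-closure and cannot be assumed.

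The paper's route to thickness is different and shorter: it uses that $L^{(r)}$ is simple whenever $L$ is simple (citing \cite[Prop B.2]{TouzeStein}) and identifies the essential image with the class of functors all of whose finite subfunctors have composition factors of the form $L^{(r)}$; a full subcategory cut out by a condition on composition factors is automatically stable under subobjects, quotients and extensions. If you want to repair your argument, either prove the torus-weight characterization or switch to the composition-factor one; in both cases the simplicity of twisted simples, which your proposal never invokes, is the key missing input.
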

\begin{proof}
Full faithfulness is proved e.g. in \cite[Appendix A]{BMT}. If $L$ is a simple object, then $L^{(r)}$ is also simple, see e.g. \cite[Prop B.2]{TouzeStein}. Thus the essential image is supported by all the functors whose finite subfunctors have composition factors of the form $L^{(r)}$ for simple objects $L$ of $\PP_{d,\kk}$. This is clearly a thick subcategory.
\end{proof}

\subsection{Frobenius and Verschiebung of Hopf algebras}\label{app-FV-Hopfalg}
Now we introduce Frobenius and Verschiebung maps for bicommutative Hopf algebras. A reference is \cite[II \S 7 1.1 and IV \S 3 4.4]{DG}. 
Given a graded vector space $V=\bigoplus_{i\in \mathbb{N}} V^i$, its Frobenius twist $^{(r)}V$ is the graded vector space with:
$$({^{(r)}}V)^k=\begin{cases} 
{}^{(r)}V^i & \text{ if $k=pi$,}\\
0 & \text{ if $p$ does not divide $k$.}
\end{cases}$$

Let $(A,\mu,\eta)$ be a graded $\kk$-algebra which is commutative in the ungraded sense (thus $A$ is concentrated in even degrees if $p$ is odd). Since the $r$-th Frobenius twist is strong monoidal, the following structure morphisms endow the graded vector space ${^{(r)}}A$ with the structure of a commutative graded $\kk$-algebra:
$$\kk\simeq{^{(r)}}\kk\xrightarrow[]{^{(r)}\eta} {^{(r)}}A\;,\qquad 
{^{(r)}}A\otimes {^{(r)}}A\simeq {^{(r)}}(A\otimes A)\xrightarrow[]{{^{(r)}}\mu}{^{(r)}}A\;.$$
The Frobenius map $\mathbf{F}:{^{(1)}}A \to A$ is the morphism of graded $\kk$-algebras (natural with respect to $A$) defined by $\mathbf{F}(^{(1)}a)=a^p$. One has the following obvious alternative definition.
\begin{fact}
The map $\mathbf{F}$ equals the composition $^{(1)}A\hookrightarrow S^{p}A\xrightarrow[]{\mu} A\;.$
\end{fact}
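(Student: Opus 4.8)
The final statement to prove is the \textbf{Fact} in appendix \ref{app-FV-Hopfalg}: that the Frobenius map $\mathbf{F}:{^{(1)}}A\to A$, defined on a graded $\kk$-algebra $A$ commutative in the ungraded sense by $\mathbf{F}({^{(1)}}a)=a^p$, coincides with the composition ${^{(1)}}A\hookrightarrow S^pA\xrightarrow{\mu}A$.

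The plan is to unwind the definitions of the two maps and check they agree on the natural generators of ${^{(1)}}A$. First I would recall, from fact \ref{fact-GSF} applied with $r=1$, that the inclusion ${^{(1)}}A\hookrightarrow S^pA$ is precisely the natural injection whose image is the subspace of $S^pA$ spanned by the $p$-th powers $a^p$, and that it sends the generator ${^{(1)}}a$ (the image of $a$ under the $p$-linear map $\pi:A\to{^{(1)}}A$) to the class $a^p\in S^pA$. Here I must work degree by degree: the composite is defined on the weight-$p^1$ piece in each internal degree, and ${^{(1)}}A$ is spanned by elements ${^{(1)}}a$ with $a$ homogeneous, which is legitimate since any homogeneous vector space is the colimit of its finite-dimensional homogeneous subspaces (fact \ref{fact-prop-extscal}\eqref{fi-colim}), and fact \ref{fact-GSF} is natural and compatible with colimits.

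Next I would compute the composition on such a generator. The inclusion carries ${^{(1)}}a$ to the class $a^p$ in $S^pA$, i.e. the image of $a\otimes\cdots\otimes a$ ($p$ factors) under $A^{\otimes p}\twoheadrightarrow S^pA$. Applying $\mu$, which factors the iterated multiplication $A^{\otimes p}\to A$ through the coinvariants $S^pA$ (this factorization exists exactly because $A$ is commutative in the ungraded sense, so the multiplication is $\Si_p$-invariant), sends $a^p$ to $a\cdot a\cdots a=a^p$ in $A$. This is by definition $\mathbf{F}({^{(1)}}a)$. Since both maps are $\kk$-linear and agree on a spanning set, they are equal; and since both are natural in $A$, the equality holds functorially.

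The only genuine point requiring care---and thus the main (though modest) obstacle---is verifying that the composition ${^{(1)}}A\hookrightarrow S^pA\xrightarrow{\mu}A$ is well defined and $\kk$-linear, rather than merely $p$-linear. The source ${^{(1)}}A$ is already a genuine $\kk$-vector space (the twist of $A$), the inclusion from fact \ref{fact-GSF} is $\kk$-linear by construction, and $\mu:S^pA\to A$ is $\kk$-linear; the $p$-linearity of the raw map $a\mapsto a^p$ has been absorbed into the Frobenius twist via the universal property of fact \ref{fact-prop-extscal}\eqref{fi-univ}, so that $\mathbf{F}$ itself is $\kk$-linear. Once this is observed, matching the two $\kk$-linear maps on the generators ${^{(1)}}a$ completes the proof with no further computation.
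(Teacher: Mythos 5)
Your proof is correct and is exactly the direct verification the paper has in mind (the paper states this Fact as an ``obvious alternative definition'' and omits the argument): the inclusion of fact \ref{fact-GSF} sends ${^{(1)}}a$ to the class $a^p\in S^pA$, the multiplication sends that to $a^p\in A$, and both composites are $\kk$-linear and agree on the spanning set $\{{^{(1)}}a\}$. Your remark that the $p$-linearity is absorbed into the twist via the universal property is the right point to flag, and nothing further is needed.
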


Similarly, if $(C,\Delta,\epsilon)$ is a  graded coalgebra over $\kk$, which is cocommutative in the ungraded sense, then the following coproduct and augmentation induce a coalgebra structure on the graded vector space $^{(r)}C$:
$$
{^{(r)}}C\xrightarrow[]{^{(r)}\epsilon} {^{(r)}}\kk\simeq \kk\;,\qquad
{^{(r)}}C\xrightarrow[]{{^{(r)}}\Delta}{^{(r)}}(C\otimes C)\simeq{^{(r)}}C\otimes {^{(r)}}C\;.
$$
The Verschiebung map $\mathbf{V}$ is defined as the composition:
$C\xrightarrow[]{}\Gamma^{p}C\twoheadrightarrow {^{(1)}}C$.
\begin{fact}
Verschiebung maps are morphisms of graded coalgebras.
\end{fact}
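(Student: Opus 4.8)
The plan is to identify the Verschiebung $\mathbf{V}_C\colon C\to {}^{(1)}C$ with the $\kk$-linear dual of the Frobenius map $\mathbf{F}_{C^*}$, and then to invoke the elementary principle that the dual of a morphism of algebras is a morphism of coalgebras. Recall that, by definition, $\mathbf{V}_C$ is the composite
$$C\xrightarrow{\ \Delta_p\ }\Gamma^p C\xrightarrow{\ \pi\ }{}^{(1)}C,$$
where $\Delta_p$ is the $p$-fold iterated comultiplication and $\pi$ is the natural surjection of fact \ref{fact-GSF}; note that $\Delta_p$ does land in the invariants $\Gamma^p C=(C^{\otimes p})^{\Si_p}$, since coassociativity and cocommutativity of $C$ force $\sigma\circ\Delta_p=\Delta_p$ for every $\sigma\in\Si_p$. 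Dually, the characterization of $\mathbf{F}$ recalled just above exhibits $\mathbf{F}_A$ as the composite ${}^{(1)}A\hookrightarrow S^pA\xrightarrow{\mu}A$, where the multiplication $A^{\otimes p}\to A$ factors through $S^pA$ because $A$ is commutative in the ungraded sense.

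First I would reduce to the case where $C$ is finite dimensional. By the fundamental theorem of coalgebras (valid in the graded setting), $C$ is the filtered colimit of its finite dimensional graded subcoalgebras $C_\alpha$. Each functor entering the construction of $\mathbf{V}$ commutes with filtered colimits: the Frobenius twist commutes with all colimits by fact \ref{fact-prop-extscal}\eqref{fi-colim}, and $\Gamma^p C=(C^{\otimes p})^{\Si_p}$ is a finite limit (an equalizer of the permutation maps) of functors commuting with filtered colimits, hence commutes with them since filtered colimits are exact in $\kk$-vector spaces. As $\mathbf{V}$, $\Delta$ and $\epsilon$ are natural, it suffices to check that each $\mathbf{V}_{C_\alpha}$ is a morphism of coalgebras: the defining diagrams then commute in the colimit.

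For $C$ finite dimensional, the dual $C^*$ is a graded algebra, commutative in the ungraded sense. Using the standard duality isomorphisms for finite dimensional spaces, namely $(S^p W)^*\simeq\Gamma^p(W^*)$ and the Frobenius duality ${}^{(1)}(W^*)\simeq({}^{(1)}W)^*$ of fact \ref{fact-prop-extscal}(5), I would dualize the factorization $\mathbf{F}_{C^*}=\bigl({}^{(1)}(C^*)\hookrightarrow S^p(C^*)\xrightarrow{\mu}C^*\bigr)$. The dual of the $p$-fold multiplication $(C^*)^{\otimes p}\to C^*$ is the $p$-fold comultiplication $\Delta_p\colon C\to C^{\otimes p}$; since $\mu$ factors through $S^p(C^*)$, its dual factors through $(S^p(C^*))^*\simeq\Gamma^p C$, yielding exactly the corestriction $C\to\Gamma^p C$. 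Likewise the dual of the inclusion ${}^{(1)}(C^*)\hookrightarrow S^p(C^*)$ is, by naturality and the essential uniqueness in fact \ref{fact-GSF}, the surjection $\pi\colon\Gamma^p C\twoheadrightarrow{}^{(1)}C$. Hence $(\mathbf{F}_{C^*})^*=\pi\circ\Delta_p=\mathbf{V}_C$, and since $\mathbf{F}_{C^*}$ is a morphism of graded algebras, its dual $\mathbf{V}_C$ is a morphism of graded coalgebras.

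The step I expect to require the most care is the verification that the natural surjection $\pi$ of fact \ref{fact-GSF} is precisely the dual of the inclusion ${}^{(1)}(C^*)\hookrightarrow S^p(C^*)$, i.e. that the duality isomorphisms $(S^p(C^*))^*\simeq\Gamma^p C$ and $({}^{(1)}(C^*))^*\simeq{}^{(1)}C$ fit into a commuting square with these two maps. This is what makes the identification $\mathbf{V}_C=(\mathbf{F}_{C^*})^*$ rigorous rather than merely formal, and it rests on the compatibility of the divided-power/symmetric-power duality with the description of ${}^{(1)}V$ as $\mathrm{Im}\,\Psi_1$ in fact \ref{fact-GSF}.
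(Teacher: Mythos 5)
Your proof is correct and follows the same route the paper sketches: reduce to (restricted-)finite-dimensional graded subcoalgebras, dualize to turn the Verschiebung into a Frobenius, and conclude from the fact that $\mathbf{F}$ is a morphism of algebras. You merely carry out in detail the compatibility of the duality isomorphisms $(S^pW)^*\simeq\Gamma^p(W^*)$ and $({}^{(1)}W)^*\simeq{}^{(1)}(W^*)$ that the paper leaves implicit.
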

\begin{proof}
It is possible to check this by using an explicit formula for the effect of $\mathbf{V}$ on elements of $C$. Alternatively, one may use the fact that any graded coalgebra is a union of subcoalgebras which are finite dimensional in each degree to reduce the proof to such coalgebras. Then taking the restricted dual (i.e. the $\kk$-linear dual in each degree) turn coalgebras into algebras and Verschiebung maps into Frobenius maps, so that one can deduce the result from the fact that $\mathbf{F}$ is a morphism of algebras. 
\end{proof}

If $H$ is a graded Hopf algebra over $\kk$ which is bicommutative in the ungraded sense, then one can endow the graded vector space ${^{(r)}}H$ with the structure of a graded Hopf algebra by using the product, unit, augmentation and coproduct defined above, together with the antipode $^{(r)}\chi:{^{(r)}}H\to {^{(r)}}H$. 
\begin{fact}
The Frobenius map $\mathbf{F}:{^{(1)}}H\to H$ and the Verschiebung map $\mathbf{V}:H\to {^{(1)}}H$ are morphisms of graded Hopf algebras.
\end{fact}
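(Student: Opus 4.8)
The plan is to reduce the statement to the verification of bialgebra compatibility. Indeed, a morphism of graded bialgebras between two graded Hopf algebras commutes automatically with the antipodes, by the uniqueness of the antipode (as used in the proof of lemma \ref{lm-defter}). Thus it suffices to prove that $\mathbf{F}$ and $\mathbf{V}$ each respect $\mu$, $\eta$, $\Delta$, $\epsilon$. For $\mathbf{F}$ we already know from the fact just established that it is a morphism of graded algebras, so that $\mu$ and $\eta$ are taken care of and only the coalgebra compatibility (for $\Delta$ and $\epsilon$) remains. Dually, $\mathbf{V}$ is already a morphism of graded coalgebras, so only the algebra compatibility (for $\mu$ and $\eta$) remains to be checked.

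First I would treat $\mathbf{F}$ directly, by a ``freshman's dream'' computation. Writing $\Delta(a)=\sum a_{(1)}\otimes a_{(2)}$ and using that $\Delta$ is a morphism of algebras, one gets $\Delta(\mathbf{F}({}^{(1)}a))=\Delta(a^p)=\Delta(a)^p$. The key observation is that $H\otimes H$ is a commutative $\kk$-algebra of characteristic $p$: when $p$ is odd, bicommutativity in the ungraded sense forces $H$ to be concentrated in even degrees, so the Koszul signs on $H\otimes H$ are trivial, and when $p=2$ all signs are trivial anyway. Hence the $p$-power map is additive and multiplicative on $H\otimes H$, so that $\Delta(a)^p=\sum a_{(1)}^p\otimes a_{(2)}^p$, which is exactly $(\mathbf{F}\otimes\mathbf{F})\bigl({}^{(1)}\Delta({}^{(1)}a)\bigr)$ once one identifies the coproduct of ${}^{(1)}H$ with ${}^{(1)}\Delta$ through the strong monoidal structure of $I^{(1)}$. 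The counit compatibility $\epsilon(a^p)=\epsilon(a)^p$ is the same computation in degree zero. This shows $\mathbf{F}$ is a morphism of graded coalgebras, hence of graded bialgebras, hence of graded Hopf algebras.

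Finally I would obtain the algebra compatibility of $\mathbf{V}$ by duality, exactly as in the proof that the Verschiebung is a morphism of coalgebras. Writing $H$ as the filtered union of its sub-Hopf-algebras which are finite-dimensional in each degree and passing to the restricted dual $H^{*}$ (the $\kk$-linear dual in each degree), the functor $(-)^{*}$ exchanges products and coproducts and, via the duality of Frobenius twists recorded in fact \ref{fact-prop-extscal} together with the natural isomorphism $({}^{(1)}H)^{*}\simeq {}^{(1)}(H^{*})$, identifies $\mathbf{V}_H$ with the transpose of $\mathbf{F}_{H^{*}}$. Transposing, the assertion that $\mathbf{V}_H$ is a morphism of algebras becomes precisely the assertion that $\mathbf{F}_{H^{*}}$ is a morphism of coalgebras, which is the content of the previous paragraph applied to the bicommutative graded Hopf algebra $H^{*}$. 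Hence $\mathbf{V}$ is a morphism of graded bialgebras, and therefore of graded Hopf algebras. The only genuinely delicate point is the sign bookkeeping in the freshman's dream step, which is why one first reduces, when $p$ is odd, to the case where $H$ is concentrated in even degrees; the finite-type reduction needed to dualize is routine and mirrors the preceding fact.
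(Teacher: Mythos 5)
Your proof is correct and follows essentially the same route as the paper: the coalgebra compatibility of $\mathbf{F}$ comes from the identity $\mathbf{F}_{A\otimes A}=\mathbf{F}_A\otimes\mathbf{F}_A$ combined with the fact that $\Delta$ is an algebra morphism (your freshman's-dream computation is just the element-level unwinding of this naturality argument), and the algebra compatibility of $\mathbf{V}$ is obtained by the same finite-type reduction and restricted-duality trick that the paper uses for the preceding fact. The only cosmetic difference is the antipode: you invoke the automatic compatibility of bialgebra morphisms with antipodes, whereas the paper gets it from naturality of $\mathbf{F}$ applied to the algebra morphism $\chi$; both are immediate.
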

\begin{proof}
That $\mathbf{F}$ is a morphism of coalgebras and commutes with antipodes follows from the naturality with respect to the algebra $A$, together with the fact that the Frobenius map of $A\otimes A$ equals the composition $^{(1)}(A\otimes A)\simeq (^{(1)}A)^{\otimes 2}\xrightarrow[]{\mathbf{F}^{\otimes 2}}A\otimes A$. A similar argument shows that $\mathbf{V}$ is a morphism of algebras commuting with antipodes.
\end{proof}

\section{Graded Dieudonn\'e modules}\label{App-Dieu}
In this appendix, we briefly recall some classical results from \cite{Schoeller} regarding the category of graded bicommutative Hopf algebras over a field $\kk$ of positive characteristic $p$, and some immediate consequences of these results.

Let $\HH^+$ denote the category of graded bicommutative Hopf algebras which are connected and concentrated in even degrees. For all $i\ge 1$, let $\HH_i$ be the full subcategory supported by the Hopf algebras whose primitives (or equivalently whose indecomposables) are concentrated in degrees $2ip^k$, $k\ge 0$. By regrading, the $\HH_i$ are all equivalent to one another.
Let $\mathbb{N}_p$ be the set of integers prime to $p$. 
\begin{fact}\label{thm-schoeller1}\cite[Section 2, Th\'eor\`eme]{Schoeller}
There is an equivalence of categories 
$$\HH^+\simeq \prod_{i\in\mathbb{N}_p}\HH_i\;.$$
\end{fact}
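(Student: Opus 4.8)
The statement is the classical decomposition of Schoeller, and I would establish it by exactly the method used in the body to prove its functorial analogue, Proposition \ref{prop-categ-decomp}. The plan is to exhibit the tensor-product functor $\bigotimes\colon \prod_{i\in\mathbb{N}_p}\HH_i\to \HH^+$ as an equivalence, by reducing to noetherian objects and to a vanishing of $\Ext^1$ between the blocks.

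First I would record the ambient structure. The category $\HH^+$ is a cocomplete abelian category (Takeuchi--Newman--Demazure--Gabriel), it is locally noetherian, and its noetherian objects are exactly the finitely generated ones; each such object carries a finite filtration whose successive quotients are primitively generated. This is the $\HH^+$-avatar of Lemma \ref{lm-locnoeth}, whose proof (which is stated there to be the classical argument for $\HH$) transports verbatim. Since every object of $\HH^+$ is the filtered union of its noetherian subobjects and $\bigotimes$ commutes with filtered colimits, it suffices to prove that the functor restricts to an equivalence on noetherian objects, i.e. that every noetherian $H\in\HH^+$ is uniquely a finite tensor product $\bigotimes_i H_i$ with $H_i\in\HH_i$.

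The two ingredients are orthogonality of the blocks and vanishing of extensions between them. For orthogonality: any morphism of $\HH^+$ carries primitives to primitives and indecomposables to indecomposables, which are additive by Lemma \ref{lm-additivite}; since the primitives (equivalently the indecomposables) of an object of $\HH_i$ lie in degrees $2ip^r$, and $ip^r=jp^s$ forces $r=s$ and then $i=j$ by $p$-adic valuation, the degree-sets for distinct $i,j\in\mathbb{N}_p$ are disjoint, so there is no nonzero morphism between a noetherian object of $\HH_i$ and one of $\HH_j$ when $i\neq j$. For the $\Ext^1$-vanishing, using the filtration by primitively generated factors it is enough to treat an extension $\kk\to B\to E\to A\to\kk$ with $A$ primitively generated in $\HH_i$ and $B\in\HH_j$, $i\neq j$. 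Exactly as in Proposition \ref{prop-categ-decomp}, such an extension splits once the canonical surjection $E\to QA$ admits a section, and such a section exists because in the lowest relevant degree $2ip^r$ the indecomposables of $B$ vanish, so $E\to QA$ is an isomorphism there; a snake-lemma comparison against $B\otimes S_{\pm}(QA)$ then yields $E\simeq B\otimes A$.

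With these in hand, existence and uniqueness of the factorization follow by induction on the length of a primitively-generated filtration, precisely as in the proof of Proposition \ref{prop-categ-decomp}: grouping the subquotients according to which $\HH_i$ they belong to, one splits off tensor factors lying in a single block one at a time, and orthogonality guarantees the decomposition is unique up to reordering. The main obstacle is the extension-splitting step: making it rigorous in the Hopf-algebra category requires controlling the interaction of the coalgebra structure (used to produce the section of $E\to QA$) with the algebra structure (used in the comparison against $B\otimes S_{\pm}(QA)$). This is, however, exactly the computation already carried out in full in Proposition \ref{prop-categ-decomp}, so the argument there may be cited or transcribed with $\HH^+$ in place of $\C$ and the indexing $i\in\mathbb{N}_p$ in place of $a\in\mathbb{N}[p^{-1}]$.
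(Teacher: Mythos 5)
The paper does not actually prove this statement: it is recorded as a Fact and attributed wholesale to Schoeller's th\'eor\`eme, the only related in-paper argument being Proposition \ref{prop-categ-decomp}, which the paper explicitly presents as an \emph{analogue} of Schoeller's decomposition in the weighted/functorial setting. So there is no in-paper proof to match yours against; but your plan of transporting the proof of Proposition \ref{prop-categ-decomp} back to $\HH^+$ is essentially sound. Local noetherianity of $\HH^+$ (for which the paper points to \cite{Gabriel} and to the argument of Lemma \ref{lm-locnoeth}), the reduction to noetherian objects, the orthogonality of the blocks (the lowest nonzero degree of the augmentation ideal of a nontrivial connected Hopf algebra consists of elements that are both primitive and indecomposable, hence lies in a degree of the form $2ip^r$ for exactly one $i\in\mathbb{N}_p$, which kills any nontrivial image between distinct blocks), and the splitting of extensions by comparison with $B\otimes S_{\pm}(QA)$ all go through.

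One justification in your extension-splitting step is wrong as stated, although the step it supports survives. You claim that $E\to QA$ is an isomorphism in the degree $2ip^r$ carrying $QA$ because the indecomposables of $B$ vanish there; but $B$ itself need not vanish in that degree (take $p=5$, $B=S(\kk[2])\in\HH_1$ and $QA$ concentrated in degree $6$, so $A\in\HH_3$: then $B^6\neq 0$ and the map $E^6\to QA^6$ has nontrivial kernel). What is true --- and is precisely why the $\HH^+$ case is \emph{easier} than Proposition \ref{prop-categ-decomp} --- is that over a field every surjection of graded vector spaces splits, so the required section $s:QA\to E$ exists for free; the weight-isomorphism argument in the paper is only needed because in the functorial setting surjections do not split automatically. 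With the section granted, the rest of your argument (the snake-lemma comparison and the orthogonality of $\ker(\iota\otimes\overline{s})$, a sub-Hopf-algebra of $S_{\pm}(QA)\in\HH_i$, against $B\in\HH_j$) is exactly the paper's computation and closes the proof.
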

Let $W(\kk)$ be the ring of the perfect field $\kk$. Let $\mathcal{D}_\kk$ be the category of graded Dieudonn\'e modules defined as follows. An object of $\mathcal{D}_\kk$ is a nonnegatively graded $W(\kk)$-module $M=\bigoplus_{i\ge 0} M^i$ such that $p^{i+1}M_i=0$ for all $i$, equipped with morphisms\footnote{Although the degrees of $M$ are denoted as exponents, we use indices for the operators $F_i$ and $V_i$, in order to prevent confusion with the closely related notations $F^i=F\circ\cdots\circ F$  and $V^i=V\circ\cdots \circ V$ where $F$ and $V$ are Frobenius and Verschiebung maps for Hopf algebras.} of $W(\kk)$-modules $F_i:M^i\to M^{i+1}$ and $V_i:M^{i+1}\to M^i$, satisfying $F_iV_i=p=V_iF_i$ for all $i$. A morphism in $\mathcal{D}_\kk$ is a (degree preserving) morphism of graded $W(\kk)$-modules commuting with the operators $F_i$ and $V_i$. 

\begin{fact}\label{thm-schoeller2}\cite[Th\'eor\`eme in section 5.2, and section 5.4]{Schoeller}
Let $\kk$ be a perfect field. There is an equivalence of categories: 
$$M:\HH_1\simeq \mathcal{D}_\kk\;.$$
Moreover, if $H$ is primitively generated, then $M(H)$ is given by 
\begin{enumerate}
\item $M(H)^i={^{(-i)}PH^{2p^i}}$,
\item the operators $F_i:M(H)^i\to M(H)^{i+1}$ coincide (up to a twist $^{(-i-1)}$) with the Frobenius map $^{(1)}PH^{2p^i}\to PH^{2p^{i+1}}$, $^{(1)}x\mapsto x^p$,
\item the operators $V_i$ are zero.
\end{enumerate}
\end{fact}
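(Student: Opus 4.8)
The plan is to follow the classical Dieudonn\'e strategy adapted to the graded setting, as in \cite{Schoeller}. Fact \ref{thm-schoeller1} already reduces $\HH^+$ to a product of blocks $\HH_i$, all equivalent by regrading, so it suffices to treat $\HH_1$. This category is abelian and cocomplete (as recalled at the beginning of section \ref{sec-elementary}); I would first verify that it is locally finite with enough injectives, so that it is controlled by its injective cogenerators and the natural operations acting on them.

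The functor $M$ would be built from graded Witt Hopf algebras. For each $n\ge 1$ let $CW_n$ denote the graded co-Witt Hopf algebra of length $n$, graded so that its indecomposables occupy degrees $2p^i$ for $0\le i<n$, and set $CW=\colim_n CW_n$; these form a cogenerating family of injectives in $\HH_1$. One defines $M(H)^i=\Hom_{\HH_1}(H,CW_{i+1})$ in internal degree $2p^i$, which is naturally a $W(\kk)$-module killed by $p^{i+1}$. The Frobenius map $\mathbf{F}$ and the Verschiebung map $\mathbf{V}$ of appendix \ref{app-FV-Hopfalg}, acting on the Witt cogenerators, induce the operators $F_i$ and $V_i$, and the fundamental Witt relation $\mathbf{F}\mathbf{V}=\mathbf{V}\mathbf{F}=p$ gives $F_iV_i=p=V_iF_i$, so $M$ lands in $\mathcal{D}_\kk$.

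To prove that $M$ is an equivalence I would proceed in three steps. First, compute $M$ on the simple objects and on the $CW_n$, checking that the $M(CW_n)$ are precisely the indecomposable injectives of $\mathcal{D}_\kk$, so that $M$ carries injectives to injectives. Second, deduce full faithfulness from the cogenerator property together with the computation $\End_{\HH_1}(CW)\cong W(\kk)[F,V]/(FV-p,\,VF-p)$, the graded Dieudonn\'e ring. Third, obtain essential surjectivity by presenting an arbitrary object of $\mathcal{D}_\kk$ as a kernel or cokernel of a map between sums of the standard modules $M(CW_n)$ and transporting this presentation back through $M$, using exactness and cocompleteness of $\HH_1$.

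The explicit formula in the primitively generated case then follows by specialization: when $H$ is primitively generated its Verschiebung $\mathbf{V}_H$ is trivial (as used in the proof of theorem \ref{thm-alg}), which forces $V_i=0$, while the primitives carry the abelian restricted structure whose $p$-operation is $x\mapsto x^p$; matching the degree $2p^i$ layer through the appropriate Frobenius twist yields $M(H)^i={}^{(-i)}PH^{2p^i}$ with $F_i$ the twisted Frobenius. The principal obstacle is the equivalence itself, and within it the most delicate point is the construction of the graded Witt cogenerators $CW_n$ and the verification that they cogenerate, together with the limiting process needed for Hopf algebras that are infinite dimensional in each degree; once the cogenerators and the Dieudonn\'e ring are identified, full faithfulness and essential surjectivity are essentially formal.
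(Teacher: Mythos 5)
First, a point of framing: the paper does not prove this statement at all --- it is recorded as a \emph{Fact} with a citation to Schoeller, so there is no internal proof to compare against. What you have written is a reconstruction of Schoeller's argument, and in outline it is the right strategy (Witt-type (co)generators, the Dieudonn\'e ring, $F$ and $V$ induced from the Frobenius and Verschiebung of appendix \ref{app-FV-Hopfalg}, reduction to $\HH_1$ via fact \ref{thm-schoeller1}).

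There is, however, a concrete problem with your construction as written: $M(H)^i=\Hom_{\HH_1}(H,CW_{i+1})$ is \emph{contravariant} in $H$, so at best it yields an anti-equivalence $\HH_1\simeq \mathcal{D}_\kk^{\op}$, not the covariant equivalence asserted. Worse, Hom \emph{into} a cofree-type cogenerator is computed on indecomposables (just as $\Hom_{\HH}(H,\Gamma(V[2]))\simeq\Hom_\kk(QH^2,V)$), so your functor would express $M(H)^i$ in terms of $QH^{2p^i}$ (or the dual of $PH$), which is incompatible with the normalization $M(H)^i={}^{(-i)}PH^{2p^i}$ in the primitively generated case; note that the paper's own deduction in fact \ref{fact-cor-P} uses the covariant identification $\Hom_\kk(V,PH^{2p^k})\simeq\Hom_{\HH_1}(S(V[2p^k]),H)\simeq\Hom_{\mathcal{D}}(M(S(V[2p^k])),M(H))$. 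The fix is to work with the \emph{projective generators} $H_m$ of fact \ref{fact-Gen-dieu} (the graded Witt Hopf algebras, extensions of $S(\kk[2p^m])$ by $\Gamma_m(\kk[2])$) and define $M$ covariantly by Hom \emph{out of} them. Two smaller points: the endomorphism ring you invoke is not the commutative quotient $W(\kk)[F,V]/(FV-p,VF-p)$ but the (graded) Dieudonn\'e ring with the semilinear relations $F\lambda=\lambda^pF$ and $\lambda V=V\lambda^p$ --- this semilinearity is precisely what the twists ${}^{(-i)}$ in the statement absorb; and in the graded setting $F_i$, $V_i$ shift the degree, so the "ring" is really a category/quiver with relations, as in definition \ref{def-DA}. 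With the variance corrected and these adjustments made, the three-step scheme (compute $M$ on generators, full faithfulness, essential surjectivity by presentations) is the standard and correct route.
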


\begin{notation}
We let $M:\HH_n\simeq \mathcal{D}_\kk$ be the equivalence of categories defined by dividing all the degrees of a Hopf algebra by $n$ and then applying the equivalence of categories given in fact \ref{thm-schoeller2}.
\end{notation}

\begin{fact}\label{fact-cor-P}
Let $M(H)=(M,F_*,V_*)$. There is a natural isomorphism 
$$PH^{2np^k}\simeq{^{(k)}}(\ker V_{k-1})\;.$$
\end{fact}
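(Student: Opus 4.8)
The plan is to deduce this from the explicit description of the equivalence $M:\HH_n\simeq\mathcal{D}_\kk$ together with the description of $M(H)$ on primitively generated Hopf algebras given in fact \ref{thm-schoeller2}. Since fact \ref{fact-cor-P} concerns the primitives of an arbitrary $H$ in $\HH_n$, not just primitively generated ones, the first step is to reduce to the primitively generated case, or rather to obtain the formula naturally in $H$. The key observation is that $PH$ is a functor of $H$ which can be read off from the Dieudonn\'e module: I would first establish that for $M(H)=(M,F_*,V_*)$, the primitives in degree $2np^k$ correspond exactly to the elements of $M^k$ killed by the Verschiebung operator $V_{k-1}$, up to the twist $^{(k)}$ coming from the normalization ${}^{(-i)}$ appearing in fact \ref{thm-schoeller2}(1).

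Concretely, I would argue as follows. For a primitively generated $H$, fact \ref{thm-schoeller2} gives $M(H)^i={}^{(-i)}PH^{2p^i}$ (after rescaling degrees by $n$, so $M(H)^i={}^{(-i)}PH^{2np^i}$ in $\HH_n$), with the operators $V_i$ all zero. In this case $\ker V_{k-1}=M(H)^k={}^{(-k)}PH^{2np^k}$, and applying the twist $^{(k)}$ recovers $PH^{2np^k}$, so the asserted isomorphism holds and is natural by construction. The heart of the matter is therefore to show that the formula $PH^{2np^k}\simeq{}^{(k)}(\ker V_{k-1})$ persists for general $H$. For this I would use that $P$ is left exact (it is the kernel of the reduced coproduct) and that the equivalence $M$ is exact, together with the fact that every object of $\HH_n$ embeds into, or is a canonical subobject/quotient of, primitively generated objects. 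The cleanest route is to identify the primitives functorially: I expect that under $M$, the subfunctor $PH$ corresponds to the graded Dieudonn\'e submodule $\ker V$ (the kernel of all the Verschiebung operators), because the Verschiebung map $\mathbf{V}$ on $H$ corresponds to the operators $V_i$ on $M(H)$, and an element is primitive precisely when it lies in the appropriate kernel — this is the Dieudonn\'e-theoretic incarnation of the statement that primitives are detected by vanishing of Verschiebung in each relevant degree.

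The main obstacle I anticipate is making the degree bookkeeping and the Frobenius-twist normalizations precise, since the correspondence between the degree-$2np^k$ part of $H$ and the degree-$k$ part of $M(H)$ involves both a division of degrees by $n$ and a twist $^{(-k)}$, and one must verify that the operator called $V_{k-1}$ in $\mathcal{D}_\kk$ is the one whose kernel computes $PH^{2np^k}$ rather than an off-by-one neighbor. I would pin this down by testing the formula on the generating examples $S$, $\Gamma$, and the truncated algebras (whose Dieudonn\'e modules are the string modules of section \ref{sec-indecomp}), where primitives and Verschiebung are both explicitly known; matching these forces the indices and twists. Once the indexing is fixed, the naturality and the isomorphism follow formally from the exactness of $M$ and the identification of $P$ with $\ker V$, so the remaining work is routine verification rather than a genuine difficulty.
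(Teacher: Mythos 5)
Your handling of the primitively generated case is fine, but that is not where the content of the statement lies, and the mechanism you propose for passing to general $H$ does not work. You write that every object of $\HH_n$ ``embeds into, or is a canonical subobject/quotient of, primitively generated objects''; this is false. Primitively generated Hopf algebras are exactly those with vanishing Verschiebung, and an object like $\Gamma(\kk[2n])$, whose Dieudonn\'e module has all the $V_i$ equal to the identity, admits no embedding into a primitively generated one. The natural substitute --- replacing $H$ by the sub-Hopf-algebra $H'$ generated by $PH$, which is primitively generated and has the same primitives --- only shows that $M(H')^k$ lands \emph{inside} $\ker V_{k-1}$ (since $V$ vanishes on $M(H')$ and the inclusion commutes with $V$); that this image is \emph{all} of $\ker V_{k-1}$ is precisely the assertion to be proved, and your proposal offers no argument for it beyond ``I expect that''. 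Testing the formula on $S$, $\Gamma$ and truncated algebras pins down the indexing but proves nothing for a general $H$.

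The paper closes this gap by a representability argument that avoids ever describing $M(H)$ for general $H$: the universal property of the free commutative algebra on primitive generators gives a natural isomorphism $\Hom_\kk(V,PH^{2p^k})\simeq \Hom_{\HH_1}(S(V[2p^k]),H)$, the equivalence $M$ converts the right-hand side into $\Hom_{\mathcal{D}_\kk}(M(S(V[2p^k])),M(H))$, and since $S(V[2p^k])$ \emph{is} primitively generated its Dieudonn\'e module is known explicitly from fact \ref{thm-schoeller2} (it is ${}^{(-k)}V$ in each degree $\ge k$ with identity Frobenii and zero Verschiebungen). A direct computation of morphisms out of this module shows the Hom-set is $\Hom_\kk({}^{(-k)}V,\ker V_{k-1})\simeq\Hom_\kk(V,{}^{(k)}\ker V_{k-1})$, and Yoneda finishes. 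So the ingredient you isolated (the explicit $M$ on primitively generated objects) is the right one --- it just has to be applied to the corepresenting object $S(V[2p^k])$ rather than to $H$ itself.
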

\begin{proof}
We have a sequence of isomorphisms of abelian groups, natural with respect to $V$ and $H$:
\begin{align*}\Hom_\kk(V,PH^{2p^k}) &\simeq \Hom_{\HH_1}(S(V[2p^k]),H)\simeq \Hom_{\mathcal{D}}(M(S(V[2p^k])),M(H))\\
&\simeq \Hom_\kk({^{(-k)}V},\mathrm{Ker}\,V_{k-1})\simeq \Hom_\kk(V,{^{(k)}}\mathrm{Ker}\,V_{k-1})
\end{align*}
Hence the result follows from the Yoneda lemma.
\end{proof}

\begin{fact}\label{fact-cor-Q}
We have an isomorphism, natural with respect to $V$, between $M(\Gamma(^{(k)}V[2np^{k}]))$ and the Dieudonn\'e module which equals $V$ in each degree $i\ge k$, and such that $V_i$ is the identity morphism for $i\ge k-1$, and all the other operators are zero.
As a consequence, if $M(H)=(M,F_*,V_*)$, we have a natural isomorphism:
$$QH^{2np^k}\simeq{^{(k)}}(\mathrm{Coker} F_{k-1})\;.$$
\end{fact}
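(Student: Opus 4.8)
The statement is the exact dual of Fact~\ref{fact-cor-P}: primitives, the free commutative algebra $S$, and the Verschiebung operators $V_i$ are replaced throughout by indecomposables, the cofree cocommutative coalgebra $\Gamma$, and the Frobenius operators $F_i$. The plan is to dualise the proof of Fact~\ref{fact-cor-P} line by line. The only genuinely new ingredient is the first assertion, the computation of $M(\Gamma({}^{(k)}V[2np^k]))$, since Fact~\ref{thm-schoeller2} gives $M$ explicitly only on primitively generated Hopf algebras, and $\Gamma$ is not primitively generated.

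I would first establish the first assertion by restricted duality. For $W$ finite dimensional, $\Gamma(W[2np^k])$ is the restricted dual of the free commutative Hopf algebra $S(W^*[2np^k])$, which is primitively generated; by Fact~\ref{thm-schoeller2} (after dividing degrees by $n$) its Dieudonn\'e module equals ${}^{(-k)}W^*$ in each degree $i\ge k$, with all $F_i$ isomorphisms and all $V_i=0$. The functor $M$ intertwines restricted duality with the duality of graded Dieudonn\'e modules that transposes each graded piece and exchanges $F_i$ with $V_i$ (the classical statement from \cite{Schoeller}); combined with the compatibility of Frobenius twists with linear duality (Fact~\ref{fact-prop-extscal}(5)), this shows that $M(\Gamma(W[2np^k]))$ equals ${}^{(-k)}W$ in each degree $i\ge k$, with all $V_i$ isomorphisms and all $F_i=0$. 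Taking $W={}^{(k)}V$ (legitimate since $\kk$ is perfect) gives exactly the module of the statement, the plain $V$ in each degree arising from ${}^{(-k)}({}^{(k)}V)\simeq V$. The infinite dimensional case would follow by writing $V$ as a filtered colimit of its finite dimensional subspaces, since $\Gamma$, $M$, and the Frobenius twists all commute with filtered colimits (Fact~\ref{fact-prop-extscal}(2)).

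With the module of $\Gamma$ in hand, I would deduce the consequence from the chain of natural isomorphisms dual to the one in Fact~\ref{fact-cor-P}:
\begin{align*}
\Hom_\kk(QH^{2np^k},V)
&\simeq \Hom_{\HH_n}\big(H,\Gamma(V[2np^k])\big)\\
&\simeq \Hom_{\D_\kk}\big(M(H),M(\Gamma(V[2np^k]))\big)\\
&\simeq \Hom_\kk\big(\mathrm{Coker}\,F_{k-1},{}^{(-k)}V\big)\\
&\simeq \Hom_\kk\big({}^{(k)}\mathrm{Coker}\,F_{k-1},V\big).
\end{align*}
The first isomorphism is the cofreeness of $\Gamma$ (a Hopf algebra map into $\Gamma(V[2np^k])$ is the same as a linear map on indecomposables $QH^{2np^k}\to V$), the second is the Dieudonn\'e equivalence of Fact~\ref{thm-schoeller2}, and the third uses the first assertion with $V$ replaced by ${}^{(-k)}V$: the module $M(\Gamma(V[2np^k]))$ is a \emph{costalk}, equal to ${}^{(-k)}V$ in degrees $\ge k$ with invertible Verschiebungs and zero Frobenii, so a morphism out of $M(H)=(M^*,F_*,V_*)$ is determined by its degree $k$ component, which must annihilate $\mathrm{Im}\,F_{k-1}$ (the target vanishing in degree $k-1$) and hence factors uniquely through $\mathrm{Coker}\,F_{k-1}$. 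The last isomorphism is that the Frobenius twist is an equivalence over the perfect field $\kk$ (Fact~\ref{fact-prop-extscal}(3)). The composite is natural in $V$, so the Yoneda lemma yields the asserted natural isomorphism $QH^{2np^k}\simeq{}^{(k)}\mathrm{Coker}\,F_{k-1}$. I expect the main obstacle to be the first assertion, namely checking that $M$ is compatible with restricted duality and bookkeeping the Frobenius twists and degree shift correctly, together with the routine but fiddly identification of morphisms into the costalk module in the third isomorphism.
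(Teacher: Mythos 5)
Your proof is correct in substance, and the second half (the chain of isomorphisms ending with Yoneda) is exactly what the paper means by ``the identification of the indecomposables is similar to the proof of fact \ref{fact-cor-P}''; your analysis of morphisms out of $M(H)$ into the ``costalk'' module, including the observation that $\mathrm{Coker}\,F_{k-1}$ is automatically killed by $p$ because $pM^k=F_{k-1}V_{k-1}M^k\subset \mathrm{Im}\,F_{k-1}$, is the right dual of the kernel computation. Where you genuinely diverge from the paper is in the first assertion. You compute $M(\Gamma(W[2np^k]))$ by restricted duality from $M(S(W^*[2np^k]))$, invoking the compatibility of $M$ with duality (transposing graded pieces and exchanging $F_i$ with $V_i$). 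That statement is classical and does hold in Schoeller's setting (the operators here are $W(\kk)$-linear, the semilinearity being absorbed into the twists, so plain transposition works), but it is nowhere recalled in the paper, so you are importing an extra ingredient. The paper instead argues internally: $\Gamma({}^{(k)}V[2np^k])$ has a composition series whose factors are the truncated algebras $T({}^{(\ell)}V[2np^\ell])$, $\ell\ge k$, each once, and $M$ is exact, which pins down the underlying graded module as $V$ in each degree $i\ge k$; the fact that the primitives sit only in degree $2np^k$ forces (via fact \ref{fact-cor-P}) the $V_i$ to be injective, naturality in $V$ forces them to be nonzero homotheties, the relation $V_iF_i=p=0$ kills the $F_i$, and a rescaling makes the $V_i$ identities. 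Your route is shorter and avoids the naturality/rescaling step, at the price of the unproved duality compatibility; the paper's route is longer but self-contained given its appendix. Either way the bookkeeping of twists comes out the same (${}^{(-k)}({}^{(k)}V)\simeq V$), and your treatment of the infinite-dimensional case by filtered colimits is fine.
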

\begin{proof}
The equivalence $M$ sends the $p$-th truncated polynomial algebra $T(^{(k)}V[2np^k])$ on a vector space $^{(k)}V$  to the Dieudonn\'e module which is equal to $V$ concentrated in degree $k$ (with trivial operators $F_*$ and $V_*$). 
The divided power algebra $H(V)=\Gamma(^{(k)}V[2np^k]$ (i) has a composition series whose factors are the $T({^{(\ell)}}V[2np^\ell])$, $\ell\ge k$, each of these appearing once, and (ii) has primitives equal to $^{(k)}V[2np^k]$. Thus by (i) $M(H(V))^i=V$ in each degree $i\ge k$ and zero otherwise, and by (ii) we know that all the $V_i$ must be injective for $i\ge k-1$. Since the only morphisms $V\to V$, natural with respect to $V$ are homotheties, it follows that these $V_i$ are nonzero scalar multiples of the identity. Since $0=p=V_iF_i$, the $F_i$ are zero.
Finally, up to isomorphism, one can arrange that the $V_i$, $i\ge k-1$, are equal to the identity. The identification of the indecomposables is similar to the proof of fact \ref{fact-cor-P}.
\end{proof}

Given an abelian category $\A$ and a ring $R$,  we let $_R\A$ denote the category of left $R$-modules in $\A$. Its objects are pairs $(A,\phi_A)$ where $A$ is an object of $\A$ and $\phi_A:R\to \End_\A(A)$ is a ring morphism. Its morphisms are the morphisms $f:A\to B$ such that  $f\circ \phi_A(r)=\phi_B(r)\circ f$ for all $r\in R$. 
\begin{notation}\label{notation-AppA}
We write $\HH_n'$ for $_{\Fp}\HH_n$ and $\mathcal{D}'_\kk$ for $_{\Fp}\mathcal{D}_\kk$.
\end{notation}

As any equivalence of abelian categories induces an equivalence between the corresponding categories of $R$-modules, we obtain:
\begin{fact}\label{thm-schoeller-Fp}
Let $\kk$ be a perfect field. There is an equivalence of categories: 
$$M:\HH_n'\simeq \mathcal{D}'_\kk\;.$$
\end{fact}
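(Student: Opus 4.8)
The final statement to prove is Fact \ref{thm-schoeller-Fp}: the equivalence $M:\HH_n\simeq \mathcal{D}_\kk$ from Fact \ref{thm-schoeller2} induces an equivalence $M:\HH_n'\simeq \mathcal{D}'_\kk$ between the categories of $\Fp$-modules in these categories.

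The plan is to derive this purely formally from the preceding Fact \ref{thm-schoeller2} by invoking a general principle about equivalences of abelian categories and internal module objects. The key observation, already flagged in the sentence preceding the statement, is that for any ring $R$ and any abelian category $\A$, the assignment $\A\mapsto {_R}\A$ is functorial in equivalences: an equivalence of abelian categories transports module structures. So the real content is the general lemma below, applied to $R=\Fp$ and to the equivalence $M:\HH_n\xrightarrow{\simeq}\mathcal{D}_\kk$.

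First I would state the general principle precisely. Let $T:\A\to\B$ be an equivalence of abelian (or merely additive) categories, with quasi-inverse $T'$. For an object $A$ of $\A$, $T$ induces a ring isomorphism $\End_\A(A)\xrightarrow{\simeq}\End_\B(TA)$, $f\mapsto Tf$ (it is a bijection because $T$ is fully faithful, and it respects composition and identities, hence is a ring homomorphism; additivity of $T$ makes it additive). Consequently, given a ring morphism $\phi_A:R\to\End_\A(A)$, the composite $R\xrightarrow{\phi_A}\End_\A(A)\xrightarrow{T}\End_\B(TA)$ endows $TA$ with an $R$-module structure. I would then define ${_R}T:{_R}\A\to{_R}\B$ on objects by $(A,\phi_A)\mapsto(TA,T\circ\phi_A)$ and on morphisms by $f\mapsto Tf$; one checks that if $f$ commutes with the $R$-actions $\phi_A,\phi_B$ then $Tf$ commutes with $T\circ\phi_A, T\circ\phi_B$, using the naturality square $Tf\circ T(\phi_A(r))=T(f\circ\phi_A(r))=T(\phi_B(r)\circ f)=T(\phi_B(r))\circ Tf$. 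Thus ${_R}T$ is a well-defined functor, and the analogous construction on the quasi-inverse $T'$ gives ${_R}T'$.

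Second, I would verify that ${_R}T$ and ${_R}T'$ are quasi-inverse. The natural isomorphisms $\eta:\Id_\A\xrightarrow{\simeq}T'T$ and $\varepsilon:TT'\xrightarrow{\simeq}\Id_\B$ witnessing the original equivalence lift to the module categories: for an object $(A,\phi_A)$ of ${_R}\A$, the component $\eta_A:A\to T'TA$ is a morphism of $R$-modules because naturality of $\eta$ gives $\eta_A\circ\phi_A(r)=(T'T)(\phi_A(r))\circ\eta_A=\phi_{T'TA}(r)\circ\eta_A$, which is exactly the compatibility required in ${_R}\A$. Hence $\eta$ and $\varepsilon$ define natural isomorphisms ${_R}({\Id})\xrightarrow{\simeq}{_R}T'\circ{_R}T$ and ${_R}T\circ{_R}T'\xrightarrow{\simeq}{_R}({\Id})$, so ${_R}T$ is an equivalence. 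Applying this to $R=\Fp$, $T=M:\HH_n\to\mathcal{D}_\kk$ of Fact \ref{thm-schoeller2}, and recalling Notation \ref{notation-AppA} that $\HH_n'={_{\Fp}}\HH_n$ and $\mathcal{D}_\kk'={_{\Fp}}\mathcal{D}_\kk$, yields the desired equivalence $M:\HH_n'\simeq\mathcal{D}_\kk'$.

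There is essentially no hard step here: the statement is a formal corollary, and the only thing to be careful about is the bookkeeping of the compatibility diagrams showing that $T$ sends $R$-linear morphisms to $R$-linear morphisms and that the unit/counit are $R$-linear. The single point that deserves a line of justification is that $T$ induces a \emph{ring} isomorphism on endomorphism rings (not merely a bijection of underlying sets), which relies on full faithfulness together with additivity of the equivalence; both hold for $M$ since it is an equivalence of abelian categories. I would therefore present the proof as the short formal argument above rather than reproving anything about Dieudonné modules.
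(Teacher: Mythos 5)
Your proposal is correct and is exactly the paper's argument: the paper derives this fact in one line from Fact \ref{thm-schoeller2} by invoking the general principle that an equivalence of abelian categories induces an equivalence between the corresponding categories of $R$-modules, together with Notation \ref{notation-AppA} identifying $\HH_n'$ and $\mathcal{D}'_\kk$ as the $\Fp$-module categories. You merely spell out the routine verification of that principle, which the paper leaves implicit.
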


When $R=\Fp$, the category $\Fp-\A$ identifies with the full subcategory of $\A$ supported by the objects $A$ such that $p\Id_A=0$. Since $\kk$ is perfect, $W(\kk)/pW(\kk)$ is isomorphic to $\kk$. Thus we obtain:
\begin{fact}\label{fact-DieudoDef}
The category $\mathcal{D}'_\kk$ is the full subcategory of $\mathcal{D}_\kk$ supported by the graded $\kk$-modules $M=\bigoplus_{i\ge 0} M^i$ equipped with $\kk$-linear operators $F_i:M^i\leftrightarrows M^{i+1}:V_i$ satisfying the relations $F_iV_i=0=V_iF_i$. 
\end{fact}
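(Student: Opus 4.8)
Recall from notation \ref{notation-AppA} that $\mathcal{D}'_\kk$ is by definition $_{\Fp}\mathcal{D}_\kk$, the category of $\Fp$-modules in the abelian category $\mathcal{D}_\kk$. The plan is simply to unwind this definition in two steps. First I would record a general observation about $\Fp$-modules in an abelian category $\A$: an object of $_{\Fp}\A$ is a pair $(A,\phi_A)$ with $\phi_A:\Fp\to\End_\A(A)$ a ring morphism, and since $\Fp$ is generated as a ring by $1$ subject only to $p\cdot 1=0$, such a $\phi_A$ is forced to send $1\mapsto\Id_A$; it therefore exists exactly when $p\,\Id_A=0$, and is then unique. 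Consequently $_{\Fp}\A$ is canonically the full subcategory of $\A$ supported by the objects $A$ with $p\,\Id_A=0$ (the compatibility condition on morphisms being automatic). Applying this with $\A=\mathcal{D}_\kk$, an object of $\mathcal{D}'_\kk$ is precisely a graded Dieudonn\'e module $(M,F_*,V_*)$ with $p\,\Id_M=0$, that is, with $pM=0$.

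The second step translates the condition $pM=0$ into the stated description. Each $M^i$ is a $W(\kk)$-module annihilated by $p$, hence a module over $W(\kk)/pW(\kk)$; since $\kk$ is perfect this quotient is isomorphic to $\kk$, so every $M^i$ is canonically a $\kk$-vector space and the operators $F_i,V_i$ are automatically $\kk$-linear. The finiteness constraint $p^{i+1}M^i=0$ built into the definition of $\mathcal{D}_\kk$ becomes vacuous once $pM=0$, and the structural relation $F_iV_i=p=V_iF_i$ of $\mathcal{D}_\kk$, read inside $M$ where multiplication by $p$ is zero, reduces to $F_iV_i=0=V_iF_i$. This is exactly the description in the statement, and it coincides with the category $\mathcal{D}'_\A$ of definition \ref{def-DA} in the special case $\A=\Mod$, which is the compatibility that the fact is meant to record.

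This argument is essentially a definition-chasing exercise, so there is no genuine obstacle; the one point deserving a line of care is the very first step, namely the claim that endowing an object of $\mathcal{D}_\kk$ with an $\Fp$-module structure adds no data beyond the condition $p\,\Id_M=0$. That claim follows at once from the uniqueness of $\phi_A$ observed above, equivalently from the fact that the forgetful functor $_{\Fp}\mathcal{D}_\kk\to\mathcal{D}_\kk$ is fully faithful with essential image the subcategory of $p$-torsion objects. Everything else is the formal consequence of $p=0$ in $M$ together with perfectness of $\kk$.
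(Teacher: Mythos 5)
Your proposal is correct and follows exactly the route the paper takes: the paper's (very terse) justification consists precisely of the two observations you elaborate, namely that $_{\Fp}\A$ identifies with the full subcategory of objects $A$ with $p\,\Id_A=0$, and that $W(\kk)/pW(\kk)\simeq\kk$ for $\kk$ perfect, after which the relations $F_iV_i=p=V_iF_i$ reduce to $F_iV_i=0=V_iF_i$. Your write-up just makes explicit the uniqueness of the $\Fp$-module structure and the vacuousness of the condition $p^{i+1}M^i=0$, which the paper leaves implicit.
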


An object $M$ in $\mathcal{D}'_\kk$ can be depicted as a diagram of $\kk$-modules, each $M^i$ being placed in degree $i$, and operators $F_i$ and $V_i$
$$\xymatrix{
M^0 \ar@/^/[r]^-{F_0} & \ar@/^/[l]^-{V_0}M^1 \ar@/^/[r]^-{F_1}&
\ar@/^/[l]^-{V_1}M^2 \ar@/^/[r]^-{F_2}&\ar@/^/[l]^-{V_2}\cdots \;.
}$$ 
When drawing such diagrams, we usually only mention the operators $F_i$ and $V_i$ which are nonzero.
Let $\Gamma_m(\kk[2np^k])$ be the kernel of the unique (up to a nonzero scalar multiple) morphism $\Gamma(\kk[2np^k])\to \Gamma(\kk[2np^{km}])$. Since $M$ is exact, the Dieudonn\'e module of $\Gamma_m(\kk[2np^k])$ is depicted by:
\begin{align*}\underbrace{\kk}_{\text{deg } 0}\xleftarrow[]{\Id} \cdots \xleftarrow[]{\Id} \underbrace{\kk}_{\text{deg } m-1}\;.\end{align*}
The Dieudonn\'e module of $S(\kk[2np^m])$ is known by fact \ref{thm-schoeller2}. Up to isomorphism, there is a unique graded Dieudonn\'e module $M(m)$ which has $M(\Gamma_n(\kk[2np^k]))$ as a submodule, $M(S(\kk[2np^m]))$ as a quotient, and which is a nontrivial extension of these modules. Namely, $M(m)$ is depicted by:
\begin{align*}\underbrace{\kk}_{\text{deg } 0}\xleftarrow[]{\Id} \cdots \xleftarrow[]{\Id} \underbrace{\kk}_{\text{deg } m}\xrightarrow[]{\Id}\cdots \xrightarrow[]{\Id}\kk \xrightarrow[]{\Id}\cdots\;.\end{align*}
On easily sees that the modules $M(m)$ form a projective generator of $\mathcal{D}'_\kk$. Coming back to $\HH_n'$ we obtain the following statement.
\begin{fact}\label{fact-Gen-dieu}
Let $\kk$ be a perfect field. For all $m\ge 0$ there is a unique (up to isomorphism) Hopf algebra $H_m$ in $\HH_n'$ fitting into a nonsplit short exact sequence
$$\kk\to \Gamma_m(\kk[2n])\to H_m \to S(\kk[2np^m])\to \kk\;.$$
Moreover the $H_m$, $m\ge 0$, form a generator of $\HH_n'$.
\end{fact}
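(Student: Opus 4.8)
The plan is to transport the entire statement across the equivalence of abelian categories $M\colon\HH_n'\xrightarrow{\simeq}\mathcal{D}'_\kk$ of fact \ref{thm-schoeller-Fp}; the genuine content then lives on the side of graded Dieudonné modules. First I would record the images of the two end terms. By facts \ref{thm-schoeller2}, \ref{fact-cor-P} and \ref{fact-cor-Q} --- as already drawn in the diagrams preceding the statement --- the module $M(\Gamma_m(\kk[2n]))$ is the left-pointing chain with $\kk$ in degrees $0,\dots,m-1$ and all $V_i=\Id$, while $M(S(\kk[2np^m]))$ is the right-pointing chain with $\kk$ in every degree $i\ge m$, all $F_i=\Id$ and all $V_i=0$. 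Since $M$ is an equivalence of abelian categories it carries the (non)split short exact sequences of $\HH_n'$ to the (non)split short exact sequences of $\mathcal{D}'_\kk$ with the same sub and quotient. Hence constructing $H_m$ and proving its uniqueness is equivalent to exhibiting, up to isomorphism, a unique nonsplit extension of the second module by the first, and the generation statement is equivalent to the $M(m)$ forming a generator of $\mathcal{D}'_\kk$. Setting $H_m:=M^{-1}(M(m))$ then proves the fact, as equivalences preserve generators.

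For existence and uniqueness of the nonsplit extension I would compute $\Ext^1_{\mathcal{D}'_\kk}$ by hand. Any such extension has underlying graded space equal to $\kk$ in each degree. Because the sub is closed under the operators while a lift of the quotient carries the operators up to off-diagonal corrections landing in the sub, I inspect degree by degree which corrections are possible: the degrees in which $F$ could have an off-diagonal component never coincide, so $F$ is forced, whereas the only available slot for $V$ is a single scalar $V_{m-1}\colon M^m\to M^{m-1}$, identified with an element $\lambda\in\Hom_\kk(\kk,\kk)=\kk$. One checks that the relations $F_iV_i=0=V_iF_i$ are satisfied for every value of $\lambda$, so $\Ext^1\cong\kk$; the class $\lambda=0$ is the split extension, and each $\lambda\ne 0$ gives the module $M(m)$ drawn before the statement, all mutually isomorphic after rescaling the chosen lift of the quotient. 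This yields a unique nonsplit extension up to isomorphism.

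For the generation statement I would use the identification, recalled after fact \ref{fact-DieudoDef}, of $\mathcal{D}'_\kk$ with the representations of the pictured quiver subject to $F_iV_i=0=V_iF_i$. The relations force exactly one path out of the vertex $m$ to each vertex, so the indecomposable projective at $m$ is one-dimensional in every degree; this is precisely $M(m)$, and it represents the functor $N\mapsto N^m$ of evaluation in degree $m$ (a morphism $M(m)\to N$ being freely and uniquely determined by the image of the degree-$m$ generator). Limits and colimits in $\mathcal{D}'_\kk$ being computed degreewise, evaluation in degree $m$ is exact, so each $M(m)$ is projective; and since $\bigoplus_{m\ge 0}\Hom(M(m),N)=\bigoplus_{m\ge 0}N^m$ vanishes only when $N=0$, the family $\{M(m)\}_{m\ge 0}$ is a projective generator, which is more than required.

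The main obstacle is the bookkeeping in the $\Ext^1$ computation: one must verify carefully, degree by degree, that no off-diagonal Frobenius component is available, that the single admissible Verschiebung scalar is left unconstrained by the relations, and that distinct nonzero scalars give isomorphic objects (not merely equivalent extensions). Beyond this, everything is a formal consequence of $M$ being an equivalence of abelian categories together with the degreewise exactness of $\mathcal{D}'_\kk$.
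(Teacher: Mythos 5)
Your proposal is correct and follows essentially the same route as the paper: the discussion preceding fact \ref{fact-Gen-dieu} in appendix \ref{App-Dieu} likewise transports everything across the equivalence $M$ of fact \ref{thm-schoeller-Fp}, identifies the Dieudonn\'e modules of $\Gamma_m(\kk[2n])$ and $S(\kk[2np^m])$ by the same diagrams, asserts the uniqueness of the nontrivial extension $M(m)$, and asserts that the $M(m)$ form a projective generator of $\mathcal{D}'_\kk$. Your degree-by-degree $\Ext^1$ computation and the identification of $M(m)$ as the representable (hence projective) object for evaluation in degree $m$ are exactly the details the paper leaves to the reader, and they check out.
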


\section{Additive, polynomial and analytic functors}\label{app-Fct}

In this appendix, we briefly recall some well-known facts on cross-effects and polynomial functors, introduced in \cite{EML2}. 
\begin{fact}\label{fact-idcomp}
Let $\A$ be an additive category. The following assertions are equivalent.
\begin{enumerate}
\item[(i)] All idempotents in $\A$ have a kernel.
\item[(ii)] All idempotents in $\A$ have a cokernel.
\item[(iii)] All idempotents $e:A\to A$ in $\A$ decompose as $e=s\circ r$ for some morphisms $r:A\to B$ and $s:B\to A$ satisfying $r\circ s=\Id_B$. 
\end{enumerate}
\end{fact}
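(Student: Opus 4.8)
The plan is to exploit the \emph{complementary idempotent} $1-e:=\Id_A-e$, which is again idempotent since $(1-e)^2=1-2e+e^2=1-e$, and which satisfies $e(1-e)=e-e^2=0=(1-e)e$. All three conditions quantify over \emph{all} idempotents, and the whole strategy is to trade an assertion about $e$ for an assertion about $1-e$ and back.

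First I would record two elementary building blocks. \textbf{(a)} If an idempotent $f$ splits as $f=s\circ r$ with $r\circ s=\Id_B$, then $s:B\to A$ is a kernel of $1-f$: indeed $(1-f)s=s-srs=s-s=0$, and any $g$ with $(1-f)g=0$ satisfies $g=fg=s(rg)$, this factorization being unique because $s$ is a split monomorphism with retraction $r$. \textbf{(b)} Conversely, if an idempotent $e$ admits a kernel $k:K\to A$, then $1-e$ splits: since $e\circ(1-e)=0$ the map $1-e$ factors as $1-e=k\circ t$ for a unique $t:A\to K$; composing on the left with $k$ gives $k(tk)=(1-e)k=k$, and since kernels are monomorphisms we deduce $t\circ k=\Id_K$, so $(K,t,k)$ splits $1-e$.

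With these in hand the equivalence (i)$\Leftrightarrow$(iii) is immediate. Assuming (iii), every idempotent splits; applying (a) to $f=1-e$ shows that $e=1-(1-e)$ has a kernel, which is (i). Assuming (i), the idempotent $1-e$ has a kernel, so (b) applied to $1-e$ shows that $e$ splits, which is (iii). The remaining equivalence (ii)$\Leftrightarrow$(iii) I would obtain by duality: the splitting of an idempotent is a self-dual notion, since the defining data $s\circ r=e$, $r\circ s=\Id_B$ is invariant under reversal of arrows, whereas a kernel in $\A$ is precisely a cokernel in $\A^{\op}$. As (i)$\Leftrightarrow$(iii) has just been established in every additive category, instantiating it in $\A^{\op}$ reads off as (ii)$\Leftrightarrow$(iii) in $\A$.

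There is no serious obstacle here; the one point deserving care is that hypothesis (i) does \emph{not} directly split $e$ itself — a kernel of $e$ splits the \emph{complementary} idempotent $1-e$, and vice versa. This is exactly why each condition is phrased as a statement about all idempotents, and why the argument must pass repeatedly between $e$ and $1-e$. The only categorical input beyond the additive structure (needed to form $1-e$) is the standard fact that every kernel is a monomorphism, which is what supplies the retraction identity $t\circ k=\Id_K$ in building block (b).
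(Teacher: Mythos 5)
Your proof is correct. The paper states this fact without proof (it is recalled as standard background in Appendix~C), and your argument --- producing a splitting of $e$ from a kernel of the complementary idempotent $1-e$ and back via your building blocks (a) and (b), then dualizing to handle cokernels --- is the standard one and is carried out without gaps.
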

In the sequel, $\A$ denotes an additive category with split idempotents, that is which satisfies the equivalent conditions listed in fact \ref{fact-idcomp}. We also let $\V$ be a (small) additive category, and we denote by $\Fct(\V,\A)$ be the category of all functors from $\V$ to $\A$.

Since $\A$ has split idempotents, any functor $F:\V\to \A$ canonically decomposes as a direct sum $F(V)=F(0)\oplus \overline{F}(V)$, where $\overline{F}:\V\to \A$ is \emph{reduced}, i.e. satisfies $\overline{F}(0)=0$. Letting $\Fct_{\mathrm{red}}(\V,\A)$ be the full subcategory of reduced functors, one has a canonical decomposition
$$ \Fct(\V,\A)\simeq \A\times\Fct_{\mathrm{red}}(\V,\A)\;.$$

For $n\ge 2$, the $n$-th cross effect of $F$ is the functor with $n$ variables denoted by $\Cr_n F$ and defined as the kernel of the canonical morphism:
$$\overline{F}(V_1\oplus\dots\oplus V_n)\xrightarrow[]{(\overline{F}(e_1),\dots,\overline{F}(e_n))} \bigoplus_{i=1}^n\overline{F}(V_1\oplus\dots\oplus V_n)\;, $$
where $e_i$ denotes the composite of the canonical morphisms
$$V_1\oplus\dots\oplus V_n\to  V_1\oplus\dots\oplus\widehat{V_i}\oplus\dots\oplus V_n \to V_1\oplus\dots\oplus V_n\;.$$
The kernel exists and is a direct summand of $\overline{F}(V_1\oplus\dots\oplus V_n)$ because the $e_i$ are commuting idempotents. Moreover, this definition can be proved to be equivalent to the original definition of Eilenberg and Mac Lane \cite[p. 77]{EML2} e.g. by using the characterization given in \cite[Thm 9.6]{EML2}. 
The functor $\Cr_n F$ is $n$-reduced, i.e. it is equal to zero whenever one of its arguments is zero. Thus cross effects define a functor:
$$\Fct(\V,\A)\to \Fct_{n-\mathrm{red}}(\V^{\times n},\A)\;.$$
A functor $\mathrm{D}_n$ going the other way is defined by $(\mathrm{D}_nG)(V)=G(V,\dots,V)$.
\begin{fact}\label{fact-adj}
The pairs $(\Cr_n,D_n)$ and $(D_n,\Cr_n)$ are adjoint pairs.
\end{fact}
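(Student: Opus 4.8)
The plan is to establish both adjunctions through explicit hom-bijections built from the diagonal and fold maps, exploiting the fact recorded in the definition that $\Cr_n F$ is a \emph{natural direct summand} of $\overline{F}(V_1\oplus\cdots\oplus V_n)$. Write $W=V_1\oplus\cdots\oplus V_n$, let $\iota_i\colon V_i\to W$ and $p_i\colon W\to V_i$ be the structural inclusions and projections, let $\delta_V\colon V\to V^{\oplus n}$ be the diagonal and $\sigma_V\colon V^{\oplus n}\to V$ the fold. Since the idempotents $e_i$ commute, so do the $\overline{F}(e_i)$, and $\Cr_nF$ is the image of the idempotent $\pi=\prod_{i=1}^n(\Id-\overline{F}(e_i))$; denote by $j\colon\Cr_nF\hookrightarrow\overline{F}(W)$ and $p\colon\overline{F}(W)\twoheadrightarrow\Cr_nF$ the associated split mono and split epi, so that $p\circ j=\Id$ and $j\circ p=\pi$. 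Because $\Cr_n$ only sees $\overline{F}$ and $\mathrm{D}_nG$ is reduced when $G$ is $n$-reduced, we may assume throughout that $F$ is reduced. I would treat $(\Cr_n,\mathrm{D}_n)$ first and obtain $(\mathrm{D}_n,\Cr_n)$ by the formally dual argument (interchanging $p_i\leftrightarrow\iota_i$, $\delta\leftrightarrow\sigma$, and $p\leftrightarrow j$).

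First I would define mutually inverse candidates for $\Cr_n\dashv\mathrm{D}_n$. Given $\beta\colon\Cr_nF\to G$, set $\Phi(\beta)_V=\beta_{V,\dots,V}\circ p\circ F(\delta_V)\colon F(V)\to G(V,\dots,V)=\mathrm{D}_nG(V)$; given $\alpha\colon F\to\mathrm{D}_nG$, set $\Psi(\alpha)_{V_1,\dots,V_n}=G(p_1,\dots,p_n)\circ\alpha_W\circ j$. Naturality of $\Phi(\beta)$ and $\Psi(\alpha)$ is a routine check using functoriality of $F$, $G$ and the naturality of $\delta$, of the $p_i$, and of $j,p$. The content lies in verifying $\Phi\Psi=\Id$ and $\Psi\Phi=\Id$. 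For $\Phi\Psi(\alpha)_V$ one computes $G(p_1,\dots,p_n)\circ\alpha_{V^{\oplus n}}\circ\pi\circ F(\delta_V)$, and applying naturality of $\alpha$ to each term of $\pi=\sum_{S}(-1)^{|S|}F(e_S)$ (where $e_S=\prod_{i\in S}e_i$ kills the coordinates in $S$) turns this into $\bigl[\sum_{S}(-1)^{|S|}G(p_1e_S\delta_V,\dots,p_ne_S\delta_V)\bigr]\circ\alpha_V$. Since $p_ie_S\delta_V$ equals $\Id_V$ for $i\notin S$ and $0$ for $i\in S$, every term with $S\neq\varnothing$ has a zero argument and therefore vanishes by the $n$-reducedness of $G$, leaving the $S=\varnothing$ term $\Id$; hence $\Phi\Psi(\alpha)=\alpha$.

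For the opposite composite $\Psi\Phi(\beta)$, I would use naturality of $\beta$ along the tuple $(p_1,\dots,p_n)\colon(W,\dots,W)\to(V_1,\dots,V_n)$ to reduce the claim to the purely formal identity $\Cr_nF(p_1,\dots,p_n)\circ p_W\circ F(\delta_W)\circ j=\Id_{\Cr_nF(V_1,\dots,V_n)}$. Writing $\Cr_nF(p_1,\dots,p_n)=p\circ F(p_1\oplus\cdots\oplus p_n)\circ j$ and using $j_{(W,\dots,W)}\circ p_W=\pi_W$, this becomes $p\circ F(p_1\oplus\cdots\oplus p_n)\circ\pi_W\circ F(\delta_W)\circ j$, and a short calculation shows $(p_1\oplus\cdots\oplus p_n)\circ e_S^{W}\circ\delta_W$ is exactly the idempotent of $W$ that kills the blocks $V_i$ with $i\in S$, so that $F(p_1\oplus\cdots\oplus p_n)\circ\pi_W\circ F(\delta_W)=\pi_{(V_1,\dots,V_n)}$; since $p\circ\pi=p$ and $p\circ j=\Id$, the composite is the identity. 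This settles $\Cr_n\dashv\mathrm{D}_n$, and the dual bijections (with $\sigma$, $\iota_i$) settle $\mathrm{D}_n\dashv\Cr_n$. The main obstacle, and the only non-formal point, is precisely the reduction in the previous paragraph: recognizing that inserting the cross-effect idempotent $\pi$ produces, after moving it through the given natural transformation, a signed sum of "partial diagonal" terms, all of which except the full diagonal are annihilated by $n$-reducedness—this is where the hypothesis that $G$ is $n$-reduced is indispensable, while the complementary composite needs only the idempotent calculus for $\pi$ and the identity $p\circ j=\Id$.
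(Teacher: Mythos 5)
The paper offers no proof of this fact to compare against: it is recalled in the appendix as a classical statement from Eilenberg--Mac Lane, with only a citation. Your argument is the standard explicit construction of the hom-set bijections and it is correct. The three points that actually need checking all hold: the reduction to reduced $F$ is legitimate because any natural transformation between a constant functor and a reduced functor vanishes (use naturality along $V\to 0$ and $0\to V$); the identification of $\Cr_nF$ with the image of $\pi=\prod_i(\Id-\overline{F}(e_i))=\sum_S(-1)^{|S|}\overline{F}(e_S)$ is exactly how the paper's definition of $\Cr_n F$ as a kernel of commuting idempotents splits off; and the two computations $p_i\circ e_S\circ\delta_V=\Id_V$ for $i\notin S$ (and $0$ for $i\in S$) and $(p_1\oplus\cdots\oplus p_n)\circ e_S\circ\delta_W=e_S^W$ are composites of biproduct structure maps, hence valid in an arbitrary additive category with split idempotents rather than only for module categories — which is precisely the generality the appendix requires. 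The use of $n$-reducedness of $G$ to kill all terms with $S\neq\varnothing$ is the one non-formal input, and you have located it correctly; the dual adjunction $(D_n,\Cr_n)$ does follow by the stated interchange of $\delta/\sigma$, $p_i/\iota_i$ and $p/j$.
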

A functor $F:\V\to \A$ is called \emph{polynomial of degree less than $n$} (or sometimes polynomial of Eilenberg-Mac Lane degree less than $n$ if we want to avoid confusion with homological degrees) whenever $\Cr_n F=0$. It is polynomial of degree $n-1$ if $\Cr_n F=0$ and $\Cr_{n-1} F\ne 0$. Observe that $\Cr_n F(V_1,\dots,V_n)$ is a direct summand of $D_n\Cr_n F(V)$ for $V=V_1\oplus\dots\oplus V_n$. Thus we obtain a criterion which is sometimes useful in computations.
\begin{fact}\label{def-pol}
$F$ is polynomial of degree less than $n$ if and only if $D_n\Cr_nF=0$.
\end{fact}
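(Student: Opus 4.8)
The plan is to unwind the definition and then exploit the retraction observation recorded immediately before the statement. By definition $F$ is polynomial of degree less than $n$ precisely when its $n$-th cross effect $\Cr_n F$ vanishes, so the genuine content of the fact is the equivalence $\Cr_n F = 0 \iff \mathrm{D}_n\Cr_n F = 0$. The forward implication is immediate: if the multifunctor $\Cr_n F$ is identically zero, then so is $\mathrm{D}_n\Cr_n F$, since $(\mathrm{D}_n\Cr_n F)(V) = \Cr_n F(V,\dots,V)$.

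For the reverse implication I would spell out the retraction that exhibits $\Cr_n F(V_1,\dots,V_n)$ as a retract of $(\mathrm{D}_n\Cr_n F)(V)$ for $V := V_1\oplus\dots\oplus V_n$. Writing $G = \Cr_n F$ and using the canonical inclusions $\iota_j:V_j\to V$ and projections $p_j:V\to V_j$, functoriality of $G$ in each of its $n$ slots produces morphisms $G(\iota_1,\dots,\iota_n):G(V_1,\dots,V_n)\to G(V,\dots,V)$ and $G(p_1,\dots,p_n)$ in the reverse direction, whose composite is $G(p_1\iota_1,\dots,p_n\iota_n) = G(\Id_{V_1},\dots,\Id_{V_n}) = \Id$. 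Hence $G(V_1,\dots,V_n)$ is a retract of $(\mathrm{D}_n G)(V)$. Now if $\mathrm{D}_n\Cr_n F = 0$, this target is zero, so $\Id_{G(V_1,\dots,V_n)}$ factors through $0$ and therefore vanishes, forcing $G(V_1,\dots,V_n) = 0$. As this holds for every tuple, $\Cr_n F = 0$, which is exactly polynomiality of degree less than $n$.

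The argument is entirely formal and I expect no real obstacle. The only point requiring a line of care is the identity $p_j\iota_j = \Id_{V_j}$ in the additive category $\V$; granting it, the reverse implication needs nothing beyond the remark that the identity of $G(V_1,\dots,V_n)$ factors through the zero object $(\mathrm{D}_n G)(V)$. In particular I would not even need to invoke that $\A$ has split idempotents here — only the existence of the section/retraction pair — so the whole statement reduces to essentially a two-line proof once the retraction is made explicit.
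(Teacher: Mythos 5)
Your proof is correct and follows exactly the route the paper intends: the content is the equivalence $\Cr_nF=0\iff \mathrm{D}_n\Cr_nF=0$, and the reverse implication is the retraction $\Cr_nF(V_1,\dots,V_n)\to \Cr_nF(V,\dots,V)\to \Cr_nF(V_1,\dots,V_n)$ coming from the inclusions and projections of $V=V_1\oplus\dots\oplus V_n$, which is precisely the ``direct summand'' observation the paper records in the sentence preceding the statement. Your explicit verification that the composite is the identity, and your remark that split idempotents are not needed for this step (only for the existence of $\Cr_nF$ itself), are both accurate.
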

When $\A=\kk-\Mod^*$, the category $\Fct(\V,\A)$ is abelian with enough projectives and injectives, and the derived form of fact \ref{fact-adj} is often formulated as follows, see e.g \cite[Thm A.1]{PiraBetley}. 
\begin{fact}[Pirashvili's vanishing lemma]\label{lm-cancel}
Let $F$ be polynomial of degree less than $n$ and let $F_1,\dots,F_n$ be reduced functors. Then 
$$\Ext^*_{\Fct(\V,\kk-\mathrm{Mod}^*)}(F,F_1\otimes\dots\otimes F_n)=0=\Ext^*_{\Fct(\V,\kk-\mathrm{Mod}^*)}(F_1\otimes\dots\otimes F_n,F)\;.$$
\end{fact}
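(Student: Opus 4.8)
The plan is to deduce the double vanishing from the two adjunctions of fact~\ref{fact-adj} by passing to derived functors, the crucial input being that the cross-effect functor $\Cr_n$ is \emph{exact}. First I would record this exactness: by construction $\Cr_n F(V_1,\dots,V_n)$ is the kernel of a commuting sum of idempotents on $\overline{F}(V_1\oplus\dots\oplus V_n)$, hence a functorial direct summand of it, so $\Cr_n$ carries short exact sequences to short exact sequences. Since fact~\ref{fact-adj} exhibits $\Cr_n$ simultaneously as a \emph{left} adjoint of $\mathrm{D}_n$ (the pair $(\Cr_n,\mathrm{D}_n)$) and as a \emph{right} adjoint of $\mathrm{D}_n$ (the pair $(\mathrm{D}_n,\Cr_n)$), and since $\mathrm{D}_n$ is itself exact (being evaluation on the diagonal, computed objectwise), it follows that $\Cr_n$ preserves projectives (a left adjoint of an exact functor does) and preserves injectives (a right adjoint of an exact functor does).

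Next I would rewrite the tensor product as a diagonal of an external product. Given reduced functors $F_1,\dots,F_n$, form the functor of $n$ variables $G=F_1\boxtimes\dots\boxtimes F_n$, with $G(V_1,\dots,V_n)=F_1(V_1)\otimes\dots\otimes F_n(V_n)$; this $G$ is $n$-reduced precisely because each $F_i$ is reduced, and $\mathrm{D}_n G=F_1\otimes\dots\otimes F_n$. Now choose a projective resolution $P_\bullet\to F$ in $\Fct(\V,\kk-\mathrm{Mod}^*)$, which has enough projectives and injectives. Applying the exact, projective-preserving functor $\Cr_n$ produces a projective resolution $\Cr_n P_\bullet\to\Cr_n F$, and the natural isomorphism $\Hom(\Cr_n P_i,G)\cong\Hom(P_i,\mathrm{D}_n G)$ coming from the adjunction $(\Cr_n,\mathrm{D}_n)$ yields, on passing to cohomology,
$$\Ext^*(F,\mathrm{D}_n G)\cong\Ext^*(\Cr_n F,G).$$
Dually, choosing an injective resolution $F\to I^\bullet$ and using that $\Cr_n$ is exact and injective-preserving together with the adjunction $(\mathrm{D}_n,\Cr_n)$, I obtain
$$\Ext^*(\mathrm{D}_n G,F)\cong\Ext^*(G,\Cr_n F).$$

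Finally, because $F$ is polynomial of Eilenberg--Mac Lane degree less than $n$, we have $\Cr_n F=0$ by definition, so both right-hand sides are $\Ext$-groups in which one argument vanishes and are therefore zero; this is exactly the asserted double vanishing. I expect the only real work to be organizational rather than conceptual: one must confirm that the adjunctions of fact~\ref{fact-adj} and the chosen resolutions are set up in matching categories (the one-variable category $\Fct(\V,\kk-\mathrm{Mod}^*)$ and the $n$-reduced category $\Fct_{n\text{-red}}(\V^{\times n},\kk-\mathrm{Mod}^*)$ in which $\Cr_n$ takes values and in which $G$ lives), and that $\Cr_n$ genuinely restricts to an adjoint pair between these categories. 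These compatibilities are needed to justify the two displayed $\Ext$-isomorphisms, but they are harmless for the conclusion, since once $\Cr_n F=0$ the target groups vanish regardless of the ambient category.
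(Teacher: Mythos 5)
Your proof is correct, and it follows exactly the route the paper indicates: the paper states this as a Fact, citing it as ``the derived form of fact~\ref{fact-adj}'' (Betley--Pirashvili, Thm A.1), and your argument is precisely that derivation — exactness of $\Cr_n$ and $\mathrm{D}_n$, preservation of projectives/injectives from the two adjunctions, and the resulting $\Ext$-isomorphisms collapsing because $\Cr_n F=0$. The only cosmetic remark is that you do not even need $\Ext$ in the $n$-reduced multivariable category: when $\Cr_nF=0$ the complex $\Cr_nP_\bullet$ is a bounded-below exact complex of projectives, hence contractible, so $\Hom(\Cr_nP_\bullet,G)$ is exact for free.
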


We denote by $\Fct^{< n}(\V,\A)$ the full subcategory of $\Fct(\V,\A)$ whose objects are the polynomial functors of degree less than $n$. We have the following formal consequence of fact \ref{fact-adj}.
\begin{fact}\label{fact-stab}
The cross effect functor $\Cr_n$ commutes with limits and colimits. In particular, the subcategory $\Fct^{< n}(\V,\A)$ of $\Fct(\V,\A)$ is stable by direct summands, and more generally by limits and colimits (when they exist).
\end{fact}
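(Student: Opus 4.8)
The plan is to deduce the entire statement formally from the two adjunctions recorded in fact \ref{fact-adj}, combined with the observation that membership in $\Fct^{<n}(\V,\A)$ is detected by the vanishing of a single cross effect.

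First I would read fact \ref{fact-adj} as exhibiting $\Cr_n$ as \emph{both} a left and a right adjoint of $\mathrm{D}_n$: the pair $(\Cr_n,\mathrm{D}_n)$ makes $\Cr_n$ left adjoint to $\mathrm{D}_n$, whereas the pair $(\mathrm{D}_n,\Cr_n)$ makes $\Cr_n$ right adjoint to $\mathrm{D}_n$. Invoking the standard categorical principle that left adjoints preserve colimits and right adjoints preserve limits, I conclude that $\Cr_n$ preserves every colimit (from the first adjunction) and every limit (from the second) which exists in $\Fct(\V,\A)$. Concretely, for a diagram $D:J\to\Fct(\V,\A)$ whose limit (resp. colimit) exists, the canonical comparison $\Cr_n(\lim D)\to\lim(\Cr_n\circ D)$ (resp. $\colim(\Cr_n\circ D)\to\Cr_n(\colim D)$) is an isomorphism. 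This settles the first assertion.

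For the ``in particular'' clause, I would use that, by definition, a functor $F$ lies in $\Fct^{<n}(\V,\A)$ if and only if $\Cr_nF=0$. If $F=\lim_J F_j$ (resp. $F=\colim_J F_j$) is a limit (resp. colimit) in $\Fct(\V,\A)$ of functors $F_j$ each of degree less than $n$, then $\Cr_nF\simeq\lim_J\Cr_nF_j$ (resp. $\colim_J\Cr_nF_j$) by the previous paragraph; since each $\Cr_nF_j=0$, this is the (co)limit of the constant zero diagram, hence the zero object, so $F$ again has degree less than $n$. The case of a direct summand is the simplest instance of the same idea: if $G$ is a retract of $F$ via $G\xrightarrow{s}F\xrightarrow{r}G$ with $rs=\Id_G$, then applying $\Cr_n$ produces a retraction of $\Cr_nF$ onto $\Cr_nG$, so $\Cr_nF=0$ forces $\Cr_nG=0$; alternatively one uses that a finite direct sum is simultaneously a limit and a colimit and that $\Cr_n(G\oplus H)\simeq\Cr_nG\oplus\Cr_nH$.

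Because the argument is entirely formal once fact \ref{fact-adj} is granted, I do not expect a serious obstacle. The only points deserving a line of care are the remark that membership in $\Fct^{<n}(\V,\A)$ \emph{is} literally the vanishing $\Cr_nF=0$, and the trivial verification that a limit or colimit of a diagram of zero objects, and a retract of a zero object, is again zero in the additive target category $\Fct_{n-\mathrm{red}}(\V^{\times n},\A)$.
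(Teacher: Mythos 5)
Your argument is correct and is exactly the one the paper intends: it presents this fact as "a formal consequence of fact \ref{fact-adj}", namely that $\Cr_n$ is simultaneously a left adjoint (hence preserves colimits) and a right adjoint (hence preserves limits) of $\mathrm{D}_n$, after which stability of $\Fct^{<n}(\V,\A)$ follows from the characterization $\Cr_nF=0$. Your handling of the retract case and of the zero (co)limits in the target category is the right level of care; nothing is missing.
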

Assume that $\A$ has all cokernels. Then for all $F:\V\to \A$ we let
$q_n F$ be the cokernel of the counit of adjunction $D_n(\Cr_n F)\to F$. This counit of adjunction can be explicitly described as the composition (where $\sigma_V$ is the folding map):
$$\Cr_nF(V,\dots,V)\hookrightarrow F(V^{\oplus n})\xrightarrow[]{F(\sigma_V)} F(V)\;.$$
\begin{fact}\label{fact-quot}
Assume that $\A$ has all cokernels. The functor $q_nF$ is polynomial of degree less than $n$, and  
$$q_n:\Fct(\V,\A)\to \Fct^{< n}(\V,\A)$$
is left adjoint to the embedding $\Fct^{< n}(\V,\A)\hookrightarrow \Fct(\V,\A)$. (Thus $q_n F$ is the largest quotient of degree less than $n$ of $F$.)
\end{fact}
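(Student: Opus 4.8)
The plan is to identify $q_n$ with the left adjoint produced by the adjunction $\mathrm{D}_n\dashv \Cr_n$ of fact \ref{fact-adj}. Write $\epsilon:\mathrm{D}_n\Cr_n\to \Id$ for the counit of this adjunction; by construction $\epsilon_F$ is exactly the morphism $\mathrm{D}_n(\Cr_nF)\to F$ described just before the statement (the composite $\Cr_nF(V,\dots,V)\hookrightarrow F(V^{\oplus n})\xrightarrow{F(\sigma_V)}F(V)$), so that $q_nF=\mathrm{coker}(\epsilon_F)$ and the canonical map $F\to q_nF$ exhibits $q_nF$ as a quotient of $F$. Everything will then follow formally from the universal property of this cokernel together with the two triangle identities of the adjunction, once we know that $\Cr_n$ preserves cokernels.

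First I would show that $q_nF$ is polynomial of degree less than $n$, i.e. $\Cr_n(q_nF)=0$. Since $\Cr_n$ commutes with colimits by fact \ref{fact-stab}, and a cokernel is a colimit, applying $\Cr_n$ to the defining cokernel yields
$$\Cr_n(q_nF)=\mathrm{coker}\big(\Cr_n\epsilon_F:\Cr_n\mathrm{D}_n\Cr_nF\to \Cr_nF\big)\;.$$
The triangle identity $(\Cr_n\epsilon)\circ(\eta\Cr_n)=\Id_{\Cr_n}$, where $\eta:\Id\to \Cr_n\mathrm{D}_n$ is the unit, shows that $\Cr_n\epsilon_F$ is a split epimorphism with section $\eta_{\Cr_nF}$. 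Hence its cokernel vanishes, and $\Cr_n(q_nF)=0$, so $q_nF\in\Fct^{<n}(\V,\A)$.

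Next I would establish the adjunction. Let $G$ be polynomial of degree less than $n$, so that $\Cr_nG=0$, whence $\mathrm{D}_n\Cr_nG=0$ and therefore $\epsilon_G=0$. For any morphism $h:F\to G$, naturality of $\epsilon$ gives $h\circ\epsilon_F=\epsilon_G\circ(\mathrm{D}_n\Cr_nh)=0$. By the universal property of the cokernel, $h$ factors uniquely through $F\to q_nF$; conversely every morphism $q_nF\to G$ composes with $F\to q_nF$ to give a morphism $F\to G$. This produces a bijection
$$\Hom_{\Fct(\V,\A)}(F,G)\xrightarrow{\ \simeq\ }\Hom_{\Fct(\V,\A)}(q_nF,G)\;,$$
natural in $F$ and $G$; since $\Fct^{<n}(\V,\A)$ is a full subcategory containing both $q_nF$ and $G$, the right-hand side is $\Hom_{\Fct^{<n}(\V,\A)}(q_nF,G)$. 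This is precisely the statement that $q_n$ is left adjoint to the inclusion, and the unit $F\to q_nF$ being an epimorphism exhibits $q_nF$ as the largest quotient of $F$ of degree less than $n$.

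There is no serious obstacle here: the argument is a purely formal consequence of the adjunction $\mathrm{D}_n\dashv \Cr_n$ and the exactness properties of $\Cr_n$. The only points requiring care are bookkeeping ones — fixing the direction of the adjunction so that $\epsilon_F$ is genuinely the map written in the statement, invoking the correct triangle identity to see that $\Cr_n\epsilon_F$ splits, and noting that $\Cr_n$ preserves the relevant cokernel (guaranteed by fact \ref{fact-stab}).
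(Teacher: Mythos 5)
Your argument is correct and is exactly the standard formal derivation the paper has in mind (the paper states this fact without proof in the appendix, having already supplied the needed ingredients: the adjunction of fact \ref{fact-adj}, the colimit-preservation of $\Cr_n$ from fact \ref{fact-stab}, and the explicit description of the counit). The two points you flag as needing care — the triangle identity making $\Cr_n\epsilon_F$ a split epimorphism, and $\Cr_n$ commuting with the cokernel — are indeed the whole content, and you handle both correctly.
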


When the counits $D_n\Cr_n F\to F$ have images (e.g. if $\A$ is abelian), these yield a descending filtration of $F$, called the \emph{copolynomial filtration of $F$} \cite{Djament}.

Dually, if $\A$ has all kernels, we let $p_nF$ be the kernel of the unit of adjunction $F(V)\to D_n\Cr_n(V)$. This unit of adjunction  
equals the composition:
$$ F(V)\xrightarrow[]{F(\delta_V)} F(V^{\oplus n})\twoheadrightarrow \Cr_nF(V,\dots,V)\;.$$ 
\begin{fact}\label{fact-ker}
Assume that $\A$ has all kernels. The functor $p_n$ is polynomial of degree less than $n$, and  
$$p_n:\Fct(\V,\A)\to \Fct^{< n}(\V,\A)$$
is right adjoint to the embedding $\Fct^{< n}(\V,\A)\hookrightarrow \Fct(\V,\A)$. (Thus $p_n F$ is the largest subfunctor of degree less than $n$ of $F$.)
\end{fact}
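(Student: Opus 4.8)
The plan is to treat fact \ref{fact-ker} as the formal dual of fact \ref{fact-quot}, running the same argument with the \emph{unit} rather than the counit of the cross-effect adjunction. By fact \ref{fact-adj} the pair $(\Cr_n,\mathrm{D}_n)$ is an adjunction, so there is a unit natural transformation $\eta:\mathrm{Id}\to \mathrm{D}_n\Cr_n$ whose component $\eta_F:F\to \mathrm{D}_n\Cr_n F$ is exactly the composite $F(V)\xrightarrow{F(\delta_V)}F(V^{\oplus n})\twoheadrightarrow \Cr_n F(V,\dots,V)$ displayed before the statement; since $\A$ has kernels by hypothesis, $p_nF:=\ker \eta_F$ exists and is computed objectwise. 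I would then check two things: that $p_nF$ lies in $\Fct^{<n}(\V,\A)$, and that the inclusion $p_nF\hookrightarrow F$ is universal among morphisms into $F$ from functors of degree less than $n$.

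For polynomiality I would use that $\Cr_n$ commutes with limits, in particular with kernels (fact \ref{fact-stab}). Hence $\Cr_n(p_nF)=\Cr_n(\ker \eta_F)=\ker(\Cr_n\eta_F)$, and it suffices to show that $\Cr_n\eta_F$ is a monomorphism. This follows from the triangle identity of the adjunction $(\Cr_n,\mathrm{D}_n)$: writing $\epsilon:\Cr_n\mathrm{D}_n\to \mathrm{Id}$ for the counit, one has $\epsilon_{\Cr_n F}\circ \Cr_n(\eta_F)=\mathrm{Id}_{\Cr_n F}$, so $\Cr_n(\eta_F)$ is a split monomorphism and its kernel vanishes. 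Thus $\Cr_n(p_nF)=0$, i.e. $p_nF$ is polynomial of degree less than $n$.

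For the universal property, let $G$ be polynomial of degree less than $n$ and let $f:G\to F$ be any morphism. Naturality of $\eta$ gives $\eta_F\circ f=(\mathrm{D}_n\Cr_n f)\circ \eta_G$; but $\Cr_n G=0$ forces $\mathrm{D}_n\Cr_n G=0$ and hence $\eta_G=0$, so $\eta_F\circ f=0$ and $f$ factors through $\ker \eta_F=p_nF$. The factorization is unique because $p_nF\hookrightarrow F$ is a monomorphism. Since $\Fct^{<n}(\V,\A)$ is a full subcategory, this says precisely that $\Hom_{\Fct^{<n}}(G,p_nF)\xrightarrow{\sim}\Hom_{\Fct}(G,F)$ naturally in $G$ and $F$, i.e. $p_n$ is right adjoint to the inclusion; letting $G$ range over the degree-$<n$ subfunctors of $F$ then identifies $p_nF$ as the largest such subfunctor.

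The argument is essentially formal, so the only genuinely delicate point — the \emph{main obstacle} — is the bookkeeping around the ambidextrous adjunction of fact \ref{fact-adj}: one must confirm that $\eta$ is the unit of $(\Cr_n,\mathrm{D}_n)$ (and not of $(\mathrm{D}_n,\Cr_n)$), that it coincides with the explicit composite above, and that the correct triangle identity is invoked so that it is $\Cr_n(\eta_F)$, rather than $\eta_{\Cr_n F}$, which is the split monomorphism. Everything else transfers verbatim from the proof of fact \ref{fact-quot} under the kernel/cokernel duality.
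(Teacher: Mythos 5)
Your argument is correct: the paper states fact \ref{fact-ker} without proof (it is a recalled ``fact'' in the appendix, the formal dual of fact \ref{fact-quot}), and your proof supplies exactly the intended argument --- identify $p_nF$ as the kernel of the unit $\eta_F:F\to \mathrm{D}_n\Cr_nF$ of the adjunction $(\Cr_n,\mathrm{D}_n)$, use that $\Cr_n$ preserves kernels together with the triangle identity $\epsilon_{\Cr_nF}\circ\Cr_n(\eta_F)=\mathrm{Id}_{\Cr_nF}$ to get $\Cr_n(p_nF)=0$, and use $\eta_G=0$ for $G$ of degree less than $n$ to get the universal property. You have also correctly isolated and resolved the one genuinely delicate point, namely which of the two adjunctions of fact \ref{fact-adj} and which triangle identity are being used.
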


The subfunctors $p_nF$ yield an ascending filtration of $F$, called the \emph{polynomial filtration of $F$}. The functor $F$ is called \emph{analytic} if its polynomial filtration is exhaustive. 
The following improvement of fact \ref{fact-stab} is a formal consequence of the existence of $p_n$ or $q_n$.
\begin{fact}\label{fact-create}
Assume that $\A$ has all cokernels (resp. all kernels). Then the embedding $\Fct^{< n}(\V,\A)\hookrightarrow \Fct(\V,\A)$ creates colimits (resp. limits).
\end{fact}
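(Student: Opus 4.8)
The plan is to reduce everything to the single fact that the cross-effect functors $\Cr_n$ preserve both colimits and limits (fact \ref{fact-stab}), the hypotheses on $\A$ serving only to guarantee that the colimits (resp. limits) at issue exist in $\Fct(\V,\A)$. I would treat the colimit statement and obtain the limit statement by the evident dualization (replacing cokernels by kernels, $q_n$ by $p_n$, and colimits by limits). Write $U$ for the inclusion $\Fct^{<n}(\V,\A)\hookrightarrow\Fct(\V,\A)$. Recall from fact \ref{fact-adj} that $(\Cr_n,\mathrm{D}_n)$ is an adjoint pair, so $\Cr_n$ is a left adjoint and hence preserves colimits.

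First I would fix a small diagram $D\colon J\to\Fct^{<n}(\V,\A)$ such that $UD$ admits a colimit $C$ in $\Fct(\V,\A)$; since $\A$ is additive with all cokernels it has all finite colimits, computed pointwise, so such a $C$ exists at least for finite $J$. I claim $C$ is again polynomial of degree less than $n$. Indeed, because $\Cr_n$ preserves colimits,
$$\Cr_n C\;\simeq\;\Cr_n\,\colim_J UD\;\simeq\;\colim_J \Cr_n\,UD\;\simeq\;\colim_J 0\;\simeq\;0,$$
the third isomorphism holding because $\Cr_n\,UD(j)=0$ for every $j$, each $UD(j)$ being of degree less than $n$. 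Thus $C$ lies in $\Fct^{<n}(\V,\A)$.

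It then remains to upgrade this closure property to the creation of colimits, which is where full faithfulness of $U$ enters. Since $\Fct^{<n}(\V,\A)$ is a full subcategory and $C$ belongs to it, the colimit cocone $(C,\iota)$ is a cocone in $\Fct^{<n}(\V,\A)$; any competing cocone with vertex in $\Fct^{<n}(\V,\A)$ is, by fullness, a cocone in $\Fct(\V,\A)$, hence factors uniquely through $(C,\iota)$ by a map which is again a morphism of the subcategory. So $(C,\iota)$ is the colimit of $D$ in $\Fct^{<n}(\V,\A)$, and full faithfulness forces $(C,\iota)$ to be the unique lift of the ambient colimit cocone; this is precisely the assertion that $U$ creates colimits. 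The dual argument, using that $\Cr_n$ is also a right adjoint of $\mathrm{D}_n$ (fact \ref{fact-adj}) hence preserves limits, and that $\A$ with kernels has finite limits computed pointwise, yields that $U$ creates limits. The expected main obstacle is not in any single step — each is formal — but in pinning down the correct reading of \emph{creates}: it is not enough to know that $\Fct^{<n}(\V,\A)$ is stable under colimits (that is already fact \ref{fact-stab}); one must verify that the ambient universal cocone is the unique lift and remains universal inside the subcategory, and that the hypothesis on $\A$ (equivalently, the existence of $q_n$, resp. $p_n$, from facts \ref{fact-quot} and \ref{fact-ker}) is exactly what makes the relevant colimits (resp. limits) available.
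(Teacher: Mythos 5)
Your proof is correct. The paper gives no argument for this fact beyond the remark that it is ``a formal consequence of the existence of $p_n$ or $q_n$'', i.e.\ of the adjoints to the embedding, whereas you derive it from fact \ref{fact-stab} (closure of $\Fct^{<n}(\V,\A)$ under ambient colimits, via $\Cr_n$ being a left adjoint) combined with the elementary observation that the inclusion of a full replete subcategory closed under a class of colimits creates them, the hypothesis on $\A$ serving only to guarantee that the colimits in question exist. This difference is worth recording, because the purely formal adjoint argument, read naively, pairs the hypotheses the other way around: the existence of the left adjoint $q_n$ makes the embedding reflective, and a fully faithful right adjoint creates \emph{limits}, while $p_n$ makes it coreflective and hence creates \emph{colimits}. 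The statement as printed (cokernels give creation of colimits, kernels of limits) is nevertheless true, and your closure-plus-fullness argument is the one that establishes it directly; the adjoints are, as you say, only needed for existence. Two cosmetic remarks: the restriction to finite $J$ in your first step is unnecessary, since creation only concerns diagrams whose ambient colimit happens to exist; and in the identification $\colim_J \Cr_n UD\simeq 0$ you are implicitly using that the colimit of the constant zero diagram is the zero object of the additive target category, which is harmless but could be said explicitly.
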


A functor $F$ has degree less than two if and only if in the canonical decomposition
$F(V)=F(0)\oplus \overline{F}(V)$, the functor $\overline{F}$ is additive. Thus, all facts recalled  above for the category $\Fct^{<2}(\V,\A)$ also apply to the full subcategory  $\Fct_{\add}(\V,\A)$ of additive functors.

The next result may be thought of as the core of the Eilenberg-Watts classification of additive functors \cite{Eilenberg, Watts}. For this result, we take the category $\mathrm{P}_R$ of finitely generated projective $R$-modules as source category of our additive functors. We denote by ${_R}\A$ the category of $R$-modules in $\A$. For example the category of $R$-modules in $\Mod$ is isomorphic to the category $_R\Mod$ of $(R,\kk)$-bimodules.
\begin{fact}\label{fact-EW}
Evaluation on $R$ yields an equivalence of categories 
$$\Fct_\add(\mathrm{P}_R,\A)\to {_R}\A\;.$$
\end{fact}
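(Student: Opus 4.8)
Fact B.10 (fact-EW) — the Eilenberg–Watts classification.

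The plan is to prove that evaluation on $R$ gives an equivalence
$$\Fct_\add(\mathrm{P}_R,\A)\xrightarrow{\ \simeq\ } {_R}\A,\qquad F\mapsto F(R).$$
First I would produce the candidate inverse functor. Given an $R$-module object $(A,\phi_A)$ in $\A$ — that is, an object $A$ of $\A$ together with a ring map $\phi_A\colon R\to \End_\A(A)$ — I would build an additive functor $T_A\colon \mathrm{P}_R\to\A$ whose value on $R$ is $A$. On the free module $R^n$ I set $T_A(R^n)=A^{\oplus n}$, and on a morphism $R^m\to R^n$, which is given by an $n\times m$ matrix $(r_{ij})$ with entries in $R$, I let $T_A$ act by the matrix of morphisms $(\phi_A(r_{ij}))\colon A^{\oplus m}\to A^{\oplus n}$ in $\A$. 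Here I use crucially that $\A$ is additive, so that $A^{\oplus n}$ exists and that $\hom_\A(A^{\oplus m},A^{\oplus n})$ is identified with $n\times m$ matrices of elements of $\End_\A(A)$; functoriality of $T_A$ is then exactly the compatibility of matrix multiplication with $\phi_A$, i.e. the fact that $\phi_A$ is a ring homomorphism. A general object $P$ of $\mathrm{P}_R$ is a direct summand of some $R^n$, cut out by an idempotent $e\in M_n(R)$; since $\A$ has split idempotents (the standing hypothesis on $\A$, via fact~\ref{fact-idcomp}), the idempotent $\phi_A(e)$ on $A^{\oplus n}$ splits, and I define $T_A(P)$ to be its image. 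One checks this is well defined up to canonical isomorphism and extends to a functor.

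Next I would verify that $T_A$ is additive and that $T_A(R)\cong A$ naturally in $A$, giving one composite $\mathrm{ev}_R\circ T\cong \Id_{{_R}\A}$. The heart of the argument is the other composite: for an arbitrary additive $F$, I must exhibit a natural isomorphism $T_{F(R)}\xrightarrow{\simeq} F$. The $R$-module structure on $F(R)$ is $\phi_{F(R)}(r)=F(r\cdot)$, where $r\cdot\colon R\to R$ is right multiplication; additivity of $F$ guarantees $F(R^{\oplus n})\cong F(R)^{\oplus n}$, and naturality in the matrix entries matches the action of $T_{F(R)}$ on morphisms. Thus on the full subcategory of free modules the comparison map is an isomorphism by additivity alone, and it extends to all of $\mathrm{P}_R$ because both functors send the splitting idempotent $e$ to compatible splittings, and the comparison is compatible with passing to images. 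Finally I would record that $\mathrm{ev}_R$ is automatically a functor into ${_R}\A$: any natural transformation $F\to G$ commutes with $F(r\cdot)$ and $G(r\cdot)$, hence its value on $R$ is a morphism of $R$-module objects.

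The main obstacle, and the step I would treat most carefully, is the reduction from free modules to arbitrary finitely generated projectives through split idempotents: one must check that the assignment $P\mapsto T_A(P)$ is independent (up to canonical coherent isomorphism) of the choice of presentation of $P$ as the image of an idempotent, and that the comparison isomorphism $T_{F(R)}\cong F$ is natural across morphisms of projectives that need not lift to morphisms of the ambient free modules. This is a standard but slightly delicate bookkeeping argument; it is exactly where the hypothesis that $\A$ has split idempotents is used, and it is the only place where anything beyond formal additivity enters. Everything else is the translation between matrices over $R$ and morphisms in $\A$, which is routine once $\phi_A$ is recognized as encoding precisely this matrix action.
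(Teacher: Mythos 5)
Your proposal is correct and is essentially the paper's own argument: the paper also views an $R$-module object as a functor $*_R\to\A$ and extends it to $\mathrm{P}_R\simeq(*_R)_\oplus^\natural$ first over the additive hull (your matrix construction on free modules $R^n$) and then over the idempotent completion (your splitting of $\phi_A(e)$ on $A^{\oplus n}$, using fact~\ref{fact-idcomp}). The only difference is presentational: the paper outsources the well-definedness and coherence bookkeeping you rightly flag as the delicate step to the universal property of the Karoubi envelope, citing \cite[Lm 3.12]{IvoGoncalo}, whereas you carry it out by hand.
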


To be more specific, the inverse of the equivalence is constructed as follows. The category ${_R}\A$ can be equivalently thought of as the category of functors from the category $*_R$ with one object and the ring $R$ as endomorphisms, to $\A$. Since $\A$ is additive with split idempotents, any functor $F:*_R\to \A$ extends to an additive functor $\widetilde{F}:(*_R)_\oplus^\natural\to \A$, where $(*_R)_\oplus$ is the additive hull of $*_R$ and $^\natural$ refers to its idempotent completion. But $(*_R)_\oplus^\natural$ is equivalent to $\mathrm{P}_R$ (see e.g. \cite[Lm 3.12]{IvoGoncalo}).

\begin{remark}
An analogous classification for polynomial functors of higher degree was proved by Pirashvili 
\cite{PiraPol}, but we shall not use this here.
\end{remark}

Now we take $\A=\Mod$. Thus we can define the tensor product of two functors $F,G:\V\to \Mod$ by $(F\otimes G)(V)=F(V)\otimes G(V)$. Using the definition of cross effects, one can prove
\begin{fact}\label{fact-tens}
If $\deg F\le k$ and $\deg G\le \ell$ then $\deg (F\otimes G)\le k+\ell$.
\end{fact}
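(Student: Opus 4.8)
The plan is to prove that if $\deg_{\EML} F \le k$ and $\deg_{\EML} G \le \ell$, then $\deg_{\EML}(F \otimes G) \le k + \ell$, by computing the cross effects of the tensor product via a binomial-type decomposition. First I would recall the basic identity relating the value of a functor on a direct sum to its cross effects: for any reduced functor $H$, evaluating on $V_1 \oplus \cdots \oplus V_n$ yields $H(V_1 \oplus \cdots \oplus V_n) \simeq \bigoplus_{S} \Cr_{|S|} H(V_i : i \in S)$, where $S$ runs over the nonempty subsets of $\{1,\dots,n\}$. This is the standard consequence of the cross-effect decomposition (the $e_i$ being commuting idempotents, as in the definition preceding fact \ref{fact-adj}), and it is the workhorse of the argument.

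The key step is then to apply this decomposition to both $F$ and $G$ simultaneously on a sum of $n = k+\ell+1$ variables. Writing $(F \otimes G)(V_1 \oplus \cdots \oplus V_n) = F(V_1 \oplus \cdots \oplus V_n) \otimes G(V_1 \oplus \cdots \oplus V_n)$ and expanding each factor by the decomposition above, one obtains a direct sum of terms of the form $\Cr_{|S|}F \otimes \Cr_{|T|}G$ indexed by pairs of subsets $S, T \subseteq \{1, \dots, n\}$. From this I would extract the top cross effect $\Cr_n(F \otimes G)$: it is precisely the summand on which \emph{every} one of the $n$ variables appears nontrivially, which forces $S \cup T = \{1,\dots,n\}$. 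Here I use that $\Cr_m F = 0$ for $m > k$ (since $\deg_{\EML} F \le k$) and $\Cr_m G = 0$ for $m > \ell$, so any surviving term has $|S| \le k$ and $|T| \le \ell$, whence $|S| + |T| \le k + \ell < n = |S \cup T| \le |S| + |T|$, a contradiction. Therefore every contributing term vanishes and $\Cr_n(F \otimes G) = 0$, i.e.\ $\deg_{\EML}(F \otimes G) \le k + \ell$.

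The cleanest route to making the ``$S \cup T = \{1,\dots,n\}$'' step rigorous is to identify which pairs $(S,T)$ contribute to the $n$-th cross effect: a summand $\Cr_{|S|}F(\dots) \otimes \Cr_{|T|}G(\dots)$ contributes to $\Cr_n(F\otimes G)$ exactly when it becomes zero upon setting any single $V_i$ to zero, and setting $V_i = 0$ kills the summand precisely when $i \in S \cup T$ (because $\Cr_m$ is reduced in each argument). Thus the $n$-reduced part picks out exactly the pairs with $S \cup T$ equal to the whole index set. I would phrase this using the naturality of the cross-effect projections and the adjunction $(\Cr_n, D_n)$ from fact \ref{fact-adj}, rather than manipulating elements.

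The main obstacle I anticipate is bookkeeping rather than conceptual difficulty: one must be careful that the tensor product of the two direct-sum decompositions commutes with the cross-effect projections, and that the splitting of $(F\otimes G)$ into its cross effects is compatible with the product decomposition of $F$ and $G$ individually. This requires that all the relevant idempotents ($e_i$ for $F$, for $G$, and for $F \otimes G$) be matched up correctly, which is where naturality and the fact that cross effects are computed as images of commuting idempotents must be invoked cleanly. Once the combinatorial identification $\{(S,T) : S \cup T = \{1,\dots,n\}\}$ is in place, the counting inequality $|S| + |T| \ge |S \cup T| = n > k + \ell$ delivers the vanishing immediately.
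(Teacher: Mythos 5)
Your argument is correct and is precisely the standard cross-effect computation the paper has in mind: the paper states Fact \ref{fact-tens} with no written proof, saying only that it follows ``using the definition of cross effects,'' and your binomial decomposition of $(F\otimes G)(V_1\oplus\cdots\oplus V_n)$ into summands $\Cr_{|S|}F\otimes\Cr_{|T|}G$ with the counting argument $|S|+|T|\ge|S\cup T|=n>k+\ell$ is exactly that intended argument. The bookkeeping you flag (compatibility of the idempotents $e_i$ with the tensor decomposition) is routine since the $e_i$ act diagonally on $F\otimes G$, so there is no gap.
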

The inequality in fact \ref{fact-tens} may very well be strict. For example, if $\kk=\Z$, $\V=\Proj_\Z$ then $A(-)=-\otimes \mathbb{Q}/\Z$ has degree one, while $A^{\otimes 2}=0$. 
\begin{fact}\label{fact-tens-2}
If $\kk$ is a commutative domain and $F$ and $G$ are polynomial functors such that $F(V)$ and $G(V)$ are torsion-free for all $V$, then $\deg (F\otimes G)=\deg F+\deg G$.  
\end{fact}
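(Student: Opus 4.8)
The final statement is Fact~\ref{fact-tens-2}: over a commutative domain $\kk$, if $F$ and $G$ are polynomial functors with $F(V)$ and $G(V)$ torsion-free for all $V$, then $\deg(F\otimes G)=\deg F+\deg G$.

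\textbf{Overview of the plan.} One inequality, $\deg(F\otimes G)\le \deg F+\deg G$, is exactly Fact~\ref{fact-tens}, which I may assume. So the entire content is the reverse inequality: I must produce a nonzero $(k+\ell)$-th cross effect for $F\otimes G$, where $k=\deg F$ and $\ell=\deg G$. The natural approach is to compute the top cross effect of a tensor product in terms of the cross effects of the factors, and then use torsion-freeness together with the fact that $\kk$ is a domain to guarantee that the top term survives as a nonzero direct summand.

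\textbf{Key steps.} First I would recall the standard multiplicativity of cross effects: for functors into $\Mod$ there is a natural decomposition
$$
\Cr_n(F\otimes G)(V_1,\dots,V_n)\;\simeq\;\bigoplus_{\substack{S\sqcup T\subseteq\{1,\dots,n\}\\ S\cup T=\{1,\dots,n\}}}\Cr_{|S|}F(V_S)\otimes \Cr_{|T|}G(V_T)\,,
$$
where $V_S$ denotes the tuple of variables indexed by $S$ (this follows by expanding $(F\otimes G)(V_1\oplus\cdots\oplus V_n)=F(\textstyle\bigoplus V_i)\otimes G(\bigoplus V_i)$ via the exponential-type splitting of each factor into its cross effects and collecting the terms that are reduced in every variable). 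Taking $n=k+\ell$, every surjective pair $(S,T)$ with $|S|\le k$ and $|T|\le\ell$ forces $|S|=k$, $|T|=\ell$, and $S\sqcup T=\{1,\dots,k+\ell\}$ a genuine partition; in particular the summand $\Cr_kF(V_S)\otimes\Cr_\ell G(V_T)$ appears. Second, I would choose objects $V_1,\dots,V_{k+\ell}$ of $\V$ so that $\Cr_kF(V_{i_1},\dots,V_{i_k})\ne 0$ and $\Cr_\ell G(V_{j_1},\dots,V_{j_\ell})\ne 0$ for some complementary index sets; such choices exist precisely because $\deg F=k$ and $\deg G=\ell$. Third, and this is the crux, I would argue that the corresponding summand $\Cr_kF(V_S)\otimes\Cr_\ell G(V_T)$ is itself nonzero. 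Here torsion-freeness enters: $\Cr_kF(V_S)$ is a direct summand of $F(\bigoplus_{i\in S}V_i)$, hence torsion-free, and likewise $\Cr_\ell G(V_T)$; a tensor product over a commutative domain $\kk$ of two nonzero torsion-free modules is nonzero (tensor with the fraction field $\mathrm{Frac}(\kk)$, under which both factors remain nonzero since they are torsion-free, and the tensor product of nonzero vector spaces over a field is nonzero). Therefore $\Cr_{k+\ell}(F\otimes G)\ne 0$, giving $\deg(F\otimes G)\ge k+\ell$.

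\textbf{Main obstacle.} The delicate point is not the existence of the top summand—that is pure bookkeeping with the cross-effect decomposition—but ruling out the possibility that $\Cr_kF(V_S)\otimes\Cr_\ell G(V_T)$ vanishes even though both tensor factors are individually nonzero. The counterexample $A\otimes A=0$ for $A=-\otimes\mathbb{Q}/\Z$ over $\kk=\Z$ recorded just before the statement shows this can genuinely fail without a hypothesis, so the proof must use both that $\kk$ is a domain and that the relevant modules are torsion-free. The clean way to discharge this is the fraction-field argument above; I would state it as a small lemma (nonzero torsion-free modules over a domain have nonzero tensor product) and apply it to the two cross-effect summands, which are torsion-free by virtue of being direct summands of $F$ and $G$ evaluated on direct sums. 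Everything else is the naturality and direct-summand structure of cross effects already encoded in Fact~\ref{fact-stab}.
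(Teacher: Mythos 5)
Your proof is correct and follows essentially the same route as the paper: both exhibit the summand $\Cr_kF(V_S)\otimes\Cr_\ell G(V_T)$ inside $\Cr_{k+\ell}(F\otimes G)$ and then invoke the fraction-field argument to see that a tensor product of nonzero torsion-free modules over a domain is nonzero. The only difference is cosmetic — the paper skips the full multiplicativity formula for cross effects and just notes directly that $\Cr_dF\otimes\Cr_eG$ is a $(d+e)$-reduced direct summand of $(F\otimes G)(V_1\oplus\cdots\oplus V_{d+e})$, hence a subfunctor of the top cross effect.
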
 
\begin{proof}
Let $d=\deg F$ and $e=\deg G$. The functor of $d+e$ variables:
$$C(V_1,\dots,V_{d+e})=\Cr_dF(V_1,\dots,V_d)\otimes \Cr_eG(V_{d+1},\dots,V_{d+e})$$
is $(d+e)$-reduced, and it is a direct summand of $(F\otimes G)(V_1\oplus\dots\oplus V_{d+e})$. Hence it is a subfunctor of $\Cr_{d+e}(F\otimes G)$. 
By our hypotheses $\Cr_dF$ and $\Cr_eG$ are both nonzero, and they have values in torsion free modules. Since $\kk$ is a domain, the tensor product of two nonzero torsion-free modules is nonzero (tensor with the fraction field to see it). Thus $C$ is nonzero, hence $\Cr_{d+e}(F\otimes G)$ is nonzero, hence $\deg (F\otimes G)\ge \deg F+\deg G$. The other inequality is provided by fact \ref{fact-tens}. 
\end{proof}

The notion of degree in the sense of Eilenberg and Mac Lane can also be developed for strict polynomial functors. Let $\kk$ be a commutative ring and let $\PP_{d,\kk}$ be the category of homogeneous strict polynomial functors of weight $d$ over $\kk$. There is a forgetful functor 
$$\U: \PP_{d,\kk}\to \Fct(\Proj_\kk,\Mod)\;.$$
The degree of $F\in \PP_{d,\kk}$ is defined as that of $\U F$. Since the projective generator $\Gamma^{d,n}$ of $\PP_{d,\kk}$ has degree $d$, it follows from fact \ref{fact-stab} that the degree of an object of $\PP_{d,\kk}$ is always less or equal to $d$. Note that the inequality can be strict, e.g. if $\kk$ is a field of characteristic $p$ then $I^{(1)}\in \PP_{p,\kk}$ is additive, hence it has degree $1$. 

In fact it is possible to define cross effects directly on the level of strict polynomial functors, in such a way that cross effects commute with the forgetful functor $\U$, see e.g. \cite[Section 3.2]{TouzeFund}. Thus one can also say that $F$ has degree less than $n$ if $\Cr_nF$ is zero. Then one easily sees that facts \ref{fact-adj} through \ref{fact-create} and facts \ref{fact-tens} and \ref{fact-tens-2} stay valid for strict polynomial functors, provided we replace $\Fct(\V,\A)$ by $\PP_{d,\kk}$ and $\Fct^{<n}(\V,\A)$ by $\PP_{d,\kk}^{<n}$ (the full subcategory supported by the strict polynomial functors of degree less than $n$).

\end{document}